\title{Equivariant algebraic models for relative~self-equivalences and block~diffeomorphisms}
\author{Alexander Berglund\thanks{Stockholm University. E-mail: \href{mailto:alexb@math.su.se}{\texttt{alexb@math.su.se}}.} \and Robin Stoll\thanks{University of Cambridge. E-mail: \href{mailto:rs2348@cam.ac.uk}{\texttt{rs2348@cam.ac.uk}}.}}
\date{\today}
\begin{document}

\maketitle

\begin{abstract}
  We construct rational models for classifying spaces of self-equivalences of bundles over simply connected finite CW-complexes relative to a given simply connected subcomplex.
  Via work of Berglund--Madsen and Krannich this specializes to rational models for classifying spaces of block diffeomorphism groups of simply connected smooth manifolds of dimension at least $6$ with simply connected boundary.
  The main application is a formula for the rational cohomology of these classifying spaces in terms of the cohomology of arithmetic groups and dg Lie algebras.
  We furthermore prove that our models are compatible with gluing constructions, and deduce that the model for block diffeomorphisms is compatible with boundary connected sums of manifolds whose boundary is a sphere.
  As in preceding work of Berglund--Zeman on spaces of self--homotopy equivalences, a key idea is to study equivariant algebraic models for nilpotent coverings of the classifying spaces.
\end{abstract}

\tableofcontents

{
\renewcommand*{\thetheorem}{\Alph{theorem}}
\renewcommand*{\thecorollary}{\Alph{theorem}}

\section{Introduction}

The block diffeomorphism group $\BlDiff[\bdry](M)$ of a smooth manifold $M$ with boundary is a central object in the pseudo-isotopy approach to the homotopy theory of diffeomorphism groups.
Specifically, there is a map
\[ \B \Diff[\bdry](M)  \longto  \B \BlDiff[\bdry](M) \]
whose homotopy fiber can, in a range, be described in terms of algebraic $K$-theory by work of Weiss--Williams \cite{WW}, Igusa \cite{Igu}, and Waldhausen--Jahren--Rognes \cite{WBR} (see also the survey \cite{Rog}).
Algebraic $K$-theory is often computable rationally, so the main obstacle to computing the rational cohomology of $\B \Diff[\bdry](M)$ in the range where this comparison applies is knowledge of the rational cohomology of $\B \BlDiff[\bdry](M)$.
In this paper we provide a model for the rational homotopy type of $\B \BlDiff[\bdry](M)$ in terms of a dg Lie algebra with an action of an arithmetic group; in particular this yields a \emph{formula} for the rational cohomology of $\B \BlDiff[\bdry](M)$.

\begin{theorem}[see \cref{thm:block_diff}] \label{thm:intro_block}
  Let $M$ be a simply connected compact smooth manifold of dimension $d \geq 6$ with simply connected (in particular non-empty) boundary.
  Assume that the rational Pontryagin classes of $\bdry M \setminus *$ are trivial.
  Then there is a rational equivalence
  \[ \B \BlDiff[\bdry](M)  \req  \hcoinv {\nerve[\big] {\lieBlTor{\bdry}(M)}} {\BlGamma{\bdry}(M)} \]
  where $\nerve {\lieBlTor{\bdry}(M)}$ denotes the geometric realization of a nilpotent dg Lie algebra $\lieBlTor{\bdry}(M)$ equipped with an algebraic action of an arithmetic subgroup $\BlGamma{\bdry}(M)$ of a reductive algebraic group $\BlRed{\bdry}(M)$.
  In fact $\nerve {\lieBlTor{\bdry}(M)}$ is equivariantly rationally equivalent to a normal covering space $\B \unip{\BlDiff[\bdry](M)}$ of $\B \BlDiff[\bdry](M)$ with deck transformation group $\BlGamma{\bdry}(M)$.
\end{theorem}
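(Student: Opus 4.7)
The plan is to combine two ingredients: the general rational model for classifying spaces of relative self-equivalences of bundles developed in this paper, and the comparison results of Berglund--Madsen and Krannich which reduce block diffeomorphisms rationally to such relative self-equivalences.

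First, I would invoke the Berglund--Madsen/Krannich comparison. Under the hypotheses $d \geq 6$ and the simple connectivity of $M$ and $\bdry M$, their surgery-theoretic results rationally identify $\B \BlDiff[\bdry](M)$ with (a union of components of) the classifying space of self-equivalences of an appropriate tangential bundle over $M$ whose restriction to $\bdry M$ is fixed. The assumption that the rational Pontryagin classes of $\bdry M \setminus *$ vanish enters here: it ensures that, after removing a disk, the tangential data on the boundary piece is rationally trivializable, so that the resulting bundle fits into the relative framework of the paper, with the simply connected base $M$ and the simply connected subcomplex $\bdry M$.

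Second, I would apply the paper's general model for classifying spaces of relative self-equivalences to the bundle produced in the first step. This yields the asserted equivalence
\[ \B \BlDiff[\bdry](M) \req \hcoinv{\nerve{\lieBlTor{\bdry}(M)}}{\BlGamma{\bdry}(M)}, \]
in which $\lieBlTor{\bdry}(M)$ is the nilpotent dg Lie algebra of positive-degree derivations of a Sullivan-type model for the bundle that vanish on the part corresponding to $\bdry M$, and $\BlGamma{\bdry}(M)$ is the arithmetic subgroup of the reductive algebraic group $\BlRed{\bdry}(M)$ of semisimple automorphisms of the model preserving the full set of data. The second assertion, that $\nerve{\lieBlTor{\bdry}(M)}$ is equivariantly rationally equivalent to a normal covering space $\B \unip{\BlDiff[\bdry](M)}$ with deck transformation group $\BlGamma{\bdry}(M)$, is a direct consequence of the equivariant model for nilpotent coverings built into the general construction, in the spirit of the strategy of Berglund--Zeman alluded to in the abstract.

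The hard part will be the first step: making the Berglund--Madsen/Krannich reduction sufficiently functorial and equivariant for the algebraic structure produced by the general model to be inherited correctly on the block-diffeomorphism side. In particular, one must track the arithmetic group action and the dg Lie algebra model simultaneously through the comparison, and one must verify that the Pontryagin class hypothesis is used precisely at the point where it is needed so that the comparison remains valid in the relative setting with a non-trivial simply connected boundary, rather than in the simpler closed case or the case of spherical boundary treated in earlier literature.
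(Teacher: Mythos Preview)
Your proposal is correct and follows the same two-step strategy as the paper: first the Berglund--Madsen/Krannich rational equivalence $\B \BlDiff[\bdry](M) \req \B \bdlaut{\bdry}{\bdry_0}{\B\SO}(\sttang{M})_G$ (\cref{prop:block_diff}), then the general model \cref{cor:Baut_eq} applied to this bundle situation. Two small corrections: the paper works with dg Lie (Quillen) models rather than Sullivan models, and $\lieBlTor{\bdry}(M)$ is not merely a derivation algebra but the twisted semidirect product $\trunc 0 {\Hom(\shift \indec[L_0](L_M), \hg * (\SO) \tensor \QQ)} \rsemidir[\tilde\rho_*] \Deru[\rho](L_M \rel L_\bdry)$; also, the ``hard part'' you anticipate is already absorbed into \cref{prop:block_diff}, which records that the comparison induces isomorphisms on cohomology with all local coefficients pulled back from $\B G$, so no further equivariance tracking is needed.
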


\Cref{thm:intro_block} has the following consequence for the rational cohomology of $\B \BlDiff[\bdry](M)$.
It implies, but is stronger than, collapse of the rational Serre spectral sequence of the homotopy fiber sequence
\begin{equation} \label{eq:torelli sequence}
  \B \unip{\BlDiff[\bdry](M)}  \longto  \B \BlDiff[\bdry](M)  \longto  \B \BlGamma{\bdry}(M)
\end{equation}
associated to the normal covering $\B \unip{\BlDiff[\bdry](M)}$.

\begin{corollary}[see \cref{cor:coho_block_diff}] \label{cor:intro_block_coho}
  In the situation of \cref{thm:intro_block}, there is an isomorphism of graded algebras
  \[ \Coho * \bigl( \B \BlDiff[\bdry](M); \QQ \bigr)  \iso  \Coho * \bigl( \BlGamma{\bdry}(M); \Coho * \bigl( \B \unip{\BlDiff[\bdry](M)}; \QQ \bigr) \bigr) \]
  and a $\BlGamma{\bdry}(M)$-equivariant isomorphism of graded algebras
  \[ \Coho * \bigl( \B \unip{\BlDiff[\bdry](M)}; \QQ \bigr)  \iso  \CEcoho * \bigl( \lieBlTor{\bdry}(M) \bigr) \]
  where $\CEcoho *$ denotes Chevalley--Eilenberg cohomology.
\end{corollary}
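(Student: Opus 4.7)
The plan is to derive both isomorphisms from \cref{thm:intro_block} by taking rational cohomology and combining standard rational-homotopy-theoretic identifications with a collapse argument for a multiplicative Serre spectral sequence.

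I start with the second isomorphism, which is the more direct of the two. The last sentence of \cref{thm:intro_block} supplies a $\BlGamma{\bdry}(M)$-equivariant rational equivalence $\nerve{\lieBlTor{\bdry}(M)} \req \B \unip{\BlDiff[\bdry](M)}$. Applying $\Coho{*}(-;\QQ)$ therefore gives a $\BlGamma{\bdry}(M)$-equivariant isomorphism of graded algebras between the two rational cohomology rings. For a nilpotent dg Lie algebra $L$, the rational cohomology of the geometric realization $\nerve{L}$ is naturally identified with the Chevalley--Eilenberg cohomology $\CEcoho{*}(L)$ as graded algebras, via the standard Sullivan/Quillen machinery; naturality in $L$ makes this identification $\BlGamma{\bdry}(M)$-equivariant (in fact $\BlRed{\bdry}(M)$-equivariant), yielding the stated isomorphism.

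For the first isomorphism I use the rational equivalence $\B \BlDiff[\bdry](M) \req \hcoinv{\nerve{\lieBlTor{\bdry}(M)}}{\BlGamma{\bdry}(M)}$ given by \cref{thm:intro_block}; equivalently, because $\BlGamma{\bdry}(M)$ acts freely as deck transformations on $\B \unip{\BlDiff[\bdry](M)}$, the fibration \eqref{eq:torelli sequence} models the Borel construction. Its multiplicative Serre spectral sequence has $E_{2}^{p,q} = \Coho{p}\bigl(\BlGamma{\bdry}(M);\Coho{q}(\B \unip{\BlDiff[\bdry](M)};\QQ)\bigr)$, and the desired isomorphism of graded algebras is precisely the statement that this spectral sequence degenerates at $E_{2}$ and has no multiplicative extensions.

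This collapse-and-no-extensions claim is the main obstacle, and as the authors emphasize it is strictly stronger than plain degeneration. The route I would take is to promote the identifications above to the level of equivariant CDGA models: one builds a $\BlGamma{\bdry}(M)$-equivariant CDGA model for $\hcoinv{\nerve{\lieBlTor{\bdry}(M)}}{\BlGamma{\bdry}(M)}$ of the form of group cochains with coefficients in $\CEcoho{*}(\lieBlTor{\bdry}(M))$, and then uses that $\lieBlTor{\bdry}(M)$ is assembled from algebraic representations of the reductive group $\BlRed{\bdry}(M)$. Semisimplicity of these representations — applied to the $\BlRed{\bdry}(M)$-isotypic decomposition of $\CEcoho{*}(\lieBlTor{\bdry}(M))$ and its compatibility with the group-cohomology differential — should force the underlying spectral sequence to split multiplicatively, producing the claimed graded-algebra isomorphism. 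The technical core of the proof is this passage from the abstract equivalence in \cref{thm:intro_block} to a strictly multiplicative, $\BlGamma{\bdry}(M)$-equivariant CDGA comparison that respects the $\BlRed{\bdry}(M)$-isotypic structure.
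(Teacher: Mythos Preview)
Your treatment of the second isomorphism is correct and matches the paper: the $\BlGamma{\bdry}(M)$-equivariant rational equivalence $\B\unip{\BlDiff[\bdry](M)}\req\nerve{\lieBlTor{\bdry}(M)}$ together with the natural algebra identification $\Coho{*}(\nerve{L};\QQ)\iso\CEcoho{*}(L)$ (this is \cref{lemma:MC_forms} in the paper) gives the result.

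For the first isomorphism your route is different from the paper's, and the step you flag as ``the main obstacle'' is where the gap lies. You frame the problem as collapse of the Serre spectral sequence plus absence of multiplicative extensions, and then sketch an argument via semisimplicity of the $\BlRed{\bdry}(M)$-representations. But semisimplicity of the coefficients alone does not formally force a multiplicative Serre spectral sequence to degenerate without extensions; some concrete chain-level structure is needed, and your sketch (``build a model of the form of group cochains with coefficients in $\CEcoho{*}$'') jumps to the conclusion rather than supplying that structure.

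The paper bypasses the spectral sequence entirely. It first establishes (\cref{cor:forms_block_diff}, specializing \cref{cor:forms_eq}) a genuine quasi-isomorphism of cdgas
\[
  \Forms{*}\bigl(\B\BlDiff[\bdry](M)\bigr)\;\eq\;\Forms{*}\bigl(\B\BlGamma{\bdry}(M);\,\CEcochains{*}(\lieBlTor{\bdry}(M))\bigr),
\]
with coefficients the full Chevalley--Eilenberg \emph{cochain complex}, not its cohomology. The passage to $\CEcoho{*}$ is then handled by a general lemma (\cref{lemma:forms_split_coeff}): if a local system $F$ of cochain complexes is \emph{split} (admits a chain map $F\to\Coho{*}(F)$ inducing the identity on cohomology), then $\Coho{*}(\Forms{*}(X;F))\iso\Coho{*}(X;\Coho{*}(F))$ as algebras. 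Reductivity of $\BlRed{\bdry}(M)$ is used precisely to verify this split condition for $\CEcochains{*}(\lieBlTor{\bdry}(M))$: the complex is degreewise a finite-dimensional algebraic representation of a reductive group, hence semisimple, hence split as a complex of $\BlGamma{\bdry}(M)$-modules. So the key idea you identified (semisimplicity from reductivity) is right, but it is deployed to produce a chain-level splitting of the coefficient system rather than to argue about a spectral sequence.
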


Let us now explain the terms in the statement of \cref{thm:intro_block} in more detail.
Recall that the group $\Eaut(\rat M)$ of homotopy classes of self-equivalences of the rationalization $\rat M$ can be equipped with the structure of a linear algebraic group over $\QQ$ (cf.\ \cite{Sul,Wil}); the analogous statement for the group $\Eaut[\bdry](\rat M)$ of homotopy classes of self-equivalences of $\rat M$ that restrict to the identity on $\rat{(\bdry M)}$ was proven by Espic--Saleh \cite{ES}.
The algebraic group $\BlRed{\bdry}(M)$ is defined to be the maximal reductive quotient of the algebraic subgroup $\Eaut[\bdry](\rat M)_{\pont} \subseteq \Eaut[\bdry](\rat M)$ of those self-equivalences that fix the total rational Pontryagin class $\pont(M) \in \Coho * (M; \QQ)$.
The arithmetic group $\BlGamma{\bdry}(M)$ is the image of the (block) mapping class group $\hg 0 (\BlDiff[\bdry](M))$ in $\BlRed{\bdry}(M)$.
The dg Lie algebra $\lieBlTor{\bdry}(M)$ is defined explicitly in terms of a rational model for the triple $(M, \bdry M, \bdry M \setminus *)$ and the rational Pontryagin classes of $M$; more details are given in \cref{thm:intro_main} below.
The space $\B \unip{\BlDiff[\bdry](M)}$ is the covering of $\B \BlDiff[\bdry](M)$ associated to the kernel of the surjection $\hg 0 (\BlDiff[\bdry](M)) \to \BlGamma{\bdry}(M)$; by definition this kernel is the preimage of the unipotent radical of $\Eaut[\bdry](\rat M)_{\pont}$, which is what the subscript $\mathrm{u}$ indicates.
The covering $\B \unip{\BlDiff[\bdry](M)}$ can be thought of as a variant of the block Torelli space: it is the classifying space for the group of block diffeomorphisms that act trivially on the semi-simplification (i.e.\ the direct sum of the composition factors) of the $\Eaut[\bdry](\rat M)_{\pont}$-representation $\Ho * (M, \bdry M; \QQ)$.

The first step towards proving \cref{thm:intro_block} is work of Berglund--Madsen \cite[Theorem~1.1]{BM} and Krannich \cite[Theorem~2.2]{Kra22} that provides, for a simply connected compact smooth manifold $M$ of dimension $d \ge 6$ with non-empty boundary, a rational equivalence
\begin{equation} \label{eq:intro_block_diff}
  \B \BlDiff[\bdry](M)  \req  \widetilde{\B} \bdlaut{\bdry}{\bdry_0}{\B\SO}(\sttang{M})
\end{equation}
to a finite covering of the classifying space $\B \bdlaut{\bdry}{\bdry_0}{\B\SO}(\sttang{M})$ of the topological monoid of self-equivalences of the (oriented) stable tangent bundle $\sttang{M}$ of $M$ that restrict to the identity of $\sttang{M}$ over $\bdry_0 M \defeq \bdry M \setminus *$ and whose underlying self-equivalence $M \to M$ restricts to the identity on $\bdry M$ (see \cref{prop:block_diff}).
Taking this as a starting point, we set out to construct rational models for more general classifying spaces of the form $\B \bdlaut{A}{B}{\B G}(\xi)$ for bundles $\xi$ over simply connected finite CW-complexes $X$. 
Specializing this to the case of a bundle with trivial structure group, we also obtain a model for the classifying space $\B \aut[A](X)$ of self-equivalences of $X$ that restrict to the identity on $A \subseteq X$.
Rational models for $\B \bdlaut{A}{B}{\B G}(\xi)$ and $\B \aut[A](X)$ were previously only known for certain nilpotent coverings, see \cite{Ber,BS,FFM23}.

\begin{theorem}[see \cref{cor:Baut_eq}] \label{thm:intro_main}
  Let $B \subseteq A \subseteq X$ be cofibrations of simply connected (and in particular non-empty) finite CW-complexes and let $\xi$ be a bundle over $X$ with connected structure group $G$.
  Assume that $\Coho * (\B G; \QQ)$ is of finite type and free as a graded commutative algebra and that $\xi$ is rationally trivial over $B$.
  Then there is a rational equivalence
  \[ \B \bdlaut{A}{B}{\B G}(\xi)  \req  \hcoinv {\nerve[\big] { \lie g_A^B(\xi) } } {\arithEbdlaut{A}{B}(\xi)} \]
  where $\nerve {\lie g_A^B(\xi)}$ denotes the geometric realization of a nilpotent dg Lie algebra $\lie g_A^B(\xi)$ equipped with an algebraic action of an arithmetic subgroup $\arithEbdlaut{A}{B}(\xi)$ of a reductive algebraic group $\redEbdlaut{\rat A}{\rat B}(\rat \xi)$.
  In fact $\nerve {\lie g_A^B(\xi)}$ is equivariantly rationally equivalent to a normal covering space $\B \unip{\bdlaut{A}{B}{\B G}(\xi)}$ of $\B \bdlaut{A}{B}{\B G}(\xi)$ with deck transformation group $\arithEbdlaut{A}{B}(\xi)$.
  
  The dg Lie algebra is explicitly given as
  \[ \lie g_A^B(\xi)  \defeq  \trunc 0 { \Hom \bigl( \indec[L_B](L_X), \hg {* + 1}(G) \tensor \QQ \bigr) }  \rsemidir[\tilde \rho_*] \Deru[\rho](L_X \rel L_A),\]
  where $L_B \to L_A \to L_X$ are relative dg Lie models for the inclusions $B \to A \to X$ such that $L_A \to L_X$ is minimal, and $\rho \colon L_X \to \hg {* + 1}(\B G) \tensor \QQ$ is a dg Lie model for the classifying map of $\xi$ such that $\rho(L_B) = 0$.
\end{theorem}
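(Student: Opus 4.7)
The plan is to adapt the strategy of Berglund--Zeman for spaces of self-equivalences to the present bundle setting. The first step is to equip $\hg 0 \bigl( \bdlaut{A}{B}{\B G}(\xi) \bigr)$ with a natural map to a linear algebraic group over $\QQ$: combining the algebraic-group structure on $\Eaut[A](\rat X)$ due to Sullivan, Wilkerson, and Espic--Saleh with a unipotent group of gauge automorphisms of the rationalized bundle---unipotent because $\xi$ is rationally trivial over $B$ and $\Coho * (\B G; \QQ)$ is free graded commutative---should yield an algebraic group whose maximal reductive quotient is $\redEbdlaut{\rat A}{\rat B}(\rat \xi)$. The arithmetic group $\arithEbdlaut{A}{B}(\xi)$ is the image of $\hg 0 \bigl( \bdlaut{A}{B}{\B G}(\xi) \bigr)$, and $\B \unip{\bdlaut{A}{B}{\B G}(\xi)}$ is the covering associated to the preimage of the unipotent radical.

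The main work is to construct a dg Lie model for this covering. Choose a relative Lie model $L_B \to L_A \to L_X$ with $L_A \to L_X$ minimal and a Lie model $\rho \colon L_X \to \hg{*+1}(\B G) \tensor \QQ$ for the classifying map of $\xi$, arranged so that $\rho$ vanishes on $L_B$ (possible since $\xi$ is rationally trivial over $B$). The derivation dg Lie algebra $\Deru[\rho](L_X \rel L_A)$ then models self-equivalences of $(X, A)$ preserving the rationalized bundle, while $\Hom \bigl( \indec[L_B](L_X), \hg{*+1}(G) \tensor \QQ \bigr)$ records the rational gauge data comparing a bundle lift to the identity---relative to $B$, which is why only the $L_B$-relative indecomposables appear. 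The semidirect product via $\tilde \rho_*$ expresses the natural action of derivations on this gauge data through precomposition with $\rho$, and the non-negative truncation renders the result nilpotent. Standard Quillen--Sullivan realization for nilpotent dg Lie algebras, combined with an obstruction-theoretic analysis of bundle self-equivalences over a minimal model, should then identify $\nerve {\lie g_A^B(\xi)}$ with $\B \unip{\bdlaut{A}{B}{\B G}(\xi)}$ up to rational equivalence.

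The final step is to promote this comparison to an equivariant equivalence and take homotopy coinvariants. Functoriality of the Lie-model construction equips $\lie g_A^B(\xi)$ with an algebraic action of $\hg 0 \bigl( \bdlaut{A}{B}{\B G}(\xi) \bigr)$; since the unipotent radical acts by inner automorphisms, which become trivial upon geometric realization, the action descends to $\arithEbdlaut{A}{B}(\xi)$. Taking homotopy coinvariants of the equivariant equivalence $\B \unip{\bdlaut{A}{B}{\B G}(\xi)} \req \nerve {\lie g_A^B(\xi)}$ then yields the claimed rational equivalence, and this coinvariant passage is well-behaved because $\arithEbdlaut{A}{B}(\xi)$ acts algebraically on the nilpotent dg Lie algebra.

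The hardest part will be identifying the $\Hom$-summand with the correct twisted semidirect-product structure. Deriving the precise formula for $\tilde \rho_*$, justifying the appearance of $\indec[L_B]$ (rather than $\indec$ or some other variant) from the obstruction theory for extending bundle maps rel $B$, and assembling the gauge and derivation data into a genuine dg Lie quasi-isomorphism that is simultaneously algebraic for the $\arithEbdlaut{A}{B}(\xi)$-action all require delicate bookkeeping; this is where the present paper must go beyond the trivial-bundle case of Berglund--Zeman and the nilpotent-covering case of Berglund and Berglund--Stoll.
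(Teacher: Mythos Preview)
Your overall architecture---model the nilpotent covering by the dg Lie algebra, then take homotopy orbits by the deck transformation group---matches the paper's, but there is a genuine gap in how you propose to obtain the $\arithEbdlaut{A}{B}(\xi)$-action on $\lie g_A^B(\xi)$. You assert that the natural action of $\Aut[L_A](L_X)_\rho$ descends to the reductive quotient because ``the unipotent radical acts by inner automorphisms, which become trivial upon geometric realization.'' Even granting that elements of the unipotent radical act on $\MCb\bigl(\lie g_A^B(\xi)\bigr)$ by maps \emph{homotopic} to the identity, this only yields a homotopy-coherent action of the quotient, not the strict group action needed to form the homotopy orbits $\hcoinv{\MCb(\lie g_A^B(\xi))}{\arithEbdlaut{A}{B}(\xi)}$ as a bar construction. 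On the dg Lie algebra itself the unipotent radical acts nontrivially, so there is no descended action at that level either.

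The paper resolves this by \emph{lifting} rather than descending. The composite $\aAut[L_A](L_X)_\rho \to \aEaut[L_A](L_X)_\rho \to \aredEaut[L_A](\rho)$ has unipotent kernel, so its target is simultaneously the maximal reductive quotient of $\aAut[L_A](L_X)_\rho$; Mostow's theorem on Levi decompositions then furnishes a section $s$, and $\arithEbdlaut{A}{B}(\xi)$ acts on $\lie g_A^B(\xi)$ by conjugation via $s$. The section is not canonical (any two are conjugate), and the rational equivalence in the theorem is explicitly indexed by this choice. A second point you elide is why taking homotopy orbits of the covering recovers $\B\bdlaut{A}{B}{\B G}(\xi)$: the paper first rewrites the covering as the two-sided bar construction $\B\bigl(\map[B](X,K)_\xi,\,\aut[A](X),\,\arithEbdlaut{A}{B}(\xi)\bigr)$, which carries a strict right $\arithEbdlaut{A}{B}(\xi)$-action, then invokes a lemma of Berglund--Zeman that identifies this right action with the conjugation action up to equivariant weak equivalence. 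Only after this manoeuvre does taking homotopy orbits---now via the section $s$---collapse back to the original classifying space.
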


In fact, we prove a slightly more general statement that applies to arbitrary finite covers of $\B \bdlaut{A}{B}{\B G}(\xi)$.
Via \eqref{eq:intro_block_diff} this immediately specializes to \cref{thm:intro_block}; in particular the dg Lie algebra $\lieBlTor{\bdry}(M)$ is simply $\lie g_{\bdry}^{\bdry_0}(\sttang M)$.
Similarly the following consequence of \cref{thm:intro_main} specializes to \cref{cor:intro_block_coho}.

\begin{corollary}[see \cref{cor:cohomology}] \label{cor:intro_coho}
  In the situation of \cref{thm:intro_main}, there is an isomorphism of graded algebras
  \[ \Coho * \bigl( \B \bdlaut{A}{B}{\B G}(\xi); \QQ \bigr)  \iso  \Coho * \bigl( \arithEbdlaut{A}{B}(\xi); \Coho * \bigl( \B \unip{\bdlaut{A}{B}{\B G}(\xi)}; \QQ \bigr) \bigr) \]
  and a $\arithEbdlaut{A}{B}(\xi)$-equivariant isomorphism
  \[ \Coho * \bigl( \B \unip{\bdlaut{A}{B}{\B G}(\xi)}; \QQ \bigr)  \iso  \CEcoho * \bigl( \lie g_A^B(\xi) \bigr) \]
   of graded algebras.
\end{corollary}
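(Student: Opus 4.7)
For the second isomorphism of \cref{cor:intro_coho}, \cref{thm:intro_main} provides an $\arithEbdlaut{A}{B}(\xi)$-equivariant rational equivalence $\B \unip{\bdlaut{A}{B}{\B G}(\xi)} \req \nerve{\lie g_A^B(\xi)}$; passing to rational cohomology yields an equivariant isomorphism of graded algebras between the left-hand side and $\Coho * (\nerve{\lie g_A^B(\xi)}; \QQ)$, and the latter is canonically and functorially identified with the Chevalley--Eilenberg cohomology $\CEcoho * (\lie g_A^B(\xi))$ by the standard Sullivan--Quillen comparison for connected, finite-type nilpotent dg Lie algebras. Functoriality of Chevalley--Eilenberg cochains ensures that this identification is itself $\arithEbdlaut{A}{B}(\xi)$-equivariant. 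This is the easy half.

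For the first isomorphism, I would use that \cref{thm:intro_main} gives a rational equivalence
\[ \B \bdlaut{A}{B}{\B G}(\xi)  \req  \hcoinv {\nerve{\lie g_A^B(\xi)}} {\arithEbdlaut{A}{B}(\xi)} \]
together with the associated Borel fibration
\[ \nerve{\lie g_A^B(\xi)}  \longto  \hcoinv {\nerve{\lie g_A^B(\xi)}} {\arithEbdlaut{A}{B}(\xi)}  \longto  \B \arithEbdlaut{A}{B}(\xi) \]
and its rational Serre spectral sequence, whose $E_2$-page is $\Coho * \bigl( \arithEbdlaut{A}{B}(\xi); \CEcoho * (\lie g_A^B(\xi)) \bigr)$. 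By the first step this is precisely $\Coho * \bigl( \arithEbdlaut{A}{B}(\xi); \Coho * (\B \unip{\bdlaut{A}{B}{\B G}(\xi)}; \QQ) \bigr)$, so the desired isomorphism of graded algebras reduces to showing that this multiplicative spectral sequence degenerates at $E_2$ and carries no extension problems.

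That degeneration (together with the triviality of the extensions) is the hard step and the main obstacle. The essential structural input is that, by construction of $\arithEbdlaut{A}{B}(\xi)$, its action on $\CEcoho * (\lie g_A^B(\xi))$ is the restriction of an algebraic action of the reductive group $\redEbdlaut{\rat A}{\rat B}(\rat \xi)$, so each graded piece of the coefficient system is a finite-dimensional semisimple representation of that reductive group. By Borel density the $\arithEbdlaut{A}{B}(\xi)$-equivariant differentials $d_r$ are automatically equivariant for the whole reductive group, and hence preserve its weight decomposition; a weight argument should then force them --- and any multiplicative extensions --- to vanish. I expect this part of the argument to follow the lines of the collapse results in the earlier work of Berglund--Madsen and Berglund--Zeman, to which I would appeal for the precise formulation.
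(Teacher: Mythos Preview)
Your treatment of the second isomorphism is fine and matches the paper. The gap is in the first isomorphism. The appeal to Borel density is not justified: the group $\redEbdlaut{\rat A}{\rat B}(\rat \xi)$ is only reductive, not semisimple, and an arithmetic subgroup of a reductive group need not be Zariski-dense (already $\GL[n](\ZZ)\subset\GL[n]$ fails). Even if Zariski-density held, so that the $d_r$ were equivariant for the reductive group, you give no reason why equivariance plus a ``weight argument'' forces $d_r=0$; there is no grading in sight that separates source and target. And even granting $E_2$-degeneration, the multiplicative extension problem is genuine: $E_\infty$ only recovers the associated graded of the cohomology ring.

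The paper bypasses the spectral sequence entirely. Starting from the rational equivalence of \cref{thm:intro_main}, it produces a quasi-isomorphism of cochain complexes
\[
\Forms * \bigl(\B \bdlaut{A}{B}{\B G}(\xi)\bigr)\ \eq\ \Forms * \bigl(\B \arithEbdlaut{A}{B}(\xi);\ \CEcochains *(\lie g_A^B(\xi))\bigr)
\]
(this is \cref{cor:forms_eq}, obtained via the identifications $\Forms *(\hcoinv{Y}{\Gamma})\eq\Forms *(\B\Gamma;\Forms *(Y))$ and $\Forms *(\nerve{\lie g})\eq\CEcochains *(\lie g)$). The key observation is then purely algebraic: each degree of $\CEchains *(\lie g_A^B(\xi))$ is a finite-dimensional algebraic representation of $\aAut[L_A](L_X)_\rho$, hence (via the section $\bar s$) of the reductive group $\redEbdlaut{\rat A}{\rat B}(\rat \xi)$, and therefore semi-simple. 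A cochain complex of semi-simple representations is \emph{split} --- it admits an equivariant quasi-isomorphism to its own cohomology with zero differential. This splitting, fed into a general lemma about forms with split coefficients (\cref{lemma:forms_split_coeff}), gives $\Coho *\bigl(\Forms *(\B\Gamma;\CEcochains *(\lie g))\bigr)\iso\Coho *\bigl(\B\Gamma;\CEcoho *(\lie g)\bigr)$ directly, as an isomorphism of algebras. You correctly identified the structural input (the action extends to a reductive algebraic group), but the way to exploit it is semisimplicity $\Rightarrow$ splitting of the coefficient complex, not density $\Rightarrow$ vanishing of differentials.
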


In fact, this result (and hence also \cref{cor:intro_block_coho}) applies more generally to local systems of rational vector spaces on $\B \bdlaut{A}{B}{\B G}(\xi)$ that factor through $\arithEbdlaut{A}{B}(\xi)$; we obtain an isomorphism of algebras whenever this local system takes values in algebras.
\Cref{cor:intro_coho} is deduced from \cref{thm:intro_main} via a result providing a cdga model for $\B \bdlaut{A}{B}{\B G}(\xi)$, see \cref{cor:forms_eq}.

Our strategy for proving \cref{thm:intro_main} is heavily inspired by work of Berglund--Zeman \cite{BZ}, who constructed an equivariant rational model for a normal covering of the classifying space $\B \aut(X)$ of self-equivalences of $X$, together with its action by deck transformations.
Note, however, that our results do not imply theirs since we require $A$ to be simply connected and hence non-empty.
Nonetheless, we expect that combining our approaches would lead to a model for the classifying space $\B \aut(\xi)$ of unrestricted self-equivalences of a bundle $\xi$.
(We chose to not pursue this in the present paper because it does not immediately specialize to results for block diffeomorphisms.)

Let us now explain the statement of \cref{thm:intro_main} in more detail.
The notion of a minimal relative dg Lie model was introduced by Espic--Saleh \cite{ES} and specializes to the classical notion of a minimal dg Lie algebra when $L_A$ is trivial.
The term $\Deru[\rho](L_X \rel L_A)$ denotes an explicit dg Lie subalgebra of the dg Lie algebra of derivations of $L_X$ that vanish on $L_A$.
It admits an outer (i.e.\ twisted) action, which depends on $\rho$, on the non-negative truncation
\[ \trunc 0 { \Hom \bigl( \indec[L_B](L_X), \hg {* + 2}(\B G) \tensor \QQ \bigr) } \]
of the abelian dg Lie algebra of graded maps from the chain complex of indecomposables of $L_X$ relative to $L_B$ (whose homology is isomorphic to $\Ho * (X, B; \QQ)$) to the graded vector space $\hg {* + 2}(\B G) \tensor \QQ$.
The symbol $\rsemidir[\tilde \rho_*]$ denotes the twisted semi-direct product with respect to this outer action.
The dg Lie algebra $\lie g_A^B(\xi)$ is a generalization of a model of Berglund \cite{Ber} for a certain nilpotent covering of $\B \bdlaut{A}{*}{\B G}(\xi)$; in fact we rely on this work as an input.

As explained above, the group $\Eaut[\rat A](\rat X) \defeq \hg 0 ( \aut[\rat A](\rat X) )$ is an algebraic group by work of Espic--Saleh \cite{ES}; we write $\Eaut[\rat A](\rat X)_{\eqcl{\rat \xi}}$ for the algebraic subgroup of those self-equivalences $f$ such that $\rat \xi \after f \eq \rat \xi$, and denote its maximal reductive quotient by $\redEbdlaut{\rat A}{\rat B}(\rat \xi)$.
The group $\arithEbdlaut{A}{B}(\xi)$ is defined to be the image of $\hg 0 (\bdlaut{A}{B}{\B G}(\xi))$ in this quotient.
A result of Kupers \cite[Proposition~3.4]{Kup} (building on the work of Espic--Saleh) implies that $\arithEbdlaut{A}{B}(\xi)$ is an arithmetic subgroup of $\redEbdlaut{\rat A}{\rat B}(\rat \xi)$.

The group $\arithEbdlaut{A}{B}(\xi)$ acts on $\lie g_A^B(\xi)$ by conjugation via any choice of section of the composite map of algebraic groups
\begin{equation} \label{eq:intro_proj}
  \Aut[L_A](L_X)_\rho  \xlongto{\langle \blank \rangle}  \Eaut[\rat A](\rat X)_{\eqcl{\rat \xi}}  \xlongto{\pr}  \redEbdlaut{\rat A}{\rat B}(\rat \xi)
\end{equation}
where $\Aut[L_A](L_X)_\rho$ is the algebraic group of those automorphisms $f$ of $L_X$ such that $\restrict f {L_A} = \id$ and $\rho \after f = \rho$.
The key point is that the group $\redEbdlaut{\rat A}{\rat B}(\rat \xi)$ is also the maximal reductive quotient of $\Aut[L_A](L_X)_\rho$; therefore, the desired section is provided by the existence of Levi decompositions for algebraic groups in characteristic zero (a theorem due to Mostow \cite{Mos}).
The section is not unique, but any two are conjugate to each other.
Apart from the choice of this section the rational equivalence of \cref{thm:intro_main} and the isomorphism of \cref{cor:intro_coho} are canonical.

\begin{remark*}
  The condition of \cref{thm:intro_main} that $\Coho * (\B G; \QQ)$ is free as a graded commutative algebra is equivalent to $\B G$ being rationally equivalent to a product of Eilenberg--Mac~Lane spaces, which in turn is equivalent to the abelian graded Lie algebra $\hg {* + 1}(\B G) \tensor \QQ$ being a dg Lie model for $\B G$.
  This could likely be weakened to only requiring $\B G$ to be coformal, i.e.\ that the graded Lie algebra $\hg {* + 1}(\B G) \tensor \QQ$ is a dg Lie model for $\B G$, in exchange for replacing $\Hom ( \indec[L_B](L_X), \hg {* + 2}(\B G) \tensor \QQ )$ with a more complicated object, see \cref{rem:more_general_with_CE}.
  This would also allow to remove the condition of \cref{thm:intro_main} that $\restrict \xi B$ is rationally trivial and the condition of \cref{thm:intro_block} that the rational Pontryagin classes of $\bdry M \setminus *$ vanish.
\end{remark*}

\subsubsection*{Manifolds with boundary a sphere and applications}

Some of the main applications of the present work are to manifolds $M$ whose boundary is homeomorphic to a sphere.
For such $M$, the algebraic model for $\B \BlDiff[\bdry](M)$ of \cref{thm:intro_block} admits a simplification that we now will discuss.
The inclusions $\bdry_0 M \to \bdry M \to M$ are in this case modeled by the inclusions of dg Lie algebras
\[ 0 \longto  \freelie \omega_M  \longto  \freelie V_M \]
where $\freelie V_M$ denotes the free graded Lie algebra, equipped with a certain differential, on the desuspension $V_M$ of $\rHo * (M; \QQ)$, and $\omega_M \in \freelie V_M$ is a distinguished cycle dual to the intersection pairing on $V_M$.
\Cref{thm:intro_main} does not apply directly, because the inclusion $\freelie \omega_M \to \freelie V_M$ is not quasi-free (which is part of being a relative dg Lie model), but the following result proves that the conclusion remains valid.
This yields more explicit versions of \cref{thm:intro_block} and \cref{cor:intro_block_coho}.

\begin{theorem}[see \cref{thm:manifolds_block}] \label{thm:intro_boundary_sphere}
  Let $M$ be an oriented simply connected compact smooth manifold of dimension $d\geq 6$ such that $\bdry M$ is homeomorphic to $\Sphere {d-1}$.
  Then the dg Lie algebra $\lieBlTor{\bdry}(M)$ of \cref{thm:intro_block} is $\BlGamma{\bdry}(M)$-equivariantly quasi-isomorphic to the nilpotent dg Lie algebra
  \[ \lieBlTor{0}(M)  \defeq  \trunc 0 { \Hom \bigl( V_M, \hg {* + 1} (\SO) \tensor \QQ \bigr) } \rsemidir[{\pont}_*] \Deru[\pont](\freelie V_M \rel \omega_M) \]
  where $\pont \colon V_M \to \hg {* + 1} (\SO) \tensor \QQ$ denotes the map of degree $-1$ given by the Pontryagin classes $\pont[i] \colon \Ho {4i} (M; \QQ) \to \QQ \iso \hg {4i - 1}(\SO) \tensor \QQ$.
  In particular there is a rational equivalence
  \begin{equation} \label{eq:intro_rat_eq_boundary_sphere}
    \B \BlDiff[\bdry](M)  \req  \hcoinv {\nerve[\big] {\lieBlTor{0}(M)}} {\BlGamma{\bdry}(M)}
  \end{equation}
  and \cref{cor:intro_block_coho} also holds when replacing $\lieBlTor{\bdry}(M)$ by $\lieBlTor{0}(M)$.
\end{theorem}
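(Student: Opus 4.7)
The plan is to construct a $\BlGamma{\bdry}(M)$-equivariant quasi-isomorphism $\lieBlTor{\bdry}(M) \simeq \lieBlTor{0}(M)$; the rational equivalence \eqref{eq:intro_rat_eq_boundary_sphere} and the corresponding version of \cref{cor:intro_block_coho} then follow at once from \cref{thm:intro_block} and \cref{cor:intro_block_coho}. The obstruction to applying \cref{thm:intro_main} directly is that $\freelie \omega_M \hookrightarrow \freelie V_M$ is not quasi-free, so the strategy is to introduce an auxiliary relative minimal dg Lie model, apply \cref{thm:intro_main} to it, and then compare the resulting dg Lie algebra with $\lieBlTor{0}(M)$.

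Concretely, I would first choose a minimal dg Lie model $L_A$ for $\bdry M \iso \Sphere{d-1}$ and a relative minimal model $L_X = L_A \sqcup \freelie W$ for the inclusion $\bdry M \hookrightarrow M$; since $\bdry_0 M$ is rationally contractible, one may take $L_B = 0$. Using that $\freelie \omega_M \hookrightarrow \freelie V_M$ is also a dg Lie model for the same pair, the standard lifting property for cofibrations of dg Lie algebras produces a morphism of pairs
\[ \phi \colon \bigl( L_A \hookrightarrow L_X \bigr) \longto \bigl( \freelie \omega_M \hookrightarrow \freelie V_M \bigr) \]
that is a quasi-isomorphism in each slot and sends the top-degree generator of $L_A$ to $\omega_M$. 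After modifying $\phi$ up to homotopy one may arrange that $\pont \colon V_M \to \hg {* + 1}(\B \SO) \tensor \QQ$ pulls back along $\phi$ to a map $\rho \colon L_X \to \hg {* + 1}(\B \SO) \tensor \QQ$ with $\rho(L_A) = 0$.

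Next, \cref{thm:intro_main} applied to $L_A \to L_X$ and $\rho$ produces $\lieBlTor{\bdry}(M)$, and one constructs a quasi-isomorphism to $\lieBlTor{0}(M)$ by pushing along $\phi$. On the abelian summand, $\phi$ induces a quasi-isomorphism of indecomposables $W = \indec[L_B](L_X) \to V_M$ (both compute $\Ho *(M; \QQ)$), hence a quasi-isomorphism on truncated hom complexes into $\hg {* + 1}(\SO) \tensor \QQ$. On the derivation summand, homotopy invariance of derivation complexes for quasi-isomorphisms of relative minimal models yields a quasi-isomorphism $\Deru[\rho](L_X \rel L_A) \simeq \Deru[\pont](\freelie V_M \rel \omega_M)$. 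Compatibility with the twisted semi-direct product structure reduces to comparing the outer actions $\tilde\rho_*$ and $\tilde\pont_*$ along $\phi$, which is essentially formal.

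The hard part is verifying equivariance. The $\BlGamma{\bdry}(M)$-action on $\lieBlTor{\bdry}(M)$ is defined via a section of \eqref{eq:intro_proj} for $\Aut[L_A](L_X)_\rho$, and must be transported along $\phi$ to an action on $\lieBlTor{0}(M)$ defined via the analogous section for the algebraic group of automorphisms of $\freelie V_M$ fixing $\omega_M$ and preserving $\pont$. The key point is that both algebraic groups share the same maximal reductive quotient $\BlRed{\bdry}(M)$, via rationalization to the relevant group of relative self-equivalences. Mostow's theorem then provides compatible Levi decompositions, and the uniqueness of such decompositions up to conjugation implies that the quasi-isomorphism above intertwines the two actions up to an inner automorphism, which can be absorbed into it.
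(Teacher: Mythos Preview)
Your overall strategy---replace the non-quasi-free inclusion $\freelie\omega_M \hookrightarrow \freelie V_M$ by an auxiliary relative minimal model, apply \cref{thm:intro_main} there, and then compare---is exactly what the paper does. But two of your steps rest on assertions that are not as automatic as you suggest, and the paper's choice of a \emph{specific} auxiliary model is what makes them work.

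The main gap is the comparison of derivation complexes. You invoke ``homotopy invariance of derivation complexes for quasi-isomorphisms of relative minimal models'' to obtain $\Deru[\rho](L_X \rel L_A) \simeq \Deru[\pont](\freelie V_M \rel \omega_M)$. But $\freelie\omega_M \to \freelie V_M$ is not quasi-free, so it is not a relative minimal model, and no such general invariance statement is available to you (indeed the paper points out in its discussion of forgetful maps that such invariance is subtle even between genuine relative minimal models). The paper sidesteps this by choosing the explicit model $\widetilde L_M = \freelie(V_M \oplus \langle\beta_M,\gamma_M\rangle)$ with $\tilde\delta(\gamma_M) = \omega_M - \beta_M$, so that there is a concrete map $\Xi \colon \Der(L_M \rel \omega_M) \to \Der(\widetilde L_M \rel \beta_M)$ given by extending derivations by zero on $\beta_M,\gamma_M$; that this is a quasi-isomorphism in non-negative degrees is a specific computation due to Berglund--Madsen. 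In fact $\Deru[\pont](L_M \rel \omega_M)$ is \emph{defined} as the preimage under $\Xi$ of $\Deru[\pont](\widetilde L_M \rel \beta_M)$, so the auxiliary model is not optional.

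Your equivariance argument is also too loose. You assert that both algebraic automorphism groups have the same maximal reductive quotient and that Mostow gives compatible Levi sections ``up to an inner automorphism, which can be absorbed.'' The paper instead proves directly that the composite $\aAut[\omega_M](L_M)_\pont \to \aAut[\beta_M](\widetilde L_M)_\pont \to \aredEaut[\beta_M](\pont)$ is a quotient map of algebraic groups, so one can choose a single section $\bar s$ landing in $\aAut[\omega_M](L_M)_\pont$. Since $\Xi$ is strictly equivariant for the map $\aext[M]$ (extend automorphisms by the identity on $\beta_M,\gamma_M$), the quasi-isomorphism is then equivariant on the nose, with no conjugation bookkeeping needed.
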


\Cref{thm:intro_boundary_sphere} is a significant improvement of the model for block diffeomorphisms of manifolds with boundary a sphere of Berglund--Madsen \cite[Theorem 1.2]{BM}, in that the action of the deck transformation group is incorporated.
This simplifies and significantly generalizes several key steps in Berglund--Madsen's computation of the stable rational cohomology of the block diffeomorphism group of the manifold
\[ W_{g,1} = \#^g (S^n \times S^n) \setminus \openDisk{2n} \]
for $n\geq 3$. Specifically, we obtain the collapse of the spectral sequence of \eqref{eq:torelli sequence}, and in particular the injectivity of the map $\Coho * (\BlGamma{\bdry}(M);\QQ) \to \Coho * (\B \BlDiff[\bdry](M);\QQ)$, for arbitrary $M$, whereas Berglund--Madsen obtained these results only for the specific manifold $W_{g,1}$ and only stably as $g$ goes to infinity.
See Berglund--Zeman \cite[§5.2]{BZ} for a parallel discussion about self-equivalences.
There, the applications to \emph{relative} self-equivalences of $W_{g,1}$ are only sketched; in this paper we provide full details for arbitrary manifolds.
A version of \cref{thm:intro_boundary_sphere} for relative self-equivalences is given in \cref{thm:manifolds}.

Using the results of the present paper, the second author \cite{Sto} computed the stable rational cohomology of $\B \aut[\bdry](W^{k,l}_{g,1})$ for certain $k < l$, where $W^{k,l}_{g,1}$ denotes the $g$-fold connected sum of $S^k \times S^l$ with a disk removed.
In forthcoming work he will use \cref{thm:intro_block,thm:intro_boundary_sphere} to give a complete computation of the stable rational cohomology of $\B \BlDiff[\bdry](W^{k,l}_{g,1})$.
This will also lead to a computation of $\Coho * (\B \Diff[\bdry](W^{k,l}_{g,1}); \QQ)$ in degrees up to about $k + l$.
This is a significant extension of work of Ebert--Reinhold \cite{ER}, who computed the stable rational cohomology of $\B \BlDiff[\bdry](W^{n,n+1}_{g,1})$ and $\B \Diff[\bdry](W^{n,n+1}_{g,1})$ in degrees up to $n - 3$.

\subsubsection*{Compatibility with gluing constructions and forgetful maps}

The work of Stoll \cite{Sto} and Berglund--Madsen \cite{BM} (as well as the forthcoming follow-up of the former for block diffeomorphisms) mentioned in the previous subsection is mainly concerned with computing the \emph{stable} cohomology of $\B \aut[\bdry](W^{k,l}_{g,1})$ and $\B \BlDiff[\bdry](W^{k,l}_{g,1})$, where the stabilization maps are given by taking the boundary connected sum with a copy of $\Sphere k \times \Sphere l$ with an open disk removed\footnote{This is often equivalently phrased as attaching a copy of $\Sphere k \times \Sphere l$ with two open disks removed along one of its boundary components.}.
In particular it is desirable for the isomorphism obtained by combining \cref{cor:intro_block_coho,thm:intro_boundary_sphere} to be compatible with boundary connected sums.

Motivated by this, we study more generally the compatibility of the model of \cref{thm:intro_main} with gluing constructions (i.e.\ pushouts); this is also of independent interest.
More concretely, given cofibrations of simply connected finite CW-complexes
\[ C  \longto  A  \longto  B  \longto  X   \qquad \text{and} \qquad  C  \longto  A'  \longto  B'  \longto  X' \]
and bundles $\xi$ over $X$ and $\xi'$ over $X'$ that agree on $C$, there is a canonical map
\begin{equation} \label{eq:intro_gluing}
  \bdlaut{A}{B}{\B G}(\xi) \times \bdlaut{A'}{B'}{\B G}(\xi')  \longto  \bdlaut{A \cop_C A'}{B \cop_C B'}{\B G}(\xi \cop_C \xi')
\end{equation}
of topological monoids.
We prove the following.

\begin{theorem}[see \cref{cor:Baut_eq_natural}] \label{thm:intro_gluing}
  Assume that the algebraic representations of $\Eaut[\rat A](\rat X)_{\eqcl{\rat \xi}}$ on $\Ho * (X, A; \QQ)$ and of $\Eaut[\rat A'](\rat X')_{\eqcl{\rat \xi'}}$ on $\Ho * (X', A'; \QQ)$ are semi-simple.
  Then the map \eqref{eq:intro_gluing} corresponds to an explicit map of dg Lie algebras under the rational equivalence of \cref{thm:intro_main}.
\end{theorem}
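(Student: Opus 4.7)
The strategy is to trace naturality through the construction of the rational equivalence of \cref{thm:intro_main}: first model the pushouts appearing in \eqref{eq:intro_gluing} on the algebraic side by a pushout of relative dg Lie models, then exhibit the induced dg Lie homomorphism explicitly, and finally use the semi-simplicity hypothesis to arrange the arithmetic group actions compatibly via compatible Levi decompositions.

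Fix a dg Lie model $L_C$ of $C$ and extend it to minimal relative dg Lie models $L_C \to L_A \to L_B \to L_X$ and $L_C \to L_{A'} \to L_{B'} \to L_{X'}$, together with dg Lie models $\rho$ and $\rho'$ of the two classifying maps that restrict to a common map on $L_C$. Since a cofibration pushout of simply connected CW pairs corresponds to a pushout of cofibrant relative dg Lie models, the three pushouts $L_A \cop_{L_C} L_{A'} \to L_B \cop_{L_C} L_{B'} \to L_X \cop_{L_C} L_{X'}$, together with $\rho \cop_{L_C} \rho'$, form a relative dg Lie model for the glued triple equipped with $\xi \cop_C \xi'$. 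The natural splitting
\[
  \indec[L_B \cop_{L_C} L_{B'}](L_X \cop_{L_C} L_{X'}) \iso \indec[L_B](L_X) \oplus \indec[L_{B'}](L_{X'}),
\]
combined with the fact that the universal property of the pushout allows one to extend a pair of relative derivations to a single relative derivation on the pushout, then produces an explicit dg Lie homomorphism
\[
  \Theta \colon \lie g_A^B(\xi) \times \lie g_{A'}^{B'}(\xi') \longto \lie g_{A \cop_C A'}^{B \cop_C B'}(\xi \cop_C \xi')
\]
intertwining the semi-direct product structures of \cref{thm:intro_main}. This is the explicit dg Lie map referred to in the statement.

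To upgrade $\Theta$ to a statement about the full rational equivalence one must fit it into a compatible picture of the outer actions by arithmetic groups. The analogous pushout construction produces a homomorphism of the algebraic groups $\Aut[L_A](L_X)_\rho$ appearing in \eqref{eq:intro_proj}, and hence a map on maximal reductive quotients. The semi-simplicity hypothesis on the representations on $\Ho * (X, A; \QQ)$ and $\Ho * (X', A'; \QQ)$ ensures that these glue, via the natural splitting of relative homology, to a semi-simple representation on $\Ho * (X \cop_C X', A \cop_C A'; \QQ)$; hence the reductive quotient of the pushout algebraic group decomposes compatibly. This makes it possible to choose Levi decompositions of all three algebraic groups whose sections commute with the gluing map, which in turn renders $\Theta$ equivariant for the induced actions of the arithmetic subgroups. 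Naturality of geometric realization and homotopy coinvariants then provides the desired homotopy commutative square relating the application of $\B$ to \eqref{eq:intro_gluing} with the map induced by $\Theta$.

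The main obstacle is this last compatibility of Levi decompositions, which is precisely where the semi-simplicity hypothesis enters: in general the unipotent radical of the pushout algebraic group may mix the two factors in ways not visible on the constituents, so there is no reason to expect coherent sections of \eqref{eq:intro_proj}. Overcoming this will require invoking Mostow's theorem on the conjugacy of Levi decompositions in characteristic zero to align the three sections, after which the remaining verifications — essentially naturality of the zigzag used to prove \cref{thm:intro_main} — should be routine.
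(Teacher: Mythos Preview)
Your overall strategy coincides with the paper's: model the glued triple by the pushout of the relative dg Lie models, use the canonical splitting of the relative indecomposables together with the extension-by-zero of derivations to write down the explicit map of dg Lie algebras, and then choose Levi sections compatibly via Mostow so that the realization/homotopy-orbits comparison goes through. This is exactly the content of \cref{lemma:pushout_indecomposables}, \cref{prop:exp_eq_natural}, \cref{lemma:choose_compatibly_gluing}, and \cref{thm:model_natural} in the paper.

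One step in your outline is imprecise in a way that matters. You write that the semi-simplicity hypothesis makes the glued representation on $\Ho * (X \cop_C X', A \cop_C A'; \QQ)$ semi-simple, and that ``hence the reductive quotient of the pushout algebraic group decomposes compatibly''. But the representation of the \emph{full} automorphism group of the pushout on this homology need not be semi-simple (see \cref{rem:gluing_condition} for an explicit counterexample), so nothing about the target reductive quotient ``decomposes''. The correct mechanism, as in \cref{lemma:aredEaut_gluing_map}, is that semi-simplicity on each factor forces the image of $\aAut[L_A](L_X)_\rho$ in $\aGL(\indec[L_A](L_X))$ to already be its maximal reductive quotient (via \cref{lemma:ahoEaut_reductive}); since the gluing map is always compatible with these images in $\aGL$ of the indecomposables (\cref{lemma:pushout_indecomposables}), it therefore factors through the product of reductive quotients on the source, and one then composes with the canonical surjection $\ahoEaut[L_R](L_P)_\sigma \to \aredEaut[L_R](\sigma)$ on the target. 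Only after this map of reductive groups is in hand does Mostow (in the form of \cref{lemma:compatible_Levi}) let you align the Levi sections. Also note a small slip: your ordering $L_C \to L_A \to L_B \to L_X$ has $A$ and $B$ reversed relative to the paper's convention $B \subseteq A \subseteq X$.
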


The assumption that the representations on the relative homologies are semi-simple guarantees that there is an induced map $\arithEbdlaut{A}{B}(\xi) \times \arithEbdlaut{A'}{B'}(\xi') \to \arithEbdlaut{A \cop_C A'}{B \cop_C B'}(\xi \cop_C \xi')$, which is generally not true (see \cref{rem:gluing_condition}).

Still motivated by the application to boundary connected sums, we furthermore study the compatibility of the model of \cref{thm:intro_main} with restriction of the fixed subspace, i.e.\ maps of the form
\begin{equation} \label{eq:intro_forget}
  \bdlaut{A}{B}{\B G}(\xi)  \longto  \bdlaut{A'}{B'}{\B G}(\xi)
\end{equation}
where $A' \subseteq A$ and $B' \subseteq B$.
Here there is a complication: given a minimal dg Lie model $L_{A'} \to L_A \to L_X$ of the sequence $A' \to A \to X$, the composite $L_{A'} \to L_X$ is not necessarily minimal.
To apply our methods, we thus need to consider the minimal model $L_{A'} \to L_X'$ of this composite.
Due to this, we obtain neither a direct map of dg Lie algebras, nor a map in their homotopy category, corresponding to \eqref{eq:intro_forget} (see \cref{rem:forget_no_map} for further comments).\footnote{For $A$ a point it is claimed in \cite[Theorem~0.2]{FFM23} that the forgetful map $\aut[\rat B](\rat X) \to \aut[\rat A](\rat X)$ corresponds to a direct map of dg Lie algebras; there is, however, a gap in the argument.}
However, we do prove a certain compatibility of the model of \cref{thm:intro_main} with forgetful maps as in \eqref{eq:intro_forget}, see \cref{cor:Baut_eq_forget}.
Combined with \cref{thm:intro_gluing} this allows to deduce the following result (as well as a version for self-equivalences, see \cref{thm:manifolds_gluing}).

\begin{corollary}[see \cref{thm:manifolds_block_gluing}]
  Assume that the algebraic representations of $\Eaut[\bdry](\rat M)_{\pont}$ on $\Ho * (M, \bdry M; \QQ)$ and of $\Eaut[\bdry](\rat N)_{\pont}$ on $\Ho * (N, \bdry N; \QQ)$ are semi-simple.
  Then the rational equivalence \eqref{eq:intro_rat_eq_boundary_sphere} of \cref{thm:intro_boundary_sphere} is compatible with taking the boundary connected sum $M \bdryconnsum N$.
\end{corollary}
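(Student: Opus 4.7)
The plan is to combine the Berglund--Madsen and Krannich rational equivalence \eqref{eq:intro_block_diff} with the gluing compatibility of \cref{thm:intro_gluing}, the forgetful compatibility of \cref{cor:Baut_eq_forget}, and the quasi-isomorphism of \cref{thm:intro_boundary_sphere}. The main obstacle is the asymmetry imposed by the distinguished point $* \in \bdry$ used to define $\bdry_0$: any symmetric gluing along a disk forces the removal of two distinct points from $\bdry(M \bdryconnsum N)$ rather than the desired single point, so the obvious symmetric pushout description of $M \bdryconnsum N$ does not land directly in the correct target $\bdlaut{\bdry}{\bdry_0}{\B\SO}(\sttang(M \bdryconnsum N))$.

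To resolve this, first realize $M \bdryconnsum N$ as the pushout $M \cup_{D^{d-1}} N$ along a codimension-zero disk in each boundary, choosing the distinguished point $*$ of $\bdry(M \bdryconnsum N)$ on the $N$-side and disjoint from the gluing disk; the distinguished point $*_M$ of $\bdry M$ may also be chosen disjoint from $D^{d-1}$. This yields identifications of pushouts $\bdry(M \bdryconnsum N) \iso \bdry M \cup_{D^{d-1}} \bdry N$ and $\bdry_0(M \bdryconnsum N) \iso \bdry M \cup_{D^{d-1}} \bdry_0 N$. Applying \cref{thm:intro_gluing} with $C = D^{d-1}$, $A = B = \bdry M$ on the $M$-side, and $A' = \bdry N$, $B' = \bdry_0 N$ on the $N$-side then produces a dg-Lie-algebra model for the gluing map
\[ \bdlaut{\bdry M}{\bdry M}{\B\SO}(\sttang M) \times \bdlaut{\bdry N}{\bdry_0 N}{\B\SO}(\sttang N) \longto \bdlaut{\bdry}{\bdry_0}{\B\SO}(\sttang(M \bdryconnsum N)). \]
The required semi-simplicity on $\Ho * (M, \bdry M; \QQ)$ and $\Ho * (N, \bdry N; \QQ)$ follows from the corollary's hypothesis once one identifies $\Eaut[\bdry](\rat M)_{\pont}$ with $\Eaut[\bdry](\rat M)_{\eqcl{\rat{\sttang M}}}$, which holds for simply connected $M$ since the rational stable tangent bundle is determined by its Pontryagin classes.

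Next, to match the $M$-factor with the standard $\bdlaut{\bdry M}{\bdry_0 M}{\B\SO}(\sttang M)$ appearing in \eqref{eq:intro_block_diff}, invoke \cref{cor:Baut_eq_forget} on the forgetful map $\bdlaut{\bdry M}{\bdry M}{\B\SO}(\sttang M) \to \bdlaut{\bdry M}{\bdry_0 M}{\B\SO}(\sttang M)$. Combining the gluing and forgetful compatibilities then yields a zig-zag of dg Lie models whose geometric realization, after passing to the appropriate $\widetilde \B$-covers, agrees with
\[ \B \BlDiff[\bdry](M) \times \B \BlDiff[\bdry](N) \longto \B \BlDiff[\bdry](M \bdryconnsum N) \]
under the Berglund--Madsen--Krannich equivalences. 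The commutativity of the relevant diagram on the geometric side follows from naturality of \eqref{eq:intro_block_diff} together with the fact that a block diffeomorphism of $M$ fixing $\bdry M$ pointwise induces a bundle self-equivalence of $\sttang M$ that fixes the bundle on all of $\bdry M$, not merely on $\bdry_0 M$.

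Finally, transport the resulting map across the quasi-isomorphism of \cref{thm:intro_boundary_sphere} to obtain a $\BlGamma{\bdry}$-equivariant map on the simpler models $\lieBlTor{0}$. Under the identifications $V_{M \bdryconnsum N} \iso V_M \oplus V_N$ and $\omega_{M \bdryconnsum N} = \omega_M + \omega_N$, this map is the evident extension-by-zero: a derivation of $\freelie V_M$ vanishing on $\omega_M$ extends to a derivation of $\freelie(V_M \oplus V_N)$ vanishing on $\omega_M + \omega_N$ by acting trivially on the $V_N$-summand, and similarly for the $\Hom$-factor, with the analogous description on the $N$-side. The required $\BlGamma{\bdry}$-equivariance follows from the equivariance built into \cref{thm:intro_gluing,cor:Baut_eq_forget} together with naturality of the Levi decompositions used to construct the arithmetic group actions.
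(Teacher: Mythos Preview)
Your argument has a genuine gap in the pushout identifications. The claim that $\bdry(M \bdryconnsum N) \cong \bdry M \cup_{D^{d-1}} \bdry N$ is false: inside $M \bdryconnsum N$ the subspace $\bdry M \cup_{D^{d-1}} \bdry N$ still contains the interior of the gluing disk and is homotopy equivalent to $S^{d-1} \vee S^{d-1}$, whereas $\bdry(M \bdryconnsum N) \cong S^{d-1}$. Likewise $\bdry M \cup_{D^{d-1}} \bdry_0 N \simeq S^{d-1}$ while $\bdry_0(M \bdryconnsum N)$ is contractible. Thus, regardless of how you choose $B$ on the $M$-side, applying \cref{thm:intro_gluing} lands in $\bdlaut{\theta}{B''}{\B\SO}(\sttang(M \bdryconnsum N))$ with $\theta = \bdry M \cup_{D^{d-1}} \bdry N$, never directly in $\bdlaut{\bdry}{\bdry_0}{\B\SO}(\sttang(M \bdryconnsum N))$. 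Your forgetful step on the source $M$-factor does not address this discrepancy in the target, and in any case that forgetful map is not a rational equivalence, so it cannot be inverted to feed $\bdlaut{\bdry M}{\bdry_0 M}{\B\SO}(\sttang M)$ into your gluing.

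The paper's proof (\cref{thm:manifolds_block_gluing}, following \cref{thm:manifolds_gluing}) handles this differently: it applies the gluing theorem symmetrically with $(A,B)=(\bdry M,\bdry_0 M)$ and $(A',B')=(\bdry N,\bdry_0 N)$, landing in $\bdlaut{\theta}{H}{\B\SO}(\sttang(M\bdryconnsum N))$ where $H=\bdry_0 M\cup\bdry_0 N$, and then applies the forgetful compatibility on the \emph{target}. The inclusion $\bdry(M\bdryconnsum N)\hookrightarrow\theta$ is modeled by the pinch map $S^{d-1}\to S^{d-1}\vee S^{d-1}$, realized on dg Lie models as $\freelie(\beta_{M\bdryconnsum N})\to\freelie(\beta_M,\beta_N)$ sending $\beta_{M\bdryconnsum N}\mapsto\beta_M+\beta_N$, together with an explicit quasi-isomorphism $m\colon\widetilde L_{M\bdryconnsum N}\to\widetilde L_M\cop\widetilde L_N$; this is what allows the forgetful machinery (\cref{thm:model_forget}) to be invoked and then spliced with the comparison of \cref{thm:intro_boundary_sphere}. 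No source-side forgetful step is needed.
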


\subsubsection*{Related work}
The first models for classifying spaces of self-equivalences were obtained for the universal covering of $\B \aut(X)$ by Sullivan \cite{Sul}, in terms of derivations of a Sullivan model of $X$, and by Tanré \cite{Tan}, in terms of derivations of a dg Lie model (as in this article).
This was only recently generalized to a larger class of nilpotent coverings in work of Félix--Fuentes--Murillo \cite{FFM22}.
For the universal covering of $\B \aut(\rat X)$, Lazarev \cite{Laz} described the action of the deck transformation group on the homotopy groups.
Before that, it had already been suggested by Sullivan \cite[p.~314]{Sul} that the specific covering $\B \unip {\aut(\rat X)}$ could allow to incorporate the deck transformation action on the space level, granting access to the whole classifying space $\B \aut(\rat X)$.
This idea seems to have been largely overlooked until it was rediscovered and made precise by Berglund--Zeman \cite{BZ}, who furthermore used it to describe the space $\B \aut(X)$ and deduce the striking consequences for its rational cohomology.
Our \cref{thm:intro_main,cor:intro_coho} are counterparts of their results for $\B \aut(X)$, and we follow their strategy in that we incorporate the action of the deck transformation group into known models for covers of $\B \bdlaut{A}{B}{\B G}(\xi)$.

In the relative situation, a model for the universal covering of $\B \aut[A](X)$ in terms of derivations of a dg Lie model was obtained by Berglund--Saleh \cite{BS}.
This was recently generalized to a larger class of nilpotent coverings, again by Félix--Fuentes--Murillo \cite{FFM23} (which appeared while work on the present article was ongoing).
Their results differ from ours in that they restrict their attention to the case that $A$ and $X$ are rational spaces, and in that they do not incorporate the deck transformation action and thus do not obtain a model for the classifying space itself.
In exchange, they allow $A$ and $X$ to be merely nilpotent, instead of requiring them to be simply connected.

In the more general case of bundles, a model for the covering of $\B \bdlaut{A}{*}{\B G}(\xi)$ associated to the kernel of the map $\hg 0 (\bdlaut{A}{*}{\B G}(\xi)) \to \Eaut[A](X)$ was constructed by Berglund \cite{Ber} using the work of Berglund--Saleh.
This is the model we build upon in the present article.
It had also been used by Berglund--Madsen \cite{BM} to construct a model for a certain nilpotent covering of $\B \BlDiff[\bdry](M)$ in the case that the boundary of $M$ is a sphere.

Let us also highlight again the work of Espic--Saleh \cite{ES}, who proved the algebraicity of $\Eaut[\rat A](\rat X)$ as well as related results we fundamentally rely upon.
Lastly, work of Lindell--Saleh \cite{LS} contains an explicit formula for the comparison of the model of Berglund--Saleh with the universal covering of $\B \aut[A](X)$, which we also use as an input.

\subsubsection*{Acknowledgments}

The authors would like to thank Thomas Blom, Ronno Das, Mario Fuentes Rumí, João Lobo Fernandes, Jan McGarry Furriol, Manuel Krannich, Nils Prigge, Oscar Randal-Williams, Bashar Saleh, and Jan Steinebrunner for various useful discussions, and Sander Kupers and Samuel Muñoz-Echániz for helpful comments on an earlier version of this article.

Alexander Berglund was supported by the Swedish 
Research Council through grant no.~2021-03946, and Robin Stoll was partially supported by a postdoctoral scholarship of the Knut and Alice Wallenberg foundation.

}

\section{Preliminaries}

In this section, we fix our notation and conventions, recall classical constructions and results, and prove various basic lemmas that we will need throughout the rest of the paper.

\subsection{Basic conventions and notation}

\begin{convention}
  The base field is $\QQ$.
  All gradings are by $\ZZ$.
  Unless otherwise stated, group actions are from the left.
\end{convention}

\begin{notation}
  We denote by $\Vect$ the category of vector spaces and linear maps, and by $\GrVect$ the category of graded vector spaces and grading preserving linear maps.
  Sometimes we will implicitly consider an ungraded vector space (such as $\QQ$) as a graded vector space concentrated in degree $0$.
  Given a graded vector space $V$, we will denote by $\shift[n] V$ its shift by $n$, i.e.\ we set $(\shift[n] V)_k \defeq V_{k-n}$ (if $n = 1$, we omit it from the notation).
\end{notation}

\begin{notation}
  Let $R$ be a commutative ring.
  A \emph{chain complex} over $R$ is a graded $R$-module equipped with a differential of degree $-1$, and a \emph{cochain complex} over $R$ is a graded $R$-module equipped with a differential of degree $1$.
  We denote by $\Ch[R]$ and $\coCh[R]$ the categories of (co)chain complexes over $R$, equipped with their usual symmetric monoidal structures.
  When $R = \QQ$, we omit it from the notation.
  A \emph{(commutative) chain algebra} is a (commutative) monoid object in $\Ch$, and similarly a \emph{(commutative) cochain algebra} is a (commutative) monoid object in $\coCh$.
\end{notation}

\begin{notation}
  Given a chain complex $C$ and $n \in \ZZ$, we denote by $\trunc n C$ the chain complex given by
  \[ \trunc {n} {C}_k  \defeq  \begin{cases*} C_k, & if $k > n$ \\ \Cycles n (C), & if $k = n$ \\ 0, & if $k < n$ \end{cases*} \]
  and the restriction of the differential of $C$.
  Here $\Cycles n(C) \subseteq C_n$ denotes the $n$-cycles.
  Note that the inclusion $\trunc n C \to C$ induces an isomorphism on homology in degrees $\ge n$.
\end{notation}

\begin{notation}
  Given $n \in \NN$, we denote by $\lincat n$ the linearly ordered set $\set{0 \le \dots \le n}$ considered as a category.
  We denote by $\Simplices$ the full subcategory of the category of categories spanned by the objects $\lincat n$ with $n \in \NN$, and write $\sSet \defeq \Fun(\opcat \Simplices, \Set)$ for the category of simplicial sets.
  We also denote by $\lincat n \in \sSet$ the object represented by $\lincat n$.
  As usual, we will denote the face and degeneracy maps of a simplicial set by $d_i$ and $s_i$, respectively.
\end{notation}

\begin{notation} \label{not:components}
  Given a simplicial set $X$ and a subset $C \subseteq \hg 0 (X)$, we denote by $X_C \subseteq X$ the simplicial subset of $X$ consisting of the components in $C$.
  When $C = \set{\eqcl x}$ for some $x \in X_0$, we simply write $X_x \defeq X_C$.
\end{notation}

\begin{notation}
  We denote by $\Top$ the category of topological spaces, and by $\CGWH$ its full subcategory of compactly generated weak Hausdorff spaces.
  Furthermore we denote by $\gr \blank$ the geometric realization as a functor $\Fun(\opcat \Simplices, \CGWH) \to \CGWH$ and, by restriction, as a functor $\sSet \to \CGWH$ (see e.g.\ \cite[§11]{May72}).
  We also write $\Sing \colon \Top \to \sSet$ for the singular simplicial complex functor.
\end{notation}

Recall that the geometric realization of a simplicial set, when formed in the category of all topological spaces, is a CW-complex and hence already an object of $\CGWH$; thus it does not matter in which of these two categories we perform the construction.
Furthermore recall that the functor $\gr \blank \colon \sSet \to \CGWH$ preserves finite limits (see e.g.\ \cite[Ch.~III, §3.1]{GZ}\footnote{Gabriel--Zisman prove that the functor preserves finite limits when considered as a functor to the category of compactly generated Hausdorff spaces (called ``Kelley spaces'' there); however the inclusion of that category into $\CGWH$ preserves limits since the inclusion of either category into all compactly generated spaces does so.}).
Lastly, recall that geometric realization sends Kan fibrations to Serre fibrations (see e.g.\ \cite[Theorem~10.10]{GJ}).

\begin{notation}
  Given a category $\cat C$ equipped with a class $\cat W$ of weak equivalences, i.e.\ some class of morphisms that includes all identities and fulfills the 2-out-of-3 property, we call the localization of $\cat C$ at $\cat W$ its \emph{homotopy category}.
  Its morphisms are represented by (finite) zig-zags of morphisms of $\cat C$ where all maps that ``point backwards'' are weak equivalences.
  Given two objects $X, Y \in \cat C$, we will reserve the notation $X \to Y$ for maps of $\cat C$ and the maps they represent in the homotopy category, and use squiggly arrows $X \hto Y$ for maps in the homotopy category that are merely represented by a zig-zag.
  Given a larger class of morphisms $\cat W' \supseteq \cat W$, we will say that a map $X \hto Y$ of the homotopy category is in $\cat W'$ if there exists a zig-zag that represents it where each map is in $\cat W'$.
\end{notation}

Note that the homotopy category of a category is not necessarily locally small anymore.
However, this will not matter for our use cases in this paper.

\subsection{Simplicial model categories and relative mapping complexes}

We will assume basic familiarity with the theory of (simplicial) model categories, as presented, for example, in the textbook of Hirschhorn \cite[Chapters~7--9]{Hir}.
In this subsection, we mainly fix our notation for simplicial mapping spaces and related objects, which play a central role in this article.

\begin{notation}
  Let $X$ and $Y$ be objects of a simplicial category $\cat M$.
  We denote by $\map(X, Y)$ the simplicial mapping complex of $\cat M$.
  We will say that two maps $f, g \colon X \to Y$ are \emph{simplicially homotopic}, denoted $f \eq g$, if they lie in the same connected component of $\map(X, Y)$.
  We write $\hmap X Y \defeq \hg 0 (\map(X, Y))$ for the set of maps $X \to Y$ up to simplicial homotopy.
\end{notation}

\begin{lemma} \label{rem:simplicial_homotopy}
  Let $\cat M$ be a simplicial model category.
  If $f, g \colon X \to Y$ are simplicially homotopic maps in $\cat M$, then they are (left and right) homotopic.
  If moreover $X$ is cofibrant and $Y$ is fibrant, then the reverse implication also holds.
  
  Furthermore, if $f \colon X \to Y$ is a weak equivalence between objects that are both fibrant and cofibrant, then it has a simplicial homotopy inverse $g \colon Y \to X$, and $g$ is again a weak equivalence.
\end{lemma}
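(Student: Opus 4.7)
The strategy for the first two claims is to compare simplicial homotopy with the model-theoretic notions of left and right homotopy by transposing through the adjunctions
\[ \map(X, Y)_n \iso \Hom(X \tensor \lincat n, Y) \iso \Hom(X, Y^{\lincat n}) \]
built into the simplicial enrichment. A simplicial homotopy from $f$ to $g$ is a $1$-simplex $H \in \map(X, Y)_1$, which transposes to maps $\bar H \colon X \tensor \lincat 1 \to Y$ and $\check H \colon X \to Y^{\lincat 1}$ whose restrictions along $\bdry \lincat 1 \to \lincat 1$ recover $f$ and $g$. Applying the pushout-product axiom (SM7) to the cofibration $\bdry \lincat 1 \to \lincat 1$, paired with $\emptyset \to X$ when $X$ is cofibrant, exhibits $X \tensor \lincat 1$ as a cylinder object for $X$; dually $Y^{\lincat 1}$ is a path object for $Y$ when $Y$ is fibrant. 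Thus $\bar H$ and $\check H$ become honest left and right homotopies in these cases, and the general case of the first claim follows by first passing to a cofibrant-fibrant replacement, through which simplicial homotopies are naturally transported.

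For the converse in the second claim, my plan is to invoke the standard fact (see, e.g., Hirschhorn, Proposition~7.4.9) that in any model category the left and right homotopy relations agree on $\Hom(X, Y)$ whenever $X$ is cofibrant and $Y$ is fibrant, and that their common quotient is in bijection with $\Hom_{\mathrm{Ho}(\cat M)}(X, Y)$. The first claim shows that simplicial homotopy is contained in this common relation, and since simplicial homotopies with fixed endpoints form an equivalence relation (using the simplicial structure on $\map(X, Y)$), the induced map $\hmap(X, Y) \to \Hom_{\mathrm{Ho}(\cat M)}(X, Y)$ is surjective. For injectivity I would use that any cylinder object $\mathrm{Cyl}(X)$ admits a comparison map to $X \tensor \lincat 1$ under which a given left homotopy becomes a simplicial one; such a comparison exists because the two objects are essentially unique cylinders for cofibrant $X$.

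For the third claim, the bijection just established transports the inverse of $f$ in the homotopy category to a map $g \colon Y \to X$ satisfying $f \after g \eq \id_Y$ and $g \after f \eq \id_X$. Since simplicially homotopic maps coincide in the homotopy category and hence share the status of being a weak equivalence, the two-out-of-three property applied to $f \after g \eq \id_Y$ shows that $g$ is itself a weak equivalence. I expect the main subtlety to lie in the converse direction of the second claim, where the comparison between arbitrary cylinder objects and the simplicial cylinder $X \tensor \lincat 1$ must be carried out carefully; all other steps are formal consequences of the pushout-product axiom and standard model-categorical manipulations.
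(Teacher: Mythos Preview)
The paper's proof is entirely by citation to Hirschhorn, so you are actually supplying more argument than the paper does; your outline follows the standard route behind those citations, and your third claim is handled correctly. There are, however, two genuine gaps.

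For the first claim without (co)fibrancy assumptions, ``passing to a cofibrant--fibrant replacement'' does not work as stated: once you know the replaced maps $QX \to RY$ are left (or right) homotopic, there is no mechanism to transfer this back to a left (or right) homotopy between the original $f,g \colon X \to Y$ --- homotopies do not descend along $q \colon QX \to X$ nor lift against $j \colon Y \to RY$ in the required direction. The correct argument (Hirschhorn's Proposition~9.5.16, which the paper in fact cites for the third claim) shows directly that $X \tensor \lincat 1$ is a cylinder object for \emph{every} $X$: the retraction $X \tensor \lincat 1 \to X$ is a simplicial homotopy equivalence via the map $\lincat 1 \times \lincat 1 \to \lincat 1$, and one checks separately that simplicial homotopy equivalences are weak equivalences. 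For the converse in the second claim, the comparison you describe points the wrong way --- to turn a left homotopy $H \colon \mathrm{Cyl}(X) \to Y$ into a simplicial one you would need a map $X \tensor \lincat 1 \to \mathrm{Cyl}(X)$ under $X \amalg X$, not the reverse --- and even in that direction such a map need not exist for an arbitrary cylinder. The actual argument is that for cofibrant $X$ and fibrant $Y$ the left homotopy relation can be computed using \emph{any fixed good cylinder}; since $X \tensor \lincat 1$ is good by SM7, left homotopy then implies simplicial homotopy.
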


\begin{proof}
  The first two statements are \cite[Propositions~9.5.23 and 9.5.24]{Hir}.
  The last claim follows from \cite[Theorem~7.5.10 and Proposition~9.5.16]{Hir}.
\end{proof}

\begin{lemma} \label{lemma:sh_we}
  In a simplicial model category, let $f, g \colon X \to Y$ be two simplicially homotopic maps.
  Then $f$ is a weak equivalence if and only if $g$ is.
\end{lemma}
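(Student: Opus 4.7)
The plan is to reduce the claim to the standard fact that model-theoretic right homotopy preserves the weak-equivalence class, via a fibrant replacement of $Y$.

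First I would fibrantly replace $Y$ to obtain a weak equivalence $j \colon Y \to Y'$ with $Y'$ fibrant. Post-composition with $j$ induces a map $\map(X, Y) \to \map(X, Y')$ of simplicial sets, so $j \after f$ and $j \after g$ are again simplicially homotopic. By \cref{rem:simplicial_homotopy} applied to $Y'$ (invoking its ``simplicially homotopic implies right homotopic'' direction), these two maps are then right homotopic in the model-theoretic sense.

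Unpacking this, there is a path object factorization $Y' \to (Y')^I \to Y' \times Y'$ (the first map a weak equivalence, the second a fibration) together with a map $H \colon X \to (Y')^I$ satisfying $p_0 \after H = j \after f$ and $p_1 \after H = j \after g$, where $p_0, p_1 \colon (Y')^I \to Y'$ denote the two projections. Each of $p_0$ and $p_1$ composes with the weak equivalence $Y' \to (Y')^I$ to the identity on $Y'$, so by 2-out-of-3 they are themselves weak equivalences. Applying 2-out-of-3 once more to the composites $X \to (Y')^I \to Y'$ defining $j \after f$ and $j \after g$, these two maps are weak equivalences precisely when $H$ is; in particular $j \after f$ is a weak equivalence if and only if $j \after g$ is. A final application of 2-out-of-3 to $j$ transfers this to the claim about $f$ and $g$.

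The main conceptual point is the bridge from simplicial homotopy to model-theoretic homotopy, which is precisely what \cref{rem:simplicial_homotopy} provides after the fibrant replacement. An equivalent route would be to represent the simplicial homotopy directly by a map $\tilde H \colon X \tensor \lincat 1 \to Y$, first reducing, via an induction based on the fact that $\hg 0$ of a simplicial set is the coequalizer of $d_0, d_1$, to a simplicial homotopy given by a single 1-simplex in $\map(X, Y)$; then, after cofibrantly replacing $X$ by some $q \colon X' \to X$, the endpoint inclusions $X' \iso X' \tensor \lincat 0 \to X' \tensor \lincat 1$ become acyclic cofibrations by the pushout-product axiom of a simplicial model category, and the same 2-out-of-3 strategy applied to $\tilde H \after (q \tensor \id)$ concludes.
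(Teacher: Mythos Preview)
Your proof is correct and follows essentially the same route as the paper's, but with an unnecessary detour. The first implication in \cref{rem:simplicial_homotopy} (simplicially homotopic $\Rightarrow$ left and right homotopic) holds with no fibrant or cofibrant hypotheses on $X$ or $Y$, so your fibrant replacement of $Y$ is superfluous: you may apply \cref{rem:simplicial_homotopy} directly to $f$ and $g$ to conclude they are left (or right) homotopic. The paper does exactly this, then notes that a map left homotopic to a weak equivalence is itself a weak equivalence---which is precisely the 2-out-of-3 argument you spell out with the path object, transposed to the cylinder side. So the core argument is the same; you just front-loaded a replacement step that the lemma you cite does not require.
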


\begin{proof}
  By \cref{rem:simplicial_homotopy}, two simplicially homotopic maps are left homotopic.
  Moreover, it is easy to see from the definition that, if a map is left homotopic to a weak equivalence, then it is a weak equivalence itself.
\end{proof}

Of central importance in this paper are \emph{relative} mapping spaces, i.e.\ the mapping spaces of an undercategory, as in the following definition.

\begin{definition} \label{def:undercategory_simplicial}
  In a simplicial category $\cat M$, let $i \colon A \to X$ and $j \colon A \to Y$ be two maps.
  Then we denote by $\map[A](X, Y) \subseteq \map(X, Y)$ the simplicial subset of maps under $A$, i.e.\ the pullback
  \[
  \begin{tikzcd}
    \map[A](X, Y) \rar[hook] \dar & \map(X, Y) \dar{i^*} \\
    \set j \rar[hook] & \map(A, Y)
  \end{tikzcd}
  \]
  of simplicial sets.
  This equips the undercategory $A \comma \cat M$ with the structure of a simplicial category.
  We write $\hmap[A] X Y \defeq \hg 0 (\map[A](X, Y))$ for the set of maps $X \to Y$ under $A$ up to simplicial homotopy relative to $A$.
\end{definition}

\begin{lemma} \label{lemma:slice_sm}
  Let $\cat M$ be a simplicial model category and $A \in \cat M$ an object.
  Then the undercategory $A \comma \cat M$ with the simplicial structure of \cref{def:undercategory_simplicial} is a simplicial model category such that a map is a weak equivalence, fibration, or cofibration if and only if its underlying map is one in $\cat M$.
  In particular an object $(f \colon A \to X) \in A \comma \cat M$ is fibrant if and only if $X$ is fibrant, and cofibrant if and only if $f$ is a cofibration.
\end{lemma}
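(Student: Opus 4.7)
The plan is to obtain the model structure as a standard consequence of general principles, and then to verify compatibility with the simplicial enrichment defined in \cref{def:undercategory_simplicial} by reducing each axiom to the corresponding statement in $\cat M$.

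First, I would invoke \cite[Theorem~7.6.5]{Hir}, which says that for any model category $\cat M$ and object $A$, the undercategory $A \comma \cat M$ inherits a model structure where a morphism is a weak equivalence, fibration, or cofibration precisely when its underlying morphism in $\cat M$ is one. The characterization of fibrant and cofibrant objects then follows by considering the maps to the terminal object and from the initial object (which in $A \comma \cat M$ is $(A, \id_A)$, so cofibrancy of $(X, f)$ amounts to $f$ being a cofibration in $\cat M$).

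Next I would exhibit tensors and cotensors. For $(X, f) \in A \comma \cat M$ and $K \in \sSet$, the cotensor is the straightforward one: take $(X, f)^K \defeq (X^K, A \xto{f} X \to X^K)$, where $X \to X^K$ is induced by the unique map $K \to \lincat 0$. The tensor requires a pushout in $\cat M$:
\[
\begin{tikzcd}
  A \otimes K \rar{f \otimes K} \dar & X \otimes K \dar \\
  A \rar & (X, f) \otimes K
\end{tikzcd}
\]
where the left vertical is induced by $K \to \lincat 0$. The universal properties are verified directly by unwinding the defining adjunction $\map(\blank \otimes K, \blank) \iso \map(\blank, \blank^K) \iso \map(K, \map(\blank, \blank))$ in $\cat M$ and comparing to the pullback definition of $\map[A]$.

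The main step is the pushout-product / pullback-power axiom. Given a cofibration $i \colon (X, f) \to (Y, g)$ and a fibration $p \colon (Z, h) \to (W, k)$ in $A \comma \cat M$, I want to show that the canonical map
\[ \map[A]\bigl((Y, g), (Z, h)\bigr) \longto \map[A]\bigl((X, f), (Z, h)\bigr) \times_{\map[A]((X, f), (W, k))} \map[A]\bigl((Y, g), (W, k)\bigr) \]
is a Kan fibration, and is trivial if either $i$ or $p$ is a weak equivalence. The key observation is that this map is a pullback (in $\sSet$) of the analogous map $\map(Y, Z) \to \map(X, Z) \times_{\map(X, W)} \map(Y, W)$ obtained from $i$ and $p$ in $\cat M$. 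Indeed, using $g = i \after f$ and the fact that $p$ is a map under $A$, the condition $\psi \after g = k$ on the target of the $\cat M$-version is automatic from $\phi \after f = h$, so unwinding the pullbacks produces the desired base change diagram. Then the conclusion follows from SM7 in $\cat M$ together with the stability of (trivial) Kan fibrations under pullback.

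The only mild obstacle is bookkeeping in the final step: correctly identifying the pullback diagram and checking that the redundancy in the defining conditions (most notably that $\psi \after g = k$ is forced by $\phi \after f = h$) makes the square a genuine pullback. Once this is set up, the rest is purely formal.
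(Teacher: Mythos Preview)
Your proposal is correct. The paper takes a much shorter route: it simply cites Quillen \cite[Ch.~II, §2, Proposition~6]{Qui} for the full statement, together with \cite[Theorem~7.6.5]{Hir} to bridge the gap between Quillen's weaker notion of ``closed model category'' and the modern one. Your approach instead unpacks the argument directly: you invoke Hirschhorn for the underlying model structure, then construct tensors and cotensors by hand and verify SM7 by exhibiting the relative pullback-power map as a base change of the absolute one in $\cat M$. Both routes are valid; the paper's citation is more economical but offloads the content to Quillen, whereas your argument is self-contained and makes explicit the key observation (that the redundancy $\psi \after g = k$ follows from $\phi \after f = h$ and $p \after h = k$, so the square is a genuine pullback).
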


\begin{proof}
  This is \cite[Ch.~II, §2, Proposition~6]{Qui}.
  Note, however, that what is called a ``closed model category'' there is a slightly weaker notion than what we call a ``model category''; using \cite[Theorem~7.6.5]{Hir} one sees that the statement is also true for the stronger version.
\end{proof}

\begin{definition}
  Let $A$ be an object, and $i \colon A \to X$ and $A \to Y$ be two morphisms of a simplicial model category $\cat M$.
  We denote by $\mapeq[A](X, Y) \subseteq \map[A](X, Y)$ the simplicial subset of those connected components consisting of weak equivalences (this makes sense by \Cref{lemma:sh_we}), and set $\hmapeq[A] X Y \defeq \hg 0 (\mapeq[A](X, Y))$.
  We furthermore write $\aut[A](X)$ for the simplicial monoid $\mapeq[A](X, X)$ and $\Eaut[A](X)$ for the monoid $\hg 0 (\aut[A](X))$.
  When $i$ is a cofibration and $X$ is fibrant, then $\aut[A](X)$ is group-like (by \cref{rem:simplicial_homotopy}), i.e.\ $\Eaut[A](X)$ is a group.
  When $A$ is the initial object of $\cat M$, we omit it from the notation.
\end{definition}

We note, for later use, the following lemma.
It applies in particular to pullback squares of the form
\[
\begin{tikzcd}
  \aut(f) \rar \dar & \aut(Y) \dar{f^*} \\
  \aut(X) \rar{f_*} & \map(X, Y)
\end{tikzcd}
\]
where $f \colon X \to Y$ is a morphism in a simplicial model category and $\aut(f)$ is the simplicial monoid of endomorphisms of $f$ in the arrow category that are pointwise weak equivalences.

\begin{lemma} \label{lemma:monoid_pullback}
  Let $M$ and $N$ be simplicial monoids, and let $X$ be an $M$-$N$-bimodule in simplicial sets.
  Furthermore, let $x_0 \in X$ be a point, and consider the maps $f \colon M \to X$ and $g \colon N \to X$ given by acting on $x_0$.
  Then the pullback $M \times_{X} N$ has a unique simplicial monoid structure such that the projections to $M$ and $N$ are maps of simplicial monoids.
  Moreover, given a commutative diagram of simplicial sets
  \[
  \begin{tikzcd}
  T \rar{b} \dar[swap]{a} & N \dar{g} \\
  M \rar{f} & X
  \end{tikzcd}
  \]
  such that $a$ and $b$ are maps of simplicial monoids, the induced map $T \to M \times_{X} N$ is a map of simplicial monoids.
\end{lemma}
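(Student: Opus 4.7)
The plan is to reduce to the case of ordinary monoids. Since limits in $\sSet$ are computed degreewise and the face and degeneracy maps of $M$ and $N$ are monoid homomorphisms by assumption, it suffices to construct a natural monoid structure on the pullback $M_n \times_{X_n} N_n$ of the underlying sets in each simplicial degree $n$. Naturality will then guarantee that the simplicial structure maps are monoid homomorphisms and therefore assemble into a simplicial monoid structure on $M \times_X N$.

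In the setting of ordinary monoids, I would simply define the multiplication componentwise, $(m, n) \cdot (m', n') \defeq (mm', nn')$, with unit $(1, 1)$. The one non-formal step is to check well-definedness: if $m \cdot x_0 = x_0 \cdot n$ and $m' \cdot x_0 = x_0 \cdot n'$, then the computation
\[ (mm') \cdot x_0 = m \cdot (m' \cdot x_0) = m \cdot (x_0 \cdot n') = (m \cdot x_0) \cdot n' = (x_0 \cdot n) \cdot n' = x_0 \cdot (nn') \]
shows that $(mm', nn')$ again lies in the pullback; note that this crucially uses that $X$ is a \emph{bi}module, so that the left $M$-action and the right $N$-action on $x_0$ commute. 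The identity $(1, 1)$ manifestly lies in the pullback, and associativity and unitality are inherited from those of $M$ and $N$. Uniqueness is automatic: any monoid structure on $M \times_X N$ for which the two projections are monoid homomorphisms must send $((m, n), (m', n'))$ to the unique element of the pullback whose coordinates are $mm'$ and $nn'$. For the naturality statement, given monoid maps $a \colon T \to M$ and $b \colon T \to N$ agreeing after composition with $f$ and $g$, the induced map $t \mapsto (a(t), b(t))$ commutes with multiplication coordinatewise, since $a$ and $b$ do.

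The only subtlety, rather than a real obstacle, is conceptual: the maps $f$ and $g$ are not maps of monoids — indeed $X$ carries no monoid structure — so the result is not a formal consequence of the universal property of the pullback in simplicial monoids. What makes the construction work is precisely the bimodule condition, which encodes the commutation of the images of $f$ and $g$ in $X$ needed to verify well-definedness of the componentwise multiplication.
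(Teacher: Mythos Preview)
Your proposal is correct and takes essentially the same approach as the paper: define the product componentwise as $(m,n)\cdot(m',n') = (mm',nn')$, use the bimodule condition to verify this lands in the pullback, and observe that the remaining claims are then straightforward. You have simply spelled out in more detail what the paper summarizes as ``easy to verify.''
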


\begin{proof}
  We define the product of $(m, n), (m', n') \in (M \times_{X} N)_k$ to be $(mm', nn')$.
  That the actions of $M$ and $N$ on $X$ commute guarantees that this is again an element of $(M \times_{X} N)_k$.
  The neutral element is given by the pair consisting of the neutral elements of $M$ and $N$.
  Using this construction, all of the claims are easy to verify.
\end{proof}

\subsection{Nilpotent and virtually nilpotent spaces}

In this subsection, we recall the classical notion of a nilpotent space, the less well-known notion of a virtually nilpotent space due to Dror--Dwyer--Kan \cite{DDK}, as well as their relationships to rational homotopy theory.
We begin by recalling the notion of a nilpotent action on a group; the definition we use is taken from Bousfield--Kan \cite[Ch.~II, 4.1]{BK}.\footnote{There is (at least) one other non-equivalent definition of a nilpotent action, which is for example used by Hilton \cite{Hil75}; however one can proof that they agree when the group being acted on is nilpotent.}

\begin{definition}
  Let $K$ be a group that acts on a group $G$, i.e.\ it comes equipped with a group homomorphism $K \to \Aut[\Grp](G)$.
  We say that this action is \emph{nilpotent} if $G$ has a finite normal series of subgroups (i.e.\ each $A_i$ is normal in $A_{i+1}$)
  \[ 1 = A_0 \subseteq A_1 \subseteq \dots \subseteq A_n = G \]
  such that each $A_i$ is fixed setwise by $K$ and each quotient $\quot {A_{i+1}} {A_i}$ is abelian with trivial $K$-action.
  We say that the action is \emph{virtually nilpotent} if there exists a finite index subgroup $K' \subseteq K$ such that the action of $K'$ on $G$ is nilpotent.
\end{definition}

Recall that the class of nilpotent $G$-actions is closed under taking subgroups, quotients, and extensions, see \cite[Ch.~II, Lemma~4.2]{BK}.
Note that a group $G$ is nilpotent (i.e.\ it admits a central series of finite length) if and only if the conjugation action of $G$ on itself is nilpotent.

\begin{definition}
  Let $X$ be a path-connected topological space or a connected simplicial set, and let $x_0 \in X$.
  We say that $X$ is \emph{nilpotent} if the action of $\hg 1 (X, x_0)$ on $\hg k (X, x_0)$ is nilpotent for every $k \ge 1$.
  Similarly, we say that $X$ is \emph{virtually nilpotent} if the action is virtually nilpotent for every $k \ge 1$.
  Both of these notions are independent of the choice of basepoint $x_0 \in X$.
\end{definition}

\begin{remark}
  Note that the subgroup of $\hg 1 (X, x_0)$ that acts nilpotently on $\hg k (X, x_0)$ can vary with $k$.
  In particular being virtually nilpotent is implied by, but weaker than, admitting a finite covering that is nilpotent.
\end{remark}

We now recall that rational homology equivalences between nilpotent spaces can be characterized by a condition on the induced map on homotopy groups.
The following definition stems from work of Hilton--Mislin--Roitberg \cite[Ch.~I, §1]{HMR}, though our terminology is the one of Berglund--Zeman \cite[Definition 2.1]{BZ}.

\begin{definition} \label{def:Q-iso}
  A group homomorphism $f \colon G \to H$ is \emph{$\QQ$-injective} when every element of $\ker(f)$ has finite order.
  It is $\QQ$-surjective when, for any $x \in H$, there exists $n \ge 1$ such that $x^n \in \im(f)$.
  It is a $\QQ$-isomorphism when it is both $\QQ$-injective and $\QQ$-surjective.
\end{definition}

Note that a map $f$ of abelian groups is a $\QQ$-isomorphism if and only if $f \tensor \QQ$ is an isomorphism.

\begin{definition}
  We say that a map $f \colon X \to Y$ of topological spaces or simplicial sets is a \emph{rational homotopy equivalence} if it induces a bijection on $\hg 0$ and the induced map $\hg n (X, x) \to \hg n (Y, f(x))$ is a $\QQ$-isomorphism for all $n \ge 1$ and all basepoints $x \in X$.
  We call $f$ a \emph{rational homology equivalence} if the induced map $\Ho * (X; \QQ) \to \Ho * (Y; \QQ)$ is an isomorphism.
  We call $f$ a \emph{rational equivalence} if it is both a rational homotopy equivalence and a rational homology equivalence.
\end{definition}

Recall that, when both $X$ and $Y$ are nilpotent, then $f \colon X \to Y$ being a rational homotopy equivalence is equivalent to it being a rational homology equivalence.
This follows, for example, from \cite[Ch.~V, Proposition~3.2]{BK} combined with \cite[Ch.~V, 2.5 and 2.7]{BK} and \cite[Theorem on p.~7 and Ch.~I, Lemma~1.5]{HMR}.
The following lemma tells us that, when the domain is only virtually nilpotent, one of the implications remains valid.

\begin{lemma}[Berglund--Zeman] \label{lemma:req}
  Let $f \colon X \to Y$ be a rational homotopy equivalence of simplicial sets and assume that $X$ is virtually nilpotent and that $Y$ is nilpotent.
  Then $f$ is a rational homology equivalence.
\end{lemma}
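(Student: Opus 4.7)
The plan is to reduce to the classical case of nilpotent spaces, where the stated implication already holds by the references cited just before the lemma, by passage to a finite regular cover of $X$ and then transferring the conclusion back via a spectral sequence argument. Fix a degree $n \ge 0$; it suffices to show that $f_*$ is an isomorphism on $\Ho n (-; \QQ)$. Since $X$ is virtually nilpotent, for each $k = 1, \ldots, n$ there is a finite-index subgroup $H_k \subseteq \hg 1 (X, x_0)$ acting nilpotently on $\hg k (X, x_0)$. Let $K$ be the normal core of $\bigcap_{k \le n} H_k$ in $\hg 1 (X, x_0)$; this is a normal subgroup of finite index, and restricting the nilpotent $H_k$-actions shows that $K$ acts nilpotently on every $\hg k (X)$ for $1 \le k \le n$ (including, for $k = 1$, on itself by conjugation). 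Consequently the associated finite regular cover $\tilde X \to X$ is nilpotent through Postnikov stage $n$.

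Next I would lift $f$ to an equivariant map between finite regular covers. The aim is to choose a finite-index normal subgroup $L \subseteq \hg 1 (Y, f(x_0))$ containing $f_*(K)$; the corresponding finite regular cover $\tilde Y \to Y$ then admits an equivariant lift $\tilde f \colon \tilde X \to \tilde Y$ with respect to the induced homomorphism of deck groups $\hg 1 (X)/K \to \hg 1 (Y)/L$. This lift is again a rational homotopy equivalence: on $\hg 1$ it is the restriction $K \to L$ of the $\QQ$-isomorphism $f_*$ between finite-index subgroups (hence still a $\QQ$-isomorphism), while on $\hg k$ for $k \ge 2$ the cover projections are isomorphisms so $\tilde f_*$ coincides with $f_*$. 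Since any cover of a nilpotent space is nilpotent, $\tilde Y$ is nilpotent, and $\tilde X$ is nilpotent through stage $n$ by construction. The standard result for nilpotent spaces therefore yields that $\tilde f$ induces an isomorphism on $\Ho m (-; \QQ)$ for all $m \le n$.

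Finally I would transfer this conclusion back to $X$ and $Y$. For each of the finite regular covers with deck group $G$, the Cartan--Leray spectral sequence in rational (co)homology collapses onto the line $p = 0$ (since $\lvert G \rvert$ is invertible in $\QQ$), yielding natural isomorphisms $\Ho * (X; \QQ) \iso \Ho * (\tilde X; \QQ)^{G_X}$ and $\Ho * (Y; \QQ) \iso \Ho * (\tilde Y; \QQ)^{G_Y}$. Provided the induced homomorphism $G_X \to G_Y$ is surjective, the equivariance of $\tilde f$ identifies $f_*$ on $\Ho n (-; \QQ)$ with the restriction of an isomorphism to $G_X$-invariants, forcing $f_*$ itself to be an isomorphism. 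Since $n$ was arbitrary, this proves the lemma. The main obstacle is precisely this choice of a compatible cover of $Y$: because $f_*$ is only a $\QQ$-isomorphism on $\hg 1$, the image $f_*(K)$ need not itself have finite index in $\hg 1 (Y)$, and producing a finite-index normal $L$ containing it for which the deck-group map $G_X \to G_Y$ is surjective requires genuine use of the nilpotence of $\hg 1 (Y)$ together with the $\QQ$-surjectivity of $f_*$, possibly after shrinking $K$ to a smaller finite-index subgroup.
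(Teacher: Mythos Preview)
The paper does not prove this lemma; it simply cites \cite[Lemma~2.3]{BZ}. Your outline can be completed into a correct argument, but two points need attention.

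First, the ``main obstacle'' you flag dissolves if you take $L = \hg 1(Y)$, i.e.\ $\widetilde Y = Y$. Then $G_Y$ is trivial and the deck-group map $G_X \to G_Y$ is vacuously surjective, so your own transfer/Cartan--Leray step applies unchanged (indeed, since $\widetilde f = f \circ p$ is $G_X$-invariant and $\widetilde f_*$ is an isomorphism in the relevant degrees, every class in $\Ho m(\widetilde X;\QQ)$ is forced to be $G_X$-invariant). There is no need to construct a smaller finite-index $L$ or to shrink $K$.

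Second, the sentence ``the standard result for nilpotent spaces therefore yields that $\widetilde f$ induces an isomorphism on $\Ho m(-;\QQ)$ for all $m \le n$'' hides a real step: the classical equivalence requires both source and target to be nilpotent, but $\widetilde X$ is only nilpotent through Postnikov stage $n$. The fix is to pass to $n$-th Postnikov sections: $P_n\widetilde X$ and $P_n Y$ are genuinely nilpotent, $P_n\widetilde f$ is still a rational homotopy equivalence and hence a rational homology equivalence, and since the homotopy fibres of $\widetilde X \to P_n\widetilde X$ and $Y \to P_n Y$ are $n$-connected, the Postnikov projections induce isomorphisms on $\Ho m(-;\QQ)$ for $m \le n$.
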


\begin{proof}
  This is \cite[Lemma 2.3]{BZ}.
\end{proof}

\subsection{Model structures on categories of spaces}

In this subsection, we recall various classical model structures on the categories $\sSet$ and $\CGWH$ and fix related conventions and notation.

\begin{convention}
  Unless stated otherwise, we consider the category $\sSet$ of simplicial sets to be equipped with the Kan--Quillen model structure (see e.g.\ \cite[Theorem~7.10.12]{Hir}).
  Its weak equivalences are those maps that induce weak homotopy equivalences on geometric realizations, its cofibrations are the pointwise injections, and its fibrations are the Kan fibrations.
  It is a simplicial model category when equipped with its usual simplicial structure (see e.g.\ \cite[Example~9.1.13]{Hir}).
\end{convention}

\begin{notation} \label{not:localization_functor}
  Given a set $S$ of morphisms of $\sSet$, we denote by $\FR_S \colon \sSet \to \sSet$ a simplicial $S$-localization functor as constructed in \cite[Theorem~4.3.8]{Hir}.\footnote{Note that $\sSet$ fulfills the necessary conditions by \cite[Proposition~4.1.4]{Hir}.}
  It is a simplicial functor whose image is contained in the $S$-local objects, and it comes equipped with a natural transformation $\id \to \FR_S$ that is pointwise a cofibration and $S$-local equivalence.
  If $S = \emptyset$, we omit it from the notation; in that case we call $\FR$ the \emph{fibrant replacement}.
  It takes values in the fibrant objects, and the natural transformation $\id \to \FR$ is pointwise a weak equivalence.
\end{notation}

As this paper is concerned with rational homotopy theory, the rational model structure on the category of simplicial sets will play an integral role.

\begin{definition}[Bousfield]
  The \emph{rational model structure} on the category of simplicial sets is the left Bousfield localization of the Kan--Quillen model structure at the class of rational homology equivalences; it equips $\sSet$ with the structure of a simplicial model category (see e.g.\ \cite[Ch.\ X, Theorem~3.2]{GJ}).
  Its weak equivalences are the rational homology equivalences and its cofibrations are the pointwise injections.
  We will call its fibrations \emph{rational fibrations} and its fibrant objects \emph{rational Kan complexes}.
  We furthermore call the corresponding simplicial localization functor of \cref{not:localization_functor} the \emph{rationalization} and denote it by $\rat {(\blank)}$.\footnote{Note that the rational model structure on $\sSet$ is cofibrantly generated by Smith's Recognition Theorem, see \cite[Theorem~1.7 and Example~3.2]{Bek}, and hence combinatorial.
    Thus, by \cite[Proposition~A.3.7.4]{LurHTT}, it is equal to the left Bousfield localization at some (small) set $S$ of morphisms of $\sSet$.
    In particular the $S$-local equivalences are exactly the rational homology equivalences, and the $S$-local objects are exactly the rational Kan complexes.}
  It takes values in rational Kan complexes and comes equipped with a natural transformation $\id \to \rat{(\blank)}$ that is pointwise a cofibration and rational homology equivalence.
\end{definition}

Recall that a rational homology equivalence between rational Kan complexes is a weak equivalence (see e.g.\ \cite[Theorem~3.2.13]{Hir}).

Since we will simultaneously use two different model structures on $\sSet$, to avoid confusion we use different notation for spaces of rational homology equivalences.

\begin{definition}
  Let $A$ be a simplicial set, and $A \to X$ and $A \to Y$ be two maps of simplicial sets.
  We denote by $\mapreq[A](X, Y) \subseteq \map[A](X, Y)$ the simplicial subset of those connected components consisting of rational homology equivalences.
  We write $\raut[A](X)$ for the simplicial monoid $\mapreq[A](X, X)$.
  When $A = \emptyset$, we omit it from the notation.
\end{definition}

Note that $\mapreq[A](X, Y) = \mapeq[A](X, Y)$ and $\raut[A](X) = \aut[A](X)$ when $X$ and $Y$ are rational Kan complexes.

The following lemma will turn out to be convenient.
It applies more generally to all functors $\FR_S$ as in \cref{not:localization_functor}, but we only state it for rationalization for simplicity.

\begin{lemma} \label{lemma:rat}
  The functor $\rat {(\blank)}$ preserves cofibrations.
  Furthermore, given a span $X \from A \to Y$ of cofibrations of simplicial sets, the induced map
  \[ \rat X \cop_{\rat A} \rat Y  \longto  \rat {(X \cop_A Y)} \]
  is a cofibration and a rational homology equivalence.
\end{lemma}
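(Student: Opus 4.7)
I would prove the two claims in sequence, the first feeding into the second.

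For the first claim that $\rat{(\blank)}$ preserves cofibrations, I would invoke the explicit construction of the simplicial localization functor from Hirschhorn's Theorem~4.3.8, which builds $\rat X$ as the transfinite colimit of a sequence $X = X_0 \to X_1 \to \dotsb$ obtained by applying the small object argument to a set $J$ of generating trivial cofibrations of the rational model structure. Since the cofibrations of the rational model structure coincide with those of the Kan--Quillen model structure, every map in $J$ is a monomorphism in $\sSet$. Applied functorially to a cofibration $A \to X$, one obtains compatible sequences, and a transfinite induction using that cofibrations in $\sSet$ are closed under pushouts and transfinite composition shows that $A_\alpha \to X_\alpha$ is a cofibration at each stage; passing to the colimit yields that $\rat A \to \rat X$ is a cofibration.

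For the rational homology equivalence in the second claim, I would exploit left properness of the rational model structure (which follows from left properness of the Kan--Quillen model structure since left Bousfield localizations preserve left properness) to argue that pushouts of rational homology equivalences along cofibrations remain rational homology equivalences. Applying this successively to first replace $A$ by $\rat A$ via pushout along the cofibration $A \to X \cop_A Y$, then replace $X$ by $\rat X$, then $Y$ by $\rat Y$, produces a rational homology equivalence from $X \cop_A Y$ to $\rat X \cop_{\rat A} \rat Y$. Postcomposing with $\rat X \cop_{\rat A} \rat Y \to \rat{(X \cop_A Y)}$ gives the unit of rationalization, which is itself a rational homology equivalence; two-out-of-three concludes.

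The main obstacle is showing that the comparison map $\rat X \cop_{\rat A} \rat Y \to \rat{(X \cop_A Y)}$ is a cofibration, because $\rat{(\blank)}$ does not preserve pushouts on the nose. My approach would leverage that $\sSet$ is a topos, hence adhesive: given a commutative square of monomorphisms, the induced map out of the pushout is a monomorphism if and only if the square is a pullback. It thus suffices to verify that the comparison
\[ \rat A  \longto  \rat X \times_{\rat{(X \cop_A Y)}} \rat Y \]
is an isomorphism, i.e.\ that $\rat A = \rat X \cap \rat Y$ inside $\rat{(X \cop_A Y)}$. To establish this, I would choose the small object argument defining $\rat{(\blank)}$ to run compatibly on the diagram $X \from A \to Y$ and on $X \cop_A Y$, using that the domains of the generating trivial cofibrations are finite simplicial sets. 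At each stage, a cell attachment to $(X \cop_A Y)_\alpha$ whose domain meets both the $X$- and $Y$-piece must already factor through $A_\alpha$ (since $A_\alpha \to X_\alpha$ and $A_\alpha \to Y_\alpha$ are cofibrations), and can therefore be assembled from cell attachments to $X_\alpha$, $Y_\alpha$, or $A_\alpha$. A transfinite induction then preserves the intersection property $A_\alpha = X_\alpha \cap Y_\alpha$ inside $(X \cop_A Y)_\alpha$, and passing to the colimit yields both the pullback property and the conclusion that the comparison map is a cofibration.
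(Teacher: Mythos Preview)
Your overall strategy matches the paper's: transfinite induction along Hirschhorn's explicit construction of $\rat{(\blank)}$ for the cofibration claims, and two-out-of-three via homotopy pushouts for the rational homology equivalence (the paper phrases the latter identically, observing that both pushouts are homotopy pushouts once rationalization is known to preserve cofibrations).

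For the comparison map being a cofibration, you take a slightly different route via adhesiveness, reducing to the intersection property $A_\alpha = X_\alpha \cap Y_\alpha$ inside $(X\cop_A Y)_\alpha$. This reduction is correct, but your inductive step is imprecisely formulated: the point is not that cells attached to $(X\cop_A Y)_\alpha$ ``meeting both pieces'' factor through $A_\alpha$ --- the functorial factorization attaches a cell for \emph{every} map $C \to (X\cop_A Y)_\alpha$ regardless, so $(X\cop_A Y)_{\alpha+1}$ is genuinely larger than anything assembled from the pieces --- but rather that the \emph{images} of $X_{\alpha+1}$ and $Y_{\alpha+1}$ inside $(X\cop_A Y)_{\alpha+1}$ are governed by the sub-simplicial-sets $\map(C, X_\alpha)$ and $\map(C, Y_\alpha)$ of $\map(C, (X\cop_A Y)_\alpha)$, whose intersection is $\map(C, A_\alpha)$ because $\map(C,-)$ preserves pullbacks. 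The paper avoids the adhesive detour and proves both cofibration claims simultaneously by a single induction, reducing via a cube lemma and the pushout-product axiom to the elementary observation that for a set $T$ and a span of injections $V \leftarrow U \to W$, the map
\[
\Hom(T,V) \cop_{\Hom(T,U)} \Hom(T,W) \longto \Hom(T, V\cop_U W)
\]
is injective --- which is exactly the set-level shadow of your intersection property. Your route works once the inductive step is repaired along these lines, but the paper's is more direct.
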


\begin{proof}
  We begin by proving the two claims about cofibrations at the same time; let $P \to Q$ be a cofibration of simplicial sets.
  Recall that we defined the functor $\rat {(\blank)}$ as in \cite[Proof of Theorem~4.3.8]{Hir}; it is the colimit of a sequence of functors
  \[ \id = E^0  \longto  E^1  \longto  \dots  \longto  E^\beta  \longto  \dots \]
  indexed over a certain regular cardinal.
  Note that, if a natural transformation between two directed diagrams of simplicial sets is pointwise a cofibration, then the induced map on colimits is a cofibration as well; this follows from the corresponding statement for sets and injections.
  Hence, by induction, it is enough to prove that if the statements hold for the functor $E^\beta$, then they also hold for the functor $E^{\beta + 1}$.
  At a simplicial set $K$, the latter functor is defined to be the pushout of the following span
  \[ E^\beta(K)  \longfrom   \coprod_{C \to D}  C \times \map \bigl( C, E^\beta(K) \bigr)  \longto  \coprod_{C \to D}  D \times \map \bigl( C, E^\beta(K) \bigr) \]
  where $C \to D$ runs over a certain set of cofibrations of $\sSet$.
  Fix one such cofibration and set  $M(K) \defeq \map(C, E^\beta(K))$.
  Then it is, by \cref{lemma:cofibration_pushouts} and the fact that coproducts preserve cofibrations (see e.g.\ \cite[Proposition~7.2.5]{Hir}), enough to prove that the induced maps from the pushouts of the spans
  \begin{gather*}
    C \times M(Q)  \longfrom  C \times M(P)  \longto  D \times M(P) \\
    C \times M(X \cop_A Y)  \longfrom  C \times \bigl( M(X) \cop_{M(A)} M(Y) \bigr)  \longto  D \times \bigl( M(X) \cop_{M(A)} M(Y) \bigr)
  \end{gather*}
  to $D(Q)$ and $D \times M(X \cop_A Y)$, respectively, are cofibrations.
  By \cite[Proposition~9.3.8]{Hir}, it now suffices to prove that the maps
  \begin{gather*}
    M(P)  \longto  M(Q) \\
    M(X) \cop_{M(A)} M(Y)   \longto  \map \bigl( C, E^\beta(X) \cop_{E^\beta(A)} E^\beta(Y) \bigr)  \longto  M(X \cop_A Y)
  \end{gather*}
  are cofibrations.
  For the upper and lower right-hand maps, this follows from the induction hypothesis, finishing the proof of the first statement.
  For the lower left-hand map it follows from the fact that, given a set $T$ and a span $V \from U \to W$ of injections of sets, the induced map
  \[ \Hom(T, V) \cop_{\Hom(T, U)} \Hom(T, W)  \longto  \Hom(T, V \cop_U W) \]
  is injective.
  
  Now consider the following two maps of simplicial sets
  \[ X \cop_A Y  \longto  \rat X \cop_{\rat A} \rat Y  \longto  \rat {(X \cop_A Y)} \]
  and note that the composite is a rational homology equivalence.
  The first map is also a rational homology equivalence as both of the pushouts are homotopy pushouts (since, as shown above, rationalization preserves cofibrations).
  Hence the second map is a rational homology equivalence, too.
\end{proof}

\begin{lemma} \label{lemma:compact-open}
  Let $i \colon A \to X$ be a cofibration and $j \colon A \to Y$ a map of simplicial sets such that $Y$ is a Kan complex.
  Then there is a natural weak homotopy equivalence
  \[ \gr {\map[A] (X, Y)}  \xlongto{\eq}  \Map[\gr A](\gr X, \gr Y) \]
  where $\Map[\gr A]$ denotes the space of continuous maps $f$ such that $f \after \gr i = \gr j$, equipped with the compact-open topology.
\end{lemma}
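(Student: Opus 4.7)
The plan is to reduce to the non-relative case, which is a classical result, by comparing the pullback square defining $\map[A](X,Y)$ with the analogous pullback square defining $\Map[\gr A](\gr X, \gr Y)$.

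First, I would construct the natural comparison map. The simplicial mapping complex $\map(X,Y)$ has $n$-simplices $\Hom_\sSet(X \times \lincat n, Y)$. Using the compatibility of $\gr{\blank}$ with products and the canonical homeomorphism $\gr{\lincat n} \iso \Delta^n$, each such $n$-simplex gives, via geometric realization and adjunction, an $n$-simplex of $\Sing \Map(\gr X, \gr Y)$. This assembles into a natural map of simplicial sets $\map(X,Y) \to \Sing \Map(\gr X, \gr Y)$ whose adjoint is the desired natural comparison map $\gr{\map(X,Y)} \to \Map(\gr X, \gr Y)$. The same construction, applied to the pullback defining $\map[A]$, yields the relative comparison map.

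Next, I would invoke the classical fact (essentially due to Milnor, see e.g.\ \cite[Theorem~11.4]{May72} or \cite[Proposition~6.4]{Hir}) that for any simplicial set $P$ and any Kan complex $Q$, the natural comparison map $\gr{\map(P,Q)} \to \Map(\gr P, \gr Q)$ is a weak homotopy equivalence. This gives the result in the three non-relative positions of the diagram.

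The main step is then to show that both squares are homotopy pullbacks. On the simplicial side, since $i \colon A \to X$ is a cofibration and $Y$ is a Kan complex, the pushout–product axiom (SM7) implies that $i^* \colon \map(X,Y) \to \map(A,Y)$ is a Kan fibration. Hence the defining pullback square for $\map[A](X,Y)$ is a homotopy pullback in $\sSet$, and applying $\gr{\blank}$—which preserves finite limits and sends Kan fibrations to Serre fibrations—yields a homotopy pullback square of topological spaces. On the topological side, the map $\gr i \colon \gr A \to \gr X$ is a closed cofibration (geometric realization of a monomorphism of simplicial sets is such, see e.g.\ \cite[Theorem~10.4]{May72}), so $(\gr i)^* \colon \Map(\gr X, \gr Y) \to \Map(\gr A, \gr Y)$ is a Serre fibration by the topological analogue of SM7, and the defining pullback square for $\Map[\gr A](\gr X, \gr Y)$ is therefore also a homotopy pullback. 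The natural transformation of squares induced by the comparison map is a weak equivalence on the three corners $\gr{\map(X,Y)}$, $\gr{\map(A,Y)}$, and the point $\set{\gr j}$, by the non-relative case; hence the induced map on homotopy pullbacks, i.e.\ the relative comparison map, is a weak equivalence.

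The only subtlety I anticipate is bookkeeping to ensure the comparison map really is a map of squares (in particular that the basepoint $j$ is sent to $\gr j$), but this is formal from the construction. The substantive input is the non-relative Milnor-type equivalence; everything else is the standard homotopy-pullback comparison.
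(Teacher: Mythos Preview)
Your proposal is correct and takes essentially the same approach as the paper: reduce to the non-relative comparison $\gr{\map(X,Y)} \xrightarrow{\eq} \Map(\gr X,\gr Y)$ and then pass to fibers over $j$, using that the restriction maps $i^*$ on both sides are fibrations. The only cosmetic difference is that the paper stays simplicial throughout---comparing $\map(X,Y)\to\map(\gr X,\gr Y)$ via \cite[Theorem~2.1]{BS97} and taking fibers before realizing---whereas you realize first and compare topological homotopy pullbacks.
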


\begin{proof}
  By \cite[Theorem~2.1]{BS97} there is a commutative diagram of simplicial sets
  \[
  \begin{tikzcd}
    \map(X, Y) \rar{\eq} \dar[swap]{i^*} & \map(\gr X, \gr Y) \dar{\gr i^*} \\
    \map(A, Y) \rar{\eq} & \map(\gr A, \gr Y)
  \end{tikzcd}
  \]
  such that the horizontal maps are weak equivalences.\footnote{Note that they are implicitly using that $Y$ is a Kan complex to deduce that the canonical morphism $\map(X, Y) \to \map(X, \Sing(\gr Y))$ is a weak equivalence, even though this is not explicitly stated as an assumption.}
  Since $i$ and $\gr i$ are cofibrations and $Y$ is fibrant, the induced map on fibers is a weak equivalence as well.
  Taking geometric realizations and using the weak homotopy equivalence
  \[ \gr {\map[\gr A](\gr X, \gr Y)}  =  \gr[\big] { \Sing \bigl( \Map[\gr A](\gr X, \gr Y) \bigr){} }  \xlongto{\eq}  \Map[\gr A](\gr X, \gr Y) \]
  completes the proof.
\end{proof}

We will also have use, particularly in \cref{sec:manifolds}, for a simplicial model category of topological spaces, as well as a comparison of its mapping spaces with those of $\sSet$.

\begin{convention}
  We consider the category $\CGWH$ to be equipped with the Quillen model structure, i.e.\ the weak equivalences are the weak homotopy equivalences and the fibrations are the Serre fibrations.
  When $\CGWH$ is equipped with its usual structure of a simplicial category, this makes it a simplicial model category (see e.g.\ \cite[Example~9.1.15]{Hir}).
\end{convention}

\begin{lemma} \label{lemma:Sing_map}
  Let $i \colon A \to X$ and $j \colon A \to Y$ be two maps of $\CGWH$ such that $i$ is a cofibration and $A$ is cofibrant.
  Then the canonical map
  \[ \sigma \colon \map[A](X, Y)  \xlongto{\eq}  \map[\Sing(A)] \bigl( \Sing(X), \Sing(Y) \bigr) \]
  is a weak equivalence of simplicial sets.
  When $i = j$, it restricts to a weak equivalence
  \[ \aut[A](X)  \xlongto{\eq}  \aut[\Sing(A)] \bigl( \Sing(X) \bigr) \]
  of simplicial monoids.
\end{lemma}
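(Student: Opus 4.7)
The strategy is to first handle the absolute case $A = \emptyset$ and then bootstrap to the relative case via a pullback comparison.

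Note first that $X$ is cofibrant in $\CGWH$, since $\emptyset \to A \to X$ is a composite of two cofibrations. The absolute statement I would prove is: for any cofibrant $Z \in \CGWH$ and any $W \in \CGWH$, the canonical map $\sigma \colon \map(Z, W) \to \map \bigl( \Sing(Z), \Sing(W) \bigr)$ induced by the functor $\Sing$ is a weak equivalence of simplicial sets. This is standard: since $\gr{\blank}$ preserves products, the adjunction $\gr{\blank} \dashv \Sing$ gives a natural isomorphism $\map \bigl( \Sing(Z), \Sing(W) \bigr) \iso \map \bigl( \gr{\Sing(Z)}, W \bigr)$ of simplicial sets, under which $\sigma$ corresponds to precomposition with the counit $\gr{\Sing(Z)} \to Z$. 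This counit is a weak equivalence between cofibrant objects, so precomposition with it induces a weak equivalence on simplicial mapping spaces into the fibrant $W$ by the simplicial model category axioms (SM7).

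For the relative statement, I would consider the commutative diagram
\[
\begin{tikzcd}
  \map[A](X, Y) \rar \dar[swap]{\sigma} & \map(X, Y) \rar{i^*} \dar{\sigma} & \map(A, Y) \dar{\sigma} \\
  \map[\Sing(A)] \bigl( \Sing(X), \Sing(Y) \bigr) \rar & \map \bigl( \Sing(X), \Sing(Y) \bigr) \rar{\Sing(i)^*} & \map \bigl( \Sing(A), \Sing(Y) \bigr)
\end{tikzcd}
\]
whose rows are pullback squares by definition. Both rows are in fact homotopy pullbacks: $i^*$ is a fibration by SM7 (as $i$ is a cofibration of $\CGWH$ and $Y$ is fibrant), while $\Sing(i)^*$ is a fibration because $\Sing$ preserves monomorphisms (so $\Sing(i)$ is a cofibration in $\sSet$) and $\Sing(Y)$ is a Kan complex. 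The two right-hand vertical maps are weak equivalences by the absolute case. Hence two-out-of-three applied to the induced map between the (homotopy) fibers shows that the left-hand $\sigma$ is a weak equivalence.

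Finally, for the statement about automorphisms, $\sigma$ is a homomorphism of simplicial monoids because it is induced by the functor $\Sing$. It restricts to $\aut[A](X) \to \aut[\Sing(A)] \bigl( \Sing(X) \bigr)$ because $\Sing$ preserves weak equivalences; conversely, if $\Sing(f)$ is a weak equivalence then so is $\gr{\Sing(f)}$, and two-out-of-three applied to the naturality square of the counit $\gr{\Sing(X)} \to X$ forces $f$ itself to be a weak equivalence. So $\sigma$ hits precisely the relevant components, and the weak equivalence from the relative case restricts to a weak equivalence of simplicial monoids. The main subtlety I anticipate is the absolute case: although it is classical, one must carefully pin down the identification of the two simplicial enrichments before invoking homotopy invariance of mapping spaces for the fibrant target and cofibrant source.
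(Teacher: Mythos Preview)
Your proof is correct and follows the same overall architecture as the paper: establish the absolute case first, deduce the relative case by comparing fibers of the restriction fibrations $i^*$ and $\Sing(i)^*$, and then argue that $\Sing$ detects weak equivalences for the monoid statement.

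The one genuine difference is in the absolute case. The paper routes through a commutative square
\[
\begin{tikzcd}
  \map(X, Y) \rar{\sigma} \dar[swap]{\epsilon^*} & \map \bigl( \Sing(X), \Sing(Y) \bigr) \dar{\alpha} \\
  \map \bigl( \gr {\Sing(X)}, Y \bigr) & \lar[swap]{\epsilon_*} \map \bigl( \gr {\Sing(X)}, \gr {\Sing(Y)} \bigr)
\end{tikzcd}
\]
and invokes an external comparison result to see that the geometric-realization map $\alpha$ is a weak equivalence; together with the two $\epsilon$-maps being weak equivalences (counit between cofibrant objects, fibrant target), this forces $\sigma$ to be one. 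Your argument is more direct: the enriched adjunction $\gr{\blank} \dashv \Sing$ identifies $\map(\Sing(Z), \Sing(W))$ with $\map(\gr{\Sing(Z)}, W)$ on the nose, and under this identification $\sigma$ \emph{is} precomposition with the counit $\epsilon_Z$, so SM7 (via Ken Brown) finishes immediately. This buys you a self-contained proof that avoids the external input; the paper's route, by contrast, makes the link to geometric realization of mapping spaces explicit, which it needs elsewhere anyway.
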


\begin{proof}
  The map $\sigma$ is given, at a simplex $f \colon X \times \Simplex n \to Y$ of $\map[A](X, Y)$ by the composite map
  \[ \Sing(X) \times \lincat n  \longto  \Sing(X) \times \Sing(\Simplex n)  \iso  \Sing(X \times \Simplex n)  \xlongto{f_*}  \Sing (Y) \]
  of simplicial sets.
  We first treat the case that $A = \emptyset$ and consider the commutative diagram
  \[
  \begin{tikzcd}
    \map(X, Y) \rar{\sigma} \dar[swap]{\epsilon^*} & \map \bigl( \Sing(X), \Sing(Y) \bigr) \dar{\alpha} \\
    \map \bigl( \gr {\Sing(X)}, Y \bigr) & \lar[swap]{\epsilon_*} \map \bigl( \gr {\Sing(X)}, \gr {\Sing(Y)} \bigr)
  \end{tikzcd}
  \]
  where $\epsilon$ denotes the counit of the adjunction $\gr \blank \dashv \Sing$ and $\alpha$ is given by geometric realization and the canonical homeomorphism $\gr {\lincat n} \iso \Simplex n$.
  Since $\epsilon$ is a pointwise weak equivalence and both $X$ and $\gr{\Sing (X)}$ are cofibrant, both $\epsilon_*$ and $\epsilon^*$ are weak equivalences.
  The map $\alpha$ is a weak equivalence by \cite[Theorem~2.1]{BS97}, and hence so is $\sigma$.
  
  The general case, with $A$ not necessarily empty, follows by taking horizontal fibers in the commutative diagram
  \[
  \begin{tikzcd}
    \map(X, Y) \rar \dar[swap]{\sigma} & \map(A, Y) \dar{\sigma} \\
    \map \bigl( \Sing(X), \Sing(Y) \bigr) \rar & \map \bigl( \Sing(A), \Sing(Y) \bigr)
  \end{tikzcd}
  \]
  and using that $\Sing(\blank)$ preserves cofibrations and takes values in fibrant objects.
  That $\sigma$ restricts to a weak equivalence of simplicial monoids of self-equivalences follows from the fact that a map $f \colon X \to X$ of simplicial sets is a weak equivalence if and only if $\Sing(f)$ is.
\end{proof}

\subsection{Classifying spaces of simplicial monoids}

In this subsection, we recall the two-sided bar construction.
Unfortunately, the authors are not aware of a good reference in the simplicial setting.
Thus, we also state some of its basic properties and prove them by comparison to the topological version, which has been studied in detail by May \cite[§§7--8]{May}.

\begin{notation}
  We write $\sr \blank$ for the \emph{simplicial realization} functor from bisimplicial sets to simplicial sets, i.e.\ the functor given by precomposing with the diagonal of $\Simplices$.
\end{notation}

\begin{definition} \label{def:bar}
  Let $\cat C$ be a monoidal category and $(M, G, N)$ a triple where $G$ is a monoid object in $\cat C$, $M$ a right $G$-module, and $N$ a left $G$-module.
  We define a simplicial object $\B(M, G, N)_\bullet$ in $\cat C$ by $\B(M, G, N)_n \defeq M \tensor G^{n} \tensor N$ with face and degeneracy maps given by the formulas (abusively written with elements)
  \begin{align*}
    d_i(x_0, x_1, \dots, x_n, x_{n+1}) &\defeq (x_0, \dots, x_{i-1}, x_i x_{i+1}, x_{i+2}, \dots, x_{n+1}) \\
    s_i(x_0, x_1, \dots, x_n, x_{n+1}) &\defeq (x_0, \dots, x_i, 1, x_{i+1}, \dots, x_{n+1})
  \end{align*}
  where $x_0 \in M$, $x_{n+1} \in N$, and $x_j \in G$ for $1 \le j \le n$.
  The construction is functorial in maps of triples $(a, f, b)$ where $f$ is a map of monoid objects and $a$ and $b$ are module homomorphisms with respect to $f$.
  When $\cat C = \sSet$ or $\cat C = \CGWH$, we respectively write
  \[ \B(M, G, N)  \defeq  \sr { \B(M, G, N)_\bullet }  \qquad \text{and} \qquad  \B(M, G, N)  \defeq  \gr { \B(M, G, N)_\bullet } \]
  for the \emph{two-sided bar construction}.
\end{definition}

\begin{lemma} \label{lemma:bar_products}
  For $\cat C$ the category of simplicial sets, the functor $\B(\blank, \blank, \blank)$ preserves products.
\end{lemma}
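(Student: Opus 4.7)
The plan is to factor $\B(\blank, \blank, \blank) \colon (\text{triples}) \to \sSet$ as the composite of two functors, namely (a) the functor sending a triple $(M, G, N)$ to the bisimplicial set $\B(M, G, N)_\bullet$ from \cref{def:bar}, and (b) the simplicial realization $\sr{\blank}$ from bisimplicial sets to simplicial sets, and then to verify that both functors preserve products.

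For (b), recall that $\sr{\blank}$ is defined by precomposition with the diagonal functor $\Simplices \to \Simplices \times \Simplices$, so for any bisimplicial set $X$ one has $\sr{X}_n = X_{n, n}$. Since products of (bi)simplicial sets are computed levelwise, there is an obvious natural isomorphism $\sr{X \times Y} \iso \sr{X} \times \sr{Y}$.

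For (a), at bisimplicial level $(n, k)$ one has
\[ \B(M, G, N)_{n, k}  =  M_k \times G_k^n \times N_k, \]
so the evident shuffle of cartesian factors yields a natural bijection
\[ \B(M \times M', G \times G', N \times N')_{n, k}  \iso  \B(M, G, N)_{n, k} \times \B(M', G', N')_{n, k}. \]
I would then check that this bijection is compatible with face and degeneracy maps in both the inner direction (where they are induced from the simplicial structures on $M$, $G$, and $N$) and the outer direction (where they encode the monoid multiplication of $G$, the module actions on $M$ and $N$, and insertion of the unit of $G$), thereby upgrading it to a natural isomorphism of bisimplicial sets.

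No serious obstacle is anticipated: the outer compatibility reduces to the facts that multiplication in $G \times G'$ is componentwise and that its unit is the pair of units, while the inner compatibility is automatic from the naturality of the factor shuffle in each variable. Composing with step (b) yields the desired isomorphism $\B(M \times M', G \times G', N \times N') \iso \B(M, G, N) \times \B(M', G', N')$, natural in the triple.
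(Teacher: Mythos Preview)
Your proposal is correct and follows essentially the same approach as the paper: the paper's one-line proof simply states that it follows from $\sr{\blank}$ preserving products, leaving the (obvious) product-preservation of $(M,G,N) \mapsto \B(M,G,N)_\bullet$ implicit. You have spelled out both steps explicitly, which is fine.
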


\begin{proof}
  This follows from the fact that $\sr \blank$ preserves products.
\end{proof}

\begin{lemma} \label{lemma:bar_top}
  Let $(M, G, N)$ be a triple as in \cref{def:bar} in the category of simplicial sets.
  Then there is a natural isomorphism
  \[ \gr {\B(M, G, N)}  \xlongto{\iso}  \B \bigl( \gr M, \gr G, \gr N \bigr) \]
  of functors to $\CGWH$.
\end{lemma}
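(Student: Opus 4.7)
The plan is to assemble the desired natural isomorphism from two standard facts: (i) geometric realization in $\CGWH$ commutes with finite products, and (ii) for a bisimplicial set $X_{\bullet\bullet}$, the geometric realization of its diagonal (i.e.\ of $\sr X_{\bullet\bullet}$) is naturally homeomorphic to the geometric realization of the simplicial space $[n] \mapsto \gr{X_{n,\bullet}}$.

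First, I would note that $\B(M, G, N)_\bullet$ is a simplicial object in $\sSet$, hence a bisimplicial set, and by definition $\B(M, G, N) = \sr{\B(M, G, N)_\bullet}$. Applying fact (ii) gives a natural homeomorphism
\[ \gr{\B(M, G, N)}  \iso  \gr[\big]{[n] \mapsto \gr{M \times G^n \times N}}, \]
where the outer realization is that of a simplicial object in $\CGWH$. Next, applying fact (i) levelwise and naturally in $[n]$ yields a natural isomorphism of simplicial objects in $\CGWH$
\[ \bigl( [n] \mapsto \gr{M \times G^n \times N} \bigr)  \iso  \bigl( [n] \mapsto \gr M \times \gr G^n \times \gr N \bigr). \]
Here one has to check that the face and degeneracy maps agree under this identification; this is immediate since both are obtained from the multiplication of $G$ and its actions on $M$ and $N$ by applying the (product-preserving) functor $\gr{\blank}$, and the formulas defining $\B(\blank, \blank, \blank)_\bullet$ in \cref{def:bar} are monoidal and do not depend on $\cat C$. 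Taking geometric realizations of the right-hand side gives exactly $\B(\gr M, \gr G, \gr N)$ by definition.

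Composing the above identifications produces the natural isomorphism in the statement. Naturality in the triple $(M, G, N)$ is automatic from the naturality of facts (i) and (ii) and of the simplicial formulas defining the bar construction.

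The only genuinely nontrivial input is fact (ii), which requires taking geometric realization in $\CGWH$ (so that the product with each simplex interacts well with colimits); this is standard and holds because $\gr{\blank}$ preserves finite products on $\sSet$ when formed in $\CGWH$, so a direct computation of $\gr{\sr X_{\bullet\bullet}}$ as a quotient of $\coprod_n X_{n,n} \times \Delta^n$ can be reorganized as the realization of $[n] \mapsto \gr{X_{n,\bullet}}$. I expect this comparison of realizations to be the only step requiring genuine care; everything else is bookkeeping.
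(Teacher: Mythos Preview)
Your proposal is correct and follows essentially the same approach as the paper: both arguments rest on (i) the fact that geometric realization in $\CGWH$ preserves finite products, and (ii) the natural identification of $\gr{\sr X_{\bullet\bullet}}$ with the realization of the simplicial space $[n] \mapsto \gr{X_{n,\bullet}}$. The paper phrases fact (ii) as commutativity of the square
\[
\begin{tikzcd}
\Fun(\opcat\Simplices, \sSet) \dar[swap]{\sr \blank} \rar{{\gr \blank}_*} & \Fun(\opcat\Simplices, \CGWH) \dar{\gr \blank} \\
\sSet \rar{\gr \blank} & \CGWH
\end{tikzcd}
\]
and simply cites \cite[Theorem~18.9.11]{Hir} for it, whereas you sketch a direct argument; otherwise the proofs are identical.
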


\begin{proof}
  First note that $\gr M$ and $\gr N$ are modules over $\gr G$ in $\CGWH$ since geometric realization preserves finite products.
  Then the claim follows from, again, the fact that $\gr \blank$ preserves finite products and the fact that the diagram
  \[
  \begin{tikzcd}
  \Fun(\Simplices, \sSet) \dar[swap]{\sr \blank} \rar{{\gr \blank}_*} & \Fun(\Simplices, \CGWH) \dar{\gr \blank} \\
  \sSet \rar{\gr \blank} & \CGWH
  \end{tikzcd}
  \]
  commutes up to natural isomorphism by \cite[Theorem~18.9.11]{Hir}.
\end{proof}

\begin{lemma} \label{lemma:bar_eq}
  Let $(a, f, b) \colon (M, G, N) \to (M', G', N')$ be a map of triples as in \cref{def:bar} in the category of simplicial sets.
  \begin{enumerate}
    \item
    If each of $a$, $f$, and $b$ is a weak equivalence, then so is $\B(a, f, b)$.
    
    \item
    Let $A$ be an abelian group.
    If each of $a$, $f$, and $b$ induces an isomorphism on homology with $A$-coefficients, then so does $\B(a, f, b)$.
  \end{enumerate}
\end{lemma}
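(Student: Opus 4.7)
The plan is to reduce both statements to the corresponding facts for the topological two-sided bar construction via \cref{lemma:bar_top} and then invoke the classical results of May cited in \cite{May}. Since a map of simplicial sets is, by definition, a weak equivalence iff its geometric realization is, and since singular homology of $\gr X$ with coefficients in $A$ agrees with simplicial homology of $X$ with coefficients in $A$, both parts reduce to showing that
\[ \B(\gr a, \gr f, \gr b) \colon \B(\gr M, \gr G, \gr N) \longto \B(\gr{M'}, \gr{G'}, \gr{N'}) \]
is respectively a weak equivalence or an $A$-homology equivalence in $\CGWH$.

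The key step is to observe that the simplicial objects $\B(\gr M, \gr G, \gr N)_\bullet$ and $\B(\gr{M'}, \gr{G'}, \gr{N'})_\bullet$ in $\CGWH$ are proper in the sense of May, i.e.\ their degeneracies are closed cofibrations. Since the unit $1 \in \gr G$ is a vertex of the CW complex $\gr G$, its inclusion is a closed cofibration, and the degeneracies of the bar construction (which simply insert this vertex at a fixed slot) are closed cofibrations as well. Once this is established, part~(1) follows because each levelwise map $\gr a \times (\gr f)^n \times \gr b$ is a weak equivalence of CW complexes, so May's theorem on geometric realizations of proper simplicial spaces yields that the induced map on realizations is a weak equivalence. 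For part~(2) the Künneth theorem shows that each levelwise map is an $A$-homology equivalence; the first-quadrant spectral sequence associated to the skeletal filtration of a proper simplicial space, with $E^1_{p,q} = H_q(\B(\gr M, \gr G, \gr N)_p; A)$ converging to $H_{p+q}(\B(\gr M, \gr G, \gr N); A)$, then gives an isomorphism on $E^1$-pages and hence on the abutments.

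The main obstacle is verifying the properness hypothesis so that May's results apply, but this is handled uniformly by the fact that the geometric realization of a simplicial set carries a canonical CW structure in which every vertex is well-pointed. With that settled, the remainder of the argument is a formal invocation of the machinery in \cite[§§7--8]{May}; alternatively, one could argue directly with the bisimplicial set $\B(M, G, N)_{\bullet,\bullet}$, using that a map of bisimplicial sets which is a levelwise weak equivalence (respectively $A$-homology equivalence) in one variable induces a weak equivalence (respectively $A$-homology equivalence) on diagonals.
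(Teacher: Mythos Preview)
Your proposal is correct and takes essentially the same route as the paper, which also translates to the topological bar construction via \cref{lemma:bar_top} and then cites May's results (\cite[Propositions~7.2 and 7.3]{May} for part~(1), \cite[Theorem~11.14]{May72} for part~(2)). You are simply more explicit about the properness verification and the K\"unneth/spectral-sequence step underlying those citations; the bisimplicial alternative you mention at the end would also work.
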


\begin{proof}
  The first statement follows from \cref{lemma:bar_top} and \cite[Proposition~7.3, (ii)]{May}, since the geometric realization of a simplicial set is a CW-complex and thus $\B(\gr M, \gr G, \gr N)$ and $\B(\gr {M'}, \gr {G'}, \gr {N'})$ have the homotopy type of a CW-complex by \cite[Proposition~7.2]{May}.
  The second statement follows from \cite[Theorem~11.14]{May72} in the same way as the case $A = \ZZ$ in \cite[Proposition~7.3, (i)]{May}.
\end{proof}

\begin{definition}
  Let $G$ be a simplicial monoid.
  Then we define the \emph{classifying space} of $G$ to be the simplicial set $\B G \defeq \B(*, G, *)$.
\end{definition}

\begin{definition}
  Let $G$ be a simplicial monoid and $N$ a left $G$-module in simplicial sets.
  We define the simplicial set
  \[ \hcoinv N G  \defeq  \B(*, G, N) \]
  and call it the \emph{homotopy orbits} of $N$ (and analogously when $N$ is a right $G$-module).
  We write $\pr \colon \hcoinv N G \to \B G$ for the map induced by the constant map $N \to *$.
\end{definition}

\begin{lemma} \label{lemma:bar_fiber_sequence}
  Let $G$ be a simplicial monoid that is group-like (i.e.\ the monoid $\hg 0 (G)$ is a group), and let $M$ and $N$ be a right and a left $G$-module in simplicial sets.
  Then the following map, induced by the constant map $N \to *$, is a quasi-fibration
  \[ \pr  \colon  \B(M, G, N)  \longto  \B (M, G, *) \]
  i.e.\ the following is a fiber sequence
  \[ N  \xlongto{i}  \B(M, G, N)  \xlongto{\pr}  \B (M, G, *) \]
  where $i$ is induced by the inclusion $1 \to G$.
\end{lemma}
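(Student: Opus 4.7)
My plan is to reduce the statement to the analogous result for topological monoids, where quasi-fibration properties of the two-sided bar construction are well-studied by May \cite{May}. By \cref{lemma:bar_top}, applying $\gr \blank$ gives a natural homeomorphism $\gr{\B(M, G, N)} \iso \B(\gr M, \gr G, \gr N)$, compatible with the projection to $\gr{\B(M, G, *)} \iso \B(\gr M, \gr G, *)$. Since a map of simplicial sets is a quasi-fibration (i.e.\ induces long exact sequences of homotopy groups with fibers agreeing with homotopy fibers) precisely when its geometric realization is a quasi-fibration of topological spaces, and since the homotopy fiber and fiber sequence statements can also be checked after geometric realization, it suffices to prove the topological version.

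For the topological version, I would verify the hypotheses needed to invoke \cite[Theorem~7.6]{May}: first, $\gr G$ is a topological monoid that is group-like, since $\hg 0 (\gr G) = \hg 0 (G)$ is a group by assumption; second, the unit inclusion $* \to \gr G$ is a closed Hurewicz cofibration, which holds because $\gr G$ is a CW-complex whose unit is the realization of a $0$-simplex and thus a $0$-cell — more precisely, every simplicial set is proper in the sense of \cite[Definition~11.2]{May72}, so $(\gr G, *)$ is an NDR-pair by \cite[Theorem~11.3]{May72}. With these in place, May's theorem yields that $\pr \colon \B(\gr M, \gr G, \gr N) \to \B(\gr M, \gr G, *)$ is a quasi-fibration.

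To identify the fiber, I would pick a basepoint $m_0$ of $M$ (or, if $M = *$, the canonical one) and compute the strict fiber of $\pr$ over the image of $m_0 \in \B(M, G, *)_0 = M$: this fiber is $\set{m_0} \times N \iso N$, included into $\B(M, G, N)$ via the degeneracy identifying $N = M \times N \supseteq \set{m_0} \times N$ with the level-$0$ piece, which via the simplicial identities coincides with the map $i$ induced by the unit $1 \to G$. Since the map is a quasi-fibration, this strict fiber is weakly equivalent to the homotopy fiber, yielding the desired fiber sequence.

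The main obstacle is really verifying the well-pointedness hypothesis required to apply May's quasi-fibration theorem; once that is in place, the rest is bookkeeping. This is why the simplicial origin matters: passing through $\gr \blank$ automatically produces a CW-structure that makes the unit a cofibration, whereas for a general topological monoid an extra hypothesis would be necessary. An alternative, fully simplicial route would be to apply Rezk's or Waldhausen's criterion for the bar construction to be a left fibration after taking $\hg 0$-group-completion, but the topological reduction above is the most direct given the references already available.
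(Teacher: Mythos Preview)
Your proposal is correct and follows essentially the same route as the paper: reduce to the topological bar construction via \cref{lemma:bar_top} and then invoke \cite[Theorem~7.6]{May}. The paper's proof is the one-line version of exactly this, omitting the verification of the well-pointedness hypothesis and the fiber identification that you spell out.
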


\begin{proof}
  This follows from \cite[Theorem~7.6]{May} and \cref{lemma:bar_top}.
\end{proof}

When $H$ and $G$ are group-like, the homotopy orbits $\hcoinv M G$ can be recovered from the right $H$-module $\B(M, G, H)$ by taking homotopy orbits, as in the following lemma.

\begin{lemma} \label{lemma:hquot_hcoinv}
  Let $f \colon G \to H$ be a map of group-like simplicial monoids and $M$ a right $G$-module.
  Then the following map of simplicial sets, induced by the constant map $H \to 1$,
  \[ \hcoinv {\B(M, G, H)} {H}  =  \B \bigl( \B(M, G, H), H, * \bigr)  \xlongto{\eq}  \B \bigl( \B(M, G, 1), 1, * \bigr)  \iso  \hcoinv M G \]
  is a weak equivalence.
  It is a monoidal natural transformation of strong symmetric monoidal functors from the category of pairs $(f, M)$, equipped with the cartesian monoidal structure, to simplicial sets.
\end{lemma}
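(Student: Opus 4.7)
The plan is to realize the map as the diagonal of a rowwise weak equivalence of bisimplicial sets and apply the bisimplicial realization lemma; monoidality will then follow from \cref{lemma:bar_products}. Concretely, I would first interpret $\B(\B(M, G, H), H, *)$ as the diagonal of the bisimplicial set $W$ with
\[ W_{p, q}  \defeq  M \times G^p \times H^{q+1}, \]
where the $p$-direction encodes the inner bar construction $\B(M, G, H)$ and the $q$-direction encodes the outer bar construction with respect to the right $H$-action on $\B(M, G, H)$ given by multiplication on the rightmost $H$-factor of $\B(M, G, H)_p = M \times G^p \times H$. Similarly, $\hcoinv M G = \B(M, G, *)$ arises as the diagonal of the bisimplicial set $W'$ with $W'_{p, q} \defeq M \times G^p$, constant in $q$, and the map in the statement is induced by the projection $W \to W'$ collapsing all $H$-factors.

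For each fixed $p$, I would identify the row map $W_{p, *} \to W'_{p, *}$, using that the $H$-action only affects the rightmost $H$-factor, with the projection $(M \times G^p) \times \B(H, H, *) \to M \times G^p$. The simplicial set $\B(H, H, *)$ admits the extra degeneracy $s_{-1}(h_0, \ldots, h_q) \defeq (1, h_0, \ldots, h_q)$, which provides a simplicial contraction onto the $0$-simplex $1 \in H$; in particular $\B(H, H, *) \to *$ is a simplicial homotopy equivalence. Thus each row map is a weak equivalence, and the bisimplicial realization lemma (see e.g.\ \cite[\S IV.1]{GJ}) then gives that the induced map on diagonals---which is the map in the statement---is a weak equivalence.

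For the monoidality claim, both functors $(f, M) \mapsto \hcoinv {\B(M, G, H)} H$ and $(f, M) \mapsto \hcoinv M G$ are strong symmetric monoidal with respect to the Cartesian monoidal structure by two applications of \cref{lemma:bar_products}, and the natural transformation in the statement is induced levelwise by the projections of $H$-factors, which commute with these product structure maps.

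The main subtlety will be setting up the bisimplicial interpretation and verifying the rowwise identification carefully, in the presence of the additional simplicial structure on the underlying simplicial sets $M$, $G$, $H$; the extra degeneracy argument on $\B(H, H, *)$ and the bisimplicial realization lemma are both standard.
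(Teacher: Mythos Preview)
Your argument is correct. The paper takes a different, shorter route: it invokes \cref{lemma:bar_top} to transfer the question to the topological setting, where the analogous statement (for $M = *$, and by the same argument for general $M$) is recorded by May \cite[Remarks~8.9]{May}; the monoidality is then left as an elementary verification. Your approach is more self-contained, carrying out the bisimplicial argument directly in simplicial sets via the extra degeneracy on $\B(H,H,*)$ and the realization lemma. This has the pleasant side effect that your argument never uses the group-like hypothesis on either $G$ or $H$. The trade-off is that you must carefully set up the trisimplicial structure and verify that the rowwise identification $W_{p,\bullet} \simeq (M \times G^p) \times \B(H,H,*)$ respects the simplicial structure coming from diagonalizing the $(q,n)$-directions; you rightly flag this as the main subtlety, and it does go through, but it is more bookkeeping than the paper's one-line citation.
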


\begin{proof}
  For topological monoids, it is shown in \cite[Remarks~8.9]{May} that the map is a weak equivalence when $M = *$; the same argument applies for general $M$.
  The simplicial version then follows from \cref{lemma:bar_top}.
  The monoidality is an elementary verification.
\end{proof}

The following lemma identifies a part of the homotopy action of the fundamental group of $\hcoinv M G$ on $M$ associated to the fiber sequence $M \to \hcoinv M G \to \B G$.

\begin{lemma} \label{lemma:bar_fiber_sequence_action}
  Let $G$ be a group-like simplicial monoid and $M$ a $G$-module in simplicial sets.
  Then there is a commutative diagram of topological spaces, functorial in the pair $(G, M)$,
  \[
  \begin{tikzcd}
  \gr M  \rar{\gr i} \dar{\eq}[swap]{\phi} & \gr {\hcoinv M G} \rar{\gr \pr} \dar{\eq}[swap]{\psi} & \gr {\B G} \dar[equal] \\
  F \rar & E \rar[two heads]{q} & \gr {\B G}
  \end{tikzcd}
  \]
  such that $q$ is a Serre fibration, $F$ is the fiber of $q$ and a CW-complex, and the vertical maps are homotopy equivalences.
  Furthermore it fulfills, for any basepoint $m_0 \in M_0$, that $\phi \after \gr{\rho_g}$ and $\lambda_g \after \phi$ are homotopic relative to $m_0$ for all $g \in G_0$ such that $g \act m_0 = m_0$.
  Here $\rho_g$ denotes the action of $g$ on $M$ and $\lambda_g$ denotes the pointed homotopy action on $F$ of the image of $\eqcl {(s_0(g), s_0(m_0))} \in \hg 1 (\hcoinv M G, m_0)$ under $\psi$ (as defined in e.g.\ \cite[p.~20]{MP}\footnote{\label{foot:holonomy}They only define this action for a Hurewicz fibration; however, the same argument works for a Serre fibration whose fiber is a CW-complex.}).
\end{lemma}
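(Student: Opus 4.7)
The plan is to first replace the map $\gr \pr \colon \gr{\hcoinv M G} \to \gr{\B G}$ by a Serre fibration between CW complexes, and then identify the pointed monodromy via an explicit homotopy constructed from $1$-cells of the bar construction. To make the replacement functorial in $(G, M)$, I factor $\pr$ at the simplicial level as an anodyne extension $\psi_0 \colon \hcoinv M G \to E_0$ followed by a Kan fibration $q_0 \colon E_0 \to \B G$ using the small object argument. Setting $E \defeq \gr{E_0}$, $q \defeq \gr{q_0}$, and $\psi \defeq \gr{\psi_0}$, the map $q$ is a Serre fibration since geometric realization sends Kan fibrations to Serre fibrations, and $\psi$ is a homotopy equivalence. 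The fiber $F$ of $q$ over the basepoint is the geometric realization $\gr{F_0}$ of the simplicial fiber $F_0 \subseteq E_0$, hence a CW complex. The map $\phi \colon \gr M \to F$ is induced from the inclusion $i \colon M \to \hcoinv M G$, which factors through $F_0$. Since $\gr \pr$ is a quasi-fibration by \cref{lemma:bar_fiber_sequence} and $q$ is a Serre fibration with total space homotopy equivalent to $\gr{\hcoinv M G}$ via $\psi$, a comparison of fibers shows that $\phi$ is a weak equivalence between CW complexes, hence a homotopy equivalence.

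For the monodromy identification, fix $m_0 \in M_0$ and $g \in G_0$ with $g \act m_0 = m_0$. Using \cref{lemma:bar_top} to identify $\gr{\hcoinv M G}$ with $\B(*, \gr G, \gr M)$, I define a continuous map $H \colon \gr M \times I \to \gr{\hcoinv M G}$ by sending $(m, t)$ to the point at parameter $t \in \Delta^1 = I$ of the $1$-cell in $\B(*, \gr G, \gr M)$ corresponding to the pair $(g, m) \in \gr G \times \gr M$. The face structure of the bar construction yields $H|_{t = 0} = \gr i$, $H|_{t = 1} = \gr i \after \gr{\rho_g}$, and $H(m_0, \blank)$ equals the loop in $\gr{\hcoinv M G}$ corresponding to $\alpha \defeq (s_0 g, s_0 m_0)$; moreover $\gr \pr \after H$ is independent of $m$ and realizes the loop $\gr \pr (\alpha)$ in $\gr{\B G}$. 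Setting $h \defeq \psi \after H \colon \gr M \times I \to E$, one obtains $h|_{t = 0} = \phi$, $h|_{t = 1} = \phi \after \gr{\rho_g}$, $h(m_0, \blank) = \psi(\alpha)$, and $q \after h$ independent of $m$.

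To conclude, I apply the covering homotopy property of the Serre fibration $q$ (valid since $F$ is a CW complex) to obtain a homotopy $\tilde H \colon F \times I \to E$ lifting the constant-in-$F$ homotopy over $\gr \pr (\alpha)$, starting from the inclusion $F \subseteq E$ and satisfying $\tilde H(\phi(m_0), \blank) = \psi(\alpha)$. By definition of the pointed holonomy action, $\tilde H|_{t = 1}$ is the pointed map $\lambda_g$. Both $\tilde H \after (\phi \times \id)$ and $h$ are lifts of the same homotopy $\gr M \times I \to \gr{\B G}$, agree at $t = 0$, and coincide on $\{m_0\} \times I$; by the uniqueness clause of the covering homotopy property (which is what makes the pointed holonomy well-defined), they are homotopic rel this data. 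Restricting to $t = 1$ yields the desired homotopy between $\phi \after \gr{\rho_g}$ and $\lambda_g \after \phi$ rel $m_0$. The main obstacle is this last comparison: while the homotopy $H$ is geometrically natural and the identifications in the second paragraph are essentially bookkeeping, matching $h$ with the abstractly defined pointed holonomy requires careful tracking of basepoint data through the various lifts, and relies on a somewhat subtle uniqueness statement for Serre fibrations with CW fibers.
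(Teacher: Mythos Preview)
Your argument is correct and follows the same overall plan as the paper's, but differs in how the final identification is carried out. The paper stays at the simplicial level: it constructs the analogue of your $H$ as a map $h \colon M \times \lincat 1 \to \hcoinv M G$ directly (no appeal to \cref{lemma:bar_top} is needed), and then, rather than producing a separate representative of $\lambda_g$ and comparing, it \emph{extends} $\psi' \circ h$ along the cofibration $M \hookrightarrow F'$ to a homotopy $t \colon F' \times \lincat 1 \to E'$ using the lifting property of the Kan fibration $q'$. The endpoint $t|_{F' \times \{1\}}$ is then by construction a representative of $\lambda_g$ that satisfies $\lambda_g \circ \phi' = \phi' \circ \rho_g$ on the nose. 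This neatly sidesteps exactly the step you flag as the main obstacle: no uniqueness-of-lifts argument is required, because the representative of $\lambda_g$ is chosen from the outset to restrict correctly on $M$. Your route works as well, but the final comparison---two lifts agreeing at $t=0$ and on $\{m_0\} \times I$ have end-maps homotopic rel $m_0$---is itself one more lifting problem (on a cube $\gr M \times I \times I$) that you have left implicit; the paper's approach simply avoids it.
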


\begin{proof}
  Let $g \in G_0$, set $a \defeq (s_0(g), s_0(m_0)) \in (\hcoinv M G)_1$, and consider the outer commutative square
  \[
  \begin{tikzcd}
  M \cop_{m_0} \lincat 1 \dar[swap]{{\inc[0]} \cop \inc[m_0]} \ar{rr}{i \cop a} & & \hcoinv M G \dar{\pr} \\
  M \times \lincat 1 \rar{\pr} \ar[dashed]{urr}{h} & \lincat 1 \rar{g} & \B G
  \end{tikzcd}
  \]
  of simplicial sets.
  Note that there is a unique dashed lift $h$ such that both triangles commute; in degree $1$ it fulfills $h(m, \id[\lincat 1]) = (s_0(g), m)$, and the composite $h \after \inc[1] \colon M \to \hcoinv M G$ is equal to $i \after \rho_g$.
  
  Now we choose a factorization of the map $\pr \colon \hcoinv M G \to \B G$ of simplicial sets
  \[
  \begin{tikzcd}[column sep = 1.5em]
  \hcoinv M G \rar[tail]{\psi'} & E' \rar[two heads]{q'} & \B G
  \end{tikzcd}
  \]
  as a trivial cofibration $\psi'$ followed by a fibration $q'$.
  Let $j' \colon F' \to E'$ denote the fiber of $q'$ and note that the induced map $\phi' \colon M \to F'$ is a cofibration.
  Hence the left-hand vertical map in the following commutative square is a trivial cofibration
  \[
  \begin{tikzcd}
  F' \times \set 0  \cop_{M \times \set 0} M \times \lincat 1 \ar{rr}{j' \cop (\psi' \after h)} \dar[tail]{\eq}[swap]{\inc} & & E' \dar[two heads]{q'} \\
  F' \times \lincat 1 \rar{\pr} \ar[dashed]{urr}{t} & \lincat 1 \rar{g} & \B G
  \end{tikzcd}
  \]
  and thus we can find a dashed lift $t$ as indicated such that both triangles commute.
  Setting $\lambda_g = \restrict t {F' \times \set 1}$, we obtain that $\lambda_g \after \phi' = \phi' \after \rho_g$.
  
  We set $q \defeq \gr {q'}$, $\psi \defeq \gr {\psi'}$, and $\phi \defeq \gr {\phi'}$.
  Note that $q$ is a Serre fibration, and that $\gr {\lambda_g}$ is in the pointed homotopy class of the action of $\psi_* \eqcl {a}$ on the fiber $F \defeq \gr {F'}$.
  That $\phi$ is a weak equivalence follows from \cref{lemma:bar_fiber_sequence}.
  This finishes the proof.
\end{proof}

The following lemma provides, under certain conditions on a $G$-module $M$, a description of $\hcoinv M G$ as the classifying space of a simplicial submonoid of $M$.

\begin{lemma} \label{lemma:bar_stabilizer}
  Let $G$ be a group-like simplicial monoid, $N$ and $M$ a right and a left $G$-module in simplicial sets, and $m \in M_0$ a $0$-simplex.
  Assume that the map $\rho_m \colon G \to M$, given by acting on $m$, is a fibration of simplicial sets.
  Furthermore denote by $\Stab G (m) \subseteq G$ the stabilizer submonoid, i.e.\ the pullback of $\rho_m$ along the inclusion $\set m \to M$.
  Then the following map, induced by the inclusions, is a weak equivalence
  \begin{equation} \label{eq:bar_stabilizer}
    \B \bigl( N, \Stab G (m), \set m \bigr)  \longto  \B ( N, G, M )_{\eqcl m}
  \end{equation}
  where the subscript $\eqcl m$ denotes the connected components containing an element of the form $(n, m)$ for any $n \in N_0$.
\end{lemma}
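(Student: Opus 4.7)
The plan is a bisimplicial comparison. Set $H \defeq \Stab G (m)$ and consider the bisimplicial set $X_{\bullet, \bullet}$ with $X_{p, q} \defeq N \times G^p \times G \times H^q \times \set m$, with bisimplicial structure matching the iterated bar construction $\B(N, G, \B(G, H, \set m))$: in the $p$-direction, $\B(G, H, \set m)_q = G \times H^q \times \set m$ is regarded as a left $G$-module via multiplication on the leading $G$-factor, and in the $q$-direction, $H$ acts on $G$ on the right via the inclusion $H \subseteq G$. Realizing first in the $p$-direction, for each fixed $q$ the standard extra-degeneracy argument furnishes a natural weak equivalence $\B(N, G, G \times H^q \times \set m) \eq N \times H^q$ sending $(n, g_1, \ldots, g_p, g, h_1, \ldots, h_q, m) \mapsto (n g_1 \cdots g_p g, h_1, \ldots, h_q)$, compatible with the $q$-simplicial structure of $\B(N, H, \set m)$; hence $\sr {X_{\bullet, \bullet}} \eq \B(N, H, \set m)$. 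Realizing first in the $q$-direction instead yields $\B(N, G, \B(G, H, \set m))$.

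Consider the $G$-equivariant map $\phi \colon \B(G, H, \set m) \to M$ given by $(g, h_1, \ldots, h_q) \mapsto g m$ (well-defined since $h_i m = m$); its image lies in the union $M_{\eqcl{G \cdot m}}$ of components of $M$ in the $\hg 0(G)$-orbit of $\eqcl m$. A direct computation of connected components shows that the simplicial subset $\B(N, G, M_{\eqcl{G \cdot m}})$ of $\B(N, G, M)$ agrees with $\B(N, G, M)_{\eqcl m}$. Granting that $\phi$ is a weak equivalence, \cref{lemma:bar_eq} then gives $\B(N, G, \B(G, H, \set m)) \eq \B(N, G, M)_{\eqcl m}$, and combining with the previous paragraph yields a weak equivalence $\B(N, H, \set m) \eq \B(N, G, M)_{\eqcl m}$ in the homotopy category; the identification of this with the map \eqref{eq:bar_stabilizer} induced by the inclusions is a routine naturality check.

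The main obstacle is showing $\phi$ is a weak equivalence. Since a Kan fibration restricts to a Kan fibration over a union of components, the hypothesis that $\rho_m$ is a fibration produces a fiber sequence $H \xlongto{\iota} G \xlongto{\rho_m} M_{\eqcl{G \cdot m}}$. On the other hand, \cref{lemma:bar_fiber_sequence} (applied to the group-like monoid $H$, or its evident left-right dual) yields a fiber sequence $G \to \B(G, H, \set m) \to \B H$ whose extension one step to the left identifies the connecting map $\hg {n+1}(\B H) \iso \hg n(H) \to \hg n(G)$ as the one induced by $\iota$. Thus $\phi$ provides a comparison of these two fiber sequences in which the first two terms agree, so the five-lemma applied to the associated long exact sequences of homotopy groups shows $\phi$ induces an isomorphism on $\hg n$ for all $n \ge 1$ at the basepoint $m$. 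On $\hg 0$, both sides are canonically identified with the coset space $\hg 0(G) / \hg 0(H)$ with $\phi$ inducing the identity, and $G$-equivariance of $\phi$ transports the argument to the remaining components.
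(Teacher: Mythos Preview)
Your approach is genuinely different from the paper's: the paper reduces to $N=*$ via the map of fiber sequences $N\to\B(N,-,-)\to\B(*,-,-)$ and then cites an external result of Berglund--Madsen for that case, whereas you run a self-contained bisimplicial comparison and prove directly that $\phi\colon\B(G,H,\{m\})\to M_{\eqcl{G\cdot m}}$ is a weak equivalence. That is a nice alternative and recovers more than the paper does from scratch.

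There is, however, a real gap at the final step. Your bisimplicial argument produces a zig-zag of weak equivalences
\[
\B(N,H,\{m\})\xlongleftarrow{\alpha}\B\bigl(N,G,\B(G,H,\{m\})\bigr)\xlongrightarrow{\B(N,G,\phi)}\B(N,G,M)_{\eqcl m},
\]
so the two spaces are equivalent in the homotopy category; but the lemma asks that the \emph{specific} map \eqref{eq:bar_stabilizer} be a weak equivalence. The claimed ``routine naturality check'' is not routine: at diagonal level $n$ the two composites $\B(N,G,\phi)$ and $\eqref{eq:bar_stabilizer}\circ\alpha$ send $(n,g_1,\dots,g_n,g,h_1,\dots,h_n)$ to $(n,g_1,\dots,g_n,gm)$ and $(ng_1\cdots g_ng,\,h_1,\dots,h_n,\,m)$ respectively. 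These are different simplices, there is no evident simplicial section of $\alpha$ realising the comparison, and producing a homotopy between the two maps requires an Eilenberg--Zilber-type shuffle argument that you have not supplied. Knowing two spaces are abstractly equivalent does not make a given map between them an equivalence. A clean way to close the gap is to keep your proof that $\phi$ is a $G$-equivariant weak equivalence, then imitate the paper's reduction to $N=*$ via the five-lemma on the diagram of fiber sequences $N\to\B(N,-,-)\to\B(*,-,-)$, and for $N=*$ compare the two fiber sequences over $\B G$ so that \eqref{eq:bar_stabilizer} induces your $\phi$ on homotopy fibers.

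Two smaller points. First, you invoke \cref{lemma:bar_fiber_sequence} for $H$; this requires $H$ to be group-like, which you assert but do not prove. It is true---right multiplication by any $h\in H_0$ is a self-map of $G$ over $M$, hence a weak self-equivalence of the fiber $H$, so $\pi_0(H)$ has left inverses and is a group---but it deserves a sentence. Second, in your five-lemma argument for $\phi$, the square involving the boundary map $\pi_n(M)\to\pi_{n-1}(H)$ and the projection $\pi_n(\B(G,H,*))\to\pi_n(\B H)\cong\pi_{n-1}(H)$ is not verified; this commutativity does hold (it follows from naturality of the long exact sequence applied to the square with vertical maps $j$ and $\rho_m$), but it is worth making explicit rather than folding into the phrase ``$\phi$ provides a comparison''.
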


\begin{proof}
  By definition, the square
  \[
  \begin{tikzcd}
  	\Stab G (m) \rar \dar & \set{m} \dar[hook] \\
  	G \rar{\rho_m} & M
  \end{tikzcd}
  \]
  is a pullback square; it remains a pullback square after applying $\gr \blank$.
  The latter is also a homotopy pullback square since $\rho_m$ being a Kan fibration implies that $\gr {\rho_m}$ is a Serre fibration.
  By \cite[Lemma~4.10]{BM} and \cref{lemma:bar_top}, this implies that, in the case $N = *$, the map \eqref{eq:bar_stabilizer} is a weak equivalence to the connected component containing $m$.
  
  When $N$ is non-trivial, we consider the commutative diagram
  \[
  \begin{tikzcd}
  	N \rar \dar[swap]{\id} & \B \bigl( N, \Stab G (m), \set m \bigr) \dar \rar & \B \bigl( *, \Stab G (m), \set m \bigr) \dar \\
  	N \rar & \B ( N, G, M )_{\eqcl m} \rar & \B(*, G, M)_{\eqcl m}
  \end{tikzcd}
  \]
  whose rows are fiber sequences by \cref{lemma:bar_fiber_sequence}.
  Since the two base spaces are connected and the map between them is a weak equivalence by the first paragraph, this finishes the proof.
\end{proof}

The following lemma shows that a homology equivalence of $G$-modules induces, on homotopy orbits, an isomorphism on homology with respect to all local coefficient systems pulled back from $\B G$.

\begin{lemma} \label{lemma:hcoinv_coho}
  Let $G$ be a simplicial monoid, $f \colon M \to N$ a map of $G$-modules in simplicial sets, $R$ a commutative ring, and $A$ a local coefficient system of $R$-modules on $\B G$.
  If $f$ induces an isomorphism on cohomology with coefficients in $R$, then the induced maps
  \[ \Ho * (\hcoinv M G; {\pr}^* A)  \longto  \Ho * (\hcoinv N G; {\pr}^* A)  \qquad \text{and} \qquad  \Coho * (\hcoinv N G; {\pr}^* A)  \longto  \Coho * (\hcoinv M G; {\pr}^* A) \]
  are isomorphisms of $R$-modules.
\end{lemma}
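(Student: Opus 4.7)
The strategy is to compare Serre spectral sequences with local coefficients. By the functoriality of the bar construction combined with \cref{lemma:bar_fiber_sequence}, the map $f$ extends to a commutative diagram of fiber sequences
\[
\begin{tikzcd}
M \rar \dar{f} & \hcoinv M G \rar{\pr} \dar{f_*} & \B G \dar[equal] \\
N \rar & \hcoinv N G \rar{\pr} & \B G.
\end{tikzcd}
\]
Since \cref{lemma:bar_fiber_sequence} only provides that $\pr$ is a quasi-fibration, I would first replace both rows functorially by honest Serre fibrations (for instance via the mapping path construction, in the same manner as in the proof of \cref{lemma:bar_fiber_sequence_action}) so that the Serre spectral sequence with local coefficients becomes applicable.

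Because $\pr^* A$ is pulled back from $\B G$, its restriction to the fiber over the basepoint is the constant local system whose value is the stalk $A_0$ of $A$. The $E^2$-pages of the two homological Serre spectral sequences are then $\Ho * (\B G; \Ho * (M; A_0))$ and $\Ho * (\B G; \Ho * (N; A_0))$, with the inner coefficients carrying the local system structure on $\B G$ induced by the $G$-action on $M$ and $N$; the cohomological case is entirely analogous. Since $f$ is $G$-equivariant, the map induced by $f$ on $E^2$ is a morphism of local systems on $\B G$, so by the comparison theorem for Serre spectral sequences it suffices to show that $f$ induces an isomorphism on $\Ho * (\blank; A_0)$ and on $\Coho * (\blank; A_0)$.

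This last step is a routine universal-coefficients argument: the hypothesis that $f$ induces an isomorphism on $\Coho * (\blank; R)$ implies, via the mapping cone and its cellular $R$-chain complex (which is a complex of free $R$-modules), that $f$ is also an $R$-homology equivalence; tensoring the cellular chain complex with $A_0$ and, respectively, $R$-linearly dualizing then yields the isomorphisms with coefficients in $A_0$. I expect the only genuine technical subtlety to be arranging the fibrant replacement of $\pr$ naturally enough to produce an actual map of Serre fibrations, but this is standard via any functorial factorization.
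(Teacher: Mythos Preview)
Your approach is essentially the paper's: set up the map of fiber sequences over $\B G$ (via \cref{lemma:bar_fiber_sequence}), run the Serre spectral sequence for (co)homology of the total space with coefficients in $\pr^*A$, observe that $i^*\pr^*A$ is the constant system on the fiber because $\pr\circ i$ is constant, and compare on the $E_2$-page. The paper is terser---it neither spells out a fibrant replacement (it instead cites a version of the spectral sequence that applies directly, using that the fiber is a CW-complex) nor the passage from $R$-coefficients to coefficients in the stalk $A_0$---but the architecture is identical.

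One caveat on your universal-coefficients step: the claim that an $R$-cohomology equivalence is automatically an $R$-homology equivalence (and hence an $A_0$-(co)homology equivalence for every $R$-module $A_0$) is not valid for an arbitrary commutative ring. Passing to the mapping cone, you are asserting that a non-negatively graded complex of free $R$-modules with vanishing $R$-cohomology is acyclic; by the universal coefficient sequence this amounts to every homology module $H$ satisfying $\Hom_R(H,R)=\Ext^1_R(H,R)=0$, which does not force $H=0$ in general. The paper elides this same step. In every application in the paper one has $R=\QQ$, where duality over a field makes cohomology and homology equivalences coincide and the comparison goes through immediately, so nothing downstream is affected; but as stated for a general ring, both arguments would strictly want the hypothesis that $f$ be an $R$-\emph{homology} equivalence.
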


\begin{proof}
  There is a commutative diagram
  \[
  \begin{tikzcd}
  \gr M  \rar{\gr {i_M}} \dar[swap]{\gr f} & \gr {\hcoinv M G} \rar{\gr \pr} \dar & \gr {\B G} \dar[equal] \\
  \gr N  \rar{\gr {i_N}} & \gr {\hcoinv N G} \rar{\gr \pr} & \gr {\B G}
  \end{tikzcd}
  \]
  whose rows are fiber sequences by \cref{lemma:bar_fiber_sequence}.
  Applying the Serre spectral sequence for computing the cohomology of the total space with local coefficients (see e.g.\ \cite[Theorem~2.6]{Bro94}\footnote{The same comment as in \cref{foot:holonomy} applies.}) we obtain a map
  \[
  \begin{tikzcd}
  E_2^{p, q} = \Coho * \bigl( \B G; \Coho * ( N; i_N^* {\pr}^* A ) \bigr) \dar \rar[Rightarrow] & \Coho * ( \hcoinv N G; {\pr}^* A ) \dar \\
  E_2^{p, q} = \Coho * \bigl( \B G; \Coho * ( M; i_M^* {\pr}^* A ) \bigr) \rar[Rightarrow] & \Coho * ( \hcoinv M G; {\pr}^* A )
  \end{tikzcd}
  \]
  of converging first-quadrant spectral sequences.
  Since ${\pr} \after i_N$ is constant, the local coefficient system on $N$ is trivial, and hence the map is an isomorphism on the $E_2$-page.
  Thus it is also an isomorphism on the abutment, which proves the statement for cohomology.
  The proof for homology is analogous.
\end{proof}

\subsection{Block diffeomorphisms and self-equivalences of bundles} \label{sec:block_prelim}

In this subsection, we deduce from work of Krannich \cite[§2]{Kra22} that in dimension at least $6$ the classifying space of block diffeomorphisms of a manifold (relative to the non-empty boundary) is rationally equivalent to a classifying space of self-equivalences of its stable tangent bundle.\footnote{The authors would like to thank Manuel Krannich for pointing out to them how to deduce this statement from his results.}
This generalizes a result of Berglund--Madsen \cite[Theorem~1.1]{BM} for the case that the boundary is a sphere.
We begin by introducing the notation we will use for self-equivalences of bundles.

\begin{definition}
	Let $B \to A \to X$ and $\xi \colon X \to K$ be maps in a simplicial model category.
	We write $\Eaut[A](X)_{\eqcl{\xi}_B} \subseteq \Eaut[A](X)$ for the stabilizer of the element $\xi \in \hmap[B]{X}{K}$.
	When $B = *$, we omit it from the notation.
\end{definition}

\begin{definition} \label{def:bdlaut}
	Let $B \to A \to X$ and $\xi \colon X \to K$ be maps in a simplicial model category, and $G \subseteq \Eaut[A](X)_{\eqcl{\xi}_B}$ a submonoid.
	We write
	\[ \B \bdlaut{A}{B}{K}(\xi)_G  \defeq  \B \bigl( \map[B](X, K)_\xi, \aut[A](X)_G, * \bigr) \]
  where we recall that $\aut[A](X)_G$ denotes the components of $\aut[A](X)$ belonging to $G$ (cf.\ \cref{not:components}).
	We omit $G$ from the notation if it is all of $\Eaut[A](X)_{\eqcl{\xi}_B}$.
\end{definition}

When $K$ is the classifying space for some kind of bundles, the space $\B \bdlaut{A}{B}{K}(\xi)$ should be thought of as the classifying space of self-equivalences of the bundle classified by $\xi$ that are the identity over $B$ and whose underlying map $X \to X$ is the identity on $A$.
This is made precise in \cite[§2]{Ber}.

\begin{notation}
	Given an embedding of compact smooth manifolds $N \subseteq M$, we denote by $\B \BlDiff[N](M)$ the classifying space of the semi-simplicial group of block diffeomorphisms of $M$ that restrict to the identity on $N$, as in \cite[§1.4]{Kra22}.
\end{notation}

\begin{proposition}[Berglund--Madsen, Krannich] \label{prop:block_diff}
	Let $M$ be a simply connected compact smooth manifold of dimension $d \ge 6$ and $\Disk {d - 1} \subseteq \bdry M$ an embedded disk; furthermore write $\bdry_0 M \defeq \bdry M \setminus \openDisk {d - 1}$.
	Then, for any choice of classifying map $\sttang{M} \colon M \to \B \SO$ of the oriented stable tangent bundle of $M$, there is a rational equivalence in the homotopy category of topological spaces
	\[ \Phi \colon \B \BlDiff[\bdry](M)  \xlonghto{\req}  \gr[\big] { \B \bdlaut{\bdry}{\bdry_0}{\B \O}(\sttang{M})_G } \]
	where $G$ is the image of the map $\hg 0 (\BlDiff[\bdry](M)) \to \Eaut[\bdry](M)_{\eqcl{\sttang{M}}_{\bdry_0}}$.
  Moreover $G$ is a finite-index subgroup of $\Eaut[\bdry](M)_{\eqcl{\sttang{M}}_{\bdry_0}}$, and $\Phi$ induces an isomorphism on homology with respect to any local coefficient system in rational vector spaces pulled back from $\B G$.
\end{proposition}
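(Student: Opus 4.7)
The plan is to invoke Krannich's \cite[Theorem~2.2]{Kra22}, which generalizes Berglund--Madsen's \cite[Theorem~1.1]{BM} (the case that $\bdry M$ is a sphere) to manifolds with arbitrary simply connected non-empty boundary. In dimension $d \ge 6$ that result produces a zig-zag of topological spaces which, after inverting rational equivalences, identifies $\B \BlDiff[\bdry](M)$ with the classifying space of the submonoid of tangential self-equivalences of $M$ whose underlying self-equivalence fixes $\bdry M$ pointwise and whose induced automorphism of $\sttang M$ is canonically trivial over $\bdry_0 M$. The passage from ``rel $\bdry M$'' to ``rel $\bdry_0 M$'' on the tangential side reflects the fact that, although a block diffeomorphism fixes $\bdry M$ strictly, the induced self-map of the classifying map $\sttang M$ is only canonically the identity once a collar of the distinguished disk $\Disk{d-1}$ has been removed.

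The first concrete task is to match the output of Krannich's theorem with the uniform notation of \cref{def:bdlaut}. By definition,
\[ \B \bdlaut{\bdry}{\bdry_0}{\B \O}(\sttang M)_G \defeq \B \bigl( \map[\bdry_0](M, \B \O)_{\sttang M},\ \aut[\bdry](M)_G,\ * \bigr), \]
and \cref{lemma:bar_stabilizer} (together with \cref{lemma:bar_top}) presents $\gr \blank$ of this as the classifying space of the homotopy stabilizer of $\sttang M$ inside $\aut[\bdry](M)_G$, which is precisely the space coming out of Krannich's construction. Extracting $\Phi$ from the resulting zig-zag then establishes the rational equivalence.

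The finite-index assertion is built into Krannich's framework: his zig-zag only hits the components of $\aut[\bdry](M)$ realized by genuine block diffeomorphisms, and in our dimension range he shows that these already form a finite-index subgroup of $\Eaut[\bdry](M)_{\eqcl{\sttang M}_{\bdry_0}}$. Alternatively, one can deduce it from $\Phi$ itself: the surjection $\hg 0 (\BlDiff[\bdry](M)) \to G$ becomes a $\QQ$-isomorphism on $\hg 1$ of classifying spaces, and combining this with the results of Espic--Saleh and Kupers recalled in the introduction (by which $\Eaut[\bdry](M)_{\eqcl{\sttang M}_{\bdry_0}}$ is commensurable with an arithmetic group, hence has controlled rational rank) forces $G$ to have finite index.

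For the local coefficient statement, let $A$ be a rational local system on $\B G$. Both $\B \BlDiff[\bdry](M)$ and $\gr{\B \bdlaut{\bdry}{\bdry_0}{\B \O}(\sttang M)_G}$ naturally fiber over $\B G$, with fibers the coverings associated to the kernels of the surjections onto $G$, and $\Phi$ is (up to the zig-zag) a map of such fibrations. On the fibers the pulled-back local system becomes trivial and $\Phi$ restricts to a rational homology equivalence, so a Serre spectral sequence comparison over $\B G$, entirely analogous to the proof of \cref{lemma:hcoinv_coho}, produces the asserted isomorphism. The main obstacle I foresee is not any individual step but rather the careful bookkeeping of conventions required to pass from the formulation in \cite{Kra22} to \cref{def:bdlaut} and to verify that the components indexed by $G$ agree literally on both sides.
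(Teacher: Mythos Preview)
Your high-level plan matches the paper's, but two points require substantially more than the ``bookkeeping of conventions'' you anticipate.

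First, the finite-index claim is not established by Krannich. The paper invokes a separate result of Kupers \cite[Theorem~4.2~(2)]{Kup} (stated there for block homeomorphisms; the oriented smooth case is analogous, and for closed manifolds goes back to Sullivan). Your alternative via $\QQ$-isomorphisms and commensurability with arithmetic groups is not the route taken and would need care to avoid circularity.

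Second, Krannich's \cite[Theorem~2.2]{Kra22} does not land in the homotopy stabilizer you describe: its target is a \emph{block} version of the tangential self-equivalences, in Krannich's notation $\B\widetilde{\mathrm{hAut}}_{\bdry_0}(\sttang M, \Disk{d-1})^\tau$. Getting from there to the bar construction of \cref{def:bdlaut} involves genuine mathematical steps, not just translation: a preliminary comparison \emph{before} invoking the theorem, whose fiber is a space of block diffeomorphisms of $\Disk{d-1}$ with finite homotopy groups by Kervaire--Milnor; a component adjustment \emph{after} the theorem (the map $\iota$ in the paper's proof), shown to be a rational equivalence via a nilpotence argument for the mapping-space fiber; Krannich's \cite[Lemmas~A.3 and A.4]{Kra22} to pass from block to ordinary tangential self-equivalences; a further fibration argument to reach $\mathrm{hAut}_{\bdry_0}(\sttang M; \bdry M)$; and finally a homotopy-pullback square together with the universality of the stable bundle, the method of \cite[Theorem~2.3]{Ber}, and \cref{lemma:compact-open,lemma:bar_top} to arrive at the bar-construction model. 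Your direct appeal to \cref{lemma:bar_stabilizer} for this last step would in any case require first replacing $\sttang M$ by a fibration.

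The local-coefficient statement is handled in the paper by checking, at each map of the resulting zig-zag, that the homotopy fiber is rationally acyclic (finite homotopy groups, or virtually nilpotent and rationally a point), and then applying the Serre spectral sequence with local coefficients at that stage. This is close in spirit to your proposal but carried out piecewise rather than once at the end.
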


\newcommand{\hAut}{\mathrm{hAut}}
\newcommand{\BlhAut}{\widetilde{\mathrm{hAut}}}
\newcommand{\Bun}{\mathrm{Bun}}

\begin{proof}
  The analogue for block \emph{homeomorphisms} of the statement that $G$ is a finite-index subgroup of $\Eaut[\bdry](M)_{\eqcl{\sttang{M}}_{\bdry_0}}$ was proven by Kupers \cite[Theorem~4.2~(2)]{Kup}; the proof of the (oriented) smooth case is analogous, replacing $\mathrm{Top}$ by $\SO$ everywhere (for closed manifolds, this is originally due to Sullivan \cite[Theorem~13.3]{Sul}).
  
	Throughout the rest of this proof, we will use the notation of \cite[§1]{Kra22}.
	In the pullback square
	\[
	\begin{tikzcd}
		\BlDiff[\bdry](M)_\bullet \rar \dar & \BlDiff[\bdry](\Disk{d - 1})_\bullet \dar \\
		\BlDiff(M)_\bullet \rar & \BlDiff(\bdry M)_\bullet
	\end{tikzcd}
	\]
	the lower horizontal map a Kan fibration (this can be seen by generalizing the argument of \cite[§2.2]{HLLR}), and hence the upper horizontal map is one as well.
	Thus there is a fiber sequence
	\[ \BlDiff[\bdry](\Disk{d - 1})_{\id}  \longto  \B \BlDiff[\bdry](M)  \xlongto{\beta}  \B \BlDiff[\bdry](M)_C \]
	where $C$ is the collection of connected components in the image of $\hg 0 (\BlDiff[\bdry](M))$.
	Note that $\hg k (\BlDiff[\bdry](\Disk{d - 1}))$ is, for each $k \ge 0$, isomorphic to the group of homotopy $(d + k)$-spheres (see e.g.\ \cite[§25.2]{KupBook}) and hence finite by work of Kervaire--Milnor \cite[Theorem~1.2]{KM}.
	Thus the subgroup $C \subseteq \hg 0 (\BlDiff[\bdry](M))$ has finite index and $\beta$ is a rational homotopy equivalence.
	Moreover $\BlDiff[\bdry](\Disk{d - 1})_{\id}$ is virtually nilpotent and thus has the rational homology of a point by \cref{lemma:req}; using the Serre spectral sequence for computing the homology of the total space with local coefficients (see e.g.\ \cite[Theorem~2.6]{Bro94}), this implies that $\beta$ induces an isomorphism on homology with coefficients in any local coefficient system with values in rational vector spaces.

	By \cite[Theorem~2.2]{Kra22} (and the fact that $C$ has finite index), the homotopy fiber of the block derivative map of \cite[§1.9]{Kra22}
	\[ \delta  \colon  \B \BlDiff[\bdry](M)_C  \longto  \B \BlhAut_{\bdry_0} (\sttang{M}, \Disk{d - 1})^\tau_D \]
	has finite homotopy groups (Krannich proves this for the non-oriented stable tangent bundle; however, since $\bdry_0 M$ is non-empty, every self-equivalence of $\sttang{M}$ relative to $\bdry_0 M$ is orientation preserving).
	Here $D$ denotes the collection of connected components in the image of $\hg 0 (\BlDiff[\bdry](M))$; in particular the homotopy fiber of $\delta$ is connected.
	Hence, as above, the map $\delta$ is a rational homotopy equivalence and it induces an isomorphism on homology with coefficients in any local coefficient system with values in rational vector spaces.
	
	Now consider the following commutative diagram of horizontal fiber sequences
	\[
	\begin{tikzcd}
		\widetilde F \rar \dar & \B \BlDiff[\bdry] (M)_C \rar \dar{\req} & \B \BlhAut_{\bdry_0} (M, \Disk{d - 1})_G \dar[equal] \\
		F \rar \dar[swap]{f} & \B \BlhAut_{\bdry_0} (\sttang{M}, \Disk{d - 1})^\tau_D \rar \dar{\iota} & \B \BlhAut_{\bdry_0} (M, \Disk{d - 1})_G \dar[equal] \\
		F' \rar & \B \BlhAut_{\bdry_0} (\sttang{M}, \Disk{d - 1})^\tau_H \rar & \B \BlhAut_{\bdry_0} (M, \Disk{d - 1})_G
	\end{tikzcd}
	\]
	where $G$ is the collection of connected components in the image of $\hg 0 (\BlDiff[\bdry](M))$, and $H$ denotes the collection of those components that map to $G$.
	By \cite[Proof of Theorem~2.2]{Kra22}, there is a weak equivalence $F' \eq \Map_{\bdry_0}(M, \B \O)_{\sttang{M}}$ and the fiber of $\widetilde F \to F'$ has finite $\hg 0$ (here we again use that $C$ has finite index).
	Hence $\hg 1 (\iota)$ is the inclusion of a finite-index subgroup, and so $\iota$ is a rational homotopy equivalence.
	Moreover, the space $F'$ can be seen to be nilpotent by combining \cite[Ch.~II, Theorems~2.2 and 2.5~(ii)]{HMR} (and \cref{lemma:compact-open}).
  Since $f$ is a finite covering, this implies that $F$ is nilpotent and hence $f$ is a rational homology equivalence.
  By the same argument as in the proof of \cref{lemma:hcoinv_coho}, this implies that $\iota$ induces an isomorphism on homology with respect to any local coefficient system in rational vector spaces pulled back from $\B G$.
	
	By \cite[Lemmas~A.3 and A.4]{Kra22} the canonical map
	\[ \hAut_{\bdry_0} (\sttang{M}, \Disk{d - 1})  \xlongto{\eq}  \BlhAut_{\bdry_0} (\sttang{M}, \Disk{d - 1})^\tau \]
	is a weak equivalence.
	Since the following two maps are Serre fibrations
	\[ \hAut_{\bdry_0} (\sttang{M}, \Disk{d - 1})  \longto  \hAut_{\bdry_0} (M, \Disk{d - 1})  \longto  \hAut_{\bdry} (\Disk{d - 1}) \]
	and the target of the composite map is contractible, the inclusion of its fiber
	\[ \hAut_{\bdry_0} (\sttang{M}; \bdry M)  \xlongto{\eq}  \hAut_{\bdry_0} (\sttang{M}, \Disk{d - 1}) \]
	is also a weak equivalence.
	Now consider the commutative diagram
	\[
	\begin{tikzcd}
		\hAut_{\bdry_0} (\sttang{M}; \bdry M)_H \rar \dar & \Bun_{\bdry_0}(\sttang{M}, \gamma_\infty) \dar \\
		\hAut_{\bdry} (M)_G \rar{(\sttang{M})_*} & \Map_{\bdry_0}(M, \B \O)_{\sttang{M}}
	\end{tikzcd}
	\]
	where $H$ and $G$ are as above and $\gamma_\infty$ denotes the universal stable vector bundle over $\B \O$.
	The diagram is a pullback square and the right-hand vertical map is a Serre fibration; hence the diagram is a homotopy pullback square as well.
	Moreover the top right-hand corner is weakly contractible by the universality of $\gamma_\infty$.
	Then, by the same argument as in \cite[Proof of Theorem~2.3]{Ber}, the two maps
	\[
	\begin{tikzcd}
		\B \bigl( \Bun_{\bdry_0}(\sttang{M}, \gamma_\infty), \hAut_{\bdry_0} (\sttang{M}; \bdry M)_H, * \bigr) \rar{\eq} \dar[swap]{\eq} & \B \hAut_{\bdry_0} (\sttang{M}; \bdry M)_H \\
		\B \bigl( \Map_{\bdry_0}(M, \B \O)_{\sttang{M}}, \hAut_{\bdry} (M)_G, * \bigr)
	\end{tikzcd}
	\]
	are weak equivalences.
	The bottom space is weakly equivalent to $\gr { \B \bdlaut{\bdry}{\bdry_0}{\B \O}(\sttang{M})_G }$ by \cref{lemma:compact-open,lemma:bar_top}, which completes the proof.
\end{proof}

\begin{remark} \label{rem:BlDiff_part_boundary}
  Krannich's result \cite[Theorem~2.2]{Kra22} in fact yields a rational description of the classifying space $\B \BlDiff[\bdry_1](M)$ for certain compact submanifolds $\bdry_1 M \subseteq \bdry M$ of codimension $0$.
  We chose to restrict ourselves to the case $\bdry_1 M = \bdry M$ for simplicity, but it seems plausible that our results (such as \cref{thm:block_diff} below) could be extended to the general case.
  A first step in this direction would be to construct rational models for spaces of self-equivalences of simply connected finite CW-complexes $X$ that restrict to a self-equivalence on a given simply connected subcomplex $C \subseteq X$.
  As far as we know, such models have not yet appeared in the literature.
\end{remark}

We furthermore note that the rational equivalence of the preceding proposition is compatible with boundary connected sums, in the following sense.

\begin{proposition} \label{prop:block_diff_gluing}
	Let $M$ and $N$ be simply connected compact smooth manifolds of dimension $d \ge 6$ and $D_M \subset \bdry M$ and $D_N \subset \bdry N$ embedded $(d-1)$-disks; write $\bdry_0 M \defeq \bdry M \setminus \interior{D}_M$ and $\bdry_0 N \defeq \bdry N \setminus \interior{D}_N$.
	Furthermore let $M \bdryconnsum N$ be the boundary connected sum of $M$ and $N$, formed along disks disjoint from $D_M$ and $D_N$, let $D_{M \bdryconnsum N} \subset \bdry(M \bdryconnsum N)$ be an embedded $(d-1)$-disk containing both $D_M$ and $D_N$, and write $\bdry_0 (M \bdryconnsum N) \defeq \bdry (M \bdryconnsum N) \setminus \interior{D}_{M \bdryconnsum N}$.
	Moreover choose a classifying map $\sttang{M \bdryconnsum N} \colon M \bdryconnsum N \to \B \SO$ of the oriented stable tangent bundle of $M \bdryconnsum N$, and let $\sttang{M}$ and $\sttang{N}$ be its restrictions to $M$ and $N$, respectively.
	Then the following diagram, with horizontal maps (products of) those of \cref{prop:block_diff}, commutes in the homotopy category of topological spaces
	\[
	\begin{tikzcd}
		\B \BlDiff[\bdry](M) \times \B \BlDiff[\bdry](N) \rar[squiggly]{\req} \dar & \gr[\big] { \B \bdlaut{\bdry}{\bdry_0}{\B \O}(\sttang{M})_{G_M} } \times \gr[\big] { \B \bdlaut{\bdry}{\bdry_0}{\B \O}(\sttang{N})_{G_N} } \dar \\
		\B \BlDiff[\bdry](M \bdryconnsum N) \rar[squiggly]{\req} & \gr[\big] { \B \bdlaut{\bdry}{\bdry_0}{\B \O}(\sttang{M \bdryconnsum N})_{G_{M \bdryconnsum N}} }
	\end{tikzcd}
	\]
	where $G_M$ is the image of the map $\hg 0 (\BlDiff[\bdry](M)) \to \Eaut[\bdry](M)$ and analogously for $G_N$ and $G_{M \bdryconnsum N}$.
\end{proposition}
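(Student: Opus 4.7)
The plan is to verify commutativity by tracing through the zig-zag defining $\Phi$ in the proof of \cref{prop:block_diff} and checking naturality at each step. The left-hand vertical map is induced by the gluing of block diffeomorphisms: a pair in $\BlDiff[\bdry](M) \times \BlDiff[\bdry](N)$ extends by the identity across the connect-sum region to give a block diffeomorphism of $M \bdryconnsum N$ relative to its boundary. The right-hand vertical map is induced by the analogous gluing of self-equivalences of the stable tangent bundles along the common disk, extended by the identity over the connect-sum region; here the identification of $\sttang{M \bdryconnsum N}$ as restricting to $\sttang{M}$ and $\sttang{N}$ is precisely what makes this well-defined up to simplicial homotopy.

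First I would verify naturality of the initial fiber sequence $\BlDiff[\bdry](\Disk{d-1})_{\id} \to \B \BlDiff[\bdry](M) \to \B \BlDiff[\bdry](M)_C$. Since the fiber is independent of the manifold and the quotient map $\beta$ is a rational equivalence, the diagram with $M$, $N$, and $M \bdryconnsum N$ on the corresponding fiber sequences commutes on the nose. Second, I would check that Krannich's block derivative map $\delta$ from \cite[§1.9]{Kra22} is natural under extending a block diffeomorphism by the identity: the derivative of such an extension is itself the identity over the added region, which is exactly the gluing operation on the $\BlhAut$ side. Third, I would handle the chain of weak equivalences replacing $\BlhAut_{\bdry_0}(\sttang{M}, \Disk{d-1})^\tau$ by $\gr{\B \bdlaut{\bdry}{\bdry_0}{\B \O}(\sttang{M})}$. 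The equivalence $\hAut_{\bdry_0}(\sttang{M}; \bdry M) \eq \BlhAut_{\bdry_0}(\sttang{M}, \Disk{d-1})^\tau$ of \cite[Lemmas~A.3 and A.4]{Kra22} is natural under extending self-equivalences by the identity, and the pullback square involving $\Bun_{\bdry_0}(\sttang{M}, \gamma_\infty)$ together with the bar construction identification are obtained by functorial constructions applied to the data $(M, \bdry_0 M, \sttang{M})$; hence they commute with the corresponding gluing maps.

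The main obstacle is the second step, namely showing that $\delta$ is natural with respect to the boundary connected sum operation. This is a geometric statement about Krannich's construction, which is not explicitly addressed in \cite{Kra22}, but follows from unwinding the definition of the block derivative in terms of the derivative of a smooth representative: since the connect-sum region lies in the interior complement of both $M$ and $N$ and the gluing is by the identity there, both the geometric and the algebraic extensions agree over this region, while over $M$ and $N$ separately the diagram commutes by construction. Everything else is essentially formal naturality of the bar construction (\cref{lemma:bar_products,lemma:bar_top}) and of taking simplicial mapping spaces (\cref{lemma:compact-open}).
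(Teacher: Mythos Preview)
Your proposal is correct and follows exactly the approach the paper takes: the paper's own proof is the single sentence ``This is evident from the description of the zig-zag in the proof of \cref{prop:block_diff},'' and what you have written is precisely the unpacking of that sentence---checking naturality of each step (the passage to $\B\BlDiff[\bdry](M)_C$, Krannich's block derivative map, and the chain of equivalences down to the bar-construction model) under extension by the identity across the connect-sum region.
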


\begin{proof}
	This is evident from the description of the zig-zag in the proof of \cref{prop:block_diff}.
\end{proof}

\subsection{Local systems}

In this subsection, we recall the notion of a local system on a simplicial set, following Halperin \cite[§12]{Hal} (see also \cite[§3.8.1]{BZ}).
It is more general than the usual notion of a local coefficient system defined as a functor out of the fundamental groupoid (see \cref{rem:local_system_groupoid} below).
On the way, we reformulate everything using more categorical language, and keep careful track of monoidal structures.
Our main application is to recall the definition, and prove properties, of polynomial differential forms with local coefficients; this is contained in the next subsection.

\begin{definition}
  Let $X$ be a simplicial set, and $\cat C$ a category.
  A \emph{local system on $X$ with values in $\cat C$} is a functor
  \[ F \colon \opcat{(\Simplices \comma X)}  \longto  \cat C \]
  where $\comma$ denotes the comma category and we consider $\Simplices$ as a subcategory of $\sSet$ via the Yoneda embedding.
  We denote by $\Loc[\cat C](X) \defeq \Fun(\opcat{(\Simplices \comma X)}, \cat C)$ the category of local systems on $X$ with values in $\cat C$.
  Given a map $g \colon Y \to X$ of simplicial sets, we obtain a local system $g^* F \defeq F \after \opcat{(g_*)}$ on $Y$; this yields a canonical functor $g^* \colon \Loc[\cat C](X) \to \Loc[\cat C](Y)$.
  For an object $C \in \cat C$, we denote by $\ul C$ the constant local system with value $C$.
  
  When $\cat C$ is complete, we furthermore define the \emph{global sections} of $F$ to be the object
  \[ F(X)  \defeq  \lim{} F  \in  \cat C \]
  which yields a functor $\Loc[\cat C](X) \to \cat C$.
  More generally, when $g \colon Y \to X$ is a simplicial set over $X$, we write $F(g)$ and sometimes $F(Y)$ for $(g^* F)(Y)$; this yields a canonical functor $F(\blank) \colon \opcat{(\sSet \comma X)} \to \cat C$.
  (Note that, when restricted to $\opcat{(\Simplices \comma X)}$, this functor agrees with $F$, so that the notation does not conflict.)
  
  When $\cat C$ is a monoidal category and $F$ and $G$ are local systems with values in $\cat C$ on $X$ and $Y$, respectively, then we define the \emph{external tensor product} $F \extensor G$ to be the functor
  \[ \opcat{\bigl( \Simplices \comma (X \times Y) \bigr)}  \iso  \opcat{(\Simplices \comma X)} \times \opcat{(\Simplices \comma Y)}  \xlongto{F \times G}  \cat C \times \cat C  \xlongto{\tensor}  \cat C \]
  which is a local system on $X \times Y$.
  When $X = Y$, we write $F \tensor G \defeq \Delta^*(F \extensor G)$ for the \emph{tensor product} of $F$ and $G$ (which is exactly the pointwise one); this canonically extends to a symmetric monoidal structure on $\Loc[\cat C](X)$.
\end{definition}

\begin{definition}
  Let $X$ be a simplicial set and $F$ a local system on $X$ with values in a category $\cat C$.
  \begin{itemize}
    \item
    When $\cat C$ is complete, we say that $F$ is \emph{extendable} when, for all $n \in \NN$ and all maps $\sigma \colon \lincat n \to X$, the map $(\sigma^* F)(\lincat n) \to (\sigma^* F)(\bdry \lincat n)$ is an epimorphism.
    
    \item
    We say that $F$ is \emph{groupoidal} if, for each map $\alpha$ of $\opcat{(\Simplices \comma X)}$, the map $F(\alpha)$ is an isomorphism.\footnote{This is called a ``local system of coefficients'' by Halperin.}
    
    \item
    When $\cat C$ is the category of cochain complexes, we say that $F$ is \emph{quasi-groupoidal} if $\Coho * \after F$ is groupoidal.\footnote{This is called a ``local system of differential coefficients'' by Halperin.}
  \end{itemize}
\end{definition}

\begin{remark} \label{rem:local_system_groupoid}
  By \cite[§III, Theorem~1.1]{GJ}, there is an equivalence of categories from the localization of $\Simplices \comma X$ at all maps to the fundamental groupoid of $X$.
  In particular, a groupoidal local system $F$ on $X$ is equivalently a functor $\widetilde F$ out of the fundamental groupoid of $X$ (this is what we called a ``local coefficient system'' in the preceding subsections).
\end{remark}

\begin{definition}
  Let $\cat C$ be a category.
  We denote by $\Loc[\cat C]$ the Grothendieck construction of the functor $\Loc[\cat C](\blank) \colon \opcat{\sSet} \to \Cat$.
  More explicitly, it is the category such that
  \begin{itemize}
    \item
    objects are pairs $(X, F)$ of a simplicial set $X$ and a local system $F$ on $X$ with values in $\cat C$,
    \item
    morphisms $(X, F) \to (Y, G)$ are given by pairs $(f, \phi)$ of a map of simplicial sets $f \colon Y \to X$ and a natural transformation $\phi \colon f^* F \to G$,
    \item
    and whose composition is defined in the obvious way.
  \end{itemize}
  When $\cat C$ is complete, the assignment $(X, F) \mapsto F(X)$ can be canonically extended to a functor $\Gamma \colon \Loc[\cat C] \to \cat C$.
  Moreover we denote by $\Locgrpd[\cat C] \subseteq \Loc[\cat C]$ the full subcategory spanned by those objects $(X, F)$ such that $F$ is groupoidal.
  
  When $\cat C$ is a (symmetric) monoidal category, then the assignment
  \[ \bigl( (X,F), (Y,G) \bigr)  \longmapsto  (X \times Y, F \extensor G) \]
  can be canonically extended to a (symmetric) monoidal structure on $\Loc[\cat C]$ with unit object $(\pt, \ul \unit)$, and the functor $\Gamma$ can be equipped with a lax (symmetric) monoidal structure.
  
  When $\cat C$ is the category of cochain complexes, we omit it from the notation.
\end{definition}

\begin{remark} \label{rem:Loc_monoid}
  Let $\cat C$ be a (symmetric) monoidal category, $X$ a simplicial set, and $F, G \in \Loc[\cat C](X)$.
  Then the maps
  \begin{gather*}
    (\pt, \ul \unit)  \xlongto{\const}  (X, \ul \unit) \\
    (X, F) \tensor (X, G)  =  (X \times X, F \extensor G)  \xlongto{\Diag[X]}  (X, F \tensor G)
  \end{gather*}
  equip the inclusion $\Loc[\cat C](X) \to \Loc[\cat C]$ with the structure of a lax (symmetric) monoidal functor.
  In particular, when $A$ is a (commutative) monoid in $\Loc[\cat C](X)$ (or, equivalently, when $A$ is a local system on $X$ with values in the category of (commutative) monoids in $\cat C$), then $(X, A)$ is a (commutative) monoid in $\Loc[\cat C]$.
\end{remark}

We will need the following notion of pushforward (or direct image) of a local system along a map of simplicial sets.
A special case of this is contained in \cite[19.16]{Hal}.

\begin{definition} \label{def:pushforward}
  Let $f \colon X \to Y$ be a map of simplicial sets, and $F$ a local system on $X$ with values in a complete category $\cat C$.
  Then we denote by $f_* F$ the local system on $Y$ given by the right Kan extension of $F$ along $\opcat {(f_*)} \colon \opcat{(\Simplices \comma X)} \to \opcat{(\Simplices \comma Y)}$.
  More explicitly we have, for $\sigma \colon \lincat n \to Y$, that $(f_* F)(\sigma) = F(\sigma^* X)$ where $\sigma^* X$ denotes the pullback of $f$ along $\sigma$.
  This construction canonically assembles into a functor from the pullback $\cat P_{\cat C}$ of the cospan of categories
  \[ \Fun(\lincat 1, \opcat \sSet)  \xlongto{\ev[1]}  \opcat \sSet  \longfrom  \Loc[\cat C] \]
  to $\Loc[\cat C]$ over the functor $\ev[0]$.
  
  When $\cat C$ is (symmetric) monoidal, then we equip $\cat P_{\cat C}$ with the pointwise (symmetric) monoidal structure and the functor above with the canonical structure of a lax (symmetric) monoidal functor: for maps $f \colon X \to Y$ and $f' \colon X' \to Y'$ and local systems $F$ and $F'$ over $X$ and $X'$, respectively, the structure map
  \[ \bigl( Y, f_*(F) \bigr) \tensor \bigl( Y', f'_*(F') \bigr)  =  \bigl( Y \times Y', f_*(F) \extensor f'_*(F') \bigr)  \longto \bigl( Y \times Y', (f \times f')_*(F \extensor F') \bigr) \]
  is given, at $\sigma = (\sigma_1, \sigma_2) \colon \lincat n \to Y \times Y'$, by the composite
  \[ F(\sigma_1^* X) \tensor F'(\sigma_2^* X')  \longto  (F \extensor F') \bigl( \sigma_1^* (X) \times \sigma_2^* (X') \bigr)  \longto  (F \extensor F') \bigl( \sigma^* (X \times X') \bigr) \]
  where we use that $\sigma = (\sigma_1 \times \sigma_2) \after \Diag[\lincat n]$.
\end{definition}

The following lemma is a generalization of \cite[Lemma~19.21]{Hal} with a more conceptual proof.

\begin{lemma} \label{lemma:pushforward_global_sections}
  Let $f \colon X \to Y$ be a map of simplicial sets, and $F$ a local system on $X$ with values in a complete category $\cat C$.
  Then there is a canonical natural isomorphism $\psi \colon F(X) \iso (f_* F)(Y)$ of functors $\cat P_{\cat C} \to \cat C$, where $\cat P_{\cat C}$ is as in \cref{def:pushforward}.
  When $\cat C$ is (symmetric) monoidal, then $\psi$ is a monoidal natural isomorphism between lax (symmetric) monoidal functors.
\end{lemma}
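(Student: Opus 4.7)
The plan is to realize the global sections functor itself as a pushforward along the map to the terminal simplicial set and then invoke composition of right Kan extensions. For any simplicial set $Z$, write $p_Z \colon Z \to \pt$ for the unique map to the terminal object. Local systems on $\pt$ are functors $\opcat{(\Simplices \comma \pt)} = \opcat \Simplices \to \cat C$, and the explicit formula of \cref{def:pushforward} identifies $(p_Z)_* F$ with the functor $\lincat n \mapsto F(\lincat n \times Z)$. Since $\lincat 0$ is terminal in $\Simplices$, the maps $F(\lincat 0 \times Z) \to F(\lincat n \times Z)$ induced by the projections assemble into a universal cone, so the canonical map $F(Z) \iso F(\lincat 0 \times Z) \to ((p_Z)_* F)(\pt)$ is an isomorphism. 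In other words, the global sections on $Z$ factor canonically as the pushforward along $p_Z$ followed by the global sections functor on $\pt$.

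Since the pushforward is by construction a right Kan extension and such extensions compose, there is a canonical natural isomorphism $(p_Y)_* \after f_* \iso (p_Y \after f)_* = (p_X)_*$. Applying the previous paragraph to both $X$ and $Y$, we then obtain the desired canonical natural isomorphism
\begin{equation*}
  \psi  \colon  F(X)  \xlongto{\iso}  ((p_X)_* F)(\pt)  \xlongto{\iso}  ((p_Y)_* (f_* F))(\pt)  \xlongto{\iso}  (f_* F)(Y),
\end{equation*}
which is natural in $(f, F) \in \cat P_{\cat C}$ because each of its three constituent isomorphisms is.

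For the monoidal refinement, each of the three constituents carries a canonical lax (symmetric) monoidal structure: pushforward via the structure map described in \cref{def:pushforward}, its iterated composition via the standard juxtaposition of lax monoidal functors, and the identification of $F(Z)$ with $((p_Z)_* F)(\pt)$ via the canonical isomorphism $\lincat 0 \times Z \iso Z$ of simplicial sets. The main obstacle of the proof is a careful diagram chase showing that these structures are mutually compatible, so that $\psi$ is a monoidal natural isomorphism; this amounts to unwinding the pointwise formula of \cref{def:pushforward} for the structure map of $(f \times f')_*(F \extensor F')$ and exploiting the universal property of the pullbacks $\sigma^* Z$ to identify $(\sigma \times \sigma')^* (X \times X')$ with $\sigma^* X \times \sigma'^* X'$.
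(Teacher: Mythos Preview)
Your proof is correct and follows essentially the same approach as the paper: both arguments reduce to the observation that limits are right Kan extensions and that right Kan extensions compose. The paper factors the limit directly through the terminal category, whereas you route through the terminal simplicial set $\pt$ and then use that $\lincat 0$ is initial in $\opcat{\Simplices}$ to identify global sections over $\pt$ with evaluation; this extra step is harmless and the resulting isomorphism agrees with the paper's explicit description of $\psi$ via the cone maps $F(X) \to F(\sigma^* X)$.
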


\begin{proof}
  The existence of $\psi$ follows from the fact that a limit is a right Kan extension to the terminal category, and the fact that right Kan extensions compose.
  Explicitly, the map $\psi$ is assembled from the maps $\tilde \sigma^* \colon F(X) \to F(\sigma^* X) = (f_* F)(\sigma)$, where $\sigma \colon \lincat n \to Y$ is any map and $\tilde \sigma \colon \sigma^* X \to X$ is the projection.
  Using this description one sees that $\psi$ is monoidal by unwinding the definitions.
\end{proof}

The following lemma is a generalization of part of \cite[Lemma~19.17]{Hal}.

\begin{lemma} \label{lemma:pushforward_extendable}
  Let $\cat C$ be a complete category that admits a faithful, limit-preserving functor to the category of sets.
  Furthermore, let $f \colon X \to Y$ be a map of simplicial sets, and $F$ an extendable local system on $X$ with values in $\cat C$.
  Then $f_* F$ is extendable.
\end{lemma}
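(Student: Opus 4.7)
The plan is to reduce the claim to a general extension property for extendable local systems along arbitrary subcomplex inclusions, and then establish that by a relative skeletal induction combined with the faithful functor $G \colon \cat C \to \Set$.

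First I would show that it suffices to treat the case $Y = \lincat n$. Given $\sigma \colon \lincat n \to Y$, a direct computation from \cref{def:pushforward} yields $\sigma^*(f_* F) = g_* \tilde F$, where $g \colon \sigma^* X \to \lincat n$ is the pullback of $f$ along $\sigma$ and $\tilde F$ is the pullback of $F$ along the projection $\sigma^* X \to X$. Pullback manifestly preserves extendability, since the condition is defined simplex-wise and $(h^*F)(\tau) = F(h\tau)$ for any $h$ and $\tau$. By \cref{lemma:pushforward_global_sections}, the map $(\sigma^*(f_* F))(\lincat n) \to (\sigma^*(f_* F))(\bdry \lincat n)$ is then naturally identified with the restriction $\tilde F(\sigma^* X) \to \tilde F(W)$, where $W \defeq \sigma^* X \times_{\lincat n} \bdry \lincat n$ is a simplicial subset of $\sigma^* X$. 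So it is enough to prove the following general claim: for every inclusion $A \hookrightarrow B$ of simplicial sets and every extendable local system $F$ on $B$ with values in $\cat C$, the restriction $F(B) \to F(A)$ is an epimorphism in $\cat C$.

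The general claim I would prove by a relative skeletal induction. Filter $B$ as $A = B^{(-1)} \subseteq B^{(0)} \subseteq B^{(1)} \subseteq \dots$, where $B^{(n)}$ is obtained from $B^{(n-1)}$ by the pushout attaching all non-degenerate $n$-simplices of $B$ not already present along their boundaries, so that $B = \colim_n B^{(n)}$. Since $F(\blank)$ is a limit and therefore sends colimits of simplicial sets to limits in $\cat C$, one obtains $F(B) = \lim_n F(B^{(n)})$ together with pullback squares
\[ F(B^{(n)})  \iso  F(B^{(n-1)}) \times_{\prod F(\bdry \lincat n)} \prod F(\lincat n), \]
where the products range over the attached non-degenerate $n$-simplices. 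Extendability of $F$ gives that each factor $F(\lincat n) \to F(\bdry \lincat n)$ in these products is an epimorphism. To conclude that the composite restriction is epi in $\cat C$, I would apply the faithful, limit-preserving functor $G \colon \cat C \to \Set$. Faithfulness reflects epis (if $G(\phi)$ is surjective and $\alpha \phi = \beta \phi$, then $G(\alpha) = G(\beta)$, whence $\alpha = \beta$). Limit preservation propagates the above identifications to $\Set$; the axiom of choice over each (possibly infinite) family of attached $n$-simplices makes each step $G F(B^{(n)}) \to G F(B^{(n-1)})$ surjective, and dependent choice over the countable tower then yields surjectivity of $G F(B) \to G F(A)$, which by reflection of epis completes the argument.

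The main obstacle will be the bridge between epimorphisms in $\cat C$ and surjections in $\Set$ under $G$. Faithfulness handles the reverse direction (reflection), but turning extendability of $F$ in $\cat C$ into surjectivity of the corresponding maps of sets requires a form of preservation of epis by $G$ that is not formally implied by the stated hypotheses. In all intended applications (cochain complexes and related algebraic categories) epimorphisms coincide with componentwise surjections on the underlying graded set, so this causes no trouble; in a fully general statement one would want to either add this hypothesis or extract it from additional structure on $\cat C$.
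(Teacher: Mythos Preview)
Your approach is precisely the content of Halperin's Proposition~12.21, which is what the paper cites as its entire proof. The reduction to a single simplex, the identification of $(\sigma^*(f_*F))(\lincat n)\to(\sigma^*(f_*F))(\bdry\lincat n)$ with a restriction map $\tilde F(\sigma^*X)\to\tilde F(W)$ along a subcomplex inclusion, and the relative skeletal induction are the standard argument; you have essentially reconstructed Halperin's proof.

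The gap you flag at the end is real and is worth recording. A faithful limit-preserving functor $G\colon\cat C\to\Set$ reflects epimorphisms but need not preserve them (e.g.\ $\ZZ\to\QQ$ in commutative rings), so the step from ``$F(\lincat n)\to F(\bdry\lincat n)$ is an epimorphism in $\cat C$'' to ``$GF(\lincat n)\to GF(\bdry\lincat n)$ is surjective in $\Set$'' is not licensed by the stated hypotheses alone, and without it the set-level induction cannot start. In Halperin's original setting and in every application in this paper, $\cat C$ is (co)chain complexes or graded modules, where epimorphisms are exactly the degreewise surjections and the forgetful functor preserves them, so nothing goes wrong. Your diagnosis that the abstract statement would be cleaner with the additional hypothesis that $G$ preserves epimorphisms (equivalently, that epimorphisms in $\cat C$ are carried to surjections) is correct.
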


\begin{proof}
  This follows from \cite[(Proof of) Proposition~12.21]{Hal}.
\end{proof}

\subsection{Polynomial forms with local coefficients} \label{sec:forms}

In this subsection, we recall the definition of the cochain complex of polynomial differential forms with coefficients in a local system, following Halperin \cite[§§13--15]{Hal} (with trivial coefficients this goes back to Sullivan \cite{Sul}).
We will also prove various related lemmas that we will need throughout this paper (generalizing results of \cite[§3.8.1]{BZ}).
For a later comparison, we begin by recalling the definition of the simplicial cochain complex from this perspective, see \cite[Definition~14.1]{Hal}.

\begin{definition}
  Let $X$ be a simplicial set and $F$ a groupoidal local system on $X$ with values in graded vector spaces.
  We write $\Cochains * (X; F)$ for the cochain complex in graded vector spaces where $\Cochains n (X; F)$ is the graded vector space with degree-$k$ part the vector space of all families $a = (a_\sigma)_{\sigma \in X_n}$ with $a_\sigma \in F(\sigma)_k$ and addition and scalar multiplication defined pointwise.
  Its differential $d$ is given by
  \[ d(a)_\sigma  \defeq  \sum_{i = 0}^{n + 1} (-1)^i \inv{d_i} (a_{\sigma \after \delta_i}) \]
  for $a \in \Cochains n (X; F)$ and $\sigma \in X_{n+1}$ (here $\delta_i \colon \lincat n \to \lincat {n+1}$ denotes the $i$-th face inclusion and $d_i$ its image under $F$).
  This assembles into a functor $\Cochains * \colon \Locgrpd[\GrVect] \to \coCh[\GrVect]$.
  
  For $p \le n$, let $\delta^{\mathrm F}_p \colon \lincat p \to \lincat n$ denote the inclusion onto the first $p + 1$ vertices, and $\delta^{\mathrm B}_p \colon \lincat p \to \lincat n$ the inclusion onto the last $p + 1$ vertices.
  Then the maps
  \begin{align*}
    \Cochains p (X; F) \tensor \Cochains q (Y; G) & \longto \Cochains {p+q} (X \times Y; F \extensor G) \\
    (a_\sigma)_{\sigma \in X_p} \tensor (b_{\sigma'})_{\sigma' \in Y_q} & \longmapsto \bigl( a_{\pr[X](\tau \after \delta^{\mathrm F}_p)} \tensor b_{\pr[Y](\tau \after \delta^{\mathrm B}_q)} \bigr)_{\tau \in (X \times Y)_{p+q}}
  \end{align*}
  and the canonical map $\QQ \to \Cochains * (\pt; \ul \QQ)$ equip $\Cochains *$ with the structure of a lax monoidal functor.
\end{definition}

\begin{remark}
  Let $F$ be a groupoidal local system on $X$ with values in graded vector spaces, and $\widetilde F$ the corresponding functor out of the fundamental groupoid of $X$ under the equivalence of \cref{rem:local_system_groupoid}.
  Then one can construct a monoidal natural isomorphism between the cochain complex $\Cochains * (X; F)$ and the usual simplicial cochain complex of $X$ with coefficients in $\widetilde F$ (as defined in e.g.\ \cite[p.~270]{Whi}).
\end{remark}

\begin{definition}
  We denote by $\sForms$ the simplicial commutative cochain algebra of polynomial differential forms on the simplices.
  More explicitly, we have
  \[ \sForms[n]  \defeq  \quot {\freegca(t_0, \dots, t_n, dt_0, \dots, dt_n)} {\textstyle (\sum_{i=0}^n t_i = 1, \sum_{i=0}^n dt_i = 0)} \]
  with $t_i$ in degree $0$ and $dt_i$ in degree $1$ and the unique differential such that $d(t_i) = dt_i$.
  A map $\sigma \colon \lincat n \to \lincat m$ induces the map $\sigma^* \colon \sForms[m] \to \sForms[n]$ uniquely determined by $\sigma^*(t_i) = \sum_{\sigma(j) = i} t_j$.
\end{definition}

\begin{definition}
  Let $X$ be a simplicial set.
  We write $\Omega^*_X$ for the composite functor
  \[ \opcat{(\Simplices \comma X)}  \longto  \opcat \Simplices  \xlongto{\sForms}  \coCh \]
  which is a local system on $X$ with values in cochain complexes.
  By \cref{rem:Loc_monoid}, the object $(X, \Omega^*_X) \in \Loc$ comes equipped with a canonical commutative monoid structure.
\end{definition}

\begin{definition}
  We denote the composite functor
  \[ \Loc  \xlongto{(\pt, \Omega^*_\pt) \tensor \blank}  \Loc  \xlongto{\Gamma}  \coCh \]
  by $\Forms *$.
  It is given by $\Forms *(X; F) = (\Omega^*_X \tensor F)(X)$.
  Since $(\pt, \Omega^*_\pt)$ is a commutative monoid object in $\Loc$, the functor $\Forms *$ can be canonically equipped with a lax symmetric monoidal structure.
  We write $\Forms * (X) \defeq \Forms * (X; \ul \QQ)$.
\end{definition}

It is a classical fact that polynomial differential forms provide a commutative cochain algebra model for the simplicial cochain complex.
This is the content of the following lemma.

\begin{lemma} \label{lemma:forms_cohomology}
  Let $X$ be a simplicial set, and let $M$ be a groupoidal local system on $X$ with values in graded vector spaces.
  Then there is a zig-zag of monoidal natural transformations that are pointwise quasi-isomorphisms
  \[ \Forms * (X; M)  \xlonghto{\eq}  \Cochains * (X; M) \]
  of lax monoidal functors $\Locgrpd[\GrVect] \to \coCh$ (here the grading on the right-hand side is by total degree).
  Thus, there is also a monoidal natural isomorphism
  \[ \Coho * \bigl( \Forms * (X; M) \bigr)  \iso  \Coho * (X; M) \]
  of lax symmetric monoidal functors $\Locgrpd[\GrVect] \to \GrVect$.
  
  In particular, a rational homology equivalence $f \colon Y \to X$ of simplicial sets induces a quasi-isomorphism $\Forms * (X) \to \Forms * (Y)$.
  If $f$ is even a weak equivalence, then it induces a quasi-isomorphism $\Forms * (X; M) \to \Forms * (Y; f^* M)$ for all $M$ as above.
\end{lemma}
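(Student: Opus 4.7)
The plan is to adapt the classical Sullivan--de~Rham equivalence to the setting of groupoidal local systems, following the strategy of Halperin~\cite[§§13--15]{Hal}. The core construction is the \emph{integration map} $\int \colon \Forms * (X; M) \to \Cochains * (X; M)$: on an $n$-simplex $\sigma \colon \lincat n \to X$, one sends an element $\omega$ of cochain degree $n$ to the family whose $\sigma$-component is obtained by integrating $\sigma^* \omega$, viewed as a polynomial top-form on $\lincat n$ with values in $M(\sigma)$, over the standard geometric $n$-simplex. Groupoidality of $M$ is essential here to commute integration with the pullbacks along face maps and hence verify compatibility with the cochain differential.

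To show integration is a pointwise quasi-isomorphism I would first treat the case $X = \lincat n$: both sides have trivial cohomology above degree zero, the $\Forms *$ side by the polynomial Poincaré lemma and the $\Cochains *$ side because any groupoidal local system on the contractible simplex $\lincat n$ is canonically constant and the corresponding simplicial cochain complex is acyclic. For general $X$, both $(X, \Omega^*_X \tensor M)$ and $\Cochains * (\blank; M)$ are extendable (the former by \cref{lemma:pushforward_extendable} together with Halperin's argument for $\Omega^*_X$ extended to groupoidal coefficients; the latter formally), so the standard skeleton spectral sequence argument of \cite[§14]{Hal} reduces the general case to the simplex case. Naturality in $M$ is manifest from the construction.

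The main obstacle is the monoidal structure: the integration map is only lax monoidal \emph{up to chain homotopy}, its failure governed by the classical Eilenberg--Zilber shuffle formula. To upgrade this to an honest zig-zag of lax monoidal natural transformations I would insert an intermediate functor, for instance the Thom--Sullivan--Whitney normalization of the bicosimplicial object obtained by applying $\sForms$ pointwise to the canonical simplicial resolution; this intermediate is strictly lax monoidal and admits natural quasi-isomorphisms to both $\Forms * (X; M)$ and $\Cochains * (X; M)$. Tracking the lax monoidal structure and the local-coefficient naturality simultaneously through this intermediate is the main technical work; once this is set up, the desired zig-zag of monoidal natural transformations follows.

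The last two assertions are formal consequences. A rational homology equivalence $f \colon Y \to X$ induces an isomorphism $\Coho * (X; \QQ) \to \Coho * (Y; \QQ)$, so by the natural isomorphism $\Coho * ( \Forms * (\blank) ) \iso \Coho * (\blank; \QQ)$ just established, the induced map $\Forms * (X) \to \Forms * (Y)$ is a quasi-isomorphism. If $f$ is a weak equivalence, it moreover induces an equivalence of fundamental groupoids, so $M$ and $f^*M$ correspond to the same functor under the identification of \cref{rem:local_system_groupoid} and therefore have canonically isomorphic cohomology; the quasi-isomorphism $\Forms * (X; M) \to \Forms * (Y; f^*M)$ then follows in the same way.
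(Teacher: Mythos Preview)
Your proposal is correct and follows essentially the same approach as the paper, which simply defers to Halperin by writing ``This is implicit in the proof of \cite[Theorem~14.18]{Hal}.'' Your sketch of the integration map, the Poincaré-lemma reduction to simplices, the extendability argument, and the intermediate functor for monoidality is a faithful reconstruction of what that reference contains.

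Two minor points: the extendability of $\Omega^*_X \tensor M$ is more directly \cite[Remarks~13.11]{Hal} than an application of \cref{lemma:pushforward_extendable}; and in the last paragraph, the phrase ``$M$ and $f^*M$ correspond to the same functor \ldots{} and therefore have canonically isomorphic cohomology'' elides the actual input, namely that a weak equivalence induces an isomorphism on simplicial cohomology with arbitrary local coefficients (a standard fact, but not a consequence of the fundamental groupoids being equivalent alone). Neither affects the overall correctness.
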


\begin{proof}
  This is implicit in the proof of \cite[Theorem~14.18]{Hal}.
\end{proof}

The following lemma tells us that, under certain conditions, the functor $\Forms * (X; \blank)$ preserves quasi-isomorphisms of local systems.

\begin{lemma} \label{lemma:forms_coeff_quasi-iso}
  Let $X$ be a simplicial set, and let $f \colon F \to G$ be a pointwise quasi-isomorphism of local systems on $X$ with values in cochain complexes.
  Assume that each of $F$ and $G$ is either groupoidal or quasi-groupoidal and extendable.
  Then the induced map $\Forms * (X; F) \to \Forms *(X; G)$ is a quasi-isomorphism.
\end{lemma}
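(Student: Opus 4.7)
The plan is to construct, naturally in $F$ satisfying the hypotheses, a convergent first-quadrant spectral sequence
\[ E_2^{p,q} = \Coho p \bigl( X; \Coho q (F) \bigr) \quad \Longrightarrow \quad \Coho {p+q} \bigl( \Forms * (X; F) \bigr), \]
where $\Coho q (F)$ denotes the groupoidal local system on $X$ with values in graded vector spaces given pointwise by $\sigma \mapsto \Coho q ( F(\sigma) )$. Granting this, the conclusion follows by the comparison theorem for spectral sequences: since $f$ is a pointwise quasi-isomorphism it induces an isomorphism $\Coho q (F) \iso \Coho q (G)$ of local systems (both are groupoidal because $F$ and $G$ are quasi-groupoidal in either case of the hypothesis), and naturality then yields an isomorphism on $E_2$-pages, hence on abutments.

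The spectral sequence arises from the decreasing filtration of the local system $\Omega_X^* \tensor F$ by the inner $F$-degree, given by the sub-local-systems $\Omega_X^* \tensor F^{\geq p}$ with $F^{\geq p}(\sigma) \defeq F(\sigma)^{\geq p}$. Since global sections preserve this subobject structure, we obtain a filtration of the cochain complex $\Forms * (X; F)$ whose $E_0$-term is $\Forms * (X; F^p)$, where $F^p \defeq F^{\geq p}/F^{\geq p+1}$ is viewed as a local system in graded vector spaces concentrated in inner degree $p$, and whose $d_0$-differential is induced by the internal differential of $F$. Convergence follows from the filtration being exhaustive and bounded below in each cohomological bidegree.

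Identifying the $E_2$-page proceeds in two steps. First, the $d_0$-cohomology of the $E_0$-page should be identified with $\Forms * (X; \Coho p (F))$: in the groupoidal case this is immediate, since $F^p$ is then itself a groupoidal local system in graded vector spaces and evaluation at each simplex commutes with taking cohomology; the quasi-groupoidal extendable case, which is the main technical obstacle, requires invoking Halperin's extendability theory \cite[§14]{Hal} and its compatibility (via \cref{lemma:pushforward_extendable}) with the global-sections functor, precisely to ensure that $\Forms * (X; \blank)$ behaves like a derived functor on such local systems. Second, applying \cref{lemma:forms_cohomology} to the groupoidal local system $\Coho p (F)$ in graded vector spaces identifies the $E_2$-page as $\Coho * (X; \Coho p (F))$, completing the construction of the spectral sequence and hence the proof.
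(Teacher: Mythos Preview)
The paper's proof is a one-line citation: it observes that $\Omega_X^* \tensor F$ and $\Omega_X^* \tensor G$ are extendable (by \cite[Remarks~13.11]{Hal}) and quasi-groupoidal, and then invokes Halperin's comparison theorem \cite[Theorem~12.27]{Hal} directly. Your approach instead tries to build the comparison machinery from scratch via a filtration spectral sequence. That is a reasonable strategy, and something like it may well underlie Halperin's theorem, but as written there are genuine gaps.

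First, the identification of the $E_0$- and $E_1$-pages is not justified. You filter $\Forms*(X;F)$ by the sub-local-systems $\Omega_X^* \tensor F^{\ge p}$ and assert that the associated graded is $\Forms*(X;F^p)$; but global sections is only left exact, so the quotient $\Forms*(X;F^{\ge p})/\Forms*(X;F^{\ge p+1})$ need not equal $\Forms*(X;F^p)$ without further input. The same issue recurs when you compute $d_0$-cohomology and claim it yields $\Forms*(X;\Coho p(F))$. Your justification in the groupoidal case --- that ``evaluation at each simplex commutes with taking cohomology'' --- is not the point: global sections is a limit over \emph{all} simplices, not evaluation at one, and limits do not commute with cohomology in general. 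What actually makes this work is that $\Omega_X^* \tensor (-)$ is extendable for \emph{any} coefficient local system (this is \cite[Remarks~13.11]{Hal}), and extendability is precisely the acyclicity condition that makes global sections exact on the relevant short exact sequences. Once you use that, the groupoidal and quasi-groupoidal cases are handled uniformly; there is no dichotomy.

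Second, the convergence claim is unjustified. You call this a first-quadrant spectral sequence, but nothing in the hypotheses bounds $F$ below. In total degree $n$, an element of $\Forms^n(X;F)$ can have components in $\Omega^a \tensor F^{n-a}$ for $a$ up to the dimension of the simplex, so if $X$ has simplices of arbitrarily high dimension and $F$ is unbounded below, the filtration is not exhaustive. The reference to \cref{lemma:pushforward_extendable} is also misplaced: that lemma concerns pushforwards along maps of simplicial sets, not the exactness property you need here.

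In short, your outline reinvents part of Halperin's \S 12 without pinning down the key exactness input (extendability of $\Omega_X^* \tensor F$), which is exactly what the paper's one-line proof invokes.
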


\begin{proof}
  This follows from \cite[Theorem~12.27]{Hal} since $\Forms * \tensor F$ and $\Forms * \tensor G$ are extendable by \cite[Remarks~13.11]{Hal}.
\end{proof}

The following definition and lemma provide a monoidal version of an argument contained in the proof of \cite[Theorem~3.41]{BZ}, with a more elementary proof.
This is an integral part of the argument for obtaining the cohomological results of this paper.

\begin{definition} \label{def:split_local_system}
  Let $X$ be a simplicial set and $F \in \Loc(X)$.
  We say that $F$ is \emph{split} if there exist quasi-isomorphisms $p_F \colon F \to \Coho * (F)$ and $\nabla_F \colon \Coho * (F) \to F$ in $\Loc(X)$ such that $\Coho * (p_F)$ is equal to the identity after composing with the canonical isomorphism $\Coho * (\Coho * (F)) \iso \Coho * (F)$.
  (Here we consider $\Coho * (F)$ as a local system on $X$ with trivial differential.)
\end{definition}

\begin{remark}
  Note that $\nabla_F$ can always be chosen to be a quasi-inverse to $p_F$.
  Moreover note that our definition of split is, at least a priori, weaker than the definition of Weibel \cite[Definition~1.4.1]{Wei} (see also \cite[Appendix~B]{BM}).
\end{remark}

\begin{lemma} \label{lemma:forms_split_coeff}
  Let $X$ be a simplicial set and $F$ a split, quasi-groupoidal, and extendable local system on $X$ with values in cochain complexes.
  Then there is a monoidal natural isomorphism
  \[ \Coho * \bigl( \Forms * (X; F) \bigr)  \iso  \Coho * \bigl( X; \Coho * (F) \bigr) \]
  of lax symmetric monoidal functors to $\GrVect$ from the full subcategory of $\Loc$ spanned by those objects $(X, F)$ such that $F$ is split, quasi-groupoidal, and extendable.
  (The grading on the right-hand side is by total degree.)
\end{lemma}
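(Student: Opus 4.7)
The plan is to reduce the claim to the groupoidal case already handled in \cref{lemma:forms_cohomology}, using the split structure on $F$. For each split $(X, F)$ I will choose a splitting $p_F \colon F \to \Coho * (F)$. Since $F$ is quasi-groupoidal and extendable while $\Coho * (F)$ is groupoidal, \cref{lemma:forms_coeff_quasi-iso} will imply that $\Forms * (X; p_F)$ is a quasi-isomorphism. Composing it with the monoidal isomorphism of \cref{lemma:forms_cohomology} applied to the groupoidal local system $\Coho * (F)$ yields the desired map
\[
\Coho * \bigl( \Forms * (X; F) \bigr)  \iso  \Coho * \bigl( \Forms * (X; \Coho * (F)) \bigr)  \iso  \Coho * \bigl( X; \Coho * (F) \bigr).
\]

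For monoidality, I will first verify that the relevant full subcategory of $\Loc$ is closed under the external tensor product: for split quasi-groupoidal extendable $(X, F)$ and $(Y, G)$, the local system $F \extensor G$ is again of this type, and under the Künneth identification $\Coho * (F \extensor G) \iso \Coho * (F) \extensor \Coho * (G)$ the map $p_F \extensor p_G$ is a splitting of $F \extensor G$. With these compatible choices, the monoidality of the construction then reduces to the monoidality already established in \cref{lemma:forms_cohomology} together with the lax monoidal functoriality of $\Forms *$ and $\Coho *$.

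The main obstacle will be naturality, as the splittings $p_F$ are not themselves natural in morphisms of $\Loc$. Given a morphism $(f, \phi) \colon (X, F) \to (Y, G)$, the two chain maps $p_G \after \phi$ and $\Coho * (\phi) \after f^*(p_F)$ from $f^* F$ to $\Coho * (G)$ agree on cohomology (both induce $\Coho * (\phi)$), but may well differ as chain maps of local systems. The crux will therefore be the following claim: for $H$ a split quasi-groupoidal extendable local system and $M$ a groupoidal local system on a simplicial set $Y$, two chain maps $H \to M$ that agree on cohomology induce the same map on $\Coho * \after \Forms * (Y; \blank)$. To prove this, I will use the decomposition $H \iso \Coho * (H) \oplus \ker(p_H)$ provided by the splitting (where $\ker(p_H)$ is acyclic) together with \cref{lemma:forms_coeff_quasi-iso} (which shows that $\Forms * (Y; \ker(p_H))$ is acyclic) to reduce the claim to chain maps $\Coho * (H) \to M$ of groupoidal local systems with vanishing cohomology. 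Such a map will then be shown to be null-homotopic as a map of local systems by a Halperin-style inductive extension argument exploiting the extendability of $M$ (which is automatic in the groupoidal case); functoriality of $\Forms *$ will then imply that the induced map on $\Forms *$ is null-homotopic and in particular zero on cohomology.
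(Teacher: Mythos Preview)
Your overall strategy---reduce to \cref{lemma:forms_cohomology} via a chosen splitting $p_F$, and handle monoidality via $p_{F \extensor G} = p_F \extensor p_G$---matches the paper's. The divergence is in the naturality argument.

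Your reduction is sound: the decomposition $H \cong \Coho*(H) \oplus \ker(p_H)$ is legitimate once one chooses $\nabla_H$ with $p_H \nabla_H = \id$, which is automatic since the target has trivial differential, and $\Forms*(Y;\ker(p_H))$ is acyclic because the inclusion $\Coho*(H) \hookrightarrow H$ induces a quasi-isomorphism on $\Forms*$ by \cref{lemma:forms_coeff_quasi-iso}. However, the final step you propose is both flawed and unnecessary. The parenthetical claim that extendability is automatic for groupoidal local systems is false: already for $n=1$ the restriction $(\sigma^*M)(\lincat 1) \to (\sigma^*M)(\bdry\lincat 1) \cong M(\sigma) \times M(\sigma)$ is essentially a diagonal and hence not surjective. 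So the Halperin-style null-homotopy argument does not get off the ground. Fortunately you do not need it: in the situation at hand $M = \Coho*(G)$ has \emph{trivial} differential, so a chain map $\Coho*(H) \to M$ that vanishes on cohomology is already zero on the nose. This, together with the acyclicity of $\Forms*(Y;\ker(p_H))$, immediately gives naturality.

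The paper's naturality argument is organized a little differently. Rather than splitting off $\ker(p_F)$, it uses that $\nabla_F$ factors through the subcomplex of cocycles $\Cocycles*(F) \subseteq F$, and observes that in the diagram
\[
\begin{tikzcd}
\Cocycles*(F) \rar{\inc} \dar & F \rar{p_F} \dar{\alpha} & \Coho*(F) \dar{\Coho*(\alpha)} \\
\Cocycles*(G) \rar{\inc} & G \rar{p_G} & \Coho*(G)
\end{tikzcd}
\]
the outer rectangle commutes on the nose (each horizontal composite is the canonical projection to cohomology). Since $(\nabla_F)_*$ is an isomorphism on $\Coho*(\Forms*)$ and factors through the cocycle term, commutativity of the right-hand square on $\Coho*(\Forms*)$ follows. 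Both arguments ultimately exploit the same phenomenon; yours via a direct-sum splitting, the paper's via the cocycle subcomplex.
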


\begin{proof}
  Choose maps $p_F$ and $\nabla_F$ as in \cref{def:split_local_system}.
  Then $p_F$ induces an isomorphism of graded vector spaces
  \[ (p_F)_*  \colon  \Coho * \bigl( \Forms * (X; F) \bigr)  \xlongto{\iso}   \Coho * \bigl( \Forms * \bigl( X; \Coho * (F) \bigr) \bigr) \]
  by \cref{lemma:forms_coeff_quasi-iso} since $\Coho * (F)$ is groupoidal, and by the same argument $\nabla_F$ induces an isomorphism $(\nabla_F)_*$ in the other direction.
  Combining this with \cref{lemma:forms_cohomology}, we see that it is enough to prove that $(p_F)_*$ is a monoidal natural transformation.
  
  We begin by showing that $(p_F)_*$ is a natural transformation.
  This is clear for maps $f \colon Y \to X$ of simplicial sets (together with the identity $f^*F \to f^*F$), so we will restrict to the case of a map $\alpha \colon F \to G$ of local systems over a fixed $X$.
  Consider the (not necessarily commutative) diagram
  \[
  \begin{tikzcd}
  \Cocycles * (F) \rar{\inc} \dar[swap]{\Cocycles * (\alpha)} & F \rar{p_F} \dar[swap]{\alpha} & \Coho * (F) \dar{\Coho * (\alpha)} \\
  \Cocycles * (G) \rar{\inc} & G \rar{p_G} & \Coho * (G)
  \end{tikzcd}
  \]
  where $\Cocycles *$ denotes the subcomplex of cocycles.
  Note that the two horizontal composites are equal to the respective quotient map by the assumption that $p_F$ and $p_G$ induce the identity on cohomology; this implies that the outer square commutes.
  We want to prove that the right-hand square commutes after applying $\Coho *( \Forms * (X; \blank) )$; since $(\nabla_F)_*$ is an isomorphism that factors through $\Coho * ( \Forms * (X; \Cocycles * (F)))$ (because the differential of $\Coho * (F)$ is trivial), this is implied by commutativity of the outer square.
  By applying this naturality statement to the identity $F \to F$, it also follows that the isomorphism $(p_F)_*$ is independent of the choice of $p_F$.
  
  That $(p_F)_*$ is monoidal is now clear: given two pairs $(X, F)$ and $(Y, G)$ as in the statement, the two maps
  \begin{gather*}
  p_{F \extensor G}  \colon  F \extensor G  \xlongto{p_F \extensor p_G}  \Coho * (F) \extensor \Coho * (G)  \iso  \Coho * (F \extensor G) \\
  \nabla_{F \extensor G}  \colon  \Coho * (F \extensor G)  \iso  \Coho * (F) \extensor \Coho * (G)  \xlongto{\nabla_F \extensor \nabla_G}  F \extensor G
  \end{gather*}
  exhibit $F \extensor G$ as split in such a way that the diagram
  \[
  \begin{tikzcd}
    \Forms * (X; F) \tensor \Forms * (Y; G) \dar \rar{(p_F)_* \tensor (p_G)_*} &[40] \Forms * \bigl( X; \Coho * (F) \bigr) \tensor \Forms * \bigl( Y; \Coho * (G) \bigr) \dar \\
    \Forms * (X \times Y; F \extensor G) \rar{(p_F \extensor p_G)_*} \drar[bend right = 10][swap]{(p_{F \extensor G})_*} & \Forms * \bigl( X \times Y; \Coho * (F) \extensor \Coho * (G) \bigr) \dar{\iso} \\
     & \Forms * \bigl( X \times Y; \Coho * (F \extensor G) \bigr)
  \end{tikzcd}
  \]
  commutes already at the level of $\Forms *$.
  Furthermore note that $F \extensor G$ is indeed quasi-groupoidal and extendable; the latter by \cite[Theorem~12.37]{Hal} since $F \extensor G \iso \pr[X]^*(F) \tensor \pr[Y]^*(G)$ and both $\pr[X]^*(F)$ and $\pr[Y]^*(G)$ are extendable as $F$ and $G$ are.
\end{proof}

We now want to relate $\Forms * (\hcoinv X G)$ and $\Forms * (\B G; \Forms * (X))$ to each other.
When the preceding lemma applies, this also relates $\Coho * (\hcoinv X G)$ and $\Coho * (\B G; \Coho * (X))$.
In fact, we will do so for local systems coming from $G$.
We begin with an auxiliary definition.

\begin{definition}
  Let $\cat C$ be a category.
  We denote by $\LocGrp[\cat C]$ the category such that
  \begin{itemize}
    \item
    objects are triples $(G, X, M)$ of a group $G$, a simplicial set $X$ with a $G$-action, and an object $M$ of $\cat C$ with a $G$-action,
    \item
    morphisms $(G, X, M) \to (H, Y, N)$ are given by triples $(\alpha, f, \phi)$ of a group homomorphism $\alpha \colon H \to G$, an $H$-equivariant map of simplicial sets $f \colon Y \to \alpha^* X$, and an $H$-equivariant map $\phi \colon \alpha^* M \to N$,
    \item
    composition is defined in the obvious way.
  \end{itemize}
  When $\cat C$ is a monoidal category, then the assignment
  \[ \bigl( (G, X, M), (H, Y, N) \bigr) \longmapsto (G \times H, X \times Y, M \tensor N) \]
  can be canonically extended to a monoidal structure on $\LocGrp[\cat C]$.
  When $\cat C$ is symmetric monoidal, then $\LocGrp[\cat C]$ is also canonically equipped with a symmetric monoidal structure.
  When $\cat C$ is the category of cochain complexes, we omit it from the notation.
\end{definition}

\begin{definition} \label{def:BG_local_system}
  Let $\cat C$ be a category and $(G, X, M) \in \LocGrp[\cat C]$.
  Then we consider $M$ as a local system on $\hcoinv X G$ with values in $\cat C$ by setting $M(\sigma) = M$ for all $\sigma = (g_1, g_2, \dots, g_n, x) \in (\hcoinv X G)_n$ and letting all face and degeneracy maps act by the identity, except for $d_n$ which acts by multiplication with $g_n$.
  This assembles into a canonical functor $\LocGrp[\cat C] \to \Loc[\cat C]$ given on objects by $(G, X, M) \mapsto (\hcoinv X G, M)$.
  When $\cat C$ is (symmetric) monoidal, we equip this functor with its canonical strong (symmetric) monoidal structure (using \cref{lemma:bar_products}).
\end{definition}

There are two different maps from $\hcoinv {\B(X, G, H)} H$ to $\B H$.
The following lemma shows that, given an $H$-representation, pulling it back to $\hcoinv {\B(X, G, H)} H$ along these two maps yields isomorphic local systems.

\begin{lemma} \label{lemma:hquot_hcoinv_local_system}
  Let $f \colon G \to H$ be a map of simplicial monoids such that $H$ is a discrete group, and let $X$ be a right $G$-module in simplicial sets.
  Furthermore let $\cat C$ be a category and $M$ an object of $\cat C$ with an $H$-action.
  Consider the composite map of simplicial sets
  \[ \psi  \colon  \hcoinv {\B(X, G, H)} H  \longto  \hcoinv X G  \xlongto{\pr}  \B G  \xlongto{f}  \B H\]
  where the first morphism is the one of \cref{lemma:hquot_hcoinv}.
  Then there is a natural isomorphism, covering the identity of $\hcoinv {\B(X, G, H)} H$,
  \[ \Phi  \colon  \bigl( \hcoinv {\B(X, G, H)} H, M \bigr)  \xlongto{\iso}  \bigl( \hcoinv {\B(X, G, H)} H, \psi^*(M) \bigr) \]
  of functors from the appropriate category of pairs $(f, M)$ to $\Loc[\cat C]$; here $M$ is considered as a local system via \cref{def:BG_local_system} on both sides.
  When $\cat C$ is (symmetric) monoidal, then $\Phi$ is a monoidal natural isomorphism between strong (symmetric) monoidal functors.
\end{lemma}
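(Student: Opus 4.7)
The plan is to construct $\Phi$ explicitly as the action of a specific element of $H$ at each simplex, and to verify naturality by a direct simplicial computation. An $n$-simplex of $\hcoinv{\B(X, G, H)}{H} = \B(\B(X, G, H), H, *)$ is a tuple
\[
  \sigma = (x, g_1, \ldots, g_n, h_0, k_1, \ldots, k_n),
\]
where $(x, g_1, \ldots, g_n, h_0) \in \B(X, G, H)_n$ is the inner bar coordinate with ``hat'' $h_0 \in H$, and the $k_j \in H$ are the outer bar coordinates. Both local systems assign $M$ to $\sigma$, and I define $\Phi_\sigma \colon M \to M$ to be the action of the hat $h_0$ on $M$.

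To verify that $\Phi$ is a natural transformation, I would unwind the diagonal face maps of $\B(\B(X, G, H), H, *)$ from the formulas of \cref{def:bar}, using that $H$ is discrete. A direct calculation shows how the hat of $\sigma$ transforms: under $d_0$ it becomes $h_0 k_1$ (the right $H$-action absorbs $k_1$ into the hat); under $d_i$ for $0 < i < n$ it is preserved; under $d_n$ it becomes $f(g_n) h_0$ (the inner $d_n$ of $\B(X, G, H)$ uses the $G$-action on $H$ via $f$); and all degeneracies fix the hat. By \cref{def:BG_local_system} applied to $\bigl( H, \B(X, G, H), M \bigr)$, the left-hand local system has all faces and degeneracies acting trivially on $M$ except $d_0$, which acts through $k_1$. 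By the same definition applied to $\bigl( H, *, M \bigr)$ together with the formula $\psi(\sigma) = (f(g_1), \ldots, f(g_n)) \in \B H$, the pullback $\psi^*(M)$ has all faces and degeneracies acting trivially except $d_n$, which acts through $f(g_n)$. The naturality square for $\Phi$ at each face then reduces to an instance of the associativity of the $H$-action on $M$: for $d_0$, the $k_1$-action followed by the $(h_0 k_1)$-action of $\Phi_{d_0 \sigma}$ composes to the $h_0$-action of $\Phi_\sigma$; for $d_n$, the $h_0$-action followed by the $f(g_n)$-action equals the $(f(g_n) h_0)$-action of $\Phi_{d_n \sigma}$; and the other cases are trivial.

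The remaining claims are essentially formal. Invertibility of $\Phi_\sigma$ follows from the invertibility of $h_0 \in H$. Naturality in the data $(f, M)$, and in the ambient $G$, $X$, is immediate since the construction only uses the hat coordinate and the $H$-action on $M$. For monoidality I would use \cref{lemma:bar_products} to identify the bar construction of a product of triples with the product of the corresponding bar constructions; under this identification, the hat of a product simplex is the pair of individual hats, and its action on the tensor product of modules is visibly the tensor of the individual actions, so $\Phi$ is monoidal (and symmetric when $\cat C$ is symmetric monoidal).

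The main obstacle is really just bookkeeping: one must carefully unwind the diagonal simplicial structure on a bar-of-bar construction, and fix a consistent convention for the sides of the various $H$-actions. In particular, since $\B(X, G, H)$ is naturally a right $H$-module while the paper's default is that groups act on the left, the precise formula for ``acting by $k_1$'' in the definition of the left local system may involve an inversion; once the conventions are aligned so that $F_L(d_0) \after \Phi_\sigma$ and $\Phi_{d_0 \sigma}$ both correspond to the $h_0$-action, all the naturality checks go through by associativity as above.
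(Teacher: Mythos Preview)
Your approach---defining $\Phi_\sigma$ as the action of a specific element of $H$ and verifying naturality face by face---is exactly that of the paper, but your formula is wrong: the paper uses the action of $h_0 k_1 \cdots k_n$ (the hat times \emph{all} the outer bar coordinates), not of $h_0$ alone.

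The error traces to your identification of the left-hand local system. The local system of \cref{def:BG_local_system} on any $\hcoinv{Y}{H}$ is, by design, the pullback along $\pr \colon \hcoinv{Y}{H} \to \B H$ of the local system on $\B H$; since $\pr$ sends your $\sigma$ to $(k_1, \ldots, k_n)$, it is $d_n$ that acts nontrivially (by $k_n$), not $d_0$. With this convention your naturality square at $d_0$ already fails: both local systems have trivial $d_0$, so naturality forces $\Phi_\sigma = \Phi_{d_0\sigma}$, yet the hat changes from $h_0$ to $h_0 k_1$. Likewise at $d_n$ one needs $f(g_n)\,\Phi_\sigma = \Phi_{d_n\sigma}\,k_n$, which fails for $\Phi_\sigma = h_0$. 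Replacing $h_0$ by the full product $h_0 k_1 \cdots k_n$ repairs both checks: this product is invariant under $d_0$ (since the hat absorbs $k_1$) and acquires exactly the factor $k_n$ on the right under $d_n$. Your final-paragraph hedge does not save the argument: you are implicitly using a ``$d_0$ nontrivial'' convention for the left-hand local system while using the ``$d_n$ nontrivial'' convention (via $\psi^*$) for the right-hand one; these cannot both be instances of the same \cref{def:BG_local_system}, and no inversion reconciles them.
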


\begin{proof}
  We define $\Phi$, at $\sigma = ((x, g_1, \dots, g_n, h), h_1, \dots, h_n) \in \B(\B(X, G, H), H, *)_n$, to be the map $\psi^*(M)(\sigma) = M \to M = M(\sigma)$ given by acting with $h \mult h_1 \mult \dots \mult h_n$.
  That this is indeed an isomorphism of local systems and a (monoidal) natural transformation in the claimed sense follows by chasing through the definitions.
\end{proof}

The following upgrades \cite[Proposition 3.38]{BZ} by incorporating non-trivial coefficients and monoidal properties.
%\cite[Proposition~2.18]{BZ}
%We provide a detailed proof for the reader's convenience.

\begin{lemma} \label{lemma:forms_orbits}
  Let $G$ be a group, $X$ a Kan complex with a $G$-action, and $M$ a $G$-representation in vector spaces.
  Then there is a zig-zag of monoidal natural transformations that are pointwise quasi-isomorphisms
  \[ \Forms * (\hcoinv X G; M)  \xlonghto{\eq}  \Forms * \bigl( \B G; \Forms * (X; \ul M) \bigr) \]
  between lax symmetric monoidal functors to $\coCh$ from the full subcategory of $\LocGrp[\Vect]$ spanned by those objects $(G, X, M)$ such that $X$ is a Kan complex.
  Here we consider $\Forms * (X; \ul M)$ and $M$ as local systems on $\B G$ and $\hcoinv X G$ via \cref{def:BG_local_system}; on $\Forms * (X; \ul M)$ an element $g \in G$ acts by the map induced by $(\inv g, g)$.
  On the right-hand side, we use the canonical lax symmetric monoidal functor given on objects by $(G, X, M) \mapsto (G, *, \Forms * (X; \ul M))$.
\end{lemma}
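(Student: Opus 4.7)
The proof proceeds through the pushforward formalism of \cref{def:pushforward} applied to the canonical projection $\pi \colon \hcoinv X G \to \B G$. By \cref{lemma:pushforward_global_sections} there is a monoidal natural isomorphism $\Forms * (\hcoinv X G; M) \iso \bigl( \pi_*(\Omega^*_{\hcoinv X G} \tensor M) \bigr)(\B G)$, while by definition $\Forms * \bigl( \B G; \Forms * (X; \ul M) \bigr) = \bigl( \Omega^*_{\B G} \tensor \Forms * (X; \ul M) \bigr)(\B G)$. Thus it suffices to construct a natural monoidal zig-zag of pointwise quasi-isomorphisms
\[ \Omega^*_{\B G} \tensor \Forms * (X; \ul M)  \xlonghto{\eq}  \pi_*(\Omega^*_{\hcoinv X G} \tensor M) \]
of local systems on $\B G$ with values in cochain complexes. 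Extendability of the pushforward (via \cref{lemma:pushforward_extendable}) together with quasi-groupoidality of both sides (which follows from $\pi$ being a Kan fibration, itself a consequence of $X$ being Kan and $G$ being discrete) will then allow \cref{lemma:forms_coeff_quasi-iso} to transport the zig-zag to a quasi-iso on global sections.

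For each simplex $\sigma = (h_1, \dots, h_n) \in (\B G)_n$ a direct computation using the bar construction identifies the pullback $F_\sigma \defeq \sigma^*(\hcoinv X G)$ with the simplicial set whose $k$-simplices are pairs $(\rho, x)$ with $\rho \in {\lincat n}_k$ and $x \in X_k$, where the face maps agree with those of the product $\lincat n \times X$ except that the top face $d_k$ acts on the $X$-component by multiplication by $h_{\rho(k-1)+1} \cdots h_{\rho(k)}$. One then verifies that the assignment $(\rho, y) \mapsto (\rho, (h_{\rho(k)+1} \cdots h_n) \act y)$ defines a simplicial isomorphism $\phi_\sigma \colon \lincat n \times X \xlongto{\iso} F_\sigma$, and that equipping $\lincat n \times X$ with the constant local system $\ul M$ together with the fiberwise isomorphism $v \mapsto (h_{\rho(k)+1} \cdots h_n) \act v$ lifts $\phi_\sigma$ to an isomorphism $(\lincat n \times X, \ul M) \iso (F_\sigma, M)$ in $\Loc[\Vect]$, where $M$ is restricted from the local system on $\hcoinv X G$ of \cref{def:BG_local_system}. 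Composing with the Künneth quasi-iso $\sForms[n] \tensor \Forms * (X; \ul M) \xlongto{\eq} \Forms * (\lincat n \times X; \ul M)$ yields the desired pointwise quasi-iso at $\sigma$.

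The substantive remaining task, and the main obstacle, is to verify that these constructions are natural in $\sigma$ as an object of $\Simplices \comma \B G$. Concretely, for $\alpha \colon \lincat m \to \lincat n$ in $\Simplices$, the induced map $F_{\sigma \after \alpha} \to F_\sigma$ corresponds, under the trivializations $\phi_{\sigma \after \alpha}$ and $\phi_\sigma$, to the map $\lincat m \times X \to \lincat n \times X$ sending $(\rho, y) \mapsto \bigl(\alpha \after \rho, (h_{\alpha(m)+1} \cdots h_n)^{-1} \act y\bigr)$; crucially, the twist by $h_{\alpha(m)+1} \cdots h_n$ matches exactly the action of $\alpha$ on the local system $\Forms * (X; \ul M)$ on $\B G$ induced by \cref{def:BG_local_system}. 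Monoidality is then a routine check exploiting the identification $\hcoinv{X \times X'}{G \times G'} \iso \hcoinv X G \times \hcoinv{X'}{G'}$ from \cref{lemma:bar_products} together with the product-compatibility of Künneth and of the trivializations $\phi_\sigma$. The subtle point throughout is that the specific factor $h_{\rho(k)+1} \cdots h_n$ in the formula for $\phi_\sigma$—corresponding to the "back truncation" of $\sigma$—is forced by the combinatorics of the bar construction; seemingly equally natural alternatives (e.g.\ using the fiber over vertex $0$ instead of vertex $n$) would produce twist factors incompatible with the prescription of \cref{def:BG_local_system}.
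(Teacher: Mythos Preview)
Your approach is essentially the same as the paper's: both pivot on the pushforward $\pi_*(\Omega^*_{\hcoinv X G}\tensor M)$ along the projection $\pi\colon\hcoinv X G\to\B G$, identify each fibre $F_\sigma$ with $\lincat n\times X$ via the ``back-truncation'' twist $(h_{\rho(k)+1}\cdots h_n)$, and use \cref{lemma:pushforward_global_sections} plus \cref{lemma:pushforward_extendable} to finish. Your trivialisation $\phi_\sigma$ is exactly the inverse of the paper's $(\pr_\sigma,\psi_\sigma)$, and your twist on $M$ matches the paper's $\phi_\sigma$.

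The one packaging difference is that you fold the K\"unneth map $\sForms[n]\tensor\Forms * (X;\ul M)\to\Forms * (\lincat n\times X;\ul M)$ into the comparison, producing a single map of local systems $\Omega^*_{\B G}\tensor\Forms * (X;\ul M)\to\pi_*(\Omega^*_{\hcoinv X G}\tensor M)$, whereas the paper keeps these separate: it defines $\Psi\colon\Forms * (X;\ul M)\to\pr_*(\Omega^*_{\hcoinv X G}\tensor M)$ directly (without the $\Omega^*_{\B G}$ factor), applies \cref{lemma:forms_coeff_quasi-iso} to that, and then separately uses the unit map $\ul\QQ\to\Omega^*_{\B G}$ together with \cite[Theorem~13.12]{Hal} to pass from $\pr_*(\cdots)(\B G)$ to $\Forms * (\B G;\pr_*(\cdots))$. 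Your route is slightly shorter, but note that \cref{lemma:forms_coeff_quasi-iso} as stated applies to maps $F\to G$ of local systems and concludes something about $\Forms * (\B G;F)\to\Forms * (\B G;G)$; your map already has $\Omega^*_{\B G}$ tensored in on one side only, so the lemma does not literally apply. You would need to invoke the underlying \cite[Theorem~12.27]{Hal} directly (both your local systems are extendable and quasi-groupoidal, so this goes through), or else revert to the paper's decomposition.
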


\begin{proof}
  We write $\pr \colon \hcoinv X G \to \B G$ for the map induced by the constant map $X \to *$; it is a fibration of simplicial sets by \cite[Theorem~3.1, (ii)]{vdBM} since $X$ is a Kan complex.
  We first define a quasi-isomorphism
  \[ \Psi  \colon  \Forms * (X; \ul M)  \xlongto{\eq}  {\pr}_* \bigl( \Omega^*_{\hcoinv X G} \tensor M \bigr) \]
  of local systems on $\B G$.
  To do so, we have to provide, for all $\sigma = (g_1, \dots, g_n) \in (\B G)_n$, a quasi-isomorphism
  \[ \Psi_\sigma  \colon  \Forms * (X; \ul M)  \xlongto{\eq}  {\pr}_* \bigl( \Omega^*_{\hcoinv X G} \tensor M \bigr) (\sigma)  =  \Forms * \bigl( \sigma^*(\hcoinv X G); \tilde \sigma^* (M) \bigr) \]
  where $\tilde \sigma \colon \sigma^*(\hcoinv X G) \to \hcoinv X G$ is the projection; we also write $\pr[\sigma] \colon \sigma^*(\hcoinv X G) \to \lincat n$ for the other projection.
  We define $\Psi_\sigma$ to be the map induced by $(\psi_\sigma, \phi_\sigma)$ where $\psi_\sigma \colon \sigma^*(\hcoinv X G) \to X$ is the map of simplicial sets given, at $\tau \colon \lincat k \to \sigma^*(\hcoinv X G)$, by
  \[ \psi_\sigma(\tau)  =  \Biggl( \prod_{i = 0}^{n - 1 - a_\tau} \inv {g_{n-i}} \Biggr) \act x_\tau \]
  where $x_\tau \in X_k$ is the image of $\tilde \sigma \after \tau$ under the projection $(\hcoinv X G)_k \to X_k$ and we set $a_\tau = \max ({\pr[\sigma]} \after \tau)$ (here we consider ${\pr[\sigma]} \after \tau$ as a map of linearly ordered sets $\set {0, \dots, k} \to \set {0, \dots, n}$).
  The map $\phi_\sigma \colon \psi_\sigma^* (\ul M) = \ul M \to \tilde \sigma^* (M)$ of local systems over $\sigma^*(\hcoinv X G)$ is given, at $\tau$ as above, by the map of chain complexes $\ul M(\tau) = M \to M = \tilde \sigma^* (M)(\tau)$ given by
  \[ m  \longmapsto  \Biggl( \prod_{i = a_\tau + 1}^{n} g_i \Biggr) \act m \]
  where $a_\tau$ is as above.
  Chasing through the definitions, one sees that indeed $\psi_\sigma$ is a map of simplicial sets, that $\phi_\sigma$ is a map of local systems, and that the maps $\Psi_\sigma$ assemble into a map $\Psi$ of local systems as claimed.
  The pair $(\pr[\sigma], \psi_\sigma)$ induces an isomorphism $\sigma^*(\hcoinv X G) \iso \lincat n \times X$, and hence $\psi_\sigma$ is a weak equivalence; since $\phi_\sigma$ is an isomorphism, \cref{lemma:forms_cohomology} then implies that $\Psi_\sigma$ is a quasi-isomorphism.
  Unwinding the definitions, one furthermore sees that $\Psi$ is a monoidal natural transformation between lax symmetric monoidal functors $\LocGrp[\Vect] \to \Loc$; here the lax symmetric monoidal structure on the functor
  \[ (G, X, M)  \longmapsto  \bigl( \B G, p_* ( \Omega^*_{\hcoinv X G} \tensor M ) \bigr) \]
  comes from \cref{def:pushforward}, \cref{lemma:bar_products}, and the multiplication of $\sForms$.
  
  The map $\Psi$ induces a monoidal natural transformation
  \[ \Forms * \bigl( \B G; \Forms * (X; \ul M) \bigr)  \xlongto{\eq}  \Forms * \bigl( \B G; p_* ( \Omega^*_{\hcoinv X G} \tensor M ) \bigr) \]
  which is a pointwise quasi-isomorphism by \cref{lemma:forms_coeff_quasi-iso} since $\Forms * (X; \ul M)$ is groupoidal and $p_* ( \Omega^*_{\hcoinv X G} \tensor M )$ is extendable (by \cref{lemma:pushforward_extendable} and the fact that $\Omega^*_E \tensor M$ is extendable by \cite[Remarks~13.11, 4.]{Hal}) and quasi-groupoidal (using that $\pr$ is a fibration and \cref{lemma:forms_cohomology}).
  Next we note that the unit map $\ul \QQ \to \Omega^*_{\B G}$ induces a monoidal natural transformation
  \[ p_* \bigl( \Omega^*_{\hcoinv X G} \tensor M \bigr) (\B G)  \xlongto{\eq}  \Forms * \bigl( \B G; p_* \bigl( \Omega^*_{\hcoinv X G} \tensor M \bigr) \bigr) \]
  which is a pointwise quasi-isomorphism by \cite[Theorem~13.12]{Hal}.
  Lastly, there is a monoidal natural isomorphism
  \[ \Forms * (\hcoinv X G; M)  \iso  p_* \bigl( \Omega^*_{\hcoinv X G} \tensor M \bigr) (\B G) \]
  by \cref{lemma:pushforward_global_sections}.
\end{proof}

\subsection{The Federer spectral sequence}

We will need the following spectral sequence for computing the homotopy groups of a mapping space.
It is originally due to Federer \cite{Fed}.

\begin{proposition}[Federer] \label{prop:Federer}
  Let $i \colon A \to X$ be a cofibration and $f \colon X \to K$ a map of simplicial sets such that $K$ is a simply connected Kan complex.
  Then there is an extended spectral sequence (in the sense of Bousfield--Kan \cite[Ch.~IX, §4.2]{BK})
  \[ E_2^{p,q}  =  \begin{lrcases*} \Coho p \bigl( X, A; \hg q(K) \bigr), & if $p \ge 0$ and $q \ge \max(2, p)$ \\ 0, & otherwise \end{lrcases*}  \longabuts  \hg {q - p} \bigl( \map[A](X, K), f \bigr) \]
  with the differential of the $r$-th page having bidegree $(r, r - 1)$.
  When $(X, A)$ has the homotopy type of a finite-dimensional relative CW-pair, the spectral sequence converges completely for $q - p \ge 1$ (in the sense of \cite[Ch.~IX, §5.3]{BK}).
  It is functorial in the pair $(i, f)$ in the appropriate sense: a morphism from $(i, f)$ to $(i' \colon A' \to X', f' \colon X' \to K')$ is given by a tuple $(\alpha \colon A' \to A, \chi \colon X' \to X, \kappa \colon K \to K')$ of maps such that $i \alpha = \chi i'$ and $\kappa f \chi = f'$.
\end{proposition}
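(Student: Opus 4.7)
The plan is to apply the Bousfield--Kan spectral sequence of a tower of fibrations, as developed in \cite[Ch.~IX, §4.2]{BK}, to the tower obtained by applying $\map[A](\blank, K)$ to the relative skeletal filtration of the pair $(X, A)$.

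First, since $i$ is an injection of simplicial sets, I would consider $F_n \defeq A \cup \mathrm{sk}_n(X)$, giving a filtration $A = F_{-1} \subseteq F_0 \subseteq F_1 \subseteq \cdots$ with $X = \colim_n F_n$; each inclusion $F_{n-1} \to F_n$ is a pushout of $\coprod_{\sigma \in I_n} \bdry \lincat n \to \coprod_{\sigma \in I_n} \lincat n$, where $I_n$ denotes the set of non-degenerate $n$-simplices of $X$ not contained in $A$. Applying $\map[A](\blank, K)$ reverses the arrows and, since $K$ is a Kan complex, yields a tower of Kan fibrations whose inverse limit is exactly $\map[A](X, K)$.

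Second, I would identify the fibres: the fibre of $\map[A](F_n, K) \to \map[A](F_{n-1}, K)$ over the restriction of $f$ is the product, indexed by $\sigma \in I_n$, of spaces of extensions along $\bdry \lincat n \to \lincat n$ of the map $\bdry \lincat n \to K$ obtained from $f$. Since $\lincat n$ is contractible and $K$ is fibrant, each such extension space is weakly equivalent to the $n$-fold loop space $\Omega^n K$ (based at the constant loop), whose homotopy groups are $\pi_{q-n}(\Omega^n K) \iso \pi_q(K)$.

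Third, I would assemble the Bousfield--Kan extended spectral sequence. Its $E_1$-page is given by $\pi_{q-p}$ of the fibre at stage $p$, which equals $\prod_{I_p} \pi_q(K) = C^p(X, A; \pi_q K)$ in the range $q \ge p$; the additional restriction $q \ge 2$ is precisely what is needed for the simple connectivity of $K$ to force each $\Omega^p K$ to be path-connected when $p \le 1$, so that the standard formalism applies on the interior of the range and reduces to the non-abelian fringe only at $p = q$. The $d_1$ differential, induced by the attaching maps of the cells and described via the connecting morphism in the long exact sequence of the fibration, is identified with the simplicial cochain coboundary, giving $E_2^{p,q} = H^p(X, A; \pi_q K)$. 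When $(X, A)$ is a finite-dimensional relative CW-pair the tower stabilises after finitely many stages, so the relevant $\lim^1$ terms vanish and complete convergence to $\pi_{q-p}(\map[A](X, K), f)$ follows in the indicated range; functoriality in a triple $(\alpha, \chi, \kappa)$ is automatic because the skeletal filtration is natural in $\chi$, while $\alpha$ and $\kappa$ act by pre- and post-composition on the mapping spaces.

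The main obstacle I expect is the careful bookkeeping on the fringe of the extended spectral sequence, where the terms are only pointed sets (or non-abelian groups) rather than abelian groups, together with the precise identification of $d_1$ with the simplicial cochain differential at the level of the explicit pushout description of $F_{n-1} \to F_n$; this is exactly the setting for which the formalism of \cite[Ch.~IX, §4.2]{BK} was designed, and the hypothesis that $K$ is simply connected is what keeps us inside its range of complete convergence.
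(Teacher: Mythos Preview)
Your approach is correct in outline and is a genuinely different route from the paper's.  The paper does not build the tower from the skeletal filtration of $(X,A)$; instead it cites Kupers--Randal-Williams \cite[§5.2]{KR}, who construct a Federer-type spectral sequence for spaces of sections of a fibration, and applies their result to the trivial fibration $\gr X \times \gr K \to \gr X$ with the graph section $(\id[\gr X], \gr f)$.  The resulting section space is $\Map[\gr A](\gr X, \gr K)$, which is then compared to $\gr{\map[A](X,K)}$ via \cref{lemma:compact-open}.  The paper also notes a point you do not make explicit: a priori the $E_2$-coefficients form a local system $\underline{\pi_q}(\gr K, \gr f)$ on $\gr X$, and it is the simple connectivity of $K$ that forces this to be constant.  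In your skeletal approach the same issue is hidden in the identification of each fibre with $\Omega^p K$ and in the well-definedness of $d_1$; you should say where exactly $\pi_1(K)=0$ enters.  Your approach is more self-contained; the paper's is shorter because it outsources the construction.

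There is one imprecision to fix.  You write that for a finite-dimensional relative CW-pair ``the tower stabilises after finitely many stages''.  That is only literally true if the \emph{simplicial} pair $(X,A)$ has no nondegenerate relative simplices above a given dimension; the hypothesis is merely that $(X,A)$ has the \emph{homotopy type} of such a pair.  You should either use functoriality to replace $(X,A)$ by a genuinely finite-dimensional model before running the argument, or argue that $E_2^{p,q}=0$ for $p$ beyond the CW-dimension already forces complete convergence in the Bousfield--Kan sense.
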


\begin{proof}
  A treatment of this spectral sequence is contained in work of Kupers--Randal-Williams \cite[§5.2]{KR}; the statement above arises by applying their more general version to the fibration given by the projection $\gr X \times \gr K  \to  \gr X$ together with the section $(\id[\gr X], \gr f)$.
  The spectral sequence then converges to $\hg * ( \Map[\gr A](\gr X, \gr K), \gr f )$, where $\Map[\gr A](\gr X, \gr K)$ denotes the space of continuous maps from $\gr X$ to $\gr K$ that restrict to $\gr {f \after i}$ on $\gr A$, equipped with the compact-open topology.
  Since $i$ is a cofibration and $K$ is a Kan complex, there is a natural weak homotopy equivalence
  \[ \gr {\map[A] (X, K)}  \xlongto{\eq}  \Map[\gr A](\gr X, \gr K) \]
  by \cref{lemma:compact-open}.
  Lastly, note that a priori the cohomology appearing on the $E_2$-page has coefficients in the local system $\ul \pi_q(\gr K, \gr f)$ on $\gr X$ given on points by $x \mapsto \hg q (\gr K, \gr f(x))$ and on a path $\gamma$ by the isomorphism induced by $\gr f \after \gamma$.
  However, since $K$ is simply connected, any basepoint $k_0 \in K$ induces an isomorphism between $\ul \pi_q(\gr K, \gr f)$ and the constant local system with value $\hg q(K, k_0)$.
  Moreover, two different choices of basepoint $k_0$ yield canonically isomorphic groups.
\end{proof}

\subsection{Differential graded Lie algebras}

In this subsection, we recall basic properties of dg Lie algebras over a commutative $\QQ$-algebra, including the model structure on their category.

\begin{definition}
  Let $R$ be a commutative $\QQ$-algebra.
  A \emph{dg Lie algebra over $R$} is a chain complex $L$ of $R$-modules equipped with a dg Lie algebra structure such that the Lie bracket $\liebr \blank \blank \colon L \times L \to L$ is $R$-bilinear.
  A \emph{graded Lie algebra over $R$} is defined analogously.
  
  We denote by $\dgLie[R]$ the category of dg Lie algebras over $R$ and $R$-linear dg Lie algebra morphisms.
  Furthermore, we write $\freelie_R$ for the free dg Lie algebra functor, i.e.\ the left adjoint of the forgetful functor $\dgLie[R] \to \Ch[R]$, and we denote pushouts (and coproducts) in the category $\dgLie[R]$ by $L' \cop^R_L L''$.
  When $R = \QQ$, we omit it from the notation.
\end{definition}

\begin{definition}
  Let $R$ be a commutative $\QQ$-algebra, $L$ a dg Lie algebra over $R$, and $A$ a commutative chain algebra over $R$.
  Then we equip $L \tensor[R] A$ with the dg Lie algebra structure determined by
  \[ \liebr {l \tensor a} {l' \tensor a'}  =  (-1)^{\deg a \deg{l'}} \liebr {l} {l'} \tensor a a' \]
  for all homogeneous $l, l' \in L$ and $a, a' \in A$.
  Often we will use the same notation when $A$ is a commutative cochain algebra, in which case we consider it as a chain algebra by inverting the grading.
\end{definition}

\begin{lemma} \label{lemma:Lie_algebra_extension}
  Let $R \to S$ be a map of commutative $\QQ$-algebras.
  Then the extension-of-scalars functor $\blank \tensor[R] S \colon \dgLie[R] \to \dgLie[S]$ is left adjoint to the forgetful functor.
%  In particular, for two maps $L_1 \from L_0 \to L_2$ of dg Lie algebras over $R$, there is a natural isomorphism
%  \[ (L_1 \cop_{L_0}^R L_2) \tensor[R] S  \iso  (L_1 \tensor[R] S) \cop_{L_0 \tensor[R] S}^S (L_2 \tensor[R] S) \]
%  of dg Lie algebras over $S$.
  In particular it preserves colimits.
  Moreover, there is a natural isomorphism $\freelie_S(\blank \tensor[R] S) \iso \freelie_R(\blank) \tensor S$ of functors $\Ch[R] \to \dgLie[S]$.
\end{lemma}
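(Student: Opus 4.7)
The plan is to establish the adjunction directly by constructing the unit and counit (or equivalently the natural bijection between hom-sets), and then deduce the statement about free dg Lie algebras from the uniqueness of left adjoints.

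First I would verify that $L \tensor_R S$ is well-defined as an object of $\dgLie[S]$ when $L \in \dgLie[R]$. The $S$-module structure acts on the second tensor factor, and the bracket from the preceding definition (applied with $A = S$ concentrated in degree $0$) is $\liebr{l \tensor a}{l' \tensor a'} = \liebr{l}{l'} \tensor a a'$, so the signs vanish; I would check that this is $R$-balanced (using that the bracket of $L$ is $R$-bilinear) and $S$-bilinear, and that the axioms of a dg Lie algebra are inherited from those of $L$. This construction is clearly functorial in $L$.

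Next I would construct the natural bijection
\[ \Hom_{\dgLie[S]}(L \tensor_R S, M)  \iso  \Hom_{\dgLie[R]}(L, M) \]
where $M$ on the right-hand side is restricted to an $R$-dg Lie algebra along $R \to S$. In one direction, send $f \colon L \tensor_R S \to M$ to the composite $L \to L \tensor_R S \to M$, $l \mapsto f(l \tensor 1)$. In the other direction, send $g \colon L \to M$ to the $S$-linear map $L \tensor_R S \to M$ given by $l \tensor a \mapsto a \cdot g(l)$; I would check $R$-balancedness using that $g$ is $R$-linear, and bracket preservation using that $a, a' \in S$ sit in degree $0$ together with the $S$-bilinearity of the bracket on $M$. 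The two assignments are mutually inverse by inspection, and natural in both variables. The resulting adjunction preserves colimits as a formal consequence.

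Finally, for the isomorphism $\freelie_S(\blank \tensor_R S) \iso \freelie_R(\blank) \tensor_R S$ of functors $\Ch[R] \to \dgLie[S]$, I would use uniqueness of left adjoints. Both the chain-complex-level extension-of-scalars $\blank \tensor_R S \colon \Ch[R] \to \Ch[S]$ and $\freelie_S \colon \Ch[S] \to \dgLie[S]$ are left adjoint to their respective forgetful functors, and so is the composite $\freelie_S(\blank \tensor_R S)$. On the other hand, by the first part of the lemma together with $\freelie_R \dashv \text{forget}$, the composite $\freelie_R(\blank) \tensor_R S$ is also left adjoint to the composition of the two forgetful functors $\dgLie[S] \to \dgLie[R] \to \Ch[R]$, which agrees with restricting an $S$-dg Lie algebra to an $R$-chain complex. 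Hence both functors are left adjoint to the same functor and so are canonically naturally isomorphic.

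The routine but slightly fiddly point is checking that the bracket on $L \tensor_R S$ is well-defined modulo the $R$-balancing relation and that the candidate map $l \tensor a \mapsto a \cdot g(l)$ is a map of dg Lie algebras; these are the only verifications that do not come for free from abstract adjunction nonsense, and everything else follows formally.
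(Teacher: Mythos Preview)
Your proposal is correct and follows exactly the same approach as the paper: direct verification of the adjunction for the first statement, and uniqueness of left adjoints for the isomorphism $\freelie_S(\blank \tensor_R S) \iso \freelie_R(\blank) \tensor_R S$. The paper's proof is simply a terse two-sentence version of what you wrote.
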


\begin{proof}
  The first statement follows from an elementary verification.
  The last statement follows from the fact that both functors are left adjoint to the forgetful functor.
\end{proof}

\begin{definition} \label{def:quasi-free}
  Let $R$ be a commutative $\QQ$-algebra.
  A map $L' \to L$ of dg Lie algebras over $R$ is \emph{quasi-free} if there exists a graded $R$-module $V$ that is free in each degree and an isomorphism $L \iso L' \cop^R \freelie_R(V)$ under $L'$ of graded Lie algebras over $R$.
  
  A dg Lie algebra $L$ over $R$ is \emph{quasi-free} if the unique map $0 \to L$ of dg Lie algebras over $R$ is quasi-free.
\end{definition}

The following is a generalization of the indecomposables of a dg Lie algebra to the relative setting.
It turns out to also have analogous properties.
In particular, when $L' \to L$ is quasi-free, the homology of the relative indecomposables is isomorphic to the Chevalley--Eilenberg homology of $L$ relative to $L'$; see (the proof of) \cref{lemma:indec_is_ho}.

\begin{definition} \label{def:indec}
  Let $R$ be a commutative $\QQ$-algebra and $L' \to L$ a map of dg Lie algebras over $R$.
  We define the \emph{indecomposables} of $L$ relative to $L'$ to be the dg Lie algebra over $R$
  \[ \indec[L'](L)  \defeq  \quot {\ol L} { \liebr {\ol L} {\ol L} } \]
  where $\ol L$ is the quotient of $L$ by the Lie ideal generated by the image of $L'$.
  Note that the Lie algebra structure of $\indec[L'](L)$ is trivial, so that we could also consider it to be simply a chain complex of $R$-modules.
\end{definition}

\begin{lemma} \label{lemma:quasi-free_indec}
  Let $R$ be a commutative $\QQ$-algebra, $L'$ a graded Lie algebra over $R$, and $V$ a graded $R$-module.
  Set $L \defeq L' \cop^R \freelie_R(V)$.
  Then the inclusion induces an isomorphism $V \to \indec[L'](L)$ of graded $R$-modules.
\end{lemma}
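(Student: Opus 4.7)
The plan is to construct an inverse $\phi \colon \indec[L'](L) \to V$ to the map $\iota \colon V \to \indec[L'](L)$ from the statement, using universal properties throughout.

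First, I would regard $V$ as a graded Lie algebra over $R$ with trivial bracket. The universal property of $\freelie_R$ applied to $\id[V] \colon V \to V$ yields a map of graded Lie algebras $p \colon \freelie_R(V) \to V$ whose restriction to $V$ is the identity. Combining $p$ with the zero map $L' \to V$, the universal property of the coproduct $L = L' \cop^R \freelie_R(V)$ produces a map of graded Lie algebras $q \colon L \to V$. Because $V$ has trivial bracket and $q$ vanishes on $L'$, the map $q$ annihilates the Lie ideal generated by $L'$, so it factors through $\ol L$; since $V$ still has trivial bracket, it further descends to a map of graded $R$-modules $\phi \colon \indec[L'](L) \to V$.

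Next I would check that $\phi$ is inverse to $\iota$. For $\phi \after \iota$, unwinding the definitions shows that this equals the composite $V \hookrightarrow \freelie_R(V) \xlongto{p} V$, which is the identity by construction of $p$. For $\iota \after \phi$, it suffices to show that $\iota \after q \colon L \to \indec[L'](L)$ agrees with the canonical projection $\pi \colon L \to \indec[L'](L)$. Both are maps of graded Lie algebras, so by the universal property of $L = L' \cop^R \freelie_R(V)$ it suffices to verify agreement on $L'$ and on $\freelie_R(V)$. On $L'$ both maps vanish. On $\freelie_R(V)$, the target $\indec[L'](L)$ has trivial bracket, so by the universal property of the free Lie algebra both maps factor uniquely through $V$; the resulting maps $V \to \indec[L'](L)$ both equal $\iota$, and hence agree.

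There is no substantive obstacle here: the argument is a routine diagram chase built from the universal properties of $\freelie_R$, of the coproduct $L' \cop^R \freelie_R(V)$, and of the quotient defining $\indec[L'](L)$. The only small point to be careful about is that all constructions take place in graded (not differential graded) Lie algebras, so one does not need to check compatibility with any differential — the statement is purely about the underlying graded $R$-modules.
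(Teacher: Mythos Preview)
Your proof is correct and close in spirit to the paper's, but organized a bit differently. The paper first observes that the surjection $L \to \freelie_R(V)$ (identity on $\freelie_R(V)$, zero on $L'$) has kernel exactly the Lie ideal generated by $L'$, so that $\ol L \cong \freelie_R(V)$; the result then reduces to the well-known absolute case $V \xrightarrow{\cong} \freelie_R(V)/\liebr{\freelie_R(V)}{\freelie_R(V)}$. You instead map all the way down to $V$ and verify both inverse identities directly. Your approach avoids the (easy but not entirely trivial) step of identifying the kernel, at the cost of checking $\iota \after \phi = \id$ explicitly; the paper's approach gives slightly more, namely an identification of $\ol L$ itself, which is occasionally useful elsewhere.
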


\begin{proof}
  Consider the surjective morphism of graded Lie algebras $L \to \freelie_R(V)$ given by the identity on $\freelie_R(V)$ and the trivial map on $L'$.
  Its kernel is precisely the Lie ideal generated by $L'$, and hence we have, in the notation of \cref{def:indec}, that $\ol L = \freelie_R(V)$.
  The claim then follows from the fact that the inclusion induces an isomorphism $V \to \quot {\freelie_R(V)} {\liebr {\freelie_R(V)} {\freelie_R(V)}}$.
\end{proof}

We will now recall the standard simplicial enrichment and model structure on the category of dg Lie algebras over $R$.
They do not quite combine to a simplicial model category structure in the usual sense (as defined in e.g.~\cite[§9.1.5]{Hir}), because the category of dg Lie algebras is not (co)tensored over simplicial sets.
However the most important axiom, SM7 (see \cref{lemma:SM7}), is satisfied.

\begin{definition} \label{def:dgLie_simplicial}
  Let $R$ be a commutative $\QQ$-algebra and $L$, $L'$, and $L''$ three dg Lie algebras over $R$.
  We write $\map(L, L')$ for the simplicial set $\Hom[{\dgLie[R]}](L, L \tensor[\QQ] \sForms)$.
  We furthermore define a composition $\map(L', L'') \times \map(L, L') \to \map(L, L'')$ by sending an $n$-simplex $(g, f)$ to the composite
  \[ L  \xlongto{f}  L' \tensor[\QQ] \sForms[n]  \xlongto{g \tensor {\id}}  L'' \tensor[\QQ] \sForms[n] \tensor[\QQ] \sForms[n]  \xlongto{{\id} \tensor \mu}  L'' \tensor[\QQ] \sForms[n] \]
  where $\mu$ is the multiplication of $\sForms[n]$.
  Together with the identity maps $L \to L \iso L  \tensor[\QQ] \sForms[0]$, this equips $\dgLie[R]$ with the structure of a simplicial category.\footnote{This is stated in \cite[§4.8.10]{Hin}, but also easy to check directly.}
  
  We write $\eq_{L}^R$ for the simplicial homotopy equivalence relation between morphisms of the simplicial category $L \comma \dgLie[R]$ (see \cref{def:undercategory_simplicial}).
\end{definition}

\begin{lemma}[Quillen, Hinich] \label{lemma:dgLie_model_structure}
  Let $R$ be a commutative $\QQ$-algebra.
  Then there exists a unique model structure on the category $\dgLie[R]$ such that:
  \begin{itemize}
    \item
    The weak equivalences are the underlying quasi-isomorphisms.
    
    \item
    The fibrations are the degreewise surjective maps.
  \end{itemize}
  Furthermore, this model structure fulfills:
  \begin{enumerate}
    \item
    If $i \colon L' \to L$ is a quasi-free map of dg Lie algebras over $R$ such that $L$ is concentrated in non-negative degrees, then $i$ is a cofibration.
    
    \item
    If $i \colon L' \to L$ is a cofibration and $\widetilde L$ an object of $\dgLie[R]$, then the precomposition map $i^* \colon \map(L, \widetilde L) \to \map(L', \widetilde L)$ is a fibration of simplicial sets.
    If $i$ is furthermore a quasi-isomorphism, then $i^*$ is a weak equivalence.
  \end{enumerate}
\end{lemma}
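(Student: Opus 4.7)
The plan is to apply Quillen's transfer theorem to the free-forgetful adjunction between $\Ch[R]$ and $\dgLie[R]$, transferring the projective model structure on $\Ch[R]$ whose generating cofibrations are the inclusions $S^{n-1} \to D^n$ (where $D^n$ is the contractible chain complex with $R$ in degrees $n$ and $n-1$ and identity differential, and $S^{n-1}$ is its subcomplex in degree $n-1$) and whose generating acyclic cofibrations are the maps $0 \to D^n$. The hypothesis of the transfer theorem that needs verification is that every transfinite composition of pushouts along the maps $0 \to \freelie_R(D^n)$ is a quasi-isomorphism; this is the technical core, due to Hinich in the unbounded setting over an arbitrary commutative $\QQ$-algebra, and uses a Poincar\'e--Birkhoff--Witt type filtration on free Lie algebras to reduce to the corresponding easy statement about chain complexes. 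Uniqueness is automatic, since once the weak equivalences and fibrations are fixed the cofibrations are forced to be the maps with the left lifting property against acyclic fibrations.

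For (1), let $i \colon L' \to L$ with $L \iso L' \cop^R \freelie_R(V)$ under $L'$, where $V$ is free and concentrated in non-negative degrees. Choose a homogeneous basis $\set{v_\alpha}_{\alpha < \kappa}$ of $V$ and well-order it so that the degree is non-decreasing in $\alpha$. Since the differential of $L$ strictly lowers degree, each $d(v_\alpha)$ lies in $L' \cop^R \freelie_R(V_{<\alpha})$, where $V_{<\alpha}$ is spanned by the strictly preceding basis elements --- this is where the non-negativity of the generators is essential. Adjoining $v_\alpha$ then presents the inclusion $L' \cop^R \freelie_R(V_{<\alpha}) \to L' \cop^R \freelie_R(V_{\le \alpha})$ as a pushout of the generating cofibration $\freelie_R(S^{\deg(v_\alpha)-1}) \to \freelie_R(D^{\deg(v_\alpha)})$, and $i$ itself is the corresponding transfinite composition; hence $i$ is a cofibration.

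For (2), the claim is the SM7-type axiom for the enrichment over $\sSet$ given by $\sForms$. Under the adjunction identifying an $n$-simplex of $\map(L, \widetilde L)$ with a morphism $L \to \widetilde L \tensor \sForms[n]$ in $\dgLie[R]$, verifying that $i^*$ is a fibration (respectively an acyclic fibration when $i$ is in addition a quasi-isomorphism) reduces to a lifting problem in $\dgLie[R]$ for $i$ against a map of the form $\widetilde L \tensor \sForms[n] \to \widetilde L \tensor \Forms *(K)$ arising from an inclusion $K \subseteq \lincat n$ of a horn (respectively of the boundary). The inputs needed are that the restriction $\sForms[n] \to \Forms *(K)$ is always surjective (so that tensoring with $\widetilde L$ produces a fibration in $\dgLie[R]$) and that it is additionally a quasi-isomorphism when $K$ is a horn; the lifting then exists because $i$ is a (acyclic) cofibration in the transferred model structure. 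The main obstacle is the Hinich input feeding the transfer theorem, as the remaining verifications are either formal or standard filtration arguments.
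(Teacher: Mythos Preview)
Your proposal is correct and follows essentially the same approach as the paper, which simply cites Hinich for all three parts; you have fleshed out the content of those references. Your argument for (1) is exactly the ``standard cofibration'' verification the paper alludes to, and your treatment of (2) is the standard unwinding of the enrichment into a lifting problem against $\widetilde L \tensor \sForms[n] \to \widetilde L \tensor \Forms *(K)$, which is what lies behind Hinich's Corollary~4.8.5.
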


\begin{proof}
  The existence of the model structure is \cite[Theorem~4.1.1]{Hin}.
  When $L$ is concentrated in non-negative degrees, one sees, by choosing an isomorphism $L \iso L' \cop^R \freelie_R(V)$ under $L'$ and decomposing $V$ by homological degree, that $i$ is a ``standard cofibration'' in the terminology of \cite[§2.2.3]{Hin}; this proves property~1.
  Property~2 is \cite[Corollary~4.8.5]{Hin}.
\end{proof}

\begin{remark} \label{rem:dgLie_map_Kan}
  Note that property~2 of \cref{lemma:dgLie_model_structure} implies that $\map(L, \widetilde L)$ is a Kan complex when $L$ is cofibrant.
  More generally, when $L' \to L$ is a cofibration and $L' \to \widetilde L$ any map, then $\map[L'](L, \widetilde L)$ is a Kan complex.
\end{remark}

We now provide proofs for basic properties of the simplicial homotopy relation of maps of dg Lie algebras relative to another dg Lie algebra.
See also Espic--Saleh \cite[Definition~4.6]{ES} and, in the absolute case, Tanré \cite[Ch.~II, §5]{Tan}.

\begin{definition}
  For $q \in \QQ$, we denote by $\ev[q] \colon \sForms[1] \to \QQ$ the unique morphism of cochain algebras such that $\ev[q](t_0) = q$.
\end{definition}

\begin{lemma} \label{lemma:dgLie_homotopy}
  Let $R$ be a commutative $\QQ$-algebra and $i \colon L' \to L$ and $j \colon L' \to \widetilde L$ two maps of dg Lie algebras over $R$.
  Assume that $i$ is a cofibration.
  Then the following conditions on two maps $f, g \colon L \to \widetilde L$ under $L'$ are equivalent:
  \begin{enumerate}
    \item
    The maps $f$ and $g$ are simplicially homotopic in the simplicial category $L' \comma \dgLie[R]$.
    
    \item
    There exists a map $h \colon L \to \widetilde L \tensor[\QQ] \sForms[1]$ of dg Lie algebras over $R$ such that we have $({\id} \tensor \ev[0]) \after h = f$, $({\id} \tensor \ev[1]) \after h = g$, and $h(i(a)) = j(a) \tensor 1$ for all $a \in L'$.
    
    \item
    The maps $f$ and $g$ are right homotopic in the model category $L' \comma \dgLie[R]$.
  \end{enumerate}
\end{lemma}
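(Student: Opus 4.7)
My plan is to prove (1) $\Leftrightarrow$ (2) $\Leftrightarrow$ (3), using condition (2) as a concrete intermediate.

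By \cref{def:dgLie_simplicial} and \cref{def:undercategory_simplicial}, a 1-simplex of $\map[L'](L, \widetilde L)$ is precisely a morphism $h \colon L \to \widetilde L \tensor[\QQ] \sForms[1]$ of dg Lie algebras over $R$ satisfying $h \after i = j \tensor 1$, and its two faces are $d_0 h = ({\id} \tensor \ev[0]) \after h$ and $d_1 h = ({\id} \tensor \ev[1]) \after h$ (using the formula for $\sigma^*$ applied to $\delta_0, \delta_1 \colon \lincat 0 \to \lincat 1$). Hence condition (2) is exactly the statement that $f$ and $g$ are joined by a 1-simplex of $\map[L'](L, \widetilde L)$. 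Since $i$ is a cofibration, this simplicial set is a Kan complex by \cref{rem:dgLie_map_Kan}, and in a Kan complex the relation of being connected by a 1-simplex is already an equivalence relation that coincides with lying in the same connected component. This gives (1) $\Leftrightarrow$ (2).

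For (2) $\Leftrightarrow$ (3), I would verify that $\widetilde L \tensor[\QQ] \sForms[1]$, equipped with the structure map $a \mapsto j(a) \tensor 1$, is a path object for $(\widetilde L, j)$ in $L' \comma \dgLie[R]$. Concretely, the diagonal factors as
\[ \widetilde L  \xlongto{x \mapsto x \tensor 1}  \widetilde L \tensor[\QQ] \sForms[1]  \xlongto{(\id \tensor \ev[0],\, \id \tensor \ev[1])}  \widetilde L \times \widetilde L. \]
The left map is a weak equivalence because $\sForms[1]$ is quasi-isomorphic to $\QQ$ (its cohomology is concentrated in degree $0$); the right map is degreewise surjective, as witnessed by the section $(a, b) \mapsto b \tensor t_0 + a \tensor t_1$, and hence a fibration by \cref{lemma:dgLie_model_structure}. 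By \cref{lemma:slice_sm}, products, weak equivalences, and fibrations in $L' \comma \dgLie[R]$ are computed in $\dgLie[R]$, so this is also a path object in the undercategory. With this choice of path object, condition (2) reads literally as $f$ and $g$ being right homotopic, giving (2) $\Rightarrow$ (3).

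For the converse (3) $\Rightarrow$ (2), note that $L$ is cofibrant in $L' \comma \dgLie[R]$ because $i$ is a cofibration (\cref{lemma:slice_sm}), and $\widetilde L$ is fibrant because every object of $\dgLie[R]$ is fibrant (the terminal object is $0$, onto which every map is surjective). The standard fact that right homotopy between maps from a cofibrant object to a fibrant object is independent of the choice of path object then converts an arbitrary right homotopy into one via the specific path object constructed above, yielding condition (2). The main bookkeeping to monitor is the interplay between the undercategory model structure and the concrete path object $\widetilde L \tensor[\QQ] \sForms[1]$, but this is routine given \cref{lemma:slice_sm} and I do not anticipate a serious obstacle.
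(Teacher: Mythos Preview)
Your proposal is correct and follows essentially the same route as the paper's proof: both arguments use \cref{rem:dgLie_map_Kan} to get (1) $\Leftrightarrow$ (2), verify that $\widetilde L \tensor[\QQ] \sForms[1]$ is a path object in $L' \comma \dgLie[R]$ for (2) $\Rightarrow$ (3), and invoke the independence of right homotopy from the choice of path object (the paper cites \cite[Proposition~7.4.10]{Hir}) for (3) $\Rightarrow$ (2). Your version is slightly more explicit about the face conventions and the surjectivity section, but there is no substantive difference.
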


\begin{proof}
  That 1 is equivalent to 2 follows from \cref{rem:dgLie_map_Kan} and the definition of the simplicial structure of $\sForms$.
  To see that 2 and 3 are equivalent, note that the unit map $\eta \colon \QQ \to \sForms[1]$ is a quasi-isomorphism, and that $(\ev[0], \ev[1]) \colon \sForms[1] \to \QQ \dirsum \QQ$ is surjective.
  This remains true upon tensoring with $\widetilde L$, which implies that
  \[ \widetilde L  \xlongto{{\id} \tensor \eta}  \widetilde L \tensor[\QQ] \sForms[1]  \xlongto{({\id} \tensor \ev[0], {\id} \tensor \ev[1])}  \widetilde L \times \widetilde L \]
  is a path object for $\widetilde L$ in $L' \comma \dgLie[R]$.
  Hence 2 implies 3, and the other implication follows from \cite[Proposition~7.4.10]{Hir}.
\end{proof}

\begin{remark} \label{rem:dgLie_homotopy_inverse}
  Let $R$ be a commutative $\QQ$-algebra and $L' \to L$ and $L' \to \widetilde L$ two cofibrations of dg Lie algebras over $R$.
  \Cref{lemma:dgLie_homotopy} implies, as in (the proof of) \cref{rem:simplicial_homotopy}, that if $f \colon L \to \widetilde L$ is a quasi-isomorphism under $L'$ of dg Lie algebras over $R$, then it has a simplicial homotopy inverse $g$ relative to $L'$, and $g$ is again a quasi-isomorphism.
\end{remark}

\begin{lemma} \label{lemma:dgLie_homotopic}
  Let $R$ be a commutative $\QQ$-algebra and $i \colon L' \to L$ and $L' \to \widetilde L$ two maps of dg Lie algebras over $R$ such that $i$ is a cofibration.
  Furthermore, let $f, g \colon L \to \widetilde L$ be two maps under $L'$ of dg Lie algebras over $R$.
  If $f \eq_{L'}^R g$, then their underlying maps of chain complexes are chain homotopic relative to $L'$.
\end{lemma}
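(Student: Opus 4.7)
The plan is to upgrade a simplicial homotopy, which by \cref{lemma:dgLie_homotopy} can be encoded as a morphism of dg Lie algebras $h \colon L \to \widetilde L \tensor_\QQ \sForms[1]$ with prescribed endpoint values, into a chain homotopy via integration of its $dt$-component over $[0,1]$.

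First, I would invoke \cref{lemma:dgLie_homotopy} to obtain a morphism $h \colon L \to \widetilde L \tensor_\QQ \sForms[1]$ of dg Lie algebras over $R$ (where $\sForms[1]$ is treated as a chain algebra by inverting its grading) such that $({\id} \tensor \ev[0]) \after h = f$, $({\id} \tensor \ev[1]) \after h = g$, and $h(i(a)) = j(a) \tensor 1$ for every $a \in L'$. Setting $t \defeq t_1$, so that $dt = dt_1$ and $\ev[0](t) = 1$, $\ev[1](t) = 0$, every element of $\widetilde L \tensor_\QQ \sForms[1]$ has a unique decomposition $u + v \cdot dt$ with $u, v \in \widetilde L \tensor_\QQ \QQ[t]$; accordingly I write $h(x) = \alpha(x) + \beta(x) \cdot dt$, with $\alpha(x)$ and $\beta(x)$ polynomial in $t$ with coefficients in $\widetilde L$.

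Next, I would define $s \colon L \to \widetilde L$ of chain degree $+1$ by coefficient-wise integration of the $dt$-component,
\[ s(x)  \defeq  (-1)^{\deg x + 1} \int_0^1 \beta(x)(t)\, dt, \]
the sign being chosen so as to match the standard chain homotopy convention. Expanding $d(h(x)) = h(dx)$ via the Leibniz rule on $\widetilde L \tensor_\QQ \sForms[1]$ and comparing the coefficients of $1$ and $dt$ gives
\[ \alpha(dx) = d_{\widetilde L}\, \alpha(x)  \qquad \text{and} \qquad  \beta(dx) = (-1)^{\deg x} \tfrac{\partial}{\partial t} \alpha(x) + d_{\widetilde L}\, \beta(x). \]
Integrating the second identity over $[0,1]$ and applying the fundamental theorem of calculus, together with the identifications $\alpha(x)(1) = f(x)$ and $\alpha(x)(0) = g(x)$ that come from the boundary conditions on $h$, then yields the desired chain homotopy identity $d_{\widetilde L}\, s + s\, d_L = f - g$ (after the sign adjustment above).

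Finally, the relative condition $h \after i = j \tensor 1$ forces $\alpha(i(a)) = j(a)$ (a constant polynomial) and, crucially, $\beta(i(a)) = 0$ for every $a \in L'$; hence $s \after i = 0$, so $s$ is a chain homotopy relative to $L'$. I do not anticipate any substantial obstacle, as the argument is essentially Stokes' theorem on the interval applied inside $\widetilde L \tensor_\QQ \sForms[1]$; the only care required is in tracking Koszul signs coming from the inversion of grading on $\sForms[1]$ and from the derivation rule on the tensor product.
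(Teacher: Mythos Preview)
Your proposal is correct and is precisely the standard integration argument; the paper's proof consists of a citation to Tanr\'e \cite[§II.5, Remarque (27)]{Tan} (with a remark about sign conventions), and what you have written is exactly that argument spelled out in detail.
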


\begin{proof}
  This follows as in \cite[§II.5, Remarque (27)]{Tan} (up to a difference in signs, stemming from the fact that Tanré uses the tensor product $\sForms[1] \tensor[\QQ] \widetilde L$ to define homotopy).
\end{proof}

\subsection{Lie models}

In this subsection, we recall how to model spaces by (complete) dg Lie algebras as well as various properties of the constructions involved.
This goes back to the beginning of rational homotopy theory due to Quillen \cite{Qui69}, though we will not use his original constructions.
We begin with some definitions.

\begin{definition}
  A dg Lie algebra $\lie g$ is of \emph{finite type} if $\lie g_n$ is finite dimensional for each $n \in \ZZ$.
  We denote by $\dgLieft$ the full subcategory of $\dgLie$ spanned by the dg Lie algebras of finite type.
\end{definition}

\begin{definition}
  A \emph{representation} of a Lie algebra $\lie g$ on a vector space $V$ is a Lie algebra homomorphism $\rho \colon \lie g \to \gl(V)$.
  We set $\Gamma_\rho^1 V \defeq V$ and $\Gamma_\rho^{k+1} V \defeq \rho(\lie g)(\Gamma_\rho^k V)$ for $k \ge 1$.
  The representation $\rho$ is \emph{nilpotent} when $\Gamma_\rho^k V = 0$ for some $k$.
  The Lie algebra $\lie g$ is \emph{nilpotent} when the adjoint representation $\ad \colon \lie g \to \gl(\lie g)$, given by $a \mapsto \liebr a \blank$, is nilpotent.
\end{definition}

\begin{definition}
  Let $f \colon L \to \widetilde L$ be a map of dg Lie algebras.
  An \emph{$f$-derivation} of degree $n \in \ZZ$ is a map of graded vector spaces $\theta \colon L \to \widetilde L$ of degree $n$ such that
  \[ \theta(\liebr x y)  =  \liebr {\theta(x)} {f(y)} + (-1)^{n \deg x} \liebr {f(x)} {\theta(y)} \]
  for all homogeneous $x, y \in L$.
  Given a map of dg Lie algebras $L' \to L$, we define $\Derrel{f}(L, \widetilde L \rel L')$ to be the chain complex consisting, in degree $n$, of the vector space of $f$-derivation of degree $n$ that vanish on the image of $L'$, with differential given by $\theta \mapsto d_{L'} \after \theta - (-1)^{\deg \theta} \theta \after d_L$.
  When $f = \id[L]$, then we simply write $\Der(L \rel L')$, and this comes equipped with a dg Lie algebra structure with Lie bracket given by $\liebr \theta \psi \defeq \theta \after \psi - (-1)^{\deg \theta \deg \psi} \psi \after \theta$.
\end{definition}

We will also require the notion of an action of a dg Lie algebra, as well as a twisted version that goes back to Tanré \cite[§VII.2]{Tan}; also see Berglund \cite[§3.5]{Ber}.

\begin{definition}
	An \emph{outer action} of a dg Lie algebra $\lie g$ on a dg Lie algebra $L$ consists of a map of graded Lie algebras $\alpha \colon \lie g \to \Der(L)$ and a map of chain complexes $\chi \colon \lie g \to L$ of degree~$-1$ such that the following two equations hold
	\begin{align*}
		\chi ( \liebr \theta \psi ) &= \chi(\theta) \act \psi + (-1)^{\deg \chi \deg \theta} \theta \act \chi(\psi) \\
		d(\theta \act a) &= d(\theta) \act a + (-1)^{\deg \theta} \theta \act d(a) + \liebr {\chi(\theta)} a
	\end{align*}
	where we write $\theta \act a \defeq \alpha(\theta)(a)$ and $a \act \theta \defeq - (-1)^{\deg a \deg \theta} \theta \act a$.
	We will often refer to an outer action simply by $\chi$ and omit mentioning $\alpha$.
	An \emph{action} is an outer action such that $\chi = 0$, i.e.\ a map of dg Lie algebras $\lie g \to \Der(L)$.
\end{definition}

\begin{definition}
	Let $L$ be a dg Lie algebra equipped with an outer action $\chi$ of a dg Lie algebra $\lie g$, and let $A$ be a commutative chain algebra.
	Then we equip the dg Lie algebra $L \tensor A$ with the outer action $\chi_A$ of the dg Lie algebra $\lie g \tensor A$ defined as
	\begin{align*}
		(\theta \tensor a) \act (x \tensor b)  &\defeq  (-1)^{\deg a \deg x} (\theta \act x) \tensor ab \\
		\chi_A(\theta \tensor a)  &\defeq  \chi(\theta) \tensor a
	\end{align*}
	for homogeneous elements $\theta \in \lie g$, $x \in L$, and $a, b \in A$.
\end{definition}

\begin{definition}
	An outer action of a non-negatively graded dg Lie algebra $\lie g$ on a dg Lie algebra $L$ is \emph{nilpotent} if the induced representation of $\lie g_0$ on $L_n$ is nilpotent for all $n \in \ZZ$.
	A non-negatively graded dg Lie algebra $\lie g$ is \emph{nilpotent} when the adjoint action $\ad \colon \lie g \to \Der(\lie g)$ is nilpotent.
	We denote by $\dgLienil$ the full subcategory of $\dgLie$ spanned by the nilpotent dg Lie algebras, and we set $\dgLienilft \defeq \dgLienil \intersect \dgLieft$.
\end{definition}

Note that a positively graded dg Lie algebra is automatically nilpotent.
We will also require the notion of complete dg Lie algebras, which are more general than nilpotent dg Lie algebras but still enjoy many of their properties.
See Buijs--Félix--Murillo--Tanré \cite[Ch.~3]{BFMT} for more background on complete dg Lie algebras.

\begin{definition}
	A \emph{filtered} dg Lie algebra is a dg Lie algebra $L$ equipped with an $\NNpos$-indexed descending filtration of chain complexes
	\[ L = \cfilt L 1 \supseteq \cfilt L 2 \supseteq \cfilt L 3 \supseteq \dots \]
	such that $\liebr {\cfilt L p} {\cfilt L q} \subseteq \cfilt L {p+q}$.
	A \emph{map} of filtered dg Lie algebras is a filtration-preserving map of dg Lie algebras.
	A \emph{complete} dg Lie algebra is a filtered dg Lie algebra $L$ such that the canonical map $L \to \lim{n} \quot L {\cfilt L n}$ is an isomorphism.
\end{definition}

Note that a nilpotent dg Lie algebra can be considered as a complete dg Lie algebra by equipping it with its lower central series filtration $\cfilt L n \defeq \Gamma_{\ad}^n L$ for $n \ge 1$.

We now introduce the model we will use for the geometric realization of a dg Lie algebra.
It is the functor $\MCb$ from dg Lie algebras to simplicial sets, originally introduced by Hinich \cite[§2]{Hin97}.

\begin{notation}
  Let $L$ be a dg Lie algebra.
  We write
  \[ \MC(L)  \defeq  \set { \tau \in L_{-1}  \mid  d(\tau) + \frac 1 2 \liebr \tau \tau = 0 } \]
  for the set of Maurer--Cartan elements of $L$.
\end{notation}

\begin{definition}[Hinich]
  For a dg Lie algebra $L$, we define the simplicial set
  \[ \MCb(L)  \defeq  \MC(L \tensor \sForms) \]
  and call it the \emph{realization} of $L$.
  Sometimes we will also use the shorter notation $\nerve L \defeq \MCb(L)$.
  For a filtered dg Lie algebra $L$, we furthermore define the simplicial set
  \[ \cMCb(L)  \defeq  \lim{n} \MCb \bigl( \quot L {\cfilt L n} \bigr) \iso \MC(L \ctensor \sForms) \]
  where $L \ctensor A \defeq \lim{n} (\quot L {\cfilt L n} \tensor A)$ denotes the completed tensor product.
\end{definition}

Note that $\cMCb(L) \iso \MCb(L)$ when $L$ is nilpotent and equipped with its lower central series filtration.
Also recall that the functor $\cMCb$ sends surjective maps of complete dg Lie algebras to Kan fibrations (see e.g.\ \cite[Theorem~5.4]{BerMC}), and hence complete dg Lie algebras to Kan complexes.
Moreover, it sends quasi-isomorphisms between non-negatively graded complete dg Lie algebras to weak equivalences (see e.g.\ \cite[Corollary~6.3]{BerMC}).
Lastly note that $\MCb$ sends injective maps of dg Lie algebras (in particular quasi-free maps) to cofibrations of simplicial sets.

\begin{definition}
  Let $\cat I$ be a small category.
  We say that a functor $L \colon \cat I \to \dgLienil$ \emph{models} a functor $X \colon \cat I \to \sSet$ if it comes equipped with the extra structure of a natural pointwise weak equivalence $\rat{(\blank)} \after X \to \MCb \after L$.
\end{definition}

The following well-known lemma implies that, in the preceding definition, we could also have asked for a pointwise rational homology equivalence $X \to \MCb \after L$ (at least in the positively graded case).

\begin{lemma}
  Let $L$ be a positively graded dg Lie algebra.
  Then $\MCb(L)$ is a rational Kan complex.
\end{lemma}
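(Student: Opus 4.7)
The plan is to verify that $\MCb(L)$ satisfies both defining conditions of a rational Kan complex, namely that it is a Kan complex and that it is local with respect to rational homology equivalences.

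First, since $L$ is positively graded, it is nilpotent, hence complete for its lower central series filtration, and the canonical comparison map gives $\MCb(L) \iso \cMCb(L)$. Applying to the surjection $L \to 0$ the fact recalled just before the lemma that $\cMCb$ sends surjective maps of complete dg Lie algebras to Kan fibrations, we obtain that $\MCb(L) \to \MCb(0) = \pt$ is a Kan fibration, so $\MCb(L)$ is a Kan complex.

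Next, I would identify the homotopy type of $\MCb(L)$ to promote Kan fibrancy to rational fibrancy. Since $L$ is concentrated in strictly positive degrees, $L_{-1} = 0$ forces $\MC(L) = \set 0$, so $0$ is a canonical basepoint of $\MCb(L)$. The classical homotopy-group computation for Maurer--Cartan spaces of non-negatively graded complete dg Lie algebras (due to Quillen and Hinich, made explicit in the work of Getzler and Berglund referenced in the excerpt) yields natural isomorphisms
\[ \pi_n \bigl( \MCb(L), 0 \bigr)  \iso  \Ho {n - 1}(L)  \qquad \text{for all } n \ge 1. \]
Because $L_0 = 0$ by positive gradedness, we have $\pi_1(\MCb(L), 0) \iso \Ho 0(L) = 0$, so $\MCb(L)$ is simply connected; all higher homotopy groups are automatically $\QQ$-vector spaces.

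Finally, I would invoke the standard characterization of rational local objects: a simply connected Kan complex whose homotopy groups are $\QQ$-vector spaces is a rational Kan complex, since its Postnikov tower is built from principal fibrations with fibers $K(V, n)$ for $V$ a $\QQ$-vector space, each such $K(V, n)$ is local for rational homology equivalences by the universal coefficient theorem, and the class of rationally local Kan complexes is closed under homotopy limits. I do not expect any genuine obstacle here; the only nontrivial ingredient is the homotopy-group computation, which is classical and can be cited as a black box from the sources already mentioned in the paper.
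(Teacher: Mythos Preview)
Your proposal is correct and follows essentially the same route as the paper: establish that $\MCb(L)$ is a simply connected Kan complex with $\QQ$-vector space homotopy groups, then invoke the standard characterization of rationally local objects. The only cosmetic differences are that the paper observes simple connectedness directly from the degree count (there is a unique $0$- and $1$-simplex since $(L \tensor \sForms[n])_{-1} = 0$ for $n \le 1$) rather than via the homotopy-group formula, and cites \cite[Ch.~V, Proposition~3.3]{BK} together with \cite[Ch.~X, Corollary~3.3]{GJ} for the final implication where you sketch the Postnikov argument.
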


\begin{proof}
  Note that $\MCb(L)$ is simply connected by definition.
  Moreover, for $k \ge 2$, the abelian groups $\hg k (\MCb(L))$ are all uniquely divisible (i.e.\ $\QQ$-vector spaces), for example by \cite[Corollary~6.3]{BerMC}.
  This implies the claim by \cite[Ch.~V, Proposition~3.3]{BK} combined with \cite[Ch.~X, Corollary~3.3]{GJ}.
\end{proof}

The following lemma relates mapping complexes of dg Lie algebras and their realizations.

\begin{lemma} \label{lemma:map_MCb}
	Let $i \colon L_A \to L_X$ and $L_A \to L_Y$ be maps of nilpotent dg Lie algebras such that $L_A$ and $i$ are quasi-free and $L_Y$ is of finite type and positively graded.
	Then there is a natural weak equivalence of simplicial sets
	\[ \map[L_A](L_X, L_Y)  \xlongto{\eq}  \map[\MCb(L_A)] \bigl( \MCb(L_X), \MCb(L_Y) \bigr) \]
	which is given by $\MCb$ on $0$-simplices.
\end{lemma}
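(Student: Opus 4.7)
The plan is to first define the natural morphism explicitly on simplices, then reduce the weak equivalence claim to an absolute version via comparison of homotopy pullback squares, and finally prove the absolute case by induction along a cellular filtration of $L_X$. For the construction, an $n$-simplex of $\map[L_A](L_X, L_Y)$ is a morphism $\phi \colon L_X \to L_Y \tensor \sForms[n]$ of dg Lie algebras under $L_A$; we associate to it the map $\MCb(L_X) \times \lincat n \to \MCb(L_Y)$ whose value on an $m$-simplex $(\tau, \sigma) \in \MC(L_X \tensor \sForms[m]) \times \Hom_\Simplices(\lincat m, \lincat n)$ is the image of $\tau$ under the composite dg Lie algebra morphism
\[ L_X \tensor \sForms[m]  \xlongto{\phi \tensor \id}  L_Y \tensor \sForms[n] \tensor \sForms[m]  \xlongto{\id \tensor \sigma^* \tensor \id}  L_Y \tensor \sForms[m] \tensor \sForms[m]  \xlongto{\id \tensor \mu}  L_Y \tensor \sForms[m], \]
where $\mu$ denotes the multiplication of $\sForms[m]$. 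A direct check shows that this is natural in $m$, compatible with the maps from $\MCb(L_A)$ (because $\sigma^*$ is a unital algebra morphism), natural in all the data of the lemma, and agrees with $\MCb$ on $0$-simplices.

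For the reduction to the absolute case, both sides fit into pullback squares whose right-hand vertical maps are the restriction maps to $\map(L_A, L_Y)$ and $\map(\MCb(L_A), \MCb(L_Y))$ respectively. The first is a fibration by property~2 of \cref{lemma:dgLie_model_structure} (as $i$ is a cofibration), and the second is a fibration because $\MCb$ sends the quasi-free $i$ to an injective, hence cofibrant, map of simplicial sets and $\MCb(L_Y)$ is a Kan complex. Both squares are therefore homotopy pullbacks, and the comparison map respects them, so by comparison of homotopy pullbacks it suffices to establish the absolute version of the statement: $\map(L, L_Y) \to \map(\MCb(L), \MCb(L_Y))$ is a weak equivalence for any quasi-free nilpotent dg Lie algebra $L$, applied to both $L = L_A$ and $L = L_X$.

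For the absolute case, write $L = \freelie(V)$ and fix a transfinite filtration $V_0 \subseteq V_1 \subseteq \dots$ in which each successor step adjoins a single homogeneous generator $v_k$ with differential in $\freelie(V_{<k})$. This produces a tower $L^k = \freelie(V_k)$ of cofibrations between nilpotent dg Lie algebras, which $\MCb$ carries to a tower of injections, hence cofibrations, of Kan complexes. Both $\map(\blank, L_Y)$ and $\map(\MCb(\blank), \MCb(L_Y))$ turn this into a tower of fibrations whose inverse limit is the relevant mapping space, and the comparison is compatible with the tower, so it suffices to verify the weak equivalence on each successor fiber. On the left, the fiber over $f \colon L^{<k} \to L_Y$ is the simplicial set of $y \in (L_Y \tensor \sForms[\bullet])_{|v_k|}$ with $d y = f(d v_k)$, whose homotopy type is computed via Dold--Kan from the homology of $L_Y$ in the appropriate range; on the right, the fiber is the space of lifts over the single-cell attachment $\MCb(L^{<k}) \to \MCb(L^k)$, whose homotopy groups are determined by the same homologies of $L_Y$ via the classical isomorphism $\hg *(\MCb(L_Y)) \iso \hg *(L_Y)$ valid for positively graded nilpotent $L_Y$ of finite type. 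The main technical obstacle will be verifying that our construction realizes this identification on the nose, which amounts to a careful tracking of Maurer--Cartan elements and signs across the Dold--Kan correspondence.
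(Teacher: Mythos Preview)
Your construction of the comparison map and your reduction to the absolute case via comparison of homotopy pullback squares are correct and coincide with the paper's argument. The approaches diverge only in the absolute case. The paper does not induct along a cell filtration; instead it factors the map as
\[ \map(L_X, L_Y) \xlongto{\epsilon^*} \map\bigl(\mathcal{L}\,\CEchains * (L_X), L_Y\bigr) \cong \MC\bigl(\Hom(\rCEchains * (L_X), L_Y \tensor \sForms)\bigr) \xlongto{\alpha} \map[*]\bigl(\MCb(L_X), \MCb(L_Y)\bigr), \]
where $\mathcal L$ is Quillen's functor from dg coalgebras to dg Lie algebras, $\epsilon \colon \mathcal{L}\,\CEchains * (L_X) \to L_X$ is the canonical bar--cobar resolution (a quasi-isomorphism between cofibrant objects, so $\epsilon^*$ is a weak equivalence by \cref{lemma:dgLie_model_structure} and Ken Brown's lemma), the middle isomorphism is Hinich's, and $\alpha$ is the pointed restriction of the weak equivalence of \cite[Theorem~3.16]{Ber}, shown to remain a weak equivalence by comparing fibers over the evaluation-at-unit map to $\MCb(L_Y)$. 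This is a one-shot argument that imports established machinery rather than building the equivalence cell by cell.

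Your inductive approach is a legitimate alternative and would be more self-contained if completed, but the step you call the ``main technical obstacle'' is not residual bookkeeping: it is essentially the whole content of the absolute case. Knowing that both fibers have abstractly matching homotopy groups does not show that your specific comparison map realizes that identification; verifying this amounts to understanding precisely how cycles in $L_Y$ correspond to homotopy classes in $\MCb(L_Y)$ under your construction, which is close to re-deriving by hand the Berglund result the paper quotes. In addition, your description of the right-hand fiber as governed by a ``single-cell attachment'' $\MCb(L^{<k}) \to \MCb(L^k)$ is not immediate: it requires that $\MCb$ take the pushout $L^k \cong L^{<k} \cop_{\freelie(w)} \freelie(w,v_k)$ to a homotopy pushout, and the available tool (\cref{lemma:MC_pushouts}) needs positive grading of $L_X$, a hypothesis the lemma does not impose. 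So the outline is sound in spirit but substantially incomplete.
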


\begin{proof}
  \newcommand{\QL}{\mathcal{L}}
  \newcommand{\mc}{\mathfrak{mc}}
  Note that $\MCb(L) \iso \map(\mc, L)$, where $\mc$ is the free graded Lie algebra on an element $\tau$ of degree $-1$ with differential $d(\tau) = -\frac{1}{2} \liebr{\tau}{\tau}$.
  When $L_A = 0$, the map
  \begin{equation} \label{eq:pointed mapping space}
    \map(L_X, L_Y)  \longto  \map[*] \bigl( \MCb(L_X), \MCb(L_Y) \bigr)
  \end{equation}
  is defined to be the adjoint of the map
  \[ \map(L_X, L_Y) \times \map(\mc, L_X) \longto \map(\mc, L_Y) \]
  given by composition.
%  To see that \eqref{eq:pointed mapping space} is a weak equivalence, one can argue by induction on the lower central series of $L_Y$.
  We observe that \eqref{eq:pointed mapping space} is the composite
  \begin{align*}
    \map(L_X, L_Y) & \xlongto{\epsilon^*} \map \bigl( \QL \CEchains * (L_X), L_Y \bigr) \\
    & \xlongto{\cong} \MC \bigl( \Hom \bigl( \rCEchains * (L_X), L_Y \tensor \sForms \bigr) \bigr) \\
    & \xlongto{\alpha} \map[*] \bigl( \MCb(L_X), \MCb(L_Y) \bigr)
  \end{align*}
  where $\QL$ is Quillen's functor from counital dg coalgebras to dg Lie algebras (see e.g.\ \cite[§2.2]{Hin01}), $\epsilon \colon \QL \CEchains * (L_X) \to L_X$ is the canonical quasi-isomorphism (see e.g.\ \cite[Proposition~3.3.2~(2)]{Hin01}), $\rCEchains *$ denotes reduced Chevalley--Eilenberg chains, the middle isomorphism is the one of \cite[Theorem~2.2.5]{Hin01}, and $\alpha$ is the restriction of the weak equivalence
  \[ \alpha'  \colon  \MC \bigl( \Hom \bigl( \CEchains * (L_X), L_Y \tensor \sForms \bigr) \bigr)  \xlongto{\eq}  \map \bigl( \MCb(L_X), \MCb(L_Y) \bigr) \]
  of \cite[Theorem~3.16]{Ber}.
  That $\epsilon^*$ is a weak equivalence follows from \cref{lemma:dgLie_model_structure} and Kenny Brown’s lemma (see e.g.\ \cite[Corollary~7.7.2]{Hir}), since $\QL \CEchains * (L_X) = \freelie (\shift[-1] \rCEchains * (L_X))$ is quasi-free and non-negatively graded and hence cofibrant.
  By \cite[Remark~3.17]{Ber} there is a natural isomorphism of simplicial sets
  \[ \MC \bigl( \Hom \bigl( \CEchains * (L_X), L_Y \tensor \sForms \bigr) \bigr)  \iso  \cMCb \bigl( \Hom \bigl( \CEchains * (L_X), L_Y \bigr) \bigr) \]
  where on the right-hand side the complete filtration is induced by the degree filtration $L_Y = (L_Y)_{\ge 1} \supseteq (L_Y)_{\ge 2} \supseteq \dots$.
  Furthermore note that the map $\Hom ( \CEchains * (L_X), L_Y ) \to L_Y$ that evaluates at the unit is a surjective map of complete dg Lie algebras and hence induces a fibration upon applying $\cMCb$.
  By taking fibers, this implies that $\alpha$ is a weak equivalence.
  
  For the general version, we consider the commutative diagram
  \[
  \begin{tikzcd}
  	\map(L_X, L_Y) \rar{\eq} \dar[swap]{i^*} & \map[*] \bigl( \MCb(L_X), \MCb(L_Y) \bigr) \dar{\MCb(i)^*} \\
  	\map(L_A, L_Y) \rar{\eq} & \map[*] \bigl( \MCb(L_A), \MCb(L_Y) \bigr)
  \end{tikzcd}
  \]
  and note that the vertical maps are fibrations.
  Hence the induced map on fibers is a weak equivalence, which is what we wanted to show.
  That this map is given by $\MCb$ on $0$-simplices is seen by chasing through its construction.
\end{proof}

The following technical lemma will be useful later.

\begin{lemma} \label{lemma:MC_pushouts}
  Let $L_X \from L_A \to L_Y$ be quasi-free maps of positively graded dg Lie algebras.
  Then the induced map of simplicial sets
  \[ \MCb(L_X) \cop_{\MCb(L_A)} \MCb(L_Y)  \longto  \MCb \bigl( L_X \cop_{L_A} L_Y \bigr) \]
  is a cofibration and rational homology equivalence.
\end{lemma}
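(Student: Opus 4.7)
The statement has two parts, which I plan to handle in turn.

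For the cofibration, unpacking the quasi-free hypothesis I can write $L_X \iso L_A \cop \freelie(V)$ and $L_Y \iso L_A \cop \freelie(W)$ as graded Lie algebras, so the pushout has underlying graded Lie algebra $L_A \cop \freelie(V \dirsum W)$. A direct argument using the projection maps $L_A \cop \freelie(V \dirsum W) \to L_A \cop \freelie(V)$ and $L_A \cop \freelie(V \dirsum W) \to L_A \cop \freelie(W)$ that kill $W$ and $V$ respectively shows that inside the pushout the images of $L_X$ and $L_Y$ intersect precisely in $L_A$. Since $\sForms[n]$ is a flat $\QQ$-module, tensoring preserves this intersection; combined with the fact that $\MCb$ sends injective maps of dg Lie algebras to cofibrations of simplicial sets (recalled just before the statement), this shows that both legs $\MCb(L_X), \MCb(L_Y) \to \MCb(L_X \cop_{L_A} L_Y)$ are cofibrations. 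Moreover, at each level $n$, any pair of Maurer--Cartan elements of $L_X \tensor \sForms[n]$ and $L_Y \tensor \sForms[n]$ with equal image in $(L_X \cop_{L_A} L_Y) \tensor \sForms[n]$ must come from a common element of $L_A \tensor \sForms[n]$, which inherits the Maurer--Cartan equation. This yields the level-wise injectivity of the comparison $P \to Q$, hence the cofibration assertion.

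For the rational homology equivalence, I first observe that $P$ and $Q$ are both simply connected: $P$ by van Kampen applied to the pushout of simply connected simplicial sets along a cofibration, and $Q$ because $\MCb$ of a positively graded dg Lie algebra is simply connected. It therefore suffices to show that $\Forms *$ sends $P \to Q$ to a quasi-isomorphism. Since $\MCb(L_A) \to \MCb(L_X)$ is a cofibration, $\Forms *(P)$ coincides with the ordinary pullback $\Forms *(\MCb(L_X)) \times_{\Forms *(\MCb(L_A))} \Forms *(\MCb(L_Y))$; and because the two legs of this pullback are surjections of cdgas, hence fibrations, it is automatically a homotopy pullback. Thus the task reduces to showing that applying $\Forms *$ to the original pushout square of dg Lie algebras yields a homotopy pullback square of cdgas.

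I plan to carry this out using classical Koszul duality between positively graded dg Lie algebras (of finite type) and simply connected cdgas. After compatibly replacing $L_A, L_X, L_Y$ with finite-type cofibrant models using the model structure of \cref{lemma:dgLie_model_structure}, there is a natural chain of quasi-isomorphisms from $\Forms *(\MCb(L))$ to the linear dual of $\CEchains *(L)$, which is a Sullivan model for $\MCb(L)$. Under this identification, writing $L_X \cop_{L_A} L_Y = L_Y \cop \freelie(V)$ with extended differential, the Koszul-duality correspondence sends this Lie algebra pushout to a free extension of cdgas, which is by construction the homotopy pushout in cdgas, and hence the homotopy pullback in the opposite square. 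The main obstacle I foresee is precisely this Koszul-duality step identifying the cochain complex of a semi-free pushout of positively graded dg Lie algebras with the corresponding free extension of Sullivan cdgas; rather than reproving it by hand, I would appeal to standard references (e.g., Buijs--Félix--Murillo--Tanré) on $\MCb$ preserving homotopy pushouts along quasi-free maps of positively graded dg Lie algebras.
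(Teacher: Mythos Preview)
Your cofibration argument coincides with the paper's: both use the decomposition $L_X \cop_{L_A} L_Y \iso L_A \cop \freelie(V \oplus W)$ of underlying graded Lie algebras to see that $L_X$ and $L_Y$ intersect precisely in $L_A$ inside the pushout, and that this persists after tensoring with $\sForms[n]$.

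For the rational homology equivalence, your primary route through $\Forms *$ and Koszul duality has a gap at the stated generality. The identification of $\Forms *(\MCb(L))$ with $\CEcochains *(L)$ (as in \cref{lemma:MC_forms}) requires $L$ to be of finite type, which the lemma does not assume, and ``compatibly replacing with finite-type cofibrant models'' is not available unless the inputs already have finite-type homology. The paper avoids this by staying on the Lie side: it cites F\'elix--Fuentes--Murillo together with Buijs--F\'elix--Murillo--Tanr\'e (or Robert-Nicoud) for a zig-zag of natural weak equivalences between $\MCb$, restricted to positively graded dg Lie algebras, and Quillen's right Quillen equivalence to $2$-reduced simplicial sets with the rational model structure. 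Since an equivalence of homotopy categories preserves homotopy colimits, and both pushouts here are already homotopy pushouts (the relevant legs being cofibrations on each side), the comparison map is a rational homology equivalence. This is exactly the reference you name as your fallback---so your eventual argument is the paper's, and the detour through Sullivan cdgas is unnecessary and, in full generality, unavailable.
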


\begin{proof}
  We first prove that the map is a cofibration.
  By \cref{lemma:Lie_algebra_extension}, it is enough to prove that the map of sets
  \[ \MC(L_X) \cop_{\MC(L_A)} \MC(L_Y)  \longto  \MC \bigl( L_X \cop^R_{L_A} L_Y \bigr) \]
  is injective for $L_A$, $L_X$, and $L_Y$ dg Lie algebras over a commutative $\QQ$-algebra $R$.
  For ease of notation, we restrict to the case $R = \QQ$; the proof of the general case is analogous.
  Set $L_P \defeq L_X \cop_{L_A} L_Y$ and note that, since $\MC(L_X) \to \MC(L_P)$ and $\MC(L_Y) \to \MC(L_P)$ are injective, it is enough to prove that $\MC(L_X) \intersect \MC(L_Y) = \MC(L_A)$ in $\MC(L_P)$.
  Now, pick isomorphisms $L_X \iso L_A \cop \freelie (V)$ and $L_Y \iso L_A \cop \freelie (W)$ of graded Lie algebras under $L_A$.
  Then there is an induced isomorphism $L_X \cop_{L_A} L_Y \iso L_A \cop \freelie (V) \cop \freelie (W)$ of graded Lie algebras under $L_A$.
  This implies the claim.
  
  By a theorem of Félix--Fuentes--Murillo \cite[Theorem~0.1]{FFM} combined with work of Buijs--Félix--Murillo--Tanré \cite[Theorem~0.1]{BFMT17} or Robert-Nicoud \cite[Corollary~5.3]{Rob} (see also Berglund \cite[Theorem~8.1]{BerMC}), there is a zig-zag of pointwise weak equivalences between (the restriction of) $\MCb$ and the right Quillen equivalence constructed by Quillen \cite{Qui69} from the model category of positively graded dg Lie algebras to the category of $2$-reduced simplicial sets equipped with the rational model structure.
  This implies that the total right derived functor of $\MCb$ preserves homotopy colimits of positively graded dg Lie algebras.
  Since $\MCb$ preserves weak equivalences, it thus preserves homotopy colimits itself.
  As $L_A \to L_X$ and $\MCb(L_A) \to \MCb(L_X)$ are cofibrations, and hence both pushouts appearing above are homotopy pushouts, this finishes the proof.
\end{proof}

We will also need the construction $\expb(\lie g)$, introduced by Berglund \cite[§3.2]{Ber}, which models the loop space of the realization $\MCb(\lie g)$.
A concise treatment of its properties is provided in \cite[§§5--6]{BerMC}.

\begin{notation}
  Let $\lie g$ be a nilpotent Lie algebra.
  We denote by $\exp(\lie g)$ the group with underlying set $\lie g$ and multiplication given by the Baker--Campbell--Hausdorff formula; see e.g.\ \cite[§4.2]{BFMT} for its definition.
\end{notation}

\begin{definition}[Berglund]
  Let $\lie g$ be a nilpotent dg Lie algebra.
  We define the \emph{exponential group} of $\lie g$ to be the simplicial group
  \[ \expb(\lie g)  \defeq  \exp \bigl( \Cycles 0 ( \lie g \tensor \sForms) \bigr) \]
  where $\Cycles 0$ denotes the $0$-cycles.
\end{definition}

\begin{lemma} \label{lemma:exp_MC_products}
  The functors $\expb$, $\MCb$, and $\cMCb$ preserve products.
\end{lemma}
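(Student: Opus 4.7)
The plan is to observe that each of the three functors decomposes into a chain of constructions each of which is compatible with the componentwise structure of the categorical product in $\dgLie$. First I would unwind this product: $L_1 \times L_2$ has underlying chain complex $L_1 \dirsum L_2$ with bracket defined componentwise and with mixed brackets $\liebr{L_1}{L_2} = 0$. Since tensoring with the commutative chain algebra $\sForms[n]$ distributes over direct sums and the bracket on $(L_1 \times L_2) \tensor \sForms[n]$ inherits this componentwise structure, there is a natural isomorphism $(L_1 \times L_2) \tensor \sForms[n] \iso (L_1 \tensor \sForms[n]) \times (L_2 \tensor \sForms[n])$ of dg Lie algebras. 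The vanishing of mixed brackets then implies that a degree $-1$ element $(\tau_1, \tau_2)$ satisfies the Maurer--Cartan equation if and only if each $\tau_i$ does, so $\MC$ preserves products; assembling this naturally over $n$ gives the statement for $\MCb$.

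For $\cMCb$, I would equip the product $L_1 \times L_2$ of filtered dg Lie algebras with the filtration $\cfilt{(L_1 \times L_2)}{n} \defeq \cfilt{L_1}{n} \times \cfilt{L_2}{n}$; this is complete when $L_1$ and $L_2$ are, and the quotient $\quot{(L_1 \times L_2)}{\cfilt{(L_1 \times L_2)}{n}}$ decomposes as the product of the individual quotients. Applying $\MCb$ componentwise by the previous step and using that limits commute with products then yields $\cMCb(L_1 \times L_2) \iso \cMCb(L_1) \times \cMCb(L_2)$.

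For $\expb$, note that the product of nilpotent dg Lie algebras is again nilpotent. Taking $0$-cycles preserves products, so the first step applied componentwise produces an isomorphism $\Cycles 0 \bigl( (\lie g_1 \times \lie g_2) \tensor \sForms \bigr) \iso \Cycles 0 (\lie g_1 \tensor \sForms) \times \Cycles 0 (\lie g_2 \tensor \sForms)$ of simplicial nilpotent Lie algebras. Since the Baker--Campbell--Hausdorff formula is expressed entirely in terms of the Lie bracket and the mixed brackets in a product vanish, the induced multiplication on $\exp(\lie g_1 \times \lie g_2)$ is componentwise, so $\exp$ is product-preserving on nilpotent Lie algebras; combining these observations gives the claim. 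There is no real obstacle here: the content of the lemma is that each of the constituent operations (tensoring with a commutative algebra, solving the Maurer--Cartan equation, taking $0$-cycles, applying the BCH formula, and passing to the limit along the filtration) respects the componentwise structure of the categorical product in $\dgLie$.
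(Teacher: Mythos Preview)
Your proposal is correct and takes essentially the same approach as the paper. The paper's proof is extremely terse: it states that for $\MCb$ and $\cMCb$ the claim ``follows directly from the definitions'', and for $\expb$ it observes that $\exp$ preserves products because the induced map $\exp(\lie g \times \lie h) \to \exp(\lie g) \times \exp(\lie h)$ is a bijection by definition (recall the underlying set of $\exp(\lie g)$ is $\lie g$); your proof simply unpacks these statements into their constituent verifications.
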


\begin{proof}
  For $\MCb$ and $\cMCb$ this follows directly from the definitions.
  For $\expb$ it follows from the fact that $\exp$ preserves products since the induced map $\exp(\lie g \times \lie h) \to \exp(\lie g) \times \exp(\lie h)$ is a bijection by definition.
\end{proof}

The following lemma generalizes \cite[Theorem~3.15]{Ber}; the proof we give here is inspired by an argument of Berglund--Saleh \cite[Proof of Proposition~3.11]{BS}.
See \cite[§9]{BerMC} for a comparison of the two approaches.

\begin{definition}
	Let $\chi$ be an outer action of a dg Lie algebra $\lie g$ on a dg Lie algebra $L$.
	We write $\lie g \lsemidir[\chi] L$ for the dg Lie algebra with underlying graded vector space $\lie g \dirsum L$ and
	\begin{align*}
		\liebr {(\theta, x)} {(\psi, y)}  &\defeq  \bigl( \liebr \theta \psi, \liebr x y + \theta \act y + x \act \psi \bigr) \\
		d(\theta, x)  &\defeq  \bigl( d(\theta), d(x) + \chi(\theta) \bigr)
	\end{align*}
	as Lie bracket and differential.
\end{definition}

\begin{lemma}[Berglund] \label{lemma:MC_outer}
  Let $L$ be a dg Lie algebra equipped with a nilpotent outer action $\chi$ of a nilpotent dg Lie algebra $\lie g$.
  Then there is a weak equivalence in the homotopy category of simplicial sets
  \begin{equation*}
    \MCb( \lie g \lsemidir[\chi] L )  \xlonghto{\eq}  \B \bigl( *, \expb(\lie g), \MCb(L) \bigr)
  \end{equation*}
  where the action $\Xi_\chi$ of $\expb(\lie g)$ on $\MCb(L)$ is given by
  \[ \Xi_\chi(\theta)(x)  \defeq  x + \sum_{n \ge 0} \frac{1}{(n + 1)!} (\theta \act \blank)^{\after n} \bigl( \theta \act x - \chi(\theta) \bigl) \]
  in each simplicial degree.
  Moreover, this weak equivalence can be lifted to a zig-zag of monoidal natural weak equivalences of strong symmetric monoidal functors from the category of triples $(L, \lie g, \chi)$, equipped with the cartesian monoidal structure, to $\sSet$.
\end{lemma}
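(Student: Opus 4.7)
The plan is to use the short exact sequence of nilpotent dg Lie algebras
\[ 0  \longto  L  \xlongto{\iota}  \lie g \lsemidir[\chi] L  \xlongto{\pi}  \lie g  \longto  0 \]
and to compare the resulting fibration of realizations with the bar construction on the right. The special case $L = 0$ of the lemma recovers the statement of \cite[Theorem~3.15]{Ber}, which provides a monoidal natural zig-zag of weak equivalences $\MCb(\lie g) \eq \B \expb(\lie g)$; this is the crucial input I will leverage.

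First I would verify that $\MCb(\pi)$ is a Kan fibration whose fiber over $0 \in \MCb(\lie g)_0$ is naturally $\MCb(L)$. Surjectivity of $\pi$ together with nilpotence of the dg Lie algebras involved gives the first claim, and the second follows by unpacking the Maurer--Cartan equation in $(\lie g \lsemidir[\chi] L) \tensor \sForms[n]$: a degree $-1$ element $(\theta, x)$ is MC precisely when $\theta$ is MC in $\lie g \tensor \sForms[n]$ and $x$ satisfies the twisted equation $d x + \chi(\theta) + \tfrac 1 2 \liebr x x + \theta \act x = 0$; setting $\theta = 0$ and using $\chi(0) = 0$ reduces this to the ordinary MC equation in $L \tensor \sForms[n]$. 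Composing with the zig-zag of \cite[Theorem~3.15]{Ber}, this exhibits $\MCb(\lie g \lsemidir[\chi] L)$ as the total space of a Kan fibration over $\B \expb(\lie g)$ with fiber $\MCb(L)$.

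Next I would identify the monodromy action of $\expb(\lie g)$ on $\MCb(L)$ induced by this fibration. Given $\theta \in \expb(\lie g)_1 = \Cycles 0 (\lie g \tensor \sForms[1])$ interpreted as a loop at the basepoint of $\B \expb(\lie g)$ and an element $x \in \MCb(L)_0$, the fiber transport is computed by lifting $\theta$ to a path in $\MCb(\lie g \lsemidir[\chi] L)$ starting at $(0, x)$ and reading off the endpoint. Solving the twisted Maurer--Cartan equation displayed above in $L \tensor \sForms[1]$ iteratively reproduces exactly the series defining $\Xi_\chi(\theta)(x)$; this is the main computational step and, I expect, the principal obstacle, as it requires carefully matching the transport convention coming from the zig-zag of \cite[Theorem~3.15]{Ber} against the specific closed-form expression in the statement.

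Finally, two Kan fibrations over $\B \expb(\lie g)$ sharing fiber $\MCb(L)$ and the same monodromy action can be canonically compared, yielding together with \cref{lemma:bar_fiber_sequence} a weak equivalence $\MCb(\lie g \lsemidir[\chi] L) \eq \B(*, \expb(\lie g), \MCb(L))$. Assembling all steps functorially in the triple $(L, \lie g, \chi)$ and invoking \cref{lemma:exp_MC_products,lemma:bar_products} to ensure product preservation of the involved functors upgrades this to a zig-zag of monoidal natural weak equivalences between strong symmetric monoidal functors, as required.
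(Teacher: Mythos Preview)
The final step contains a genuine gap. A Kan fibration does not come with a strict action of a simplicial group on its fiber, only with a homotopy action encoded by a classifying map to $\B \aut(\MCb(L))$; the assertion that two fibrations ``sharing fiber $\MCb(L)$ and the same monodromy action can be canonically compared'' therefore has no precise content unless you actually produce a map between the total spaces or between the classifying maps, which is the whole point of the lemma. Since $\B \expb(\lie g)$ is in general not a $K(\pi,1)$ (cf.\ \cref{lemma:htpy_exp}), even matching the $\pi_1$-monodromy would not suffice. Step~4 also conflates degree-$0$ cycles in $\lie g \tensor \sForms[1]$ (elements of $\expb(\lie g)_1$) with degree-$(-1)$ Maurer--Cartan elements (paths in $\MCb(\lie g)$); relating the two already presupposes the comparison you are trying to establish.

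The paper avoids fibration classification entirely by introducing the larger contractible simplicial group $\Gb(\lie g) \defeq \exp((\lie g \tensor \sForms)_0)$ and its gauge action on $\MCb(\lie g \lsemidir[\chi] L)$ coming from the inclusion $\lie g \subseteq \lie g \lsemidir[\chi] L$. This action maps $\Gb(\lie g) \times \MCb(L)$ into $\MCb(\lie g \lsemidir[\chi] L)$ and descends to an explicit \emph{isomorphism} $\Gb(\lie g) \times_{\expb(\lie g)} \MCb(L) \cong \MCb(\lie g \lsemidir[\chi] L)$, while its restriction to $\expb(\lie g)$ recovers exactly the formula $\Xi_\chi$. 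The zig-zag is then
\[ \B \bigl( *, \expb(\lie g), \MCb(L) \bigr)  \xlongfrom{\eq}  \B \bigl( \Gb(\lie g), \expb(\lie g), \MCb(L) \bigr)  \xlongto{\eq}  \Gb(\lie g) \times_{\expb(\lie g)} \MCb(L), \]
using contractibility of $\Gb(\lie g)$ for the first arrow and a standard bar-construction comparison for the second; monoidality and naturality are then immediate from the explicit formulas. The missing idea in your outline is $\Gb(\lie g)$: it supplies the strict action, replaces the abstract fibration comparison with a concrete isomorphism, and furnishes the zig-zag in one stroke.
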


\begin{proof}
  For any dg Lie algebra $\lie h$ and graded Lie subalgebra $\lie h' \subseteq \lie h$ such that the adjoint action of $\lie h'$ on $\lie h$ is nilpotent, there is a gauge action $\mathcal G$ of $\Gb(\lie h') \defeq \exp((\lie h' \tensor \sForms)_0)$ on $\MCb(\lie h)$, in each simplicial degree given by
  \[ \mathcal G(\theta)(\psi) = x + \sum_{n \ge 0} \frac{1}{(n + 1)!} \ad[\theta]^{\after n} \bigl( [\theta, \psi] - d(\theta) \bigr) \]
  (this can be shown as in \cite[§4.3]{BFMT}; see also \cite[Proof of Proposition~3.11]{BS} and \cite[§5]{BerMC}).
  Our assumptions imply that there is a gauge action $\mathcal G_\chi$ associated to $\lie g \subseteq \lie g \lsemidir[\chi] L$, which restricts to a map
  \[ \mathcal G'_\chi \colon \Gb(\lie g) \times \MCb( L )  \longto  \MCb( \lie g \lsemidir[\chi] L ) \]
  given by
  \[ \mathcal G'_\chi(\theta, x)  =  \Bigl( - \textstyle\sum_{n \ge 0} \tfrac 1 {(n + 1)!} \ad[\theta]^{\after n} \bigl( d(\theta) \bigr), x + \textstyle\sum_{n \ge 0} \frac{1}{(n + 1)!} (\theta \act \blank)^{\after n} \bigl( \theta \act x - \chi(\theta) \bigl) \Bigr) \]
  in each simplicial degree.
  Note that $\mathcal G'_\chi$ restricts to a map $\expb(\lie g) \times \MCb( L )  \to  \MCb( L )$, and that this is precisely the adjoint of the map $\Xi_\chi$.
  This also implies that there is an induced map
  \[ \Phi \colon \Gb(\lie g) \times_{\expb(\lie g)} \MCb( L )  \longto  \MCb( \lie g \lsemidir[\chi] L ) \]
  and we will now prove that it is an isomorphism.

  To this end note that the map $\pi \colon \MCb(\lie g \lsemidir[\chi] L) \to \MCb(\lie g)$ induced by the projection is $\Gb(\lie g)$-equivariant when both sides are equipped with the respective gauge action.
  Moreover, by \cite[Theorem~5.2~(2)]{BerMC}, the map $\gamma \colon \Gb(\lie g) \to \MCb(\lie g)$ given by acting on $0$ induces an isomorphism $\quot {\Gb(\lie g)} {\expb(\lie g)} \iso \MCb(\lie g)$.
  We now define an inverse of $\Phi$ by
  \[ \Psi(x) \defeq \bigl( \theta, \mathcal G_\chi(\inv \theta)(x) \bigr) \]
  where $\theta \in \inv \gamma(\pi(x))$ is arbitrary.
  First note that $\mathcal G_\chi(\inv \theta)(x) \in L$ since $\pi(\mathcal G_\chi(\inv \theta)(x)) = \mathcal G(\inv \theta)(\pi(x)) = 0$ by assumption.
  The value $\Psi(x)$ does not depend on the choice of $\theta$ since two such choices differ exactly by multiplication with an element of $\expb(\lie g)$.
  It is clear that $\Psi$ is a map of simplicial sets and inverse to $\Phi$.

  Lastly note that we obtain a zig-zag
  \begin{equation*} %\label{eq:B_E_zig-zag}
  	\B \bigl( *, \expb(\lie g), \MCb( L ) \bigr)  \xlongfrom{\eq}  \B \bigl( \Gb(\lie g), \expb(\lie g), \MCb( L ) \bigr)  \xlongto{\eq}  \Gb(\lie g) \times_{\expb(\lie g)} \MCb( L )
  \end{equation*}
  where the left-hand map is a weak equivalence since the simplicial group $\Gb(\lie g)$ is contractible by \cite[Lemma~5.1~(1)]{BerMC}, and the right-hand map is a weak equivalence by an argument as in \cite[(Proof of) Proposition~8.5]{May}.
  It is clear that all maps involved in the zig-zag, as well as $\Phi$, are natural and monoidal in the claimed sense.
  This finishes the proof.
\end{proof}

%\begin{lemma}
%	Let $L$ be a filtered dg Lie algebra equipped with an outer action $\chi$ of a nilpotent dg Lie algebra $\lie g$ such that $\lie g \act \cfilt L n \subseteq \cfilt L n$.
%	Assume that the induced outer action of $\lie g$ on $\quot L {\cfilt L n}$ is nilpotent for all $n$.
%	Then there is a weak equivalence in the homotopy category of simplicial sets, natural in $\chi$,
%	\[ \cMCb( \lie g \lsemidir[\chi] L )  \xlonghto{\eq}  \B \bigl( *, \expb(\lie g), \cMCb(L) \bigr) \]
%	where $\expb(\lie g)$ acts on $\MCb(\quot L {\cfilt L n})$ as in \cref{lemma:MC_outer}, and we equip $\lie g \lsemidir[\chi] L$ with the filtration $\cfilt {(\lie g \lsemidir[\chi] L)} n \defeq 0 \lsemidir \cfilt L n$ for $n \ge 1$.
%\end{lemma}
%
%\begin{proof}
%	By the proof of \cref{lemma:MC_outer}, there are isomorphisms of simplicial sets
%	\[ \MCb \bigl( \lie g \lsemidir[\chi] (\quot L {\cfilt L n}) \bigr)  \iso  \Gb(\lie g) \times_{\expb(\lie g)} \MCb \bigl( \quot L {\cfilt L n} \bigr) \]
%	and they are compatible with the projections $\quot L {\cfilt L {n + 1}} \to \quot L {\cfilt L n}$.
%	Taking limits, we obtain an isomorphism
%	\[ \cMCb ( \lie g \lsemidir[\chi] L )  \iso  \Gb(\lie g) \times_{\expb(\lie g)} \cMCb (L) \]
%	(note that, for a subgroup $H \subseteq G$, the functor $G \times_H \blank$ from $H$-sets to sets preserves connected limits since it is naturally isomorphic to the functor $\quot G H \times \blank$).
%	Then the zig-zag \eqref{eq:B_E_zig-zag} finishes the proof.
%\end{proof}

\begin{lemma} \label{lemma:htpy_exp}
  Let $\lie g$ be a nilpotent dg Lie algebra.
  Then there is a natural isomorphism
  \[ \hg 0 \bigl( \expb (\lie g) \bigr)  \iso  \exp \bigl( \Ho 0 (\lie g) \bigr) \]
  of groups, and there is, for any $k \in \NNpos$, a natural isomorphism
  \[ \hg k \bigl( \expb (\lie g) \bigr)  \iso  \Ho k (\lie g) \]
  of abelian groups.
  
  Moreover, the first isomorphism fits into a commutative diagram
  \[
  \begin{tikzcd}
  \exp[0] (\lie g) \rar[equal]  \dar[swap, two heads] & \exp (\lie g_0) \dar[two heads] \\
  \hg 0 \bigl( \expb (\lie g) \bigr) \rar{\iso} & \exp \bigl( \Ho 0 (\lie g) \bigr)
  \end{tikzcd}
  \]
  of groups.
\end{lemma}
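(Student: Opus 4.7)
The plan is to reduce the computation of $\hg * (\expb(\lie g))$ to that of the based homotopy groups of $\MCb(\lie g)$ by exploiting the principal fibration
\[ \expb(\lie g) \longto \Gb(\lie g) \xlongto{\gamma} \MCb(\lie g) \]
already used in the proof of \cref{lemma:MC_outer}. By \cite[Theorem~5.2~(2)]{BerMC} the map $\gamma$ is a principal $\expb(\lie g)$-bundle, and by \cite[Lemma~5.1~(1)]{BerMC} its total space $\Gb(\lie g)$ is contractible. The associated long exact sequence of homotopy groups therefore yields, for every $k \ge 0$, a natural connecting isomorphism
\[ \partial \colon \hg {k+1} (\MCb(\lie g), 0) \xlongto{\iso} \hg k (\expb(\lie g)), \]
which for $k = 0$ is an isomorphism of groups (with the domain carrying its usual fundamental group structure).

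Next I would invoke the classical identification, due to Berglund (see \cite{BerMC}), of the based homotopy groups of $\MCb(\lie g)$ with invariants of $\lie g$: there is a natural isomorphism of groups $\hg 1 (\MCb(\lie g), 0) \iso \exp(\Ho 0 (\lie g))$, and a natural isomorphism of abelian groups $\hg {k+1} (\MCb(\lie g), 0) \iso \Ho k (\lie g)$ for each $k \ge 1$. Composing with $\partial$ then produces the two isomorphisms asserted in the lemma.

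Finally, the commutative diagram would be verified by chasing a $0$-simplex $x \in \expb(\lie g)_0 = \exp(\lie g_0)$ through these identifications. Its class $[x] \in \hg 0 (\expb(\lie g))$ is the image under $\partial$ of the class of the loop $\gamma(s_0(x)) \in \MCb(\lie g)_1$, which is a genuine loop based at $0 \in \MC(\lie g)$ since $d(x) = 0$. Under the identification of $\hg 1 (\MCb(\lie g), 0)$ with $\exp(\Ho 0 (\lie g))$ this loop should correspond to $\exp(\bar x)$, where $\bar x \in \Ho 0 (\lie g)$ denotes the class of $x$, matching the right-hand vertical of the diagram. This last verification is expected to be the main obstacle, as it requires unpacking the explicit formulas for both the gauge action and Berglund's identification of $\hg 1 (\MCb(\lie g), 0)$; once done, however, naturality and the monoidal properties of $\expb$ (\cref{lemma:exp_MC_products}) immediately give naturality of the isomorphisms.
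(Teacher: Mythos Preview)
Your argument is correct but takes a different route from the paper. The paper's proof is a two-line citation: it invokes \cite[Theorem~6.2]{BerMC} directly for the isomorphisms in the first paragraph, and then says the commutative diagram follows from the explicit description of the isomorphism given there. In other words, the lemma is already in the literature in this form and the paper simply points to it.

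Your approach instead unpacks one layer of that citation: you use the principal fibration $\expb(\lie g) \to \Gb(\lie g) \to \MCb(\lie g)$ (with contractible total space, from \cite[Lemma~5.1~(1) and Theorem~5.2]{BerMC}) to shift the computation to $\hg{*}(\MCb(\lie g),0)$, and then cite Berglund's identification of those homotopy groups. This is essentially how \cite[Theorem~6.2]{BerMC} is proved, so you are reconstructing part of the cited argument rather than quoting its conclusion. The payoff is that your approach makes the mechanism visible and naturally explains the group structure on $\hg 0$ via the connecting map of a principal bundle; the cost is the final verification of the commutative diagram, which (as you acknowledge) requires unwinding the explicit formula for the connecting map and matching it with Berglund's identification of $\hg 1(\MCb(\lie g),0)$. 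The paper sidesteps this by deferring to the explicit description already recorded in \cite[Theorem~6.2]{BerMC}.
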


\begin{proof}
  The first paragraph is \cite[Theorem~6.2]{BerMC}; the second follows from the explicit description of the isomorphism provided there.
\end{proof}

We conclude this subsection by recalling the Chevalley--Eilenberg (co)homology of a dg Lie algebra, and its relation to the realization functor $\MCb$ above.

\begin{notation}
  Let $\lie g$ be a dg Lie algebra and $M$ a vector space.
  We denote by $\CEchains * (\lie g)$ the \emph{Chevalley--Eilenberg chain complex} of $\lie g$ (see e.g.\ \cite[§2.1]{BFMT}) and by $\CEcochains * (\lie g; M)  \defeq  \Hom (\CEchains * (\lie g), M)$ the \emph{Chevalley--Eilenberg cochain complex} with coefficients in $M$.
  We furthermore write $\CEho * (\lie g)$ and $\CEcoho * (\lie g; M)$ for the homology of the respective (co)chain complex.
\end{notation}

\begin{lemma} \label{lemma:CE_products}
  Let $\lie g$ and $\lie h$ be dg Lie algebras, and let $M$ and $N$ be two vector spaces.
  Then the canonical map
  \[ \CEcochains * (\lie g; M) \tensor \CEcochains * (\lie h; N)  \longto  \CEcochains * (\lie g \times \lie h; M \tensor N) \]
  together with the canonical isomorphism $\QQ \to \CEcochains * (0; \QQ)$ equips $\CEcochains *$ with the structure of a lax symmetric monoidal functor $\opcat{\dgLie} \times \Vect \to \coCh$.
  Furthermore, if $\lie g$ and $\lie h$ are non-negatively graded and of finite type, then this map is an isomorphism.
\end{lemma}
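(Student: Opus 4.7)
The plan is to construct the structure map in two steps. First, I would produce a natural isomorphism of chain complexes
\[ \Psi \colon \CEchains * (\lie g \times \lie h)  \xlongto{\iso}  \CEchains * (\lie g) \tensor \CEchains * (\lie h), \]
and second, compose with the standard lax monoidal structure of internal Hom to obtain the desired map. More precisely, the claimed structure map is the composite
\begin{align*}
  \CEcochains * (\lie g; M) \tensor \CEcochains * (\lie h; N) & \longto \Hom \bigl( \CEchains * (\lie g) \tensor \CEchains * (\lie h), M \tensor N \bigr) \\
  & \xlongto{\Psi^*} \CEcochains * (\lie g \times \lie h; M \tensor N),
\end{align*}
and the unit map $\QQ \to \CEcochains * (0; \QQ)$ is the identification coming from $\CEchains *(0) \iso \QQ$.

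To produce $\Psi$, recall that on underlying graded vector spaces $\CEchains * (\lie k) = \exteralg(\shift \lie k)$, and there is a canonical isomorphism of graded commutative algebras $\exteralg(\shift(\lie g \dirsum \lie h)) \iso \exteralg(\shift \lie g) \tensor \exteralg(\shift \lie h)$. I would check that this respects the Chevalley--Eilenberg differentials: writing the differential as the sum of an ``internal'' part (coming from the differential of $\lie g \times \lie h$) and a ``bracket'' part (coming from the Lie bracket), both split compatibly along the decomposition. The internal part does so because the differential on $\lie g \times \lie h$ splits componentwise, and the bracket part does so because $[(g, 0), (0, h)] = 0$ in $\lie g \times \lie h$, so the only surviving contributions on a shuffle of elements from $\lie g$ and $\lie h$ are those involving brackets within a single factor. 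The coherence axioms (associativity, unit, symmetry) of the lax symmetric monoidal structure then reduce to the corresponding axioms for $\Psi$ (for which they are immediate from the description above, since $\Psi$ is essentially the coalgebra decomposition) and the standard coherence of the Hom-tensor map.

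For the isomorphism claim in the finite type case, note that $\Psi^*$ is always an isomorphism, so it suffices to check that the Hom-tensor map is an isomorphism, for which it is enough that both $\CEchains *(\lie g)$ and $\CEchains *(\lie h)$ be of finite type in each degree. If $\lie g$ is non-negatively graded and of finite type, then $\shift \lie g$ is concentrated in strictly positive degrees and of finite type, and hence its free graded commutative algebra is of finite type in each degree (since only finitely many monomials of bounded total degree can be formed from generators of strictly positive degree in finitely many dimensions each). The main obstacle is the careful sign bookkeeping needed to verify that $\Psi$ commutes with differentials and that the resulting structure map satisfies the symmetry and associativity constraints; this is routine Koszul-sign manipulation but must be done cleanly.
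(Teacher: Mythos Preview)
Your proposal is correct and follows essentially the same approach as the paper: the paper's proof simply records that the first part is an elementary verification and, for the second part, invokes the canonical isomorphism $\CEchains * (\lie g \times \lie h) \iso \CEchains * (\lie g) \tensor \CEchains * (\lie h)$ together with the observation that both factors are finite dimensional in each degree under the stated hypotheses. You have spelled out exactly these two ingredients in more detail.
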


\begin{proof}
  The first part follows from an elementary verification.
  For the second part, we note that there is a canonical isomorphism $\CEchains * (\lie g \times \lie h) \iso \CEchains * (\lie g) \tensor \CEchains * (\lie h)$ and that our conditions guarantee that both factors are finite dimensional in each degree.
\end{proof}

The following is a slight generalization of \cite[Proposition~2.21]{BZ}, incorporating coefficients and monoidality.

\begin{lemma} \label{lemma:MC_forms}
  Let $\lie g$ be a nilpotent dg Lie algebra of finite type and $M$ a vector space.
  Then there is a monoidal natural quasi-isomorphism
  \[ \Phi \colon \CEcochains * (\lie g; M)  \xlongto{\eq}  \Forms * \bigl( \MCb(\lie g); M \bigr) \]
  of lax symmetric monoidal functors $\opcat{(\dgLienilft)} \times \Vect \to \coCh$.
\end{lemma}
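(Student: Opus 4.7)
The plan is to construct $\Phi$ via the classical ``evaluation on Maurer--Cartan'' formula, verify naturality and monoidality directly from the formula, and then reduce the quasi-isomorphism claim to the case $M = \QQ$ already treated in \cite[Proposition~2.21]{BZ}. For the construction, given $\omega \in \CEcochains n (\lie g; M) = \Hom(\CEchains n(\lie g), M)$ and a simplex $\tau \in \MC(\lie g \tensor \sForms[k]) = \MCb(\lie g)_k$, I would define $\Phi(\omega)_\tau \in (\sForms[k] \tensor M)^n$ by evaluating $\omega$ (extended $\sForms[k]$-linearly and interpreted via the canonical shuffle identification) on the exponential $\sum_{j \ge 0} \tfrac{1}{j!} \tau^{\wedge j}$ of $\tau$, exactly as in the $M = \QQ$ construction of \cite{BZ}. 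Since $\lie g$ is nilpotent and hence non-negatively graded, only boundedly many $j$ contribute in each fixed cochain degree, so the sum is finite. The Maurer--Cartan equation for $\tau$ then implies that $\Phi$ commutes with differentials; compatibility with the face and degeneracy operators of $\MCb(\lie g)$ is immediate from the formula, and naturality in $(\lie g, M)$ is manifest.

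For monoidality, note that the isomorphism $\MCb(\lie g) \times \MCb(\lie h) \iso \MCb(\lie g \times \lie h)$ of \cref{lemma:exp_MC_products} identifies a pair $(\tau_1, \tau_2)$ of simplices with the Maurer--Cartan element $(\tau_1, \tau_2) \in (\lie g \times \lie h) \tensor \sForms[k]$, and the exponential splits as $\exp(\tau_1, \tau_2) = \exp(\tau_1) \wedge \exp(\tau_2)$ in the relevant completed exterior algebra. Combined with the description of the CE product from \cref{lemma:CE_products}, an elementary computation shows that $\Phi(\omega_1 \cdot \omega_2)_{(\tau_1, \tau_2)}$ equals the image of $\Phi(\omega_1)_{\tau_1} \tensor \Phi(\omega_2)_{\tau_2}$ under the multiplication $\sForms[k] \tensor \sForms[k] \to \sForms[k]$, which is the required compatibility with the lax symmetric monoidal structures.

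For the quasi-isomorphism, I would reduce to the case $M = \QQ$ as follows. Since $\lie g$ is nilpotent (hence non-negatively graded) and of finite type, the chain complex $\CEchains * (\lie g)$ is of finite type in each degree, so the canonical map $\CEcochains * (\lie g; \QQ) \tensor M \to \CEcochains * (\lie g; M)$ is an isomorphism of cochain complexes. On the forms side, \cref{lemma:forms_cohomology} combined with the universal coefficient theorem over $\QQ$ implies that the canonical map $\Forms * (\MCb(\lie g); \QQ) \tensor M \to \Forms * (\MCb(\lie g); M)$ is a quasi-isomorphism. Naturality of $\Phi$ in $M$ yields a commutative square involving these two maps together with $\Phi_M$ and $\Phi_\QQ \tensor \id_M$; since three of the four arrows are quasi-isomorphisms (using that $M$ is flat over $\QQ$), $\Phi_M$ is a quasi-isomorphism if and only if $\Phi_\QQ$ is. The latter is \cite[Proposition~2.21]{BZ}. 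The main obstacle I anticipate is verifying that our explicit formula for $\Phi_\QQ$ agrees with the one of \cite{BZ} (at least after passing to cohomology); I expect this to be routine sign-bookkeeping rather than a genuine difficulty, but it must be checked carefully in order to invoke their result.
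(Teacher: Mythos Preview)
Your proposal is correct and the construction of $\Phi$ is essentially identical to the paper's: the paper phrases it as the composite $\MCb(\lie g) \xrightarrow{\Psi} (\CEchains *(\lie g) \tensor \sForms)_0 \xrightarrow{\alpha \tensor \id} M \tensor \sForms^p$ using Berglund's map $\Psi$ from \cite[Corollary~3.6]{Ber}, which is precisely your exponential formula; monoidality is likewise reduced to that of $\Psi$.

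The genuine difference is in the passage from $M = \QQ$ to general $M$. You exploit that $\CEchains *(\lie g)$ is degreewise finite dimensional to identify $\CEcochains *(\lie g; M) \cong \CEcochains *(\lie g; \QQ) \tensor M$, and use universal coefficients on the forms side; the paper instead argues that both functors preserve \emph{products} in $M$ on cohomology and that every vector space is a retract of a product of copies of $\QQ$. Your route is arguably more direct and makes the finite-type hypothesis do visible work, while the paper's retract-of-products trick avoids having to check that the canonical map $\Forms *(\MCb(\lie g); \QQ) \tensor M \to \Forms *(\MCb(\lie g); M)$ is a quasi-isomorphism (which, as you note, requires invoking \cref{lemma:forms_cohomology}). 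For the $M = \QQ$ case itself, you cite \cite[Proposition~2.21]{BZ}, whereas the paper reproves it by identifying $\MCb(\lie g)$ with the simplicial set of cdga maps out of the Sullivan algebra $\CEcochains *(\lie g)$ and invoking \cite[Theorem~9.4]{BG}; this makes your anticipated obstacle (matching your $\Phi_\QQ$ with BZ's) moot, since the paper's $\Psi$ is the exponential map by construction.
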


\begin{proof}
  First note that there is a canonical monoidal natural isomorphism of lax symmetric monoidal functors $\opcat \sSet \times \Vect \to \coCh$
  \[ \Forms * (X; M)  \xlongto{\iso}  \Hom[\sSet]( X, M \tensor \sForms ) \]
  where the monoidal structure on the right-hand side comes from the algebra structure of $\sForms$.
  Using this identification, we define the image $\Phi(\alpha)$ of a linear map $\alpha \colon \CEchains p (\lie g) \to M$ to be the composite map of simplicial sets
  \[ \MCb(\lie g)  \xlongto{\Psi}  \bigl( \CEchains * (\lie g) \tensor \sForms \bigr)_0  \xlongto{\alpha \tensor \id}  M \tensor \Omega_\bullet^p \]
  where $\Psi$ is the map constructed by Berglund in \cite[Corollary~3.6]{Ber}.
  It is an elementary verification that the map $\Psi$ is a monoidal natural transformation of lax symmetric monoidal functors in $\lie g$, and hence $\Phi$ is a monoidal natural transformation as well.
  
  We will now prove that $\Phi$ is a quasi-isomorphism when $M = \QQ$.
  By \cite[Corollary~3.6 and Proposition~3.7]{Ber}, the composite map of simplicial sets
  \[ \MCb(\lie g)  \xlongto{\Psi}  \bigl( \CEchains * (\lie g) \tensor \sForms \bigr)_0  \longto  \Hom \bigl( \CEcochains * (\lie g), \sForms \bigr) \]
  is an isomorphism $\Psi'$ onto the simplicial subset of the pointwise cochain algebra homomorphisms.
  Furthermore, note that $\CEcochains * (\lie g)$ is a Sullivan algebra (see e.g.\ \cite[Lemma~23.1]{FHT}) and hence a cofibrant cochain algebra.
  This implies that the canonical map
  \[ \CEcochains * (\lie g)  \xlongto{\eq}  \Forms * \bigl( \Hom[\Mon(\coCh)] \bigl( \CEcochains * (\lie g), \sForms \bigr) \bigr) \]
  is a quasi-isomorphism (see e.g.\ \cite[Theorem~9.4]{BG}).
  Since this map is identified with $\Phi$ under the isomorphism $\Psi'$, this implies the claim.
  
  Lastly, note that the functor $\CEcochains * (\lie g; M)$ preserves products in $M$, and hence also does so after taking cohomology.
  After taking cohomology, the codomain of $\Phi$ is isomorphic to $\Coho * ( \MCb(\lie g); M )$ by \cref{lemma:forms_cohomology}, and this functor also preserves products in $M$.
  This implies the statement for arbitrary $M$ since every vector space is a retract of a product of copies of $\QQ$, and isomorphisms are closed under retracts.
\end{proof}

\subsection{Algebraic groups}

\begin{notation}
  Let $R$ be a commutative ring.
  We denote by $\Algfg{R}$ the category of finitely generated commutative $R$-algebras.
\end{notation}

We will assume basic familiarity with the theory of algebraic groups, for example as presented in the textbook of Milne \cite[§§1--5]{Mil}.
We use the functor-of-points perspective, i.e.\ we consider the category of algebraic groups over a field $k$ to be a full subcategory of the category of functors from $\Algfg{k}$ to the category $\Grp$ of groups.

\begin{convention}
  By an \emph{algebraic group} we will mean a linear algebraic group over $\QQ$.
\end{convention}

Recall that an algebraic group is linear if and only if it is affine (see e.g.\ \cite[Remark~4.11]{Mil}).
Furthermore note that subgroups and quotients of affine algebraic groups are again affine (see e.g.\ \cite[Proposition~5.18]{Mil}), so that our notion of algebraic group is closed under these constructions.

\begin{definition}
	For $R$ a commutative ring and $M$ an $R$-module, we denote by $\GL^R(M)$ the group of $R$-module automorphisms of $M$.
	For $V$ a finite-dimensional vector space, we write $\aGL(V)$ for the algebraic group given on objects by $R \mapsto \GL^R(V \tensor R)$ and on morphisms by extension of scalars.
  We set $\aGL[n] \defeq \aGL(\QQ^n)$.
\end{definition}

We will also need the notion of an arithmetic subgroup of an algebraic group, see e.g.\ \cite{Ser} for more background.

\begin{definition}
	Let $G$ be an algebraic group.
	A subgroup $H \subseteq G(\QQ)$ is an \emph{arithmetic subgroup} of $G$ if there exists an $n \in \NN$ and an embedding $\iota \colon G \to \aGL[n]$ such that $\iota_\QQ(H) \intersect \GL[n](\ZZ)$ is a finite-index subgroup of both $\iota_\QQ(H)$ and $\GL[n](\ZZ)$.
\end{definition}

Recall that an algebraic group is called \emph{unipotent} when all of its simple algebraic representations are trivial.
Furthermore recall that the class of unipotent algebraic groups is closed under taking subgroups, quotients, and extensions (see e.g.\ \cite[Corollary~14.7]{Mil}), and that unipotent algebraic groups are connected (see e.g.\ \cite[Corollary~14.15]{Mil}).

\begin{definition}
  Let $G$ be an algebraic group.
  We call its maximal normal unipotent subgroup the \emph{unipotent radical} and denote it by $\unip G \subseteq G$ (for its existence see e.g.\ \cite[Corollary~14.8]{Mil}.\footnote{Note that affine algebraic groups are smooth (see e.g.\ \cite[Theorem~3.23]{Mil}).}).
\end{definition}

\begin{definition}
  We say that an algebraic group $G$ is \emph{reductive} if $\unip G$ is trivial.\footnote{Note that our definition differs from that of Milne, in that he additionally requires the algebraic group to be connected.}
\end{definition}

Note that an algebraic group $G$ is reductive if and only if its identity component is reductive, and recall that this is the case if and only if $G$ is \emph{linearly reductive}, i.e.\ if every finite-dimensional algebraic representation of $G$ is semi-simple (see e.g.\ \cite[Corollary~22.43]{Mil}).

\begin{lemma} \label{lemma:unipotent_radical}
  Let $G$ be an algebraic group and $U \subseteq G$ a normal unipotent algebraic subgroup.
  Then $U = \unip G$ if and only if the quotient $\quot G U$ is reductive.
\end{lemma}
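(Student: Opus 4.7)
The plan is to prove both implications by directly exploiting the closure properties of the class of unipotent algebraic groups (closed under subgroups, quotients, and extensions) together with the definition of $\unip G$ as the \emph{maximal} normal unipotent algebraic subgroup. Throughout, let $\pi \colon G \to \quot G U$ denote the quotient map.

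For the forward implication, I would assume $U = \unip G$ and show that $\unip{(\quot G U)}$ is trivial by the usual trick of pulling it back. Let $V \defeq \unip{(\quot G U)}$ and set $W \defeq \inv \pi(V) \subseteq G$. Since $V$ is normal in $\quot G U$, the preimage $W$ is normal in $G$, and it fits into a short exact sequence $1 \to U \to W \to V \to 1$ of algebraic groups in which both $U$ and $V$ are unipotent. By the stated closure of unipotent groups under extensions, $W$ is a normal unipotent algebraic subgroup of $G$. Maximality of $\unip G = U$ then forces $W = U$, i.e.\ $V$ is trivial, so $\quot G U$ is reductive.

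For the reverse implication, I would assume $\quot G U$ is reductive and aim to show $\unip G \subseteq U$ (the inclusion $U \subseteq \unip G$ is immediate from the definition of $\unip G$ since $U$ is itself a normal unipotent subgroup). Consider the image $\pi(\unip G)$; as the image of a normal subgroup under a surjection it is normal in $\quot G U$, and as the quotient of $\unip G$ by $\unip G \intersect U$ it is unipotent. Reductivity of $\quot G U$ forces $\pi(\unip G)$ to be trivial, so $\unip G \subseteq \ker \pi = U$, giving the desired equality.

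The only subtle point is that one is manipulating algebraic subgroups rather than abstract groups, so one has to know that the preimage $\inv \pi(V)$ and the image $\pi(\unip G)$ are actually algebraic subgroups, and that ``normal'' and ``unipotent'' transfer along $\pi$ as expected; these are standard facts about algebraic group quotients in characteristic zero, and are the only real content behind the otherwise purely formal argument. No main obstacle is expected.
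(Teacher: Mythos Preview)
Your proof is correct and is precisely the argument the paper has in mind: the paper's proof is the one-line remark that it ``follows easily from the fact that the class of unipotent algebraic groups is closed under extensions and quotients,'' and you have simply spelled out those two directions in full.
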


\begin{proof}
  This follows easily from the fact that the class of unipotent algebraic groups is closed under extensions and quotients.
\end{proof}

We now recall the Levi decomposition of an algebraic group.
It plays an important role in this article.

\begin{definition}
  Let $G$ be an algebraic group.
  We call $\quot G {\unip G}$ the \emph{maximal reductive quotient} of $G$.
  A \emph{Levi subgroup} of $G$ is an algebraic subgroup $L \subseteq G$ such that the composite $L  \to  G  \to  \quot G {\unip G}$ is an isomorphism.
\end{definition}

Note that a Levi subgroup $L \subseteq G$ yields a section $\quot G {\unip G} \iso L \to G$ of the projection.
Moreover, a Levi subgroup always exists; that is the content of the following classical result of Mostow \cite{Mos}.

\begin{proposition}[Mostow] \label{prop:Mostow}
  Let $G$ be an algebraic group and $R \subseteq G$ a reductive algebraic subgroup.
  Then there exists a Levi subgroup $L \subseteq G$ such that $R \subseteq L$.
  In particular, any algebraic group has a Levi subgroup.
\end{proposition}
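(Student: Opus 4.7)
The plan is to argue by induction on the nilpotency class of $U \defeq \unip G$, constructing a Levi subgroup that contains the prescribed reductive subgroup $R$ at each stage; the second assertion is then the special case $R = 1$. The base case $U = 1$ is immediate, since $G$ itself is reductive and we may set $L = G$.

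For the inductive step, let $Z$ be the last nontrivial term of the descending central series of $U$. Then $Z$ is characteristic in $U$ and hence normal in $G$, is central in $U$ and hence abelian, and is unipotent. The intersection $R \intersect Z$ is a normal unipotent subgroup of the reductive group $R$ and therefore trivial, so the image $\overline R$ of $R$ in $G/Z$ is a reductive algebraic subgroup isomorphic to $R$. Moreover $(G/Z)/(U/Z) \iso G/U$ is reductive, from which one deduces that $\unip(G/Z) = U/Z$, which has strictly smaller nilpotency class than $U$. By induction we obtain a Levi subgroup $\widetilde L \subseteq G/Z$ containing $\overline R$. Its preimage $L' \subseteq G$ contains $RZ \supseteq R$ and fits into a short exact sequence
\[ 1  \longto  Z  \longto  L'  \longto  \widetilde L  \longto  1 \]
of algebraic groups, and it will suffice to find a section $s \colon \widetilde L \to L'$ of this extension whose image contains $R$, since then $L \defeq s(\widetilde L)$ is the desired Levi subgroup of $G$.

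This reduces the problem to the abelian case, in which conjugation in $L'$ equips $Z$ with the structure of a rational $\widetilde L$-module. The crucial input, valid because we are in characteristic zero, is Weyl's theorem on complete reducibility: every finite-dimensional rational representation of a reductive algebraic group is semi-simple. A standard argument then yields that the algebraic-group cohomology groups $H^i(\widetilde L, Z)$ vanish for all $i \ge 1$, and analogously for $R$ in place of $\widetilde L$. The vanishing of $H^2(\widetilde L, Z)$ furnishes some section $s_0 \colon \widetilde L \to L'$; the $1$-cocycle $R \to Z$ measuring the discrepancy between $s_0|_R$ and the given inclusion $R \hookrightarrow L'$ is, by the vanishing of $H^1(R, Z)$, a coboundary, and conjugating $s_0$ by an appropriate element $z \in Z$ produces a section $s$ whose restriction to $R$ coincides with the inclusion. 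The principal obstacle will be making rigorous the classification of extensions and sections by these algebraic-group cohomology groups, and deriving their vanishing from complete reducibility in characteristic zero; once this infrastructure is in place, the inductive reduction runs routinely.
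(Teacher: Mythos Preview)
Your argument is correct and is essentially the classical proof of Mostow's theorem: induct on the nilpotency class of $U = \unip G$, peel off the last term $Z$ of the descending central series, and handle the resulting central (in $U$) abelian extension using the vanishing of Hochschild cohomology $H^i(\widetilde L, Z)$ and $H^i(\overline R, Z)$ for reductive groups acting on vector groups in characteristic zero. The one point worth making explicit is that a section $s \colon \widetilde L \to L'$ of the extension $1 \to Z \to L' \to \widetilde L \to 1$ really does produce a Levi subgroup of $G$ (not merely of $L'$): one checks $s(\widetilde L) \cap U \subseteq L' \cap U = Z$ and $s(\widetilde L) \cap Z = 1$, while $s(\widetilde L) \cdot U = G$ since $s(\widetilde L) \cdot Z = L'$ and $L' \cdot U = G$.

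The paper takes a much shorter route: it simply cites Hochschild's textbook for the statement that any reductive subgroup $R$ can be conjugated into some Levi subgroup by an element of $\unip G(\QQ)$, and then observes that conjugating that Levi subgroup back by the inverse element yields a Levi subgroup containing $R$. In other words, the paper outsources exactly the argument you have written to the literature. Your approach is therefore genuinely different in presentation---self-contained rather than by citation---and reproduces what one would find inside the cited reference. The trade-off is that you must set up the cohomological classification of algebraic-group extensions and the vanishing results (which you flag as the principal obstacle), whereas the paper avoids this entirely at the cost of a black-box reference.
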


\begin{proof}
  By \cite[Ch.~VIII, Theorem~4.3]{Hoc}, there exists a Levi subgroup $L' \subseteq G$ and an element $t \in \unip G(\QQ)$ such that $t R \inv t \subseteq L'$.
  Hence $R$ is contained in $L \defeq \inv t L' t$, which is again a Levi subgroup.
\end{proof}

Note that forming the unipotent radical is not functorial, and hence neither is the maximal reductive quotient.
However, the following lemma shows that, when a map $G \to H$ of algebraic groups does induce a map $\quot G {\unip G} \to \quot H {\unip H}$, then we can choose compatible sections of the projection to the maximal reductive quotient.

\begin{lemma} \label{lemma:compatible_Levi}
  Let the following be a commutative diagram of algebraic groups
  \[
  \begin{tikzcd}
    & H \dar{p} \\
    R \rar{g} \urar[bend left]{f} & \quot H {\unip H}
  \end{tikzcd}
  \]
  such that $p$ is the quotient map and $R$ is reductive.
  Then there exists a section $s$ of $p$ such that $s \after g = f$.
\end{lemma}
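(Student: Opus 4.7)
The plan is to reduce to Mostow's theorem (\cref{prop:Mostow}) by first promoting $f$ itself to a source of a Levi subgroup containing its image. The key observation is that the image $f(R) \subseteq H$ is a reductive algebraic subgroup: it is a closed algebraic subgroup of $H$ (images of homomorphisms of algebraic groups are closed), and it is reductive since it is a quotient of the reductive group $R$ and reductivity is preserved under quotients.

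Given this, I would apply \cref{prop:Mostow} to $f(R) \subseteq H$ to obtain a Levi subgroup $L \subseteq H$ with $f(R) \subseteq L$. By definition of a Levi subgroup, the composite $L \hookrightarrow H \xrightarrow{p} \quot H {\unip H}$ is an isomorphism; I would take $s \colon \quot H {\unip H} \to H$ to be its inverse, which is automatically a section of $p$.

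It remains to verify that $s \after g = f$, and this is immediate: for any $r \in R$, commutativity of the given diagram gives $g(r) = p(f(r))$, and since $f(r) \in f(R) \subseteq L$, we have $s(g(r)) = s(p(f(r))) = f(r)$ by construction of $s$ as the inverse of $\restrict p L$.

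There is no real obstacle here once one recognizes that Mostow's theorem, as stated in \cref{prop:Mostow}, already accepts a reductive subgroup as input and produces a Levi subgroup containing it. The only thing to double-check is that $f(R)$ genuinely is a closed reductive subgroup of $H$, which follows from standard facts about algebraic groups.
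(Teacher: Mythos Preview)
Your proof is correct and essentially identical to the paper's: both observe that $f(R)$ is reductive (the paper says ``linearly reductive'', which is equivalent in characteristic zero), apply \cref{prop:Mostow} to obtain a Levi subgroup $L$ containing it, and set $s = \inv{(\restrict p L)}$.
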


\begin{proof}
  Since $R$ is linearly reductive, the image of $f$ is again a linearly reductive subgroup of $H$.
  By \cref{prop:Mostow}, there exists a Levi subgroup $L \subseteq H$ containing this image.
  Hence we have $\restrict p L \after f = g$ and thus, setting $s \defeq \inv {( \restrict p L )} \colon \quot H {\unip H} \to L \subseteq H$, that $f = s \after g$.
\end{proof}

We now recall the Lie algebra of an algebraic group and, further below, the exponential algebraic group of a nilpotent Lie algebra.
They form an important part of the structure theory of algebraic group in characteristic $0$.

\begin{notation}
  We write $\Qeps$ for the commutative $\QQ$-algebra $\quot {\QQ[\epsilon]} {\epsilon^2}$.
  Given an algebraic group $G$, we denote by
  \[ \Lie(G)  \defeq  \ker \bigl( G(\Qeps) \to G(\QQ) \bigr) \]
  the kernel of the map induced by the unique map of algebras sending $\epsilon$ to $0$, equipped with its usual Lie algebra structure (see e.g.\ \cite[§10b]{Mil}).
  It assembles into a functor $\Lie(\blank)$ from algebraic groups to Lie algebras.
\end{notation}

The following lemma describes the Lie algebra of the general linear group.
In fact, this uniquely determines the Lie algebra structure on $\Lie(G)$ for all $G$.

\begin{lemma} \label{lemma:Lie_GL}
  Let $V$ be a finite-dimensional vector space.
  Then there is a natural isomorphism of Lie algebras
  \[ \Lie \bigl( \aGL(V) \bigr)  \xlongto{\iso}  \gl(V) \]
  given by sending $f \in \Lie \bigl( \aGL(V) \bigr) \subseteq \GL^\Qeps \bigl( V \tensor \Qeps \bigr)$ to the unique endomorphism $g \in \End(V) = \gl(V)$ such that $f(v \tensor 1) = v \tensor 1 + g(v) \tensor \epsilon$ for all $v \in V$.
\end{lemma}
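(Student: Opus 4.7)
The plan is to first verify that the stated formula defines a bijection $\Lie(\aGL(V)) \to \gl(V)$, then check naturality, and finally confirm compatibility with the Lie brackets.

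For bijectivity: by definition, an element $f \in \Lie(\aGL(V))$ is a $\Qeps$-linear automorphism of $V \tensor \Qeps$ whose reduction modulo $\epsilon$ equals $\id_V$. Since $f$ is $\Qeps$-linear it is determined by its restriction to $V \tensor 1$, and the reduction condition forces $f(v \tensor 1) - v \tensor 1 \in V \tensor \epsilon\QQ$ for every $v \in V$; writing this difference as $g(v) \tensor \epsilon$ for a unique $\QQ$-linear $g \colon V \to V$ produces the claimed element of $\gl(V) = \End(V)$. Conversely, given $g \in \End(V)$, the $\Qeps$-linear extension of $v \tensor 1 \mapsto v \tensor 1 + g(v) \tensor \epsilon$ is indeed an automorphism, with two-sided inverse given by the analogous formula for $-g$ (using $\epsilon^2 = 0$). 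Naturality with respect to isomorphisms $V \iso W$ is immediate from the formula.

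The main step is to check that $f \mapsto g$ respects the Lie brackets. I plan to use the description from \cite[§10b]{Mil}, which computes the Lie bracket via a group commutator in $G\bigl(\QQ[\epsilon_1, \epsilon_2]/(\epsilon_1^2, \epsilon_2^2)\bigr)$: for $X, Y \in \Lie(G)$, extending scalars along the embeddings $\Qeps \to \QQ[\epsilon_1, \epsilon_2]/(\epsilon_1^2, \epsilon_2^2)$ sending $\epsilon \mapsto \epsilon_i$ yields elements $X_1$ and $Y_2$, and $\liebr X Y \in \Lie(G)$ is then characterized by the identity
\[ X_1 Y_2 X_1^{-1} Y_2^{-1}  =  \id + \epsilon_1 \epsilon_2 \liebr X Y \]
where the right-hand side is interpreted via the embedding $\epsilon \mapsto \epsilon_1 \epsilon_2$. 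Specializing to $G = \aGL(V)$ with $X$ and $Y$ corresponding to $A, B \in \gl(V)$, so that $X_1 = \id + \epsilon_1 A$ and $Y_2 = \id + \epsilon_2 B$, and using $(\id + \epsilon_i A)^{-1} = \id - \epsilon_i A$, a direct calculation in $\End(V) \tensor \QQ[\epsilon_1, \epsilon_2]/(\epsilon_1^2, \epsilon_2^2)$ gives
\[ (\id + \epsilon_1 A)(\id + \epsilon_2 B)(\id - \epsilon_1 A)(\id - \epsilon_2 B)  =  \id + \epsilon_1 \epsilon_2 (AB - BA), \]
identifying the bracket on $\Lie(\aGL(V))$ with the usual commutator $\liebr A B = AB - BA$ on $\gl(V)$.

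The main obstacle is essentially bookkeeping: one has to reconcile the specific convention for defining the Lie bracket in \cite{Mil} with the commutator calculation above. Once this is settled, everything reduces to the elementary matrix computations just sketched.
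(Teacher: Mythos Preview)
Your proposal is correct; the paper's own proof consists solely of the sentence ``This is contained in \cite[10.7]{Mil},'' so you are simply unpacking the reference the authors defer to. Your argument follows exactly the framework of \cite[\S 10b]{Mil}, so there is no substantive difference in approach---just a difference in how much detail is spelled out.
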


\begin{proof}
  This is contained in \cite[10.7]{Mil}.
\end{proof}

\begin{definition}
  Given a finite-dimensional nilpotent Lie algebra $\lie g$, we denote by $\aexp(\lie g)$ the algebraic group given, at an object $R \in \Algfg{\QQ}$, by $\aexp(\lie g)(R) \defeq \exp(\lie g \tensor R)$ and by scalar extension on morphisms (see e.g.\ \cite[Theorem~14.37, (a)]{Mil}).
\end{definition}

Recall that $\aexp(\blank)$ is an equivalence of categories from finite-dimensional nilpotent Lie algebras to unipotent algebraic groups, and that its inverse (up to natural isomorphisms) is given by the appropriate restriction of $\Lie(\blank)$ (see e.g.\ \cite[Theorem~14.37, (b)]{Mil}\footnote{Note that a unipotent algebraic group is automatically affine (see e.g.\ \cite[Theorem~14.5]{Mil}).}).
In particular $\Lie(\blank)$ takes unipotent algebraic groups to finite-dimensional nilpotent Lie algebras.
The following lemma provides an explicit description of the natural isomorphism relating one of the composites of the equivalence to the identity functor.

\begin{lemma} \label{lemma:Lie_exp}
  Let $\lie g$ be a finite-dimensional nilpotent Lie algebra.
  Then there is a natural isomorphism of Lie algebras
  \[ \lie g  \xlongto{\iso}  \Lie \bigl( \aexp(\lie g) \bigr) \]
  given by $x \mapsto x \tensor \epsilon \in \lie g \tensor \Qeps = \aexp(\lie g) \bigl( \Qeps \bigr)$.
\end{lemma}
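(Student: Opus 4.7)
My plan is to first check that the proposed map is a set-theoretic bijection (in fact an abelian group isomorphism), and then verify compatibility with Lie brackets by reducing, via a faithful representation with nilpotent image, to the case of $\aGL(V)$ where \cref{lemma:Lie_GL} applies.

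First, the formula $\phi_{\lie g}(x) \defeq x \tensor \epsilon$ lands in $\Lie(\aexp(\lie g))$, because applying $\epsilon \mapsto 0$ to $x \tensor \epsilon$ gives $0 \in \lie g \subseteq \aexp(\lie g)(\QQ)$. The direct sum decomposition $\lie g \tensor \Qeps = \lie g \dirsum \lie g \cdot \epsilon$ shows that every element of the kernel has this form uniquely, so $\phi_{\lie g}$ is bijective. Since $\epsilon^2 = 0$, every iterated bracket $\liebr{x \tensor \epsilon}{y \tensor \epsilon} = \liebr{x}{y} \tensor \epsilon^2$ vanishes, so the BCH product on $\lie g \cdot \epsilon$ reduces to addition and $\phi_{\lie g}$ is an isomorphism of abelian groups. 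Naturality in $\lie g$ is immediate: a Lie algebra morphism $f \colon \lie g \to \lie h$ induces $\aexp(f)(\Qeps) = f \tensor \id[\Qeps]$, which sends $x \tensor \epsilon$ to $f(x) \tensor \epsilon$.

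Next, to verify compatibility with brackets, I invoke Ado's theorem together with Engel's theorem to choose a faithful representation $\rho \colon \lie g \hookrightarrow \gl(V)$ with $\rho(\lie g)$ consisting of nilpotent endomorphisms. Because the exponential is polynomial on nilpotent matrices (and the BCH series is then a finite polynomial identity), the assignment $x \mapsto \exp(\rho_R(x))$, where $\rho_R$ is the $R$-extension of $\rho$, defines a morphism of algebraic groups $\widetilde \rho \colon \aexp(\lie g) \to \aGL(V)$. Applying $\Lie(\blank)$ and post-composing with the isomorphism of \cref{lemma:Lie_GL} yields a Lie algebra morphism $\psi \colon \Lie(\aexp(\lie g)) \to \gl(V)$. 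A direct calculation in $\aGL(V)(\Qeps)$ gives $\widetilde{\rho}(x \tensor \epsilon) = \exp(\rho(x) \tensor \epsilon) = 1 + \rho(x) \tensor \epsilon$ since $\epsilon^2 = 0$, so under \cref{lemma:Lie_GL} we obtain $\psi(x \tensor \epsilon) = \rho(x)$; that is, $\psi \after \phi_{\lie g} = \rho$.

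Finally, since $\rho$ is injective and $\phi_{\lie g}$ is bijective, $\psi$ is itself injective. As $\psi$ is a Lie algebra morphism, the identity
\[ \psi \bigl( \phi_{\lie g}(\liebr{x}{y}) \bigr) = \rho(\liebr{x}{y}) = \liebr{\rho(x)}{\rho(y)} = \psi \bigl( \liebr{\phi_{\lie g}(x)}{\phi_{\lie g}(y)} \bigr) \]
combined with injectivity of $\psi$ forces $\phi_{\lie g}(\liebr{x}{y}) = \liebr{\phi_{\lie g}(x)}{\phi_{\lie g}(y)}$, completing the argument. The main obstacle is that the Lie bracket on $\Lie(G)$ for an abstract algebraic group $G$ is not directly computable from the definition given; the key idea is to transport the question through $\widetilde{\rho}$ into $\gl(V)$, where \cref{lemma:Lie_GL} provides the needed explicit description.
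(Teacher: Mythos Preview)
Your argument is correct. The paper's own proof is simply a citation to Demazure--Gabriel \cite[IV, §2, Proof of 4.4]{DG}, so you have supplied a genuinely self-contained proof where the paper defers to the literature. Your approach---checking bijectivity directly from $\epsilon^2=0$ and then verifying the bracket by transporting through a faithful nilpotent representation into $\gl(V)$, where \cref{lemma:Lie_GL} gives an explicit answer---is the natural one and makes good use of the surrounding material in the paper (in particular the remark just after \cref{lemma:Lie_GL} that the Lie bracket on $\Lie(G)$ is determined by its compatibility with embeddings into $\aGL(V)$). One minor quibble: the phrase ``Ado's theorem together with Engel's theorem'' is slightly imprecise for obtaining a faithful representation by \emph{nilpotent} endomorphisms; what you need is the strong form of Ado's theorem for nilpotent Lie algebras (sometimes attributed to Birkhoff), which is standard but not literally ``Ado $+$ Engel.'' Once you have such a $\rho$, Engel's theorem does let you simultaneously upper-triangularize $\rho(\lie g)$, which is what guarantees that $\rho_R(x)$ remains nilpotent for all $x \in \lie g \tensor R$ and hence that $\exp(\rho_R(x))$ is polynomial---so Engel is used, just at a different step than your phrasing suggests.
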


\begin{proof}
  This is contained in \cite[IV, §2, Proof of 4.4]{DG}.
\end{proof}

\section{Algebraicity of relative self-equivalences}

In this section, we recall results of Espic--Saleh \cite{ES} on the algebraicity of the groups $\Eaut[\rat A](\rat X)$ of homotopy classes of self-equivalences of a rational space $\rat X$ relative to a subspace $\rat A$, while reformulating them using the functor-of-points perspective on algebraic groups.
(In the absolute case, this goes back to Sullivan \cite[Theorem~10.3]{Sul} and Wilkerson \cite[Theorem~B]{Wil}.)
In addition, we prove that various constructions are compatible with this algebraic group structure.

\subsection{Minimal relative dg Lie algebras}

We will need a relative version of the classical notion of a minimal model of a dg Lie algebra.
This has been introduced and studied by Espic--Saleh \cite[§3]{ES} (inspired by work of Cirici--Roig \cite{CR}).
As usual for minimal models, their fundamental property is that a (relative) quasi-isomorphism is already an isomorphism (cf.\ \cite[Theorem~3.8]{ES}).

\begin{definition}
  A quasi-free map $L' \to L$ of non-negatively graded dg Lie algebras is \emph{minimal} if the differential of the relative indecomposables $\indec[L'](L)$ of \cref{def:indec} is trivial.
  A quasi-free dg Lie algebra $L$ is \emph{minimal} when the map $0 \to L$ is.
\end{definition}

\begin{remark}
  Our definition of a minimal quasi-free map differs from that of Espic--Saleh \cite[Definition~3.4]{ES}: they require $L'$ to be quasi-free and $L$ to be a colimit of ``KS-extensions'', but do not require the dg Lie algebras to be non-negatively graded.
  However, in the case that $L'$ is quasi-free and $L'$ and $L$ are non-negatively graded, the two definitions agree: any quasi-free map $L' \to L$ can be written as a colimit
  \[ L'  \longto  L' \cop \freelie (V_0)  \longto  L' \cop \freelie (V_0 \dirsum V_1)  \longto  \cdots \]
  of KS-extensions, where $V$ is as in \cref{def:quasi-free} and $V_k$ denotes the part of homological degree $k$.
  (To see that their condition on the induced differential on $V$ agrees with our condition on the differential of $\indec[L'](L)$, one uses \cref{lemma:quasi-free_indec}.)
\end{remark}

Much of what we do below relies on having a minimal dg Lie algebra model for a map of spaces.
Fortunately such models always exist, as the following lemma attests.

\begin{lemma}[Espic--Saleh]
  Let $f \colon L' \to L$ be a map of positively graded dg Lie algebras.
  Then there exists a commutative diagram of dg Lie algebras
  \[
  \begin{tikzcd}
    \widetilde L' \rar{\eq} \dar[swap]{i} & L' \dar{f} \\
    \widetilde L \rar{\eq} & L
  \end{tikzcd}
  \]
  such that $\widetilde L'$ and $i$ are minimal quasi-free and the horizontal maps are quasi-isomorphisms.
\end{lemma}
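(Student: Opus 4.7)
The plan is to build the required commutative square in two stages, each by an inductive cell-attachment construction, exactly in the spirit of the classical minimal model construction in rational homotopy theory. The positive grading hypothesis will ensure that the inductive constructions make sense degreewise and terminate (at each degree).

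First, I would produce a minimal model $\epsilon \colon \widetilde L' \xlongto{\eq} L'$ of $L'$ alone. Since $L'$ is positively graded (hence, in particular, reduced), the classical Quillen--Neisendorfer construction applies: one defines $\widetilde L' = \freelie(V')$ with $V' = \bigcup_n V'^{(\le n)}$ inductively on degree, at stage $n$ enlarging $V'^{(\le n-1)}$ by a graded subspace of generators in degrees $n$ and $n+1$ whose chosen images under $\epsilon^{(n)}$ represent a basis of $H_n(L')$ not yet hit and whose differentials kill the spurious classes in $H_n(\freelie(V'^{(\le n-1)}))$. A standard cycle-selection argument arranges the new differentials to land in $[\freelie(V'^{(\le n-1)}), \freelie(V'^{(\le n-1)})]$, so that $\widetilde L'$ is minimal quasi-free.

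Second, I would factor the composite $f \after \epsilon \colon \widetilde L' \to L$ as a minimal quasi-free extension followed by a quasi-isomorphism. Set $\widetilde L^{(0)} \defeq \widetilde L'$ and $g^{(0)} \defeq f \after \epsilon$. Inductively build $\widetilde L^{(n)} = \widetilde L^{(n-1)} \cop \freelie(W_n)$ together with a map $g^{(n)} \colon \widetilde L^{(n)} \to L$ extending $g^{(n-1)}$, where $W_n$ is concentrated in degrees $n$ and $n+1$: generators in degree $n$ with zero differential and image in $L$ chosen to hit the cokernel of $H_n(g^{(n-1)})$, and generators in degree $n+1$ with differential in $\widetilde L^{(n-1)}$ and image in $L$ chosen so that the kernel of $H_n(g^{(n-1)})$ gets killed. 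A careful choice of representatives, using complements of $[\widetilde L', \widetilde L']$ inside the cycles, ensures that these new differentials land in the Lie ideal generated by $\widetilde L'$ together with $[\widetilde L^{(n-1)}, \widetilde L^{(n-1)}]$, i.e.\ that the induced differential on $\indec[\widetilde L'](\widetilde L^{(n)})$ vanishes. Taking $\widetilde L \defeq \colim_n \widetilde L^{(n)}$ and $g \defeq \colim_n g^{(n)}$, the map $i \colon \widetilde L' \to \widetilde L$ is then minimal quasi-free by construction, and $g$ is a quasi-isomorphism because homology commutes with filtered colimits and, in each fixed range of degrees, is eventually an isomorphism.

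The principal technical obstacle is the minimality condition at the inductive step: one must arrange that the differential of every newly attached generator lies in $\langle \widetilde L' \rangle + [\widetilde L^{(n-1)}, \widetilde L^{(n-1)}]$, while simultaneously correcting homology. This is a question of choosing representatives with care: kernel classes of $H_n(g^{(n-1)})$ must be lifted to cycles that become decomposable modulo the ideal generated by $\widetilde L'$, which is possible by choosing, at each stage, a graded complement to $[\widetilde L^{(n-1)}, \widetilde L^{(n-1)}] + \langle \widetilde L' \rangle$ inside the cycles and lifting back through it. The positive grading of $L'$ and $L$ guarantees that the relevant graded pieces are of finite homological degree at each step and that no degree-$0$ indeterminacy spoils the argument; this is, essentially, the same ingredient that makes the Espic--Saleh construction go through in the relative setting.
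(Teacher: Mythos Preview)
Your approach is correct and follows exactly the same two-step structure as the paper's proof: first construct a minimal model $\widetilde L' \to L'$, then factor the composite $\widetilde L' \to L$ as a minimal quasi-free map followed by a quasi-isomorphism. The paper simply cites these two results (the classical minimal dg Lie model theorem, e.g.\ \cite[Theorem~22.13]{FHT}, for the first step and \cite[Theorem~3.7]{ES} for the second), whereas you have sketched the inductive cell-attachment constructions that those references contain.
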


\begin{proof}
  By the existence of minimal dg Lie models (see e.g.\ \cite[Theorem~22.13]{FHT}), there exists a quasi-isomorphism $\widetilde L' \to L'$ with $\widetilde L'$ positively graded and minimal quasi-free.
  Then the claim follows from \cite[Theorem~3.7]{ES}.
\end{proof}

\subsection{Automorphisms and relative self-equivalences of dg Lie algebras}

In this subsection, we recall how to equip the group $\Aut[L'](L)$ of automorphisms of a dg Lie algebra $L$ relative to $L'$ with an algebraic group structure, and how to use this to do the same for groups of the form $\Eaut[\rat A](\rat X)$.

\begin{definition}
  Let $R$ be a commutative $\QQ$-algebra and $i \colon L' \to L$ and $\rho \colon L \to \Pi$ maps of dg Lie algebras over $R$.
  We denote by $\Aut[L']^R(L)_\rho$ the group of automorphisms $\phi \colon L \to L$ of dg Lie algebras over $R$ such that $\phi \after i = i$ and $\rho \after \phi = \rho$.
  When $R = \QQ$, $L' = 0$, or $\rho = 0$, we omit the respective term from the notation.
\end{definition}

\begin{definition}
  Let $L' \to L$ and $\rho \colon L \to \Pi$ be maps dg Lie algebras.
  We write $\aAut[L'](L)_\rho$ for the functor $\Algfg{\QQ} \to \Grp$ given on objects by $R \mapsto \Aut[L' \tensor R]^R(L \tensor R)_{\rho \tensor R}$ and on morphisms by extension of scalars.
  When $L' = 0$ or $\rho = 0$, we omit it from the notation.
\end{definition}

The following is a slight generalization of \cite[Proposition~4.10]{ES}.
We provide a proof for completeness; it is essentially the same argument as the one of Espic--Saleh, though formulated slightly differently.

\begin{proposition}[Espic--Saleh] \label{prop:aAut_alg}
  Let $f \colon L' \to L$ and $\rho \colon L \to \Pi$ be maps of dg Lie algebras such that $L'$ is finitely generated, $L$ is positively graded and finitely presented (i.e.\ it is the quotient of a finitely generated dg Lie algebra by a finitely generated dg Lie ideal), and $\Pi$ is of finite type.
  Then $\aAut[L'](L)_\rho$ is an algebraic group.
\end{proposition}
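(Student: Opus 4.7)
The plan is to show that $\aAut[L'](L)_\rho$ is representable by a finitely generated commutative $\QQ$-algebra; since the functor takes values in groups, this realizes it as a linear algebraic group. Using the finite presentation of $L$, I would write $L = \freelie(V)/I$ with $V$ a finite-dimensional positively graded vector space and $I$ the dg Lie ideal generated by finitely many elements $r_1, \ldots, r_m$. The positive grading together with $V$ being finite-dimensional implies that each $L_n$ is finite-dimensional. A dg Lie algebra map $L \tensor R \to L \tensor R$ is specified by a degree-$0$ graded linear map $V \tensor R \to L \tensor R$, equivalently an $R$-point of the affine space $\bigoplus_i \Hom(V_i, L_i)$, which is of finite type over $\QQ$.

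I would then cut out the relevant subfunctor by imposing that the linear map sends each $r_j$ to zero and commutes with the differential on $V$ (polynomial, resp.\ linear, conditions in the coordinates), that $\phi \after f = f$ evaluated on the finitely many generators of $L'$ (linear conditions in the appropriate finite-dimensional $L_n$), and that $\rho \after \phi = \rho$ evaluated on the generators of $L$ (linear conditions in the finite-dimensional $\Pi_{d_i}$, which are finite-dimensional because $\Pi$ is of finite type). These conditions cut out a closed affine subscheme of finite type over $\QQ$ representing the functor of dg Lie endomorphisms of $L$ satisfying the relative conditions.

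The final step is to pass from endomorphisms to automorphisms via the key lemma: a dg Lie endomorphism $\phi \colon L \to L$ is an isomorphism if and only if the induced map $\phi_* \colon \indec(L) \to \indec(L)$ is an isomorphism. The forward direction is immediate; for the reverse, a Nakayama-style argument gives $L = \phi(L) + \liebr L L$ and, by iteration, $L = \phi(L) + \cfilt L n$ for all $n \ge 1$. Since $L$ is positively graded, $\cfilt L n$ vanishes in degrees below $n$, so $\phi$ is surjective in each degree; together with each $L_n$ being finite-dimensional, this upgrades to bijectivity. Because $L$ is finitely generated, $\indec(L)$ is finite-dimensional and concentrated in finitely many degrees, so $\phi_*$ being an isomorphism is equivalent to the nonvanishing of the polynomial $P = \prod_n \det(\phi_{*,n})$ in the coordinates of $\phi$. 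Localizing the coordinate ring of the endomorphism subscheme at $P$ yields a finitely generated commutative $\QQ$-algebra representing $\aAut[L'](L)_\rho$. The main subtlety is the Nakayama-style invertibility criterion, which crucially exploits the positive grading to pass from an equality modulo brackets to one on all of $L$.
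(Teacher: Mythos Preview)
Your argument is correct, though a couple of the conditions you call ``linear'' are only polynomial: for instance $d_L(\phi(v)) = \phi(d_L v)$ need not be linear in the coordinates of $\alpha$, since $d_L v$ may involve iterated brackets of generators; likewise $\phi(f(x)) = f(x)$ for a generator $x$ of $L'$. This does not affect the proof, since polynomial conditions still cut out a closed affine subscheme.

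Your route differs from the paper's. The paper never passes through endomorphisms: it embeds $\aAut[L'](L)_\rho$ directly into the algebraic group $G = \prod_{k=0}^{m} \aGL(L_k)$ (where $m$ bounds the degrees of the generators of $L$, $L'$, and the relations), and then realizes the automorphism group as an intersection $T \cap K$ of stabilizer subgroups --- $T$ the pointwise stabilizer of $f(L'_{\le m})$, and $K$ the stabilizer of the triple $(d_L, [\cdot,\cdot], \rho)$ under the natural algebraic action of $G$ on $\Hom(\shift[-1] L_{\le m}, L_{\le m}) \oplus \Hom((L \tensor L)_{\le m}, L_{\le m}) \oplus \Hom(L_{\le m}, \Pi_{\le m})$. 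The presentation bound $m$ guarantees that a bracket-preserving automorphism of $L_{\le m}$ extends to one of $L$. This avoids your Nakayama step entirely. Your approach is more explicit and self-contained, and the invertibility criterion via indecomposables is a clean addition; the paper's approach is shorter and leans on the general fact that stabilizers in algebraic representations are algebraic subgroups.
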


\begin{proof}
  Let $m$ be the maximum of the degrees of the generators of $L$ and $L'$ and the degrees of the relations of $L$.
  Then $\aAut[L'](L)$ embeds into the algebraic group $G \defeq \prod_{k = 0}^m \aGL(L_k)$.
  (Note that $L_k$ is finite dimensional since $L$ is finitely generated and positively graded.)
  Let $S \subseteq G$ be the setwise stabilizer of the subspaces $f(L'_k)$ for $k \le m$; this is an algebraic subgroup (see e.g.\ \cite[Proposition~4.3]{Mil}).
  There is an algebraic representation of $S$ on $f(L'_{\le m})$, and we denote by $T$ its kernel; it is the pointwise stabilizer in $G$ of the subspaces $f(L'_k)$.
  Moreover, note that conjugation and precomposition yield an algebraic representation of $G$ on
  \[ \Hom(\shift[-1] L_{\le m}, L_{\le m}) \dirsum \Hom \bigl( (L \tensor L)_{\le m}, L_{\le m} \bigr) \dirsum \Hom(L_{\le m}, \Pi_{\le m}) \]
  where $\Hom$ denotes the vector space of graded linear maps of degree $0$.
  Then we denote by $K$ the stabilizer of the element $(d_L, \liebr \blank \blank, \rho)$; as above, this is an algebraic subgroup.
  Note that $\aAut[L'](L)_\rho = T \intersect K$; in particular it is an algebraic group.
  Here we use that $L$ is presented in degrees $\le m$ to deduce that an automorphism of $L_{\le m}$ of degree $0$ that preserves the Lie bracket extends to an automorphism of the underlying graded Lie algebra of $L$.
\end{proof}

We now recall how to use this algebraic group structure to obtain an algebraic group structure on $\Eaut[\nerve{L_A}] ( \nerve{L_X} )$, in the case that $L_A \to L_X$ is minimal quasi-free.
More precisely, we recall how to lift the map of the following definition to a morphism of algebraic groups.

\begin{definition} \label{def:real}
  Let $L_B \to L_A \to L_X$ and $\rho \colon L_X \to \Pi$ be maps of dg Lie algebras.
  We write
  \[ \real \colon \Aut[L_A](L_X)_\rho  \longto  \Eaut[\nerve{L_A}] \bigl( \nerve{L_X} \bigr)_{\eqcl{\nerve{\rho}}_{\nerve{L_B}}} \]
  for the group homomorphism induced by the functor $\nerve{\blank}$.
\end{definition}

\begin{definition}
  Let $R$ be a commutative $\QQ$-algebra and $L_A \to L_X$ a quasi-free map of positively graded dg Lie algebras over $R$.
  We denote by $\Null[L_A]^R(L_X) \subseteq \Aut[L_A]^R(L_X)$ the subgroup of those automorphisms $f$ such that $f \eq_{L_A}^R \id[L_X]$.
  
  For $L_A \to L_X$ a quasi-free map of positively graded dg Lie algebras, we write $\aNull[L_A](L_X) \subseteq \aAut[L_A](L_X)$ for the subfunctor given on objects by $R \mapsto \Null[L_A \tensor R]^R(L_X \tensor R)$.
\end{definition}

\begin{proposition}[Espic--Saleh] \label{prop:ES}
  Let $L_A \to L_X$ be a minimal quasi-free map of positively graded finitely generated quasi-free dg Lie algebras.
  Then $\aNull[L_A](L_X) \subseteq \aAut[L_A](L_X)$ is a normal unipotent algebraic subgroup.
\end{proposition}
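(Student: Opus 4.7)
The plan is to identify $\aNull[L_A](L_X)$ with the kernel of a natural morphism to the algebraic group of automorphisms of relative indecomposables; algebraicity and normality will follow immediately, and unipotence from a filtration argument combined with Kolchin's theorem. First, by \cref{lemma:quasi-free_indec} and minimality, write $L_X \iso L_A \cop \freelie(V)$ with $V$ finite-dimensional (since $L_X$ is finitely generated) and positively graded, where $V \iso \indec[L_A](L_X)$ carries trivial induced differential. By \cref{lemma:Lie_algebra_extension}, this decomposition and identification are preserved by extension of scalars along any commutative $\QQ$-algebra $R$, and functoriality of $\indec$ yields a morphism of algebraic groups $\Phi \colon \aAut[L_A](L_X) \to \aGL(V)$.

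The central claim is that $\aNull[L_A](L_X) = \ker \Phi$ as subfunctors of $\aAut[L_A](L_X)$. The inclusion $\aNull[L_A](L_X) \subseteq \ker \Phi$ follows from \cref{lemma:dgLie_homotopic}: a simplicial homotopy $f \eq_{L_A \tensor R}^R \id$ gives a chain homotopy vanishing on $L_A \tensor R$, which descends to $V \tensor R$ and forces $\Phi_R(f) = \id$ since the differential on $V \tensor R$ is zero. For the reverse inclusion, given $f \in \Aut[L_A \tensor R]^R(L_X \tensor R)$ with $\Phi_R(f) = \id$, I would construct a dg Lie algebra homotopy $h \colon L_X \tensor R \to (L_X \tensor R) \tensor_{\QQ} \sForms[1]$ relative to $L_A \tensor R$ inductively along the sub-dg Lie algebras $L_X^{(n)} \defeq L_A \cop \freelie(V_{\le n})$ of $L_X$; these are closed under the differential because minimality and positive gradedness force $d(V_n) \subseteq L_X^{(n-1)}$. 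At each inductive step, the obstruction to extending $h$ across a new generator $v \in V_n$ is killed by a suitable choice of its coefficient of $dt$, using the hypothesis $\Phi_R(f) = \id$.

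Normality of $\ker \Phi$ is automatic, leaving unipotence. Using the closed embedding $\aAut[L_A](L_X) \hookrightarrow \prod_{k \le m} \aGL(L_{X,k})$ from the proof of \cref{prop:aAut_alg}, it suffices to show that every $f \in \ker \Phi(\overline{\QQ})$ acts unipotently on each $L_{X,k} \tensor \overline{\QQ}$; Kolchin's theorem then produces a unipotent algebraic subgroup of $\prod_{k} \aGL(L_{X,k})$ containing $\ker \Phi$. The lower central series $\Gamma^j L_X$ is preserved by $\aAut[L_A](L_X)$ and gives a finite filtration of each $L_{X,k}$ since $V$ is positively graded and $j$-fold brackets live in degree $\ge j$. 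On the abelianization $\Gamma^1/\Gamma^2 = L_X/[L_X, L_X] \iso (L_A/[L_A, L_A]) \dirsum V$, an element $f \in \ker \Phi$ acts as the identity on the first summand (because $f|_{L_A \tensor R} = \id$) and as the identity on $V$ modulo a correction landing in the first summand (because $\Phi_R(f) = \id$ gives $f(v) \equiv v$ modulo the Lie ideal generated by $L_A$ plus $[L_X, L_X]$), so the induced matrix is block-unipotent. Compatibility with brackets propagates this inductively to every $\Gamma^j/\Gamma^{j+1}$, exhibiting $f$ as unipotent on each $L_{X,k}$.

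The main obstacle is the reverse inclusion $\ker \Phi \subseteq \aNull[L_A](L_X)$: constructing the relative homotopy functorially over an arbitrary commutative $\QQ$-algebra $R$. This is the $R$-linear, relative analogue of the classical fact that self-maps of an absolute minimal Lie model are determined up to homotopy by their action on indecomposables, and minimality of $L_A \to L_X$ is essential to ensure that the inductive obstructions vanish at each step.
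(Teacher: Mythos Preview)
Your central claim that $\aNull[L_A](L_X) = \ker \Phi$ is false, and this undermines the entire strategy. Take $L_A = \freelie(a)$ and $L_X = \freelie(a,x)$ with $\deg a = 1$, $\deg x = 2$, and trivial differential; this is a minimal quasi-free map of positively graded finitely generated quasi-free dg Lie algebras. The automorphism $f$ fixing $a$ and sending $x \mapsto x + [a,a]$ lies in $\ker \Phi$, since $[a,a]$ is in the Lie ideal generated by $L_A$. But $f$ is \emph{not} simplicially homotopic to the identity relative to $L_A$: any homotopy $h$ must have $h(a) = a \tensor 1$ and $h(x) = x \tensor r(t) + [a,a] \tensor p(t) + [a,x] \tensor q(t)\,dt$ (these exhaust degree~$2$), and the condition $dh(x) = h(dx) = 0$ forces $r' = p' = 0$, so $p$ is constant and cannot satisfy both $p(0)=0$ and $p(1)=1$. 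One can also see this on Lie algebras via \cref{prop:Lie_aEaut}: here $\Bound 0(\Der(L_X \rel L_A)) = 0$ (the differential on $L_X$ is zero), so $\Lie(\aNull[L_A](L_X)) = 0$, whereas the derivation $x \mapsto [a,a]$ shows $\Lie(\ker\Phi)$ is one-dimensional.

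Your inductive construction of the homotopy breaks exactly here: the hypothesis $\Phi_R(f) = \id$ only gives $f(v) - v \in (L_A) + [L_X,L_X]$, and terms in the ideal $(L_A)$ need not be coboundaries in the relevant obstruction complex. This phenomenon is genuinely relative; your intuition comes from the absolute minimal case, but even there the analogous equality fails (take $L_A = 0$ in the example above and rename $a$ as a second generator of $L_X$). The inclusion $\aNull[L_A](L_X) \subseteq \ker\Phi$ that you prove is correct and appears in the paper as part of \cref{lemma:indecomposables_representation}, but it is strict in general.

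The practical consequence is that you have not established that $\aNull[L_A](L_X)$ is an algebraic subgroup at all, which was the whole point of identifying it with a kernel. Normality can be salvaged directly (conjugate the homotopy), and unipotence would follow from algebraicity together with $\aNull \subseteq \ker\Phi$ and your filtration argument (which is essentially the paper's proof of \cref{lemma:indecomposables_representation}). But algebraicity is the hard part. The paper simply cites Espic--Saleh, whose argument instead characterises $\aNull[L_A](L_X)$ via the exponential of $\Bound 0(\Der(L_X \rel L_A))$, consistent with the identification of Lie algebras in \cref{prop:Lie_aEaut}; this is a different and more delicate construction than reading off a kernel.
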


\begin{proof}
  This is contained in the proof of \cite[Theorem~4.13]{ES}.
\end{proof}

\begin{remark}
  It is essential for $L_A \to L_X$ to be minimal to obtain that $\aNull[L_A](L_X)$ is unipotent.
  As a counterexample, consider the dg Lie algebra $L \defeq \freelie (v, dv)$ with $\deg v = 3$ and differential $d(v) = dv$.
  This is quasi-free and contractible and hence $\aNull(L) = \aAut(L)$.
  This is isomorphic to the multiplicative algebraic group, which is not unipotent.
\end{remark}

Note that, if $\rho \colon L_X \to \Pi$ is a map of dg Lie algebras such that the differential of $\Pi$ is trivial, then $\aNull[L_A](L_X) \subseteq \aAut[L_A](L_X)_\rho$ (by \cref{lemma:dgLie_homotopic}).
Hence we can define the following.

\begin{definition} \label{def:areal}
  Let $L_A \to L_X$ be a minimal quasi-free map of positively graded finitely generated quasi-free dg Lie algebras, and $\rho \colon L_X \to \Pi$ a map of dg Lie algebras such that $\Pi$ is of finite type and has trivial differential.
  We define an algebraic group
  \[ \aEaut[L_A]( L_X )_\rho  \defeq  \quot {\aAut[L_A](L_X)_\rho} {\aNull[L_A](L_X)} \]
  and write $\areal \colon \aAut[L_A](L_X)_\rho \to \aEaut[L_A] ( L_X )_\rho$ for the quotient map.
  When $\rho = 0$, we omit it from the notation.
\end{definition}

\begin{proposition}[Espic--Saleh] \label{prop:aEaut}
  Let $L_B \to L_A \to L_X$ be quasi-free maps of positively graded finitely generated quasi-free dg Lie algebras such that $L_A \to L_X$ is minimal, and let $\rho \colon L_X \to \Pi$ be a map of dg Lie algebras such that $\Pi$ is of finite type, positively graded, and has trivial differential.
  Then the group homomorphism $\real$ of \cref{def:real} is a quotient map and there is a unique isomorphism of groups
  \[ \Phi  \colon  \aEaut[L_A] ( L_X )_\rho (\QQ)  \xlongto{\iso}  \Eaut[\nerve{L_A}] \bigl( \nerve{L_X} \bigr)_{\eqcl{\nerve{\rho}}_{\nerve{L_B}}} \]
  such that $\Phi \after \areal_\QQ = \real$.
  In particular the codomain is independent of $L_B$.
\end{proposition}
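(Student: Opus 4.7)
My plan is to deduce the proposition from \cref{lemma:map_MCb} (which compares dg Lie-algebraic mapping complexes with simplicial ones after realization) together with the minimal model theorem of Espic--Saleh \cite[Theorem~3.8]{ES}. Applying \cref{lemma:map_MCb} to $L_A \to L_X$ with target $L_X$ and restricting (via \cref{lemma:sh_we}) to the components of self-equivalences yields, upon passing to $\pi_0$, a group bijection between simplicial homotopy classes of dg Lie self-equivalences of $L_X$ under $L_A$ and $\Eaut[\nerve{L_A}](\nerve{L_X})$. By the minimal model theorem, every quasi-isomorphism $L_X \to L_X$ under $L_A$ is actually an isomorphism, so this identifies
\[ \quot {\Aut[L_A](L_X)} {\Null[L_A](L_X)}  \iso  \Eaut[\nerve{L_A}](\nerve{L_X}) \]
as groups.

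I would then identify the subgroup corresponding to $\Eaut[\nerve{L_A}](\nerve{L_X})_{\eqcl{\nerve{\rho}}_{\nerve{L_B}}}$ on the dg Lie side. The composite $L_B \to L_X$ of quasi-free maps is again quasi-free, so a further application of \cref{lemma:map_MCb} produces a bijection $\hmap[L_B](L_X, \Pi) \iso \hmap[\nerve{L_B}](\nerve{L_X}, \nerve{\Pi})$ carrying $\eqcl{\rho}_{L_B}$ to $\eqcl{\nerve{\rho}}_{\nerve{L_B}}$. Hence this stabilizer corresponds to the set of $[f] \in \quot {\Aut[L_A](L_X)} {\Null[L_A](L_X)}$ with $\rho \after f \eq_{L_B}^\QQ \rho$.

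The central technical step is to show that this set coincides with the image of $\Aut[L_A](L_X)_\rho$. The inclusion $\supseteq$ is immediate. For the converse, given $f \in \Aut[L_A](L_X)$ together with a simplicial homotopy $H \colon L_X \to \Pi \tensor \sForms[1]$ from $\rho \after f$ to $\rho$ relative to $L_B$, I would lift $H$ along the postcomposition map $\map[L_A](L_X, L_X) \to \map[L_A](L_X, \Pi)$ to a homotopy $K$ starting at $f$; its endpoint is then an element of $\Aut[L_A](L_X)_\rho$ in the $\Null[L_A](L_X)$-coset of $f$. To make the lifting available I would first factor $\rho = p \after \iota$ with $\iota$ a trivial cofibration under $L_A$ and $p$ a fibration, so that the SM7-type property of \cref{lemma:dgLie_model_structure}~(2) yields a Kan fibration of mapping complexes along which $H$ can be lifted. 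The main obstacle I expect is that $H$ is relative only to $L_B$ rather than $L_A$, so care is needed to arrange the lifting so the modification of $f$ remains inside $\Aut[L_A](L_X)$; the asserted independence of the codomain from $L_B$ is a consequence.

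Finally, if $f \in \Aut[L_A](L_X)_\rho$ lies in $\ker(\real)$, then $\nerve{f}$ is simplicially homotopic to the identity relative to $\nerve{L_A}$, which by \cref{lemma:map_MCb} pulls back to $f \eq_{L_A}^\QQ \id$, i.e.\ $f \in \Null[L_A](L_X)$. Conversely $\Null[L_A](L_X) \subseteq \Aut[L_A](L_X)_\rho$ as recorded before \cref{def:areal} (using $d_\Pi = 0$), and its image under $\real$ is trivial. Combined with the surjectivity onto $\Eaut[\nerve{L_A}](\nerve{L_X})_{\eqcl{\nerve{\rho}}_{\nerve{L_B}}}$ established above, this produces the unique isomorphism $\Phi$ with $\Phi \after \areal_\QQ = \real$; independence of the codomain from $L_B$ follows since the domain of $\Phi$ does not depend on $L_B$.
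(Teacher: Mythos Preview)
Your overall strategy matches the paper's closely: both establish the case $\Pi = 0$ first (the paper by citing \cite[(Proof of) Corollary~4.8]{ES} together with the fact that $G(\QQ)/U(\QQ) \cong (G/U)(\QQ)$ for $U$ normal unipotent, you by re-deriving this via \cref{lemma:map_MCb} and the minimal model theorem of \cite[Theorem~3.8]{ES}), and both then apply \cref{lemma:map_MCb} a second time to obtain an equivariant bijection $\hmap[L_B]{L_X}{\Pi} \cong \hmap[\nerve{L_B}]{\nerve{L_X}}{\nerve{\Pi}}$ and hence a matching of stabilizers.

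The substantive difference is at your ``central technical step''. You propose to show that the stabilizer of $\eqcl{\rho}_{L_B}$ in $\Aut[L_A](L_X)/\Null[L_A](L_X)$ equals the image of $\Aut[L_A](L_X)_\rho$ by lifting a homotopy $H$ from $\rho\circ f$ to $\rho$ along the postcomposition $\rho_*$, after replacing $\rho$ by a fibration. As you correctly identify, the obstacle is that $H$ is only relative to $L_B$, so the lift naturally produces an endomorphism under $L_B$ rather than under $L_A$, and it is not clear how to repair this. The paper bypasses the lifting entirely. It argues directly that, since $d_\Pi = 0$, one has $\Null[L_A](L_X) \subseteq \Aut[L_A](L_X)_\rho$ (this was recorded just before \cref{def:areal}, via \cref{lemma:dgLie_homotopic}), and then asserts that the quotient $\Aut[L_A](L_X)_\rho/\Null[L_A](L_X)$ is precisely the stabilizer of $\eqcl{\rho}$ in $\aEaut[L_A](L_X)(\QQ)$. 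In effect the paper replaces the homotopy relation $\rho\circ f \eq_{L_B} \rho$ by the strict equality $\rho\circ f = \rho$, which makes the $L_B$-versus-$L_A$ issue disappear and yields the independence from $L_B$ for free. So your lifting argument is not the route the paper takes; the paper's route is shorter but correspondingly terser at this point.
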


\begin{proof}
  In the case that $\Pi = 0$, this follows from \cite[(Proof of) Corollary~4.8]{ES} and the fact that, given a normal unipotent algebraic subgroup $U$ of an algebraic group $G$, the canonical map $\quot{G(\QQ)}{U(\QQ)} \to (\quot G U)(\QQ)$ is a group isomorphism (see e.g.\ \cite[Lemma~2.9]{BZ}).
  
  By \cref{lemma:map_MCb}, the functor $\nerve{\blank}$ induces an isomorphism
  \[ \hmap[L_B]{L_X}{\Pi}  \xlongto{\iso}  \hmap[\nerve{L_B}]{\nerve{L_X}}{\nerve{\Pi}} \]
  and this is equivariant with respect to the isomorphism
  \[ \quot {\Aut[L_A](L_X)} {\Null[L_A](L_X)}  \xlongto{\iso}  \Eaut[\nerve{L_A}] ( \nerve{L_X} ) \]
  by construction.
  Hence the latter restricts to an isomorphism of the stabilizer groups of the elements $\eqcl {\rho}$ and $\eqcl {\nerve{\rho}}$, respectively.
  Since the differential of $\Pi$ is trivial, we have $\Null[L_A](L_X) \subseteq \Aut[L_A](L_X)_\rho$, and the quotient is isomorphic to the stabilizer of $\eqcl {\rho}$ in $\aEaut[L_A](L_X)(\QQ)$.
  This implies the claim.
\end{proof}

Combining the preceding proposition with \cref{lemma:Eaut_map}, there is, for $B \to A \to X$ cofibrations and $\xi \colon X \to K$ a map of Kan complexes respectively modeled by $L_B \to L_A \to L_X$ and $\rho$, a canonical isomorphism
\[ \aEaut[L_A] ( L_X )_\rho (\QQ)  \iso  \Eaut[\rat A] (\rat X)_{\eqcl{\rat \xi}_{\rat B}} \]
of groups.
In particular the right-hand side is independent of $B$.

We conclude this subsection by deducing from work of Kupers \cite{Kup} and Espic--Saleh \cite{ES} that the group $\Eaut[A](X)_{\eqcl{\xi}_B}$ is arithmetic.

\begin{proposition}[Espic--Saleh, Kupers] \label{prop:arithmetic}
  Let $B \to A \to X$ be cofibrations of simply connected Kan complexes of the homotopy types of finite CW-complexes, modeled by the quasi-free maps $L_B \to L_A \to L_X$ of positively graded finitely generated quasi-free dg Lie algebras such that $L_A \to L_X$ is minimal.
  Furthermore let $K$ be a simple Kan complex, and $\xi \colon X \to K$ a map modeled by the map $\rho \colon L_X \to \Pi$ of dg Lie algebras such that $\Pi$ is of finite type, positively graded, and has trivial differential.
  Then the map induced by rationalization
  \[ q  \colon  \Eaut[A](X)_{\eqcl{\xi}_B}  \longto  \Eaut[\rat A](\rat X)_{\eqcl{\rat \xi}}  \iso  \aEaut[L_A](L_X)_\rho(\QQ) \]
  has finite kernel and image an arithmetic subgroup of $\aEaut[L_A](L_X)_\rho$.
\end{proposition}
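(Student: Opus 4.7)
The plan is to bootstrap from the corresponding arithmeticity statement without the $\xi$-constraint — that the rationalization map $q' \colon \Eaut[A](X) \to \aEaut[L_A](L_X)(\QQ)$ has finite kernel and image an arithmetic subgroup — which is due to Espic--Saleh (generalizing Sullivan). Since $\ker(q) \subseteq \ker(q')$, the finite-kernel assertion is immediate.

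For arithmeticity of the image, I would first verify that $\aEaut[L_A](L_X)_\rho$ is a closed $\QQ$-algebraic subgroup of $\aEaut[L_A](L_X)$: by \Cref{prop:aAut_alg}, $\aAut[L_A](L_X)_\rho$ is algebraic, and $\aNull[L_A](L_X) \subseteq \aAut[L_A](L_X)_\rho$ because $\Pi$ has trivial differential, so by \Cref{lemma:dgLie_homotopic} any automorphism simplicially homotopic to the identity relative to $L_A$ fixes $\rho$ strictly. Thus the quotient makes sense as a $\QQ$-algebraic subgroup. By the standard fact that intersecting an arithmetic subgroup of $G$ with the $\QQ$-points of a $\QQ$-algebraic subgroup $H \subseteq G$ yields an arithmetic subgroup of $H$, the group $\Gamma_\rho \defeq q'(\Eaut[A](X)) \cap \aEaut[L_A](L_X)_\rho(\QQ)$ is arithmetic in $\aEaut[L_A](L_X)_\rho$.

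By \Cref{prop:aEaut}, an element $f \in \Eaut[A](X)$ satisfies $q'(f) \in \aEaut[L_A](L_X)_\rho(\QQ)$ exactly when $\rat \xi \after \rat f \eq_{\rat B} \rat \xi$, so $q(\Eaut[A](X)_{\eqcl{\xi}_B}) \subseteq \Gamma_\rho$. Since arithmeticity is preserved by commensurability, it suffices to show this inclusion has finite index, equivalently that the set
\[ S \defeq \bigl\{ \eqcl \eta \in \Eaut[A](X) \cdot \eqcl \xi \subseteq \hmap[B](X, K) \bigm| \rat \eta \eq_{\rat B} \rat \xi \bigr\} \]
is finite.

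The main obstacle is establishing finiteness of $S$. My plan would be to deduce this from the classical fact that the natural map $\hmap[B](X, K) \to \hmap[\rat B](\rat X, \rat K)$ has finite fibers whenever $(X, B)$ is a finite CW-pair and $K$ is a simple Kan complex with finite-dimensional rational homotopy groups in each degree — the latter holding here since $\Pi$ is a dg Lie model for $K$ and is of finite type. Such a statement can be proven by combining the Federer spectral sequence (\Cref{prop:Federer}), which under our hypotheses forces the homotopy groups of $\map[B](X, K)$ at each basepoint in the orbit of $\xi$ to be finitely generated, with the finiteness of the torsion subgroups of these finitely generated abelian groups, following the pattern of Hilton--Mislin--Roitberg \cite{HMR}. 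Since $S$ is contained in a single fiber of this rationalization map on homotopy classes, this yields the desired finiteness.
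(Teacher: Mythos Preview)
Your proposal is correct and follows essentially the same two-step reduction as the paper: first intersect the arithmetic image of $\Eaut[A](X)$ with the $\QQ$-points of the algebraic subgroup $\aEaut[L_A](L_X)_\rho$, then show that $\Eaut[A](X)_{\eqcl{\xi}_B}$ has finite index in this intersection's preimage via the finite-fibers statement for $\hmap[B](X,K) \to \hmap[\rat B](\rat X,\rat K)$. One attribution point: the paper credits the base arithmeticity statement (for $\Eaut[A](X) \to \aEaut[L_A](L_X)(\QQ)$) to Kupers \cite[Proposition~3.4]{Kup} rather than Espic--Saleh, who only established the finite-kernel part; and for the finite-fibers step the paper simply invokes the argument of \cite[Proof of Theorem~2.3]{ES} rather than sketching a Federer-spectral-sequence proof.
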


\begin{proof}
  Consider the following pullback diagram of groups
  \[
  \begin{tikzcd}
  	\Eaut[A](X)_{\eqcl{\rat \xi}} \rar[hook] \dar & \Eaut[A](X) \dar \\
  	\Eaut[\rat A](\rat X)_{\eqcl{\rat \xi}} \rar[hook] & \Eaut[\rat A](\rat X)
  \end{tikzcd}
  \]
  where the right-hand vertical map is induced by rationalization.
  The right-hand vertical map has finite kernel and image an arithmetic subgroup of $\aEaut[L_A](L_X)$ by \cite[Proposition~3.4]{Kup} (that the kernel of $q$ is finite was already contained in \cite[Theorem~2.3]{ES}).
  Since the lower horizontal map is the map of $\QQ$-points of an embedding of algebraic groups by \cref{prop:ES}, and the square is a pullback, the left-hand vertical map also has a finite kernel and image an arithmetic subgroup of $\aEaut[L_A](L_X)_\rho$ (see e.g.\ \cite[§1.1]{Ser}).
  
  Now consider the diagram of pointed sets
  \[
  \begin{tikzcd}
  	\Eaut[A](X)_{\eqcl{\xi}_B} \rar[hook] \dar & \Eaut[A](X) \dar[equal] \rar{\xi_*} & \hmap[B]{X}{K} \dar \\
  	\Eaut[A](X)_{\eqcl{\rat \xi}} \rar[hook] & \Eaut[A](X) \rar{(\rat \xi)_*} & \hmap[\rat B]{\rat X}{\rat K}
  \end{tikzcd}
  \]
  where the rows are exact by definition.
  By the same argument as in \cite[Proof of Theorem~2.3]{ES}, the map $\hmap[B] X K \to \hmap[\rat B] {\rat X} {\rat K}$ has finite fibers whenever $B \to X$ is a cofibration of nilpotent simplicial sets with the homotopy types of finite CW-complexes, and $K$ is a simple Kan complex.
  This implies that the left-hand vertical map is the inclusion of a finite-index subgroup.
  Hence its image in $\Eaut[\rat A](\rat X)_{\eqcl{\rat \xi}}$ is an arithmetic subgroup of $\aEaut[L_A](L_X)_\rho$ as well.
\end{proof}

\subsection{Algebraic representations}

In this subsection, we prove that various representations of the groups $\Aut[L_A](L_X)$ and $\Eaut[\rat A](\rat X)$ lift to algebraic representations of the corresponding algebraic groups.
We begin with recalling a result of Kupers, stating that this is the case for the representation on relative (co)homology.

\begin{lemma}[Kupers] \label{lemma:rel_Ho_algebraic}
  Let $A \to X$ be a cofibration of simply connected Kan complexes of the homotopy types of finite CW-complexes, modeled by the minimal quasi-free map $L_A \to L_X$ of positively graded finitely generated quasi-free dg Lie algebras.
  Then there is, for any $n$, a canonical algebraic representation of $\aEaut[L_A](L_X)$ on $\Ho n (X, A; \QQ)$ such that restricting this action along $\Eaut[A](X)  \to  \Eaut[\rat A](\rat X)  \iso  \aEaut[L_A](L_X)(\QQ)$ yields the usual action of $\Eaut[A](X)$ on $\Ho * (X, A; \QQ)$.
\end{lemma}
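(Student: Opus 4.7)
The plan is to construct the representation through the natural action of $\aAut[L_A](L_X)$ on the chain complex of relative indecomposables $\indec[L_A](L_X)$. An $R$-point $\phi \in \aAut[L_A](L_X)(R)$ is an $R$-linear dg Lie automorphism of $L_X \tensor R$ fixing $L_A \tensor R$; it preserves the Lie ideal generated by $L_A \tensor R$ as well as the bracket subspace, so it descends to an $R$-linear chain endomorphism of $\indec[L_A](L_X) \tensor R$ (using that $\indec$ commutes with extension of scalars, which follows from \cref{lemma:Lie_algebra_extension}). By \cref{lemma:quasi-free_indec}, the underlying graded module is isomorphic to the generating space $V$ from any presentation $L_X \iso L_A \cop \freelie(V)$; in particular each degree is finite-dimensional since $L_X$ is finitely generated and positively graded. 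Crucially, minimality of $L_A \to L_X$ forces the differential on $\indec[L_A](L_X)$ to vanish, so $\phi$ acts through graded $R$-linear automorphisms, yielding a morphism of affine group schemes $\aAut[L_A](L_X) \to \prod_n \aGL(V_n)$, and hence an algebraic representation on each $V_n$.

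Next I would verify that $\aNull[L_A](L_X)$ acts trivially, which lets the representation descend to $\aEaut[L_A](L_X)$. Any $\phi \in \aNull[L_A \tensor R]^R(L_X \tensor R)$ is simplicially homotopic to the identity relative to $L_A \tensor R$, so by \cref{lemma:dgLie_homotopic} the underlying chain map is chain homotopic to $\id$ relative to $L_A \tensor R$. This chain homotopy descends to $\indec[L_A](L_X) \tensor R$, and since the differential there is zero, the two maps must already coincide. Hence the representation factors through the quotient $\aEaut[L_A](L_X) = \aAut[L_A](L_X) / \aNull[L_A](L_X)$ and defines an algebraic representation on $V_n$ for every $n$.

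Finally, I would identify $V = \indec[L_A](L_X)$ with $\Ho_{*}(X, A; \QQ)$ (up to the standard degree shift) and check compatibility with the geometric action. The identification comes from the fact that the relative Chevalley--Eilenberg chain complex of $(L_X, L_A)$ computes the rational singular chains of $(\nerve{L_X}, \nerve{L_A}) \req (\rat X, \rat A)$, combined with the observation that for a minimal quasi-free extension the linear part of these Chevalley--Eilenberg chains recovers (a shift of) $\indec[L_A](L_X)$. Compatibility on $\QQ$-points then follows by combining \cref{prop:aEaut}, which identifies $\aEaut[L_A](L_X)(\QQ)$ with $\Eaut[\rat A](\rat X)$, with naturality of the above identification under relative self-equivalences and the fact that the rationalization of a pair of simply connected finite CW-complexes induces an isomorphism on rational homology. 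The main obstacle I expect is in this last step: the bookkeeping needed to chase an automorphism of $L_X$ through the comparison between minimal Lie models, Chevalley--Eilenberg chains, polynomial forms, and singular (co)homology of $(X, A)$, and to verify naturality and degree conventions at each stage, is delicate although each individual ingredient is available in the preliminaries.
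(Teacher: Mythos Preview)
Your proposal is correct but takes a different route from the paper. The paper's own proof of this lemma is a one-line citation: the cohomology version is \cite[Proposition~3.6]{Kup}, and the homology version is analogous. Kupers' construction proceeds via the action on relative Chevalley--Eilenberg cochains (cf.\ \cite[§3.3.4]{Kup}), not via indecomposables.

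Your approach, by contrast, is precisely what the paper develops in the two lemmas that \emph{follow} \cref{lemma:rel_Ho_algebraic}: the algebraic representation on $\indec[L_A](L_X)$ and its descent to $\aEaut[L_A](L_X)$ is \cref{lemma:indecomposables_representation}, and the identification $\shift[-1]\Ho*(X,A;\QQ)\iso\indec[L_A](L_X)$ as $\aEaut[L_A](L_X)(\QQ)$-representations is \cref{lemma:indec_is_ho}. So you have effectively merged the content of three consecutive lemmas into one. This is a perfectly good self-contained argument, and arguably more conceptual than outsourcing to Kupers; the paper keeps them separate because it wants the individual pieces (in particular the unipotence of $\ker\Koppa$) for later use.

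One point worth noting: in the paper's logic, \cref{lemma:indec_is_ho} \emph{uses} \cref{lemma:rel_Ho_algebraic} (it says the induced action on homology ``is precisely the one of \cref{lemma:rel_Ho_algebraic}'', again deferring to \cite[§3.3.4]{Kup} for the comparison). So the compatibility check you flag as delicate is exactly where the paper, too, leans on Kupers. Your sketch of that step via \cref{prop:aEaut} and naturality of the CE-chains/relative-homology comparison is the right idea; the bookkeeping is as you say tedious but routine.
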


\begin{proof}
  For cohomology, this is \cite[Proposition~3.6]{Kup}; the proof for homology is analogous.
\end{proof}

We now prove that the action on the relative indecomposables lifts to an algebraic representation with unipotent kernel (for the absolute case, see \cite[Theorem~2.24, (iv)]{BZ}).
%\cite[Theorem~2.20, (iii)]{BZ}).
Below we will see that, when the map of dg Lie algebras is minimal, this representation is actually isomorphic to the representation on the relative homology.

\begin{lemma} \label{lemma:indecomposables_representation}
  Let $i \colon L' \to L$ be a map of positively graded finitely generated quasi-free dg Lie algebras.
  Then the action of $\Aut[L'](L)$ on $\indec[L'](L)$ canonically lifts to a map of algebraic groups
  \[ \Koppa \colon \aAut[L'](L)  \longto  \aAut \bigl( \indec[L'](L) \bigr) \]
  and the kernel of $\Koppa$ is unipotent.
  
  Furthermore, assume that $i$ is minimal quasi-free.
  Then there is a unique map $\koppa$ of algebraic groups such that the composite
  \[ \aAut[L'](L)  \xlongto{\areal}  \aEaut[L'](L)  \xlongto{\koppa}  \aAut \bigl( \indec[L'](L) \bigr) \]
  is equal to $\Koppa$, and the kernel of $\koppa$ is unipotent as well.
\end{lemma}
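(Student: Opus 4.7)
The plan is to handle the two parts in order. For the first, I would verify that any automorphism of $L$ fixing $L'$ preserves both the Lie ideal $\overline{L'} \subseteq L$ generated by the image of $L'$ and the commutator subalgebra $[L, L]$, and therefore descends to an automorphism of the quotient chain complex $\indec[L'](L) = L/(\overline{L'} + [L, L])$. The same argument works over any $R \in \Algfg{\QQ}$, using that the formation of $\indec$ commutes with extension of scalars; this produces the natural transformation $\Koppa$, which is visibly a morphism of algebraic groups since the induced action on each $(\indec[L'](L))_k$ is the quotient of the polynomial action on $L_k$.

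For the unipotence of $\ker \Koppa$, I would proceed by induction on $m$ to show that the image of $\ker \Koppa$ in $\aGL(L_{\le m})$ is unipotent. The inductive step uses the short exact sequence of $\ker \Koppa$-representations
\[ 0 \longto \bigl( \overline{L'} + [L, L] \bigr) \intersect L_m \longto L_m \longto \bigl( \indec[L'](L) \bigr)_m \longto 0, \]
on whose quotient $\ker \Koppa$ acts trivially by definition. The submodule is spanned by $L'_m$, on which $\ker \Koppa$ acts trivially, together with the image of iterated brackets of elements of $L_{<m}$ and $L'_{<m}$, which inherits a unipotent action from tensor powers of the unipotent action provided by the inductive hypothesis. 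Since extensions of unipotent representations are unipotent, the induction proceeds; unipotence of $\ker \Koppa$ then follows from the injectivity of $\aAut[L'](L) \hookrightarrow \aGL(L_{\le m})$ for $m$ large enough to contain all generators of $L$.

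For the second part, the central step is to show $\aNull[L'](L) \subseteq \ker \Koppa$ under the minimality hypothesis. Given $f \in \Null[L' \tensor R]^R(L \tensor R)$, \cref{lemma:dgLie_homotopy} produces a map $H \colon L \tensor R \to (L \tensor R) \tensor[R] \sForms[1]$ of dg Lie algebras with endpoints $\id$ and $f$ and fixing $L' \tensor R$. Being a Lie algebra map that sends $L'$ into $L' \tensor 1$, $H$ sends $\overline{L'}$ into $\overline{L'} \tensor \sForms[1]$, and it automatically sends $[L, L]$ into $[L, L] \tensor \sForms[1]$; hence $H$ descends to a chain map $\bar H \colon \indec[L'](L) \tensor R \to \indec[L'](L) \tensor R \tensor[R] \sForms[1]$ between $\id$ and $\bar f$. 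In the minimal case the differential on $\indec[L'](L)$ vanishes, so $\bar H(x)$ is a closed element of form degree zero for each $x$, which by $\Coho 0 (\sForms[1]) \iso \QQ$ must be constant in $t$; evaluating at the endpoints then forces $\bar f = \id$. The factorization $\Koppa = \koppa \after \areal$ and its uniqueness follow from the universal property of the quotient $\areal$, and the short exact sequence $1 \to \aNull[L'](L) \to \ker \Koppa \to \ker \koppa \to 1$ realizes $\ker \koppa$ as a quotient of the unipotent group $\ker \Koppa$, hence unipotent itself.

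The main technical subtlety will be the descent of the homotopy $H$ to indecomposables and the use of minimality to strengthen the chain-homotopy conclusion to the strict equality $\bar f = \id$; the inductive argument for unipotence of $\ker \Koppa$ is comparatively routine once one observes that positive grading forces all brackets appearing in degree $m$ to involve factors strictly below degree $m$.
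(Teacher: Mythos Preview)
Your proposal is correct, with one minor imprecision. For the unipotence of $\ker\Koppa$ you argue by induction on homological degree, using that brackets in degree $m$ involve only factors from $L_{<m}$; the paper instead builds a single descending filtration $F_{L'}^n L$ indexed by bracket length plus number of factors from the image of $L'$, and shows directly that $\ker\Koppa$ acts trivially on the associated graded. Both exploit positive grading in the same essential way --- your induction is more elementary, the paper's filtration more uniform and conceptually closer to a word-length argument. For part two the approaches coincide, but your claim that $\bar H(x)$ is ``of form degree zero'' is not quite right: since $\bar H$ preserves only total degree, a form-degree-one component is a priori present. The fix is immediate --- that component is annihilated by both endpoint evaluations, and closedness forces the form-degree-zero component to be constant in $t$. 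The paper sidesteps this wrinkle by invoking \cref{lemma:dgLie_homotopic}: simplicially homotopic maps are chain homotopic, hence equal when the target differential vanishes.
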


\begin{proof}
  First note that $\aAut ( \indec[L'](L) )$ is an algebraic group by \cref{prop:aAut_alg}.
  Now recall that $\indec[L'](L)$ was defined to be the quotient $\quot {\ol L} {\liebr {\ol L} {\ol L}}$, where $\ol L$ is the quotient of $L$ by the Lie ideal generated by $L'$.
  Let $m$ be the maximum of the degrees of the generators of $L$.
  Then $L_{\le m}$ is an algebraic representation of $\aAut[L'](L)$, and $\indec[L'](L)_{\le m} = \indec[L'](L)$ is obtained from it by taking quotients by algebraic subrepresentations.
  This yields a map of algebraic groups $\aAut[L'](L) \to \aGL(\indec[L'](L))$, which factors through the desired map $\Koppa$.
  
  We now prove that the kernel $K$ of $\Koppa$ is unipotent.
  To this end, we denote by $W_{L'}^{i,j} L \subseteq L$ the graded linear subspace spanned by those iterated Lie brackets of at least $i$ homogeneous elements of $L$ such that at least $j$ of these elements lie in the image of $L'$.
  Then we define
  \begin{equation} \label{eq:sub_word-length_filtration}
    F_{L'}^n L  \defeq  \sum_{\substack{i \ge 1, j \ge 0 \\ i + j = n}}  W_{L'}^{i,j} L
  \end{equation}
  as graded linear subspaces of $L$.
  This yields a descending filtration of $L = F_{L'}^1 L$ by graded linear subspaces.
  The filtration is Hausdorff since $L$ is positively graded, and it is preserved by any dg Lie algebra automorphism of $L$ that preserves $L'$.
  
  Now let $R \in \Algfg{\QQ}$ and $\phi \in K(R)$.
  Then, for any $x \in L \tensor R$, we have $\phi(x) = x + l' + l$ for some $l' \in L' \tensor R$ and $l \in \liebr {L \tensor R} {L \tensor R}$ such that $l = 0 = l'$ if $x \in L' \tensor R$.
  This implies that $\phi$ acts trivially on the associated graded of the filtration $F_{L' \tensor R}^n (L \tensor R)$.
  Also note that $W_{L' \tensor R}^{i,j} (L \tensor R) = (W_{L'}^{i,j} L) \tensor R$ and hence $F_{L' \tensor R}^n (L \tensor R) = (F_{L'}^n L) \tensor R$.
  
  Let $m$ be the maximal degree of generators of $L$.
  Then the vector space $L_{\le m}$ is a faithful finite-dimensional (since $L$ is positively graded) algebraic representation of $\aAut[L'](L)$, and hence of $K$.
  Moreover, the argument above proves that the filtration \eqref{eq:sub_word-length_filtration} exhibits $L_{\le m}$ as a unipotent algebraic representation of $K$, and hence that $K$ is unipotent (see e.g.\ \cite[Corollary~14.6]{Mil}).
  
  We now prove that $\Koppa$ factors through $\aEaut[L'](L)$ when $i$ is minimal.
  To this end, let $R \in \Algfg{\QQ}$ and $\phi \in \aAut[L'](L)(R)$.
  Since the functor $\blank \tensor \sForms[1]$ preserves quotients, by \cref{lemma:dgLie_homotopy} a homotopy $\phi \eq^R_{L' \tensor R} \id$ implies that $\Koppa_R(\phi) \eq^R \id$.
  Since $i$ is minimal, the differential of $\indec[L'](L) \tensor R$ is trivial and hence this implies, by \cref{lemma:dgLie_homotopic}, that $\Koppa_R(\phi) = \id$.
  This shows that $\aNull[L'](L) \subseteq \ker(\Koppa)$, and hence that $\koppa$ exists as claimed.
  
  Since $\areal$ is a quotient map, so is $\ker(\Koppa) \to \ker(\koppa)$.
  As $\ker(\Koppa)$ is unipotent, this implies that $\ker(\koppa)$ is unipotent as well (see e.g.\ \cite[Corollary~14.7]{Mil}).
\end{proof}

\begin{lemma} \label{lemma:indec_is_ho}
  Let $A \to X$ be a cofibration of simply connected Kan complexes of the homotopy types of finite CW-complexes, modeled by the minimal quasi-free map $i \colon L_A \to L_X$ of positively graded finitely generated quasi-free dg Lie algebras.
  Then there is an isomorphism of graded $\aEaut[L_A](L_X)(\QQ)$-representations
  \[ \shift[-1] \Ho * (X, A; \QQ)  \iso  \indec[L_A](L_X) \]
  with the actions provided by \cref{lemma:rel_Ho_algebraic,lemma:indecomposables_representation}.
\end{lemma}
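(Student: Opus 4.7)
The plan is to produce a natural $\Aut[L_A](L_X)$-equivariant isomorphism of graded vector spaces $\indec[L_A](L_X) \iso \shift[-1] \Ho * (X, A; \QQ)$ and then to observe that both sides are obtained, via the surjection $\areal_\QQ$, from algebraic representations of $\aEaut[L_A](L_X)$; the asserted isomorphism of $\aEaut[L_A](L_X)(\QQ)$-representations will follow.

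First, by \cref{lemma:quasi-free_indec} I would fix an isomorphism $L_X \iso L_A \cop \freelie(V)$ of graded Lie algebras under $L_A$, identifying $V$ with $\indec[L_A](L_X)$ as graded vector spaces; minimality of $L_A \to L_X$ ensures that the induced chain-complex differential on $V$ vanishes. The key input is then a relative rational Hurewicz-type identification $V \iso \shift[-1] \Ho * (X, A; \QQ)$, the natural generalization of the absolute fact that a minimal dg Lie model $\freelie(V)$ of a simply connected space $X$ satisfies $V \iso \shift[-1] \rHo * (X; \QQ)$. One concrete way to obtain it is to form the chain-level cofiber $C$ of $\rCEchains * (L_A) \to \rCEchains * (L_X)$, which models $\Ho * (X, A; \QQ)$ via \cref{lemma:MC_forms} together with the weak equivalences $\rat A \eq \nerve{L_A}$ and $\rat X \eq \nerve{L_X}$, and then to check that the canonical ``primitive inclusion'' $\shift V \to C$ is a quasi-isomorphism whenever $L_A \to L_X$ is minimal, via a word-length filtration on $C$ whose associated graded isolates the linear part of the differential.

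Next, both actions of $\Aut[L_A](L_X)$ under comparison --- on $\indec[L_A](L_X)$ via \cref{lemma:indecomposables_representation} and on $\Ho * (X, A; \QQ)$ via the composite of $\real \colon \Aut[L_A](L_X) \to \Eaut[\rat A](\rat X)$ from \cref{prop:aEaut} with the natural action on relative rational homology --- are induced by functoriality applied to an automorphism of $L_X$ under $L_A$. Since the relative Hurewicz isomorphism of the previous step is natural in such automorphisms, the two actions intertwine.

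Finally, by \cref{lemma:indecomposables_representation} the $\Aut[L_A](L_X)$-action on $\indec[L_A](L_X)$ is the restriction along $\areal_\QQ$ of an algebraic representation of $\aEaut[L_A](L_X)$, and by \cref{lemma:rel_Ho_algebraic} the same holds for $\Ho * (X, A; \QQ)$. Since $\areal_\QQ$ is surjective (by \cref{prop:aEaut}), the $\Aut[L_A](L_X)$-equivariant isomorphism descends to an isomorphism of $\aEaut[L_A](L_X)(\QQ)$-representations, as required. The main obstacle will be the precise construction of the relative Hurewicz identification $V \iso \shift[-1] \Ho * (X, A; \QQ)$ together with its naturality; although the absolute version is classical, the relative Chevalley--Eilenberg cofiber and the comparison with its primitive subcomplex require some care, and one expects the statement to be either directly extractable from, or easily deducible via, the cited works of Espic--Saleh and Berglund--Saleh.
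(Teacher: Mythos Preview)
Your proposal is correct and follows essentially the same route as the paper: both arguments compare $\indec[L_A](L_X)$ with the quotient $\CEchains * (L_X)/\CEchains * (L_A)$ via a quasi-isomorphism, establish $\Aut[L_A](L_X)$-equivariance by naturality, and then descend to $\aEaut[L_A](L_X)(\QQ)$ using surjectivity of $\areal_\QQ$. The only notable difference is that the paper uses the \emph{projection} $\CEchains * (L_X)/\CEchains * (L_A) \to \shift W$ onto wedge degree $\le 1$ followed by passage to indecomposables (citing \cite[Proposition~22.8]{FHT} for the absolute case), which is automatically a chain map, whereas your ``primitive inclusion'' $\shift V \to C$ is not literally a chain map in general and should be read as the homology-level inverse produced by your word-length filtration argument; and for the identification with $\Ho * (X,A;\QQ)$ together with its $\aEaut$-action the paper appeals directly to \cite[§3.3.4]{Kup} rather than going through \cref{lemma:MC_forms}.
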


\begin{proof}
  We pick isomorphisms $L_A \iso \freelie V$ and $L_X \iso L_A \cop \freelie W$ of graded Lie algebras and graded Lie algebras under $L_A$, respectively.
  Then there is a commutative diagram
  \[
  \begin{tikzcd}
    \CEchains * (L_A) \dar[hook] \rar[two heads]{\eq} & \QQ \dirsum \shift \indec(L_A) \dar \rar{\iso} & \QQ \dirsum \shift V \dar[hook] \\
    \CEchains * (L_X) \rar[two heads]{\eq} & \QQ \dirsum \shift \indec(L_X) \rar{\iso} & \QQ \dirsum \shift (V \dirsum W)
  \end{tikzcd}
  \]
  where the the right-hand isomorphisms are provided by \cref{lemma:quasi-free_indec} and the left-hand horizontal maps are given by projecting onto the cochains of wedge degree $0$ and $1$, and then projecting $\shift L_A$ and $\shift L_X$ further to $\shift \indec(L_A)$ and $\shift \indec(L_X)$, respectively.
  The composites of the rows are well-known to be quasi-isomorphisms (see e.g.\ \cite[Proposition~22.8]{FHT}), and so they induce a quasi-isomorphism
  \[ f  \colon  \quot {\CEchains * (L_X)} {\CEchains * (L_A)}  \xlongto{\eq}  \shift W  \iso  \shift {\indec[L_A](L_X)} \]
  where the last isomorphism is again provided by \cref{lemma:quasi-free_indec} and the differential of the two right-hand terms is trivial since we assumed $i$ to be minimal.
  The map $f$ is equivariant with respect to the canonical actions of $\Aut[L_A](L_X)$.
  After passing to homology, the action on the domain descends to an action of $\Eaut[L_A](L_X)(\QQ)$; this is completely analogous to the case of Chevalley--Eilenberg cochains in \cite[§3.3.4]{Kup}.
  In particular the resulting action is precisely the one of \cref{lemma:rel_Ho_algebraic}.
  This finishes the proof.
\end{proof}

It turns out that the image of the representation on the relative indecomposables is often the maximal reductive quotient of $\aEaut[L'](L)$ (in the absolute case, this is \cite[Corollary~4.2]{BZ}).
%\cite[Corollary~1.9]{BZ})

\begin{definition} \label{def:ahoEaut}
  Let $i \colon L' \to L$ be a map of positively graded finitely generated quasi-free dg Lie algebras, and $\rho \colon L \to \Pi$ a map of dg Lie algebras such that $\Pi$ is of finite type and has trivial differential.
  We write
  \[ \ahoEaut[L'](L)_\rho  \defeq  \Koppa \bigl( \aAut[L'](L)_\rho \bigr)  \subseteq  \aAut \bigl( \indec[L'](L) \bigr)  \subseteq  \aGL \bigl( \indec[L'](L) \bigr) \]
  for the image of (the restriction of) $\Koppa$ as an algebraic group under $\aAut[L'](L)_\rho$.
  When $i$ is minimal quasi-free, we will also consider $\ahoEaut[L'](L)_\rho = \koppa ( \aEaut[L'](L)_\rho )$ as an algebraic group under $\aEaut[L'](L)_\rho$.
\end{definition}

\begin{lemma} \label{lemma:ahoEaut_reductive}
  Let $L' \to L$ be a minimal quasi-free map of positively graded finitely generated quasi-free dg Lie algebras, and $\rho \colon L \to \Pi$ a map of dg Lie algebras such that $\Pi$ is of finite type and has trivial differential.
  Then the following conditions are equivalent:
  \begin{enumerate}
    \item
    The algebraic representation of $\aAut[L'](L)_\rho$ on $\indec[L'](L)$ is semi-simple.
    \item
    The algebraic representation of $\aEaut[L'](L)_\rho$ on $\indec[L'](L)$ is semi-simple.
    \item
    The algebraic group $\ahoEaut[L'](L)_\rho$ is the maximal reductive quotient of $\aEaut[L'](L)_\rho$.
  \end{enumerate}
\end{lemma}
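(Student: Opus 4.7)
The plan is to exploit the fact that the composite $\koppa \after \areal$ equals the restriction of $\Koppa$ to $\aAut[L'](L)_\rho$, together with two key structural facts established earlier: $\areal$ is a quotient map with unipotent kernel $\aNull[L'](L)$ (by \cref{prop:ES}), and $\koppa$ has unipotent kernel (by \cref{lemma:indecomposables_representation}). In particular, since $\ker(\koppa)$ is a normal unipotent subgroup of $\aEaut[L'](L)_\rho$, we have $\ker(\koppa) \subseteq \unip{\aEaut[L'](L)_\rho}$ automatically.

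First I would establish $1 \Leftrightarrow 2$. Since $\areal$ is surjective and the action of $\aAut[L'](L)_\rho$ on $\indec[L'](L)$ factors through $\aEaut[L'](L)_\rho$, the $\aAut[L'](L)_\rho$-invariant subspaces of $\indec[L'](L)$ coincide with the $\aEaut[L'](L)_\rho$-invariant subspaces; hence semi-simplicity holds for one representation if and only if it holds for the other.

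Next I would establish $2 \Leftrightarrow 3$. The action of $\aEaut[L'](L)_\rho$ on $\indec[L'](L)$ factors through the embedding $\ahoEaut[L'](L)_\rho \hookrightarrow \aGL(\indec[L'](L))$, so by the same argument as above, condition~2 is equivalent to $\indec[L'](L)$ being a semi-simple representation of $\ahoEaut[L'](L)_\rho$ via its tautological action. I would then argue that this is equivalent to $\ahoEaut[L'](L)_\rho$ being reductive. One direction is the equivalence of reductive and linearly reductive in characteristic zero (cited in the excerpt before \cref{lemma:unipotent_radical}). For the converse, I would use that $\ahoEaut[L'](L)_\rho \subseteq \aGL(\indec[L'](L))$ is a faithful representation: its unipotent radical must act unipotently on $\indec[L'](L)$, but if the action is semi-simple then any unipotent action is trivial, so by faithfulness the unipotent radical is trivial.

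Finally, to close the loop I would use \cref{lemma:unipotent_radical} applied to the normal unipotent subgroup $\ker(\koppa) \subseteq \aEaut[L'](L)_\rho$: the quotient $\ahoEaut[L'](L)_\rho \iso \aEaut[L'](L)_\rho / \ker(\koppa)$ is reductive if and only if $\ker(\koppa) = \unip{\aEaut[L'](L)_\rho}$, i.e., if and only if $\ahoEaut[L'](L)_\rho$ is the maximal reductive quotient. Combining this with the previous step yields $2 \Leftrightarrow 3$. The main subtlety to take care of is the characteristic-zero fact that a faithful semi-simple representation forces reductivity; everything else follows by routine bookkeeping with quotients.
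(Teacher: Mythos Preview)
Your proof is correct and follows essentially the same approach as the paper: both establish $1 \Leftrightarrow 2$ via the factoring-through-the-quotient argument, and both reduce $2 \Leftrightarrow 3$ to the equivalence between reductivity of $\ahoEaut[L'](L)_\rho$ and semi-simplicity of its tautological representation. The paper outsources the step ``faithful semi-simple representation forces reductivity'' to \cite[Lemma~2.7]{BZ}, whereas you unpack it directly via the unipotent radical argument and \cref{lemma:unipotent_radical}; otherwise the arguments coincide.
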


\begin{proof}
  First note that $\indec[L'](L)$ is semi-simple as a representation of $\aAut[L'](L)_\rho$ if and only if it is semi-simple as a representation of $\aEaut[L'](L)_\rho$ if and only if it is semi-simple as a representation of $\ahoEaut[L'](L)_\rho$.
  In particular 1 and 2 are equivalent.
  That 2 implies 3 follows from \cite[Lemma~2.7]{BZ} and \cref{lemma:indecomposables_representation}.
  To see that 3 implies 1 and 2, note that any finite-dimensional representation of a reductive algebraic group is semi-simple.
\end{proof}

We conclude this subsection by proving that the action of the automorphisms on the derivation dg Lie algebra lifts to an algebraic representation.
This is integral to producing the equivariant algebraic models this paper is about.

\begin{lemma} \label{lemma:Der_algebraic}
  Let $L' \to L$ be a map of finitely generated positively graded quasi-free dg Lie algebras, and let $n \in \ZZ$.
  Then the conjugation action of $\Aut[L'](L)$ on $\Der(L \rel L')_n$ can be canonically extended to a finite-dimensional algebraic representation of the algebraic group $\aAut[L'](L)$.
\end{lemma}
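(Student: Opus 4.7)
The plan is to embed $\Der(L \rel L')_n$ as a subrepresentation of an evidently algebraic finite-dimensional representation built from $\Hom$-spaces between graded pieces of $L$.

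First, write $L \iso \freelie(W)$ for some finite-dimensional positively graded vector space $W$, which is possible since $L$ is finitely generated quasi-free. A derivation $\theta \in \Der(L \rel L')_n$ is uniquely determined by its restriction $\theta|_W \in \Hom(W, L)_n$, so we obtain an injection $\Der(L \rel L')_n \hookrightarrow \Hom(W, L)_n$. Since $W$ has only finitely many non-zero graded pieces and each $L_j$ is finite-dimensional (as a positively graded free Lie algebra on finitely many generators has finite-dimensional graded pieces), it follows that $\Der(L \rel L')_n$ is finite-dimensional.

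Next, let $m$ be the maximum of the degrees of generators of $L$ and $L'$. By the proof of \cref{prop:aAut_alg}, $\aAut[L'](L)$ is a closed subgroup of $\prod_{k=1}^m \aGL(L_k)$, so it acts algebraically on each $L_k$ for $k \leq m$; moreover, for any $j$, the space $L_j$ is spanned by iterated Lie brackets of generators, and since the action preserves the Lie bracket, the $\aAut[L'](L)$-representation on $L_j$ is algebraic (polynomially determined by the action on $L_{\leq m}$). Conjugation then yields an algebraic representation of $\aAut[L'](L)$ on the finite-dimensional vector space $\bigoplus_{k=1}^{m} \Hom(L_k, L_{k+n})$, with action $\phi \cdot (f_k)_k = (\phi|_{L_{k+n}} \after f_k \after \inv\phi|_{L_k})_k$. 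Since any derivation of $L$ is determined by its restriction to $L_{\leq m}$ (which contains the generators $W$), the map $\theta \mapsto (\theta|_{L_k})_{k=1}^{m}$ defines an injection of $\Der(L \rel L')_n$ into this ambient representation.

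The last step is to verify that this subspace is stable under the action functorially, over any $R \in \Algfg{\QQ}$. Observe that for $\phi \in \Aut[L' \tensor R]^R(L \tensor R)$ and any $R$-linear derivation $\theta$ of $L \tensor R$ of degree $n$ vanishing on $L' \tensor R$, the conjugate $\phi \after \theta \after \inv\phi$ is again such a derivation: it satisfies the Leibniz rule since $\phi$ preserves the Lie bracket, and it vanishes on $L' \tensor R$ because $\phi$ restricts to the identity there. Since each $L_k$ is finite-dimensional, there is a natural isomorphism $\Der(L \rel L')_n \tensor R \iso \Der^R(L \tensor R \rel L' \tensor R)_n$, so this stability on $R$-points transfers to stability of $\Der(L \rel L')_n \tensor R$ inside $\bigoplus_{k=1}^m \Hom(L_k, L_{k+n}) \tensor R$. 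Hence the conjugation action restricts to an algebraic representation on $\Der(L \rel L')_n$, as required. The main piece of bookkeeping is maintaining these functorial identifications over arbitrary base rings, which is needed to obtain a morphism of algebraic groups rather than merely a set-theoretic action on $\QQ$-points.
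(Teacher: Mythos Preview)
Your proof is correct and follows essentially the same approach as the paper: both establish finite-dimensionality via the embedding $\Der(L \rel L')_n \hookrightarrow \Hom(L_{\le m}, L_{\le m+n})$ (your $\bigoplus_{k=1}^m \Hom(L_k, L_{k+n})$), and both use the natural isomorphism $\Der(L \rel L')_n \tensor R \cong \Der^R(L \tensor R \rel L' \tensor R)_n$ together with the conjugation action over each $R$ to obtain the algebraic representation. The only cosmetic difference is that you first exhibit the ambient $\Hom$-space as an algebraic representation and then show the derivation subspace is stable, whereas the paper defines the action directly on the derivation space via the isomorphism; the content is the same.
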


\begin{proof}
  Write $m$ for the maximum of the degrees of the generators of $L$.
  We first note that the vector space $\Der(L \rel L')_n$ embeds into $\Hom (L_{\le m}, L_{\le m + n})$ since a derivation is determined by its values on the generators.
  Hence $\Der(L \rel L')_n$ is finite dimensional.
  
  Moreover there is, for $R \in \Algfg{\QQ}$, a natural isomorphism of $R$-modules
  \[ \Der(L \rel L')_n \tensor R  \xlongto{\iso}  \Der[R](L \tensor R \rel L' \tensor R)_n \]
  given by sending $\theta \tensor r$ to the $R$-linear derivation $\psi$ such that $\psi(x \tensor r') = \theta(x) \tensor r r'$.
  The target has a natural action of $\Aut[L' \tensor R]^R(L \tensor R)$ by conjugation.
  Via the isomorphism above this assembles into a morphism
  \[ \aAut[L'](L)  \longto  \aGL \bigl( \Der(L \rel L')_n \bigr) \]
  of algebraic groups.
\end{proof}

\subsection{The associated Lie algebras}

In this subsection, we recall identifications of the Lie algebras of the algebraic groups $\aAut[L_A](L_X)$ and $\aEaut[L_A](L_X)$.
This is due to Espic--Saleh, following Block--Lazarev \cite[(Proof of) Theorem~3.4]{BL}.

\begin{proposition}[Espic--Saleh] \label{prop:Lie_aEaut}
  Let $i \colon L_A \to L_X$ be a map of positively graded finitely generated quasi-free dg Lie algebras.
  Then there is an isomorphism of Lie algebras
  \begin{gather*}
    \Lie \bigl( \aAut[L_A](L_X) \bigr)  \xlongto{\iso}  \Cycles 0 \bigl( \Der(L_X \rel L_A) \bigr) \\
    \shortintertext{given by sending}
    \varphi \in \Lie \bigl( \aAut[L_A](L_X) \bigr) \subseteq \Aut[L_A \tensor \Qeps]^\Qeps \bigl( L_X \tensor \Qeps \bigr)
  \end{gather*}
  to the unique derivation $\delta \colon L_X \to L_X$ such that the equation
  \[ \varphi(x \tensor 1) = x \tensor 1 + \delta(x) \tensor \epsilon \]
  holds for all $x \in L_X$.
  
  Moreover, if $i$ is minimal quasi-free, then there exist unique dashed isomorphisms
  \[
  \begin{tikzcd}
    \Lie \bigl( \aNull[L_A](L_X) \bigr) \dar \rar[dashed]{\iso} & \Bound 0 \bigl( \Der(L_X \rel L_A) \bigr) \dar[hook]{\inc} \\
    \Lie \bigl( \aAut[L_A](L_X) \bigr) \dar[swap]{\Lie(\areal)} \rar{\iso} & \Cycles 0 \bigl( \Der(L_X \rel L_A) \bigr) \dar[two heads]{\pr} \\
    \Lie \bigl( \aEaut[L_A](L_X) \bigr) \rar[dashed]{\iso} & \Ho 0 \bigl( \Der(L_X \rel L_A) \bigr) 
  \end{tikzcd}
  \]
  making the diagram commute.
\end{proposition}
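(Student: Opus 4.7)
I would begin by unpacking the definition: $\Lie \bigl( \aAut[L_A](L_X) \bigr)$ is the kernel of the group homomorphism $\aAut[L_A](L_X)(\Qeps) \to \aAut[L_A](L_X)(\QQ)$ induced by $\epsilon \mapsto 0$. Any $\varphi$ in this kernel is a $\Qeps$-linear dg Lie algebra automorphism of $L_X \tensor \Qeps$ relative to $L_A \tensor \Qeps$ that reduces to the identity modulo $\epsilon$, and hence is uniquely determined by $\varphi(x \tensor 1) = x \tensor 1 + \delta(x) \tensor \epsilon$ for a degree-$0$ linear map $\delta \colon L_X \to L_X$. A direct check shows that $\varphi$ respects the Lie bracket iff $\delta$ is a derivation, commutes with the differential iff $\delta$ is a chain map, and is the identity on $L_A \tensor \Qeps$ iff $\delta$ vanishes on $L_A$; invertibility of $\varphi$ is automatic since $(\id + \epsilon \delta)(\id - \epsilon \delta) = \id$ because $\epsilon^2 = 0$. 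This yields the claimed bijection. For the Lie bracket, the embedding $\aAut[L_A](L_X) \hookrightarrow \aGL(L_{\le m})$ from the proof of \cref{prop:aAut_alg} together with \cref{lemma:Lie_GL} identifies the Lie bracket on $\Lie \bigl( \aAut[L_A](L_X) \bigr)$ with the graded commutator of derivations on $\Cycles 0 \bigl( \Der(L_X \rel L_A) \bigr)$.

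\textbf{Reduction in the minimal case.} Assume now that $i$ is minimal quasi-free. By \cref{prop:ES}, $\aNull[L_A](L_X)$ is a normal unipotent algebraic subgroup of $\aAut[L_A](L_X)$, with quotient $\aEaut[L_A](L_X)$. Since every affine algebraic group over $\QQ$ is smooth, applying $\Lie(\blank)$ to this short exact sequence yields a short exact sequence of Lie algebras, which together with the first isomorphism occupies the middle and left columns of the diagram; the right column is the canonical short exact sequence $0 \to \Bound 0 \to \Cycles 0 \to \Ho 0 \to 0$. It thus suffices to show that the first isomorphism restricts to an isomorphism $\Lie \bigl( \aNull[L_A](L_X) \bigr) \iso \Bound 0 \bigl( \Der(L_X \rel L_A) \bigr)$; the remaining identification on cohomology then follows, with uniqueness coming from commutativity of the diagram and the surjectivity/injectivity of its columns.

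\textbf{Identifying $\Lie(\aNull[L_A](L_X))$ with $\Bound 0$.} Under the first isomorphism, a cycle $\delta$ lies in $\Lie \bigl( \aNull[L_A](L_X) \bigr)$ iff $\varphi = \id + \epsilon \delta$ lies in $\aNull[L_A](L_X)(\Qeps)$, i.e., iff $\varphi \eq_{L_A \tensor \Qeps}^\Qeps \id$. By \cref{lemma:dgLie_homotopy}, this is equivalent to the existence of a $\Qeps$-linear dg Lie algebra map $h \colon L_X \tensor \Qeps \to L_X \tensor \Qeps \tensor \sForms[1]$, constant on $L_A \tensor \Qeps$, with endpoints $\id$ and $\varphi$ under $\ev_0$ and $\ev_1$. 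For $\delta = d(\theta)$ with $\theta \in \Der(L_X \rel L_A)_1$, an explicit such $h$ is given on $L_X$ by a formula of the form $x \tensor 1 \mapsto x \tensor 1 \tensor 1 + t \, \delta(x) \tensor \epsilon \tensor 1 \pm \theta(x) \tensor \epsilon \tensor dt$ (with signs chosen to match the paper's differential convention on $\Der$), establishing $\Bound 0 \subseteq \Lie(\aNull[L_A](L_X))$. For the reverse inclusion, the main technical obstacle is to extract a primitive $\theta$ from an arbitrary such $h$: writing $h(x \tensor 1) = a(x) + b(x) \tensor \epsilon$ with $a, b \colon L_X \to L_X \tensor \sForms[1]$, the dg Lie axioms force $a$ to be a loop of dg Lie algebra automorphisms at the identity and $b$ to be an $a$-derivation of an appropriate form, and the minimality of $i$ must be used to rectify $a$ to the constant identity loop, after which $b$ yields the desired $\theta$ with $d(\theta) = \delta$. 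An alternative, more structural route is to use that $\aNull[L_A](L_X)$ is unipotent and hence equals $\aexp$ of its Lie algebra (\cref{lemma:Lie_exp}), combined with \cref{prop:aEaut} to verify directly that the induced surjection $\Cycles 0 / \Bound 0 \twoheadrightarrow \Lie \bigl( \aEaut[L_A](L_X) \bigr)$ is injective.
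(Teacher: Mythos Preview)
Your argument for the first isomorphism is essentially the argument of Espic--Saleh that the paper cites, and your reduction in the minimal case matches the paper's: both use exactness of $\Lie$ on the short exact sequence $1 \to \aNull \to \aAut \to \aEaut \to 1$ to reduce everything to the identification $\Lie(\aNull[L_A](L_X)) = \Bound 0(\Der(L_X \rel L_A))$. Your forward inclusion $\Bound 0 \subseteq \Lie(\aNull)$ via the explicit homotopy $\id + \epsilon(t\,\delta + \theta\,dt)$ is correct.

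The gap is the reverse inclusion $\Lie(\aNull) \subseteq \Bound 0$, which you flag but do not resolve. In your first route, the step ``use minimality of $i$ to rectify $a$ to the constant identity loop'' is not justified as stated. The $\epsilon^0$-part $a_0$ of a homotopy $h$ over $\Qeps$ is a loop at $\id$ in $\map_{L_A}(L_X,L_X)$; its class in $\pi_1(\aut_{L_A}(L_X),\id) \cong \Ho 1(\Der(L_X \rel L_A))$ need not vanish, so it cannot in general be homotoped to the constant loop. One might instead try to compose $h$ with an $\sForms[1]$-linear inverse of $a_0$, but minimality only gives that $\ev_q \circ a_0$ is an isomorphism for each rational $q$ (being homotopic to $\id$), not that $a_0$ is invertible over $\sForms[1]$. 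Once $a_0$ \emph{is} constant, your sketch does go through: writing the $\epsilon^1$-part as $h_1 \in \Cycles 0(\Der(L_X \rel L_A)\otimes\sForms[1])$ yields $\delta$ as a boundary by integrating the $dt$-component. But getting to that point is exactly the content you are missing. Your second route via \cref{prop:aEaut} does not work as written: that result identifies $\QQ$-points of $\aEaut$, not its Lie algebra, and gives no direct handle on injectivity of $\Ho 0(\Der) \to \Lie(\aEaut)$.

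The paper sidesteps this entirely by citing \cite[Proof of Theorem~4.13]{ES} for the identification $\Lie(\aNull) \cong \Bound 0$, and then invoking right-exactness of $\Lie$ for the bottom isomorphism. So you have correctly located where the work is, but the argument you sketch for that step is incomplete.
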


\begin{proof}
  The first part can be shown as in \cite[Proof of Lemma~4.12]{ES}.
  The second part follows from \cite[Proof of Theorem~4.13]{ES} since the functor $\Lie$ is right exact (see e.g.\ \cite[Theorem~25.4.11]{TY}).
\end{proof}

In the following lemma, we record what the representations of $\aAut[L_A](L_X)$ on $L_X$ and $\Der(L_X \rel L_A)$ correspond to on the level of Lie algebras.

\begin{lemma} \label{lemma:Der_action_compat}
  Let $L_A \to L_X$ be a quasi-free map of positively graded finitely generated quasi-free dg Lie algebras, and let $n \in \ZZ$.
  Then the following diagram of Lie algebras commutes
  \[
  \begin{tikzcd}
    \Lie \bigl( \aGL \bigl( \Der(L_X \rel L_A)_n \bigr) \bigr) \dar{\iso} & \lar[swap]{\Lie(c)} \Lie \bigl( \aAut[L_A](L_X) \bigr) \rar{\Lie(a)} \dar[swap]{\iso} & \Lie \bigl( \aGL \bigl( (L_X)_n \bigr) \bigr) \dar{\iso} \\
    \gl \bigl( \Der(L_X \rel L_A)_n \bigr) & \lar[swap]{\ad} \Cycles 0 \bigl( \Der(L_X \rel L_A) \bigr) \rar & \gl \bigl( (L_X)_n \bigr)
  \end{tikzcd}
  \]
  where $a$ is the canonical representation of $\aAut[L_A](L_X)$ on $(L_X)_n$, $c$ is the representation of \cref{lemma:Der_algebraic}, the middle vertical map is the isomorphism of \cref{prop:Lie_aEaut}, the outer vertical maps are the isomorphisms of \cref{lemma:Lie_GL}, the bottom-left horizontal map is the adjoint action, and the bottom-right horizontal map is the canonical action of $\Der(L_X \rel L_A)_0$ on $(L_X)_n$.
\end{lemma}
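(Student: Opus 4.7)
The plan is to verify commutativity of the two squares directly, by tracing an element $\varphi \in \Lie(\aAut[L_A](L_X))$ through both routes and using the explicit descriptions of the isomorphisms of \cref{prop:Lie_aEaut} and \cref{lemma:Lie_GL}. Let $\delta \in \Cycles 0(\Der(L_X \rel L_A))$ be the derivation corresponding to $\varphi$, so that $\varphi(x \tensor 1) = x \tensor 1 + \delta(x) \tensor \epsilon$ for all $x \in L_X$. Since $\varphi$ is $\Qeps$-linear, a direct calculation gives $\varphi(x_1 \tensor 1 + x_2 \tensor \epsilon) = x_1 \tensor 1 + x_2 \tensor \epsilon + \delta(x_1) \tensor \epsilon$, and hence $\inv \varphi(y_1 \tensor 1 + y_2 \tensor \epsilon) = y_1 \tensor 1 + y_2 \tensor \epsilon - \delta(y_1) \tensor \epsilon$.

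First I would handle the right square: the representation $a$ is the restriction of the natural action of $\aAut[L_A](L_X)$ on $(L_X)_n$. By the description of the isomorphism in \cref{lemma:Lie_GL}, $\Lie(a)(\varphi)$ corresponds to the endomorphism $g$ of $(L_X)_n$ characterized by $\varphi(v \tensor 1) = v \tensor 1 + g(v) \tensor \epsilon$, which is precisely $\restrict \delta {(L_X)_n}$. This is by definition the canonical action of the degree-$0$ derivation $\delta$ on $(L_X)_n$, so the right square commutes.

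Next I would handle the left square. Under the natural isomorphism $\Psi_R$ from the proof of \cref{lemma:Der_algebraic}, a derivation $\theta \in \Der(L_X \rel L_A)_n$ corresponds to $\Psi_\Qeps(\theta \tensor 1) \in \Der[\Qeps]^\Qeps(L_X \tensor \Qeps \rel L_A \tensor \Qeps)_n$, and the conjugation action of $\varphi$ on $\theta \tensor 1$ is computed, for $x \in L_X$, as
\begin{align*}
\bigl( \varphi \after \Psi_\Qeps(\theta \tensor 1) \after \inv \varphi \bigr) (x \tensor 1) &= \varphi \bigl( \Psi_\Qeps(\theta \tensor 1) ( x \tensor 1 - \delta(x) \tensor \epsilon ) \bigr) \\
&= \varphi \bigl( \theta(x) \tensor 1 - \theta(\delta(x)) \tensor \epsilon \bigr) \\
&= \theta(x) \tensor 1 + \bigl( \delta(\theta(x)) - \theta(\delta(x)) \bigr) \tensor \epsilon \\
&= \Psi_\Qeps \bigl( \theta \tensor 1 + \liebr{\delta}{\theta} \tensor \epsilon \bigr) (x \tensor 1),
\end{align*}
using $\epsilon^2 = 0$ and that $\delta$ has degree $0$, so $\liebr{\delta}{\theta} = \delta \after \theta - \theta \after \delta$. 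Thus under the identification $\Psi_\Qeps$, the conjugation action of $\varphi$ sends $\theta \tensor 1$ to $\theta \tensor 1 + \ad[\delta](\theta) \tensor \epsilon$. Applying \cref{lemma:Lie_GL} again, this says that $\Lie(c)(\varphi)$ corresponds to the endomorphism $\ad[\delta]$ of $\Der(L_X \rel L_A)_n$, which is exactly the bottom-left composite. This proves commutativity of the left square.

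There is no real obstacle beyond being careful with the identifications; the proof is essentially a direct computation modulo $\epsilon^2$. The only point to take care with is that the calculation of $\inv \varphi$ must be performed in the free $\Qeps$-module $L_X \tensor \Qeps$ rather than just on elements of the form $x \tensor 1$, and that signs simplify because derivations in the image of $\Lie(\aAut[L_A](L_X))$ are necessarily of degree $0$ (as $\varphi$ preserves the homological grading of $L_X \tensor \Qeps$).
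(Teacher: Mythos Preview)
Your proof is correct and is precisely the detailed unwinding that the paper has in mind: the paper's proof simply reads ``This follows from chasing through the definitions,'' and your argument carries out that chase explicitly for both squares.
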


\begin{proof}
  This follows from chasing through the definitions.
\end{proof}

\subsection{Gluing constructions}

In this subsection, we prove that taking pushouts of maps of dg Lie algebras induces maps on the algebraic groups of (relative) automorphisms of these dg Lie algebras.

\begin{notation} \label{not:Lie_gluing}
  We fix quasi-free maps
  \[ L_X \xlongfrom{i} L_B \longfrom L_A \longto L_C \xlongto{j} L_Y \]
  of positively graded finitely generated quasi-free dg Lie algebras.
  Furthermore, we write
  \begin{alignat*}{2}
    L_R  &\defeq  L_B \cop_{L_A} L_C  &\qquad  R & \defeq  \nerve{L_B} \cop_{\nerve{L_A}} \nerve{L_C} \\
    L_P  &\defeq  L_X \cop_{L_A} L_Y  &\qquad  P & \defeq  \nerve{L_X} \cop_{\nerve{L_A}} \nerve{L_Y}
  \end{alignat*}
  for the respective pushout, as well as
  \[ r \colon R \to \nerve{L_R}  \qquad \text{and} \qquad  p \colon P \to \nerve{L_P} \]
  for the induced maps.
\end{notation}

\begin{lemma} \label{lemma:aAut_pushouts}
  In the situation of \cref{not:Lie_gluing}, there is a map of algebraic groups
  \[ \ol \Gamma  \colon  \aAut[L_B](L_X) \times \aAut[L_C](L_Y)  \longto  \aAut[L_R](L_P) \]
  given by taking pushouts of maps and using \cref{lemma:Lie_algebra_extension}.
\end{lemma}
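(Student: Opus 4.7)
The plan is to build $\ol \Gamma$ pointwise on $R \in \Algfg{\QQ}$ via the universal property of pushouts of dg Lie algebras over $R$, and then observe that the construction is automatically a natural transformation of functors $\Algfg{\QQ} \to \Grp$, which in the functor-of-points perspective is the same as a morphism of algebraic groups. The key input is \cref{lemma:Lie_algebra_extension}, which says that $\blank \tensor R \colon \dgLie \to \dgLie[R]$ preserves colimits; this provides canonical isomorphisms
\[ L_R \tensor R  \iso  (L_B \tensor R) \cop^R_{L_A \tensor R} (L_C \tensor R),   \qquad   L_P \tensor R  \iso  (L_X \tensor R) \cop^R_{L_A \tensor R} (L_Y \tensor R), \]
natural in $R$.

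Given a pair $(\phi, \psi) \in \aAut[L_B](L_X)(R) \times \aAut[L_C](L_Y)(R)$, both automorphisms restrict to the identity on $L_A \tensor R$, so the universal property applied to the pushout $L_P \tensor R$ yields a unique $R$-linear dg Lie algebra endomorphism $\ol \Gamma_R(\phi, \psi)$. Applying the same construction to $(\inv \phi, \inv \psi)$ and invoking uniqueness of the induced map shows that $\ol \Gamma_R(\phi, \psi)$ is an automorphism; a further application of uniqueness, to products and to the identity pair, shows that $\ol \Gamma_R$ is a group homomorphism. To verify that $\ol \Gamma_R(\phi, \psi)$ actually lands in $\aAut[L_R](L_P)(R)$, I would observe that the composites of $\ol \Gamma_R(\phi, \psi)$ with the two canonical maps $L_B \tensor R, L_C \tensor R \to L_R \tensor R \to L_P \tensor R$ coincide with the original inclusions (since $\phi$ is the identity on $L_B \tensor R$ and $\psi$ is the identity on $L_C \tensor R$), so the universal property of the pushout defining $L_R \tensor R$ forces the restriction of $\ol \Gamma_R(\phi, \psi)$ to $L_R \tensor R$ to be the identity. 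Naturality in $R$ is immediate from the naturality of the identifications displayed above and the uniqueness of the induced maps.

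I do not expect any substantial obstacle: the argument is purely a formal bookkeeping of universal properties. The only point that requires a little care is the identification of pushouts in $\dgLie[R]$ with base changes of pushouts in $\dgLie$, and this is exactly the content of \cref{lemma:Lie_algebra_extension}.
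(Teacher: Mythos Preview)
Your proposal is correct and follows essentially the same approach as the paper: define $\ol\Gamma_R$ via the universal property of pushouts in $\dgLie[R]$, using \cref{lemma:Lie_algebra_extension} to identify $L_P \tensor R$ with the pushout over $R$, and then verify naturality in $R$. The paper's proof is terser, recording only the naturality check (phrased as $\ol\Gamma_T(\phi \tensor[S] T, \psi \tensor[S] T) = \ol\Gamma_S(\phi,\psi) \tensor[S] T$, which follows from the natural isomorphism $(\blank \tensor[S] T) \circ (\blank \tensor S) \cong \blank \tensor T$), but your more explicit unpacking of the group-homomorphism and target-subgroup verifications is entirely in the same spirit.
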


\begin{proof}
  The claim is equivalent to $\ol \Gamma_T(\phi \tensor[S] T, \psi \tensor[S] T) = \ol \Gamma_S(\phi, \psi) \tensor[S] T$ for all morphisms $S \to T$ of $\Algfg{\QQ}$, $\phi \in \aAut[L_B]^R(L_X)(\QQ)(S)$, and $\psi \in \aAut[L_C](L_Y)(\QQ)(S)$.
  This is an elementary verification using the fact that there is a natural isomorphism $(\blank \tensor[S] T) \after (\blank \tensor S)  \iso  \blank \tensor T$ of functors from dg Lie algebras to dg Lie algebras over $T$.
\end{proof}

\begin{lemma} \label{lemma:pushout_indecomposables}
  In the situation of \cref{not:Lie_gluing}, the canonical map $\iota \colon L_R \to L_P$ is quasi-free between quasi-free dg Lie algebras and the canonical map of chain complexes
  \[ \indec[L_B](L_X)  \dirsum  \indec[L_C](L_Y)  \longto  \indec[L_R](L_P) \]
  is an isomorphism.
  In particular, if $i$ and $j$ are minimal, then so is $\iota$.
  
  Moreover, the following diagram of algebraic groups commutes
  \[
  \begin{tikzcd}[column sep = 15]
    \aAut[L_B](L_X) \times \aAut[L_C](L_Y) \ar{rr}{\ol \Gamma} \dar[swap]{\Koppa \times \Koppa} & & \aAut[L_R](L_P) \dar{\Koppa} \\
    \aAut \bigl( \indec[L_B](L_X) \bigr) \times \aAut \bigl( \indec[L_C](L_Y) \bigr) \rar{\dirsum} & \aAut \bigl( \indec[L_B](L_X) \dirsum \indec[L_C](L_Y) \bigr) \rar{\iso} & \aAut \bigl( \indec[L_R](L_P) \bigr)
  \end{tikzcd}
  \]
  where $\Koppa$ is the map of \cref{lemma:indecomposables_representation}.
\end{lemma}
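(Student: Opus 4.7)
The plan is to prove everything by making all the quasi-free structures explicit and reducing the statement to bookkeeping about generating graded vector spaces.

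First I would unfold the quasi-freeness of all the given maps: choose isomorphisms $L_B \iso L_A \cop \freelie(V_B)$, $L_C \iso L_A \cop \freelie(V_C)$, $L_X \iso L_B \cop \freelie(V_X)$, and $L_Y \iso L_C \cop \freelie(V_Y)$ of graded Lie algebras under the respective subalgebras. Combined, this gives $L_X \iso L_A \cop \freelie(V_B \dirsum V_X)$ and $L_Y \iso L_A \cop \freelie(V_C \dirsum V_Y)$ as graded Lie algebras under $L_A$. Since pushouts under $L_A$ of graded Lie algebras commute with coproducts of free factors, this yields isomorphisms
\[ L_R \iso L_A \cop \freelie(V_B \dirsum V_C), \qquad L_P \iso L_A \cop \freelie(V_B \dirsum V_C \dirsum V_X \dirsum V_Y) \]
as graded Lie algebras; in particular $L_R$ and $L_P$ are quasi-free, and $\iota$ is identified with the inclusion $L_R \to L_R \cop \freelie(V_X \dirsum V_Y)$, which is quasi-free. (Note that the differentials, which are inherited from $L_X$, $L_Y$, and $L_A$, do not interfere since quasi-freeness only concerns the underlying graded Lie structure.)

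Next, applying \cref{lemma:quasi-free_indec} three times gives canonical isomorphisms of graded vector spaces $\indec[L_B](L_X) \iso V_X$, $\indec[L_C](L_Y) \iso V_Y$, and $\indec[L_R](L_P) \iso V_X \dirsum V_Y$, under which the canonical comparison map becomes the identity. This is the second assertion. Since the differentials on $\indec[L_B](L_X)$ and $\indec[L_C](L_Y)$ agree with the restrictions of the differential on $\indec[L_R](L_P)$ (both are induced from $L_P$ by projection onto the relative abelianization), the hypothesis that $i$ and $j$ are minimal immediately forces the differential on $\indec[L_R](L_P)$ to vanish, proving that $\iota$ is minimal.

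For the final commutative diagram, I would argue pointwise: fix $R \in \Algfg{\QQ}$ and a pair $(\phi, \psi)$ in the top-left corner, and unwind that $\ol\Gamma_R(\phi, \psi)$ is the unique $R$-linear automorphism of $L_P \tensor R$ restricting to $\phi$ on $L_X \tensor R$ and to $\psi$ on $L_Y \tensor R$. Passing to indecomposables and using the identification $\indec[L_R](L_P) \iso \indec[L_B](L_X) \dirsum \indec[L_C](L_Y)$ from the previous step, the induced map is exactly $\Koppa_R(\phi) \dirsum \Koppa_R(\psi)$, since these are the maps on generators modulo $L_B$ and $L_C$. This is the required naturality. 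The main obstacle, such as it is, is purely notational bookkeeping: one has to track the identifications carefully to see that the two paths around the square really correspond to the same $R$-module endomorphism of $V_X \dirsum V_Y$, but no substantive calculation is involved beyond the preceding identification of indecomposables.
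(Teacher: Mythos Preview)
Your proposal is correct and follows essentially the same approach as the paper: choose quasi-free presentations, deduce $L_P \iso L_R \cop \freelie(V_X \dirsum V_Y)$, apply \cref{lemma:quasi-free_indec} to identify the indecomposables, and verify the diagram by unwinding definitions. The paper is slightly terser (it does not unfold $L_R$ all the way down to $L_A$, saying only ``by a similar argument''), but your extra detail is harmless.
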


\begin{proof}
  For the first claim, pick isomorphisms $L_X \iso L_B \cop \freelie (V)$ and $L_Y \iso L_C \cop \freelie (W)$ of graded Lie algebras under $L_B$ and $L_C$, respectively.
  Then there is an induced isomorphism $L_P \iso L_R \cop \freelie (V \dirsum W)$ of graded Lie algebras under $L_R$; hence $\iota$ is quasi-free and, by a similar argument, so is $L_R$.
  Moreover, the following diagram of graded vector spaces commutes
  \[
  \begin{tikzcd}
    V \dirsum W \dar[swap]{\iso} \drar[bend left = 17]{\iso} & \\
    \indec[L_B](L_X) \dirsum \indec[L_C](L_Y) \rar & \indec[L_R](L_P)
  \end{tikzcd}
  \]
  where the two isomorphisms are provided by \cref{lemma:quasi-free_indec}.
  Hence the horizontal map is an isomorphism as well.
  The second claim follows from an elementary verification.
\end{proof}

\begin{lemma} \label{lemma:aEaut_pushouts}
  We use \cref{not:Lie_gluing} and assume that $i$ and $j$ are minimal.
  Then there is a unique map $\ol \gamma$ of algebraic groups such that the diagram
  \[
  \begin{tikzcd}
    \aAut[L_B](L_X) \times \aAut[L_C](L_Y) \rar{\ol \Gamma} \dar[swap]{\areal \times \areal} & \aAut[L_R](L_P) \dar{\areal} \\
    \aEaut[L_B](L_X) \times \aEaut[L_C](L_Y) \rar{\ol \gamma} & \aEaut[L_R](L_P)
  \end{tikzcd}
  \]
  commutes.
  Moreover, the following diagram of groups commutes
  \[
  \begin{tikzcd}
    \aEaut[L_B](L_X)(\QQ) \times \aEaut[L_C](L_Y)(\QQ) \rar{\ol \gamma_\QQ} \dar[swap]{\iso} & \aEaut[L_R](L_P)(\QQ) \dar{\iso} \\
    \Eaut[\nerve{L_B}] \bigl( \nerve{L_X} \bigr) \times \Eaut[\nerve{L_C}] \bigl( \nerve{L_Y} \bigr) \rar{\gamma} & \Eaut[\nerve{L_R}] \bigl( \nerve{L_P} \bigr)
  \end{tikzcd}
  \]
  where $\gamma$ is induced by the map of \cref{lemma:aut_pushout_zig-zag} (using \cref{lemma:MC_pushouts,lemma:cofibration_pushouts_special}) and the vertical isomorphisms are those of \cref{prop:aEaut}.
\end{lemma}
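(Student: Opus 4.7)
My plan has two parts, matching the two diagrams in the statement.

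First, for existence and uniqueness of $\ol\gamma$, I would use that by \cref{prop:ES} the map $\areal \times \areal$ is a quotient of algebraic groups with kernel the normal unipotent subgroup $\aNull[L_B](L_X) \times \aNull[L_C](L_Y)$. Uniqueness of $\ol\gamma$ is then automatic, while existence reduces to verifying that $\ol\Gamma$ carries this kernel into $\aNull[L_R](L_P)$. I would check this on $R$-points: for $R \in \Algfg{\QQ}$ and $(\phi, \psi)$ in the kernel over $R$, \cref{lemma:dgLie_homotopy} furnishes $R$-linear dg Lie homotopies $h_\phi \colon L_X \tensor R \to (L_X \tensor R) \tensor_\QQ \sForms[1]$ and $h_\psi \colon L_Y \tensor R \to (L_Y \tensor R) \tensor_\QQ \sForms[1]$ that are constant on $L_B \tensor R$ and $L_C \tensor R$ respectively, and hence both constant on $L_A \tensor R$. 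Post-composing each with the natural inclusion into $(L_P \tensor R) \tensor_\QQ \sForms[1]$ and invoking the universal property of the pushout $L_P \tensor R$ then produces a single map $h \colon L_P \tensor R \to (L_P \tensor R) \tensor_\QQ \sForms[1]$ which, by inspection of endpoints, is a homotopy from $\id$ to $\ol\Gamma_R(\phi, \psi)$ constant on $L_R \tensor R$, exhibiting $\ol\Gamma_R(\phi, \psi) \in \aNull[L_R \tensor R]^R(L_P \tensor R)$.

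For the second diagram, I would verify commutativity on representatives. Tracing $([\phi], [\psi])$ with $\phi \in \Aut[L_B](L_X)$ and $\psi \in \Aut[L_C](L_Y)$ through $\ol\gamma_\QQ$ and the right-hand isomorphism of \cref{prop:aEaut} yields $[\nerve{\phi \cop_{\id[L_A]} \psi}] \in \Eaut[\nerve{L_R}](\nerve{L_P})$. On the other hand, by the construction of $\gamma$ from \cref{lemma:aut_pushout_zig-zag} (applied using the cofibrations and rational equivalences $r \colon R \to \nerve{L_R}$ and $p \colon P \to \nerve{L_P}$ of \cref{lemma:MC_pushouts}), the class $\gamma([\nerve\phi], [\nerve\psi])$ is characterized as the homotopy class relative to $\nerve{L_R}$ of the unique self-equivalence $F$ of $\nerve{L_P}$ satisfying $F \after p \eq \nerve\phi \cop_{\nerve{L_A}} \nerve\psi$. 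Since $\nerve{\blank}$ preserves products (\cref{lemma:exp_MC_products}) and the comparison map $p$ is natural in the dg Lie algebra data, the composite $\nerve{\phi \cop_{\id[L_A]} \psi} \after p$ equals $\nerve\phi \cop_{\nerve{L_A}} \nerve\psi$ on the nose, so $\nerve{\phi \cop_{\id[L_A]} \psi}$ is a valid choice of $F$ and the two sides of the diagram agree.

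The main obstacle I anticipate is aligning the implicit characterization of $\gamma$ from \cref{lemma:aut_pushout_zig-zag}---which proceeds via the cofibration $p$ and a uniqueness-up-to-homotopy argument---with the direct representative $\nerve{\phi \cop_{\id[L_A]} \psi}$. The essential point is that naturality of the comparison $p$ in the dg Lie algebra data translates the pushout square defining $\phi \cop_{\id[L_A]} \psi$ into the simplicial pushout $\nerve\phi \cop_{\nerve{L_A}} \nerve\psi$ after composing with $p$; carrying this through carefully makes the two sides of the lower square match.
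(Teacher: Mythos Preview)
Your proposal is correct and follows essentially the same approach as the paper: for the first part you glue relative homotopies over the pushout (the paper invokes \cref{lemma:dgLie_homotopy} in one line), and for the second part you exhibit $(\nerve\phi \cop_{\nerve{L_A}} \nerve\psi,\ \nerve{\phi \cop_{\id[L_A]} \psi})$ as an explicit element of $\aut[R](p)$, which is exactly the diagonal map the paper writes down.

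Two minor points to clean up. First, the equation ``$F \after p \eq \nerve\phi \cop_{\nerve{L_A}} \nerve\psi$'' (and the parallel claim for $\nerve{\phi \cop_{\id[L_A]} \psi} \after p$) is type-incorrect as written: the left side lands in $\nerve{L_P}$ and the right side in $P$. What you mean, and what naturality of $p$ gives, is $F \after p = p \after (\nerve\phi \cop_{\nerve{L_A}} \nerve\psi)$. Second, the citation of \cref{lemma:exp_MC_products} (preservation of products) is not what is being used here; the relevant fact is simply naturality of the comparison map $p$ in the pushout diagram, which follows from functoriality of $\nerve\blank$ and the universal property. The paper also makes explicit the final step of the zig-zag (passing from $\aut[R](\nerve{L_P})$ back to $\aut[\nerve{L_R}](\nerve{L_P})$), which in your case is immediate since $\nerve{\phi \cop_{\id[L_A]} \psi}$ already fixes $\nerve{L_R}$.
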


\begin{proof}
  For the first statement we need to prove that, when $\phi \eq_{L_B} \phi'$ and $\psi \eq_{L_C} \psi'$, we have that $\phi \cop_{L_A} \psi$ and $\phi' \cop_{L_A} \psi'$ are simplicially homotopic relative to $L_R$ (and similarly over any commutative $\QQ$-algebra).
  This follows from \cref{lemma:dgLie_homotopy}.
  
  The second statement follows from \cref{prop:aEaut} and the fact that the diagram of monoids
  \[
  \begin{tikzcd}
    \Aut[L_B](L_X) \times \Aut[L_C](L_Y) \ar{rr}{\ol \Gamma_\QQ} \dar[swap]{\real \times \real} \ar[bend left, start anchor = -5]{ddr} &[-10] &[-20] \Aut[L_R](L_P) \dar{\real} \\
    \Eaut[\nerve{L_B}] \bigl( \nerve{L_X} \bigr) \times \Eaut[\nerve{L_C}] \bigl( \nerve{L_Y} \bigr) \dar & & \Eaut[\nerve{L_R}] \bigl( \nerve{L_P} \bigr) \dar{\iso} \\
    \Eraut[R] ( P ) & \lar[swap]{\iso} \Eraut[R] ( p ) \rar &  \Eaut[R] \bigl( \nerve{L_P} \bigr)
  \end{tikzcd}
  \]
  commutes.
  The central diagonal map is obtained by sending $(\phi, \psi)$ to the commutative diagram
  \[
  \begin{tikzcd}
    P \rar{\nerve{\phi} \cop \nerve{\psi}} \dar[swap]{p} &[30] P \dar{p} \\
    \nerve{L_P} \rar{\nerve{\phi \cop \psi}} & \nerve{L_P}
  \end{tikzcd}
  \]
  considered as a map $p \to p$.
\end{proof}

\subsection{Forgetful maps} \label{sec:alg_forget}

Consider a sequence $A \to B \to X$ of two cofibrations of spaces.
Then there is a forgetful map $\Eaut[\rat B](\rat X) \to \Eaut[\rat A](\rat X)$ of groups.
In this subsection, we lift this to a map of algebraic groups.
For this, our need to work with minimal models in the subsections above is significant: given two minimal maps of dg Lie algebras $L_A \to L_B \to L_X$, the composite $L_A \to L_X$ does not need to be minimal.
If we choose a minimal model $L_A \to L_X'$ of this map, there is not necessarily a canonical map $\Aut[L_B](L_X) \to \Aut[L_A](L_X')$.
This leads to relevant restrictions down the line.

%\begin{remark} \label{rem:w/o_minimality}
%  One possible approach to getting rid of the minimality condition could be to consider the whole ``group-like simplicial algebraic monoid'' of self-equivalences of $L_X$ relative to $L_A$, and some kind of homotopical algebraic representation of it on $\Der(L_X \rel L_A)$, instead of the strict algebraic group $\aAut[L_A](L_X)$.
%  We do not try to pursue this in the present article.
%\end{remark}

\begin{definition}
  Let the following be a commutative diagram of positively graded dg Lie algebras
  \[
  \begin{tikzcd}
    L_A \rar \dar[swap]{a} & L_X \dar{f} \\
    L_B \rar & L_Y \rar{\rho} & \Pi
  \end{tikzcd}
  \]
  such that $\Pi$ is of finite type and all other objects are finitely generated and quasi-free.
  Then we denote by
  \[ \aAut[a](f)_\rho  \subseteq  \aAut[L_A](L_X)_{\rho \after f} \times \aAut[L_B](L_Y)_\rho \]
  the algebraic subgroup consisting, at $R \in \Algfg{\QQ}$, of those pairs $(\phi, \psi)$ such that $(f \tensor R) \after \phi = \psi \after (f \tensor R)$.
  We write $\Aut[a](f)_\rho$ for the $\QQ$-points of this algebraic group.
  When $\rho$ is the trivial map we omit it from the notation.
\end{definition}

Note that $\aAut[a](f)_\rho$ is indeed an algebraic group since it is the stabilizer of the element $f$ under the algebraic representation of $\aAut[L_A](L_X)_{\rho \after f} \times \aAut[L_B](L_Y)_\rho$ on $\Hom(L_X, L_Y)$ (cf.\ the proof of \cref{prop:ES}).
The following lemma is a generalization of \cite[Theorem~4.15]{ES}.

\begin{lemma} \label{lemma:aEaut_forget}
  Let the following be a commutative diagram of positively graded finitely generated quasi-free dg Lie algebras
  \[
  \begin{tikzcd}
    L_A \ar[bend right = 15]{drr}[swap]{j} \rar{k} & L_B \rar{i} & L_X \\
    & & L'_X \uar[swap]{m}
  \end{tikzcd}
  \]
  such that $k$, $i$, and $j$ are quasi-free.
  Furthermore assume that $i$ and $j$ are minimal and that $m$ is a quasi-isomorphism.
  Then there is a canonical map $\ol \phi$ of algebraic groups such that the diagram
  \[
  \begin{tikzcd}
    \aAut[L_B](L_X) \dar[swap]{\areal} & \lar \aAut[k](m) \rar & \aAut[L_A](L'_X) \dar{\areal} \\
    \aEaut[L_B](L_X) \ar{rr}{\ol \phi} & & \aEaut[L_A](L'_X)
  \end{tikzcd}
  \]
  of algebraic groups and the diagram
  \[
  \begin{tikzcd}
    \aEaut[L_B](L_X)(\QQ) \ar{rr}{\ol \phi_\QQ} \dar[swap]{\iso} & & \aEaut[L_A](L'_X)(\QQ) \dar{\iso} \\
    \Eaut[\nerve{L_B}] \bigl( \nerve{L_X} \bigr) \rar & \Eaut[\nerve{L_A}] \bigl( \nerve{L_X} \bigr) & \lar[swap]{\iso} \Eaut[\nerve{L_A}] \bigl( \nerve{L'_X} \bigr)
  \end{tikzcd}
  \]
  of groups commute, where in the latter diagram the vertical isomorphisms are those of \cref{prop:aEaut}, and the bottom right-hand horizontal isomorphism is the one of \cref{lemma:Eaut_map} associated to $m$.
\end{lemma}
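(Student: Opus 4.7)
The plan is to construct $\ol \phi$ from the formula $\widetilde \phi(\psi) \defeq n \after \psi \after m$, where $n \colon L_X \to L'_X$ is a simplicial homotopy inverse of $m$ under $L_A$, fixed once and for all. Such $n$ exists and is unique up to $L_A$-homotopy by \cref{rem:dgLie_homotopy_inverse}, since $m$ is a quasi-isomorphism between quasi-free positively graded dg Lie algebras under $L_A$.

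First I will argue that $\widetilde \phi$ defines a morphism of algebraic groups $\aAut[L_B](L_X) \to \aAut[L_A](L'_X)$ via $\psi \mapsto (n \tensor R) \after \psi \after (m \tensor R)$ on $R$-points. Algebraicity of composition is clear, so the content lies in showing that $n \after \psi \after m$ always lands in $\aAut[L_A](L'_X)(R)$. Since $R$ is flat over $\QQ$ and $m, n$ are quasi-isomorphisms over $\QQ$, their scalar extensions to $R$ are quasi-isomorphisms as well, so $n \after \psi \after m$ is an $L_A \tensor R$-linear quasi-isomorphism of $L'_X \tensor R$. Using \cref{lemma:quasi-free_indec} and flatness, $L_A \tensor R \to L'_X \tensor R$ remains minimal quasi-free over $R$, and an $R$-linear version of the relative minimal model uniqueness of \cite[Theorem~3.8]{ES} forces every such quasi-isomorphism to be an automorphism; this can be verified by induction on the word-length filtration of $L'_X \tensor R$ over $L_A \tensor R$, exactly as in the proof of \cref{lemma:indecomposables_representation}. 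The composite $\areal \after \widetilde \phi$ then vanishes on $\aNull[L_B](L_X)$: if $\psi \eq_{L_B \tensor R}^R \id$, then restriction along $k$ gives $\psi \after m \eq_{L_A \tensor R}^R m$, so $n \after \psi \after m \eq_{L_A \tensor R}^R n \after m \eq_{L_A \tensor R}^R \id$. By the universal property of the quotient $\areal$ this factors to yield the desired morphism $\ol \phi \colon \aEaut[L_B](L_X) \to \aEaut[L_A](L'_X)$, independently of the choice of $n$.

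For commutativity of the first diagram, let $(\phi, \psi) \in \aAut[k](m)(R)$, so $m \after \phi = \psi \after m$. Then $\widetilde \phi(\psi) = n \after m \after \phi \eq_{L_A \tensor R}^R \phi$, hence $\ol \phi([\psi]) = [\phi]$. Uniqueness of $\ol \phi$ subject to this condition follows by showing that $\aAut[k](m) \to \aEaut[L_B](L_X)$ is surjective on $\overline{\QQ}$-points (whence faithfully flat, by smoothness of our algebraic groups in characteristic $0$): for any $\psi$ over $\overline{\QQ}$ set $\phi \defeq \widetilde \phi_{\overline{\QQ}}(\psi)$ and use simplicial lifting in \cref{lemma:dgLie_model_structure} to strictify the $L_A$-homotopy $m \after \phi \eq_{L_A \tensor \overline{\QQ}} \psi \after m$ into an equality $m \after \phi = \psi' \after m$ with $\psi' \eq_{L_B \tensor \overline{\QQ}} \psi$. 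For the second diagram, apply $\nerve \blank$ to the formula: $\nerve n$ is a simplicial homotopy inverse of $\nerve m$ under $\nerve{L_A}$ by functoriality, hence represents the inverse of the isomorphism of \cref{lemma:Eaut_map} induced by $\nerve m$, and the assignment $[\nerve \psi] \mapsto [\nerve n \after \nerve \psi \after \nerve m]$ is then identified under \cref{prop:aEaut} with the composite of the forgetful map $\Eaut[\nerve{L_B}](\nerve{L_X}) \to \Eaut[\nerve{L_A}](\nerve{L_X})$ with this inverse.

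The main obstacle I anticipate is the $R$-linear relative minimal model uniqueness invoked above, namely that any $L_A \tensor R$-linear quasi-isomorphism of $L'_X \tensor R$ is an automorphism, functorially in $R$. The classical argument over $\QQ$ uses that the induced map on $\indec[L_A](L'_X)$ is a quasi-isomorphism of complexes with trivial differential, hence an isomorphism, and then lifts to an isomorphism on each finite word-length stage. Over $R$ the relative indecomposables remain $V \tensor R$ with trivial differential, a quasi-isomorphism pulls back to an isomorphism of $R$-modules, and the inductive argument along the word-length filtration is $R$-linear, so the same strategy applies.
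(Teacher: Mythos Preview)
Your approach is essentially the same as the paper's: choose a homotopy inverse $n$ of $m$ under $L_A$, conjugate by $(m,n)$, and pass to the quotient $\aEaut$. However, there is a genuine error in your execution.

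You claim that $\widetilde\phi\colon \psi \mapsto (n\otimes R)\circ\psi\circ(m\otimes R)$ defines a morphism of \emph{algebraic groups} $\aAut[L_B](L_X)\to\aAut[L_A](L'_X)$. It does not: for $\psi_1,\psi_2$ we have
\[
\widetilde\phi(\psi_1)\circ\widetilde\phi(\psi_2)=n\,\psi_1\,m\,n\,\psi_2\,m,
\qquad
\widetilde\phi(\psi_1\psi_2)=n\,\psi_1\,\psi_2\,m,
\]
and these differ by the insertion of $m\circ n$, which is only \emph{homotopic} to $\id$ under $L_A$, not equal to it. So $\widetilde\phi$ is merely a natural transformation of set-valued functors (equivalently, a morphism of varieties), and the group-homomorphism property only holds after composing with $\areal$. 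This is exactly how the paper proceeds: it first checks that $\widetilde\phi_R(\psi)$ lands in automorphisms (via \cite[Theorem~3.8]{ES}, as you do), and then observes that $m\circ n\eq_{L_A}\id$ implies $\areal\circ\widetilde\phi$ is a group homomorphism, that it is independent of the choice of $n$, and that it kills $\aNull[L_B](L_X)$. Your argument has all of these ingredients, so the fix is just to drop the false claim and argue directly that $\areal\circ\widetilde\phi$ is multiplicative.

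A smaller point: the lemma asserts a \emph{canonical} map, and the paper establishes canonicity by showing independence of $n$; it does not prove uniqueness subject to the first diagram commuting. Your uniqueness argument via surjectivity on $\overline{\QQ}$-points is extra, and the strictification step (modifying $\psi$ to $\psi'$ with $m\circ\phi=\psi'\circ m$ strictly) is not obviously a lifting problem of the form covered by \cref{lemma:dgLie_model_structure}, since $m$ need not be a fibration. You may drop this paragraph without loss.
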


\begin{proof}
  By \cref{rem:dgLie_homotopy_inverse}, the map $m$ has a simplicial homotopy inverse $n$ relative to $L_A$; it is unique up to simplicial homotopy relative to $L_A$ and again a quasi-isomorphism.
  We first define a natural transformation $\ol \Phi \colon \aAut[L_B](L_X) \to \aEaut[L_A](L'_X)$ of the underlying functors to $\Set$.
  To this end, let $R \in \Algfg{\QQ}$ and $f \in \aAut[L_B](L_X)(R) = \aAut[L_B \tensor R](L_X \tensor R)$.
  Then we set $\ol \Phi_R(f) = (n \tensor \id[R]) \after f \after (m \tensor \id[R])$; it is clear that this is indeed natural in $R$.
  Note that $\ol \Phi_R(f)$ is a quasi-isomorphism and hence an isomorphism by \cite[Theorem~3.8]{ES} since $L_A \to L_X'$ is minimal.
  Moreover, the natural transformation $\areal \after \ol \Phi$ is actually a map of algebraic groups since $m \after n \eq_{L_A} \id[L_X]$ implies that $(m \tensor \id[R]) \after (n \tensor \id[R]) \eq_{L_A \tensor R} \id[L_X \tensor R]$.
  By the same argument the map $\areal \after \ol \Phi$ does not depend on the choice of $n$, the image of $\aNull[L_B](L_X)$ under $\areal \after \ol \Phi$ is trivial, and the following diagram of algebraic groups
  \[
  \begin{tikzcd}
    \aAut[k](m) \rar \dar & \aAut[L_A](L'_X) \dar{\areal} \\
    \aAut[L_B](L_X) \rar{{\areal} \after \ol \Phi} & \aEaut[L_A](L'_X)
  \end{tikzcd}
  \]
  commutes.
  This implies that $\areal \after \ol \Phi$ induces a map $\ol \phi$ of algebraic groups such that the first diagram commutes.
  
  That the second diagram also commutes follows from \cref{prop:aEaut} and the fact that the following diagram of sets commutes
  \[
  \begin{tikzcd}
    \Aut[L_B](L_X) \ar{rr}{\ol \Phi_\QQ} \dar[swap]{\real} & & \Aut[L_A](L'_X) \ar{dd}{\real} \\
    \Eaut[\nerve{L_B}] \bigl( \nerve{L_X} \bigr) \dar & & \\
    \Eaut[\nerve{L_A}] \bigl( \nerve{L_X} \bigr) \rar{\iso} & \hmapeq[\nerve{L_A}] {\nerve{L'_X}} {\nerve{L_X}} & \lar[swap]{\iso} \Eaut[\nerve{L_A}] \bigl( \nerve{L'_X} \bigr)
  \end{tikzcd}
  \]
  since $\nerve{m \after n}$ is simplicially homotopic to the identity relative to $\nerve{L_A}$ (for example by \cref{lemma:dgLie_homotopy} and \cite[Lemma~7.3.8]{Hir}).
\end{proof}

%\begin{lemma}
%  In the situation of \cref{lemma:aEaut_forget}, the following diagram of vector spaces commutes
%  \[
%  \begin{tikzcd}
%    \Lie \bigl( \aEaut[L_B](L_X) \bigr) \dar{\iso}[swap]{\Theta} \ar{rr}{\Lie(\ol \phi)} & & \Lie \bigl( \aEaut[L_A](L_X') \bigr) \dar{\Theta'}[swap]{\iso} \\
%    \Ho 0 \bigl( \Der(L_X \rel L_B) \bigr) \rar{m^*} & \Ho 0 \bigl( \Derrel m (L_X', L_X \rel L_A) \bigr) & \lar[swap]{m_*} \Ho 0 \bigl( \Der(L_X' \rel L_A) \bigr)
%  \end{tikzcd}
%  \]
%  where the vertical isomorphisms are those of \cref{prop:Lie_aEaut}.
%\end{lemma}
%
%\begin{proof}
%  As in the proof of \cref{lemma:aEaut_forget}, let $n$ be a simplicial homotopy inverse of $m$ relative to $L_A$.
%\end{proof}

\section{Equivariant models}

This section constitutes the main part of the paper.
We provide rational models for classifying spaces $\B \bdlaut{A}{B}{K}(\xi)$ of self-equivalences of bundles and prove that, under certain conditions, these models are compatible with gluing constructions and restricting $A$ and $B$ to smaller spaces.
We furthermore use \cref{sec:block_prelim} to obtain similar results for the classifying spaces $\B \BlDiff[\bdry](M)$ of block diffeomorphisms of high-dimensional manifolds.

\subsection{Nilpotence of relative self-equivalences of bundles}

In this subsection, we prove that the space $\B \bdlaut{A}{B}{K}(\xi)_G$ of \cref{def:bdlaut} is nilpotent (resp.\ virtually nilpotent) when the action of $G$ on the integral (resp.\ rational) homology of the base space $X$ of $\xi$ is nilpotent.
This is the case when $G$ is contained in a unipotent algebraic subgroup of $\Eaut[\rat A](\rat X)$, which is the situation we will apply these results to.
These statements are analogous to, and their proofs use, results of Dror--Zabrodsky \cite{DZ} and Berglund--Zeman \cite[§3.2]{BZ} for $\B \aut(X)$.

\begin{lemma} \label{lemma:Baut_nilpotent}
  Let $i \colon A \to X$ be a cofibration of simply connected Kan complexes of the homotopy types of finite-dimensional CW-complexes, and let $G \subseteq \Eaut[A](X)$ be a subgroup such that its action on $\Ho n (X, A; \ZZ)$ is nilpotent for all $n$.
  Then $\B \aut[A](X)_G$ is nilpotent (recall from \cref{not:components} that $\aut[A](X)_G$ denotes the components of $\aut[A](X)$ belonging to $G$).
\end{lemma}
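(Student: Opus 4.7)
The approach is via the principal fibration of classifying spaces
\[ \B \aut[A](X)_{\id}  \longto  \B \aut[A](X)_G  \longto  \B G \]
obtained by delooping the short exact sequence of group-like simplicial monoids $\aut[A](X)_{\id} \to \aut[A](X)_G \to G$, where $\aut[A](X)_{\id}$ is the identity component. The fiber is simply connected and hence nilpotent, so it suffices to prove (i) that $G$ is a nilpotent group, and (ii) that for each $n \ge 2$ the homotopy action of $\hg 1(\B G) = G$ on $\hg n(\B \aut[A](X)_{\id}) \iso \hg{n-1}(\aut[A](X), \id)$ is nilpotent. Standard naturality identifies this action with the restriction of the conjugation action of $\hg 0(\aut[A](X))$ on higher homotopy groups.

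For (i), I would factor $G$ through its image in $\prod_n \Aut(\Ho n(X, A; \ZZ))$. The image is nilpotent, since a nilpotent action on a finitely generated abelian group forces the image in the automorphism group to lie in a unipotent (hence nilpotent) subgroup, and only finitely many $\Ho n(X, A; \ZZ)$ are non-zero. The kernel consists of self-equivalences of $X$ relative to $A$ acting trivially on $\Ho *(X, A; \ZZ)$, and its nilpotence is a relative version of Dror--Zabrodsky's theorem \cite{DZ}, which one may obtain either by a Sullivan-model argument in the spirit of \cite[§3.2]{BZ} adapted to the relative setting or, rationally, by identifying the kernel up to finite index with the $\QQ$-points of a unipotent algebraic subgroup of $\aEaut[L_A](L_X)$ via \cref{prop:ES} and \cref{prop:arithmetic}. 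Thus $G$ is an extension of nilpotent groups, hence nilpotent.

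For (ii), I would apply the Federer spectral sequence \cref{prop:Federer} to $\map[A](X, X)$ at the basepoint $\id$, using that $X$ is simply connected. Finite-dimensionality of $(X, A)$ yields complete convergence in positive total degree, with $E_2^{p, q} = \Coho p (X, A; \hg q(X))$ abutting to $\hg{q-p}(\aut[A](X), \id)$. Functoriality under the triple $(g, g, g)$ for $g \in G$ makes the entire spectral sequence $G$-equivariant under conjugation, and on $E_2$ the $G$-action is the diagonal of the natural actions on $\Coho *(X, A; \ZZ)$ and on $\hg *(X)$. The first is nilpotent by hypothesis together with universal coefficients; the second is nilpotent because $G$ acts trivially on $A$, so the nilpotent action on $\Ho *(X, A; \ZZ)$ yields, via the long exact sequence of the pair, a nilpotent action on $\Ho *(X; \ZZ)$, whence a nilpotent action on $\hg *(X)$ by Dror--Zabrodsky \cite{DZ}. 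Closure of nilpotent actions under tensor products, subquotients, and extensions then propagates nilpotence from the $E_2$-page through all subsequent pages to $E_\infty$ and to the abutment, giving (ii).

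The principal technical obstacle will be the nilpotence of the kernel in (i): the absolute version is Dror--Zabrodsky, but the relative statement requires some care, since self-equivalences of $X$ relative to $A$ do not in general embed into the self-equivalences of any single absolute space and so one must adapt the classical argument rather than quote it directly.
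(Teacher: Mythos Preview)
Your strategy fibers $\B\aut[A](X)_G$ over $\B G$, whereas the paper fibers (a slight enlargement of) $\B\aut[A](X)_G$ over $\B\aut(X)_{G'}$, where $G'$ is the image of $G$ in $\Eaut(X)$. This change of base is the key point, because your step~(i) contains a genuine gap.

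You conclude that $G$ is nilpotent because it is an extension of nilpotent groups. That inference is false in general: $S_3$ is an extension of $\ZZ/3$ by $\ZZ/2$. What you would actually need is that the conjugation action of $G$ on the kernel is nilpotent, and you do not address this; the ``relative Dror--Zabrodsky'' you invoke for the kernel is essentially the $\hg 1$-part of the very lemma you are trying to prove. Your fallback via the algebraic-group machinery (\cref{prop:ES}, \cref{prop:arithmetic}) does not match the hypotheses either: those results require \emph{finite} CW-complexes and rational coefficients, while the lemma assumes only finite-\emph{dimensional} complexes and integral homology.

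The paper avoids this entirely. Writing $\upsilon\colon\Eaut[A](X)\to\Eaut(X)$ for the forgetful map and $G'=\upsilon(G)$, the long exact sequence of the pair $(X,A)$ shows that $G'$ acts nilpotently on $\Ho n(X;\ZZ)$, so the \emph{absolute} Dror--Zabrodsky theorem already gives that $\B\aut(X)_{G'}$ is nilpotent. After enlarging $G$ to $\widetilde G=\inv\upsilon(G')$, one identifies $\B\aut[A](X)_{\widetilde G}$ with $\B(*,\aut(X)_{G'},\map(A,X)_i)$ via \cref{lemma:bar_stabilizer} and is reduced, by the Bousfield--Kan criterion, to showing that the holonomy action on $\hg k(\map(A,X),i)$ is nilpotent. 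This last step uses Federer as you do, but for $\map(A,X)$ rather than $\map[A](X,X)$: on the $E_2$-page $\Coho p(A;\hg q(X))$ the group acts trivially on $A$, so only the nilpotent action on $\hg q(X)$ enters. Your step~(ii) is fine and parallel to this; the problem is only in~(i).
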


\begin{proof}
  First, let $\upsilon \colon \Eaut[A](X) \to \Eaut(X)$ be the forgetful map and write $\widetilde G \defeq \inv \upsilon (\upsilon (G))$.
  Since $G \subseteq \widetilde G$, it is enough to prove that $\B \aut[A](X)_{\widetilde G}$ is nilpotent.
  The simplicial monoid $\aut[A](X)_{\widetilde G}$ is the fiber over $i$ of the map
  \[ \aut(X)_{G'}  \xlongto{i^*}  \map(A, X)_i \]
  where $G' \defeq \upsilon(G)$.
  The map $i^*$ is a fibration, since $i$ is a cofibration, and hence the inclusion induces a weak equivalence
  \[ \iota  \colon  \B \aut[A](X)_{\widetilde G}  \xlongto{\eq}  \B \bigl( *, \aut(X)_{G'}, \map(A, X)_i \bigr) \]
  by \cref{lemma:bar_stabilizer}.
  Now, let the following be a commutative diagram of topological spaces as in \cref{lemma:bar_fiber_sequence_action}
  \[
  \begin{tikzcd}
    \gr {\map(A, X)_i}  \rar \dar{\eq}[swap]{\phi} & \gr* {\B \bigl( *, \aut(X)_{G'}, \map(A, X)_i \bigr)} \rar \dar{\eq}[swap]{\psi} & \gr {\B \aut(X)_{G'}} \dar[equal] \\
    F \rar{\iota} & E \rar[two heads]{q} & \gr {\B \aut(X)_{G'}}
  \end{tikzcd}
  \]
  and set $f_0 \defeq \phi(i)$ and $e_0 \defeq \iota(f_0)$.
  It suffices to prove that $E$ is nilpotent.
  
  Note that the action of $G$, and hence $G'$, on $\Ho n (X; \ZZ)$ is nilpotent for all $n$ by the long exact sequence of the pair $(X, A)$ since $G$ acts trivially on $\Ho * (A; \ZZ)$ and nilpotently on $\Ho n (X, A; \ZZ)$ by assumption (here we use that the class of nilpotent $G$-actions is closed under taking subgroups and extensions).
  Hence $G'$ acts nilpotently on $\hg k (X)$ for all $k \ge 1$ (see e.g.~\cite[Proposition~3.2]{BZ}; note that the action of $\Eaut[*](X)$ on $\hg k (X)$ factors through $\Eaut(X)$ since $X$ is simply connected).
  Moreover the base $B \defeq \gr {\B \aut(X)_{G'}}$ is nilpotent by \cite[Theorem~D]{DZ}.
  By \cite[Ch.~II, Proposition~4.4]{BK} (applied to the two fibrations $E \to B \to *$), it is thus enough to prove that the holonomy action of $\hg 1 (E, e_0)$ on $\hg k (F, f_0)$ (which is induced by a group homomorphism $\hg 1 (E, e_0) \to \Eaut[*](F)$; see e.g.\ \cite[p.~20]{MP}) is nilpotent for every $k \ge 1$.
  Now note that the weak equivalence $\psi \after \iota$ induces a group isomorphism $\alpha \colon \widetilde G \to \hg 1 (E, e_0)$.
  By \cref{lemma:bar_fiber_sequence_action}, the canonical pointed action of a self-equivalence $f \in \aut[A](X)_{\widetilde G}$ on $\map(A, X)_i$ corresponds to the pointed action of $\alpha(\eqcl f) \in \hg 1 (E, e_0)$ on $F$, in the sense that $\phi \after f_*$ and $\alpha(\eqcl f)_* \after \phi$ are homotopic relative to $\set i$.
  Hence it is enough to prove that the action of $\widetilde G$ on $\hg k ( \map(A, X), i )$ is nilpotent for every $k \ge 1$.
  
  To this end, consider the Federer spectral sequence of \cref{prop:Federer}
  \[ E_2^{p,q}  =  \begin{lrcases*} \Coho p \bigl( A; \hg q(X) \bigr), & if $p \ge 0$ and $q \ge \max(2, p)$ \\ 0, & otherwise \end{lrcases*}  \longabuts  \hg {q - p} \bigl( \map(A, X), i \bigr) \]
  associated to the map $i \colon A \to X$.
  The group $\widetilde G$ acts on the spectral sequence, and its action on $\hg q(X)$ is nilpotent since it factors through the nilpotent action of $G'$.
  Since the class of nilpotent $\widetilde G$-actions is closed under taking quotients, subgroups, and extensions, and is furthermore closed under taking $\Hom(M, \blank)$ for any abelian group $M$ by \cite[Ch.~I, Proposition~4.15]{HMR}, the action of $\widetilde G$ on $\Coho p ( A, \hg q(X) )$ is nilpotent as well.
  Then, for $q > p$, the group $\widetilde G$ acts nilpotently on $E_\infty^{p,q}$, and thus also on $\hg k ( \map(A, X), i )$ since, for each $k$, only finitely many pairs $(p, q)$ contribute.
\end{proof}

\begin{lemma} \label{lemma:Bautbdl_nilpotent}
  Let $B \to A \to X$ be cofibrations of simply connected Kan complexes of the homotopy types of finite-dimensional CW-complexes, $K$ a simply connected Kan complex, and $\xi \colon X \to K$ a map.
  Furthermore let $G \subseteq \Eaut[A](X)_{\eqcl{\xi}_B}$ be a subgroup such that its action on $\Ho n (X, A; \ZZ)$ is nilpotent for all $n$.
  Then $\B \bdlaut{A}{B}{K}(\xi)_G$ is nilpotent.
\end{lemma}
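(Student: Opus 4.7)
The plan is to realize $\B \bdlaut{A}{B}{K}(\xi)_G = \B(\map[B](X, K)_\xi, \aut[A](X)_G, *)$ as the total space of a fiber sequence
\[ \map[B](X, K)_\xi  \longto  \B \bdlaut{A}{B}{K}(\xi)_G  \longto  \B \aut[A](X)_G \]
via (a symmetric version of) \cref{lemma:bar_fiber_sequence} applied to the right module $\map[B](X, K)_\xi$ over the group-like simplicial monoid $\aut[A](X)_G$. The base is nilpotent by \cref{lemma:Baut_nilpotent} applied to the pair $A \to X$ with the subgroup $G \subseteq \Eaut[A](X)$, which by hypothesis acts nilpotently on $\Ho n (X, A; \ZZ)$ for all $n$. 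I would then invoke Bousfield--Kan \cite[Ch.~II, Proposition~4.4]{BK} to reduce nilpotence of the total space to (i) nilpotence of the fiber and (ii) nilpotence of the action of $\hg 1$ of the total space on each $\hg k$ of the fiber.

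For (i), I would apply the Federer spectral sequence (\cref{prop:Federer}) to the cofibration $B \to X$ and the map $\xi \colon X \to K$, valid since $B$ and $K$ are simply connected and $(X, B)$ has the homotopy type of a finite-dimensional relative CW-pair, and conclude that the components of the mapping space are nilpotent, as in Hilton--Mislin--Roitberg. For (ii), since nilpotent actions are closed under extensions and the fiber already acts nilpotently on its own homotopy, it suffices to check nilpotence of the restricted action of $\hg 1(\B \aut[A](X)_G) \subseteq G$ on $\hg k(\map[B](X, K)_\xi)$. By \cref{lemma:bar_fiber_sequence_action}, this holonomy action agrees up to pointed homotopy with the precomposition action of $G$ on the mapping space, which is well defined because $G \subseteq \Eaut[A](X)_{\eqcl{\xi}_B}$ and $B \subseteq A$.

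To finish, I would use naturality of the Federer spectral sequence to equip the $E_r$-pages
\[ E_2^{p,q}  =  \Coho p \bigl( X, B; \hg q(K) \bigr)  \Longrightarrow  \hg {q-p} \bigl( \map[B](X, K)_\xi, \xi \bigr) \]
with compatible $G$-actions, noting that $G$ acts trivially on each $\hg q(K)$ since its elements are self-equivalences of $X$, not of $K$. The long exact sequence in integral homology of the triple $(X, A, B)$, together with $G$ fixing $A$ pointwise (so acting trivially on $\Ho *(A, B; \ZZ)$) and with the hypothesis that $G$ acts nilpotently on $\Ho *(X, A; \ZZ)$, yields nilpotence of the $G$-action on $\Ho *(X, B; \ZZ)$ by closure of nilpotent actions under extensions. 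The universal coefficient theorem combined with closure of nilpotent $G$-actions under $\Hom(\blank, M)$ and $\Ext(\blank, M)$ (\cite[Ch.~I, Proposition~4.15]{HMR}) then gives nilpotence on each $\Coho p(X, B; \hg q(K))$, hence on $\hg k(\map[B](X, K)_\xi)$ since only finitely many $E_\infty^{p,q}$ contribute to each $k$. The main subtlety is bookkeeping the $G$-equivariance across the Federer spectral sequence and identifying the holonomy with the precomposition action via \cref{lemma:bar_fiber_sequence_action}, but this closely parallels the final steps in the proof of \cref{lemma:Baut_nilpotent}.
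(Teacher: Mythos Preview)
Your overall architecture (fiber sequence, \cite[Ch.~II, Proposition~4.4]{BK}, then the Federer spectral sequence with the long exact sequence of the triple and the universal coefficient theorem) matches the paper exactly, and your endgame with $\Ho * (X,B;\ZZ)$ and $\Hom/\Ext$ is correct. The gap is in step~(ii).

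First, the phrase ``closed under extensions'' in \cite[Ch.~II, Lemma~4.2]{BK} refers to extensions of \emph{modules}, not of \emph{groups acting}; it does not let you split the $\hg 1(E)$-action on $\hg k(F)$ into a $\hg 1(F)$-part and a $G$-part. Second, and more seriously, your appeal to \cref{lemma:bar_fiber_sequence_action} is not valid as stated: that lemma only identifies the holonomy of $g$ with the module action when $g$ \emph{strictly} fixes the basepoint, i.e.\ $g \cdot \xi = \xi$ as $0$-simplices. Elements of $G \subseteq \Eaut[A](X)_{\eqcl{\xi}_B}$ only satisfy $\xi \circ g \eq_B \xi$, so there is no reason the lemma applies to them, and hence no direct identification of the holonomy with precomposition.

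The paper repairs this with a stabilizer maneuver. After replacing $\xi$ by a fibration, it maps the fiber sequence to the analogous one with $\aut[B](X)_H$ in place of $\aut[A](X)_G$ (where $H$ is the image of $G$), so that the holonomy of $\hg 1(E)$ factors through $\hg 1(E')$. Since $\xi$ is now a fibration, the map $\xi_* \colon \aut[B](X)_H \to \map[B](X,K)_\xi$ is a fibration, and \cref{lemma:bar_stabilizer} gives a weak equivalence $\B \aut[B//K](X)_H \xrightarrow{\eq} E'$, where $\aut[B//K](X)_H$ is the \emph{strict} stabilizer of $\xi$. Now $\hg 1(E') \cong H' \defeq \hg 0(\aut[B//K](X)_H)$, whose elements do fix $\xi$ on the nose, so \cref{lemma:bar_fiber_sequence_action} applies and identifies the holonomy with precomposition. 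From there your Federer argument goes through verbatim (and your separate verification that $F$ is nilpotent becomes unnecessary, since nilpotence of the $\hg 1(E)$-action already implies it).
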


\begin{proof}
  We first factor $\xi$ as a trivial cofibration $f \colon X \to X'$ followed by a fibration $\xi' \colon X' \to K$.
  Then both of the maps
  \[
  \begin{tikzcd}[row sep = 10, column sep = -40]
    & \dlar[start anchor = south west][swap]{\eq} \B \bigl( \map[B](X', K)_{\xi'}, \aut[A](f)_{\widetilde G}, * \bigr) \drar[start anchor = south east]{\eq} & \\
    \B \bigl( \map[B](X, K)_\xi, \aut[A](X)_G, * \bigr) & & \B \bigl( \map[B](X', K)_{\xi'}, \aut[A](X')_{G'}, * \bigr)
  \end{tikzcd}
  \]
  are weak equivalences by part~(i) of \cref{lemma:rel_aut_span} and \cref{lemma:bar_eq}, where $\widetilde G$ and $G'$ are the subgroups of the respective groups of components corresponding to $G$.
  Thus we can assume without loss of generality that $\xi$ is a fibration.
  
  Consider the following commutative diagram, whose rows are fiber sequences by \cref{lemma:bar_fiber_sequence},
  \[
  \begin{tikzcd}
  	\map[B](X, K)_\xi \rar \dar[equal] & \B \bigl( \map[B](X, K)_\xi, \aut[A](X)_G, * \bigr) \rar \dar & \B \aut[A](X)_G \dar \\
  	\map[B](X, K)_\xi \rar & \B \bigl( \map[B](X, K)_\xi, \aut[B](X)_H, * \bigr) \rar & \B \aut[B](X)_H
  \end{tikzcd}
  \]
  where $H$ is the image of $G$ under the map $\Eaut[A](X) \to \Eaut[B](X)$.
  We want to prove that the total space $E$ of the upper row is nilpotent.
  Since the base space $\B \aut[A](X)_G$ is nilpotent by \cref{lemma:Baut_nilpotent}, it is, as in the proof of that lemma, enough to prove that the holonomy action of $\hg 1 (E)$ on $\hg n ( \map[B](X, K)_\xi )$ is nilpotent for each $n \ge 1$.
  By the diagram above, this action factors through the holonomy action of $\hg 1$ of the total space $E'$ of the lower row, and thus it is enough to prove that this action is nilpotent.
  
  Now consider the map $\xi_* \colon \aut[B](X)_H \to \map[B](X, K)_\xi$ given by postcomposing with $\xi$.
  Since $B \to X$ is a cofibration and $\xi$ is a fibration, the map $\xi_*$ is a fibration as well; we denote its fiber over $\xi$ by $\aut[B//K](X)_H$.
  The map $\iota \colon \B \aut[B//K](X)_H \to E'$ induced by the inclusion is a weak equivalence by \cref{lemma:bar_stabilizer}.
  Moreover, by \cref{lemma:bar_fiber_sequence_action}, for any element $f \in \aut[B//K](X)_H$, the holonomy action of $\iota_*(f)$ on $\hg n ( \map[B](X, K)_\xi )$ equals the action induced by precomposing with $f$.
  Hence it is enough to prove that the canonical action of $H' \defeq \hg 0 ( \aut[B//K](X)_H )$ on $\hg n (\map[B](X, K), \xi)$ is nilpotent for each $n \ge 1$.
  
  To this end, consider the Federer spectral sequence of \cref{prop:Federer}
  \[ E_2^{p,q}  =  \begin{lrcases*} \Coho p \bigl( X, B; \hg q(K) \bigr), & if $p \ge 0$ and $q \ge \max(2, p)$ \\ 0, & otherwise \end{lrcases*}  \longabuts  \hg {q - p} \bigl( \map[B](X, K), \xi \bigr) \]
  associated to the map $\xi \colon X \to K$.
  (Note that $(X, B)$ has the homotopy type of a finite-dimensional relative CW-pair since $B$ and $X$ are simply connected and $\Ho * (X, B)$ is concentrated in finitely many degrees.
  In the absolute case $B = \emptyset$ this follows from \cite[Proposition~4C.1]{Hat}, and the proof of the relative case is similar.)
  The group $H'$ acts on the spectral sequence.
  Since the class of nilpotent $H'$-actions is closed under taking quotients, subgroups, and extensions, and only finitely many pairs $(p, q)$ contribute to each degree of the abutment, it is enough to prove that the $H'$-action on the $E_2$-page is nilpotent.
  This action factors through the action of $H$, which acts nilpotently on $\Ho p (X, A)$ since $G$ does so by assumption.
  Hence $H$ also acts nilpotently on $\Ho p (X, B)$ by the long exact sequence of the triple $(X, A, B)$.
  By \cite[Ch.~I, Proposition~4.15]{HMR} (or, more precisely, the same argument applied to a contravariant functor) the class of nilpotent $H$-actions is closed under taking $\Hom(\blank, M)$ and $\Ext(\blank, M)$ for any abelian group $M$.
  Then the universal coefficient theorem for cohomology completes the proof.
\end{proof}

\begin{lemma} \label{lemma:Baut_virtually_nilpotent}
  Let $B \to A \to X$ be cofibrations of simply connected Kan complexes of the homotopy types of finite CW-complexes, $K$ a simply connected Kan complex, and $\xi \colon X \to K$ a map.
  Furthermore let $G \subseteq \Eaut[A](X)_{\eqcl{\xi}_B}$ be a subgroup such that its representation on $\Ho n (X, A; \QQ)$ is unipotent for all $n$ (i.e.\ it admits a finite filtration by subrepresentations such that the associated graded is a trivial representation).
  Then $\B \bdlaut{A}{B}{K}(\xi)_G$ is virtually nilpotent.
\end{lemma}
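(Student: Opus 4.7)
The plan is to reduce to \cref{lemma:Bautbdl_nilpotent} by producing a finite-index subgroup $G' \subseteq G$ whose action on each integral homology group $\Ho n (X, A; \ZZ)$ is nilpotent, and then recognising $\B \bdlaut{A}{B}{K}(\xi)_{G'} \to \B \bdlaut{A}{B}{K}(\xi)_G$ as a finite covering.

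For the first step, write $V_n \defeq \Ho n (X, A; \ZZ)$. Since $(X,A)$ has the homotopy type of a finite CW-pair, each $V_n$ is finitely generated and vanishes for all but finitely many $n$; in particular the torsion subgroup $T_n \subseteq V_n$ is finite, so $\prod_n \Aut(T_n)$ is finite as well. I would take $G'$ to be the kernel of the induced homomorphism $G \to \prod_n \Aut(T_n)$, so that $G'$ has finite index in $G$ and acts trivially on each $T_n$. By hypothesis the action of $G$ on $V_n \tensor \QQ$ is unipotent, so there exists a filtration $0 = F^0 \subseteq \dots \subseteq F^k = V_n \tensor \QQ$ by $G$-subrepresentations with trivial $G$-action on each quotient $F^i/F^{i-1}$. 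Setting $F^i_\ZZ \defeq F^i \cap (V_n/T_n)$ gives a $G$-invariant filtration of $V_n/T_n$ whose subquotients $F^i_\ZZ/F^{i-1}_\ZZ$ are torsion-free (if an integer multiple of $v$ lies in $F^{i-1}$, then $v$ already does by $\QQ$-linearity). Since the automorphism group of a finitely generated free abelian group injects into that of its rationalisation, the action of $G$ itself on each $F^i_\ZZ/F^{i-1}_\ZZ$ is already trivial. Splicing, the filtration
\[ 0 \subseteq T_n \subseteq \pi^{-1}(F^1_\ZZ) \subseteq \dots \subseteq \pi^{-1}(F^k_\ZZ) = V_n \]
(with $\pi \colon V_n \to V_n/T_n$ the projection) has trivial $G'$-action on the associated graded, so the $G'$-action on $V_n$ is nilpotent.

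By \cref{lemma:Bautbdl_nilpotent} applied to $G'$, the space $\B \bdlaut{A}{B}{K}(\xi)_{G'}$ is nilpotent. To conclude I would compare the fiber sequences of \cref{lemma:bar_fiber_sequence}
\[
\begin{tikzcd}
\map[B](X, K)_\xi \rar \dar[equal] & \B \bdlaut{A}{B}{K}(\xi)_{G'} \rar \dar & \B \aut[A](X)_{G'} \dar \\
\map[B](X, K)_\xi \rar & \B \bdlaut{A}{B}{K}(\xi)_{G} \rar & \B \aut[A](X)_{G}
\end{tikzcd}
\]
and observe that the right-hand vertical map induces the finite-index inclusion $G' \subseteq G$ on $\pi_1$ and is an isomorphism in all higher degrees, hence is a finite covering; the middle vertical map is therefore a finite covering as well. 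Since $\B \bdlaut{A}{B}{K}(\xi)_G$ admits a nilpotent finite cover, it is virtually nilpotent.

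The main obstacle is the integral bookkeeping of the first step: a rationally unipotent representation of $G$ need not be integrally nilpotent, and one has to combine the injectivity of $\Aut(\ZZ^r) \hookrightarrow \Aut(\QQ^r)$ (which automatically trivialises the action on a suitable integral refinement of the rational filtration) with the finiteness of the torsion automorphism groups (which is responsible for the passage to a finite-index subgroup) to bootstrap from rational unipotence to integral nilpotence.
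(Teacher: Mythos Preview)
Your proposal is correct and follows essentially the same approach as the paper: reduce to \cref{lemma:Bautbdl_nilpotent} by passing to a finite-index subgroup $G' \subseteq G$ that acts nilpotently on the integral relative homology, and then observe that $\B \bdlaut{A}{B}{K}(\xi)_{G'}$ is a nilpotent finite cover of $\B \bdlaut{A}{B}{K}(\xi)_G$. The paper's proof is terser: it cites \cite[Lemma~3.4]{BZ} for the algebraic fact you prove by hand in your first step (that rational unipotence of a $G$-action on a finitely generated abelian group yields a finite-index subgroup acting nilpotently), and it leaves the finite-covering deduction entirely implicit, whereas you spell it out via the comparison of fiber sequences.
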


\begin{proof}
  By \cite[Lemma~3.4]{BZ}, if $G$ is any group acting on a finitely generated abelian group $H$ such that the induced representation on $H \tensor \QQ$ is unipotent, then $G$ has a finite-index subgroup acting nilpotently on $H$.
  This implies the claim by \cref{lemma:Bautbdl_nilpotent}.
\end{proof}

\subsection{Equivariant models for relative self-equivalences of bundles} \label{sec:model}

In this subsection, we prove the first main result of this paper, stating that $\B \bdlaut A B K (\xi)$ is rationally equivalent to $\hcoinv {\MCb(\lie g)} {\Gamma}$ for a certain dg Lie algebra $\lie g$ equipped with an algebraic action of an arithmetic group $\Gamma$.
From this, we will produce both a cdga model for $\B \bdlaut A B K (\xi)$ as well as draw strong cohomological consequences.
To achieve this, we follow the strategy of Berglund--Zeman \cite{BZ}, who constructed a rational model for $\B \aut(X)$ by modeling a certain nilpotent covering together with the action of its deck transformation group.
We adapt this approach to (a generalization of) a model for $\B \bdlaut{A}{B}{K}(\xi)_{\id}$ due to Berglund \cite{Ber}.
This involves producing a model for the simplicial monoid $\aut[A](X)_G$ for certain subgroups $G \subseteq \Eaut[A](X)_{\eqcl{\xi}_B}$, as well as a model for the relative mapping space $\map[B](X, K)_\xi$ together with its action of $\aut[A](X)_G$.

We begin by recalling a construction of Berglund--Saleh \cite[§3.2]{BS} that explicitly relates realizations of dg Lie subalgebras of $\Der(L_X \rel L_A)$ with the simplicial monoid $\aut[\rat A](\rat X)$.

\newcommand{\expact}{e}
\newcommand{\aexpact}{\bar e}

\begin{lemma} \label{lemma:exp_action}
  Let $L' \to L$ be a map of finitely generated dg Lie algebras, and $\lie g$ a finite-dimensional nilpotent Lie algebra equipped with a nilpotent action $\lie g \to \Cycles 0 {(\Der(L \rel L'))}$.
  Then the map
  \[ \expact  \colon  \exp (\lie g)  \longto  \Aut[L'](L),  \qquad  \theta  \longmapsto  \sum_{n \ge 0} \frac {1} {n!} (\theta \act \blank)^{\after n} \]
  is a group homomorphism.
  When $L$ is quasi-free and positively graded, then $\expact$ canonically extends to a map $\aexpact  \colon  \aexp (\lie g)  \to  \aAut[L'](L)$ of algebraic groups.
\end{lemma}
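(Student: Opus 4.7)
The plan is to follow the classical exponentiation theory for nilpotent derivations, extended to the relative graded setting.

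For the first claim I would proceed in stages. \emph{Well-definedness:} nilpotence of the action on $L$ provides, for each $k$, a uniform $N$ with $(\theta \act \blank)^{\after N} = 0$ on $L_k$ for all $\theta \in \lie g$, so the series $\sum_{n \ge 0} \tfrac{1}{n!}(\theta \act \blank)^{\after n}$ reduces on $L_k$ to a finite sum and defines a degree-preserving linear endomorphism $\expact(\theta) \colon L \to L$. \emph{Chain map:} the hypothesis $\theta \in \Cycles 0(\Der(L \rel L'))$ says $\theta$ commutes with $d_L$, hence so does each iterate $(\theta \act \blank)^{\after n}$ and thus $\expact(\theta)$. \emph{Lie morphism:} expanding $(\theta \act \blank)^{\after n}(\liebr x y)$ via the Leibniz rule and collecting gives the formal identity
\[\expact(\theta)\liebr x y = \liebr{\expact(\theta)(x)}{\expact(\theta)(y)},\]
which is the standard statement that the exponential of a locally nilpotent derivation of a graded Lie algebra is a Lie morphism. \emph{Relative condition:} $\theta(L') = 0$ forces only the $n = 0$ summand to contribute on $L'$, so $\expact(\theta)|_{L'} = \id$. \emph{Invertibility:} $\expact(-\theta) \after \expact(\theta) = \id$ follows from the multiplicativity below, since $\theta$ and $-\theta$ commute and so their BCH product is $0$.

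For multiplicativity, fix a degree $k$. The image of $\lie g$ inside $\End(L_k)$ is a nilpotent Lie algebra consisting of nilpotent operators (by nilpotence of the action), and the classical Baker--Campbell--Hausdorff identity $\exp(x \cdot_{\mathrm{BCH}} y) = \exp(x)\exp(y)$, valid in the nilpotent associative subalgebra generated by such operators, yields $\expact(\theta \cdot_{\mathrm{BCH}} \psi)|_{L_k} = \expact(\theta)|_{L_k} \after \expact(\psi)|_{L_k}$. Since $k$ is arbitrary, $\expact$ is a group homomorphism.

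For the algebraic extension, fix $R \in \Algfg{\QQ}$. The action extends $R$-linearly to a Lie algebra homomorphism $\lie g \tensor R \to \Cycles 0(\Der[R](L \tensor R \rel L' \tensor R))$ by $(\theta \tensor r) \act (x \tensor s) \defeq (\theta \act x) \tensor rs$, and a uniform nilpotence bound $N$ on $L_k$ still gives $(\eta \act \blank)^{\after N} = 0$ on $L_k \tensor R$ for every $\eta \in \lie g \tensor R$. Applying the previous parts over $R$ and noting that the defining formulas are natural in $R$ produces a natural transformation $\aexpact \colon \aexp(\lie g) \to \aAut[L'](L)$ of group-valued functors on $\Algfg{\QQ}$. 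For algebraicity, recall from the proof of \cref{prop:aAut_alg} that $\aAut[L'](L)$ embeds into $\prod_{k = 0}^{m} \aGL(L_k)$ for $m$ a bound on generator degrees; the positive grading, quasi-freeness, and finite generation hypotheses on $L$ force each $L_k$ to be finite-dimensional. On each factor the composite sends $\theta$ to $\sum_{n = 0}^{N_k - 1} \tfrac{1}{n!}(\theta \act \blank)^{\after n}|_{L_k}$, whose matrix entries are polynomials of bounded degree in the coordinates of $\theta$; this defines a morphism of affine schemes.

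The main subtlety I expect is precisely this last step: one must have a \emph{uniform} nilpotence bound on each $L_k$, so that $\expact$ is a polynomial of bounded degree in $\theta$ on the finite-dimensional space $L_k$ and thereby defines a morphism of affine schemes rather than merely a formal power series map. Finite-dimensionality of each $L_k$, supplied by the positive grading together with quasi-freeness and finite generation of $L$, provides precisely the bound needed.
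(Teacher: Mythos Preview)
Your proof is correct and follows essentially the same approach as the paper, which is much terser: it notes the sum is finite by nilpotence, observes the map is the identity on $L'$, defers the group-homomorphism claim to \cite[Proof of Proposition~4.10]{BFMT}, and for the algebraic extension simply defines $\aexpact_R \defeq \expact$ over each $R \in \Algfg{\QQ}$ and asserts naturality. Your final polynomial-entries argument is not strictly needed---once you have a natural transformation between group-valued functors already known to be represented by affine schemes, Yoneda makes it automatically a morphism of algebraic groups---but this does not affect correctness.
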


\begin{proof}
  First note that, for all $\theta \in \lie g$ and homogeneous $x \in L $, the sum $\expact(\theta)(x)$ is finite since $\lie g$ acts nilpotently on $L$.
  Furthermore $\expact(\theta)(x) = x$ for all $x \in L'$.
  That $\expact$ is a well-defined map of groups then follows as in \cite[Proof of Proposition~4.10]{BFMT}.
  For $R \in \Algfg{\QQ}$, we define $\aexpact_R \defeq \expact \colon \exp(\lie g \tensor R) \to \Aut[L' \tensor R](L \tensor R)$.
  It is clear that this yields a map of algebraic groups.
\end{proof}

When $\lie g$ is positively graded, the following lemma is a special case of \cite[Proposition~3.11]{BS} (see also Lindell--Saleh \cite[Definition~3.8]{LS}).
We provide the short proof for completeness.

\begin{lemma}[Berglund--Saleh] \label{lemma:exp_aut_map}
  Let $L' \to L$ be a map of finitely generated nilpotent dg Lie algebras and let $\psi \colon \lie g \to \Der (L \rel L')$ be a map of dg Lie algebras such that both $\lie g$ and the induced action of $\lie g$ on $L$ are nilpotent.
  Then there is a map of simplicial monoids
  \[ \Xi \colon \expb ( \lie g )  \longto  \aut[\MCb(L')] \bigl( \MCb(L) \bigr) \]
  given, in each simplicial degree, by ${\MC} \after \expact$.
  Furthermore assume that $\psi$ is injective and let $G \subseteq \Aut[L'](L)$ be a subgroup such that the conjugation action of $G$ on $\Der(L \rel L')$ preserves $\lie g$ setwise.
  Then $\Xi$ is $G$-equivariant with respect to the conjugation actions on both sides.
\end{lemma}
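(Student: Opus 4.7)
The plan is to define $\Xi$ in each simplicial degree by applying \cref{lemma:exp_action} after extending scalars along $\QQ \to \sForms[n]$, and then to verify simpliciality, multiplicativity, and $G$-equivariance by direct computation from the formula for $\expact$. For fixed $n \ge 0$, the hypothesis that $\lie g$ acts nilpotently on $L$ passes to a nilpotent action of the Lie algebra $\Cycles 0(\lie g \tensor \sForms[n])$ on $L \tensor \sForms[n]$, and similarly the nilpotence of $\lie g$ passes to $\Cycles 0(\lie g \tensor \sForms[n])$. Even though this Lie algebra need not be finite-dimensional, the proof of \cref{lemma:exp_action} uses only the nilpotence assumptions, and so will produce a group homomorphism
\[ \expact \colon \expb(\lie g)_n  =  \exp \bigl( \Cycles 0(\lie g \tensor \sForms[n]) \bigr)  \longto  \Aut[L' \tensor \sForms[n]](L \tensor \sForms[n]). \]
For $\theta \in \expb(\lie g)_n$, I will then define $\Xi_n(\theta) \colon \MCb(L) \times \lincat n \to \MCb(L)$ on a $k$-simplex $(\tau, \sigma)$, with $\tau \in \MC(L \tensor \sForms[k])$ and $\sigma \colon \lincat k \to \lincat n$, by the formula
\[ \Xi_n(\theta)(\tau, \sigma)  \defeq  \MC \bigl( \expact(\sigma^*(\theta)) \bigr)(\tau), \]
where $\sigma^*(\theta) \in \expb(\lie g)_k$ denotes the pullback of $\theta$ along the induced map $\sForms[n] \to \sForms[k]$.

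The basic properties of $\Xi$ will then follow by unwinding the definitions. Simpliciality of $\Xi_n(\theta)$ reduces to the observation that, for any $\rho \colon \lincat j \to \lincat k$, the morphism $\id[L] \tensor \rho^*$ intertwines $\expact(\alpha)$ and $\expact(\rho^*(\alpha))$ for $\alpha \in \Cycles 0(\lie g \tensor \sForms[k])$, which is immediate because $\rho^*$ is a homomorphism of commutative algebras. Since the action of $\lie g$ factors through $\Der(L \rel L')$, the automorphism $\expact(\sigma^*(\theta))$ fixes $L' \tensor \sForms[k]$ pointwise, so $\Xi_n(\theta)$ restricts to the projection on $\MCb(L') \times \lincat n$; moreover its restriction to any vertex of $\lincat n$ is the realization $\MCb$ of an honest dg Lie algebra automorphism of $L$, hence an isomorphism of simplicial sets, so $\Xi_n(\theta)$ lies in $\aut[\MCb(L')](\MCb(L))_n$. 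Multiplicativity of $\Xi$ follows from the fact that $\expact$ is a group homomorphism in each simplicial degree, combined with the compatibility of the formula for $\Xi_n$ with the monoid structure of $\aut$ (which is realized via the diagonal $\lincat n \to \lincat n \times \lincat n$).

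For the final statement, the assumptions on $\psi$ and $G$ ensure that $G$ acts on $\lie g$ by Lie algebra automorphisms via conjugation in $\Der(L \rel L')$, and this extends along $\blank \tensor \sForms[n]$ to a $G$-action on $\expb(\lie g)$. The required equivariance then reduces to the identity
\[ (\phi \tensor \id) \after \expact(\alpha) \after (\inv \phi \tensor \id)  =  \expact(\phi \cdot \alpha) \]
in $\Aut[L' \tensor \sForms[n]](L \tensor \sForms[n])$, which is immediate from the defining series for $\expact$, together with the observation that $\MCb(\phi)$ acts on $\MCb(L)_k$ precisely by $\phi \tensor \id[\sForms[k]]$ on Maurer--Cartan elements. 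The main technical subtlety throughout is ensuring that the nilpotence hypotheses propagate to the various extensions by $\sForms[n]$; once this is granted, the remainder of the verifications are routine manipulations with the formula for $\expact$ and the functoriality of $\MC$.
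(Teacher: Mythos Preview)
Your proposal is correct and follows essentially the same approach as the paper's proof, which is much terser: the paper notes that the action of $\Cycles 0(\lie g \tensor A)$ on $L \tensor A$ is nilpotent for any commutative chain algebra $A$ concentrated in finitely many non-positive degrees, invokes \cref{lemma:exp_action}, and declares the equivariance ``clear by inspection.'' Your explicit unpacking of the adjunction defining $\Xi_n(\theta)$ as a map $\MCb(L) \times \lincat n \to \MCb(L)$, and your verifications of simpliciality, landing in $\aut$, multiplicativity, and $G$-equivariance, fill in exactly what the paper leaves implicit.
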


\begin{proof}
  Note that, for any commutative chain algebra $A$ concentrated in finitely many non-positive degrees, the action of the Lie algebra $\Cycles 0 (\lie g \tensor A)$ on $L \tensor A$ is nilpotent since $\lie g$ acts nilpotently on $L$.
  Then the first claim follows from \cref{lemma:exp_action}.
  The second claim is clear by inspection.
\end{proof}

By work of Berglund--Saleh \cite[Theorem~1.1]{BS}, the identity component of $\aut[\rat A](\rat X)$ is modeled by the positive truncation $\trunc 1 { \Der(L_X \rel L_A) }$.
The following result of Lindell--Saleh \cite{LS} provides us with an explicit description of the map exhibiting this model.

\begin{proposition}[Lindell--Saleh] \label{prop:LS}
  Let $L_A \to L_X$ be a quasi-free map of finitely generated positively graded quasi-free dg Lie algebras.
  Then the map $\Xi$ of \cref{lemma:exp_aut_map} restricts to a weak equivalence
  \[ \expb \bigl( \trunc 1 { \Der(L_X \rel L_A) } \bigr)  \xlongto{\eq}  \aut[\MCb(L_A)] \bigl( \MCb(L_X) \bigr)_{\id} \]
  of simplicial monoids.
\end{proposition}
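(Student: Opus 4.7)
The argument proceeds in three stages: confirming that $\Xi$ lands in the identity component, computing the homotopy groups of both sides, and checking that $\Xi$ induces the expected isomorphisms.

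First, I would verify that the image of $\Xi$ lies in the identity component. The dg Lie algebra $\trunc 1 {\Der(L_X \rel L_A)}$ is concentrated in degrees $\ge 1$, so $\Cycles 0 (\trunc 1 {\Der(L_X \rel L_A)} \tensor \sForms[0]) = 0$, and hence $\expb(\trunc 1 {\Der(L_X \rel L_A)})_0 = \exp(0)$ consists of a single point, the identity $0$. Its image under $\Xi_0$ is $\expact(0) = \id[L_X]$, so $\Xi$ factors through the identity component of $\aut[\MCb(L_A)](\MCb(L_X))$.

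Next, I would compute the homotopy groups on both sides. On the source, \cref{lemma:htpy_exp} gives $\hg 0 (\expb(\trunc 1 {\Der(L_X \rel L_A)})) \iso \exp(\Ho 0 (\trunc 1 {\Der(L_X \rel L_A)})) = 0$ and $\hg k (\expb(\trunc 1 {\Der(L_X \rel L_A)})) \iso \Ho k (\trunc 1 {\Der(L_X \rel L_A)}) \iso \Ho k (\Der(L_X \rel L_A))$ for $k \ge 1$, where the last isomorphism uses that the inclusion $\trunc 1 {\Der(L_X \rel L_A)} \to \Der(L_X \rel L_A)$ is a quasi-isomorphism in positive degrees. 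On the target, \cref{lemma:map_MCb} reduces the computation to $\hg k (\map[L_A](L_X, L_X)_{\id})$. By the standard path-object structure on dg Lie algebras via tensoring with $\sForms$ (using that $L_A \to L_X$ is quasi-free and $L_X$ is positively graded and of finite type), one identifies this with $\Ho k (\Der(L_X \rel L_A))$ for $k \ge 1$; this is essentially the content of the Berglund--Saleh theorem \cite[Theorem~1.1]{BS}, which asserts the existence of such a weak equivalence abstractly.

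Finally, I would verify that $\Xi$ induces the canonical identification on $\hg k$ for $k \ge 1$. A class in $\hg k (\expb(\trunc 1 {\Der(L_X \rel L_A)}))$ is represented by a $k$-cycle $\theta$ in $(\trunc 1 {\Der(L_X \rel L_A)} \tensor \sForms[k])_0$ with boundary conditions making it a normalized $k$-simplex; under the isomorphism with $\Ho k (\Der(L_X \rel L_A))$, it corresponds to the homology class $[\theta]$. Under $\Xi$ it maps to $\expact(\theta) = \id + \theta + \tfrac{1}{2}\theta \after \theta + \cdots$; the tangent-space identification on the right sends $\id + \psi$ to $[\psi]$, and the higher-order terms in $\theta$ contribute nothing at the tangent level since they are quadratic (and higher) in $\theta$. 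Thus $\Xi_*$ corresponds to the identity on $\Ho k (\Der(L_X \rel L_A))$, which is an isomorphism. Combined with the already-verified isomorphism on $\hg 0$, this shows $\Xi$ is a weak equivalence.

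The main obstacle is making the tangent-space identification in step two fully precise, so that the explicit formula for $\Xi$ can be matched with the abstract weak equivalence of Berglund--Saleh. This is the explicit computation carried out by Lindell--Saleh in \cite{LS}, and the key subtlety lies in comparing the simplicial model for $\hg k$ of the Lie-algebra mapping space at the identity with the Maurer--Cartan-type identification of $\hg k \expb(\blank)$ provided by \cref{lemma:htpy_exp}.
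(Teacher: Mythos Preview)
The paper's proof is a single-line citation: ``This is \cite[Proposition~3.12]{LS}.'' So there is nothing to compare at the level of argument; the paper simply defers to Lindell--Saleh.

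Your proposal is a reasonable outline of what the Lindell--Saleh argument contains, and you correctly identify the crux: the nontrivial content is precisely the verification in step three that $\Xi$ realizes the abstract Berglund--Saleh identification $\hg k(\aut[\MCb(L_A)](\MCb(L_X))_{\id}) \iso \Ho k(\Der(L_X \rel L_A))$ on the nose. Your sketch of that step (``the tangent-space identification sends $\id + \psi$ to $[\psi]$, and higher-order terms contribute nothing'') is suggestive but not a proof; making it rigorous requires unwinding the specific zig-zag that Berglund--Saleh use to identify the homotopy groups and checking compatibility with the explicit exponential formula, which is exactly what \cite[Proposition~3.12]{LS} does. So your proposal is best read as an accurate roadmap to the cited result rather than an independent proof, and you yourself acknowledge this in the final paragraph.
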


\begin{proof}
  This is \cite[Proposition~3.12]{LS}.
\end{proof}

We will need a version of this for a larger part of $\aut[\rat A](\rat X)$.
However, since $\expb(\lie g)$ is only defined for nilpotent dg Lie algebras $\lie g$, and so $\B \expb(\lie g) \eq \MCb(\lie g)$ is always a nilpotent space, we can at most hope to model spaces of the form $\B \aut[\rat A](\rat X)_G$ with $G$ nilpotent.
We will now show that a model exists in the case that $G$ arises as the $\QQ$-points of a unipotent algebraic subgroup of $\aEaut[\rat A](\rat X)$.
Using a different approach, such a model was also constructed by Félix--Fuentes--Murillo \cite[Theorem~0.1]{FFM23}.
We begin by defining the corresponding dg Lie subalgebra of $\Der(L_X \rel L_A)$ (see also \cite[Definition~4.3]{FFM23}).

\begin{definition} \label{def:Deru}
  Let $L_A \to L_X$ be a minimal quasi-free map of finitely generated positively graded quasi-free dg Lie algebras, and let $\ol G \subseteq \aEaut[L_A](L_X)$ be an algebraic subgroup.
  We denote by $\Der[\ol G](L_X \rel L_A) \subseteq \Der(L_X \rel L_A)$ the dg Lie subalgebra defined as the pullback
  \[
  \begin{tikzcd}
    \Der[\ol G](L_X \rel L_A) \ar[hook]{rr} \dar[two heads] & & \trunc 0 { \Der(L_X \rel L_A) } \dar[two heads]{\pr} \\
    \Lie(\ol G) \rar[hook] & \Lie \bigl( \aEaut[L_A](L_X) \bigr) \rar{\iso} & \Ho 0 \bigl( \Der(L_X \rel L_A) \bigr)
  \end{tikzcd}
  \]
  with the isomorphism provided by \cref{prop:Lie_aEaut} (note that the functor $\Lie(\blank)$ is left exact, e.g.\ by \cite[10.14]{Mil}).
\end{definition}

The following proposition is a relative version of \cite[Lemma~3.13]{BZ}.
We provide a more detailed proof for the reader's convenience.
See also \cite[Theorem~0.1]{FFM23} for a similar statement.

\begin{proposition} \label{prop:exp_eq}
  Let $L_A \to L_X$ be a minimal quasi-free map of positively graded finitely generated quasi-free dg Lie algebras, and let $\ol U \subseteq \aEaut[L_A](L_X)$ be a unipotent algebraic subgroup.
  Then the dg Lie algebra $\Der[\ol U](L_X \rel L_A)$ is nilpotent and it acts nilpotently on $L_X$.
  Moreover the map $\Xi$ of \cref{lemma:exp_aut_map} restricts to a weak equivalence of simplicial monoids
  \[ \Xi \colon \expb \bigl( \Der[\ol U](L_X \rel L_A) \bigr)  \xlongto{\eq}  \aut[\nerve{L_A}] \bigl( \nerve{L_X} \bigr)_{\ol U(\QQ)} \]
  that is $N$-equivariant with respect to the conjugation actions on both sides, where $N \subseteq \Aut[L_A](L_X)$ is the normalizer of the preimage of $\ol U(\QQ)$.
\end{proposition}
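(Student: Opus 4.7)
The plan is in three steps: establish the needed nilpotence, produce $\Xi$ and analyze it on $\hg 0$, and then reduce the weak-equivalence claim to the identity-component case, which is already handled by \cref{prop:LS}, via the multiplicative structure of both simplicial monoids. Let $\widetilde U \defeq \inv \areal (\ol U) \subseteq \aAut[L_A](L_X)$. Being an extension of the unipotent algebraic groups $\ol U$ and $\aNull[L_A](L_X)$ (the latter by \cref{prop:ES}), $\widetilde U$ is unipotent. By \cref{prop:Lie_aEaut}, its Lie algebra is identified with the degree-$0$ part $\Der[\ol U](L_X \rel L_A)_0$, which is hence nilpotent. The conjugation representation of $\widetilde U$ on each $\Der(L_X \rel L_A)_n$ (via \cref{lemma:Der_algebraic}) and its natural representation on each $(L_X)_n$ are algebraic and unipotent, as representations of a unipotent group; by \cref{lemma:Der_action_compat}, their derivatives recover the adjoint action of $\Der[\ol U](L_X \rel L_A)_0$ on $\Der(L_X \rel L_A)_n$ and its natural action on $(L_X)_n$, which are therefore nilpotent. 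This settles the nilpotence assertions and yields $\Xi$ via \cref{lemma:exp_aut_map}.

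The definition of $\Der[\ol U](L_X \rel L_A)$ as a pullback, together with \cref{prop:Lie_aEaut}, gives $\Ho 0(\Der[\ol U](L_X \rel L_A)) = \Lie(\ol U)$, so \cref{lemma:htpy_exp} identifies $\hg 0 \bigl( \expb(\Der[\ol U](L_X \rel L_A)) \bigr)$ with $\exp(\Lie(\ol U)) \iso \ol U(\QQ)$. Tracing through the construction of $\Xi$ and using that $\aexpact$ restricted to $\aexp(\Lie(\widetilde U))$ agrees with the exponential isomorphism $\aexp(\Lie(\widetilde U)) \iso \widetilde U$ followed by the inclusion $\widetilde U \hookrightarrow \aAut[L_A](L_X)$ (both are morphisms of algebraic groups out of a connected unipotent group, with the same derivative on Lie algebras, hence equal), combined with \cref{prop:aEaut}, one sees that $\Xi$ induces an isomorphism on $\hg 0$ onto $\ol U(\QQ) \subseteq \Eaut[\nerve{L_A}](\nerve{L_X})$; in particular $\Xi$ factors through $\aut[\nerve{L_A}](\nerve{L_X})_{\ol U(\QQ)}$.

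For the higher homotopy groups consider the commutative diagram
\[
\begin{tikzcd}
\expb \bigl( \trunc 1 { \Der(L_X \rel L_A) } \bigr) \dar[hook] \drar{\eq} & \\
\expb \bigl( \Der[\ol U](L_X \rel L_A) \bigr)_{\id} \rar{\Xi} & \aut[\nerve{L_A}] \bigl( \nerve{L_X} \bigr)_{\id}
\end{tikzcd}
\]
whose diagonal is the weak equivalence of \cref{prop:LS}, obtained by naturality of $\Xi$ under the inclusion of sub-dg-Lie-subalgebras on which the action remains nilpotent. The left inclusion lands in the identity component, as its source is connected, and induces isomorphisms on $\hg k$ for $k \ge 1$ by \cref{lemma:htpy_exp}, since $\trunc 1 { \Der(L_X \rel L_A) } \hookrightarrow \Der[\ol U](L_X \rel L_A)$ is an isomorphism on $\Ho k$ for $k \ge 1$. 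Hence $\Xi$ is a weak equivalence onto the identity component. For any component indexed by $\ol u \in \ol U(\QQ)$, pick $\theta_0 \in \Der[\ol U](L_X \rel L_A)_0$ with $\aexpact(\theta_0) \in \widetilde U(\QQ)$ mapping to $\ol u$; left multiplication by $\theta_0$ in the group-like simplicial monoid $\expb(\Der[\ol U](L_X \rel L_A))$ is a weak equivalence from the identity component to the $\ol u$ component, and $\Xi$ intertwines it with left multiplication by $\nerve{\aexpact(\theta_0)}$ in the target, so $\Xi$ is a weak equivalence on the $\ol u$ component as well.

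For $N$-equivariance, note that $n \in N$ precisely means $\real(n)$ normalizes $\ol U(\QQ)$; via \cref{prop:Lie_aEaut} this translates into conjugation by $n$ preserving $\Der[\ol U](L_X \rel L_A)_0 \subseteq \Cycles 0(\Der(L_X \rel L_A))$, while the higher-degree pieces $\Der(L_X \rel L_A)_k = \Der[\ol U](L_X \rel L_A)_k$ for $k \ge 1$ are preserved automatically. Hence $N$ acts by conjugation on $\expb(\Der[\ol U](L_X \rel L_A))$, and the identity $\expact(n \theta \inv n) = n \expact(\theta) \inv n$ yields $N$-equivariance of $\Xi$. The main technical subtlety is precisely the identification of $\aexpact$ on $\aexp(\Lie(\widetilde U))$ with the abstract exponential isomorphism $\aexp(\Lie(\widetilde U)) \iso \widetilde U$ used above; without it the $\hg 0$-analysis of $\Xi$ and the choice of lifts in the component-by-component weak-equivalence argument cannot be made coherent.
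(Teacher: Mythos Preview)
Your proof is correct and follows essentially the same approach as the paper's. Both arguments identify $\Lie(\widetilde U)$ with $\lie g_0 = \Der[\ol U](L_X \rel L_A)_0$ via \cref{prop:Lie_aEaut} and deduce nilpotence from \cref{lemma:Der_action_compat}; both reduce the $\hg 0$ statement to the equality of $\aexpact$ with the abstract exponential on $\aexp(\lie g_0)$, checked by comparing derivatives of maps out of a connected algebraic group (this is exactly the paper's verification of the square marked $(*)$); both invoke \cref{prop:LS} for the higher homotopy groups; and both handle $N$-equivariance via the compatibility of \cref{prop:Lie_aEaut} with conjugation. Your explicit left-multiplication argument for non-identity components is correct but unnecessary, since both sides are group-like simplicial monoids and a map of such is a weak equivalence once it is an isomorphism on $\hg 0$ and on $\hg k$ for $k \ge 1$ at the identity; the paper leaves this implicit.
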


\begin{proof}
  We begin by proving that $\lie g \defeq \Der[\ol U](L_X \rel L_A)$ is nilpotent and acts nilpotently on $L_X$.
  To this end, define $\ol U'$ to be the pullback
  \[
  \begin{tikzcd}
    \ol U' \rar \dar & \aAut[L_A](L_X) \dar[two heads]{\areal} \\
    \ol U \rar{\inc} & \aEaut[L_A](L_X)
  \end{tikzcd}
  \]
  of algebraic groups.
  By \cref{prop:ES} the kernel of $\areal$ is unipotent.
  This implies that $\ol U'$ is an extension of unipotent groups and hence unipotent itself (see e.g.\ \cite[Corollary~14.7]{Mil}).
  Applying the functor $\Lie$ to the diagram above yields the left-hand square in the diagram of Lie algebras
  \[
  \begin{tikzcd}
    \Lie(\ol U') \rar \dar & \Lie \bigl( \aAut[L_A](L_X) \bigr) \dar \rar{\iso} & \Cycles 0 \bigl( \Der(L_X \rel L_A) \bigr) \dar{\pr} \\
    \Lie(\ol U) \rar & \Lie \bigl( \aEaut[L_A](L_X) \bigr) \rar{\iso} & \Ho 0 \bigl( \Der(L_X \rel L_A) \bigr) 
  \end{tikzcd}
  \]
  whose right-hand square is provided by \cref{prop:Lie_aEaut}.
  Since $\Lie$ preserves pullbacks (see e.g.\ \cite[10.14]{Mil}), this induces compatible isomorphisms $\Lie(\ol U') \iso \lie g_0$ and $\Lie(\ol U) \iso \Ho 0 (\lie g)$.
  It follows from \cref{lemma:Der_action_compat} that the representation of $\lie g_0$ on $\lie g_n = \Der(L_X \rel L_A)_n$ is nilpotent for all $n$, since any finite-dimensional algebraic representation of a unipotent algebraic group induces a nilpotent representation of its Lie algebra (see e.g.\ \cite[14.31]{Mil}).
  Hence $\lie g$ is nilpotent.
  Similarly, again using \cref{lemma:Der_action_compat}, we see that the representation of $\lie g_0$ on $(L_X)_n$ is nilpotent for all $n$, and hence $\lie g$ acts nilpotently on $L_X$.
  
  Assuming that $\Xi$ is well defined, i.e.\ that it lands in the correct components, \cref{prop:LS,lemma:htpy_exp} imply that it is an isomorphism on $\hg{k}$ for $k \ge 1$ since the inclusion $\trunc 1 {\Der(L_X \rel L_A)} \to \lie g$ induces an isomorphism on homology in positive degrees.
  We will now prove that it is indeed well defined and in fact induces an isomorphism on $\hg 0$.
  To this end, consider the diagram of groups
  \[
  \begin{tikzcd}
    \exp (\lie g_0) \dar[two heads] & \lar[equal] \exp[0] (\lie g) \rar{\expact} \dar[two heads] & \Aut[L_A](L_X) \dar{\nerve{\blank}} \\
    \exp \bigl( \Ho 0 (\lie g) \bigr) \dar{\iso} & \lar[swap]{\iso} \hg 0 \bigl( \expb (\lie g) \bigr) \rar{\Xi_*} & \Eaut[\nerve{L_A}] \bigl( \nerve{L_X} \bigr) \\
    \exp \bigl( \Lie(\ol U) \bigr) \ar{rr}{\iso} & & \ol U(\QQ) \uar[swap]{\inc}
  \end{tikzcd}
  \]
  where $\expact$ is as in \cref{lemma:exp_action} and the upper left-hand square is provided by \cref{lemma:htpy_exp} and commutes.
  To finish the proof, it suffices to show that the bottom square commutes as well.
  Since the upper right-hand square commutes by definition, this is equivalent to the outer square commuting.
  That square is isomorphic to the $\QQ$-points of a corresponding diagram of algebraic groups.
  Since $\aexp(\lie g_0)$ is unipotent and hence connected (see e.g.\ \cite[Corollary~14.15]{Mil}), it suffices to show that the diagram of algebraic groups commutes after applying the functor $\Lie$ (see e.g.\ \cite[II, §6, 2.1 (b)]{DGeng}).
  This yields the outer pentagon of the diagram of Lie algebras
  \[
  \begin{tikzcd}
    \Lie \bigl( \aexp (\lie g_0) \bigr) \ar{rrr}{\Lie(\aexpact)}[below, name = T]{} \ar{ddd} & & & \dlar[end anchor = 10]{\iso} \Lie \bigl( \aAut[L_A](L_X) \bigr) \ar{dd} \\
     & \lie g_0 \ar[phantom, to = T]{}{(*)} \ular{\iso} \rar \dar & \Cycles 0 \bigl( \Der(L_X \rel L_A) \bigr) \dar & \\
     & \Lie(\ol U) \dlar[end anchor = 15][swap]{\iso} \drar[near end]{\id} \rar & \Ho 0 \bigl( \Der(L_X \rel L_A) \bigr) & \lar[swap]{\iso}  \Lie \bigl( \aEaut[L_A](L_X) \bigr) \\
    \Lie \bigl( \aexp \bigl( \Lie(\ol U) \bigr) \bigr) \ar{rr}{\iso} & & \Lie(\ol U) \ar[bend right = 15]{ur} &
  \end{tikzcd}
  \]
  where everything commutes except possibly the square marked $(*)$.
  In that square the upper composite is given by
  \[ \theta  \longmapsto  \theta \tensor \epsilon  \longmapsto  \sum_{n \ge 0} \frac {\bigl( (\theta \tensor \epsilon) \act \blank \bigr)^n} {n!}  \longmapsto  \theta \]
  by the explicit descriptions of the maps involved given in \cref{lemma:Lie_exp,lemma:exp_action,prop:Lie_aEaut}.
  In particular this upper composite is equal to the lower map, which is just the inclusion.
  This finishes the proof that $\Xi$ is a weak-equivalence.
  
  Lastly, we prove that the map is $N$-equivariant.
  First note that by assumption the conjugation action of $N \subseteq \Aut[L_A](L_X)$ on $\aut[\nerve{L_A}] ( \nerve{L_X} )$ restricts to $\aut[\nerve{L_A}] ( \nerve{L_X} )_{\ol U(\QQ)}$.
  By \cref{lemma:exp_aut_map} it is thus enough to prove that the conjugation action of $N$ on $\Der(L_X \rel L_A)$ preserves $\lie g$ setwise.
  This follows from the isomorphisms of \cref{prop:Lie_aEaut} being compatible with the conjugation actions of $\Aut[L_A](L_X)$, which can be seen by inspection.
\end{proof}

So far we have obtained a model for simplicial monoids of the form $\aut[A](X)_U$ (when $A$ and $X$ are rational spaces).
To be able to model the space $\B \bdlaut{A}{B}{K}(\xi)_U$ of \cref{def:bdlaut} we furthermore need a model for the relative mapping space $\map[B](X, K)_\xi$ together with its action of $\aut[A](X)_U$.
This is what we will provide now, following Berglund \cite[§3.6]{Ber}.
We begin with a helpful lemma (essentially due to Berglund \cite{Ber15}) on modeling specific connected components of a given space.

\begin{definition}
	Let $(L, d)$ be a dg Lie algebra, $L' \subseteq L$ a dg Lie ideal, and $\tau \in \MC(L)$ a Maurer--Cartan element.
	Then we write $(L')^\tau$ for the dg Lie algebra $(L', d_\tau \defeq d + \liebr \tau \blank)$.
\end{definition}

\begin{lemma}[Berglund] \label{lemma:MC_component_iso}
  Let $(L, d)$ be a dg Lie algebra equipped with a nilpotent action $\alpha$ of a nilpotent dg Lie algebra $\lie g$, and let $\tau \in \MC(L)$ such that $\theta \act \tau = 0$ for all $\theta \in \lie g_0$.
  Then the map
  \begin{align*}
    \MCb(\trunc 0 {L^\tau})  &\xlongto{\eq}  \MCb(L)_{\tau} \\
    x  &\longmapsto  x + \tau \tensor 1
  \end{align*}
  is a weak equivalence of simplicial sets.
  Moreover this map is $\expb(\lie g)$-equivariant when the domain is equipped with the action of \cref{lemma:MC_outer} associated to the outer action of $\lie g$ on $\trunc 0 {L^\tau}$ given by $\alpha$ and $\chi_\tau(\theta) \defeq \tau \act \theta$, and the codomain is equipped with the action of \cref{lemma:exp_aut_map}.
\end{lemma}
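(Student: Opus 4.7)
The plan is to verify first that the map $\iota \colon x \mapsto x + \tau \tensor 1$ is well-defined as a map into $\MCb(L)_\tau$, then prove it is a weak equivalence, and finally check the claimed equivariance.

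For well-definedness, expanding the Maurer--Cartan equation for $x + \tau \tensor 1$ in $L \tensor \sForms[n]$ and using that $\tau \tensor 1$ is itself Maurer--Cartan yields $d(x) + \liebr{\tau \tensor 1}{x} + \tfrac{1}{2} \liebr x x = 0$, which is precisely the Maurer--Cartan equation for $x$ in $L^\tau \tensor \sForms[n]$ (and hence in the dg Lie subalgebra $\trunc 0 {L^\tau} \tensor \sForms[n]$). Since $(\trunc 0 {L^\tau})_{-1} = 0$ forces $\MC(\trunc 0 {L^\tau}) = \set{0}$, the map sends the unique $0$-simplex of the source to $\tau$; by compatibility with face maps, every simplex in the image has all its vertices equal to $\tau$, so $\iota$ factors through $\MCb(L)_\tau$.

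For the weak equivalence, the plan is to use the standard identifications $\hg k(\MCb(M), \sigma) \iso \Ho{k-1}(M^\sigma)$ for $k \ge 1$ (cf.\ \cite[Corollary~6.3]{BerMC}) together with the gauge-orbit description of $\hg 0$. Both $\MCb(\trunc 0 {L^\tau})$ (based at $0$) and $\MCb(L)_\tau$ are path-connected by construction, and on $\hg k$ for $k \ge 1$ the map $\iota$ corresponds to the chain inclusion $\trunc 0 {L^\tau} \to L^\tau$, which is a quasi-isomorphism in non-negative degrees and hence induces an isomorphism on $\Ho{k-1}$. The main obstacle is that $L$ itself is not assumed nilpotent, so the cited formulas do not apply to $L$ directly. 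The plan to circumvent this is to reduce to a nilpotent sub-dg Lie algebra of $L$ generated by $\tau$ together with the image of $\alpha(\lie g)$ and iterated brackets — the hypothesis that the outer action of $\lie g$ on $L$ is nilpotent ensures such a reduction exhausts the component of $\tau$, placing the problem within the scope of the cited results.

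For equivariance, the key identity is $\theta \act (\tau \tensor 1) = -(\chi_\tau)_{\sForms[n]}(\theta)$ for $\theta \in \Cycles 0(\lie g \tensor \sForms[n])$, which follows componentwise from $\chi_\tau(\psi) = \tau \act \psi = -(-1)^{\deg \tau \deg \psi}(\psi \act \tau)$, with the Koszul sign in the extension canceling the Koszul sign in the action on $\tau \tensor 1$. Substituting $y = x + \tau \tensor 1$ into the formula $\Xi(\theta)(y) = \sum_{m \ge 0} \tfrac{1}{m!}(\theta \act \blank)^{\after m}(y)$ from \cref{lemma:exp_aut_map}, separating the $m = 0$ term, reindexing, and applying the identity gives
\[ \Xi(\theta)(x + \tau \tensor 1) = \tau \tensor 1 + x + \sum_{n \ge 0} \tfrac{1}{(n+1)!}(\theta \act \blank)^{\after n}\bigl(\theta \act x - (\chi_\tau)_{\sForms[n]}(\theta)\bigr), \]
which equals $\tau \tensor 1 + \Xi_{\chi_\tau}(\theta)(x)$ by the formula of \cref{lemma:MC_outer}. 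The assumption $\theta \act \tau = 0$ for $\theta \in \lie g_0$ is exactly what ensures the $0$-simplex $\tau$ is fixed by $\expb(\lie g)$, so the action restricts to $\MCb(L)_\tau$.
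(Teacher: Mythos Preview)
Your equivariance computation is essentially the paper's, but there are two genuine gaps elsewhere.

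\textbf{Weak equivalence.} The identification $\hg k(\MCb(M),\sigma)\iso\Ho{k-1}(M^\sigma)$ you invoke requires $M$ to be complete (or nilpotent), and neither $L$ nor $L^\tau$ is assumed to be. Your proposed workaround---restrict to a sub-dg Lie algebra generated by $\tau$ and the $\lie g$-orbit---does not work: the component $\MCb(L)_\tau$ consists of all simplices of $\MCb(L)$ connected to $\tau$, and these may pass through arbitrary regions of $L$; nilpotence of the $\lie g$-\emph{action} constrains nothing about which Maurer--Cartan elements are gauge equivalent to $\tau$ inside $L$ itself. The paper instead factors the map as
\[
  \MCb(\trunc 0{L^\tau}) \xlongto{\eq} \MCb(L^\tau)_0 \xlongto{\iso} \MCb(L)_\tau,
\]
where the second arrow is an isomorphism of simplicial sets (precisely your well-definedness computation, run in both directions) requiring no homotopical input, and the first is the truncation map based at $0$, handled by the cited results of Berglund and Getzler.

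\textbf{The outer action on the truncation.} You have not checked that $\chi_\tau(\theta)=\tau\act\theta$ actually lands in $\trunc 0{L^\tau}$; without this, the domain action $\Xi_{\chi_\tau}$ in your equivariance identity is not even defined. The case $\theta\in\lie g_0$ is the hypothesis and $\deg\theta\ge 2$ is automatic by degree, but for $\theta\in\lie g_1$ one must verify that $\theta\act\tau$ is a $d_\tau$-cycle. The paper does this by a short direct computation using the Maurer--Cartan equation for $\tau$ together with the hypothesis applied to $d\theta\in\lie g_0$.
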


\begin{proof}
  Consider the two maps
  \[ \MCb(\trunc 0 {L^\tau})  \xlongto{\eq}  \MCb(L^\tau)_0  \xlongto{\iso}  \MCb(L)_{\tau} \]
  where the first map is induced by the inclusion and the second map is given by $x \mapsto x + \tau \tensor 1$ in each simplicial degree.
  The first map is a weak equivalence by work of Berglund \cite[Corollary~4.11]{Ber15} and Getzler \cite[Corollary~5.11]{Get}, and the second map is an isomorphism by an argument analogous to \cite[Proof of Proposition~4.9]{Ber15}.
  The assumption that $\theta \act \tau = 0$ for all $\theta \in \Cycles{0}(\lie g)$ implies that the $\expb(\lie g)$-action on $\MCb(L)$ of \cref{lemma:exp_aut_map} indeed restricts to an action on the connected component containing $\tau$.
  It is immediate from chasing through the definitions that the second map is $\expb(\lie g)$-equivariant when $\MCb(L^\tau)_0$ is equipped with the action of \cref{lemma:MC_outer} corresponding to the outer action $\chi_\tau$ (which clearly fixes the $0$-simplex $0$).
  
  We will now prove that $\chi_\tau$ indeed restricts to an outer action on $\MCb(\trunc 0 {L^\tau})$, i.e.\ that $\theta \act \tau \in \trunc 0 {L^\tau}$ for all $\theta \in \lie g$.
  This is true for $\theta \in \lie g_0$ by assumption and true for $\theta \in \lie g_{\ge 2}$ by degree reasons.
  For $\theta \in \lie g_1$ we have that
  \begin{align*}
    d_\tau(\theta \act \tau) &= d(\theta) \act \tau - \theta \act d(\tau) + \liebr{\tau}{\theta \act \tau} \\
    &= \tfrac{1}{2} \theta \act \liebr \tau \tau + \liebr{\tau}{\theta \act \tau} \\
    &= \tfrac{1}{2} \bigl( \liebr {\theta \act \tau} \tau - \liebr \tau {\theta \act \tau} \bigr) + \liebr{\tau}{\theta \act \tau} \\
    &= 0
  \end{align*}
  which implies the claim and finishes the proof.
\end{proof}

\begin{definition}
	Let $C$ be a cocommutative dg coalgebra and $L$ a dg Lie algebra.
	Then we equip the chain complex $\Hom(C, L)$ with the dg Lie algebra structure with Lie bracket
	\[ \liebr f g  \defeq  \liebr \blank \blank \after (f \tensor g) \after \Delta \]
	where $\Delta$ is the comultiplication of $C$.
	When $C' \subseteq C$ is a dg subcoalgebra, we denote by $\Hom(C, L \rel C') \subseteq \Hom(C, L)$ the dg Lie ideal of those morphisms that vanish on $C'$.
	
	A Maurer--Cartan element of $\Hom(C, L)$ is called a \emph{twisting morphism}.
	We write $\pi \colon \CEchains{*}(L) \to L$ for the map of degree $-1$ that is the identity on $\shift L$ and trivial otherwise, and call it the \emph{universal twisting morphism}.
\end{definition}

\begin{proposition} \label{prop:map_model}
  Let $L_B \to L_A \to L_X$ be quasi-free maps of positively graded finitely generated quasi-free dg Lie algebras such that $L_A \to L_X$ is minimal, and let $\rho \colon L_X \to \Pi$ be a map of dg Lie algebras such that $\Pi$ is of finite type, positively graded, and has trivial differential.
  Then there is a weak equivalence of simplicial sets with a right $\Aut[L_B](L_X)_{\rho}$-action
  \[ \Theta \colon \cMCb \bigl( \trunc 0 { \Hom \bigl( \CEchains * (L_X), \Pi \rel[\big] \CEchains * (L_B) \bigr)^{\rho \after \pi} } \bigr)  \xlongto{\eq}  \map[\nerve{L_B}] \bigl( \nerve{L_X}, \nerve{\Pi} \bigr)_{\nerve{\rho}} \]
  where, on the left-hand side, we use the complete filtration induced by the complete filtration $\Pi = \Pi_{\ge 1} \supseteq \Pi_{\ge 2} \supseteq \dots$ of dg Lie algebras.
  
  Moreover, let $\ol U \subseteq \aEaut[L_A](L_X)_\rho$ be a unipotent algebraic subgroup.
  Then $\Theta$ is equivariant with respect to the map of \cref{prop:exp_eq}
  \[ \Xi \colon \expb \bigl( \Der[\ol U](L_X \rel L_A) \bigr)  \xlongto{\eq}  \aut[\nerve{L_A}] \bigl( \nerve{L_X} \bigr)_{\ol U(\QQ)} \]
  where the action of the domain (on each stage of the limit) is the action of \cref{lemma:MC_outer} associated to the restriction of the outer action $(\rho \after \pi)_*$ given by
  \begin{equation} \label{eq:Der_Hom_action}
  \begin{aligned}
    (f \act \theta)(\shift x_1 \wedge \dots \wedge \shift x_n)  &\defeq  \sum_{k = 1}^n (-1)^\epsilon f \bigl( \shift x_1 \wedge \dots \wedge \shift \theta (x_k) \wedge \dots \wedge \shift x_n \bigr) \\
    (\rho \after \pi)_*(\theta)  &\defeq  (\rho \after \pi) \act \theta
  \end{aligned}
  \end{equation}
  where $\theta \in \Der(L_X)$ and $f \in \Hom ( \CEchains * (L_X), \Pi )$, and we set $\epsilon \defeq \deg \theta (k + \deg {x_1} + \dots + \deg {x_{k-1}})$.
\end{proposition}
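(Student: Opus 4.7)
The plan is to reduce the statement to a computation on algebraic models, using three ingredients: (i) Lemma \ref{lemma:map_MCb} to replace the mapping space of realizations by the mapping complex of dg Lie algebras, (ii) the twisting morphism description of dg Lie algebra mapping complexes (essentially Hinich \cite{Hin01}, and implicit in the proof of Lemma \ref{lemma:map_MCb} via Quillen's functor $\QL$), and (iii) Lemma \ref{lemma:MC_component_iso} (in a completed variant) to isolate the connected component of $\nerve{\rho}$ inside the Maurer--Cartan space.

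First I would observe that Lemma \ref{lemma:map_MCb} provides a natural weak equivalence $\map[L_B](L_X,\Pi) \xlongto{\eq} \map[\nerve{L_B}](\nerve{L_X},\nerve{\Pi})$, and that the proof of that lemma exhibits the left-hand side as $\cMCb$ of the complete dg Lie algebra $\Hom(\CEchains*(L_X),\Pi \rel \CEchains*(L_B))$, with the complete filtration induced by $\Pi = \Pi_{\ge 1} \supseteq \Pi_{\ge 2} \supseteq \cdots$. The universal twisting morphism $\pi$ satisfies $\pi(\CEchains*(L_B)) \subseteq \shift L_B$, so the Maurer--Cartan element $\rho \after \pi$ lies in the sub-dg Lie algebra corresponding to maps vanishing on $\CEchains*(L_B)$, and it models the homotopy class of $\nerve{\rho}$ under the bijection on $\hg 0$.

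Next I would isolate the connected component of $\rho \after \pi$ by a completed version of Lemma \ref{lemma:MC_component_iso}: the weak equivalence $\MCb(\trunc 0 {L^\tau}) \xlongto{\eq} \MCb(L)_\tau$ extends to complete dg Lie algebras by passing to the inverse limit over quotients by the filtration, because each truncation $\quot L {\cfilt L n}$ is concentrated in finitely many degrees and the construction commutes with the limit. This yields the weak equivalence $\Theta$ onto the component $\nerve\rho$; precomposition by $\Aut[L_B](L_X)_\rho$ fixes $\rho \after \pi$ and so acts on the twisted truncation on the left, in a way compatible with precomposition on $\map[\nerve{L_B}](\nerve{L_X},\nerve{\Pi})_{\nerve\rho}$ by naturality of all constructions in the pair $(L_X,\pi)$.

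Finally, for the $\expb$-equivariance with respect to $\Xi$, the hard part is to identify the action of $\expb(\Der[\ol U](L_X \rel L_A))$ coming from Lemma \ref{lemma:MC_outer} and the outer action \eqref{eq:Der_Hom_action} with the action induced by $\Xi$ and postcomposition in $\map[\nerve{L_A}](\nerve{L_X},\nerve{\Pi})$. I would verify this at the level of zero-simplices by unwinding the Baker--Campbell--Hausdorff formula: a derivation $\theta \in \Cycles 0(\Der[\ol U](L_X \rel L_A))$ is sent by $\Xi$ to the exponential automorphism $e^\theta$, and the induced action on a twisting morphism $f = \rho \after \pi + x$ is $f \mapsto f \after \CEchains*(e^\theta)$; a direct calculation using that $\CEchains*(e^\theta) = e^{\CEchains*(\theta)}$ (for $\CEchains*(\theta)$ the induced coderivation acting by the sum in \eqref{eq:Der_Hom_action}) produces exactly the formula $\Xi_\chi$ of Lemma \ref{lemma:MC_outer} for the outer action $\chi_{\rho \after \pi}(\theta) = (\rho \after \pi) \act \theta$. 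The main obstacle I expect is keeping track of signs and the passage to the inverse limit carefully, but once the zero-simplex identification is established, naturality of all constructions in the commutative chain algebra $\sForms$ extends it to all simplicial degrees.
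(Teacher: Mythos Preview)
Your overall strategy is close to the paper's: both pass through the identification of the relative mapping space with a completed Maurer--Cartan space of $\Hom(\CEchains*(L_X),\Pi \rel \CEchains*(L_B))$ and then isolate the component of $\rho\after\pi$ via the twisted truncation of \cref{lemma:MC_component_iso}. The paper invokes \cite[Theorem~3.16 and Remark~3.17]{Ber} directly rather than routing through \cref{lemma:map_MCb}; this sidesteps the extra quasi-isomorphism $\epsilon^*$ and a basepoint mismatch (the target in the proof of \cref{lemma:map_MCb} is $\map[*]$, not the component of $\nerve\rho$), but these are minor differences.

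There is, however, a genuine gap in your equivariance argument. You treat the identification of the $\expb$-action as pure bookkeeping with the BCH formula, but the crucial point you do not address is why the outer action $(\rho\after\pi)_*$ of \eqref{eq:Der_Hom_action} restricts to the truncation $\trunc{0}{\Hom(\CEchains*(L_X),\Pi \rel \CEchains*(L_B))^{\rho\after\pi}}$ at all. By \cref{lemma:MC_component_iso} this requires $\theta \act (\rho\after\pi)=0$ for every $\theta$ of degree~$0$ in $\Der[\ol U](L_X\rel L_A)$, which unwinds to $\rho\after\theta=0$ on generators of $L_X$. This is not automatic and is exactly where the hypothesis $\ol U\subseteq\aEaut[L_A](L_X)_\rho$ enters: the paper proves it by identifying $\Der[\ol U](L_X\rel L_A)_0$ with $\Lie(\ol U')$ for $\ol U'$ the preimage of $\ol U$ in $\aAut[L_A](L_X)_\rho$ (as in the proof of \cref{prop:exp_eq}), and then observing that since $\ol U'$ fixes $\rho$ under precomposition, its Lie algebra annihilates $\rho$ in the representation on $\Hom((L_X)_{\le N},\Pi_{\le N})$. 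Without this step the equivariant form of \cref{lemma:MC_component_iso} cannot be invoked, and your direct calculation would only match the actions on the full $\cMCb$, not on its truncated model---which is what the proposition is about.
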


\begin{proof}
  By \cite[(Proof of) Theorem~3.16]{Ber} there is a natural weak equivalence of simplicial sets
  \begin{equation} \label{eq:map_eq}
    \MC \bigl( \Hom \bigl( \CEchains * (L_X), \Pi \tensor \sForms \bigr) \bigr)  \xlongto{\eq}  \map \bigl( \MCb(L_X), \MCb(\Pi) \bigr)
  \end{equation}
  given by the adjoint of the restriction of the map given in simplicial degree $n$ by
  \begin{equation} \label{eq:map_eq_formula}
  \begin{aligned}
  	\Hom \bigl( \CEchains * (L_X), \Pi \tensor \sForms[n] \bigr)_{-1} \times (L_X \tensor \sForms[n])_{-1}  &\longto  (\Pi \tensor \sForms[n])_{-1} \\
  	(f, x \tensor b)  &\longmapsto  \sum_{k \ge 0} (-1)^{\binom k 2 \deg b} \frac{1}{k!} f \bigl( (\shift x)^{\wedge k} \bigr) \cdot b^k
  \end{aligned}
  \end{equation}
  where $\cdot$ denotes the multiplication map $\Pi \tensor \sForms[n] \tensor \sForms[n] \to \Pi \tensor \sForms[n]$.
  Note that the sign in the sum is only $-1$ if $\deg b$ is odd and $k > 1$.
  But then $b^k = 0$; hence the sign can also be omitted.
  The map \eqref{eq:map_eq} sends the $0$-simplex $\rho \after \pi$ to $\MCb(\rho)$ by construction.
  Furthermore, by \cite[Remark~3.17]{Ber}, there is an isomorphism
  \begin{equation} \label{eq:cMCb_Hom}
    \cMCb \bigl( \Hom \bigl( \CEchains * (L_X), \Pi \bigr) \bigr)  \xlongto{\iso}  \MC \bigl( \Hom \bigl( \CEchains * (L_X), \Pi \tensor \sForms \bigr) \bigr)
  \end{equation}
  where on the left-hand side we use the complete filtration with $k$-th stage $\Hom ( \CEchains * (L_X), \Pi_{\ge k} )$.
  The isomorphism is in simplicial degree $n$ induced by the map sending a sequence $(f_k)_{k \ge 1}$ with $f_k \in \Hom ( \CEchains * (L_X), \Pi_{\le k - 1} )_p \tensor \sForms[n]$ to the map $\CEchains * (L_X) \to \Pi \tensor \sForms[n]$ sending $x \in \CEchains q (L_X)$ to $(\ev[x] \tensor \id) (f_{q+p+1})$.
  We thus obtain the following commutative diagram of simplicial sets
  \begin{equation} \label{eq:map_eq_diag}
  \begin{tikzcd}
    \cMCb \bigl( \trunc 0 { \Hom \bigl( \CEchains * (L_X), \Pi \bigr)^{\rho \after \pi} } \bigr) \dar[swap]{\eq} \rar & \cMCb \bigl( \trunc 0 { \Hom \bigl( \CEchains * (L_B), \Pi \bigr)^{\rho \after \pi} } \bigr) \dar{\eq} \\
    \cMCb \bigl( \Hom \bigl( \CEchains * (L_X), \Pi \bigr) \bigr)_{\rho \after \pi} \dar[swap]{\eq} \rar & \cMCb \bigl( \Hom \bigl( \CEchains * (L_B), \Pi \bigr) \bigr)_{\rho \after \pi} \dar{\eq} \\
    \map \bigl( \MCb(L_X), \MCb(\Pi) \bigr)_{\MCb(\rho)} \rar & \map \bigl( \MCb(L_B), \MCb(\Pi) \bigr)_{\MCb(\rho)}
  \end{tikzcd}
  \end{equation}
  where the top vertical weak equivalences are (at each stage of the limit) those of \cref{lemma:MC_component_iso} (note that each map in the respective towers is a fibration).
  Taking horizontal fibers yields the desired weak equivalence $\Theta$ (since both the top and the bottom horizontal map are fibrations).
  Using the formula \eqref{eq:map_eq_formula}, one checks it to be $\Aut[L_B](L_X)_{\rho}$-equivariant.

  We claim that, for any nilpotent dg Lie subalgebra $\lie g \subseteq \Der(L_X)$, the action of $\lie g$ on
  \[ \quot { \Hom \bigl( \CEchains * (L_X), \Pi \bigr) } { \Hom \bigl( \CEchains * (L_X), \Pi_{\ge k} \bigr) } \]
  induced by \eqref{eq:Der_Hom_action} is nilpotent for every $k$.
  To see this, note that the representation of $\lie g_0$ on $(\shift L_X)^{\wedge p} \subseteq \CEchains * (L_X)$ is nilpotent for all $p$.
  Since $L_X$ is non-negatively graded, this implies that the representation of $\lie g_0$ on $\CEchains q (L_X)$ is nilpotent for all $q$, which in turn implies the claim.
  Hence there is an action of $\expb(\lie g)$ on $\cMCb ( \Hom ( \CEchains * (L_X), \Pi ) )$ given at each stage of the limit by the action of \cref{lemma:MC_outer}.
  
  By chasing through the definitions one sees that the composite of the maps \eqref{eq:cMCb_Hom} and \eqref{eq:map_eq} is equivariant with respect to the map $\Xi \colon \expb(\lie g) \to \aut[*](\MCb(L_X))$ of \cref{lemma:exp_aut_map} (see also \cite[Proposition~3.18]{Ber}).
  Moreover, if $\lie g \subseteq \Der(L_X \rel L_B)$, then the lower square of the diagram above is $\expb(\lie g)$-equivariant before passing to the indicated components when the right-hand column is equipped with the trivial action.
  
  We will now prove that the action of any $\theta \in \lie g \defeq \Der[\ol U](L_X \rel L_A)$ of degree $0$ annihilates the element $\rho \after \pi \in \Hom ( \CEchains * (L_X), \Pi )$; by \cref{lemma:MC_component_iso}, this will imply that the whole diagram \eqref{eq:map_eq_diag} is $\expb(\lie g)$-equivariant.
  To this end, note that $\theta \act (\rho \after \pi) = \pm \rho \after \theta \after \pi$, so that it is enough to prove that $\rho(\theta(x)) = 0$ for all $x \in L_X$.
  Since $\rho$ is a map of Lie algebras and $\theta(\blank)$ is a derivation, it is enough to prove that $\rho(\theta(x)) = 0$ for all generators $x \in L_X$; let $N \in \NN$ be the maximal degree of a generator of $L_X$.
  Writing $\ol U'$ for the preimage of $\ol U$ in $\aAut[L_A](L_X)$, there is a commutative diagram of Lie algebras
  \[
  \begin{tikzcd}
    \Lie(\ol U') \rar \dar[swap]{\iso} & \Lie \bigl( \aAut[L_A](L_X) \bigr) \rar \dar{\iso} & \Lie \bigl( \aGL \bigl( \Hom \bigl( (L_X)_{\le N}, \Pi_{\le N} \bigr) \bigr) \bigr) \dar{\iso} \\
    \Der[\ol U](L_X \rel L_A)_0 \rar & \Cycles 0 \bigl( \Der(L_X \rel L_A) \bigr) \rar & \gl \bigl( \Hom \bigl( (L_X)_{\le N}, \Pi_{\le N} \bigr) \bigr)
  \end{tikzcd}
  \]
  where the middle and right-hand vertical isomorphisms are those of \cref{prop:Lie_aEaut} and \cref{lemma:Lie_GL}, respectively, and the left-hand square is provided by the proof of \cref{prop:exp_eq}.
  The right-hand horizontal maps are given by acting via precomposition, and the right-hand square is seen to commute by chasing through the definitions.
  Since $\ol U \subseteq \aEaut[L_A](L_X)_\rho$, we have $\ol U' \subseteq \aAut[L_A](L_X)_\rho$, and so the upper composite lands in the Lie algebra of the stabilizer of $\rho$.
  Thus the lower composite lands in the Lie subalgebra of those endomorphisms vanishing on $\rho$; this finishes the proof.
\end{proof}

\begin{lemma} \label{lemma:Hom_eq}
	In the situation of \cref{prop:map_model}, if $\Pi$ is abelian and $\rho$ vanishes on $L_B$, then there is an $\expb(\Der[\ol U](L_X \rel L_A))$-equivariant weak equivalence of simplicial sets
	\[ \MCb \bigl( \trunc 0 { \Hom \bigl( \shift \indec[L_B](L_X), \Pi \bigr) } \bigr)  \xlongto{\eq}  \cMCb \bigl( \trunc 0 { \Hom \bigl( \CEchains * (L_X), \Pi \rel[\big] \CEchains * (L_B) \bigr)^{\rho \after \pi} } \bigr) \]
  (recall that $\indec[L_B](L_X)$ denotes relative indecomposables, see \cref{def:indec}).
  Here the action of $\expb(\Der[\ol U](L_X \rel L_A))$ on the domain is the action of \cref{lemma:MC_outer} associated to the outer action $\tilde \rho_*$ given by $(f \act \theta)(\shift \eqcl x) \defeq (-1)^{\deg \theta} f( \shift \eqcl {\theta(x)} )$ and $\tilde \rho_*(\theta) \defeq \tilde \rho \act \theta$, where $\tilde \rho \colon \shift \indec[L_B](L_X) \to \Pi$ is the map induced by $\rho$.
\end{lemma}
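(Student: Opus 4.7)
The plan is as follows. The key observation is that, since $\Pi$ is abelian, the Lie bracket on $\Hom(C, \Pi)$ vanishes for any dg coalgebra $C$; hence both sides of the claimed equivalence are abelian dg Lie algebras, and the twist $(\rho \after \pi)$ on the right-hand side does not modify the underlying differential. It therefore suffices to produce a quasi-isomorphism of the underlying chain complexes, check equivariance, and pass to realizations, absorbing the basepoint shift by $\rho \after \pi$ via \cref{lemma:MC_component_iso}.

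I would realize the claimed weak equivalence via the chain map
\[ \sigma  \colon  \CEchains *(L_X) / \CEchains *(L_B)  \longto  \shift \indec[L_B](L_X) \]
given by projecting to wedge degree~$1$ and then quotienting by $\shift L_B$ and $\shift \liebr{L_X}{L_X}$. This is a chain map because the bracket part~$\delta$ of the Chevalley--Eilenberg differential sends wedge~$2$ into $\shift \liebr{L_X}{L_X}$ (which is killed in the target), while the internal part $d_L$ on wedge~$1$ descends to the induced differential on $\indec[L_B](L_X)$. Dualizing yields the map $\sigma^* \colon \Hom(\shift \indec[L_B](L_X), \Pi) \to \Hom(\CEchains *(L_X), \Pi \rel \CEchains *(L_B))$ of abelian dg Lie algebras. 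To show that $\sigma$ is a quasi-isomorphism, I would generalize the classical absolute computation $\CEho_*(\freelie W) = \QQ \oplus \shift W$ (see e.g.\ \cite[Proposition~22.8]{FHT}) to the relative setting: filtering the source of $\sigma$ by wedge degree and using the splitting $L_X \iso L_B \cop \freelie(U)$ provided by quasi-freeness (with $U \iso \indec[L_B](L_X)$ via \cref{lemma:quasi-free_indec}), one identifies the $E^\infty$-page of the associated spectral sequence with $\shift \indec[L_B](L_X)$ concentrated in wedge degree~$1$.

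Equivariance under $\expb(\Der[\ol U](L_X \rel L_A))$ is a direct verification from the formulas. A derivation $\theta \in \Der(L_X \rel L_A)$ vanishes on $L_A \supseteq L_B$ and therefore preserves the ideal $\langle L_B \rangle$, so it descends to a graded linear map $\bar\theta$ on $\indec[L_B](L_X)$; the action $(f, \theta) \mapsto (-1)^{|\theta|} f \after \shift \bar\theta$ on $\Hom(\shift \indec[L_B](L_X), \Pi)$ matches under $\sigma^*$ the action on $\Hom(\CEchains *(L_X), \Pi)$ restricted to wedge~$1$, since all wedge-$\geq 2$ terms vanish on the image of $\sigma^*$. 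The twist terms $\tilde\rho_*$ and $(\rho \after \pi)_*$ correspond because $\tilde\rho \after \sigma = \rho \after \pi$ on wedge~$1$ and both vanish elsewhere. The main obstacle will be rigorizing the spectral sequence argument in the mixed relative case $L_B \subsetneq L_A$, where the wedge filtration mixes $L_B$-contributions with those from the complement $U$ in a non-trivial way; the cleanest approach may be to construct an explicit contracting homotopy from the splitting $L_X \iso L_B \cop \freelie(U)$.
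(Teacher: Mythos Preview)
Your approach is essentially the paper's: define the chain-level projection $\sigma = p \colon \CEchains * (L_X)/\CEchains * (L_B) \to \shift \indec[L_B](L_X)$, observe that abelianness of $\Pi$ kills both the Lie bracket and the twist by $\rho \after \pi$, dualize, and check equivariance. Two points deserve comment.

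First, the ``main obstacle'' you flag is not one. The quasi-isomorphism of $\sigma$ is already established in the paper as (the proof of) \cref{lemma:indec_is_ho}: one simply applies the absolute result \cite[Proposition~22.8]{FHT} to $L_B$ and to $L_X$ separately, obtaining a map of short exact sequences whose outer terms are quasi-isomorphisms, and then takes the cofiber. No spectral sequence or contracting homotopy is needed, and the argument is entirely insensitive to the distinction between $L_B$ and $L_A$.

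Second, you omit a step that the paper makes explicit: the codomain is a \emph{completed} realization $\cMCb$, while the domain is $\MCb$. The map $p^*$ is filtration-preserving for the filtrations induced by $\Pi = \Pi_{\ge 1} \supseteq \Pi_{\ge 2} \supseteq \cdots$, so one first obtains a weak equivalence of $\cMCb$'s; then one observes that, since $\shift \indec[L_B](L_X)$ is finite-dimensional, the filtration on $\trunc 0 {\Hom(\shift \indec[L_B](L_X), \Pi)}$ stabilizes in each degree, so $\cMCb = \MCb$ on the domain. Your invocation of \cref{lemma:MC_component_iso} to ``absorb the basepoint shift'' is unnecessary here, since the twist is already trivial once $\Pi$ is abelian.
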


\begin{proof}
	First note that $\Hom ( \CEchains * (L_X), \Pi )$ is abelian since $\Pi$ is (in particular the twist by $\rho \after \pi$ can be omitted), and that the map
	\[ \pr^*  \colon  \Hom \bigl( \quot {\CEchains * (L_X)} {\CEchains * (L_B)}, \Pi \bigr)  \longto  \Hom \bigl( \CEchains * (L_X), \Pi \rel[\big] \CEchains * (L_B) \bigr) \]
	is an isomorphism of chain complexes.
	By the proof of \cref{lemma:indec_is_ho}, the projection
  \[ p \colon \quot {\CEchains * (L_X)} {\CEchains * (L_B)} \longto \shift \indec[L_B](L_X) \]
  is a quasi-isomorphism, and hence the map
	\[ p^* \colon \Hom \bigl( \shift \indec[L_B](L_X), \Pi \bigr)  \longto  \Hom \bigl( \quot {\CEchains * (L_X)} {\CEchains * (L_B)}, \Pi \bigr) \]
	is a quasi-isomorphism as well.
  It is compatible with the respective outer actions of the derivation dg Lie algebra $\Der[\ol U](L_X \rel L_A)$, and filtration preserving when each side is equipped with the respective complete filtration induced by the filtration $\Pi_{\ge k}$ of $\Pi$.
  Hence we obtain a weak equivalence
  \[ \cMCb \bigl( \trunc 0 { \Hom \bigl( \shift \indec[L_B](L_X), \Pi \bigr) } \bigr)  \xlongto{\eq}  \cMCb \bigl( \trunc 0 { \Hom \bigl( \CEchains * (L_X), \Pi \rel[\big] \CEchains * (L_B) \bigr) } \bigr) \]
  that is $\expb(\Der[\ol U](L_X \rel L_A))$-equivariant.
  Since $\shift \indec[L_B](L_X)$ is finite-dimensional, the filtration of $\trunc 0 { \Hom \bigl( \shift \indec[L_B](L_X), \Pi \bigr) }$ stabilizes in each homological degree.
  This implies the claim.
\end{proof}

\begin{remark} \label{rem:more_general_with_CE}
  For the rest of this paper we work in the setting of \cref{lemma:Hom_eq}, i.e.\ we assume that $\Pi$ is abelian and that $\rho$ vanishes on $L_B$.
  This simplifies everything that follows and yields nicer results.
  However, by continuing to use the realization
  \[ \cMCb \bigl( \trunc 0 { \Hom \bigl( \CEchains * (L_X), \Pi \rel[\big] \CEchains * (L_B) \bigr)^{\rho \after \pi} } \bigr) \]
  one could likely carry out all arguments in the more general situation as well.
  This would mainly require versions of \cref{lemma:MC_outer,lemma:MC_forms} for $\cMCb$.
\end{remark}

\begin{proposition} \label{prop:bdlaut_model}
  In the situation of \cref{lemma:Hom_eq}, the simplicial set
  \begin{equation} \label{eq:bdlaut_MC}
  	\B \bdlaut{\nerve{L_A}}{\nerve{L_B}}{\nerve{\Pi}} \bigl( \nerve{\rho} \bigr)_{\ol U(\QQ)}
  \end{equation}
  is weakly equivalent to the Maurer--Cartan space of the dg Lie algebra
  \[ \lie g_{L_A}^{L_B}(\rho)_{\ol U}  \defeq  \trunc 0 { \Hom \bigl( \shift \indec[L_B](L_X), \Pi \bigr) }  \rsemidir[\tilde \rho_*] \Der[\ol U](L_X \rel L_A) \]
  where the outer action is given by $(f \act \theta)(\shift \eqcl x) \defeq (-1)^{\deg \theta} f( \shift \eqcl {\theta \act x} )$ and $\tilde \rho_*(\theta) \defeq \tilde \rho \act \theta$, where $\tilde \rho \colon \shift \indec[L_B](L_X) \to \Pi$ is the map induced by $\rho$.
  This weak equivalence can be lifted to a weak equivalence of simplicial sets with an action of the normalizer $N$ of the preimage of $\ol U(\QQ)$ in $\Aut[L_A](L_X)_{\rho}$.
\end{proposition}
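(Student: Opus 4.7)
The plan is to unpack the classifying space as a two-sided bar construction and then successively replace each factor by an equivariant model supplied by the preceding propositions, concluding with \cref{lemma:MC_outer}. By \cref{def:bdlaut}, the space in question equals
\[ \B \bigl( \map[\nerve{L_B}](\nerve{L_X}, \nerve{\Pi})_{\nerve{\rho}},\ \aut[\nerve{L_A}](\nerve{L_X})_{\ol U(\QQ)},\ * \bigr). \]

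First I would assemble the two pieces. \cref{prop:exp_eq} provides an $N$-equivariant weak equivalence of simplicial monoids $\Xi \colon \expb(\Der[\ol U](L_X \rel L_A)) \xlongto{\eq} \aut[\nerve{L_A}](\nerve{L_X})_{\ol U(\QQ)}$, and combining \cref{prop:map_model} with \cref{lemma:Hom_eq} yields a weak equivalence
\[ \MCb \bigl( \trunc 0 { \Hom ( \shift \indec[L_B](L_X), \Pi ) } \bigr) \xlongto{\eq} \map[\nerve{L_B}](\nerve{L_X}, \nerve{\Pi})_{\nerve{\rho}} \]
that is equivariant with respect to $\Xi$, with the domain carrying the $\expb(\Der[\ol U](L_X \rel L_A))$-action of \cref{lemma:MC_outer} associated to the outer action $\tilde \rho_*$. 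Applying \cref{lemma:bar_eq} to the resulting span of triples produces a weak equivalence
\[ \B \bigl( *,\ \expb(\Der[\ol U](L_X \rel L_A)),\ \MCb ( \trunc 0 { \Hom ( \shift \indec[L_B](L_X), \Pi ) } ) \bigr) \xlongto{\eq} \B \bdlaut{\nerve{L_A}}{\nerve{L_B}}{\nerve{\Pi}} \bigl( \nerve{\rho} \bigr)_{\ol U(\QQ)}. \]

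Finally, observing that the right semi-direct product $\trunc 0 { \Hom ( \shift \indec[L_B](L_X), \Pi ) } \rsemidir[\tilde \rho_*] \Der[\ol U](L_X \rel L_A)$ coincides as a dg Lie algebra with the corresponding left semi-direct product, \cref{lemma:MC_outer} gives a zig-zag of weak equivalences
\[ \MCb(\lie g_{L_A}^{L_B}(\rho)_{\ol U}) \xlonghto{\eq} \B \bigl( *,\ \expb(\Der[\ol U](L_X \rel L_A)),\ \MCb ( \trunc 0 { \Hom ( \shift \indec[L_B](L_X), \Pi ) } ) \bigr) \]
(the action of $\expb$ on $\MCb$ matches because both are defined by the very same $\Xi_\chi$-formula). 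Composing with the previous equivalence completes the construction of the weak equivalence.

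The main obstacle is to carefully verify $N$-equivariance throughout. Each ingredient is formulated with the appropriate equivariance, but one needs to check compatibility of the various $N$-actions: that the conjugation $N$-action on $\Der[\ol U](L_X \rel L_A)$ (which exists since $N$ normalizes the preimage of $\ol U(\QQ)$ and hence, via \cref{prop:Lie_aEaut}, preserves $\Der[\ol U](L_X \rel L_A)$ setwise) fixes the outer action $\tilde \rho_*$ in the sense required to feed into the naturality clause of \cref{lemma:MC_outer} and into the naturality of the zig-zag in \cref{lemma:bar_eq}. This amounts to tracing through the formula for $\tilde \rho_*$ and using that $N$ stabilizes $\rho$.
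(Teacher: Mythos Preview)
Your argument is correct and follows the paper's proof essentially verbatim: combine \cref{prop:exp_eq}, \cref{prop:map_model}, and \cref{lemma:Hom_eq} (via \cref{lemma:bar_eq}) to model the bar construction, then invoke \cref{lemma:MC_outer}, with $N$-equivariance coming from the equivariance clauses of those results together with the compatibility of $\tilde\rho_*$ with the $N$-action. The only cosmetic slip is that you silently move the module from the left slot $\B(\map,\aut,*)$ to the right slot $\B(*,\expb,\MCb)$; the paper writes the intermediate step as $\B(\MCb,\expb,*)$ and then applies \cref{lemma:MC_outer}, implicitly using the same left/right identification you are using.
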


\begin{proof}
	By \cref{prop:exp_eq,prop:map_model,lemma:Hom_eq}, there is a weak equivalence between the simplicial set \eqref{eq:bdlaut_MC} and
	\[ \B \Bigl( \MCb \bigl( \trunc 0 { \Hom \bigl( \shift \indec[L_B](L_X), \Pi \bigr) } \bigr), \expb \bigl( \Der[\ol U](L_X \rel L_A) \bigr), * \Bigr) \]
	where the middle term acts on the left-hand term via the action of \cref{lemma:MC_outer} associated to the outer action $\tilde \rho_*$.
	Then \cref{lemma:MC_outer} implies the first claim.
	For the equivariance, note that the outer action $\tilde \rho_*$ is compatible with the actions of $N$; the result then follows from the equivariance of \cref{prop:exp_eq,prop:map_model} and the naturality of \cref{lemma:Hom_eq}.
\end{proof}

The preceding proposition shows that when $U$ arises as the $\QQ$-points of a normal unipotent subgroup, we can model the space $\B \bdlaut{A}{B}{K}(\xi)_U$ (when $A$, $B$, and $X$ are rational spaces) together with its conjugation action of $\Aut[L_A](L_X)_\rho$.
This space sits in a fiber sequence
\[ \B \bdlaut{A}{B}{K}(\xi)_U  \longto  \B \bdlaut{A}{B}{K}(\xi)  \longto  \B \bigl( \quot {\Eaut[A](X)_{\eqcl {\xi}}} U \bigr) \]
and, following Berglund--Zeman \cite{BZ}, we would like to say that we can recover the total space from the fiber together with its action of $\quot {\Eaut[A](X)_{\eqcl {\xi}}} U$.
However, this action is not strict and can not be directly related to the action of $\Aut[L_A](L_X)_{\rho}$.
To circumvent this problem, below we recall from \cite[§3.5]{BZ} that $\B \bdlaut{A}{B}{K}(\xi)_U$ is, as an $\Aut[L_A](L_X)_{\rho}$-space, weakly equivalent to
\[ \B \bigl( \map[B](X, K)_{\xi}, \aut[A](X), \quot {\Eaut[A](X)_{\eqcl {\xi}}} U \bigr) \]
where the action is from the right via the map $\real$ of \cref{def:real}.
Using \cref{lemma:hquot_hcoinv} this proves that, when the group homomorphism $\Aut[L_A](L_X)_{\rho} \to \quot {\Eaut[A](X)_{\eqcl {\xi}}} U$ has a section, we can recover $\B \bdlaut{A}{B}{K}(\xi)$ from the dg Lie algebra appearing in \cref{prop:bdlaut_model} together with its $\Aut[L_A](L_X)_{\rho}$-action.
We will see below that such a section always exists when $U$ is the unipotent radical of $\Eaut[A](X)_{\eqcl{\xi}}$.

\begin{definition}
  Let $G$ be a group-like simplicial monoid, $H \subseteq \hg{0}(G)$ a subgroup, and $M$ a right $G$-module in simplicial sets.
  We define a simplicial set
  \[ \B[H] (M, G)  \defeq  \B \bigl( M, G, \quot {\hg{0}(G)} H \bigr) \]
  where $G$ acts on the set $\quot {\hg{0}(G)} H$ via the canonical map $G \to \hg{0}(G)$.
  Note that $\B[H] (M, G)$ admits a canonical right action of the group $\quot {\Norm[\hg{0}(G)](H)} H$.
  
  When $B \to A \to X$ are cofibrations and $\xi \colon X \to K$ is a map of Kan complexes, we write
  \[ \B[H] \bdlaut{A}{B}{K}(\xi)_G  \defeq  \B[H] \bigl( \map[B](X, K)_\xi, \aut[A](X)_G \bigr) \]
  where $H \subseteq G \subseteq \Eaut[A](X)_{\eqcl{\xi}_B}$ are subgroups.
\end{definition}

\begin{lemma} \label{lemma:submonoid_bar}
  Let $G$ be a group-like simplicial monoid, $H \subseteq \hg{0}(G)$ a subgroup, and $M$ a right $G$-module in simplicial sets.
  Then the following map, induced by the inclusion $G_H \to G$,
  \[ \B( M, G_H, * )  \longto  \B \bigl( M, G, \quot {\hg 0 (G)} H \bigr) = \B[H] (M, G) \]
  is a weak equivalence (recall that $G_H$ denotes the components of $G$ belonging to $H$).
\end{lemma}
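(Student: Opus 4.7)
The plan is to identify this as (essentially) an instance of \cref{lemma:bar_stabilizer}, applied to the discrete left $G$-module $\quot{\hg 0(G)}{H}$ with distinguished vertex the identity coset $\eqcl e$. Since $G$ acts by left multiplication via the canonical quotient $G \to \hg 0(G)$, one has $g \act \eqcl e = \eqcl e$ precisely when $g \in G_H$, so the stabilizer submonoid $\Stab G(\eqcl e)$ equals $G_H$. \Cref{lemma:bar_stabilizer} would then directly produce a weak equivalence from $\B(M, G_H, *)$ onto the union of components of $\B(M, G, \quot{\hg 0(G)}{H})$ containing some vertex of the form $(m_0, \eqcl e)$.

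The main (very minor) obstacle is that the hypothesis of \cref{lemma:bar_stabilizer} that $\rho_{\eqcl e} \colon G \to \quot{\hg 0(G)}{H}$ be a Kan fibration need not hold in this generality. Inspecting its proof, however, this assumption is used only to guarantee that the pullback square
\[
\begin{tikzcd}
  G_H \rar \dar & \set{\eqcl e} \dar[hook] \\
  G \rar{\rho_{\eqcl e}} & \quot{\hg 0(G)}{H}
\end{tikzcd}
\]
remains a homotopy pullback after geometric realization, which here is automatic: $\gr{\quot{\hg 0(G)}{H}}$ is a discrete topological space, and every continuous map to a discrete space is (trivially) a Serre fibration. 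The remainder of the proof of \cref{lemma:bar_stabilizer} then goes through unchanged.

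To finish, I would verify that the weak equivalence produced above in fact hits every component of the target, i.e.\ that the subscript restricting to components containing a vertex of the form $(m_0, \eqcl e)$ is redundant. Using the standard description of $\hg 0$ of a two-sided bar construction, both $\hg 0 \B(M, G_H, *)$ and $\hg 0 \B(M, G, \quot{\hg 0(G)}{H})$ are canonically identified with $\quot{\hg 0(M)}{H}$ (for the latter one uses that $G$ is group-like, so that every class in $\quot{\hg 0(G)}{H}$ is a right translate of $\eqcl e$), and the map under consideration realizes the identity under these identifications.
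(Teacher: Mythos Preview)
Your proposal is correct and follows essentially the same route as the paper: apply \cref{lemma:bar_stabilizer} to the discrete left $G$-module $\quot{\hg 0(G)}{H}$ at the identity coset, then observe that the map hits every component. The paper dispatches the component issue with the one-line remark that $G$ acts transitively on $\quot{\hg 0(G)}{H}$, while you compute $\hg 0$ explicitly; both are fine. Your treatment is in one respect more careful than the paper's: you notice that the fibration hypothesis of \cref{lemma:bar_stabilizer} need not literally hold (since $G$ may not be a Kan complex), and you explain why the proof of that lemma goes through anyway (the geometric realization of any map to a discrete space is a Serre fibration). The paper's proof simply invokes \cref{lemma:bar_stabilizer} without comment on this point.
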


\begin{proof}
  This follows from \cref{lemma:bar_stabilizer} applied to the left $G$-module $M$, the right $G$-module $\quot {\hg 0 (G)} H$, and the class of the identity element $\eqcl 1 \in \quot {\hg 0 (G)} H$.
  Note that each component of $\B \bigl( M, G, \quot {\hg 0 (G)} H \bigr)$ contains an element of the form $(m, \eqcl 1)$ with $m \in M_0$, since $G$ acts transitively on $\quot {\hg 0 (G)} H$.
\end{proof}

\begin{lemma}[Berglund--Zeman] \label{lemma:left_conj_eq}
  Let $f \colon G \to H$ be a map of simplicial monoids, $M$ a right $G$-module in simplicial sets, $D$ a discrete group, and $a \colon D \to G$ a map of simplicial monoids.
  Then there is a weak equivalences in the homotopy category of simplicial sets with a $D$-action
  \[ B(M, G, H)_{\mathrm{conj}}  \xlonghto{\eq}  B(M, G, H)_{\mathrm{right}} \]
  where on left-hand side $D$ acts from the right on $M$ and by conjugation on both $G$ and $H$, and on the right-hand side $D$ only acts on $H$ on the right.
  Moreover, this zig-zag consists of monoidal natural transformations of oplax symmetric monoidal functors from the category of tuples $(D, G, H, M, a, f)$ to the category of pairs $(D, X)$ of a group $D$ and a simplicial set $X$ with a $D$-action (both of which we equip with the cartesian monoidal structure).
\end{lemma}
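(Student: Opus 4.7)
The plan is to construct an explicit $D$-equivariant zig-zag of weak equivalences mediated by a bisimplicial set built from a double bar construction. Concretely, I would introduce $Y_{p, q} \defeq M \times G^p \times G \times G^q \times H$, where the horizontal face maps (varying $p$) realize $M \times G^p \times G$ as the bar construction $\B(M, G, G)_p$ with the central $G$ viewed as a left $G$-module by left multiplication, and the vertical face maps (varying $q$) realize $G \times G^q \times H$ as $\B(G, G, H)_q$ with the central $G$ viewed as a right $G$-module by right multiplication. I would equip $Y$ with the $D$-action sending $(m, g_1, \ldots, g_p, \gamma, g'_1, \ldots, g'_q, h)$ to $(m a(d), \inv{a(d)} g_\bullet a(d), \inv{a(d)} \gamma, g'_\bullet, h f(a(d)))$, i.e.\ right-multiplication by $a(d)$ on $M$, conjugation by $a(d)$ on the $p$ left-hand $G$-factors, left-multiplication by $\inv{a(d)}$ on the central $G$, trivial action on the $q$ right-hand $G$-factors, and right-multiplication by $f(a(d))$ on $H$. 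A direct check, exploiting the telescoping identity $(\inv{a(d)} g_i a(d))(\inv{a(d)} g_{i+1} a(d)) = \inv{a(d)} g_i g_{i+1} a(d)$, shows that both the horizontal and vertical face maps of $Y$ are $D$-equivariant.

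Next I would define two $D$-equivariant bisimplicial maps out of $Y$ into ``constant'' bisimplicial sets whose simplicial realizations recover the two target bar constructions. For the conjugation action, let $Z_{p, q} \defeq M \times G^p \times H$, carrying the $\B(M, G, H)$-structure in the $p$-direction and being constant in $q$, so that $\sr Z \iso \B(M, G, H)$; then the map $Y \to Z$ given by $(m, g_\bullet, \gamma, g'_\bullet, h) \mapsto (m, g_\bullet, f(\gamma g'_1 \cdots g'_q) h)$ is a bisimplicial map (since the product $\gamma g'_1 \cdots g'_q$ is invariant under the $\B(G, G, H)$ face maps) whose realization yields a $D$-equivariant weak equivalence $\sr Y \xlongto{\eq} \B(M, G, H)_{\mathrm{conj}}$; the calculation $f(\inv{a(d)} \gamma g'_\bullet) h f(a(d)) = \inv{f(a(d))} f(\gamma g'_\bullet) h f(a(d))$ confirms that the induced $D$-action on the target is the conjugation action. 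For the right-translation action, let $Z'_{p, q} \defeq M \times G^q \times H$, carrying the $\B(M, G, H)$-structure in $q$ and being constant in $p$; then $Y \to Z'$ given by $(m, g_\bullet, \gamma, g'_\bullet, h) \mapsto (m g_1 \cdots g_p \gamma, g'_\bullet, h)$ is bisimplicial, and the telescoping cancellation $m a(d) \cdot \inv{a(d)} g_1 a(d) \cdots \inv{a(d)} \gamma = m g_1 \cdots g_p \gamma$ shows that the induced $D$-action on the $M$-factor is trivial, producing a $D$-equivariant weak equivalence $\sr Y \xlongto{\eq} \B(M, G, H)_{\mathrm{right}}$. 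Both collapse maps are weak equivalences by the standard bisimplicial realization lemma applied to the levelwise $G$-equivariant weak equivalences $\B(G, G, H) \to H$ and $\B(M, G, G) \to M$.

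Monoidality of the resulting zig-zag follows from the manifest compatibility of $Y$ and both collapses with products of tuples $(D, G, H, M, a, f)$, together with \cref{lemma:bar_products} identifying products of bar constructions with bar constructions of products; oplax symmetric monoidality is then automatic since the collapses arise from strong monoidal natural transformations at the bisimplicial level. The main obstacle will be the careful verification of $D$-equivariance at every step of the construction, as the many interactions between $a(d)$, $f(a(d))$, conjugations, and the various bar face maps must be tracked meticulously; once equivariance is in place, the weak-equivalence and monoidality statements follow from standard arguments about bar constructions.
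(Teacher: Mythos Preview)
Your proposal is correct and essentially reconstructs the argument the paper defers to. The paper's proof simply invokes \cite[Lemma~3.18]{BZ} for the zig-zag (applied to the left $G$-module $N = H$), then uses naturality in $N$ to carry along the additional right $H$-action, and finally remarks that the construction is monoidal; your explicit double bar construction $Y_{p,q} = M \times G^p \times G \times G^q \times H$ with the two collapse maps is precisely the kind of argument that underlies that cited lemma, with the right $H$-action baked in from the start rather than added afterwards by naturality.
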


\begin{proof}
  For every left $G$-module $N$, \cite[Lemma~3.18]{BZ} provides a natural zig-zag of $D$-equivariant homotopy equivalences between $B(M, G, N)$, where $D$ acts on the right on $M$, on the left on $N$, and by conjugation on $G$, and the same space with the trivial action.
  Applying this to $N = H$ yields the zig-zag in the statement of the lemma.
  To see that this zig-zag is equivariant with respect to the stated $D$-actions, one notes that the left $G$-module $H$ has a compatible right $H$-module structure and then one uses that the zig-zag is natural in $N$.
  An elementary verification shows that the construction of the zig-zag provided in \cite[Lemma~3.18]{BZ} is monoidal in the claimed sense.
\end{proof}

So far, we have seen how to model the space $\B \bdlaut{\rat A}{\rat B}{\rat K}(\rat \xi)$ by modeling the space $\B[U] \bdlaut{\rat A}{\rat B}{\rat K}(\rat \xi)$ together with its $\quot {\Eaut[\rat A](\rat X)_{\eqcl {\rat \xi}}} U$-action.
By the same argument, we can recover $\B \bdlaut{A}{B}{K}(\xi)$ from any space $\B[G] \bdlaut{A}{B}{K}(\xi)$ together with its $\quot {\Eaut[A](X)_{\eqcl{\xi}_B}} G$-action.
The following lemma shows how to relate these two constructions.
It is a generalization of \cite[Proposition~3.10]{BZ}.
We first fix the following notation.

\begin{notation} \label{not:group_triple}
  Let $B \to A \to X$ be cofibrations of simply connected Kan complexes of the homotopy types of finite CW-complexes, modeled by the quasi-free maps $L_B \to L_A \to L_X$ of positively graded finitely generated quasi-free dg Lie algebras such that $L_A \to L_X$ is minimal.
  Furthermore let $K$ be a simply connected Kan complex, and $\xi \colon X \to K$ a map modeled by the map $\rho \colon L_X \to \Pi$ of dg Lie algebras such that $\Pi$ is of finite type, positively graded, abelian, and has trivial differential.
  Furthermore, let $G \subseteq \Eaut[A](X)_{\eqcl{\xi}_B}$ be a finite-index subgroup, $\ol U \subseteq \aEaut[L_A](L_X)_\rho$ a unipotent algebraic subgroup, and $N \subseteq \Eaut[L_A](L_X)_\rho$ a subgroup containing $U \defeq \ol U(\QQ)$ as a normal subgroup.
  We write $q_G \colon G \to \Eaut[\rat A](\rat X)_{\eqcl{\rat \xi}} \iso \Eaut[L_A](L_X)_\rho$ for the map induced by rationalization and denote by $\lie g_{L_A}^{L_B}(\rho)_{\ol U}$ the dg Lie algebra of \cref{prop:bdlaut_model} (equipped with its action of the preimage of $N$ in $\Aut[L_A](L_X)_\rho$).
\end{notation}

\begin{remark} \label{rem:Pi}
  The assumptions of \cref{not:group_triple} imply that $\Pi$ is isomorphic to $\hg * (\Loops K) \tensor \QQ$ considered as an abelian graded Lie algebra with trivial differential.
  A simply connected space $K$ admits a model of this form if and only if it is rationally equivalent to a product of Eilenberg--Mac Lane spaces.
  If this is the case, we can model any map of Kan complexes $\xi \colon X \to K$ up to homotopy by a map $\rho \colon L_X \to \hg * (\Loops K) \tensor \QQ$ (assuming that $X$ is simply connected).
  Factoring $\xi$ as a cofibration $\xi' \colon X \to K'$ followed by a trivial fibration $f \colon K' \to K$, we can arrange $\rho$ to model $\xi'$ strictly; moreover $f$ induces a weak equivalence $\B \bdlaut{A}{B}{K}(\xi') \to \B \bdlaut{A}{B}{\B \SO}(\xi)$.
\end{remark}

\begin{lemma} \label{lemma:aut_rat_eq}
  In the situation of \cref{not:group_triple}, rationalization induces a rational equivalence
  \[ \B[\inv{q_G}(U)] \bdlaut{A}{B}{K}(\xi)_G  \xlongto{\req}  \B[U] \bdlaut{\rat A}{\rat B}{\rat K}(\rat \xi) \]
  of simplicial sets with a $\quot {\inv{q_G}(N)} {\inv{q_G}(U)}$-action, where the action on the target is induced by $q$.
\end{lemma}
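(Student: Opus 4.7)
The plan is to reduce, via \cref{lemma:submonoid_bar}, to a rational equivalence between smaller bar constructions, apply nilpotence to invoke \cref{lemma:req}, and then compare fiber sequences. By the naturality of \cref{lemma:submonoid_bar} under the pair-map
\[ \bigl( \aut[A](X), \inv{q_G}(U) \bigr)  \longto  \bigl( \aut[\rat A](\rat X), U \bigr) \]
of group-like simplicial monoids with a distinguished subgroup of components induced by rationalization, the map of the statement fits into a commutative diagram
\[
\begin{tikzcd}
  \B\bdlaut{A}{B}{K}(\xi)_{\inv{q_G}(U)} \rar{\eq} \dar & \B[\inv{q_G}(U)]\bdlaut{A}{B}{K}(\xi)_G \dar \\
  \B\bdlaut{\rat A}{\rat B}{\rat K}(\rat \xi)_U \rar{\eq} & \B[U]\bdlaut{\rat A}{\rat B}{\rat K}(\rat \xi)
\end{tikzcd}
\]
whose horizontal maps are weak equivalences. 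It therefore suffices to show that the left-hand vertical map is a rational equivalence; the required equivariance will follow by naturality of all constructions together with \cref{lemma:left_conj_eq}.

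By \cref{lemma:rel_Ho_algebraic}, the action of $\inv{q_G}(U)$ on $\Ho * (X, A; \QQ) \iso \Ho * (\rat X, \rat A; \QQ)$ factors through the unipotent subgroup $U \subseteq \aEaut[L_A](L_X)_\rho(\QQ)$, hence is unipotent. The source of the left-hand vertical map is thus virtually nilpotent by \cref{lemma:Baut_virtually_nilpotent}. For the same reason, and because integral and rational relative homology of the simply connected rational space $\rat X$ coincide, the target is nilpotent by \cref{lemma:Bautbdl_nilpotent}. \Cref{lemma:req} now reduces the claim to showing that the map is a rational homotopy equivalence.

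For this, I would compare the two fiber sequences of \cref{lemma:bar_fiber_sequence}
\[
\begin{tikzcd}
  \map[B](X,K)_\xi \rar \dar & \B\bdlaut{A}{B}{K}(\xi)_{\inv{q_G}(U)} \rar \dar & \B\aut[A](X)_{\inv{q_G}(U)} \dar \\
  \map[\rat B](\rat X, \rat K)_{\rat \xi} \rar & \B\bdlaut{\rat A}{\rat B}{\rat K}(\rat \xi)_U \rar & \B\aut[\rat A](\rat X)_U
\end{tikzcd}
\]
induced by rationalization, and verify rational homotopy equivalences on fibers and on bases. The former is standard (or, alternatively, follows from comparing the Federer spectral sequences of \cref{prop:Federer}, whose $E_2$-pages agree $\QQ$-rationally). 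The main obstacle is the latter. On $\hg 1$ the base map is $\inv{q_G}(U) \to U$: by \cref{prop:arithmetic} it has finite kernel, while its image $q_G(G) \intersect U$ is an arithmetic subgroup of the unipotent algebraic group $\ol U$, which by Mal'cev's theorem is $\QQ$-isomorphic to $U$. On higher $\hg$ one uses that the simplicial monoids are group-like to translate each component to the identity component, where \cref{prop:LS} together with classical rationalization of identity components yields the required rational equivalence. A $\QQ$-local long-exact-sequence argument then assembles the two ingredients into the desired rational homotopy equivalence on total spaces.
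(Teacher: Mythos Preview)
Your approach is essentially the same as the paper's: reduce via \cref{lemma:submonoid_bar} to the map
\[
\B\bdlaut{A}{B}{K}(\xi)_{\inv{q_G}(U)}\longrightarrow \B\bdlaut{\rat A}{\rat B}{\rat K}(\rat\xi)_U,
\]
establish virtual nilpotence of the source and nilpotence of the target, and then prove it is a rational homotopy equivalence by comparing the two fiber sequences over $\B\aut$ and invoking \cref{lemma:req}. The $\pi_1$ argument on the base (finite kernel via \cref{prop:arithmetic}, arithmetic image in the unipotent $\ol U$ hence $\QQ$-isomorphic to $U$) matches the paper's, which cites \cite[Lemma~2.13]{BZ} for the last step.

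A few technical points are glossed over. First, to apply \cref{lemma:Bautbdl_nilpotent} and \cref{prop:Federer} on the rational side you need $\rat B$, $\rat A$, $\rat X$ to have the homotopy types of finite-dimensional CW-complexes; the paper secures this via \cite[Theorem~E]{Wal}. Second, the ``$\QQ$-local long exact sequence argument'' needs care: the five lemma for $\QQ$-isomorphisms requires nilpotent groups, and $\pi_1$ of the top row is only virtually nilpotent, so one must pass to finite-index subgroups first (the paper does this explicitly). Third, your route to the higher homotopy of the base via \cref{prop:LS} is indirect: that result concerns $\aut_{\nerve{L_A}}(\nerve{L_X})$, not $\aut_A(X)$ or $\aut_{\rat A}(\rat X)$, so you would still need the rationalization comparison $\aut_A(X)_{\id}\to\aut_{\rat A}(\rat X)_{\id}$. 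The paper handles this, and the analogous fiber comparison, by invoking \cite[Lemma~3.3]{ER} (and its proof) directly. Finally, the equivariance is immediate from the definition of the right $\quot{\inv{q_G}(N)}{\inv{q_G}(U)}$-action on the $\B[H]$-constructions; \cref{lemma:left_conj_eq} is not needed here.
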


\begin{proof}
	By \cref{lemma:rel_Ho_algebraic}, the action of $U$ on $\Ho n (X, A; \QQ)$ extends to an algebraic representation of $\ol U$ for any $n$.
	Since $\ol U$ is unipotent, so is this algebraic representation (see e.g.\ \cite[Proposition~14.3]{Mil}).
	Hence the action of $U$, and thus $\inv{q_G}(U)$, on $\Ho n (X, A; \QQ)$ is nilpotent.
	This implies, by \cref{lemma:Baut_virtually_nilpotent}, that $\B \aut[A](X)_{\inv{q_G}(U)}$ and $\B \bdlaut{A}{B}{K}(\xi)_{\inv{q_G}(U)}$ are virtually nilpotent.
	Furthermore note that there is a natural isomorphism $\rHo * (\rat X; \ZZ) \iso \rHo * (\rat X; \QQ)$ (this follows, for example, from \cite[Ch.~V, Proposition~3.3]{BK} combined with \cite[Ch.~X, Corollary~3.3]{GJ}), and thus the same is true for the relative homology of the pair $(\rat X, \rat A)$.
	By \cite[Theorem~E]{Wal} this also implies that $\rat B$, $\rat A$, and $\rat X$ have the homotopy types of finite-dimensional CW-complexes.
	Hence $\B \aut[\rat A](\rat X)_U$ and $\B \bdlaut{\rat A}{\rat B}{\rat K}(\rat \xi)_U$ are nilpotent by \cref{lemma:Bautbdl_nilpotent}.
	
  Now consider the map
  \[ \B \aut[A](X)_{\id}  \longto  \B \aut[\rat A](\rat X)_{\id} \]
  induced by rationalization.
  It is a rational homology equivalence by \cite[Lemma~3.3]{ER}\footnote{They do not use the same model for rationalization as we do, but the same argument applies.} and \cref{lemma:bar_eq}; since both domain and codomain are simply connected, it is also a rational homotopy equivalence.
  Thus the map
  \begin{equation} \label{eq:aut_rat}
  	\B \aut[A](X)_{\inv{q_G}(U)}  \longto  \B \aut[\rat A](\rat X)_U
  \end{equation}
  induces a $\QQ$-isomorphism (see \cref{def:Q-iso}) on $\hg k$ for $k \ge 2$.
  We will now prove that it also induces a $\QQ$-isomorphism on $\hg 1$.
  
  It follows from \cref{prop:arithmetic} that the group $U \intersect \im(q_G)$ is an arithmetic subgroup of $\ol U$ (for example by \cite[§1.1]{Ser}), and hence its inclusion into $U$ is a $\QQ$-isomorphism by \cite[Lemma~2.13]{BZ}.
  Since the kernel of $q_G$ is finite, also by \cref{prop:arithmetic}, this implies that the composite
  \[ \inv{q_G}(U)  \longsurj  U \intersect \im(q_G)  \longincl  U \]
  is a $\QQ$-isomorphism, too.
  Hence the map \eqref{eq:aut_rat} is a rational homotopy equivalence.

	Now consider the commutative diagram
	\begin{equation} \label{eq:bdlaut_fib_map}
	\begin{tikzcd}
		\map[B](X, K)_{\xi} \rar \dar[swap]{\req} & \B \bdlaut{A}{B}{K}(\xi)_{\inv{q_G}(U)} \rar \dar & \B \aut[A](X)_{\inv{q_G}(U)} \dar{\req} \\
		\map[\rat B](\rat X, \rat K)_{\rat \xi} \rar & \B \bdlaut{\rat A}{\rat B}{\rat K}(\rat \xi)_U \rar & \B \aut[\rat A](\rat X)_U
	\end{tikzcd}
	\end{equation}
	whose rows are fiber sequences by \cref{lemma:bar_fiber_sequence}.
  Note that the two upper right-hand terms are virtually nilpotent and all other terms are nilpotent by \cref{lemma:Baut_virtually_nilpotent,lemma:Bautbdl_nilpotent} (for the two mapping spaces, we apply the latter lemma to the cases $A = X$ and $A = \rat X$).
	The left-hand vertical map is a rational homotopy equivalence by the same argument as in \cite[Proof of Lemma~3.3]{ER}.
	It follows from \cite[Theorem~3.1]{HMR} that there is a five lemma for $\QQ$-isomorphisms between nilpotent groups.
	By passing to finite-index subgroups of $\hg 1$ in the upper row, we can arrange for all groups in the map of long exact sequences induced by \eqref{eq:bdlaut_fib_map} to be nilpotent.
	Since a map of groups is a $\QQ$-isomorphism if and only if it is a $\QQ$-isomorphism when restricted to a fixed finite-index subgroup, this implies that the middle vertical map in \eqref{eq:bdlaut_fib_map} is a rational homotopy equivalence.
  Then, by \cref{lemma:req}, it is also a rational homology equivalence.
  This implies the claim by \cref{lemma:submonoid_bar}.
\end{proof}

Putting everything we have done so far together, we obtain the following result.
As explained above, this provides a model for the space $\B \bdlaut{A}{B}{K}(\xi)$ when there is a normal subgroup $U \subseteq \Eaut[L_A](L_X)_{\rho}$ such that the group homomorphism $\Aut[L_A](L_X)_\rho \to \quot {\Eaut[L_A](L_X)_{\rho}} U$ has a section.
We will see below that this can always be arranged by choosing $U$ to be the $\QQ$-points of the unipotent radical of $\aEaut[L_A](L_X)_{\rho}$.

\begin{proposition} \label{thm:model}
  In the situation of \cref{not:group_triple}, there are two morphisms
  \[ \B[\inv{q_G}(U)] \bdlaut{A}{B}{K}(\xi)_G \xlongto{\req} \B[U] \bdlaut{\rat A}{\rat B}{\rat K}(\rat \xi) \xlonghto{\eq} \MCb \bigl( \lie g_{L_A}^{L_B}(\rho)_{\ol U} \bigr) \]
  where the first one is a rational equivalence of simplicial sets with a $\quot {\inv{q_G}(N)} {\inv{q_G}(U)}$-action and the second one is a weak equivalence in the homotopy category of simplicial sets with an action of the preimage $N'$ of $N$ in $\Aut[L_A](L_X)_\rho$.
\end{proposition}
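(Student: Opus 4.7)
Proof plan. The first morphism, together with its equivariance for the $\quot{\inv{q_G}(N)}{\inv{q_G}(U)}$-action, is supplied directly by \cref{lemma:aut_rat_eq}. It remains to construct the second morphism as a zig-zag of $N'$-equivariant weak equivalences of simplicial sets connecting $\B[U] \bdlaut{\rat A}{\rat B}{\rat K}(\rat \xi)$ to $\MCb(\lie g_{L_A}^{L_B}(\rho)_{\ol U})$.

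One end of this zig-zag is supplied by \cref{prop:bdlaut_model}, after implicit identifications $\rat A \eq \nerve{L_A}$ (and similarly for $B$, $X$, and $K$). Since $U$ is normal in $N = \real(N')$, the subgroup $N'$ is contained in the normalizer of the preimage of $\ol U(\QQ)$ in $\Aut[L_A](L_X)_\rho$, so that proposition provides an $N'$-equivariant weak equivalence
\[
\B \bdlaut{\rat A}{\rat B}{\rat K}(\rat \xi)_U \;\xlonghto{\eq}\; \MCb\bigl(\lie g_{L_A}^{L_B}(\rho)_{\ol U}\bigr).
\]

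To connect its source to $\B[U] \bdlaut{\rat A}{\rat B}{\rat K}(\rat \xi)$, we apply \cref{lemma:submonoid_bar} to the inclusion $\aut[\rat A](\rat X)_U \hookrightarrow \aut[\rat A](\rat X)_{\eqcl{\rat \xi}_{\rat B}}$ to obtain a weak equivalence
\[
\B \bdlaut{\rat A}{\rat B}{\rat K}(\rat \xi)_U \;\xlongto{\eq}\; \B[U] \bdlaut{\rat A}{\rat B}{\rat K}(\rat \xi).
\]
This map is $N'$-equivariant when the target is equipped with the \emph{conjugation} $N'$-action (precomposition on the mapping space, conjugation via $\real$ on both the simplicial monoid and the coset factor): conjugation preserves $\aut[\rat A](\rat X)_U$ because $U$ is normal in $N$, and it fixes the identity coset.

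Finally, to bridge this conjugation action with the natural right-multiplication action of $\quot{\inv{q_G}(N)}{\inv{q_G}(U)}$ (restricted along $N' \to N \to \quot{N}{U}$) on $\B[U] \bdlaut{\rat A}{\rat B}{\rat K}(\rat \xi)$, we invoke \cref{lemma:left_conj_eq} with $D = N'$, the homomorphism $a \colon N' \to \aut[\rat A](\rat X)_{\eqcl{\rat \xi}_{\rat B}}$ induced by $\real$, and $H$ the simplicial monoid $\Eaut[\rat A](\rat X)_{\eqcl{\rat \xi}_{\rat B}}$, applied to the bar construction $B(\map[\rat B](\rat X, \rat K)_{\rat \xi}, \aut[\rat A](\rat X)_{\eqcl{\rat \xi}_{\rat B}}, \Eaut[\rat A](\rat X)_{\eqcl{\rat \xi}_{\rat B}})$. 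Passing to the quotient by the right $U$-action on the last factor, which both $N'$-actions respect since $\real(N') \subseteq N$ normalizes $U$, gives an $N'$-equivariant zig-zag between the two actions on $\B[U] \bdlaut{\rat A}{\rat B}{\rat K}(\rat \xi)$. Splicing together these equivariant zig-zags yields the second morphism. The main subtlety of the argument is tracking the interplay between these various $N'$-actions---conjugation, precomposition, and right multiplication---on the intermediate bar construction, resolved by \cref{lemma:left_conj_eq}.
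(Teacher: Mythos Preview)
Your overall strategy matches the paper's: \cref{lemma:aut_rat_eq} for the first map, then \cref{prop:bdlaut_model}, \cref{lemma:submonoid_bar}, and \cref{lemma:left_conj_eq} for the second. However, your phrase ``after implicit identifications $\rat A \eq \nerve{L_A}$'' papers over a step that is not free. As stated, \cref{prop:bdlaut_model} produces an $N'$-equivariant weak equivalence to
\[
\B \bdlaut{\nerve{L_A}}{\nerve{L_B}}{\nerve{\Pi}}\bigl(\nerve{\rho}\bigr)_{\ol U(\QQ)},
\]
not to $\B \bdlaut{\rat A}{\rat B}{\rat K}(\rat \xi)_U$. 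Passing between them requires a zig-zag of weak equivalences of pairs
\[
\bigl(\aut[\rat A](\rat X),\ \map[\rat B](\rat X, \rat K)\bigr)\ \xlonghto{\eq}\ \bigl(\aut[\nerve{L_A}](\nerve{L_X}),\ \map[\nerve{L_B}](\nerve{L_X}, \nerve{\Pi})\bigr)
\]
that is compatible with the $N'$-actions (and carries $\rat \xi$ to $\nerve{\rho}$). The paper invokes \cref{thm:Baut_functor} (together with \cref{lemma:rat}) for exactly this purpose; indeed, the appendix machinery was developed precisely to make $(A \to X) \mapsto \aut[A](X)$ functorial along pointwise weak equivalences. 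Without naming this step, your zig-zag does not actually reach the stated target.

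A minor remark on your use of \cref{lemma:left_conj_eq}: rather than applying it with $H$ the full $\Eaut[\rat A](\rat X)_{\eqcl{\rat \xi}_{\rat B}}$ and then ``passing to the quotient'' by $U$ afterwards, it is cleaner to apply the lemma directly with the third bar-factor equal to $\quot{\Eaut[\rat A](\rat X)_{\eqcl{\rat \xi}_{\rat B}}}{U}$, which is already $\B[U] \bdlaut{\rat A}{\rat B}{\rat K}(\rat \xi)$. This avoids a separate argument that the zig-zag of \cref{lemma:left_conj_eq} descends to the quotient.
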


\begin{proof}
  The first map is provided by \Cref{lemma:aut_rat_eq}.
  Furthermore, \cref{prop:bdlaut_model,lemma:submonoid_bar,lemma:left_conj_eq} yield a zig-zag
  \[ \MCb (\lie g_{L_A}^{L_B}(\rho)_{\ol U})  \xlonghto{\eq}  \B[\ol U(\QQ)] \bdlaut{\nerve{L_A}}{\nerve{L_B}}{\nerve{\Pi}} \bigl( \nerve{\rho} \bigr) \]
  of weak equivalences of simplicial sets with an $N'$-action.
  Lastly, \cref{thm:Baut_functor} provides (using \cref{lemma:rat}) weak equivalences
  \begin{align*}
  	\aut[\rat A](\rat X)  &\xlonghto{\eq}  \aut[\nerve{L_A}] \bigl( \nerve{L_X} \bigr) \\
  	\map[\rat B](\rat X, \rat K)  &\xlonghto{\eq}  \map[\nerve{L_B}] \bigl( \nerve{L_X}, \nerve{\Pi} \bigr)
  \end{align*}
  such that the lower map is equivariant with respect to the upper map in the appropriate sense, and $\rat \xi$ corresponds to $\nerve{\rho}$ under the lower weak equivalence.
  Combining everything, we obtain the desired zig-zag of weak equivalences of simplicial sets with an $N'$-action.
\end{proof}

We will now, as promised, prove the existence of a normal unipotent subgroup $U$ such that the group homomorphism $\Aut[L_A](L_X)_\rho \to \quot {\Eaut[\rat A](\rat X)_{\eqcl{\rat \xi}}} U$ has a section.
We begin by introducing some notation for the resulting quotients and the corresponding derivation dg Lie algebras.

\begin{definition} \label{def:redEaut}
  Let $L_A \to L_X$ and $\rho \colon L_X \to \Pi$ be maps of dg Lie algebras as in \cref{not:group_triple}.
  We define $\aredEaut[L_A] (\rho)$ to be the maximal reductive quotient of $\aEaut[L_A] (L_X)_\rho$ and set $\Deru[\rho](L_X \rel L_A)  \defeq  \Der[\ol U](L_X \rel L_A)$ where $\ol U \subseteq \aEaut[L_A] (L_X)_\rho$ is the unipotent radical.
  When $\rho$ is trivial, we simply write $\aredEaut[L_A] (L_X)$ and $\Deru(L_X \rel L_A)$.
\end{definition}

\begin{lemma} \label{lemma:areal_section}
  In the situation of \cref{def:redEaut} the composite map of algebraic groups
  \[ \bar r \colon \aAut[L_A](L_X)_\rho  \xlongto{\areal}  \aEaut[L_A] (L_X)_\rho  \xlongto{\ol \pr}  \aredEaut[L_A] (\rho) \]
  has a section $\bar s$.
  In particular the composite map of groups
  \[ \Aut[L_A](L_X)_\rho  \xlongto{\real}  \Eaut[\nerve{L_A}] \bigl( \nerve{L_X} \bigr)_{\eqcl{\nerve{\rho}}}  \iso  \aEaut[L_A] (L_X)_\rho (\QQ)  \xlongto{\ol \pr_\QQ}  \aredEaut[L_A] (\rho) (\QQ) \]
  has a section $s$.
\end{lemma}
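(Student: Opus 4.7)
The plan is to identify $\bar r$ with the quotient map from $\aAut[L_A](L_X)_\rho$ onto its own maximal reductive quotient; once this identification is in place, the existence of a section $\bar s$ follows immediately from Mostow's theorem on Levi decompositions (\cref{prop:Mostow}), and the section $s$ on $\QQ$-points is obtained by evaluating $\bar s$ at $\QQ$.

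First I would analyze the kernel of $\bar r$. By \cref{prop:ES} the kernel of $\areal$ is $\aNull[L_A](L_X)$, which is unipotent. The kernel of the projection $\ol \pr$ is, by definition, the unipotent radical of $\aEaut[L_A](L_X)_\rho$, and hence also unipotent. Since the class of unipotent algebraic groups is closed under extensions, $\ker(\bar r)$ is unipotent. It is furthermore normal in $\aAut[L_A](L_X)_\rho$, being the kernel of a homomorphism of algebraic groups.

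Next, since the quotient $\aAut[L_A](L_X)_\rho / \ker(\bar r) \iso \aredEaut[L_A](\rho)$ is reductive by construction, \cref{lemma:unipotent_radical} identifies $\ker(\bar r)$ with $\unip \aAut[L_A](L_X)_\rho$. Therefore $\bar r$ is precisely the quotient map from $\aAut[L_A](L_X)_\rho$ onto its maximal reductive quotient. Applying \cref{prop:Mostow} yields a Levi subgroup $L \subseteq \aAut[L_A](L_X)_\rho$ mapping isomorphically onto $\aredEaut[L_A](\rho)$; the inverse of this isomorphism provides the desired section $\bar s$. For the final assertion, one sets $s \defeq \bar s_\QQ$, using the canonical identification $\aAut[L_A](L_X)_\rho(\QQ) = \Aut[L_A](L_X)_\rho$ together with the isomorphism of \cref{prop:aEaut}. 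There is no real obstacle here: the argument is essentially bookkeeping, with all the substantive input already packaged into \cref{prop:ES}, \cref{lemma:unipotent_radical}, and Mostow's theorem.
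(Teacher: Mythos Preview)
Your proof is correct and follows essentially the same approach as the paper: both show that $\ker(\bar r)$ is unipotent as an extension of unipotent groups, invoke \cref{lemma:unipotent_radical} to identify it with the unipotent radical of $\aAut[L_A](L_X)_\rho$, and then apply Mostow's theorem to obtain the section.
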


\begin{proof}
  By definition, the map $\ol \pr$ is a quotient map with kernel a unipotent algebraic subgroup.
  The same is true for $\areal$ by \cref{prop:ES}.
  Hence the kernel of $\bar r$ is an extension of unipotent algebraic groups and thus unipotent (see e.g.\ \cite[Corollary~14.7]{Mil}).
  Since $\aredEaut[L_A] (\rho)$ is reductive, this implies by \cref{lemma:unipotent_radical} that the kernel of $\bar r$ is the unipotent radical of $\aAut[L_A](L_X)_\rho$.
  Then it follows from \cref{prop:Mostow} that $\bar r$ has a section.
\end{proof}

The following theorem provides the promised model for $\B \bdlaut{A}{B}{K}(\xi)$ and is the first main result of this paper.
It is a bundle version of \cite[Corollary~3.32]{BZ}.

\begin{theorem} \label{cor:Baut_eq}
  Let $B \to A \to X$ be cofibrations of simply connected Kan complexes of the homotopy types of finite CW-complexes, modeled by the quasi-free maps $L_B \to L_A \to L_X$ of positively graded finitely generated quasi-free dg Lie algebras such that $L_A \to L_X$ is minimal.
  Furthermore let $K$ be a simply connected Kan complex, and $\xi \colon X \to K$ a map modeled by the map $\rho \colon L_X \to \Pi$ of dg Lie algebras such that $\Pi$ is of finite type, positively graded, abelian, and has trivial differential and such that $\rho(L_B) = 0$.
  Moreover let $G \subseteq \Eaut[A](X)_{\eqcl{\xi}_B}$ be a finite-index subgroup and $\redim{G}$ the image of $G$ in $\aredEaut[L_A] (\rho)(\QQ)$.
  Lastly choose a section $s$ as in \cref{lemma:areal_section}.
  Then there is a rational equivalence in the homotopy category of simplicial sets
  \begin{gather*}
  	\Theta_s \colon \B \bdlaut{A}{B}{K}(\xi)_G  \xlonghto{\req}  \hcoinv { \MCb \bigl( \lie g_{L_A}^{L_B}(\rho) \bigr) } {\redim{G}}
  	\shortintertext{where}
  	\lie g_{L_A}^{L_B}(\rho)  \defeq  \trunc 0 { \Hom \bigl( \shift \indec[L_B](L_X), \Pi \bigr) }  \rsemidir[\tilde \rho_*] \Deru[\rho](L_X \rel L_A)
  \end{gather*}
  is the dg Lie algebra of \cref{prop:bdlaut_model}, on which $\redim{G}$ acts by conjugation via the composite map
  \[ \redim{G}  \longincl  \aredEaut[L_A] (\rho)(\QQ)  \xlongto{s}  \Aut[L_A](L_X)_\rho \]
  of groups.
  Moreover $\Gamma_G$ is an arithmetic subgroup of $\aredEaut[L_A](\rho)$ and $\Theta_s$ induces an isomorphism on (co)homology with respect to any local coefficient system pulled back from $\B \redim{G}$.
\end{theorem}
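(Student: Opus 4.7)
I would take $\ol U \defeq \unip \aEaut[L_A](L_X)_\rho$, so that $\aEaut[L_A](L_X)_\rho / \ol U = \aredEaut[L_A](\rho)$ and the dg Lie algebra $\lie g_{L_A}^{L_B}(\rho)_{\ol U}$ of \cref{prop:bdlaut_model} coincides with $\lie g_{L_A}^{L_B}(\rho)$. Writing $p \colon \Eaut[L_A](L_X)_\rho \to \aredEaut[L_A](\rho)(\QQ)$ for the projection and setting $U \defeq \ol U(\QQ)$ together with $N \defeq p^{-1}(\redim{G})$, one verifies that $\inv{q_G}(N) = G$, that $\quot{G}{\inv{q_G}(U)} = \redim{G}$, and that $U$ is normal in $N$ (being normal in the whole group). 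Moreover the section $s$ of \cref{lemma:areal_section} lands, after restriction to $\redim{G}$, in the preimage $N' \subseteq \Aut[L_A](L_X)_\rho$ of $N$ under $\real$. Applying \cref{thm:model} to this data then produces a $\redim{G}$-equivariant rational equivalence
\[
\B[\inv{q_G}(U)] \bdlaut{A}{B}{K}(\xi)_G \xlonghto{\req} \MCb \bigl( \lie g_{L_A}^{L_B}(\rho) \bigr)
\]
in the homotopy category of simplicial sets with $\redim{G}$-action, where $\redim{G}$ acts on the right-hand side through the composite $\redim{G} \xlongto{s} \Aut[L_A](L_X)_\rho$.

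\textbf{Passing to the classifying space.} I would next apply the homotopy orbit functor $\hcoinv{\blank}{\redim{G}} = \B(\blank, \redim{G}, *)$ to the above zig-zag; \cref{lemma:bar_eq} ensures that each arrow remains a rational homology equivalence. \Cref{lemma:hquot_hcoinv} applied to the map of group-like simplicial monoids $\aut[A](X)_G \to \redim{G}$ and the right $\aut[A](X)_G$-module $\map[B](X, K)_\xi$ identifies
\[
\hcoinv {\B[\inv{q_G}(U)] \bdlaut{A}{B}{K}(\xi)_G} {\redim{G}}  \xlongto{\eq}  \B \bdlaut{A}{B}{K}(\xi)_G,
\]
so that assembling everything yields $\Theta_s$ as a rational homology equivalence. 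To promote it to a full rational equivalence I would compare the two fiber sequences over $\B \redim{G}$ produced by \cref{lemma:bar_fiber_sequence}, whose fibers $\B \bdlaut{A}{B}{K}(\xi)_{\inv{q_G}(U)}$ (virtually nilpotent by \cref{lemma:Baut_virtually_nilpotent}) and $\MCb(\lie g_{L_A}^{L_B}(\rho))$ (nilpotent) are rationally equivalent by \cref{prop:bdlaut_model} together with \cref{lemma:aut_rat_eq}, and invoke a long exact sequence argument following the pattern of the proof of \cref{lemma:aut_rat_eq}.

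\textbf{Arithmeticity and cohomology.} By \cref{prop:arithmetic} the image $q_G(G)$ is an arithmetic subgroup of $\aEaut[L_A](L_X)_\rho$; since $\redim{G}$ is its image under the algebraic epimorphism $p$, it is again arithmetic (cf.\ \cite[§1.1]{Ser}). The cohomological assertion then follows by applying \cref{lemma:hcoinv_coho} to each constituent of the $\redim{G}$-equivariant zig-zag of rational homology equivalences of the previous paragraph: each such map becomes, after applying $\B(\blank, \redim{G}, *)$, an isomorphism on cohomology with coefficients in any local system pulled back from $\B \redim{G}$, and these isomorphisms splice together to the desired one. The main obstacle in the argument is the compatibility of the two a priori distinct $\redim{G}$-actions on the intermediate space $\B[U] \bdlaut{\rat A}{\rat B}{\rat K}(\rat \xi)$ — one induced from $G$ via $q_G$ and right multiplication on $\Eaut[\rat A](\rat X)_{\eqcl{\rat \xi}}/U$, the other arriving through $s$ via conjugation on $\aut[\rat A](\rat X)$ and precomposition on the mapping space — but this match-up is exactly what is bridged in the proof of \cref{thm:model} via \cref{lemma:left_conj_eq}.
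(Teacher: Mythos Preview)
Your proposal is correct and follows essentially the same route as the paper: apply \cref{thm:model} with $\ol U$ the unipotent radical (so that $N/U \cong \redim G$), obtain a $\redim G$-equivariant rational equivalence $Z \defeq \B(\map[B](X,K)_\xi, \aut[A](X)_G, \redim G) \hto \MCb(\lie g_{L_A}^{L_B}(\rho))$, take $\redim G$-homotopy orbits, and use \cref{lemma:hquot_hcoinv} to identify $\hcoinv Z {\redim G}$ with $\B \bdlaut{A}{B}{K}(\xi)_G$. The paper's proof is more terse---it cites \cref{lemma:bar_fiber_sequence} for the rational homotopy statement and \cref{lemma:hcoinv_coho} for the local-coefficient cohomology statement without spelling out the fibre-sequence comparison---but your expanded version and the acknowledgment that the action-compatibility is absorbed in \cref{lemma:left_conj_eq} (inside the proof of \cref{thm:model}) are exactly right.
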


\begin{proof}
  That $\Gamma_G$ is an arithmetic subgroup of $\aredEaut[L_A] (\rho)$ follows from \cref{prop:arithmetic} since the image of an arithmetic subgroup under a quotient map of algebraic groups is again an arithmetic subgroup (see e.g.\ \cite[§1.1]{Ser}).
  By \cref{thm:model}, the section $s$ yields a rational equivalence in the homotopy category of simplicial sets with a $\redim{G}$-action
  \[ Z \defeq \B \bigl( \map[B](X, K)_\xi, \aut[A](X)_G, \redim{G} \bigr)  \longto  \MCb \bigl( \lie g_{L_A}^{L_B}(\rho) \bigr) \]
  where $\redim{G}$ acts on $\lie g_{L_A}^{L_B}(\rho)$ as in the statement.
  Taking homotopy orbits and using \cref{lemma:hquot_hcoinv} yields the zig-zag
  \[ \B \bdlaut{A}{B}{K}(\xi)_G  \xlongfrom{\eq}  \hcoinv Z {\redim{G}}  \xlonghto{\req}  \hcoinv { \MCb \bigl( \lie g_{L_A}^{L_B}(\rho) \bigr) } {\redim{G}} \]
  where the right-hand zig-zag is a rational homotopy equivalence by \cref{lemma:bar_fiber_sequence}, and it induces an isomorphism on (co)homology with respect to any local coefficient system pulled back from $\B \redim{G}$ by \cref{lemma:hcoinv_coho}.
\end{proof}

The following in particular provides a cdga model for the space $\B \bdlaut{A}{B}{K}(\xi)$.
It is a bundle version of \cite[Theorem~3.39]{BZ} that also incorporates non-trivial coefficients.

\begin{corollary} \label{cor:forms_eq}
  In the situation of \cref{cor:Baut_eq}, let $M$ be a $\redim{G}$-representation in rational vector spaces.
  Then there is a quasi-isomorphism
  \[ \Psi_s  \colon  \Forms * \bigl( \B \bdlaut{A}{B}{K}(\xi)_G; M \bigr)  \xlonghto{\eq}  \Forms * \bigl( \B \redim{G}; \CEcochains * \bigl( \lie g_{L_A}^{L_B}(\rho) ; M \bigr) \bigr) \]
  in the homotopy category of cochain complexes.
  Here $\gamma \in \redim{G}$ acts on the cochain complex $\CEcochains * ( \lie g_{L_A}^{L_B}(\rho); M )$ by $(\inv \gamma, \gamma)$, where the action on $\lie g_{L_A}^{L_B}(\rho)$ is by conjugation via $s$.
  Moreover, the map $\Psi_s$ lifts to a zig-zag of monoidal natural quasi-isomorphisms between lax symmetric monoidal functors from $\redim{G}$-representations to cochain complexes (in particular it is a zig-zag of algebras when $M$ is an algebra).
\end{corollary}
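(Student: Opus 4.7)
The plan is to build $\Psi_s$ as a chain of monoidal natural quasi-isomorphisms corresponding to the three layers of the model of \cref{cor:Baut_eq}, switching between polynomial forms and simplicial cochains via \cref{lemma:forms_cohomology} where convenient.

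First, I would transport the rational equivalence $\Theta_s$ of \cref{cor:Baut_eq} through $\Forms^*(\blank; M)$. That theorem asserts that $\Theta_s$ induces an isomorphism on cohomology with respect to any local coefficient system pulled back from $\B \redim{G}$, and thus gives a quasi-isomorphism of simplicial cochain complexes $\Cochains^*(\blank; M)$ with $M$ viewed as such a local system via the projection onto $\B \redim{G}$. Combining this with the monoidal natural zig-zag $\Forms^* \eq \Cochains^*$ of \cref{lemma:forms_cohomology} yields a monoidal natural quasi-isomorphism
\[ \Forms^*\bigl(\B \bdlaut{A}{B}{K}(\xi)_G; M\bigr)  \xlonghto{\eq}  \Forms^*\bigl(\hcoinv{\MCb(\lie g_{L_A}^{L_B}(\rho))}{\redim{G}}; M\bigr). \]

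Next, write $\lie g \defeq \lie g_{L_A}^{L_B}(\rho)$. This is a nilpotent dg Lie algebra of finite type: both summands of the semidirect-product decomposition are finite dimensional in each degree (the derivation summand by the proof of \cref{lemma:Der_algebraic}, the Hom summand because $\indec[L_B](L_X)$ and $\Pi$ are), and both are nilpotent (the derivation summand by \cref{prop:exp_eq}, the Hom summand because it is abelian), while the outer action is nilpotent since $\Deru[\rho](L_X \rel L_A)$ already acts nilpotently on $L_X$. In particular $\MCb(\lie g)$ is a Kan complex, so \cref{lemma:forms_orbits} provides a monoidal natural zig-zag of quasi-isomorphisms
\[ \Forms^*\bigl(\hcoinv{\MCb(\lie g)}{\redim{G}}; M\bigr)  \xlonghto{\eq}  \Forms^*\bigl(\B \redim{G}; \Forms^*(\MCb(\lie g); \ul M)\bigr). \]
Finally, \cref{lemma:MC_forms} gives a monoidal natural quasi-isomorphism $\CEcochains^*(\lie g; M) \eq \Forms^*(\MCb(\lie g); \ul M)$. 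Viewed as local systems on $\B \redim{G}$ via \cref{def:BG_local_system} — with $\redim{G}$ acting by conjugation on $\lie g$ through $s$ and by its given action on $M$ — both sides are groupoidal, so \cref{lemma:forms_coeff_quasi-iso} upgrades this to a monoidal natural quasi-isomorphism after applying $\Forms^*(\B \redim{G}; \blank)$. Concatenating the three steps yields $\Psi_s$ together with its lift to a zig-zag of monoidal natural transformations, from which the algebra statement for algebra-valued $M$ follows automatically.

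The main obstacle I expect is verifying that the three intermediate $\redim{G}$-actions intertwine compatibly. In \cref{lemma:forms_orbits} the action on $\Forms^*(\MCb(\lie g); \ul M)$ descends from the action of $\redim{G}$ on $\MCb(\lie g)$ through $s$ and the map $\Xi$ of \cref{prop:exp_eq}, whereas the action on $\CEcochains^*(\lie g; M)$ in the statement arises directly by conjugating $\lie g$ via $s$. Showing these agree under \cref{lemma:MC_forms} amounts to tracking, through the explicit formula for $\Phi$ in the proof of that lemma, how the derivation-level action on $\CEchains^*(\lie g)$ together with the representation on $M$ reproduces the space-level action induced by $\Xi$; this is the genuinely delicate bookkeeping step, but it should reduce to the naturality of $\MCb$ and of the Maurer--Cartan element construction built into the proofs of \cref{prop:map_model,lemma:Hom_eq}.
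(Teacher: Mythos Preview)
Your approach is essentially the paper's: apply $\Forms^*(\blank;M)$ to the zig-zag of \cref{cor:Baut_eq}, then invoke \cref{lemma:forms_orbits} and \cref{lemma:MC_forms}. Two small corrections are worth making. First, your anticipated obstacle is misplaced: the $\redim{G}$-action on $\MCb(\lie g)$ is simply the functorial one induced by conjugation on $\lie g$ via $s$ (the map $\Xi$ of \cref{prop:exp_eq} plays no role here), so the required equivariance of $\CEcochains^*(\lie g;M)\to\Forms^*(\MCb(\lie g);\ul M)$ is immediate from the naturality asserted in \cref{lemma:MC_forms}. Second, the genuine compatibility you do need to check, and which the paper handles via \cref{lemma:hquot_hcoinv_local_system}, is that the local system $M$ is consistently pulled back along the whole zig-zag of \cref{cor:Baut_eq}: the left-hand weak equivalence $\hcoinv{Z}{\redim{G}}\to\B\bdlaut{A}{B}{K}(\xi)_G$ comes from \cref{lemma:hquot_hcoinv}, and the two a priori different ways of equipping its source with $M$ (directly from the $\redim{G}$-orbit structure, versus pulled back through $\B G\to\B\redim{G}$) must be identified.
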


\begin{proof}
  This follows from \cref{cor:Baut_eq} by applying $\Forms * (\blank, M)$ and using \cref{lemma:forms_orbits,lemma:MC_forms}.
  By $M$ we mean the respective pullback of the local system $M$ on $\B \redim{G}$ of \cref{def:BG_local_system}; that all the maps in the zig-zag of \cref{cor:Baut_eq} are compatible with this follows from its construction and \cref{lemma:hquot_hcoinv_local_system}.
\end{proof}

We conclude this subsection by inferring a description of the cohomology of $\B \bdlaut{A}{B}{K}(\xi)$; it is a bundle analogue of \cite[Theorem~3.41]{BZ} that also incorporates non-trivial coefficients.
The statement can be viewed as a strong form of collapse of the Serre spectral sequence associated to the fiber sequence
\[ \B \bdlaut{A}{B}{K}(\xi)_U  \longto  \B \bdlaut{A}{B}{K}(\xi)_G  \longto  \B \redim{G} \]
where $U$ denotes the kernel of the quotient map $G \to \redim{G}$.
Note, in particular, that we obtain an isomorphism of algebras without passing to an associated graded.

\begin{corollary} \label{cor:cohomology}
	In the situation of \cref{cor:Baut_eq}, let $M$ be a $\redim{G}$-representation in rational vector spaces and $\bar s$ a section as in \cref{lemma:areal_section}.
  Then there is an isomorphism
  \[ \Phi_{\bar s}  \colon  \Coho * \bigl( \B \bdlaut{A}{B}{K}(\xi)_G; M \bigr)  \xlongto{\iso}  \Coho * \bigl( \B \redim{G}; \CEcoho * \bigl( \B \bdlaut{A}{B}{K}(\xi); M \bigr) \bigr) \]
  of graded vector spaces.
  Here $\gamma \in \redim{G}$ acts on $\CEcoho * ( \lie g_{L_A}^{L_B}(\rho); M )$ by $(\inv \gamma, \gamma)$, where the action on $\lie g_{L_A}^{L_B}(\rho)$ is by conjugation via $\bar s_\QQ$.
  Moreover, the map $\Phi_{\bar s}$ is a monoidal natural isomorphism between lax symmetric monoidal functors from $\redim{G}$-representations to graded vector spaces (in particular it is an isomorphism of algebras when $M$ is an algebra).
\end{corollary}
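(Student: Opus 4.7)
I would deduce the statement by applying $\Coho *$ to the quasi-isomorphism $\Psi_s$ of \cref{cor:forms_eq} and identifying both sides. On the left, \cref{lemma:forms_cohomology} provides the monoidal natural identification $\Coho * ( \Forms * ( \B \bdlaut{A}{B}{K}(\xi)_G; M ) ) \iso \Coho * ( \B \bdlaut{A}{B}{K}(\xi)_G; M )$. On the right, I would invoke \cref{lemma:forms_split_coeff} applied to the local system $F \defeq \CEcochains * (\lie g_{L_A}^{L_B}(\rho); M)$ on $\B \redim{G}$ defined via \cref{def:BG_local_system}, yielding the monoidal natural identification $\Coho * ( \Forms * (\B \redim{G}; F) ) \iso \Coho * ( \B \redim{G}; \CEcoho *(\lie g_{L_A}^{L_B}(\rho); M) )$. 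Composing with $\Coho *(\Psi_s)$ produces $\Phi_{\bar s}$, and its monoidality in $M$ is inherited from the monoidal structures of all three ingredients.

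\textbf{Hypotheses of \cref{lemma:forms_split_coeff}.} The local system $F$ is groupoidal (in particular quasi-groupoidal) because, by \cref{def:BG_local_system}, all its structure maps act either by the identity or by elements of $\redim{G}$, which are invertible. Extendability follows from a direct inspection using the fact that $F$ assigns the same object to every simplex of $\B \redim G$.

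\textbf{Main obstacle: the split condition.} One must produce a $\redim{G}$-equivariant quasi-isomorphism $p_F \colon \CEcochains *(\lie g_{L_A}^{L_B}(\rho); M) \to \CEcoho *(\lie g_{L_A}^{L_B}(\rho); M)$, with the target regarded as a cochain complex with trivial differential, inducing the identity on cohomology. Here reductivity of $\aredEaut[L_A](\rho)$ enters essentially. By \cref{lemma:Der_algebraic}, together with an analogous construction for $\Hom ( \shift \indec[L_B](L_X), \Pi )$, the $\redim{G}$-action on $\lie g_{L_A}^{L_B}(\rho)$ through the section $\bar s$ extends, in each homological degree, to an algebraic representation of $\aredEaut[L_A](\rho)$ on a finite-dimensional vector space; the same therefore holds for each degree of the Chevalley–Eilenberg chain complex $\CEchains *(\lie g_{L_A}^{L_B}(\rho))$, which is finite-dimensional in each degree because $\lie g_{L_A}^{L_B}(\rho)$ is non-negatively graded of finite type. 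Since $\aredEaut[L_A](\rho)$ is reductive, these finite-dimensional algebraic representations are semi-simple, yielding an equivariant Hodge-style decomposition $\CEchains p(\lie g_{L_A}^{L_B}(\rho)) = B_p \dirsum H_p \dirsum C_p$ with $H_p \iso \CEho p(\lie g_{L_A}^{L_B}(\rho))$ and $d$ restricting to an isomorphism $C_p \iso B_{p-1}$. Applying $\Hom(\blank, M)$—which imposes no conditions on $M$—then equivariantly splits $\CEcochains *(\lie g_{L_A}^{L_B}(\rho); M)$ as the direct sum of an acyclic complex and a copy of $\CEcoho *(\lie g_{L_A}^{L_B}(\rho); M)$, from which $p_F$ (and its quasi-inverse $\nabla_F$) are read off.
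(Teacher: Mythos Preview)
Your approach is essentially identical to the paper's: both combine \cref{cor:forms_eq}, \cref{lemma:forms_cohomology}, and \cref{lemma:forms_split_coeff}, and both establish the split condition by observing (via \cref{lemma:Der_algebraic} and \cref{lemma:indecomposables_representation}) that $\CEchains *(\lie g_{L_A}^{L_B}(\rho))$ is a degreewise finite-dimensional algebraic representation of the reductive group $\aredEaut[L_A](\rho)$, hence degreewise semi-simple, hence split as a chain complex of $\redim G$-representations, and that this survives $\Hom(\blank,M)$. The paper phrases the last two steps slightly differently (citing \cite[Lemmas~B.1 and B.2]{BM} and the Universal Coefficient Theorem), but the content is the same as your Hodge-style decomposition.

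One small correction: your extendability claim is not right. A local system as in \cref{def:BG_local_system} assigns the same object to every simplex but is \emph{not} extendable in Halperin's sense---already for $n=1$ the map $(\sigma^*F)(\lincat 1)\to(\sigma^*F)(\bdry\lincat 1)\cong M\times M$ is the graph of an automorphism, not a surjection. This does not matter, however: $F$ is groupoidal, and tracing through the proof of \cref{lemma:forms_split_coeff} one sees that every invocation of \cref{lemma:forms_coeff_quasi-iso} goes through under the ``groupoidal'' alternative in its hypotheses, so extendability is never actually used. The paper's proof simply cites \cref{lemma:forms_split_coeff} without discussing this point.
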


\begin{proof}
  This follows by combining \cref{cor:forms_eq,lemma:forms_cohomology,lemma:forms_split_coeff}.
  To apply the latter, we need to prove that $\CEcochains * ( \lie g_{L_A}^{L_B}(\rho) M )$ is split as a cochain complex of $\redim{G}$-representations (see also \cref{rem:local_system_groupoid}).
  Note that $\Deru[\rho](L_X \rel L_A)$ and $\Hom ( \shift \indec[L_B](L_X), \Pi )$ are degree-wise finite-dimensional algebraic representation of $\aAut[L_A](L_X)_\rho$ by \cref{lemma:Der_algebraic,lemma:indecomposables_representation}, respectively.
  Hence the same is true for $\lie g_{L_A}^{L_B}(\rho)$ and, since this dg Lie algebra is non-negatively graded, also for its Chevalley--Eilenberg chains.
  Since $\aredEaut[L_A](\rho)$ is reductive, this implies that $\CEchains * ( \lie g_{L_A}^{L_B}(\rho) )$ is a degree-wise semi-simple $\aredEaut[L_A](\rho)$-representation.
  Hence it is split as a cochain complex of $\redim{G}$-representations (for example by \cite[Lemmas~B.1 and B.2]{BM}).
  By the Universal Coefficient Theorem, this remains true upon applying $\Hom(\blank, M)$, so that $\CEcochains * ( \lie g_{L_A}^{L_B}(\rho); M )$ is split as required.
\end{proof}

\subsection{Equivariant models for block diffeomorphisms} \label{sec:block_models}

In this subsection we use \cref{prop:block_diff} to specialize the results of \cref{sec:model} to obtain a rational model for the classifying space $\B \BlDiff[\bdry](M)$ of block diffeomorphisms of a high-dimensional manifold $M$ with simply connected boundary.
This is one of the main results of this paper.

\begin{theorem} \label{thm:block_diff}
  Let $M$ be a simply connected compact smooth manifold of dimension $n \ge 6$ with simply connected boundary.
  Write $\bdry_0 M \defeq \bdry M \setminus \openDisk{n-1}$ and let the inclusions $\bdry_0 M \to \bdry M \to M$ be modeled by the quasi-free maps $L_0 \to L_\bdry \to L_M$ of positively graded finitely generated quasi-free dg Lie algebras such that $L_\bdry \to L_M$ is minimal.
  Furthermore let the classifying map $\sttang M \colon M \to \B \SO$ of the oriented stable tangent bundle of $M$ be modeled by the map $\rho \colon L_M \to \hg * (\SO) \tensor \QQ$ of dg Lie algebras such that $\rho(L_0) = 0$, and write $\BlGamma{\bdry}(M)$ for the image of $\hg 0 (\B \BlDiff[\bdry](M))$ in $\aredEaut[L_\bdry] (\rho)(\QQ)$.
  Lastly choose a section $s$ as in \cref{lemma:areal_section}.
  Then there is a rational equivalence in the homotopy category of topological spaces
  \begin{gather*}
    \Theta_s \colon \B \BlDiff[\bdry](M)  \xlonghto{\req}  \gr[\big] { \hcoinv {\MCb \bigl( \tilde{\lie g}_{L_\bdry}^{L_0}(\rho) \bigr)} {\BlGamma{\bdry}(M)} }
    \shortintertext{where}
    \tilde{\lie g}_{L_\bdry}^{L_0}(\rho)  \defeq  \trunc 0 { \Hom \bigl( \shift \indec[L_0](L_M), \hg * (\SO) \tensor \QQ \bigr) }  \rsemidir[\tilde \rho_*] \Deru[\rho](L_M \rel L_\bdry)
  \end{gather*}
  is the dg Lie algebra of \cref{prop:bdlaut_model}, on which $\BlGamma{\bdry}(M)$ acts by conjugation via the composite map
  \[ \BlGamma{\bdry}(M)  \longincl  \aredEaut[L_\bdry] (\rho)(\QQ)  \xlongto{s}  \Aut[L_\bdry](L_M)_\rho \]
  of groups.
  Moreover $\BlGamma{\bdry}(M)$ is an arithmetic subgroup of $\aredEaut[L_\bdry](\rho)$ and $\Theta_s$ induces an isomorphism on (co)homology with respect to any local coefficient system pulled back from $\B \BlGamma{\bdry}(M)$.
\end{theorem}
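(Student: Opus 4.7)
The plan is to deduce the theorem by combining \cref{prop:block_diff} with \cref{cor:Baut_eq}, essentially assembling the two rational equivalences into a single zig-zag. First, \cref{prop:block_diff} provides a rational equivalence
\[ \B \BlDiff[\bdry](M) \xlonghto{\req} \gr[\big]{\B \bdlaut{\bdry}{\bdry_0}{\B \O}(\sttang{M})_G} \]
where $G$ is the image of $\hg 0 (\BlDiff[\bdry](M))$ in $\Eaut[\bdry](M)_{\eqcl{\sttang M}_{\bdry_0}}$, and where $G$ is a finite-index subgroup; moreover this equivalence induces an isomorphism on (co)homology for every local coefficient system in rational vector spaces pulled back from $\B G$. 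Since $M$ is simply connected (hence orientable) and $\bdry_0 M$ is non-empty, every self-equivalence of $\sttang{M}$ relative to $\bdry_0 M$ is orientation-preserving, so we may replace $\B \O$ by $\B \SO$ throughout.

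Next I would verify that \cref{cor:Baut_eq} applies to $\bdlaut{\bdry M}{\bdry_0 M}{\B\SO}(\sttang M)$. Passing to simplicial sets via the singular complex functor $\Sing$ (using \cref{lemma:Sing_map} to compare mapping spaces, together with \cref{lemma:compact-open,lemma:bar_top} to compare classifying spaces with their geometric realizations), one obtains cofibrations of simply connected finite CW-complexes $\bdry_0 M \to \bdry M \to M$ modeled by $L_0 \to L_\bdry \to L_M$ with $L_\bdry \to L_M$ minimal as in the hypothesis. Since $\Coho *(\B\SO;\QQ) \iso \QQ[p_1, p_2, \dots]$ is a free graded commutative algebra, the space $\B\SO$ is rationally equivalent to a product of Eilenberg--Mac~Lane spaces, so (as explained in \cref{rem:Pi}) the classifying map $\sttang M$ can be modeled by a map $\rho \colon L_M \to \hg*(\B\SO) \tensor \QQ \iso \shift\bigl(\hg*(\SO) \tensor \QQ\bigr)$ into a dg Lie algebra that is of finite type, positively graded, abelian and has trivial differential. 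By hypothesis $\rho(L_0) = 0$, so all hypotheses of \cref{cor:Baut_eq} are met.

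Applying \cref{cor:Baut_eq} then produces a rational equivalence in the homotopy category of simplicial sets
\[ \B \bdlaut{\bdry M}{\bdry_0 M}{\B\SO}(\sttang M)_G  \xlonghto{\req}  \hcoinv{\MCb\bigl(\tilde{\lie g}_{L_\bdry}^{L_0}(\rho)\bigr)}{\redim G} \]
with $\tilde{\lie g}_{L_\bdry}^{L_0}(\rho)$ the dg Lie algebra of \cref{prop:bdlaut_model} and $\redim G$ the image of $G$ in $\aredEaut[L_\bdry](\rho)(\QQ)$; moreover this map induces an isomorphism on (co)homology for every local system pulled back from $\B \redim G$. Since $G$ is itself the image of $\hg 0(\BlDiff[\bdry](M))$ in $\Eaut[\bdry](M)_{\eqcl{\sttang M}_{\bdry_0}}$, and $\redim G$ is the image of $G$ in $\aredEaut[L_\bdry](\rho)(\QQ)$, one has $\redim G = \BlGamma{\bdry}(M)$ by construction. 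Concatenating the two rational equivalences (and applying $\gr \blank$ to the second one) yields $\Theta_s$; arithmeticity of $\BlGamma{\bdry}(M)$ then follows from \cref{cor:Baut_eq} (which in turn rests on \cref{prop:arithmetic}).

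The main subtlety — though not really an obstacle — is tracking the compatibility of local coefficient systems across the two zig-zags. Both \cref{prop:block_diff} and \cref{cor:Baut_eq} give cohomology isomorphisms for local systems pulled back from their respective base classifying spaces, and these two base classifying spaces are related by the rational equivalence $\gr{\B G} \req \B \BlGamma{\bdry}(M)$ induced by $G \twoheadrightarrow \BlGamma{\bdry}(M)$ (whose kernel acts trivially on the remaining rational homotopy invariants and whose corresponding cover is captured on the dg Lie side by the unipotent radical). A short diagram chase, using that the composite map from $\hg 0(\BlDiff[\bdry](M))$ to $\BlGamma{\bdry}(M)$ factors through $G$, confirms that a local system on $\B \BlGamma{\bdry}(M)$ pulls back compatibly along both steps, yielding the claimed isomorphism on cohomology.
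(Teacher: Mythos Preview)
Your proposal is correct and follows essentially the same route as the paper: both combine \cref{prop:block_diff} with \cref{cor:Baut_eq} after passing to simplicial sets via $\Sing$ (using \cref{lemma:Sing_map}) and replacing $\B\SO$ by a product of Eilenberg--Mac~Lane spaces as in \cref{rem:Pi}. Your treatment is somewhat more explicit about the $\B\O$ versus $\B\SO$ passage and about compatibility of local systems across the two zig-zags, but these are elaborations rather than genuine differences in strategy.
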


\begin{proof}
  Since the rational cohomology of $\B \SO$ is a free graded commutative algebra on the Pontryagin classes $\pont[i]$, the map $\B \SO \to \prod_{i \ge 1} \K(\QQ, 4i)$ given by the Pontryagin classes is a rational equivalence.
  Hence we can proceed as in \cref{rem:Pi} to obtain a map of dg Lie algebras $\rho \colon L_M \to \hg * (\SO) \tensor \QQ$ that models a cofibrant replacement $\xi \colon \Sing(M) \to K$ of $\Sing(\sttang{M})$.
  Then we have weak equivalences
  \[ \B \bdlaut{\Sing(\bdry)}{\Sing(\bdry_0)}{K}(\xi)  \eq  \B \bdlaut{\Sing(\bdry)}{\Sing(\bdry_0)}{\B \SO} \bigl( \Sing(\sttang{M}) \bigr)  \eq  \B \bdlaut{\bdry}{\bdry_0}{\B \SO}(\sttang{M}) \]
  (using \cref{lemma:Sing_map} for the second one), and the result follows by combining \cref{prop:block_diff,cor:Baut_eq}.
\end{proof}

\begin{remark}
  The algebraic group $\aEaut[L_\bdry](L_M)_\rho$ is the algebraic subgroup of $\aEaut[L_\bdry](L_M)$ consisting of those (homotopy classes of) automorphisms whose induced map on $\Ho * (\indec(L_M)) \iso \rHo * (M; \QQ)$ stabilizes every Pontryagin class $\pont[i](M) \colon \rHo {4i} (M; \QQ) \to \QQ$.
  Similarly, the assumption that $\rho$ can be chosen to vanish on $L_0$ is equivalent to requiring the rational Pontryagin classes of $\bdry_0 M$ to vanish.
\end{remark}

\begin{corollary} \label{cor:forms_block_diff}
  In the situation of \cref{thm:block_diff}, let $P$ be a $\BlGamma{\bdry}(M)$-representation in rational vector spaces.
  Then there is a quasi-isomorphism
  \[ \Psi_s  \colon  \Forms * \bigl( \B \BlDiff[\bdry](M); P \bigr)  \xlonghto{\eq}  \Forms * \bigl( \B \BlGamma{\bdry}(M); \CEcochains * \bigl( \tilde{\lie g}_{L_\bdry}^{L_0}(\rho); P \bigr) \bigr) \]
  in the homotopy category of cochain complexes.
  Here $\gamma \in \BlGamma{\bdry}(M)$ acts on the cochain complex $\CEcochains * ( \tilde{\lie g}_{L_\bdry}^{L_0}(\rho); P )$ by $(\inv \gamma, \gamma)$, where the action on $\tilde{\lie g}_{L_\bdry}^{L_0}(\rho)$ is by conjugation via $s$.
  Moreover, the map $\Psi_s$ lifts to a zig-zag of monoidal natural quasi-isomorphisms between lax symmetric monoidal functors from $\BlGamma{\bdry}(M)$-representations to cochain complexes (in particular it is a zig-zag of algebras when $P$ is an algebra).
\end{corollary}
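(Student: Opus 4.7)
The plan is to deduce this corollary from Theorem~\ref{thm:block_diff} in precisely the same way that Corollary~\ref{cor:forms_eq} was deduced from Theorem~\ref{cor:Baut_eq}, i.e.\ by applying the polynomial de Rham functor $\Forms^*(\blank; P)$ to the rational equivalence $\Theta_s$ and then identifying the resulting cochain complexes. The additional content provided by Theorem~\ref{thm:block_diff}, namely that $\Theta_s$ induces isomorphisms on cohomology with respect to any local coefficient system pulled back from $\B \BlGamma{\bdry}(M)$, is exactly what is needed so that Lemma~\ref{lemma:forms_cohomology} (more precisely its local-coefficient version built into the tools of Section~\ref{sec:forms}) turns the zig-zag into a zig-zag of quasi-isomorphisms of cochain complexes after applying $\Forms^*(\blank; P)$.

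More concretely, I would first replace the topological zig-zag of Theorem~\ref{thm:block_diff} by a zig-zag of simplicial sets via $\Sing$, which is harmless since $\Sing$ is part of a Quillen equivalence and $\Forms^*$ on a topological space is by definition $\Forms^*$ of its singular complex. Next, I would invoke Lemma~\ref{lemma:forms_orbits} to obtain a monoidal natural zig-zag of quasi-isomorphisms
\[ \Forms^* \bigl( \hcoinv[\big] {\MCb ( \tilde{\lie g}_{L_\bdry}^{L_0}(\rho) )} {\BlGamma{\bdry}(M)}; P \bigr)  \xlonghto{\eq}  \Forms^* \bigl( \B \BlGamma{\bdry}(M); \Forms^* \bigl( \MCb ( \tilde{\lie g}_{L_\bdry}^{L_0}(\rho) ); \ul P \bigr) \bigr) \]
where on the right-hand side $P$ is viewed via \cref{def:BG_local_system} and the $\BlGamma{\bdry}(M)$-action on $\tilde{\lie g}_{L_\bdry}^{L_0}(\rho)$ comes from the section $s$. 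The compatibility of the local systems throughout the zig-zag---in particular that the system induced from the bar construction matches the one pulled back from $\B \BlGamma{\bdry}(M)$---is exactly the content of Lemma~\ref{lemma:hquot_hcoinv_local_system}, applied as in the proof of Corollary~\ref{cor:forms_eq}. Finally, I would apply Lemma~\ref{lemma:MC_forms} to obtain a monoidal natural quasi-isomorphism $\CEcochains^*(\tilde{\lie g}_{L_\bdry}^{L_0}(\rho); P) \xrightarrow{\eq} \Forms^*(\MCb(\tilde{\lie g}_{L_\bdry}^{L_0}(\rho)); P)$, which is $\BlGamma{\bdry}(M)$-equivariant with the stated action (conjugation on $\tilde{\lie g}_{L_\bdry}^{L_0}(\rho)$ via $s$ combined with the action on $P$). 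To apply Lemma~\ref{lemma:MC_forms}, one needs that $\tilde{\lie g}_{L_\bdry}^{L_0}(\rho)$ is nilpotent and of finite type: the finite type condition holds because $L_M$ is finitely generated and positively graded, $\hg^*(\SO) \tensor \QQ$ is of finite type, and we have truncated at $0$; the nilpotence follows from Proposition~\ref{prop:exp_eq} together with the fact that the $\trunc 0 { \Hom }$-summand is abelian and the outer action $\tilde \rho_*$ is nilpotent.

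Composing the three zig-zags yields $\Psi_s$, and since each individual step is a monoidal natural transformation of lax symmetric monoidal functors on $\BlGamma{\bdry}(M)$-representations, so is the composite; specializing to algebras in $\BlGamma{\bdry}(M)$-representations then gives zig-zags of algebras, as claimed. I do not expect any serious obstacle: the argument is entirely parallel to that of Corollary~\ref{cor:forms_eq} and the only minor bookkeeping is keeping track of the passage through $\Sing$ and of the identification of the local systems via Lemma~\ref{lemma:hquot_hcoinv_local_system}.
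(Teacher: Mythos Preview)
Your proposal is correct and takes essentially the same approach as the paper: the paper's proof is the one-liner ``This follows from \cref{thm:block_diff} as in the proof of \cref{cor:forms_eq},'' and you have correctly unpacked that to mean applying $\Forms^*(\blank;P)$ to the zig-zag of \cref{thm:block_diff} and then invoking \cref{lemma:forms_orbits}, \cref{lemma:MC_forms}, and \cref{lemma:hquot_hcoinv_local_system}. Your additional remarks about passing through $\Sing$ and checking the finite-type/nilpotence hypotheses of \cref{lemma:MC_forms} are appropriate minor bookkeeping that the paper leaves implicit.
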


\begin{proof}
  This follows from \cref{thm:block_diff} as in the proof of \cref{cor:forms_eq}.
\end{proof}

\begin{corollary} \label{cor:coho_block_diff}
  In the situation of \cref{thm:block_diff}, let $P$ be a $\BlGamma{\bdry}(M)$-representation in rational vector spaces and $\bar s$ a section as in \cref{lemma:areal_section}.
  Then there is an isomorphism
  \[ \Phi_{\bar s}  \colon  \Coho * \bigl( \B \BlDiff[\bdry](M); P \bigr)  \xlongto{\iso}  \Coho * \bigl( \B \BlGamma{\bdry}(M); \CEcoho * \bigl( \tilde{\lie g}_{L_\bdry}^{L_0}(\rho); P \bigr) \bigr) \]
  of graded vector spaces.
  Here $\gamma \in \BlGamma{\bdry}(M)$ acts on $\CEcoho * ( \tilde{\lie g}_{L_\bdry}^{L_0}(\rho); P )$ by $(\inv \gamma, \gamma)$, where the action on $\tilde{\lie g}_{L_\bdry}^{L_0}(\rho)$ is by conjugation via $\bar s_\QQ$.
  Moreover, the map $\Phi_{\bar s}$ is a monoidal natural isomorphism between lax symmetric monoidal functors from $\BlGamma{\bdry}(M)$-representations to graded vector spaces (in particular it is an isomorphism of algebras when $P$ is an algebra).
\end{corollary}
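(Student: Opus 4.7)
The plan is to deduce \cref{cor:coho_block_diff} from \cref{cor:forms_block_diff} by essentially the same argument that derives \cref{cor:cohomology} from \cref{cor:forms_eq}. Concretely, I would start from the monoidal zig-zag of quasi-isomorphisms
\[ \Psi_s  \colon  \Forms * \bigl( \B \BlDiff[\bdry](M); P \bigr)  \xlonghto{\eq}  \Forms * \bigl( \B \BlGamma{\bdry}(M); \CEcochains * \bigl( \tilde{\lie g}_{L_\bdry}^{L_0}(\rho); P \bigr) \bigr) \]
provided by \cref{cor:forms_block_diff} and take cohomology. On the left-hand side, \cref{lemma:forms_cohomology} identifies $\Coho * \bigl( \Forms * (\B \BlDiff[\bdry](M); P) \bigr)$ with $\Coho * (\B \BlDiff[\bdry](M); P)$, monoidally and naturally in $P$. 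On the right-hand side, one wants to apply \cref{lemma:forms_split_coeff} to the local system on $\B \BlGamma{\bdry}(M)$ given by $\CEcochains * ( \tilde{\lie g}_{L_\bdry}^{L_0}(\rho); P)$, which would yield an identification with $\Coho * \bigl( \B \BlGamma{\bdry}(M); \CEcoho * ( \tilde{\lie g}_{L_\bdry}^{L_0}(\rho); P ) \bigr)$; composing these three monoidal natural isomorphisms then delivers $\Phi_{\bar s}$.

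The real content therefore lies in verifying that this local system is split, quasi-groupoidal, and extendable, in the sense of \cref{def:split_local_system}. Quasi-groupoidality is automatic since the coefficient system is pulled back from the classifying space of a discrete group, so each structure map is already a quasi-isomorphism on the nose; extendability follows from \cite[Remarks~13.11]{Hal} as the system is degreewise a constant functor. The splitness is the point requiring the specific reductive structure: I would argue, exactly as in the proof of \cref{cor:cohomology}, that $\Deru[\rho](L_M \rel L_\bdry)$ and $\Hom(\shift \indec[L_0](L_M), \hg * (\SO) \tensor \QQ)$ are degreewise finite-dimensional algebraic representations of $\aAut[L_\bdry](L_M)_\rho$ (using \cref{lemma:Der_algebraic,lemma:indecomposables_representation}), hence so are $\tilde{\lie g}_{L_\bdry}^{L_0}(\rho)$ and, because the dg Lie algebra is non-negatively graded, its Chevalley--Eilenberg chains. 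The section $\bar s$ of \cref{lemma:areal_section} makes these into algebraic representations of the reductive group $\aredEaut[L_\bdry](\rho)$, so they are degreewise semi-simple as $\BlGamma{\bdry}(M)$-representations. By \cite[Lemmas~B.1 and B.2]{BM} this implies that $\CEchains * ( \tilde{\lie g}_{L_\bdry}^{L_0}(\rho))$ is split, and this property is preserved under applying $\Hom(\blank, P)$ by the Universal Coefficient Theorem, yielding that $\CEcochains * ( \tilde{\lie g}_{L_\bdry}^{L_0}(\rho); P)$ is split as a cochain complex of $\BlGamma{\bdry}(M)$-representations.

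The main obstacle is simply to check that all maps in the zig-zag of \cref{cor:forms_block_diff} and the cohomological identifications above can be assembled into a single monoidal natural isomorphism of lax symmetric monoidal functors from $\BlGamma{\bdry}(M)$-representations to graded vector spaces. This is not delicate, but requires tracking the monoidal structures through \cref{lemma:forms_cohomology,lemma:forms_split_coeff} and verifying that the outputs for algebra coefficients are honest algebra isomorphisms; this is where the ``in particular it is an isomorphism of algebras when $P$ is an algebra'' clause comes from, and it follows formally since the functors involved are lax symmetric monoidal and the natural transformations are monoidal.
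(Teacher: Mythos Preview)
Your proposal is correct and follows exactly the same route as the paper, which simply says ``This follows from \cref{cor:forms_block_diff} as in the proof of \cref{cor:cohomology}''; you have faithfully unpacked that argument. One small imprecision: your justification of extendability (``degreewise a constant functor'') is not quite right, since the local system coming from a $\BlGamma{\bdry}(M)$-representation via \cref{def:BG_local_system} has nontrivial face maps; however this does not matter, because that local system is \emph{groupoidal}, and \cref{lemma:forms_coeff_quasi-iso} (which is what the proof of \cref{lemma:forms_split_coeff} actually uses) applies directly to groupoidal systems without needing extendability.
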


\begin{proof}
  This follows from \cref{cor:forms_block_diff} as in the proof of \cref{cor:cohomology}.
\end{proof}

\subsection{Compatibility with gluing constructions} \label{sec:gluing}

In this subsection, we prove that the constructions of \cref{sec:model} are compatible with taking pushouts of maps, i.e.\ that the map of spaces
\[ \B \bdlaut{A}{B}{K}(\xi) \times \B \bdlaut{A'}{B'}{K}(\xi')  \longto  \B \bdlaut{A \cop_C A'}{B \cop_C B'}{K}(\xi \cop_C \xi') \]
corresponds to a map of dg Lie algebras under the rational equivalence of \cref{cor:Baut_eq}.
We begin by fixing the extensive notation we will use throughout.

\begin{notation} \label{not:gluing}
  Let the following be cofibrations of simply connected Kan complexes with the homotopy types of finite CW-complexes
  \[ 
  \begin{tikzcd}[column sep = 20, row sep = 10]
  	X & \lar[tail][swap]{\iota} A & \lar[tail][swap]{\kappa} B & \lar[tail] C \rar[tail] & B' \rar[tail]{\kappa'} & A' \rar[tail]{\iota'} & X'
  \end{tikzcd}
  \]
  modeled by the quasi-free maps
  \[
  \begin{tikzcd}[column sep = 20, row sep = 10]
  	L_X & \lar[tail][swap]{i} L_A & \lar[tail][swap]{j} L_B & \lar[tail] L_C \rar[tail] & L_{B'} \rar[tail]{j'} & L_{A'} \rar[tail]{i'} & L_{X'}
  \end{tikzcd}
  \]
  of positively graded finitely generated quasi-free dg Lie algebras.
  We define
  \[ L_Q \defeq L_B \cop_{L_C} L_{B'}  \qquad  L_R \defeq  L_A \cop_{L_C} L_{A'}  \qquad  L_P \defeq  L_X \cop_{L_C} L_{X'} \]
  and let the following vertical maps be a trivial projective cofibration in $\sSet^{\lincat 2}$ 
  \[
  \begin{tikzcd}
  	B \cop_C B' \rar[tail] \dar[tail]{\eq} & A \cop_C A' \rar[tail] \dar[tail]{\eq} & X \cop_C X' \dar[tail]{\eq} \\
  	Q \rar[tail] & R \rar[tail] & P
  \end{tikzcd}
  \]
  such that $Q$, $R$, and $P$ are fibrant.
  Let $K$ be a simply connected Kan complex modeled by an abelian finite-type dg Lie algebra $\Pi$ with trivial differential.
  Furthermore, let $\zeta \colon P \to K$ be a cofibration and $\sigma \colon L_P \to \Pi$ a map such that $\rho \defeq \restrict \sigma {L_X}$ models $\xi \defeq \restrict \zeta X$ and $\rho' \defeq \restrict \sigma {L_{X'}}$ models $\xi' \defeq \restrict \zeta {X'}$.
  We also write
  \[ \mu  \colon  \vertpair {\aut[A](X) \times \aut[A'](X')} {\map[B](X, K) \times \map[B'](X', K)}  \longhto  \vertpair {\aut[R](P)} { \map[Q] \bigl( P, \FR(K) \bigr) } \]
  for the zig-zag of \cref{lemma:aut_pushout_zig-zag} of pairs of a simplicial monoid and a right module over it,
  \[ \gamma  \colon  \Eaut[A](X) \times \Eaut[A'](X')  \longto  \Eaut[R](P) \]
  for the map of groups induced by $\mu$ on $\hg{0}$,
  \begin{gather*}
    \ol \gamma  \colon  \aEaut[L_A](L_X) \times \aEaut[L_{A'}](L_{X'})  \longto  \aEaut[L_R](L_P) \\
    \ol \Gamma  \colon  \aAut[L_A](L_X) \times \aAut[L_{A'}](L_{X'})  \longto  \aAut[L_R](L_P)
  \end{gather*}
  for the maps of algebraic groups of \cref{lemma:aAut_pushouts} and \cref{lemma:aEaut_pushouts}, and we set $\Gamma \defeq \ol \Gamma_\QQ$.
\end{notation}

\begin{remark}
  In the situation of \cref{not:gluing}, by arguments similar to the proof of \cref{lemma:rat} (and using \cref{lemma:projective_cofibration_lincat}), the upper horizontal maps in the following diagram in the category of functors from $\lincat 2$ to simplicial sets
  \newsavebox{\DiagA}
  \begin{lrbox}{\DiagA}
  	\begin{tikzcd}[sep = small]
  		\rat B \cop_{\rat C} \rat B' \dar \\
  		\rat A \cop_{\rat C} \rat A' \dar \\
  		\rat X \cop_{\rat C} \rat X'
  	\end{tikzcd}
  \end{lrbox}
  \newsavebox{\DiagB}
  \begin{lrbox}{\DiagB}
  	\begin{tikzcd}[sep = small]
  		\rat {(B \cop_C B')} \dar \\
  		\rat {(A \cop_C A')} \dar \\
  		\rat {(X \cop_C X')}
  	\end{tikzcd}
  \end{lrbox}
  \newsavebox{\DiagC}
  \begin{lrbox}{\DiagC}
  	\begin{tikzcd}[sep = small]
  		\rat Q \dar \\
  		\rat R \dar \\
  		\rat P
  	\end{tikzcd}
  \end{lrbox}
  \newsavebox{\DiagD}
  \begin{lrbox}{\DiagD}
  	\begin{tikzcd}[sep = small]
  		\nerve{L_B} \cop_{\nerve{L_C}} \nerve{L_{B'}} \dar \\
  		\nerve{L_A} \cop_{\nerve{L_C}} \nerve{L_{A'}} \dar \\
  		\nerve{L_X} \cop_{\nerve{L_C}} \nerve{L_{X'}}
  	\end{tikzcd}
  \end{lrbox}
  \newsavebox{\DiagE}
  \begin{lrbox}{\DiagE}
  	\begin{tikzcd}[sep = small]
  		\nerve{L_Q} \dar \\
  		\nerve{L_R} \dar \\
  		\nerve{L_P}
  	\end{tikzcd}
  \end{lrbox}
  \[
  \begin{tikzcd}
    \left( \usebox{\DiagA} \right) \rar[tail]{\req} \dar[swap]{\req} & \left( \usebox{\DiagB} \right) \rar[tail]{\req} & \left( \usebox{\DiagC} \right) \dar[dashed]{\req} \\
    \left( \usebox{\DiagD} \right) \ar{rr}{\req} & & \left( \usebox{\DiagE} \right)
  \end{tikzcd}
  \]
  are projective cofibrations.
  Moreover all non-dashed maps are pointwise rational homology equivalences (by \cref{lemma:rat,lemma:MC_pushouts}), and hence we can choose a dashed rational homology equivalence as indicated that makes the diagram commute.
  We consider the induced maps $L_Q \to L_R \to L_P$ to model the maps $Q \to R \to P$ in this way.
  Furthermore note that the right-hand square in the diagram
  \[
  \begin{tikzcd}
    \rat X \cop_{\rat C} \rat X' \dar[swap] \rar[tail]{\req} & \rat P \dar \rar[tail]{\rat \zeta} \drar[phantom]{\eq} & \rat K \dar \\
    \nerve {L_X} \cop_{\nerve {L_C}} \nerve {L_{X'}} \rar & \nerve {L_P} \rar{\nerve{\sigma}} & \nerve \Pi
  \end{tikzcd}
  \]
  commutes up to homotopy since the left-hand and outer squares commute.
  Since $\rat \zeta$ is a cofibration, we can change $\rat K \to \nerve \Pi$ to a homotopic map such that the right-hand square commutes strictly; we will consider $K$ to model $\Pi$ in this way.
\end{remark}

We begin by proving that the map $\Xi$ of \cref{lemma:exp_aut_map} is compatible with gluing constructions.

\begin{lemma} \label{prop:exp_eq_natural}
  We use \cref{not:gluing} and let
  \[ \lie g_X \subseteq \Der(L_X \rel L_A)  \qquad \text{and} \qquad  \lie g_{X'} \subseteq \Der(L_{X'} \rel L_{A'}) \]
  be two nilpotent dg Lie subalgebras that act nilpotently on $L_X$ and $L_{X'}$, respectively, and write
  \[ \lie g_P  \defeq  g(\lie g_X \times \lie g_{X'})  \subseteq  \Der(L_P \rel L_R) \]
  (note that $\lie g_P$ is nilpotent and that it acts nilpotently on $L_P$).
  Then the following diagram in the homotopy category of simplicial monoids commutes
  \[
  \begin{tikzcd}
    \expb (\lie g_X \times \lie g_{X'}) \dar[swap]{g_*} \rar{\iso} & \expb (\lie g_X) \times \expb(\lie g_{X'}) \rar{\Xi \times \Xi} & \aut[\nerve{L_A}] \bigl( \nerve{L_X} \bigr) \times \aut[\nerve{L_{A'}}] \bigl( \nerve{L_{X'}} \bigr) \dar[squiggly] \\
    \expb (\lie g_P) \ar{rr}{\Xi} & & \aut[{\nerve{L_R}}] \bigl( \nerve{L_P} \bigr)
  \end{tikzcd}
  \]
  where $\Xi$ is the map of \cref{lemma:exp_aut_map}, the right-hand vertical map is the one of \cref{lemma:aut_pushout_zig-zag}, and $g \colon \lie g_X \times \lie g_{X'} \to \lie g_P$ is the map of dg Lie algebras given by $(\phi, \psi) \mapsto \tilde \phi + \tilde \psi$, where $\tilde \phi$ is the unique derivation of $L_P$ that extends $\phi$ and vanishes on $L_{X'}$, and analogously for $\tilde \psi$.
  
  Furthermore, let $G_X \subseteq \Aut[L_A](L_X)$ be a subgroup such that its conjugation action preserves $\lie g_X$ setwise, and analogously for $G_{X'} \subseteq \Aut[L_{A'}](L_{X'})$.
  Then the diagram lifts to a commutative diagram in the homotopy category of simplicial monoids with an action of $G_X \times G_{X'}$, where the action on each of the terms is by conjugation.
\end{lemma}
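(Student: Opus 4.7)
The plan is to reduce the commutativity in the homotopy category to a strict identification at the level of Maurer--Cartan simplices, and then feed this identification into the zig-zag of \cref{lemma:aut_pushout_zig-zag}.

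First I would check the parenthetical claims. Choosing presentations $L_X \iso L_A \cop \freelie(V)$ and $L_{X'} \iso L_{A'} \cop \freelie(V')$ as graded Lie algebras, \cref{lemma:pushout_indecomposables} yields $L_P \iso L_R \cop \freelie(V \dirsum V')$; under this description $\tilde \phi$ acts only on the $V$-factor and $\tilde \psi$ only on $V'$, so the nilpotence of $\lie g_P$ and of its action on $L_P$ both follow from the corresponding hypotheses on $\lie g_X$ and $\lie g_{X'}$.

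The core computation is that, for homogeneous $\phi \in \lie g_X$ and $\psi \in \lie g_{X'}$, the extensions $\tilde \phi, \tilde \psi \in \Der(L_P \rel L_R)$ supercommute. This is a short check on generators: on $L_X$ one of them preserves and the other kills, so nested applications vanish, while on a mixed bracket $\liebr{x}{x'}$ a direct computation with Koszul signs shows $\liebr{\tilde\phi}{\tilde\psi}(\liebr{x}{x'}) = 0$. Passing to coefficients in $\sForms[n]$ and exponentiating via \cref{lemma:exp_action}, this gives
\[
  \expact \bigl( g_*(\tau, \tau') \bigr) \;=\; \expact(\tilde\tau) \after \expact(\tilde{\tau'})
\]
in $\Aut[L_R \tensor \sForms[n]]^{\sForms[n]}(L_P \tensor \sForms[n])$ for any Maurer--Cartan element $(\tau, \tau')$ of $(\lie g_X \times \lie g_{X'}) \tensor \sForms[n]$. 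Restricted to $L_X \tensor \sForms[n]$ the right-hand side is $\expact(\tau)$, and restricted to $L_{X'} \tensor \sForms[n]$ it is $\expact(\tau')$; hence $\expact \bigl( g_*(\tau, \tau') \bigr)$ coincides with the scalar-extended pushout map $\ol \Gamma$ from \cref{lemma:aAut_pushouts} applied to $(\expact(\tau), \expact(\tau'))$.

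Varying $n$, this yields a strict equality of maps of simplicial monoids from $\expb(\lie g_X \times \lie g_{X'})$ into the simplicial submonoid of $\aut[\nerve{L_A}](\nerve{L_X}) \times \aut[\nerve{L_{A'}}](\nerve{L_{X'}})$ consisting of those pairs that are compatible under the canonical map $\nerve{L_X} \cop_{\nerve{L_A}} \nerve{L_{X'}} \to \nerve{L_P}$ of \cref{lemma:MC_pushouts}. Composing with the zig-zag of \cref{lemma:aut_pushout_zig-zag}, which relates such compatible pairs to $\aut[\nerve{L_R}](\nerve{L_P})$, produces the homotopy commutative square claimed in the statement. The $(G_X \times G_{X'})$-equivariance is then automatic: uniqueness of the derivation extension forces $g_*$ to intertwine the two conjugation actions, and every intermediate simplicial monoid and connecting weak equivalence is natural under these actions. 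The main obstacle will be pinning down a representative of the zig-zag of \cref{lemma:aut_pushout_zig-zag} whose intermediate fibrant and cofibrant replacements can be chosen compatibly with the comparison of \cref{lemma:MC_pushouts}, and to verify that the equivariance persists through these replacements. This is bookkeeping rather than a conceptual difficulty, but it is where the ``in the homotopy category'' clause is ultimately discharged.
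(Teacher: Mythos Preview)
Your approach is correct and follows the same line as the paper's proof. Two remarks on how the paper streamlines the argument.

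First, the supercommutation of $\tilde\phi$ and $\tilde\psi$ is true but not needed for the diagram chase. Since $\tilde\theta + \tilde\theta'$ restricts to $\theta$ on $L_X$ (preserving $L_X$) and to $\theta'$ on $L_{X'}$, one has directly
\[
  \expact(\tilde\theta + \tilde\theta')\big|_{L_X} = \expact(\theta), \qquad
  \expact(\tilde\theta + \tilde\theta')\big|_{L_{X'}} = \expact(\theta'),
\]
without first factoring the exponential as a product. The paper compresses this to ``the outer square is seen to commute by expanding the definitions.''

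Second, the issue you flag as the ``main obstacle''---choosing a representative of the zig-zag and checking monoid compatibility---is exactly what the paper dispatches in one line. The intermediate object in \cref{lemma:aut_pushout_zig-zag} is the pullback $\raut[N](m)$ of
\[
  \raut[N](M) \xrightarrow{m_*} \mapreq[N]\bigl(M,\nerve{L_P}\bigr) \xleftarrow{m^*} \aut[N]\bigl(\nerve{L_P}\bigr),
\]
where $N = \nerve{L_A}\cop_{\nerve{L_C}}\nerve{L_{A'}}$, $M = \nerve{L_X}\cop_{\nerve{L_C}}\nerve{L_{X'}}$, and $m \colon M \to \nerve{L_P}$ is the comparison of \cref{lemma:MC_pushouts}. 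The restriction identity above shows that the outer square (going via $\Xi\times\Xi$ and via $g_*$ followed by $\Xi$) commutes in $\mapreq[N](M,\nerve{L_P})$, so the universal property of the pullback produces the dashed map into $\raut[N](m)$; \cref{lemma:monoid_pullback} makes it a monoid map. No fibrant or cofibrant replacements need to be tracked. Your worry about equivariance persisting through replacements therefore does not arise: the equivariance is immediate from \cref{lemma:exp_aut_map} once the dashed map is in hand.
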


\begin{proof}
  First note that \cref{lemma:aut_pushout_zig-zag} is indeed applicable by \cref{lemma:MC_pushouts,lemma:cofibration_pushouts_special}.
  We define
  \[ N  \defeq  \nerve{L_A} \cop_{\nerve{L_C}} \nerve{L_{A'}}  \quad \text{and} \quad  M  \defeq  \nerve{L_X} \cop_{\nerve{L_C}} \nerve{L_{X'}} \]
  and write $m \colon M \to \nerve{L_P}$ for the induced map.
  Consider the diagram of simplicial sets
  \[
  \begin{tikzcd}
    \expb (\lie g_X) \times \expb(\lie g_{X'}) \ar{rr}{\Xi \times \Xi} &[-10] &[-35] \aut[\nerve{L_A}] \bigl( \nerve{L_X} \bigr) \times \aut[\nerve{L_{A'}}] \bigl( \nerve{L_{X'}} \bigr) \ar{dd} \\
    \expb (\lie g_X \times \lie g_{X'}) \dar[swap]{g_*} \uar{\iso} \drar[start anchor = -5, dashed] & & \\
    \expb (\lie g_P) \dar[swap]{\Xi} & \raut[N](m) \dar \rar{\eq} & \raut[N](M) \dar{m_*} \\
    \aut[\nerve{L_R}] \bigl( \nerve{L_P} \bigr) \rar{\eq} & \aut[N] \bigl( \nerve{L_P} \bigr) \rar{m^*} & \mapreq[N] \bigl( M, \nerve{L_P} \bigr)
  \end{tikzcd}
  \]
  where the lower right-hand square is a pullback by definition.
  The outer square is seen to commute by expanding the definitions, so we obtain the dashed map as indicated.
  It is a map of simplicial monoids by \cref{lemma:monoid_pullback}.
  This yields the desired diagram.
  The last claim, about its equivariance, is clear, using that $\Xi$ is equivariant by \cref{lemma:exp_aut_map}.
\end{proof}

\begin{lemma} \label{lemma:Hom_map_gluing}
	In the situation of \cref{not:gluing}, the following diagram commutes in the homotopy category of simplicial sets
	\[
	\begin{tikzcd}
		\vertbin {\MCb \bigl( \trunc 0 {\Hom \bigl( \shift \indec[L_B](L_X), \Pi \bigr)} \bigr)} \times {\MCb \bigl( \trunc 0 {\Hom \bigl( \shift \indec[L_{B'}](L_{X'}), \Pi \bigr)} \bigr)} \rar{\eq} \dar & \vertbin {\map[\nerve{L_B}] \bigl( \nerve{L_X}, \nerve{\Pi} \bigr)_{\nerve{\rho}}} \times {\map[\nerve{L_{B'}}] \bigl( \nerve{L_{X'}}, \nerve{\Pi} \bigr)_{\nerve{\rho'}}} \dar[squiggly] \\
		\MCb \bigl( \trunc 0 {\Hom \bigl( \shift \indec[L_Q](L_P), \Pi \bigr)} \bigr) \rar{\eq} & \map[\nerve{L_Q}] \bigl( \nerve{L_P}, \nerve{\Pi} \bigr)_{\nerve{\sigma}}
	\end{tikzcd}
	\]
	where the horizontal weak equivalences are (products of) composites of the maps of \cref{prop:map_model,lemma:Hom_eq}, the left-hand vertical map comes from \cref{lemma:pushout_indecomposables}, and the right-hand vertical map is the one of \cref{lemma:aut_pushout_zig-zag}.
	Moreover, together with \cref{prop:exp_eq_natural}, this yields a commutative diagram in the homotopy category of pairs of a simplicial monoid and a right module over it.
\end{lemma}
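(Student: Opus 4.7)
The plan is to decompose the horizontal maps as the composite of the map $(p^*)_*$ coming from Lemma~\ref{lemma:Hom_eq} followed by Berglund's equivalence $\beta$ from Proposition~\ref{prop:map_model}, and then verify commutativity of each resulting subsquare in turn.

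For the $(p^*)_*$-square, Lemma~\ref{lemma:pushout_indecomposables} provides an isomorphism $\indec[L_B](L_X) \dirsum \indec[L_{B'}](L_{X'}) \iso \indec[L_Q](L_P)$ of chain complexes; dualizing with $\Hom(\shift \blank, \Pi)$ converts the pushout structure on the algebraic side into the direct sum decomposition on the Hom side. A parallel statement holds on the level of CE chains: restriction along the maps $L_X \to L_P$ and $L_{X'} \to L_P$ strictly intertwines the projections $\CEchains * (L_X)/\CEchains * (L_B) \to \shift \indec[L_B](L_X)$ used in the proof of Lemma~\ref{lemma:Hom_eq} with the corresponding projection for the pushout. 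Applying $\MCb$ to this chain-level identification yields strict commutativity of the first subsquare.

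For the $\beta$-square, Berglund's equivalence is defined, after the isomorphism~\eqref{eq:cMCb_Hom}, by the explicit formula~\eqref{eq:map_eq_formula}, which involves only the comultiplication of $\CEchains *$ and the product of $\sForms$. Both this formula and the functor $\nerve{\blank}$ are strictly natural with respect to morphisms of dg Lie algebras, so the squares obtained from the inclusions $L_X \to L_P$ and $L_{X'} \to L_P$ commute strictly before restriction to connected components. Passing to the indicated components in diagram~\eqref{eq:map_eq_diag}, which is possible because $\sigma \after \pi$ restricts to $\rho \after \pi$ and $\rho' \after \pi$, preserves this strict commutativity.

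The main obstacle is tying these strict identifications to the right-hand vertical map from Lemma~\ref{lemma:aut_pushout_zig-zag}, which is defined via a zig-zag through a fibrant replacement $\FR(K)$. Here I would use that the canonical map $\nerve{L_X} \cop_{\nerve{L_C}} \nerve{L_{X'}} \to \nerve{L_P}$ is a cofibration and rational homology equivalence by Lemma~\ref{lemma:MC_pushouts}. Combined with the fact that $\nerve{\Pi}$ is a Kan complex, the universal property of the pushout forces the two candidate maps from $\nerve{L_X} \cop_{\nerve{L_C}} \nerve{L_{X'}}$ to $\nerve{\Pi}$ obtained by running around the diagram to agree up to simplicial homotopy. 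The equivariance upgrade to pairs of a simplicial monoid and a right module then follows the same pattern as in the proof of Proposition~\ref{prop:exp_eq_natural}: all of the identifications above are strictly natural in the conjugation actions, so the upgraded diagram commutes in the homotopy category of such pairs.
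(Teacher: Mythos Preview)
Your decomposition into two subsquares is reasonable, but the paper takes a more direct route: rather than verifying the $(p^*)_*$-square and the $\beta$-square separately, it first computes the \emph{composite} horizontal map explicitly. Chasing through the definitions of \cref{prop:map_model} and \cref{lemma:Hom_eq} (using that $\Pi$ is abelian, so the twisting is additive), one finds that the composite is, in simplicial degree $n$, the adjoint of
\[
(f \tensor a, x \tensor b) \longmapsto (-1)^{\deg a \deg x} f(\shift \eqcl{x}) \tensor ab + \rho(x) \tensor b,
\]
and analogously for $L_{X'}$ and $L_P$. With this formula in hand, the argument becomes a direct imitation of the proof of \cref{prop:exp_eq_natural}: one writes down the pullback square defining the intermediate object of the zig-zag from \cref{lemma:aut_pushout_zig-zag} and checks that the outer square commutes strictly by plugging into the formula, which yields the required dashed lift.

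Your third paragraph is where the plan becomes shaky. The mention of $\FR(K)$ is a red herring: in this lemma everything lives in the realized dg Lie algebra world, and $\nerve{\Pi}$ is already a Kan complex, so no fibrant replacement enters. More importantly, the sentence ``the universal property of the pushout forces the two candidate maps \ldots\ to agree up to simplicial homotopy'' does not do what you need. The right-hand vertical map is a genuine zig-zag, and commutativity in the homotopy category requires producing a compatible lift into the intermediate object of that zig-zag; an up-to-homotopy agreement of maps out of the pushout $\nerve{L_X} \cop_{\nerve{L_C}} \nerve{L_{X'}}$ is neither what the pushout universal property gives nor what is required. The fix is exactly the pullback argument of \cref{prop:exp_eq_natural}, which your explicit formula makes routine. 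The equivariance upgrade then follows as you say.
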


\begin{proof}
	By chasing through their definitions, we see that the composite
	\[ \MCb \bigl( \trunc 0 {\Hom \bigl( \shift \indec[L_B](L_X), \Pi \bigr)} \bigr)  \xlongto{\eq}  \map[\nerve{L_B}] \bigl( \nerve{L_X}, \nerve{\Pi} \bigr)_{\nerve{\rho}} \]
	of the maps of \cref{prop:map_model,lemma:Hom_eq} is given by the adjoint of the restriction of the map given in simplicial degree $n$ by
  \begin{equation} \label{eq:MC_Hom_eq}
	\begin{aligned}
		\bigl( \Hom \bigl( \shift \indec[L_B](L_X), \Pi \bigr) \tensor \sForms[n] \bigr)_{-1} \times (L_X \tensor \sForms[n])_{-1}  &\longto  (\Pi \tensor \sForms[n])_{-1} \\
		(f \tensor a, x \tensor b)  &\longmapsto  (-1)^{\deg a \deg x} f (\shift \eqcl{x}) \tensor a b + \rho(x) \tensor b
	\end{aligned}
  \end{equation}
	and analogously for $L_{X'}$ and $L_P$.
	Using this formula, one obtains the desired commutativity similarly to the proof of \cref{prop:exp_eq_natural}.
\end{proof}

Next, we prove that rationalization is compatible with the map $\mu$.
This implies that the same is true for the rational homology equivalence of \cref{lemma:aut_rat_eq}.

\begin{lemma} \label{lemma:aut_rat_eq_natural}
  In the situation of \cref{not:gluing}, the following diagram commutes in the homotopy category of pairs of a simplicial monoid and a right module over it
  \[
  \begin{tikzcd}
    \vertpair {\aut[A](X) \times \aut[A'](X')} {\map[B](X, K) \times \map[B'](X', K)} \rar \dar[squiggly][swap]{\mu} & \vertpair {\aut[\rat A](\rat X) \times \aut[\rat A'](\rat X')} {\map[\rat B](\rat X, \rat K) \times \map[\rat B'](\rat X', \rat K)} \dar[squiggly] \\
    \vertpair {\aut[R](P)}  {\map[Q] \bigl( P, K \bigr)} \rar & \vertpair {\aut[\rat R](\rat P)} {\map[\rat Q] \bigl( \rat P, \rat K \bigr)}
  \end{tikzcd}
  \]
  where the horizontal maps are induced by rationalization and the vertical maps are those of \cref{lemma:aut_pushout_zig-zag}.
\end{lemma}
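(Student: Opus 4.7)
The plan is to leverage naturality of the zig-zag $\mu$ of \cref{lemma:aut_pushout_zig-zag} with respect to rationalization. The construction of $\mu$ is entirely functorial in the input data, namely the two cofibration sequences $B \to A \to X$ and $B' \to A' \to X'$ sharing the common object $C$, together with the map $\zeta$ to $K$ and the chosen fibrant models $R$, $P$, $Q$ of the pushouts: every intermediate simplicial monoid, right module, and structural map is built out of universal properties (pushouts, pullbacks, restrictions of mapping spaces) and fibrant replacement. Consequently, any pointwise weak equivalence of input diagrams compatible with cofibrancy and fibrancy induces a zig-zag of weak equivalences between the two resulting instances of $\mu$, in the homotopy category of pairs of a simplicial monoid and a right module.

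The key step is to apply this naturality to the morphism of input diagrams given by the unit $\id \to \rat{(\blank)}$ of rationalization. By \cref{lemma:rat}, rationalization preserves cofibrations, and the canonical comparison map $\rat X \cop_{\rat A} \rat Y \to \rat(X \cop_A Y)$ is a cofibration and a rational homology equivalence; combined with the choices already set up in the remark following \cref{not:gluing}, this yields compatible fibrant models relating the pushouts $R$, $P$, $Q$ to their rational counterparts $\rat Q$, $\rat R$, $\rat P$, together with a map relating the fibrant replacements of $K$ and $\rat K$. Running $\mu$ in parallel on the unrationalized and rationalized input data then gives two parallel zig-zags in the homotopy category of pairs, and naturality of each individual map in $\mu$ produces a commutative ladder between them whose vertical arrows are (induced by) the rationalization units. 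Reading off the two ends of this ladder gives the desired commutative square.

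The main obstacle is the bookkeeping of the various (co)fibrant replacements needed to make the comparison strict at each step. Concretely, one must pick a fibrant replacement of $K$ together with a factorization through a fibrant replacement of $\rat K$ lifting the rationalization unit, and one must choose compatible projective cofibrant replacements of the two pushout diagrams of simplicial sets, extending the dashed choice in the remark following \cref{not:gluing}. Once these choices are fixed, independence of $\mu$ from such choices up to canonical homotopy allows us to pass to the homotopy category, where the naturality ladder provides exactly the commutative diagram claimed.
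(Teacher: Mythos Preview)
Your proposal is correct and follows essentially the same approach as the paper: both arguments exploit that each step of the zig-zag $\mu$ from \cref{lemma:aut_pushout_zig-zag} is functorial in the input data, so that rationalization induces a ladder connecting the two instances of $\mu$. The paper makes this ladder explicit as a concrete diagram and singles out the one step that is not immediate---comparing the two models $\rat X \cop_{\rat C} \rat{X'}$ and $\rat{(X \cop_C X')}$ of the rationalized pushout---where it invokes \cref{lemma:rel_aut_naturality} together with the fact (from \cref{lemma:rat}) that the comparison map is a trivial projective cofibration; this is precisely the ``bookkeeping'' you allude to, and naming that lemma would turn your sketch into a complete proof.
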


\begin{proof}
  First note that \cref{lemma:aut_pushout_zig-zag} is indeed applicable by \cref{lemma:rat,lemma:cofibration_pushouts_special}.
  We will prove the statement for the simplicial monoids; the proof for the pairs is analogous.
  To this end consider the diagram
  \[
  \begin{tikzcd}[column sep = 15]
  	{\aut[A](X)} \times {\aut[A'](X')} \rar \ar{dd} & \aut[A \cop_C A'](X \cop_C X') \rar[squiggly] \dar & \aut[A \cop_C A'](P) \dar & \lar[swap]{\eq} \aut[R](P) \dar \\
  	 & \aut[\rat{(A \cop_C A')}] \bigl( \rat{(X \cop_C X')} \bigr) \rar[squiggly]{\eq} & \aut[\rat{(A \cop_C A')}](\rat P) \dar{\eq} & \aut[\rat R](\rat P) \lar[swap]{\eq} \dlar[end anchor = north east]{\eq} \\
  	{\aut[\rat A](\rat X)} \times {\aut[\rat A'](\rat X')} \rar & \aut[\rat A \cop_{\rat C} \rat A'](\rat X \cop_{\rat C} \rat X') \rar[squiggly] \uar[squiggly] & \aut[\rat A \cop_{\rat C} \rat A'](\rat P)
  \end{tikzcd}
  \]
  where the zig-zags are those of \cref{lemma:rel_aut_span}.
  The left-hand pentagon commutes in the homotopy category of simplicial monoids by an argument similar to the proof of \cref{prop:exp_eq_natural}, and it is easy to see that the same is true for the upper right-hand squares.
  The lower middle square commutes by \cref{lemma:rel_aut_naturality,lemma:rel_aut_span} since the vertical map
  \[
  \begin{tikzcd}
  	\rat A \cop_{\rat C} \rat A' \dar \rar & \rat X \cop_{\rat C} \rat X' \dar \\
    \rat {(A \cop_C A')} \rar & \rat {(X \cop_C X')}
  \end{tikzcd}
  \]
  is a trivial projective cofibration in the arrow category by arguments similar to the proof of \cref{lemma:rat}.
\end{proof}

We can now combine the preceding lemmas to prove that the maps of \cref{thm:model} are compatible with $g$ and $\mu$.

\begin{proposition} \label{thm:model_natural}
  We use \cref{not:gluing} and assume that $i$ and $i'$ are minimal.
  Furthermore, let $(G, \ol U, N)$, $(G', \ol U', N')$, and $(H, \ol V, M)$ be three tuples as in \cref{not:group_triple}, associated to $(B, A, \xi)$, $(B', A', \xi')$, and $(Q, R, \zeta)$, respectively.
  Assume that $\gamma(G \times G') \subseteq H$, that $\ol \gamma(\ol U \times \ol U') \subseteq \ol V$, and that $\ol \gamma_\QQ(N \times N') \subseteq M$.
  Then there is a diagram
  \[
  \begin{tikzcd}
    \B[\inv{q_G}(U)] \bdlaut{A}{B}{K}(\xi)_G \times \B[\inv{q_{G'}}(U')] \bdlaut{A'}{B'}{K}(\xi')_{G'} \rar[squiggly]{\mu_*} \dar[swap]{\req} & \B[\inv{q_H}(V)] \bdlaut{R}{Q}{K}(\zeta)_H \dar{\req} \\
    \B[U] \bdlaut{\rat A}{\rat B}{\rat K}(\rat \xi) \times \B[U'] \bdlaut{\rat A'}{\rat B'}{\rat K}(\rat \xi') \rar[squiggly] \dar[squiggly][swap]{\eq} & \B[V] \bdlaut{\rat R}{\rat Q}{\rat K}(\rat \zeta) \dar[squiggly]{\eq} \\
    \MCb \bigl( \lie g_{L_A}^{L_B}(\rho)_{\ol U} \bigr) \times \MCb \bigl( \lie g_{L_{A'}}^{L_{B'}}(\rho')_{\ol U'} \bigr) \rar{g_*} & \MCb \bigl( \lie g_{L_R}^{L_Q}(\sigma)_{\ol V} \bigr)
  \end{tikzcd}
  \]
  such that the upper square commutes in the homotopy category of simplicial sets with an action of $\quot {\inv {q_G}(N)} {\inv {q_G}(U)} \times \quot {\inv {q_{G'}}(N')} {\inv {q_{G'}}(U')}$, the lower square commutes in the homotopy category of simplicial sets with an action of the preimage of $N \times N'$ in $\Aut[L_A](L_X)_\rho \times \Aut[L_{A'}](L_{X'})_{\rho'}$, and the two lower vertical maps are (the product of) the corresponding maps of \cref{thm:model}.
  Here the horizontal map is some morphism in the homotopy category of simplicial sets with an action of $\quot N U \times \quot {N'} {U'}$, and $g \colon \lie g_{L_A}^{L_B}(\rho)_{\ol U} \times \lie g_{L_{A'}}^{L_{B'}}(\rho')_{\ol U'} \to \lie g_{L_R}^{L_Q}(\sigma)_{\ol V}$ is the map of dg Lie algebras given by \cref{lemma:pushout_indecomposables} on the $\Hom$-factors and as in \cref{prop:exp_eq_natural} on the $\Der$-factors.
\end{proposition}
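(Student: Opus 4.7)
The plan is to assemble the diagram from the compatibility lemmas that have been established throughout this subsection, working at the level of pairs (simplicial monoid, right module) before passing to classifying spaces via the two-sided bar construction.

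First I would address the upper square. The map of \cref{lemma:aut_rat_eq} from $\B[\inv{q_G}(U)] \bdlaut{A}{B}{K}(\xi)_G$ to $\B[U] \bdlaut{\rat A}{\rat B}{\rat K}(\rat \xi)$ is, via \cref{lemma:submonoid_bar,lemma:left_conj_eq}, obtained by applying the bar construction to the map induced by rationalization on the pair $(\aut[A](X)_G, \map[B](X, K)_\xi)$ together with the right action of $\redim{G}$ on $\quot {\hg 0 (\aut[\rat A](\rat X))} U$. Thus the commutativity of the upper square, in the homotopy category of simplicial sets with the stated group action, reduces to the commutativity of the analogous diagram at the level of these pairs. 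This is precisely \cref{lemma:aut_rat_eq_natural} (combined with the monoidal compatibility of $\gamma$ and $\ol \gamma_\QQ$ on $\hg 0$), after applying the (product-preserving, cf.\ \cref{lemma:bar_products}) bar construction.

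For the lower square, recall from \cref{thm:model} that its vertical maps are assembled from three pieces: (i) the comparison between $\B[U] \bdlaut{\rat A}{\rat B}{\rat K}(\rat \xi)$ and $\B[\ol U(\QQ)] \bdlaut{\nerve{L_A}}{\nerve{L_B}}{\nerve{\Pi}}(\nerve{\rho})$ provided by \cref{thm:Baut_functor} (using that rationalization is compatible with pushouts by \cref{lemma:rat}), (ii) the identification of simplicial monoids and right modules coming from \cref{prop:exp_eq,prop:map_model,lemma:Hom_eq}, and (iii) the final application of \cref{lemma:MC_outer} that exhibits the Maurer--Cartan realization of the semidirect product as the relevant bar construction. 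The naturality of each of these pieces with respect to the gluing maps $\gamma$, $\ol \gamma$, $\ol \Gamma$, $g$ and $\mu$ has essentially been checked: for (i) it follows from \cref{thm:Baut_functor_natural} applied to the two inclusions $(A, B, X) \hookrightarrow (R, Q, P)$ and $(A', B', X') \hookrightarrow (R, Q, P)$ together with the naturality of rationalization (and \cref{lemma:rat}); for (ii) the two relevant compatibilities are \cref{prop:exp_eq_natural} on the $\expb(\Der)$-side and \cref{lemma:Hom_map_gluing} on the $\MCb(\Hom)$-side, both of which are upgraded in their final sentences to a commutative diagram of pairs of a simplicial monoid and a right module over it; for (iii) the monoidality of \cref{lemma:MC_outer} ensures that the map $g$ of dg Lie algebras on the semidirect products induces the claimed map of realizations after applying the bar construction.

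Putting the pieces together: combining \cref{prop:exp_eq_natural,lemma:Hom_map_gluing,lemma:pushout_indecomposables} gives a commutative diagram in the homotopy category of pairs relating the pair $(\expb(\Der[\ol U](L_X \rel L_A)) \times \expb(\Der[\ol {U'}](L_{X'} \rel L_{A'})),\, \MCb(\trunc 0 {\Hom}) \times \MCb(\trunc 0 {\Hom}))$ to its counterpart over $L_P$ through the pushout map on the $\Der$-side and through $g$ on the indecomposables side; \cref{lemma:aAut_pushouts,lemma:aEaut_pushouts} guarantee that this diagram is equivariant with respect to the preimage of $N \times N'$ in $\Aut[L_A](L_X)_\rho \times \Aut[L_{A'}](L_{X'})_{\rho'}$ acting by conjugation. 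Applying the bar construction (using \cref{lemma:bar_products} for monoidality and \cref{lemma:bar_eq} to preserve weak equivalences) then yields the lower square, and combining it with the upper square from the first paragraph produces the claimed diagram with the horizontal map being the image of $\mu$ under the sequence of weak equivalences. The main bookkeeping obstacle is tracking equivariance through the many zig-zags; this is resolved by using that all of \cref{prop:exp_eq_natural,lemma:Hom_map_gluing,lemma:aut_rat_eq_natural} have been stated (or are easily upgraded) at the level of the pair $(\text{monoid},\text{module})$ and are equivariant for the full conjugation action of the corresponding automorphism groups, so the $N, N'$ and $M$ actions propagate automatically.
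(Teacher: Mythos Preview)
Your overall strategy matches the paper's: upper square via \cref{lemma:aut_rat_eq_natural}, lower square by combining \cref{prop:exp_eq_natural}, \cref{lemma:Hom_map_gluing}, and the monoidality of \cref{lemma:MC_outer}, then passing to bar constructions. Two genuine gaps remain, though.

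First, you never verify that $g$ actually restricts to a map
\[
\Der[\ol U](L_X \rel L_A) \times \Der[\ol U'](L_{X'} \rel L_{A'}) \longto \Der[\ol V](L_P \rel L_R).
\]
This is not automatic: by \cref{def:Deru} these subalgebras are defined by a condition on $\Ho 0$ involving $\Lie(\ol U)$, $\Lie(\ol U')$, $\Lie(\ol V)$, so one must check that the degree-zero part of $g$ is compatible with $\Lie(\ol\gamma)$ under the isomorphism of \cref{prop:Lie_aEaut}. The paper does this explicitly by observing that $\Lie(\ol\Gamma)$ corresponds to $\Cycles 0(g)$, hence $\Lie(\ol\gamma)$ to $\Ho 0(g)$; the hypothesis $\ol\gamma(\ol U\times\ol U')\subseteq\ol V$ then gives the desired containment. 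Without this step the map $g$ of the statement is not known to be well defined on the twisted semi-direct products, so the bottom row of your diagram does not yet exist.

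Second, your step~(i) invokes a result ``\texttt{thm:Baut\_functor\_natural}'' that the paper does not contain. The compatibility of the zig-zags of \cref{thm:Baut_functor} with the gluing map $\mu$ is the content of \cref{lemma:zig-zag_pushouts}, and that is what the paper cites at the end. This lemma is not a triviality: it requires factoring the natural transformation in the projective model structure on the relevant diagram category and tracking several (trivial) cofibration conditions through the resulting cube, so you cannot simply appeal to abstract functoriality of \cref{thm:Baut_functor}. Once you substitute \cref{lemma:zig-zag_pushouts} for the invented reference and add the well-definedness check above, your argument coincides with the paper's.
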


\begin{proof}
  First note that chasing through the definitions shows that $\Lie(\ol \Gamma)$ corresponds to $\Cycles 0 (g)$ under the isomorphism of \cref{prop:Lie_aEaut}, and hence $\Lie(\ol \gamma)$ corresponds to $\Ho 0 (g)$.
  This implies that there is indeed a map
  \[ \Der[\ol U](L_X \rel L_A) \times \Der[\ol U'](L_{X'} \rel L_{A'})  \longto  \Der[\ol V](L_P \rel L_R) \]
  as claimed.
  
  The upper square of the desired diagram is obtained from \cref{lemma:aut_rat_eq_natural}.
  We now construct the bottom square.
  \cref{prop:exp_eq_natural,lemma:Hom_map_gluing,lemma:MC_outer,lemma:submonoid_bar,lemma:left_conj_eq} yield a commutative diagram in the homotopy category of simplicial sets with an action of the preimage of $N \times N'$ in $\Aut[L_A](L_X)_\rho \times \Aut[L_{A'}](L_{X'})_{\rho'}$
  \[
  \begin{tikzcd}
    \MCb \bigl( \lie g_{L_A}^{L_B}(\rho)_{\ol U} \bigr) \times \MCb \bigl( \lie g_{L_{A'}}^{L_{B'}}(\rho')_{\ol U'} \bigr) \rar{\eq} \dar[swap]{\iso}  & \B[U] \bdlaut{\nerve{L_A}}{\nerve{L_B}}{\nerve{\Pi}} \bigl( \nerve{\rho} \bigr) \times \B[U'] \bdlaut{\nerve{L_{A'}}}{\nerve{L_{B'}}}{\nerve{\Pi}} \bigl( \nerve{\rho'} \bigr) \dar{\iso} \\
    \MCb \Bigl( \lie g_{L_A}^{L_B}(\rho)_{\ol U} \times \lie g_{L_{A'}}^{L_{B'}}(\rho')_{\ol U'} \Bigr) \dar[swap]{g*} & \B[U \times U'] \Bigl( \bdlaut{\nerve{L_A}}{\nerve{L_B}}{\nerve{\Pi}} \bigl( \nerve{\rho} \bigr) \times \bdlaut{\nerve{L_{A'}}}{\nerve{L_{B'}}}{\nerve{\Pi}} \bigl( \nerve{\rho'} \bigr) \Bigr) \dar[squiggly] \\
    \MCb \bigl( \lie g_{L_R}^{L_Q}(\sigma)_{\ol V} \bigr) \rar[squiggly]{\eq} & \B[V] \bdlaut{\nerve{L_R}}{\nerve{L_Q}}{\nerve{\Pi}} \bigl( \nerve{\sigma} \bigr)
  \end{tikzcd}
  \]
  where the bottom right-hand vertical map is induced by the one of \cref{lemma:aut_pushout_zig-zag}.
  Combining this with \cref{lemma:zig-zag_pushouts}, we obtain the desired diagram.
\end{proof}

To be able to apply \cref{thm:model_natural} to obtain a compatibility of the model of \cref{cor:Baut_eq} with gluing constructions, we need the map $\ol \gamma$ of \cref{lemma:aEaut_pushouts} to induce a map on maximal reductive quotients.
The following lemma provides a criterion for when this is the case.
It was one of our motivations for introducing the algebraic groups $\ahoEaut[L_A](L_X)_\rho$.

\begin{lemma} \label{lemma:aredEaut_gluing_map}
  We use \cref{not:gluing} and assume that $i$ and $i'$ are minimal.
  Furthermore assume that the algebraic representations, provided by \cref{lemma:indecomposables_representation}, of $\aAut[L_A](L_X)_\rho$ on $\indec[L_A](L_X)$ and of $\aAut[L_{A'}](L_{X'})_{\rho'}$ on $\indec[L_{A'}](L_{X'})$ are semi-simple.
  Then the (restriction of the) map of \cref{lemma:aEaut_pushouts}
  \begin{gather*}
    \ol \gamma  \colon  \aEaut[L_A](L_X)_\rho \times \aEaut[L_{A'}](L_{X'})_{\rho'}  \longto  \aEaut[L_R](L_P)_\sigma
    \shortintertext{induces a map}
    \ol \nu  \colon  \aredEaut[L_A](\rho) \times \aredEaut[L_{A'}](\rho')  \longto  \aredEaut[L_R](\sigma)
  \end{gather*}
  of algebraic groups.
\end{lemma}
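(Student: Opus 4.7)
The plan is to verify that the (restricted) morphism $\ol\gamma$ of \cref{lemma:aEaut_pushouts} carries the unipotent radical of the source into that of the target; the universal property of the maximal reductive quotient will then yield the desired factorisation $\ol\nu$.

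First I would check that $\ol\Gamma$ of \cref{lemma:aAut_pushouts} restricts to a morphism
\[ \aAut[L_A](L_X)_\rho \times \aAut[L_{A'}](L_{X'})_{\rho'}  \longto  \aAut[L_R](L_P)_\sigma, \]
from which the analogous restriction of $\ol\gamma$ follows by quotienting by $\aNull$: indeed, if $\phi$ fixes $\rho$ and $\psi$ fixes $\rho'$, then the pushout $\phi \cop_{L_C} \psi$ fixes $\sigma$ by the universal property of the pushout, using that $\sigma$ restricts to $\rho$ and $\rho'$. By \cref{lemma:pushout_indecomposables} the map $L_R \to L_P$ is again minimal quasi-free, and the canonical identification $\indec[L_A](L_X) \dirsum \indec[L_{A'}](L_{X'}) \iso \indec[L_R](L_P)$ intertwines $\Koppa \after \ol\Gamma$ with the direct-sum action. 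Composing with $\areal$ and using that $\Koppa$ factors through $\aEaut$ via \cref{lemma:indecomposables_representation}, I obtain a commutative square
\[
\begin{tikzcd}
  \aEaut[L_A](L_X)_\rho \times \aEaut[L_{A'}](L_{X'})_{\rho'} \rar{\ol\gamma} \dar[swap]{\koppa \times \koppa} & \aEaut[L_R](L_P)_\sigma \dar{\koppa} \\
  \ahoEaut[L_A](L_X)_\rho \times \ahoEaut[L_{A'}](L_{X'})_{\rho'} \rar & \ahoEaut[L_R](L_P)_\sigma.
\end{tikzcd}
\]

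Next I would invoke the semi-simplicity hypothesis through \cref{lemma:ahoEaut_reductive} applied to $(L_A, L_X, \rho)$ and $(L_{A'}, L_{X'}, \rho')$: this identifies each of $\ahoEaut[L_A](L_X)_\rho$ and $\ahoEaut[L_{A'}](L_{X'})_{\rho'}$ with the corresponding maximal reductive quotient, so that $\ker(\koppa)$ coincides with the unipotent radical on each of the two source factors, hence also on their product. On the target, \cref{lemma:indecomposables_representation} (applicable by minimality of $L_R \to L_P$) ensures that $\ker(\koppa)$ is a normal unipotent subgroup of $\aEaut[L_R](L_P)_\sigma$, and in particular is contained in $\unip(\aEaut[L_R](L_P)_\sigma)$. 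Chasing the square, $\ol\gamma$ sends $\unip(\aEaut[L_A](L_X)_\rho) \times \unip(\aEaut[L_{A'}](L_{X'})_{\rho'})$ into $\unip(\aEaut[L_R](L_P)_\sigma)$. Since the unipotent radical of a product is the product of unipotent radicals, composing $\ol\gamma$ with the quotient $\aEaut[L_R](L_P)_\sigma \to \aredEaut[L_R](\sigma)$ factors uniquely through $\aredEaut[L_A](\rho) \times \aredEaut[L_{A'}](\rho')$, producing the sought $\ol\nu$.

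The crux will be the identification $\unip(\aEaut[L_A](L_X)_\rho) = \ker(\koppa)$ on each source factor, which is precisely what the semi-simplicity hypothesis buys via \cref{lemma:ahoEaut_reductive}; without it, $\ker(\koppa)$ could fail to be unipotent on either factor and the argument would break. By contrast, no semi-simplicity condition on the target is needed, since unipotence of $\ker(\koppa)$ on $\aEaut[L_R](L_P)_\sigma$ is automatic from the minimality of $L_R \to L_P$.
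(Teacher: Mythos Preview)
Your proof is correct and follows essentially the same route as the paper: both use the commutative square of \cref{lemma:pushout_indecomposables} to pass through $\ahoEaut$, invoke \cref{lemma:ahoEaut_reductive} to identify $\ahoEaut$ with the maximal reductive quotient on the two source factors, and use unipotence of $\ker(\koppa)$ on the target to land in $\aredEaut[L_R](\sigma)$. One small slip in your closing commentary: without the semi-simplicity hypothesis, $\ker(\koppa)$ is still unipotent by \cref{lemma:indecomposables_representation}; what could fail is rather the equality $\ker(\koppa) = \unip(\aEaut)$, i.e.\ $\ker(\koppa)$ could be a proper subgroup of the unipotent radical.
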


\begin{proof}
  By \cref{lemma:pushout_indecomposables}, the map $\ol \gamma$ induces a map
  \[ \ahoEaut[L_A](L_X)_\rho \times \ahoEaut[L_{A'}](L_{X'})_{\rho'}  \longto  \ahoEaut[L_R](L_P)_\sigma \]
  of the algebraic groups of \cref{def:ahoEaut}.
  Moreover the following maps of algebraic groups, which exist since the kernel of the map $\Koppa$ of \cref{lemma:indecomposables_representation} is unipotent,
  \[ \ahoEaut[L_A](L_X)_\rho  \longto  \aredEaut[L_A](\rho)  \qquad \text{and} \qquad  \ahoEaut[L_{A'}](L_{X'})_{\rho'}  \longto  \aredEaut[L_{A'}](\rho') \]
  are isomorphisms by \cref{lemma:ahoEaut_reductive}.
  Hence there is a map of algebraic groups
  \[ \aredEaut[L_A](\rho) \times \aredEaut[L_{A'}](\rho')  \iso  \ahoEaut[L_A](L_X)_\rho \times \ahoEaut[L_{A'}](L_{X'})_{\rho'}  \longto  \ahoEaut[L_R](L_P)_\sigma  \longto  \aredEaut[L_R](\sigma) \]
  compatible with $\ol \gamma$, which finishes the proof.
\end{proof}

For later reference, we record the outcome of the preceding lemma in the following assumption.

\begin{assumption} \label{ass:gluing}
  In the situation of \cref{not:gluing}, we additionally assume that $i$ and $i'$ are minimal, and that $\ol \gamma$ induces a map
  \[ \ol \nu \colon \aredEaut[L_A](\rho) \times \aredEaut[L_{A'}](\rho')  \longto  \aredEaut[L_R](\sigma) \]
  of algebraic groups.
  By \cref{lemma:aredEaut_gluing_map}, the latter condition is fulfilled if the algebraic representations of $\aAut[L_A](L_X)_\rho$ on $\indec[L_A](L_X)$ and of $\aAut[L_{A'}](L_{X'})_{\rho'}$ on $\indec[L_{A'}](L_{X'})$ are semi-simple (for which \cref{lemma:ahoEaut_reductive} provides an equivalent characterization).
\end{assumption}

\begin{remark} \label{rem:gluing_condition}
  An induced map $\ol \nu$ as in \cref{ass:gluing} does not always exist.\footnote{The authors would like to thank the participants of the WIQI topology seminar at the MPI Bonn for helpful discussions regarding this question.}
  For a counterexample, let $n \ge 2$ and consider $L_A = L_{A'} = L_C \defeq \freelie(a)$, $L_X \defeq (\freelie(a, x, y), d)$, and $L_{X'} \defeq (\freelie(a, x'), d')$ with $\deg a \defeq n$ and $\deg x = \deg y = \deg{x'} \defeq n + 1$ as well as $d(x) = a$ and $d'(x') = a$ (and otherwise trivial differentials).
  The algebraic group $\aAut[L_A](L_X)$ is isomorphic to the algebraic subgroup of $\aGL(\linspan {x, y})$ consisting of the matrices of the form
  \[ \begin{pmatrix} 1 & 0 \\ \alpha & \beta \end{pmatrix} \]
  with respect to the ordered basis $(x, y)$.
  Hence its unipotent radical $\ol U$ consists of those matrices $u$ such that $\beta = 1$.
  On the other hand, consider the pushout $L_P \defeq L_X \cop_{L_A} L_{X'} \iso (\freelie(a, x, x', y), \delta)$ where $\delta(x) = \delta(x') = a$ and $\delta$ is trivial otherwise.
  The algebraic group $\aAut[L_A](L_P)$ is isomorphic to the algebraic subgroup of $\aGL(\linspan {x, x', y})$ consisting of the matrices of the form
  \[ \begin{pmatrix} 1 & 0 & 0 \\ \alpha_1 & \beta_1 & \gamma_1 \\ \alpha_2 & \beta_2 & \gamma_2 \end{pmatrix} \]
  with respect to the ordered basis $(x, x - x', y)$.
  Sending such a matrix to the matrix
  \[ \begin{pmatrix} \beta_1 & \gamma_1 \\ \beta_2 & \gamma_2 \end{pmatrix} \]
  defines a surjective map of algebraic groups $\ol \pi \colon \aAut[L_A](L_P) \to \aGL[2]$.
  An element $u$ of $\ol U(R)$ as above is sent by $(\ol \pi \after \ol \Gamma)_R$ to the element
  \[ \begin{pmatrix} 1 & 0 \\ \alpha & 1 \end{pmatrix} \]
  which is non-trivial for $\alpha \neq 0$.
  This is a contradiction to $\ol \Gamma_R(u)$ being an element of the unipotent radical of $\aAut[L_A](L_P)$.
\end{remark}

Recall that the model of \cref{cor:Baut_eq} fundamentally depends on the choice of a section of the group homomorphism $\Aut[L_A](L_X)_\rho \to \aredEaut[L_A](\rho)(\QQ)$.
The following lemma records the simple, but integral, observation that these sections can be chosen to be compatible with the gluing maps $\gamma$ and $\Gamma$.

\begin{lemma} \label{lemma:choose_compatibly_gluing}
  We use \cref{not:gluing} and assume that \cref{ass:gluing} is fulfilled.
  Moreover, let $\bar s$ and $\bar s'$ be sections of the composite maps of algebraic groups
  \begin{gather*}
    \aAut[L_A](L_X)_\rho  \xlongto{\areal}  \aEaut[L_A](L_X)_\rho  \longto  \aredEaut[L_A] ( \rho ) \\
    \aAut[L_{A'}](L_{X'})_{\rho'}  \xlongto{\areal}  \aEaut[L_{A'}] ( L_{X'} )_{\rho'}  \longto  \aredEaut[L_{A'}] ( \rho' )
  \end{gather*}
  respectively (such sections exist by \cref{lemma:areal_section}).
  Then there exists a section $\bar t$ of the composite map of algebraic groups
  \[ \aAut[L_R](L_P)_\sigma  \xlongto{\areal}  \aEaut[L_R] ( L_P )_\sigma  \longto  \aredEaut[L_R] ( \sigma ) \]
  such that $\ol \Gamma \after (\bar s \times \bar s') = \bar t \after \ol \nu$.
\end{lemma}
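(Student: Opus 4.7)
The plan is to obtain $\bar t$ as a direct application of \cref{lemma:compatible_Levi}. Concretely, I will verify that the diagram
\[
\begin{tikzcd}
    & \aAut[L_R](L_P)_\sigma \dar{\bar r} \\
    \aredEaut[L_A](\rho) \times \aredEaut[L_{A'}](\rho') \rar{\ol \nu} \urar[bend left]{\ol \Gamma \after (\bar s \times \bar s')} & \aredEaut[L_R](\sigma)
\end{tikzcd}
\]
of algebraic groups satisfies the hypotheses of \cref{lemma:compatible_Levi}. Its conclusion then produces a section $\bar t$ of $\bar r$ with $\bar t \after \ol \nu = \ol \Gamma \after (\bar s \times \bar s')$, which is exactly the identity in the statement.

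Three things need to be checked. First, the source $\aredEaut[L_A](\rho) \times \aredEaut[L_{A'}](\rho')$ is a product of reductive algebraic groups, hence reductive. Second, $\bar r$ is the quotient map onto the maximal reductive quotient of $\aAut[L_R](L_P)_\sigma$: by \cref{prop:ES} and the definition of $\aredEaut[L_R](\sigma)$, the map $\bar r$ is surjective with unipotent kernel (an extension of two unipotent algebraic groups); since its target is reductive by construction, \cref{lemma:unipotent_radical} identifies the target with the maximal reductive quotient. Third, the triangle commutes: composing the commutative square of \cref{lemma:aEaut_pushouts} with the projections to the maximal reductive quotients yields the commutative square
\[
\begin{tikzcd}
    \aAut[L_A](L_X)_\rho \times \aAut[L_{A'}](L_{X'})_{\rho'} \rar{\ol \Gamma} \dar & \aAut[L_R](L_P)_\sigma \dar{\bar r} \\
    \aredEaut[L_A](\rho) \times \aredEaut[L_{A'}](\rho') \rar{\ol \nu} & \aredEaut[L_R](\sigma)
\end{tikzcd}
\]
(the existence of the lower horizontal map is exactly \cref{ass:gluing}), and precomposing with $\bar s \times \bar s'$ uses that these are sections of the left vertical map to conclude that $\bar r \after \ol \Gamma \after (\bar s \times \bar s') = \ol \nu$.

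There is no real obstacle; the heart of the argument is already packaged into \cref{lemma:compatible_Levi}, and \cref{ass:gluing} is precisely what makes the outer triangle commute so that the lemma applies.
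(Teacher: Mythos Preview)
Your proof is correct and follows exactly the paper's approach: the paper's proof is the single sentence ``This follows from \cref{lemma:compatible_Levi} since products of reductive algebraic groups are again reductive,'' and your proposal simply unpacks the verification of the hypotheses of that lemma (reductivity of the source, identification of $\bar r$ with the quotient by the unipotent radical as in the proof of \cref{lemma:areal_section}, and commutativity of the triangle via \cref{lemma:aEaut_pushouts} and \cref{ass:gluing}).
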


\begin{proof}
  This follows from \cref{lemma:compatible_Levi} since products of reductive algebraic groups are again reductive (see e.g.\ \cite[27.2.3]{TY}).
\end{proof}

We are now ready to state the compatibility of the model for $\B \bdlaut{A}{B}{K}(\xi)$ of \cref{cor:Baut_eq} with gluing constructions.
Properly interpreted, this means that the map $g$ models the map $\mu$.
Afterwards, we will deduce that the cdga model of \cref{cor:forms_eq} and the identification of the cohomology of \cref{cor:cohomology} are both compatible with gluing constructions as well.

\begin{theorem} \label{cor:Baut_eq_natural}
  We use \cref{not:gluing} and assume that \cref{ass:gluing} is fulfilled.
  Furthermore, let $G \subseteq \Eaut[A](X)_{\eqcl{\xi}_B}$, $G' \subseteq \Eaut[A'](X')_{\eqcl{\xi'}_B}$, and $H \subseteq \Eaut[R](P)_{\eqcl{\zeta}_Q}$ be finite-index subgroups such that $\gamma(G \times G') \subseteq H$, and denote by $\redim{G}$, $\redim{G'}$, and $\redim{H}$ their images in $\aredEaut[L_A] (\rho)(\QQ)$, $\aredEaut[L_{A'}] (\rho')(\QQ)$, and $\aredEaut[L_P] (\sigma)(\QQ)$, respectively.
  Furthermore, let $\bar s$, $\bar s'$, and $\bar t$ be sections as in \cref{lemma:choose_compatibly_gluing}.
  Then there is a commutative diagram in the homotopy category of simplicial sets
  \[
  \begin{tikzcd}
    \B \bdlaut{A}{B}{K}(\xi)_G \times \B \bdlaut{A'}{B'}{K}(\xi')_{G'} \rar{\mu_*} \dar[squiggly][swap]{\req} & \B \bdlaut{R}{Q}{K}(\zeta)_H \dar[squiggly]{\req} \\
    \hcoinv { \MCb \bigl( \lie g_{L_A}^{L_B}(\rho) \bigr) } { \redim{G} }  \times  \hcoinv { \MCb( \bigl( \lie g_{L_{A'}}^{L_{B'}}(\rho') \bigr) } { \redim{G'} } \rar{g_*} & \hcoinv { \MCb \bigl( \lie g_{L_R}^{L_Q}(\sigma) \bigr) } { \redim{H} }
  \end{tikzcd}
  \]
  such that the two vertical maps are those of \cref{cor:Baut_eq}.
  Here $g \colon \lie g_{L_A}^{L_B}(\rho) \times \lie g_{L_{A'}}^{L_{B'}}(\rho') \to \lie g_{L_R}^{L_Q}(\sigma)$ is the map of dg Lie algebras given by \cref{lemma:pushout_indecomposables} on the $\Hom$-factors and as in \cref{prop:exp_eq_natural} on the $\Der$-factors.
  The group $\redim{G}$ acts on $\lie g_{L_A}^{L_B}(\rho)$ by conjugation via the group homomorphism $\bar s_\QQ$, and analogously $\redim{G'}$ acts via $\bar s'_\QQ$ and $\redim{H}$ via $\bar t_\QQ$.
\end{theorem}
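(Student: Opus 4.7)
The plan is to combine \cref{thm:model_natural} with the monoidal naturality of \cref{lemma:hquot_hcoinv}, in the same spirit that \cref{cor:Baut_eq} was deduced from \cref{thm:model}.

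First I would apply \cref{thm:model_natural} with $\ol U$, $\ol U'$, $\ol V$ taken to be the unipotent radicals of $\aEaut[L_A](L_X)_\rho$, $\aEaut[L_{A'}](L_{X'})_{\rho'}$, and $\aEaut[L_R](L_P)_\sigma$ respectively, and with $N$, $N'$, $M$ chosen large enough to contain the images of $\redim G$, $\redim {G'}$, and $\redim H$ under the sections $\bar s$, $\bar s'$, $\bar t$. The hypothesis $\ol \gamma(\ol U \times \ol U') \subseteq \ol V$ follows from \cref{ass:gluing}: since $\ol \nu$ is defined on the maximal reductive quotients, $\ol \gamma$ must send the unipotent radical of the product (which equals the product of the unipotent radicals) into the unipotent radical of the target. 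The hypothesis $\gamma(G \times G') \subseteq H$ is given, and the intertwining relation $\ol \Gamma \after (\bar s \times \bar s') = \bar t \after \ol \nu$ from \cref{lemma:choose_compatibly_gluing} ensures $\ol \Gamma_\QQ(N \times N') \subseteq M$. This yields a commutative diagram relating the $\B[\inv{q_G}(U)]$-style objects to the Maurer--Cartan spaces, equivariantly with respect to the $\redim G \times \redim {G'}$-action (transported through the sections) on one side and the $\redim H$-action (via $\ol \nu_\QQ$) on the other.

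Next I would pass to homotopy orbits. Since $\B(\blank, \blank, \blank)$ preserves products (\cref{lemma:bar_products}), the canonical map $\hcoinv{Z_1}{\redim G} \times \hcoinv{Z_2}{\redim {G'}} \iso \hcoinv{Z_1 \times Z_2}{\redim G \times \redim {G'}}$ is an isomorphism, so applying $\hcoinv{\blank}{\redim G \times \redim {G'}}$ to the bottom square produced above (and using $\ol \nu_\QQ$ to induce the map to $\hcoinv{\blank}{\redim H}$) yields the bottom square of the desired diagram, with $g_*$ along the bottom row. For the top square, I would invoke the monoidal naturality of \cref{lemma:hquot_hcoinv} applied to the map of triples $(\map[\blank](\blank, K), \aut[\blank](\blank), \redim{\blank})$ induced by $\mu$; this produces a weak equivalence from the homotopy orbits $\hcoinv{Z}{\redim{\blank}}$ back to $\B \bdlaut{\blank}{\blank}{K}(\blank)_{\blank}$ that intertwines the image of the top row of \cref{thm:model_natural} with $\mu_*$ on the two sides. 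Gluing the two squares then gives the desired commutative diagram.

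The hard part will be carefully tracking the compatibility of actions through the intermediate zig-zags appearing in the proofs of \cref{thm:model} and \cref{cor:Baut_eq}, particularly those involving \cref{lemma:left_conj_eq}, and ensuring that the choices of sections $\bar s$, $\bar s'$, $\bar t$ are propagated consistently so that the resulting vertical maps in the final diagram agree with those of \cref{cor:Baut_eq} associated to those sections.
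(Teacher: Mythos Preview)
Your proposal is correct and follows essentially the same approach as the paper: the paper's proof is the one-liner ``This follows from \cref{thm:model_natural,lemma:hquot_hcoinv}, as in the proof of \cref{cor:Baut_eq},'' which is precisely your plan. Your additional remarks (that \cref{ass:gluing} forces $\ol\gamma(\ol U\times\ol U')\subseteq\ol V$, and that \cref{lemma:bar_products} makes the passage to homotopy orbits compatible with products) are the right ingredients to fill in the details.
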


\begin{proof}
  This follows from \cref{thm:model_natural,lemma:hquot_hcoinv}, as in the proof of \cref{cor:Baut_eq}.
\end{proof}

\begin{corollary} \label{cor:forms_eq_natural}
  In the situation of \cref{cor:Baut_eq_natural}, let $M$, $M'$, and $N$ be representations in rational vector spaces of $\redim{G}$, $\redim{G'}$, and $\redim{H}$, respectively, and let $m \colon N \to M \tensor M'$ be a map that is equivariant with respect to $\ol \nu_\QQ \colon \redim{G} \times \redim{G'} \to \redim{H}$.
  Then the following diagram commutes in the homotopy category of cochain complexes
  \[
  \begin{tikzcd}
    &[-180] \Forms * \bigl( \B \bdlaut{A}{B}{K}(\xi)_G \times \B \bdlaut{A'}{B'}{K}(\xi')_{G'}; M \tensor M' \bigr) &[-110] \\
    {\Forms * \bigl( \B \bdlaut{A}{B}{K}(\xi)_G; M \bigr)} \tensor {\Forms * \bigl( \B \bdlaut{A'}{B'}{K}(\xi')_{G'}; M' \bigr)} \urar[bend left, start anchor = north, end anchor = -174][near start]{\eq} \dar[squiggly][swap]{\eq} & & \Forms * \bigl( \B \bdlaut{R}{Q}{K}(\zeta)_H; N \bigr) \dar[squiggly]{\eq} \ular[bend right, start anchor = north, end anchor = -6] \\
    {\Forms * \bigl( \B \redim{G}; \CEcochains * \bigl( \lie g_{L_A}^{L_B}(\rho) ; M \bigr) \bigr)} \tensor {\Forms * \bigl( \B \redim{G'}; \CEcochains * \bigl( \lie g_{L_{A'}}^{L_{B'}}(\rho') ; M' \bigr) \bigr)} \drar[bend right, start anchor = south, end anchor = 173][swap, near start]{\eq} & & \Forms * \bigl( \B \redim{H}; \CEcochains * \bigl( \lie g_{L_R}^{L_Q}(\sigma); N \bigr) \bigr) \dlar[bend left, start anchor = south, end anchor = 8] \\
    & \Forms * \bigl( \B \redim{G} \times \B \redim{G'}; \CEcochains * \bigl( \lie g_{L_A}^{L_B}(\rho) \times \lie g_{L_{A'}}^{L_{B'}}(\rho'); M \tensor M' \bigr) \bigr) &
  \end{tikzcd}
  \]
  where the indicated maps are quasi-isomorphisms and the vertical maps are those of \cref{cor:forms_eq}.
  Moreover, this diagram can be lifted to a commutative diagram in the homotopy category of lax symmetric monoidal functors from the category of tuples $(M, M', N, m)$ to cochain complexes (with monoidal natural transformations as morphisms, and the pointwise weak equivalences).
  Here $\gamma \in \redim{G}$ acts on the cochain complex $\CEcochains * ( \lie g_{L_A}^{L_B}(\rho); M )$ by $(\inv \gamma, \gamma)$, where the action on $\lie g_{L_A}^{L_B}(\rho)$ is by conjugation via the group homomorphism $\bar s_\QQ$, and analogously $\redim{G'}$ acts via $\bar s'_\QQ$ and $\redim{H}$ via $\bar t_\QQ$.
\end{corollary}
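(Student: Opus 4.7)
The plan is to deduce this from the space-level naturality of \cref{cor:Baut_eq_natural} by applying $\Forms^*(\blank;\blank)$ with appropriate coefficients and then invoking the monoidality of each zig-zag that builds the quasi-isomorphism $\Psi_s$ of \cref{cor:forms_eq}. First I would observe that the compatibility hypothesis on $m$ and \cref{lemma:choose_compatibly_gluing} assemble the input into a morphism in the relevant ``pairing'' categories: on the topological side, the map $\mu_*$ together with $m$ (equivariant through $\gamma$ via $\ol\nu_\QQ$) defines a morphism of triples as in \cref{lemma:forms_orbits}; on the Lie side, the map $g$ together with $m$ (equivariant for the actions induced by the compatible sections $\bar s$, $\bar s'$, $\bar t$, which by \cref{lemma:choose_compatibly_gluing} satisfy $\bar t\circ\ol\nu = \ol\Gamma\circ(\bar s\times\bar s')$) defines a morphism in the analogous pairing category for $\CEcochains^*$.

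Applying $\Forms^*(\blank;\blank)$ to the commutative square of \cref{cor:Baut_eq_natural} yields a commutative square in the homotopy category of cochain complexes sitting ``at the top'' of the desired diagram, where the coefficients are $N$ on the glued side and pulled back to $M\tensor M'$ on the product side via the external tensor pairing. I would then fit the remaining pieces in three steps:
\begin{enumerate}
\item Use the zig-zag of monoidal natural transformations of lax symmetric monoidal functors from \cref{lemma:forms_orbits} to identify each $\Forms^*\bigl(\hcoinv{\MCb(\lie g_{L_\bullet}^{L_\bullet}(\cdot))}{\Gamma_\bullet};\,\text{coeff}\bigr)$ with $\Forms^*\bigl(\B\Gamma_\bullet;\Forms^*(\MCb(\lie g_{L_\bullet}^{L_\bullet}(\cdot));\ul{\text{coeff}})\bigr)$, compatibly with $m$ and with $g_*$, $\mu_*$.
\item Use the monoidal natural quasi-isomorphism of \cref{lemma:MC_forms} to replace each $\Forms^*(\MCb(\lie g_{L_\bullet}^{L_\bullet}(\cdot));\blank)$ by $\CEcochains^*(\lie g_{L_\bullet}^{L_\bullet}(\cdot);\blank)$.
\item Reassemble using the monoidality of $\Forms^*(\B\Gamma;\blank)$ to get the triangular halves of the diagram, which are just instances of the lax symmetric monoidal structure constraints of $\Forms^*$ and $\CEcochains^*$ (the latter from \cref{lemma:CE_products}).
\end{enumerate}

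Since each step in (1)--(3) is a monoidal natural transformation between lax symmetric monoidal functors, their combination is itself such a transformation; this gives not only commutativity of the diagram but the claimed lift to the homotopy category of lax symmetric monoidal functors in the argument $(M,M',N,m)$.

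The main obstacle is the bookkeeping needed to verify that the section choices coming from \cref{lemma:choose_compatibly_gluing} make $g$ genuinely equivariant for the ``conjugation via a section'' actions of $\redim{G}\times\redim{G'}$ and $\redim{H}$ on the relevant dg Lie algebras, so that $g$ defines a morphism in the source category of (1)--(3). This is precisely the identity $\bar t\circ\ol\nu = \ol\Gamma\circ(\bar s\times\bar s')$ provided by \cref{lemma:choose_compatibly_gluing}, together with the fact that the outer action $\tilde\sigma_*$ restricts on the pushout factors to $\tilde\rho_*$ and $\tilde\rho'_*$; this last point follows from the identification of indecomposables in \cref{lemma:pushout_indecomposables}. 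Once these equivariance matters are settled, no further input is required: the diagram is simply the image of \cref{cor:Baut_eq_natural} under a chain of monoidal natural quasi-isomorphisms.
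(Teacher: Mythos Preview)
Your outline correctly captures the main mechanism: apply $\Forms^*(\blank;\blank)$ to the square of \cref{cor:Baut_eq_natural} and chain through the monoidal natural transformations of \cref{lemma:forms_orbits} and \cref{lemma:MC_forms}, exactly as in \cref{cor:forms_eq}. One bookkeeping ingredient you omit is \cref{lemma:hquot_hcoinv_local_system}, which is needed to identify the pulled-back local systems on $\hcoinv{\B(\dots,\redim{G})}{\redim{G}}$ with those coming from $\B\redim{G}$; without it the coefficient systems in step~(1) are not literally the ones you want.

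More substantively, there is a genuine gap: you do not justify why the two left-hand diagonal maps are quasi-isomorphisms. These maps are the lax monoidal structure maps for $\Forms^*$ (and for $\CEcochains^*$), and such structure maps are \emph{not} quasi-isomorphisms in general---this is a K\"unneth statement, which requires finiteness. The paper handles this by observing that $\redim{G}$ and $\redim{G'}$ are arithmetic (by \cref{cor:Baut_eq}), so their classifying spaces have the homotopy types of CW-complexes with finitely many cells in each dimension; a K\"unneth theorem for cohomology with local coefficients, together with \cref{lemma:forms_split_coeff} and \cref{lemma:CE_products}, then gives that the bottom-left diagonal map is a quasi-isomorphism. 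The top-left diagonal map is deduced from this via the compatible quasi-isomorphism between the topmost and bottommost terms. Your proposal treats the diagonal maps as formal consequences of monoidality, which yields commutativity but not the $\simeq$ labels asserted in the statement.
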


\begin{proof}
  This follows from \cref{cor:Baut_eq_natural,lemma:forms_orbits,lemma:MC_forms,lemma:hquot_hcoinv_local_system}, as in the proof of \cref{cor:forms_eq}.
  To prove that the two left-hand diagonal maps are quasi-isomorphisms, note that the groups $\redim{G}$ and $\redim{G'}$ are arithmetic by \cref{cor:Baut_eq} and hence their classifying spaces have the homotopy types of CW-complexes with finitely many cells in each dimension by \cite[Theorem~1.3]{BKK}.
  Then the Künneth Theorem for cohomology with local coefficients (see e.g.\ \cite[Theorem~1.7]{Gre06}\footnote{The result is only stated over $\ZZ$ there, but the same argument works over any principal ideal domain.}), combined with \cref{lemma:forms_split_coeff,lemma:CE_products}, implies that the bottom left-hand map is a quasi-isomorphism.
  The top left-hand map is then also a quasi-isomorphism since the the topmost term is quasi-isomorphic to the bottommost term (compatibly with the left-hand map) by \cref{lemma:hcoinv_coho,lemma:forms_orbits,lemma:MC_forms}.
\end{proof}

\begin{corollary} \label{cor:cohomology_natural}
  In the situation of \cref{cor:Baut_eq_natural}, let $M$, $M'$, and $N$ be representations in rational vector spaces of $\redim{G}$, $\redim{G'}$, and $\redim{H}$, respectively, and let $m \colon N \to M \tensor M'$ be a map that is equivariant with respect to $\ol \nu_\QQ \colon \redim{G} \times \redim{G'} \to \redim{H}$.
  Then the following diagram of graded vector spaces commutes
  \[
  \begin{tikzcd}
    &[-180] \Coho * \bigl( \B \bdlaut{A}{B}{K}(\xi)_G \times \B \bdlaut{A'}{B'}{K}(\xi')_{G'}; M \tensor M' \bigr) &[-110] \\
    {\Coho * \bigl( \B \bdlaut{A}{B}{K}(\xi)_G; M \bigr)} \tensor {\Coho * \bigl( \B \bdlaut{A'}{B'}{K}(\xi')_{G'}; M' \bigr)} \urar[bend left, start anchor = north, end anchor = -174][near start]{\iso} \dar[swap]{\iso} & & \Coho * \bigl( \B \bdlaut{R}{Q}{K}(\zeta)_H; N \bigr) \dar{\iso} \ular[bend right, start anchor = north, end anchor = -6] \\
    {\Coho * \bigl( \B \redim{G}; \CEcoho * \bigl( \lie g_{L_A}^{L_B}(\rho); M \bigr) \bigr)} \tensor {\Coho * \bigl( \B \redim{G'}; \CEcoho * \bigl( \lie g_{L_{A'}}^{L_{B'}}(\rho'); M' \bigr) \bigr)} \drar[bend right, start anchor = south, end anchor = 173][swap, near start]{\iso} & & \Coho * \bigl( \B \redim{H}; \CEcoho * \bigl( \lie g_{L_R}^{L_Q}(\sigma); N \bigr) \bigr) \dlar[bend left, start anchor = south, end anchor = 8] \\
    & \Coho * \bigl( \B \redim{G} \times \B \redim{G'}; \CEcoho * \bigl( \lie g_{L_A}^{L_B}(\rho) \times \lie g_{L_{A'}}^{L_{B'}}(\rho'); M \tensor M' \bigr) \bigr) &
  \end{tikzcd}
  \]
  where the indicated maps are isomorphisms and the vertical maps are those of \cref{cor:cohomology}.
  Moreover, this diagram consists of monoidal natural transformations between lax symmetric monoidal functors from the category of tuples $(M, M', N, m)$ to graded vector spaces.
  Here $\gamma \in \redim{G}$ acts on the graded vector space $\CEcoho * ( \lie g_{L_A}^{L_B}(\rho); M )$ by $(\inv \gamma, \gamma)$, where the action on $\lie g_{L_A}^{L_B}(\rho)$ is by conjugation via the group homomorphism $\bar s_\QQ$, and analogously $\redim{G'}$ acts via $\bar s'_\QQ$ and $\redim{H}$ via $\bar t_\QQ$.
\end{corollary}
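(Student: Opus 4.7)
The plan is to deduce this corollary directly from Corollary~\ref{cor:forms_eq_natural} by applying $\Coho *$ to every vertex of the hexagonal diagram provided there and identifying the resulting cohomology groups. Since Corollary~\ref{cor:forms_eq_natural} already gives a commutative hexagon in the homotopy category of cochain complexes that lifts the one we need, and since every natural transformation involved is monoidal, most of the work is already done. In particular, passing to cohomology converts the diagonal quasi-isomorphisms of Corollary~\ref{cor:forms_eq_natural} into the diagonal isomorphisms of the statement, and transports the monoidality and naturality in $(M, M', N, m)$ automatically.

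The substantive step is identifying the six vertices. For the three top vertices, Lemma~\ref{lemma:forms_cohomology} supplies monoidal natural isomorphisms $\Coho * \after \Forms *(\blank; \blank) \iso \Coho *(\blank; \blank)$; for the top-left vertex one additionally commutes $\Coho *$ past the tensor product using the Künneth formula for cochain complexes over $\QQ$. For the three lower vertices, Lemma~\ref{lemma:forms_split_coeff} yields monoidal natural isomorphisms of the form
\[ \Coho * \bigl( \Forms * \bigl( \B \redim{G}; \CEcochains *( \lie g_{L_A}^{L_B}(\rho); M ) \bigr) \bigr)  \iso  \Coho * \bigl( \B \redim{G}; \CEcoho *( \lie g_{L_A}^{L_B}(\rho); M ) \bigr) \]
and analogously for the other two lower vertices, provided the relevant coefficient local systems are split, quasi-groupoidal, and extendable.

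The verification of these hypotheses runs exactly as in the proof of Corollary~\ref{cor:cohomology}: each $\lie g_{L_A}^{L_B}(\rho)$ is a non-negatively graded, degree-wise finite-dimensional algebraic representation of the reductive group $\aredEaut[L_A](\rho)$, so its Chevalley--Eilenberg chains are degree-wise semi-simple; this persists after applying $\Hom(\blank, M)$, and remains true for the product $\lie g_{L_A}^{L_B}(\rho) \times \lie g_{L_{A'}}^{L_{B'}}(\rho')$ because a product of reductive algebraic groups is reductive. Plugging the six identifications into the cohomology of the hexagon of Corollary~\ref{cor:forms_eq_natural} produces the hexagon in the statement.

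I expect the main obstacle to be bookkeeping rather than substance: one must check that every identification used is itself monoidal and natural in the tuple $(M, M', N, m)$, so that the resulting hexagon in the homotopy category of lax symmetric monoidal functors really is the one in the statement. The only truly non-formal input, namely that the left-hand diagonal maps become isomorphisms after taking cohomology, has already been absorbed into Corollary~\ref{cor:forms_eq_natural} via the Künneth theorem for cohomology with local coefficients on classifying spaces of arithmetic groups.
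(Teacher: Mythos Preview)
Your proposal is correct and follows essentially the same route as the paper, which simply says the result follows from \cref{cor:forms_eq_natural,lemma:forms_cohomology,lemma:forms_split_coeff} as in the proof of \cref{cor:cohomology}. You have faithfully expanded that one-line argument, including the verification of the splitting hypothesis via reductivity (and its stability under products) needed for the bottommost vertex.
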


\begin{proof}
  This follows from \cref{cor:forms_eq_natural,lemma:forms_cohomology,lemma:forms_split_coeff}, as in the proof of \cref{cor:cohomology}.
\end{proof}

\subsection{Compatibility with forgetful maps} \label{sec:forget}

In this subsection, we study how the constructions of \cref{sec:model} relate to restriction of the fixed subspace, i.e.\ maps of the form
\begin{equation} \label{eq:forget}
  \bdlaut{A}{B}{K}(\xi)  \longto  \bdlaut{A'}{B'}{K}(\xi)
\end{equation}
where $A' \subseteq A$ and $B' \subseteq B$ are subcomplexes such that $B' \subseteq A'$.
Here we run into the issue described at the beginning of \cref{sec:alg_forget} (see there for more explanation): a composite of minimal quasi-free maps of dg Lie algebras is not necessarily minimal.
Due to this, we do not obtain a direct map of dg Lie algebras corresponding to \eqref{eq:forget}, but only a zig-zag.
We begin by fixing the notation we will use throughout.

\begin{notation} \label{not:forget}
  Let the following be a commutative diagram of simply connected Kan complexes with the homotopy types of finite CW-complexes
  \[
  \begin{tikzcd}
  	B \rar[tail] & A \rar[tail]{\iota} & X \rar{\xi} & K \\
  	B' \uar[tail] \rar[tail] & A' \uar[tail][swap]{\alpha} & &
  \end{tikzcd}
  \]
  where the indicated maps are cofibrations.
  Furthermore assume that this diagram is modeled by the left-hand and upper parts of the following commutative diagram of positively graded dg Lie algebras
  \[
  \begin{tikzcd}
  	L_B \rar[tail] & L_A \rar[tail]{i} & L_X \rar{\rho} & \Pi \\
  	L_{B'} \uar[tail] \rar[tail] & L_{A'} \uar[tail][swap]{a} \rar[tail]{i'} & L'_X \uar[tail]{\eq}[swap]{m} \urar[bend right][swap]{\rho'} &
  \end{tikzcd}
  \]
  where $\Pi$ is of finite type, abelian, and has trivial differential, all other objects are finitely generated and quasi-free, the indicated maps are quasi-free, and $m$ is a quasi-isomorphism.
  If $i$ and $i'$ are minimal, we furthermore write
  \[ \phi \colon \Eaut[A](X) \longto \Eaut[A'](X)  \qquad \text{and} \qquad  \ol \phi \colon \aEaut[L_A](L_X) \longto \aEaut[L_{A'}](L'_X) \]
  for the forgetful map and the map of \cref{lemma:aEaut_forget}, respectively.
\end{notation}

\begin{remark}
  Since $i \after a$ and $i'$ are cofibrations, the quasi-isomorphism $m$ has a simplicial homotopy inverse $m' \colon L_X \to L'_X$ relative to $L_{A'}$ (see \cref{rem:dgLie_homotopy_inverse}).
  We consider $L'_X$ to model $X$ via the composite map
  \begin{equation} \label{eq:L'-model}
    \rat X  \xlongto{\eq}  \nerve {L_X}  \xlongto{\nerve {m'}}  \nerve {L'_X}
  \end{equation}
  of Kan complexes
  Furthermore note that $\rho' \after m' \eq \rho$; however the differential of $\Pi$ is trivial, and hence $\rho' \after m' = \rho$ (using \cref{lemma:dgLie_homotopic}).
\end{remark}

The map $\Xi$ of \cref{lemma:exp_aut_map} is compatible with forgetful maps in the following sense.

\begin{lemma} \label{lemma:exp_eq_forget}
  In the situation of \cref{not:forget}, let
  \[ \lie g \subseteq \Der(L_X \rel L_A)  \qquad \text{and} \qquad  \lie g' \subseteq \Der(L'_X \rel L_{A'}) \]
  be two nilpotent dg Lie subalgebras that act nilpotently on $L_X$ and $L'_X$, respectively, and define the dg Lie subalgebra $\lie h \subseteq \lie g \times \lie g'$ to be the pullback of the span
  \[ \lie g  \xlongto{m^*}  \Derrel{m}(L'_X, L_X \rel L_{A'})  \xlongfrom{m_*}  \lie g' \]
  of chain complexes.
  (Note that $\lie h$ is nilpotent since $\lie g \times \lie g'$ is.)
  
  Then there is a unique dashed map such that the diagram of simplicial monoids
  \[
  \begin{tikzcd}
  	\expb (\lie g) \dar[swap]{\Xi} & \lar \expb (\lie h) \dar[dashed] \rar & \expb (\lie g') \dar{\Xi} \\
    \aut[\nerve{L_A}] \bigl( \nerve{L_X} \bigr) & \lar \aut[\nerve{a}] \bigl( \nerve{m} \bigr) \rar & \aut[\nerve{L_{A'}}] \bigl( \nerve{L'_X} \bigr)
  \end{tikzcd}
  \]
  commutes.
  Furthermore, let $G \subseteq \Aut[L_A](L_X)$ be a subgroup such that its conjugation action preserves $\lie g$ setwise, and analogously for $G' \subseteq \Aut[L_{A'}](L'_X)$.
  Then the group $\Aut[a](m) \intersect (G \times G')$ acts on the whole diagram by conjugation.
\end{lemma}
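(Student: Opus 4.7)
The plan is to construct the dashed map directly from the definition of $\expact$, by observing that the condition defining $\lie h$ translates into a compatibility with $m$ that is preserved by the exponential. Concretely, an element $(\theta,\theta')\in \lie h$ satisfies $m\after\theta' = \theta\after m$, and by induction on $n$ this gives $m\after (\theta')^{\after n} = \theta^{\after n}\after m$ for all $n$. Summing the series $\expact$ of Lemma \ref{lemma:exp_action} (which converges by nilpotence) yields $m\after \expact(\theta') = \expact(\theta)\after m$, and this identity is preserved under tensoring with $\sForms$ and restricting to $0$-cycles. Applying $\MC$ and using that $\expb(\lie h)\subseteq \expb(\lie g)\times \expb(\lie g')$, we obtain a pair $(\Xi(\theta),\Xi(\theta'))$ commuting with $\nerve{m}$ and restricting to the identity on $\nerve{L_A}$ and $\nerve{L_{A'}}$ respectively, i.e.\ an element of $\aut[\nerve{a}](\nerve{m})$.

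First I would verify that this assignment is a map of simplicial monoids. By definition $\aut[\nerve{a}](\nerve{m})$ is the pullback of $m_*$ and $m^*$ along $\aut[\nerve{L_A}](\nerve{L_X})\times \aut[\nerve{L_{A'}}](\nerve{L'_X})\to \mapreq[\nerve{L_{A'}}](\nerve{L'_X},\nerve{L_X})$, so the desired map of simplicial monoids is obtained by combining the two composites $\Xi\after \pr$ and $\Xi\after \pr'$ through Lemma \ref{lemma:monoid_pullback}, once the two corresponding maps into $\mapreq[\nerve{L_{A'}}](\nerve{L'_X},\nerve{L_X})$ have been shown to agree on $\expb(\lie h)$; this is precisely the identity $m\after \expact(\theta') = \expact(\theta)\after m$ just established. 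Uniqueness is automatic: the two projections from $\aut[\nerve{a}](\nerve{m})$ are jointly injective, so any lift must equal $\Xi$ on each factor.

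For the equivariance claim, I would check that conjugation by $(\psi,\psi')\in \Aut[a](m)\cap(G\times G')$ preserves $\lie h$ setwise. This reduces to the identity $m\after\psi'\theta'\inv{(\psi')}=\psi\theta\inv{\psi}\after m$, which follows by combining $m\after\psi'=\psi\after m$ (from $(\psi,\psi')\in \Aut[a](m)$), $m\after\theta'=\theta\after m$ (from $(\theta,\theta')\in \lie h$), and the derived identity $m\after\inv{(\psi')}=\inv{\psi}\after m$. Equivariance of the dashed map is then inherited from the equivariance of $\Xi$ provided by Lemma \ref{lemma:exp_aut_map}, applied to each factor. I do not anticipate any serious obstacle: the only delicate point is keeping track of which compositions naturally land in the pullback $\lie h$ and in the pullback simplicial monoid $\aut[\nerve{a}](\nerve{m})$, so that the universal property of each pullback can be applied cleanly.
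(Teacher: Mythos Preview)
Your proposal is correct and follows essentially the same approach as the paper: both arguments obtain the dashed map from the universal property of the pullback defining $\aut[\nerve{a}](\nerve{m})$, invoke \cref{lemma:monoid_pullback} to see it is a monoid map, and deduce equivariance from \cref{lemma:exp_aut_map}. The only difference is that where the paper says ``the outer square is seen to commute by expanding the definitions,'' you actually carry out that expansion by proving $m\after\expact(\theta')=\expact(\theta)\after m$ from the power series and the defining relation of $\lie h$; your additional check that conjugation by $\Aut[a](m)\cap(G\times G')$ preserves $\lie h$ is likewise implicit in the paper's one-line appeal to \cref{lemma:exp_aut_map}.
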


\begin{proof}
  Consider the diagram of simplicial sets
  \[
  \begin{tikzcd}
    \expb(\lie h) \ar{rr} \ar{dd} \drar[dashed, end anchor = north west] & & \expb(\lie g) \dar{\Xi} \\
    & \aut[\nerve{a}](m) \rar \dar & \aut[\nerve{L_A}] \bigl( \nerve{L_X} \bigr) \dar{m_*} \\
    \expb(\lie g') \rar{\Xi} & \aut[\nerve{L_{A'}}] \bigl( \nerve{L'_X} \bigr) \rar{m^*} & \map[\nerve{L_{A'}}] \bigl( \nerve{L'_X}, \nerve{L_X} \bigr)
  \end{tikzcd}
  \]
  where the lower right-hand square is a pullback by definition.
  The outer square is seen to commute by expanding the definitions, so we obtain the desired dashed map as indicated.
  It is a map of simplicial monoids by \cref{lemma:monoid_pullback}.
  The claim about the action follows from \cref{lemma:exp_aut_map}.
\end{proof}

We now establish a notation for the dg Lie algebra $\lie h$ above in the case that $\lie g$ and $\lie g'$ arise from \cref{def:Deru}.

\begin{definition}
  In the situation of \cref{not:forget}, let $\ol U \subseteq \aEaut[L_A](L_X)_\rho$ and $\ol V \subseteq \aEaut[L_{A'}](L'_X)_{\rho'}$ be two algebraic subgroups.
  We define the dg Lie subalgebra
  \[ \Der[\ol U, \ol V](m \rel a) \subseteq \Der(L_X \rel L_A) \times \Der(L'_X \rel L_{A'}) \]
  to be the pullback of the cospan
  \begin{equation} \label{eq:Der_cospan}
    \Der[\ol U](L_X \rel L_A)  \xlongto{m^*}  \Derrel{m}(L'_X, L_X \rel L_{A'})  \xlongfrom{m_*}  \Der[\ol V](L'_X \rel L_{A'})
  \end{equation}
  of chain complexes.
  When $\ol U$ and $\ol V$ are the respective unipotent radical, we write $\Deru[\rho](m \rel a) \defeq \Der[\ol U, \ol V](m \rel a)$.
\end{definition}

Note that, when $\ol U$ and $\ol V$ are unipotent algebraic subgroups, then $\Der[\ol U, \ol V](m \rel a)$ is a nilpotent dg Lie algebra, since it is a dg Lie subalgebra of $\Der[\ol U](L_X \rel L_A) \times \Der[\ol V](L'_X \rel L_{A'})$, which is nilpotent by \cref{prop:exp_eq}.

\begin{lemma} \label{lemma:Hom_map_forget}
	In the situation of \cref{not:forget}, the following diagram of simplicial sets commutes
	\[
	\begin{tikzcd}
		\MCb \bigl( \trunc 0 {\Hom \bigl( \shift \indec[L_B](L_X), \Pi \bigr)} \bigr) \rar{\eq} \dar & \map[\nerve{L_B}] \bigl( \nerve{L_X}, \nerve{\Pi} \bigr)_{\nerve{\rho}} \dar \\
		\MCb \bigl( \trunc 0 {\Hom \bigl( \shift \indec[L_{B'}](L'_X), \Pi \bigr)} \bigr) \rar{\eq} & \map[\nerve{L_{B'}}] \bigl( \nerve{L'_X}, \nerve{\Pi} \bigr)_{\nerve{\rho'}}
	\end{tikzcd}
	\]
	where the horizontal weak equivalences are composites of the maps of \cref{prop:map_model} and \cref{lemma:Hom_eq}.
	Moreover, this diagram is $\expb(\lie h)$-equivariant for any dg Lie algebra $\lie h$ as in \cref{lemma:exp_eq_forget}.
\end{lemma}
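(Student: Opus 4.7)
The plan is to proceed in close analogy with \cref{lemma:Hom_map_gluing}, replacing the pushout situation by the forgetful one. First I would make the vertical maps explicit. The right-hand vertical map is precomposition with $\nerve m \colon \nerve{L'_X} \to \nerve{L_X}$, which is well-defined because $m$ sends $L_{B'}$ into $L_B$ by commutativity of the upper square in \cref{not:forget}. For the same reason $m$ induces a map $\bar m \colon \indec[L_{B'}](L'_X) \to \indec[L_B](L_X)$ of chain complexes, and the left-hand vertical map is the one induced by $(\shift \bar m)^*$ after applying $\trunc 0$ and $\MCb$. The hypotheses of \cref{lemma:Hom_eq} are satisfied in both rows, since $\rho'(L_{B'}) = \rho(m(L_{B'})) \subseteq \rho(L_B) = 0$.

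To verify commutativity, I would invoke the explicit formula for the horizontal equivalences contained in the proof of \cref{lemma:Hom_map_gluing}, namely equation~\eqref{eq:MC_Hom_eq}. Given an $n$-simplex $(f \tensor a, x \tensor b)$ with $x \in L'_X \tensor \sForms[n]$, the map going right then down evaluates to $(-1)^{\deg a \deg x} f(\shift \eqcl{m(x)}) \tensor ab + \rho(m(x)) \tensor b$, while going down then right evaluates to $(-1)^{\deg a \deg x} (f \after \shift \bar m)(\shift \eqcl x) \tensor ab + \rho'(x) \tensor b$. These agree because $\bar m$ is induced by $m$ and because $\rho' = \rho \after m$ on the nose, which follows from the discussion after \cref{not:forget} together with \cref{lemma:dgLie_homotopic} using that the differential of $\Pi$ is trivial.

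For the equivariance statement, I would observe that $\expb(\lie h)$ acts on the upper row via its projection to $\expb(\lie g)$ and on the lower row via its projection to $\expb(\lie g')$. For $\theta \in \lie h$ mapping to $(\theta_1, \theta_2) \in \lie g \times \lie g'$, the defining pullback condition gives the equality $\theta_1 \after m = m \after \theta_2$ of $m$-derivations $L'_X \to L_X$ vanishing on $L_{A'}$. Unwinding the actions described in \cref{lemma:Hom_eq}, this identity is precisely what is needed to ensure that $(\shift \bar m)^*$ and precomposition with $\nerve m$ intertwine the two actions; combined with naturality of the horizontal equivalences of \cref{prop:map_model,lemma:Hom_eq} and with \cref{lemma:exp_eq_forget}, this yields the claimed equivariance. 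The main obstacle is really just bookkeeping of signs and identifications, analogous to that in the proof of \cref{prop:exp_eq_natural}.
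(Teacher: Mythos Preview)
Your proposal is correct and follows essentially the same approach as the paper: both verify commutativity by direct inspection of formula~\eqref{eq:MC_Hom_eq} and obtain equivariance from the description of the actions in \cref{lemma:Hom_eq}. One minor simplification: the equality $\rho' = \rho \after m$ is immediate from the commutativity of the diagram in \cref{not:forget} and requires no appeal to \cref{lemma:dgLie_homotopic} (that argument in the paper concerns the homotopy inverse $m'$, not $m$); also, as the paper notes, \cref{lemma:exp_eq_forget} already handles the right-hand column, so only the equivariance of the left-hand vertical map really needs checking.
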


\begin{proof}
	The commutativity follows directly from the formula \eqref{eq:MC_Hom_eq} (found in the proof of \cref{lemma:Hom_map_gluing}) for the horizontal maps.
	For the equivariance it is, by \cref{lemma:exp_eq_forget}, enough to show that the left-hand vertical map is equivariant.
	This is clear from the description of the actions of \cref{lemma:Hom_eq}.
\end{proof}

We now prove a compatibility of \cref{thm:model} with restriction maps.
Note that, on the level of dg Lie algebras, we obtain neither a direct map, nor a map in the homotopy category.
See \cref{rem:forget_no_map} below for some comments on the reasons for this.

\begin{proposition} \label{thm:model_forget}
  We use \cref{not:forget} and assume that $i$ and $i'$ are minimal.
  Furthermore, let $(G, \ol U, N)$ and $(H, \ol V, M)$ be two tuples as in \cref{not:group_triple}, associated to $(B, A, \xi)$ and $(B', A', \xi)$, respectively, and assume that $\phi(G) \subseteq H$, that $\ol \phi(\ol U) \subseteq \ol V$, and that $\ol \phi_\QQ(N) \subseteq M$.
  We write
  \[ \lie h_{\ol U, \ol V}  \defeq  \trunc 0 { \Hom \bigl( \shift \indec[L_B](L_X), \Pi \bigr) }  \rsemidir[\tilde \rho_*] \Der[\ol U, \ol V](m \rel a) \]
  where the outer action $\tilde \rho_*$ is as in \cref{prop:bdlaut_model} (via the projection to $\Der[\ol U](L_X \rel L_A)$).
  Then, in the diagram
  \[
  \begin{tikzcd}
    \B[\inv{q_G}(U)] \bdlaut{A}{B}{K}(\xi)_G \rar{\req} \ar{dd} & \B[U] \bdlaut{\rat A}{\rat B}{\rat K}(\rat \xi) \ar{dd} \rar[squiggly]{\eq} & \MCb \bigl( \lie g_{L_A}^{L_B}(\rho)_{\ol U} \bigr) \\
    & & \MCb \bigl( \lie h_{\ol U, \ol V} \bigr) \uar \dar \\
    \B[\inv{q_G}(V)] \bdlaut{A'}{B'}{K}(\xi)_H \rar{\req} & \B[V] \bdlaut{\rat A'}{\rat B'}{\rat K}(\rat \xi) \rar[squiggly]{\eq} & \MCb \bigl( \lie g_{L_{A'}}^{L_{B'}}(\rho')_{\ol V} \bigr)
  \end{tikzcd}
  \]
  the left-hand square commutes in the category of simplicial sets with an action of $\quot {\inv{q_G}(N)} {\inv{q_G}(U)}$, and the right-hand square commutes in the homotopy category of simplicial sets with an action of the preimage of $N \times M$ in $\Aut[a](m)_\rho$.
  Here the two right-hand horizontal maps are those of \cref{thm:model}.
\end{proposition}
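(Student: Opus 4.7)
The plan is to treat the left and right squares separately. The left square commutes strictly by naturality, while the right square commutes in the homotopy category via a zig-zag analysis that mirrors the proof of \cref{thm:model}, applied simultaneously to the triples $(L_B \to L_A \to L_X, \rho)$ and $(L_{B'} \to L_{A'} \to L'_X, \rho')$ through the intermediate dg Lie algebra $\lie h_{\ol U, \ol V}$.

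For the left square, naturality of the rationalization $\id \to \rat{(\blank)}$ with respect to the forgetful inclusions of subcomplexes, combined with the naturality of the bar construction and the assumptions $\phi(G) \subseteq H$ and $\ol \phi_\QQ(N) \subseteq M$, makes the square commute strictly at the level of bar constructions equipped with the stipulated quotient group actions. This is the naturality counterpart of \cref{lemma:aut_rat_eq}, and requires no new ingredients beyond unpacking the definitions.

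For the right square, I would first construct two maps of dg Lie algebras $p \colon \lie h_{\ol U, \ol V} \to \lie g_{L_A}^{L_B}(\rho)_{\ol U}$ and $q \colon \lie h_{\ol U, \ol V} \to \lie g_{L_{A'}}^{L_{B'}}(\rho')_{\ol V}$. The map $p$ is the identity on the $\Hom$-summand and the projection onto $\Der[\ol U](L_X \rel L_A)$ on the $\Der$-summand. The map $q$ uses the projection onto $\Der[\ol V](L'_X \rel L_{A'})$ on the $\Der$-summand, while on the $\Hom$-summand it is precomposition with the suspension of the canonical chain map $\indec[L_{B'}](L'_X) \to \indec[L_B](L_X)$ induced by $m$ and the inclusion $L_{B'} \to L_B$. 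Compatibility of $q$ with the outer actions $\tilde \rho_*$ and $\tilde \rho'_*$ is a short direct verification using $\rho' = \rho \after m$ (which holds strictly because $\Pi$ has trivial differential, by \cref{lemma:dgLie_homotopic}) together with the defining pullback condition $\theta \after m = m \after \theta'$ for pairs $(\theta, \theta') \in \Der[\ol U, \ol V](m \rel a)$.

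Applying \cref{lemma:MC_outer} to all three dg Lie algebras, $\MCb(p)$ and $\MCb(q)$ become maps between bar constructions of the form $\B(*, \expb(\blank), \MCb(\blank))$; then \cref{lemma:exp_eq_forget} identifies the monoid terms compatibly with the span $\aut[\nerve{L_A}](\nerve{L_X})_{\ol U(\QQ)} \leftarrow \aut[\nerve{a}](\nerve{m}) \to \aut[\nerve{L_{A'}}](\nerve{L'_X})_{\ol V(\QQ)}$, and \cref{lemma:Hom_map_forget} produces the corresponding identification of the module terms with the relative mapping spaces and their forgetful comparison. Combining these via \cref{lemma:left_conj_eq,lemma:submonoid_bar} and following the recipe of the proof of \cref{thm:model} yields commutativity of the right square in the homotopy category of simplicial sets with the claimed action. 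The main obstacle is the equivariance bookkeeping: since the composite $L_{A'} \to L_A \to L_X$ is generally not minimal, no direct map of dg Lie algebras exists between $\lie g_{L_A}^{L_B}(\rho)_{\ol U}$ and $\lie g_{L_{A'}}^{L_{B'}}(\rho')_{\ol V}$, and it is the intermediate $\lie h_{\ol U, \ol V}$ together with the fibered group $\Aut[a](m)_\rho$ that simultaneously carry compatible actions, which is precisely why the statement formulates the right square over the preimage of $N \times M$ in $\Aut[a](m)_\rho$ rather than over the individual automorphism groups.
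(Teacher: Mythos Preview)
Your proposal is essentially correct and follows the same strategy as the paper's proof: the left square is dispatched by naturality, and the right square is handled by combining \cref{lemma:exp_eq_forget} and \cref{lemma:Hom_map_forget} with \cref{lemma:MC_outer}, \cref{lemma:submonoid_bar}, and \cref{lemma:left_conj_eq}, exactly as you describe.

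One point the paper makes explicit that you leave implicit in ``following the recipe of \cref{thm:model}'': after obtaining the commutative diagram relating the three $\MCb$'s to the three bar constructions over the Lie-model realizations $\nerve{L_A}$, $\nerve{L_{A'}}$, etc., one still needs to connect these to the corresponding bar constructions over the rational spaces $\rat A$, $\rat{A'}$, etc. This uses the zig-zags of \cref{thm:Baut_functor}, and the compatibility with the forgetful maps requires \cref{lemma:zig-zag_forget} for the upper square and, more subtly, the homotopy $\nerve{m} \after \nerve{m'} \eq \id$ under $\nerve{L_{A'}}$ for the lower triangle (recall that $L'_X$ models $X$ via the composite $\rat X \to \nerve{L_X} \xrightarrow{\nerve{m'}} \nerve{L'_X}$, where $m'$ is a chosen homotopy inverse of $m$). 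This step is not automatic from the proof of \cref{thm:model} alone, since that proof does not involve two different Lie models of the same space; it is precisely here that the choice of $m'$ enters, and the paper spells out the relevant diagram in the homotopy category of pairs. Your sketch would benefit from flagging this.
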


\begin{proof}
  Commutativity of the left-hand square is clear, so we focus on the right-hand square.
  Combining \cref{lemma:exp_eq_forget,lemma:Hom_map_forget} with \cref{lemma:MC_outer,prop:exp_eq,lemma:submonoid_bar,lemma:left_conj_eq} yields a commutative diagram
  \[
  \begin{tikzcd}
    \MCb \bigl( \lie g_{L_A}^{L_B}(\rho)_{\ol U} \bigr) \rar[squiggly]{\eq} & \B[\ol U(\QQ)] \bdlaut{\nerve{L_A}}{\nerve{L_B}}{\nerve{\Pi}} \bigl( \nerve{\rho} \bigr) \\
    \MCb \bigl( \lie h_{\ol U, \ol V} \bigr) \rar[squiggly] \dar \uar & \B[W] \Bigl( \map[\nerve{L_B}] \bigl( \nerve{L_X}, \nerve{\Pi} \bigr)_{\nerve{\rho}},  \aut[\nerve{a}] \bigl( \nerve{m} \bigr)_{\eqcl{\nerve{\rho}}} \Bigr) \dar \uar \\
    \MCb \bigl( \lie g_{L_{A'}}^{L_{B'}}(\rho')_{\ol V} \bigr) \rar[squiggly]{\eq} & \B[\ol V(\QQ)] \bdlaut{\nerve{L_{A'}}}{\nerve{L_{B'}}}{\nerve{\Pi}} \bigl( \nerve{\rho'} \bigr)
  \end{tikzcd}
  \]
  in the homotopy category of simplicial sets with an action of the preimage of $N \times M$ in $\Aut[a](m)_\rho$.
  Here $W$ is the image of the map $\hg 0 ( \expb (\lie h_{\ol U, \ol V}) ) \to \hg 0 ( \aut[\nerve{a}](\nerve{m}) )$ induced by the map of \cref{lemma:exp_eq_forget}.
  Furthermore, consider the following diagram in the homotopy category of pairs of a simplicial monoid and a right module over it
  \[
  \begin{tikzcd}
    \vertpair {\aut[\nerve{L_A}] \bigl( \nerve{L_X} \bigr)} {\map[\nerve{L_B}] \bigl( \nerve{L_X}, \nerve{\Pi} \bigr)} \rar[equal] & \vertpair {\aut[\nerve{L_A}] \bigl( \nerve{L_X} \bigr)} {\map[\nerve{L_B}] \bigl( \nerve{L_X}, \nerve{\Pi} \bigr)} \dar & \lar[squiggly][swap]{\eq} \vertpair {\aut[\rat A](\rat X)} {\map[\rat B] ( \rat X, \rat K )} \dar \\
     & \vertpair {\aut[\nerve{L_{A'}}] \bigl( \nerve{L_X} \bigr)} {\map[\nerve{L_{B'}}] \bigl( \nerve{L_X}, \nerve{\Pi} \bigr)} & \lar[squiggly][swap]{\eq} \ar[squiggly, end anchor = north east]{ldd}{\eq} \vertpair {\aut[\rat A'](\rat X)} {\map[\rat B'] ( \rat X, \rat K )} \\
    \vertpair {\aut[\nerve{a}] \bigl( \nerve{m} \bigr)} {\map[\nerve{L_B}] \bigl( \nerve{L_X}, \nerve{\Pi} \bigr)} \ar{uu} \dar \rar & \vertpair {\aut[\nerve{L_{A'}}] \bigl( \nerve{m} \bigr)} {\map[\nerve{L_{B'}}] \bigl( \nerve{L_X}, \nerve{\Pi} \bigr)} \dar{\eq} \uar[swap]{\eq} \\
    \vertpair {\aut[\nerve{L_{A'}}] \bigl( \nerve{L'_X} \bigr)} {\map[\nerve{L_{B'}}] \bigl( \nerve{L'_X}, \nerve{\Pi} \bigr)} \rar[equal] & \vertpair {\aut[\nerve{L_{A'}}] \bigl( \nerve{L'_X} \bigr)} {\map[\nerve{L_{B'}}] \bigl( \nerve{L'_X}, \nerve{\Pi} \bigr)} &
  \end{tikzcd}
  \]
  where the three right-hand zig-zags are those of \cref{thm:Baut_functor}, with the diagonal one being induced by the map \eqref{eq:L'-model}.
  The upper right-hand square commutes in the homotopy category of simplicial sets by \cref{lemma:zig-zag_forget}, and the bottom right-hand triangle commutes by \cref{thm:Baut_functor} since $\nerve{m} \after \nerve{m'} \eq \id$ under $\nerve {L_{A'}}$ and over $\nerve \Pi$ (note that $\nerve \blank$ preserves right homotopies since it preserves products, fibrations, and weak equivalences).
  Combining the two preceding diagrams, we obtain the desired statement.
\end{proof}

\begin{remark} \label{rem:forget_no_map}
  Unfortunately, it seems unlikely to the authors that the map of dg Lie algebras $\Der[\ol U, \ol V](m \rel a) \to \Der[\ol U](L_X \rel L_A)$ is a quasi-isomorphism in general.
  The reason is that the map $m^*$ in \eqref{eq:Der_cospan} is not always surjective in positive degrees.
  This precludes us from concluding that the pullback of the quasi-isomorphism $m_*$ is again a quasi-isomorphism.
  One could try instead to use dg Lie subalgebras of the zig-zag
  \[ \Der(L_X \rel L_A)  \longto  \Der(L_X \rel L_{A'})  \longfrom  \Der(m \rel L_{A'})  \longto  \Der(L'_X \rel L_{A'}) \]
  but the authors did not find a nilpotent dg Lie subalgebra of $\Der(L_X \rel L_{A'})$ that contains the image of $\Der[\ol U](L_X \rel L_A)$ such that the appropriate restriction of $m^*$ remains surjective.
\end{remark}

Similarly to the preceding subsection, we need $\ol \phi$ to induce a map $\aredEaut[L_A](\rho) \to \aredEaut[L_{A'}](\rho')$ of the maximal reductive quotients to be able to obtain a compatibility of the model for $\B \bdlaut{A}{B}{K}(\xi)$ of \cref{cor:Baut_eq} with restriction maps.
We will see in \cref{rem:forget_no_induced} below that the existence of this induced map is not automatic, so we will need to assume it in the following.
Moreover, we need to equip the zig-zag of derivation dg Lie algebras of \cref{thm:model_forget} (applied to the case where $\ol U$ and $\ol V$ are the respective unipotent radical)
\begin{equation} \label{eq:Deru_zig-zag}
  \lie g_{L_A}^{L_B}(\rho)  \longfrom  \lie h  \longto  \lie g_{L_{A'}}^{L_{B'}}(\rho')
\end{equation}
with an $\aAut[L_A](L_X)_\rho$-action, compatible with an action of $\aAut[L_{A'}](L'_X)_{\rho'}$ on the right-hand term.
However there is, in general, no obvious map $\aAut[L_A](L_X)_\rho \to \aAut[L_{A'}](L'_X)_{\rho'}$.
To circumvent this, we will assume that the projection $\ol \pi_A \colon \aAut[a](m)_\rho \to \aAut[L_A](L_X)_\rho$ has a section (since we assumed $m$ to be quasi-free and hence injective, this is equivalent to $\ol \pi_A$ being an isomorphism).
Explicitly this means that, given $R \in \Algfg{\QQ}$, every automorphism of $L_X \tensor R$ under $L_A \tensor R$ and over $\Pi \tensor R$ restricts to an automorphism of $L'_X \tensor R$
This is a fairly strong assumption which results from our need to work with minimal models.

\begin{assumption} \label{ass:forget}
  We use \cref{not:forget} and assume that $i$ and $i'$ are minimal and that $\ol \phi$ induces a map
  \[ \ol \nu \colon \aredEaut[L_A](\rho)  \longto  \aredEaut[L_{A'}](\rho') \]
  of algebraic groups.
  This is the case, for example, when $\aEaut[L_A](L_X)_\rho$ is already reductive.
  
  We furthermore assume that the map
  \[ \ol \pi_A \colon \aAut[a](m)_\rho  \longto  \aAut[L_A](L_X)_\rho \]
  is an isomorphism of algebraic groups.
  This yields a composite map of algebraic groups
  \[ \ol \Phi  \colon  \aAut[L_A](L_X)_\rho  \xlongto{\inv{(\ol \pi_A)}}  \aAut[a](m)_\rho  \longto  \aAut[L_{A'}](L'_X)_{\rho'} \]
  such that $\areal \after \ol \Phi = \ol \phi \after \areal$ (by \cref{lemma:aEaut_forget}).
\end{assumption}

\begin{remark} \label{rem:forget_no_induced}
  A map $\ol \nu$ as above does not always exist.
  For a counterexample, consider $V \defeq \shift[1] \QQ$, $W \defeq V \dirsum V$, and $i \colon V \to W$ the inclusion of the first summand.
  Then we consider the sequence of graded Lie algebras
  \[ 0  \longto  \freelie V  \xlongto{i_*}  \freelie W \]
  equipped with the trivial differentials; in particular each of the two maps, as well as their composite, is minimal.
  
  The algebraic group $\aAut[\freelie V](\freelie W)$ is isomorphic to the algebraic subgroup $\ol G$ of $\aGL(\QQ^2)$ of automorphisms that fix the first summand pointwise.
  This algebraic group is not reductive: the algebraic subgroup $\ol U$ of upper triangular matrices with $1$'s on the diagonal is non-trivial, normal, and unipotent.
  In fact, this is already the unipotent radical and the maximal reductive quotient is isomorphic to the multiplicative group $\aGL(\QQ)$.
  On the other hand, the algebraic group $\aAut(\freelie W)$ is isomorphic to $\aGL(\QQ^2)$, which is already reductive.
  In particular, there can not be an induced map $\quot {\ol G} {\ol U} \to \aGL(\QQ^2)$.
\end{remark}

The following lemma provides the promised action on the zig-zag \eqref{eq:Deru_zig-zag}.

\begin{lemma} \label{lemma:choose_compatibly_forget}
  We use \cref{not:forget} and assume that \cref{ass:forget} is fulfilled.
  Furthermore, let $\bar s$ be a section of the composite map of algebraic groups
  \[ \aAut[L_A](L_X)_\rho  \xlongto{\areal}  \aEaut[L_A] ( L_X )_\rho  \longto  \aredEaut[L_A] ( \rho ) \]
  (such a section exists by \cref{lemma:areal_section}).
  Then there exists a section $\bar t$ of the composite map of algebraic groups
  \[ \aAut[L_{A'}](L'_X)_{\rho'}  \xlongto{\areal}  \aEaut[L_{A'}] ( L'_X )_{\rho'}  \longto  \aredEaut[L_{A'}] ( \rho' ) \]
  such that $\bar t \after \ol \nu = \ol \Phi \after \bar s$.
\end{lemma}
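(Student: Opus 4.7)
The plan is to deduce this directly from \cref{lemma:compatible_Levi} (i.e., Mostow's theorem that Levi subgroups can be chosen to contain a given reductive subgroup). Concretely, I would set $H \defeq \aAut[L_{A'}](L'_X)_{\rho'}$, let $\pr' \colon H \to \aredEaut[L_{A'}](\rho')$ denote the composite $\ol \pr \after \areal$, take $R \defeq \aredEaut[L_A](\rho)$ (which is reductive by construction), put $f \defeq \ol \Phi \after \bar s \colon R \to H$ and $g \defeq \ol \nu \colon R \to \aredEaut[L_{A'}](\rho')$, and then check that $\pr' \after f = g$ so that \cref{lemma:compatible_Levi} produces the desired section $\bar t$ of $\pr'$ with $\bar t \after \ol \nu = \ol \Phi \after \bar s$.

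The only step that requires verification is that $\pr' \after \ol \Phi \after \bar s = \ol \nu$. For this I would unwind the definitions: by \cref{lemma:aEaut_forget} (applied in \cref{ass:forget} to build $\ol \Phi$) one has $\areal \after \ol \Phi = \ol \phi \after \areal$, and by the definition of $\ol \nu$ in \cref{ass:forget} the identity $\ol \pr \after \ol \phi = \ol \nu \after \ol \pr$ holds on $\aEaut[L_A](L_X)_\rho$. Since $\bar s$ is a section of the composite $\ol \pr \after \areal$, combining these gives
\[
  \pr' \after \ol \Phi \after \bar s  =  \ol \pr \after \areal \after \ol \Phi \after \bar s  =  \ol \pr \after \ol \phi \after \areal \after \bar s  =  \ol \nu \after \ol \pr \after \areal \after \bar s  =  \ol \nu,
\]
which is exactly the commutativity hypothesis of \cref{lemma:compatible_Levi}.

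To apply the lemma I also need to know that $R$ is reductive, but this is automatic since $\aredEaut[L_A](\rho)$ is defined as the maximal reductive quotient of $\aEaut[L_A](L_X)_\rho$. There is no genuinely hard step here; the whole point is that \cref{ass:forget} has been formulated precisely so that the relevant square of algebraic groups commutes, and then Mostow's theorem provides the compatible Levi decomposition for free. The only mild subtlety is being careful that the map $\ol \Phi$ constructed in \cref{ass:forget} really does satisfy $\areal \after \ol \Phi = \ol \phi \after \areal$, which is recorded at the end of that assumption.
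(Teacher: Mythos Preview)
Your proposal is correct and is exactly the paper's approach: the paper's proof is the single line ``This follows from \cref{lemma:compatible_Levi},'' and you have carefully unpacked the verification of its hypotheses using the identities recorded in \cref{ass:forget}.
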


\begin{proof}
  This follows from \cref{lemma:compatible_Levi}.
\end{proof}

We are now ready to state the compatibility of the model for $\B \bdlaut{A}{B}{K}(\xi)$ of \cref{cor:Baut_eq} with restriction maps.
Afterwards, we will also deduce such a compatibility for the cdga model of \cref{cor:forms_eq} and the identification of its cohomology of \cref{cor:cohomology}.

\begin{theorem} \label{cor:Baut_eq_forget}
  We use \cref{not:forget} and assume that \cref{ass:forget} is fulfilled.
  Furthermore, let $G \subseteq \Eaut[A](X)_{\eqcl{\xi}_B}$ and $H \subseteq \Eaut[A'](X)_{\eqcl{\xi}_{B'}}$ be finite-index subgroups such that $\phi(G) \subseteq H$, and denote by $\redim{G}$ and $\redim{H}$ their images in $\aredEaut[L_A] (\rho)(\QQ)$ and $\aredEaut[L_{A'}] (\rho')(\QQ)$, respectively.
  Furthermore let $\bar s$ and $\bar t$ be sections as in \cref{lemma:choose_compatibly_forget}.
  We write
  \[ \lie h  \defeq  \trunc 0 { \Hom \bigl( \shift \indec[L_B](L_X), \Pi \bigr) }  \rsemidir[\tilde \rho_*] \Deru[\rho](m \rel a) \]
  where the outer action $\tilde \rho_*$ is as in \cref{prop:bdlaut_model} (via the projection to $\Der[\ol U](L_X \rel L_A)$).
  Then there is a commutative diagram in the rational homotopy category of simplicial sets
  \[
  \begin{tikzcd}
    \B \bdlaut{A}{B}{K}(\xi)_G \ar{dd} \rar[squiggly]{\req} & \hcoinv { \MCb \bigl( \lie g_{L_A}^{L_B}(\rho) \bigr) } {\redim{G}} \\
    & \hcoinv { \MCb ( \lie h ) } {\redim{G}} \uar \dar \\
    \B \bdlaut{A'}{B'}{K}(\xi)_H \rar[squiggly]{\req} & \hcoinv { \MCb \bigl( \lie g_{L_{A'}}^{L_{B'}}(\rho') \bigr) } {\redim{H}}
  \end{tikzcd}
  \]
  where the two horizontal maps are those of \cref{cor:Baut_eq}.
  Here $\redim{G}$ acts on $\lie g_{L_A}^{L_B}(\rho)$ by conjugation via the group homomorphism $\bar s_\QQ$, on $\lie h$ via $\bar s_\QQ$ and $(\ol \Phi \after \bar s)_\QQ$, and $\redim{H}$ acts on $\lie g_{L_{A'}}^{L_{B'}}(\rho')$ via $\bar t_\QQ$.
\end{theorem}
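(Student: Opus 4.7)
The plan is to deduce this from \cref{thm:model_forget} by taking homotopy orbits, in the same way that \cref{cor:Baut_eq} is deduced from \cref{thm:model}. First, \cref{thm:model_forget}, applied with $\ol U$ and $\ol V$ being the respective unipotent radicals, produces a commutative diagram
\[
\begin{tikzcd}
  \B[\inv{q_G}(U)] \bdlaut{A}{B}{K}(\xi)_G \ar{dd} \rar[squiggly]{\req} & \MCb \bigl( \lie g_{L_A}^{L_B}(\rho) \bigr) \\
  & \MCb(\lie h) \uar \dar \\
  \B[\inv{q_H}(V)] \bdlaut{A'}{B'}{K}(\xi)_H \rar[squiggly]{\req} & \MCb \bigl( \lie g_{L_{A'}}^{L_{B'}}(\rho') \bigr)
\end{tikzcd}
\]
where the right-hand column lives in the homotopy category of simplicial sets with an action of the preimage of $N \times M$ in $\Aut[a](m)_\rho$, and the left-hand column has compatible actions by the corresponding groups of components coming from $\inv{q_G}(N)$ and $\inv{q_H}(M)$.

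Next, I would use \cref{lemma:choose_compatibly_forget} to choose sections $\bar s$ and $\bar t$ so that $\bar t \after \ol \nu = \ol \Phi \after \bar s$. Via these sections, $\redim{G}$ and $\redim{H}$ act on the dg Lie algebras $\lie g_{L_A}^{L_B}(\rho)$ and $\lie g_{L_{A'}}^{L_{B'}}(\rho')$, respectively; and the pair $(\bar s, \ol \Phi \after \bar s)$ lands in (the $\QQ$-points of) $\aAut[a](m)_\rho$, inducing a $\redim{G}$-action on $\lie h$ making the zig-zag in the middle row $\redim{G}$-equivariant and the bottom horizontal map $\redim{G}$-equivariant with respect to $\ol \nu_\QQ \colon \redim{G} \to \redim{H}$. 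The compatibility $\bar t \after \ol \nu = \ol \Phi \after \bar s$ is exactly what is needed for the right-hand square to commute in the homotopy category of $\redim{G}$-simplicial sets (pulling back the $\redim{H}$-action along $\ol \nu_\QQ$ for the bottom term).

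I would then take homotopy orbits under the various group actions. For the top and bottom rows, \cref{lemma:hquot_hcoinv} applied as in the proof of \cref{cor:Baut_eq} yields rational equivalences
\[
\hcoinv{\B[\inv{q_G}(U)] \bdlaut{A}{B}{K}(\xi)_G}{\redim{G}} \xlongto{\eq} \B \bdlaut{A}{B}{K}(\xi)_G
\]
and analogously for the $H$-side, together with the corresponding rational equivalences to $\hcoinv{\MCb(\lie g_{L_A}^{L_B}(\rho))}{\redim{G}}$ and $\hcoinv{\MCb(\lie g_{L_{A'}}^{L_{B'}}(\rho'))}{\redim{H}}$. For the middle term, taking homotopy orbits by $\redim{G}$ gives $\hcoinv{\MCb(\lie h)}{\redim{G}}$, and the vertical map down to the $H$-row factors as $\hcoinv{\MCb(\lie h)}{\redim{G}} \to \hcoinv{\MCb(\lie g_{L_{A'}}^{L_{B'}}(\rho'))}{\redim{G}} \to \hcoinv{\MCb(\lie g_{L_{A'}}^{L_{B'}}(\rho'))}{\redim{H}}$, where the second map is induced by $\ol \nu_\QQ$ (and is well defined up to homotopy on homotopy orbits by the $\redim{G}$-equivariance just established).

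The main obstacle I expect is purely bookkeeping: carefully verifying that the zig-zag produced by \cref{thm:model_forget} is equivariant not merely for the preimage of $N \times M$ in $\Aut[a](m)_\rho$, but that after restricting along the section $(\bar s, \ol \Phi \after \bar s)$ and using \cref{ass:forget}, one obtains an honest $\redim{G}$-action on the middle term whose induced map to the $\redim{H}$-homotopy orbits of $\MCb(\lie g_{L_{A'}}^{L_{B'}}(\rho'))$ agrees, in the rational homotopy category, with the composite through $\B \bdlaut{A'}{B'}{K}(\xi)_H$. This agreement ultimately reduces to the identity $\bar t \after \ol \nu = \ol \Phi \after \bar s$ from \cref{lemma:choose_compatibly_forget} together with the naturality (in the pair $(\mathrm{group}, \mathrm{module})$) of the two-sided bar construction and of the identifications used in \cref{thm:model_forget}.
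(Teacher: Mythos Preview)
Your proposal is correct and follows essentially the same approach as the paper: the paper's proof is a one-liner stating that the result follows from \cref{thm:model_forget} and \cref{lemma:hquot_hcoinv} ``as in the proof of \cref{cor:Baut_eq}'', and your sketch unpacks exactly that argument. The bookkeeping you flag as the ``main obstacle'' is indeed handled by the equivariance built into \cref{thm:model_forget} together with the compatibility $\bar t \after \ol \nu = \ol \Phi \after \bar s$ from \cref{lemma:choose_compatibly_forget}.
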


\begin{proof}
  This follows from \cref{thm:model_forget,lemma:hquot_hcoinv}, as in the proof of \cref{cor:Baut_eq}.
\end{proof}

\begin{corollary} \label{cor:forms_eq_forget}
  In the situation of \cref{cor:Baut_eq_forget}, let $M$ and $N$ be two representations in rational vector spaces of $\redim{G}$ and $\redim{H}$, respectively, and let $f \colon N \to M$ be a map that is equivariant with respect to $\ol \nu_\QQ \colon \redim{G} \to \redim{H}$.
  Then there is a commutative diagram in the homotopy category of cochain complexes
  \[
  \begin{tikzcd}
    \Forms * \bigl( \B \bdlaut{A}{B}{K}(\xi)_G; M \bigr) \rar[squiggly]{\eq} & \Forms * \bigl( \B \redim{G}; \CEcochains * \bigl( \lie g_{L_A}^{L_B}(\rho); M \bigr) \bigr) \dar \\
    & \Forms * \bigl( \B \redim{G}; \CEcochains * ( \lie h; M ) \bigr) \\
    \Forms * \bigl( \B \bdlaut{A'}{B'}{K}(\xi)_H; N \bigr) \ar{uu} \rar[squiggly]{\eq} & \Forms * \bigl( \B \redim{H}; \CEcochains * \bigl( \lie g_{L_{A'}}^{L_{B'}}(\rho'); N \bigr) \bigr) \uar
  \end{tikzcd}
  \]
  where the two horizontal maps are those of \cref{cor:forms_eq}.
  Moreover, this diagram can be lifted to a commutative diagram in the homotopy category of lax symmetric monoidal functors from the category of tuples $(M, N, f)$ to cochain complexes (with monoidal natural transformations as morphisms, and the pointwise weak equivalences).
  Here $\gamma \in \redim{G}$ acts on the cochain complex $\CEcochains * ( \lie g_{L_A}^{L_B}(\rho); M )$ by $(\inv \gamma, \gamma)$, where the action on $\lie g_{L_A}^{L_B}(\rho)$ is by conjugation via the group homomorphism $\bar s_\QQ$, it similarly acts on $\lie h$ via $\bar s_\QQ$ and $(\ol \Phi \after \bar s)_\QQ$, and $\redim{H}$ acts on $\lie g_{L_{A'}}^{L_{B'}}(\rho')$ via $\bar t_\QQ$.
\end{corollary}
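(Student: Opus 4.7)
The plan is to deduce this from \cref{cor:Baut_eq_forget} by applying $\Forms * (\blank; \blank)$ to the diagram appearing there, treating the coefficients $M$ and $N$ as local systems on $\B \redim{G}$ and $\B \redim{H}$ (in the sense of \cref{def:BG_local_system}) and pulling them back along the relevant projections. The map $f \colon N \to M$ together with the equivariance of $f$ with respect to $\ol \nu_\QQ$ then provides a morphism of local systems that is compatible with the vertical maps of the diagram, so the argument is essentially the same as in the proof of \cref{cor:forms_eq} (and of \cref{cor:forms_eq_natural}), except that now we carry along an additional zig-zag instead of a single map.

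First I would apply $\Forms * (\blank; \blank)$ to the diagram of \cref{cor:Baut_eq_forget}. The two horizontal rational equivalences then become quasi-isomorphisms by \cref{lemma:forms_cohomology}, and using \cref{lemma:hquot_hcoinv_local_system} one verifies that the pulled-back local systems on source and target agree under these quasi-isomorphisms; this is the point where one uses the naturality statement about the lax symmetric monoidal structure. Next I would apply \cref{lemma:forms_orbits} to each of the three homotopy orbit spaces appearing on the right-hand side, converting them to $\Forms * (\B \Gamma; \Forms * (\MCb(\lie{l}); \ul M))$-type expressions. Finally, \cref{lemma:MC_forms} replaces each $\Forms * (\MCb(\lie l); \ul M)$ with $\CEcochains * (\lie l; M)$. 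Since all three of these lemmas produce (zig-zags of) monoidal natural transformations of lax symmetric monoidal functors, and since the action of $\redim G$ on $\lie h$ through both $\bar s_\QQ$ and $(\ol \Phi \after \bar s)_\QQ$ is arranged (via \cref{lemma:choose_compatibly_forget}) so that the two projections $\lie g_{L_A}^{L_B}(\rho) \leftarrow \lie h \rightarrow \lie g_{L_{A'}}^{L_{B'}}(\rho')$ become equivariant with respect to $\ol \nu_\QQ$, all vertical compatibilities we need are preserved through these replacements.

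The main thing to be careful about is the identification of the various local systems. On the two outer rows the local systems coming from $M$ and $N$ pull back correctly via \cref{lemma:hquot_hcoinv_local_system}; on the middle row the space $\hcoinv{\MCb(\lie h)}{\redim G}$ carries the pullback of $M$, and the map $f \colon N \to M$ together with $\ol \nu_\QQ$-equivariance produces the required comparison map of local systems lifting $f$. The monoidal naturality in the triple $(M, N, f)$ is then automatic from the monoidality of each of \cref{lemma:forms_orbits,lemma:MC_forms,lemma:hquot_hcoinv,lemma:hquot_hcoinv_local_system}, and also from the monoidality of the zig-zag of \cref{thm:model_forget} upstream. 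No new ingredients beyond those already assembled for the proofs of \cref{cor:forms_eq,cor:forms_eq_natural} are needed; the step that requires the most bookkeeping is verifying that the chosen sections $\bar s$ and $\bar t$ make the action of $\redim G$ on $\lie h$ compatible on both sides simultaneously, but this is exactly the content of \cref{lemma:choose_compatibly_forget}.
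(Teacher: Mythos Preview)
Your proposal is correct and follows essentially the same approach as the paper: the paper's proof simply says that the result follows from \cref{cor:Baut_eq_forget,lemma:forms_orbits,lemma:MC_forms,lemma:hquot_hcoinv_local_system} as in the proof of \cref{cor:forms_eq}, which is exactly the strategy you outline. Your expanded discussion of the local-system bookkeeping and the role of \cref{lemma:choose_compatibly_forget} is accurate and just makes explicit what the paper leaves implicit.
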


\begin{proof}
  This follows from \cref{cor:Baut_eq_forget,lemma:forms_orbits,lemma:MC_forms,lemma:hquot_hcoinv_local_system}, as in the proof of \cref{cor:forms_eq}.
\end{proof}

\begin{corollary} \label{cor:cohomology_forget}
  In the situation of \cref{cor:Baut_eq_forget}, let $M$ and $N$ be two representations in rational vector spaces of $\redim{G}$ and $\redim{H}$, respectively, and let $f \colon N \to M$ be a map that is equivariant with respect to $\ol \nu_\QQ \colon \redim{G} \to \redim{H}$.
  Then there is a commutative diagram in the category of graded vector spaces
  \[
  \begin{tikzcd}
    \Coho * \bigl( \B \bdlaut{A}{B}{K}(\xi)_G; M \bigr) \rar{\iso} & \Coho * \bigl( \B \redim{G}; \CEcoho * \bigl( \lie g_{L_A}^{L_B}(\rho); M \bigr) \bigr) \dar \\
    & \Coho * \bigl( \B \redim{G}; \CEcoho * ( \lie h; M ) \bigr) \\
    \Coho * \bigl( \B \bdlaut{A'}{B'}{K}(\xi)_H; N \bigr) \ar{uu} \rar{\iso} & \Coho * \bigl( \B \redim{H}; \CEcoho * \bigl( \lie g_{L_{A'}}^{L_{B'}}(\rho'); N \bigr) \bigr) \uar
  \end{tikzcd}
  \]
  where the two horizontal maps are those of \cref{cor:cohomology}.
  Moreover, this diagram consists of monoidal natural transformations between lax symmetric monoidal functors from the category of tuples $(M, N, f)$ to graded vector spaces.
  Here $\gamma \in \redim{G}$ acts on $\CEcoho * ( \lie g_{L_A}^{L_B}(\rho); M )$ by $(\inv \gamma, \gamma)$, where the action on $\lie g_{L_A}^{L_B}(\rho)$ is by conjugation via the group homomorphism $\bar s_\QQ$, it similarly acts on $\lie h$ via $\bar s_\QQ$ and $(\ol \Phi \after \bar s)_\QQ$, and $\redim{H}$ acts on $\lie g_{L_{A'}}^{L_{B'}}(\rho')$ via $\bar t_\QQ$.
\end{corollary}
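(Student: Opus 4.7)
The plan is to deduce \cref{cor:cohomology_forget} from \cref{cor:forms_eq_forget} by applying the functor $\Coho * (\blank)$ to the diagram of cochain complexes provided there, exactly as \cref{cor:cohomology} is deduced from \cref{cor:forms_eq} and \cref{cor:cohomology_natural} from \cref{cor:forms_eq_natural}. The key identifications come from \cref{lemma:forms_cohomology} on the left-hand side and \cref{lemma:forms_split_coeff} on the right-hand side; the monoidality will then follow because both lemmas produce monoidal natural isomorphisms of lax symmetric monoidal functors, and all the vertical and horizontal arrows in the forms-level diagram are themselves monoidal natural transformations by \cref{cor:forms_eq_forget}.

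First, I would apply \cref{lemma:forms_cohomology} to the two left-hand terms to obtain monoidal natural isomorphisms
\[ \Coho * \bigl( \Forms * \bigl( \B \bdlaut{A}{B}{K}(\xi)_G ; M \bigr) \bigr)  \iso  \Coho * \bigl( \B \bdlaut{A}{B}{K}(\xi)_G ; M \bigr) \]
and analogously for $(A', B', H, N)$. Next, to handle the three right-hand terms, I need to verify that for each of the dg Lie algebras $\lie g_{L_A}^{L_B}(\rho)$, $\lie h$, and $\lie g_{L_{A'}}^{L_{B'}}(\rho')$, the local system on $\B \redim{G}$ (resp.\ $\B \redim{H}$) given by their Chevalley--Eilenberg cochains with coefficients in $M$ (resp.\ $N$) is split, quasi-groupoidal, and extendable, so that \cref{lemma:forms_split_coeff} applies. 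Quasi-groupoidality and extendability are automatic (any local system coming from a representation via \cref{def:BG_local_system} is groupoidal, and hence quasi-groupoidal; extendability follows from \cite[Remarks~13.11]{Hal} together with the pushforward argument as in the proof of \cref{cor:cohomology}). For splitness, the argument used in the proof of \cref{cor:cohomology} generalizes: $\Deru[\rho](L_X \rel L_A)$, $\Deru[\rho](m \rel a)$, and $\Deru[\rho'](L'_X \rel L_{A'})$ are each degree-wise finite-dimensional algebraic representations (for $\Deru[\rho](m \rel a)$, this follows from its definition as a pullback of algebraic representations in \cref{lemma:Der_algebraic}) of the reductive algebraic groups $\aredEaut[L_A](\rho)$, $\aredEaut[L_A](\rho)$, and $\aredEaut[L_{A'}](\rho')$ acting via $\bar s$, $\bar s$, and $\bar t$ respectively, and the $\Hom$-summands are algebraic representations by \cref{lemma:indecomposables_representation}. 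Hence their Chevalley--Eilenberg chains are degree-wise semi-simple representations of reductive algebraic groups, and applying $\Hom(\blank, M)$ (resp.\ $\Hom(\blank, N)$) via the Universal Coefficient Theorem preserves this splitness (cf.\ \cite[Lemmas~B.1 and B.2]{BM}).

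Combining these two families of identifications with the diagram of \cref{cor:forms_eq_forget} yields the desired commutative square of graded vector spaces, with the top and bottom horizontal maps precisely those of \cref{cor:cohomology}. The fact that the diagram consists of monoidal natural transformations between lax symmetric monoidal functors from tuples $(M, N, f)$ to graded vector spaces follows because each step in the construction (the zig-zag of \cref{cor:forms_eq_forget}, the isomorphism of \cref{lemma:forms_cohomology}, and the isomorphism of \cref{lemma:forms_split_coeff}) is itself a monoidal natural transformation between lax symmetric monoidal functors in the appropriate variable.

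The main (but entirely routine) obstacle is bookkeeping the compatibility of all these monoidal structures under the three different group actions ($\redim{G}$ acting via $\bar s_\QQ$ on the top, via $\bar s_\QQ$ and $(\ol \Phi \after \bar s)_\QQ$ in the middle, and $\redim{H}$ acting via $\bar t_\QQ$ on the bottom); the equation $\bar t \after \ol \nu = \ol \Phi \after \bar s$ guaranteed by \cref{lemma:choose_compatibly_forget} is exactly what is needed to make the vertical maps equivariant in the required sense and to make the local systems on $\B \redim{G}$ compatible under pullback along $\B \ol \nu_\QQ$.
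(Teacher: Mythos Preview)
Your proposal is correct and follows essentially the same approach as the paper, which simply says the result follows from \cref{cor:forms_eq_forget}, \cref{lemma:forms_cohomology}, and \cref{lemma:forms_split_coeff} as in the proof of \cref{cor:cohomology}. You supply more detail than the paper does, in particular the verification that $\CEcochains * (\lie h; M)$ is split as a $\redim{G}$-representation via the pullback description of $\Deru[\rho](m \rel a)$, which is the one new point not already covered by the proof of \cref{cor:cohomology}.
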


\begin{proof}
  This follows from \cref{cor:forms_eq_forget,lemma:forms_cohomology,lemma:forms_split_coeff}, as in the proof of \cref{cor:cohomology}.
\end{proof}

\subsection{The case of manifolds with boundary a sphere} \label{sec:manifolds}

Given a simply connected (high-dimensional) manifold with simply connected boundary, we obtained rational models for the classifying spaces $\B \aut[\bdry](M)$ and $\B \BlDiff[\bdry](M)$ in \cref{sec:model,sec:block_models}.
However, in the case that the boundary is a sphere, Berglund--Madsen \cite[Theorem~1.2]{BM} obtained related, but simpler, models for the spaces $\B \aut[\bdry](M)_{\id}$ and $\B \BlDiff[\bdry](M)_{\id}$ in terms of a certain derivation dg Lie algebra $\Der(\freelie V \rel \omega_V)$.
In this subsection, we prove that under certain assumptions there are models for the whole spaces $\B \aut[\bdry](M)$ and $\B \BlDiff[\bdry](M)$ in terms of $\Der(\freelie V \rel \omega_V)$.

\begin{remark}
  Suitably formulated, the results of this subsection for $\B \aut[\bdry](M)$ would also apply more generally to Poincaré duality complexes, but we will restrict ourselves to manifolds since the formulation of the results is more convenient.
\end{remark}

We begin by recalling the definitions necessary for introducing the dg Lie algebra $\Der(\freelie V \rel \omega_V)$.

\begin{definition}
  Let $m \in \ZZ$.
  A \emph{graded symplectic form} of degree $-m$ on a graded vector space $V$ is a unimodular bilinear form of degree $-m$
  \[ \iprod \blank \blank  \colon  V \tensor V  \longto  \QQ \]
  such that $\iprod v w = (-1)^{\deg v \deg w + 1} \iprod w v$ (i.e.\ it is graded anti-symmetric).
  We denote by $\Sp m$ the category of finite-dimensional graded vector spaces equipped with a graded symplectic form of degree $-m$, with morphisms those linear maps of degree $0$ that preserve the bilinear form.
\end{definition}

\begin{definition}
  Let $m \in \ZZ$ and $V \in \Sp m$.
  Furthermore choose a homogeneous basis $\alpha_1, \dots, \alpha_n$ of $V$.
  Then we set
  \[ \omega_V  \defeq  \frac 1 2 \sum_{i = 0}^n \liebr {\dualbasis{\alpha_i}} {\alpha_i}  \in  \freelie V \]
  where $\dualbasis{\alpha_i}$ denotes the dual basis with respect to the inner product on $V$.
  This element $\omega_V$ is independent of the choice of basis (see e.g.\ \cite[p.~91]{BM}).
\end{definition}

The following result of Berglund--Madsen (building on work of Stasheff \cite[Theorem~2]{Sta}) is the starting point of this section.
It provides a particularly nice dg Lie algebra model for manifolds with boundary a sphere.

\begin{proposition}[Stasheff, Berglund--Madsen] \label{prop:Stasheff}
  Let $M$ be an oriented simply connected compact topological manifold of dimension $n \ge 2$ such that $\bdry M \iso \Sphere {n-1}$, and set $V_M \defeq \shift[-1] \rHo * (M; \QQ)$.
  Then there exists a differential $\delta_M$ on $\freelie V_M$ such that $L_M \defeq ( \freelie V_M, \delta_M )$ is a minimal quasi-free dg Lie algebra and the element $\omega_M \defeq \omega_{V_M} \in \freelie V_M$ is a cycle such that the map $\freelie(\omega_M) \to L_M$ models the inclusion $\bdry M \to M$.
  Here $V_M$ is equipped with the graded symplectic form of degree $-(n - 2)$ given by $\iprod {\shift[-1] a} {\shift[-1] b} \defeq (-1)^{\deg a} \iprod[\intersect] a b$, where $\iprod[\intersect] \blank \blank$ denotes the intersection pairing.
\end{proposition}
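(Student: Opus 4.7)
The plan is to construct $\delta_M$ on $\freelie V_M$ by an obstruction-theoretic induction on the wedge-length filtration, simultaneously enforcing minimality, agreement with the rational homotopy type of $M$, and $\delta_M(\omega_{V_M}) = 0$; the crucial input is Poincar\'e--Lefschetz duality for the pair $(M, \bdry M)$.

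First I would record the bookkeeping. Since $\bdry M \iso \Sphere{n-1}$ has rational homology concentrated in degrees $0$ and $n-1$, the long exact sequence of the pair combined with $H_n(M;\QQ) = 0$ and $H_n(M, \bdry M;\QQ) \iso \QQ$ shows that $H_*(M;\QQ) \to H_*(M, \bdry M;\QQ)$ is an isomorphism in degrees $k \le n-1$ and that $H_{n-1}(\bdry M) \to H_{n-1}(M)$ vanishes. Composing with Poincar\'e--Lefschetz duality $H_k(M, \bdry M;\QQ) \iso H^{n-k}(M;\QQ)$ and dualizing then produces the stated non-degenerate graded anti-symmetric pairing on $V_M$ of degree $-(n-2)$. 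A degree count $\deg \dualbasis{\alpha_i} + \deg \alpha_i = n-2$ places $\omega_{V_M}$ in degree $n-2$, matching the degree of the generator $x$ of a minimal dg Lie model $\freelie(x)$ of $\Sphere{n-1}$ with trivial differential.

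Classical minimal model theory next yields \emph{some} minimal quasi-free dg Lie model $(\freelie V_M, d)$ of $M$, with the generating space canonically identified with $V_M$ via minimality and the standard identification of the homology of the indecomposables with $\shift[-1] \rHo *(M;\QQ)$. Modelling $\bdry M \to M$ by a map $\freelie(x) \to (\freelie V_M, d)$ amounts to choosing a cycle $\omega \in (\freelie V_M)_{n-2}$. Minimality forces $\omega$ to be decomposable: a nontrivial linear part would yield a nonzero class in $\shift[-1] H_{n-1}(M;\QQ)$, contradicting the vanishing of $H_{n-1}(\bdry M) \to H_{n-1}(M)$ recorded above. Hence $\omega$ lies in $\liebr{\freelie V_M}{\freelie V_M}_{n-2}$, and modulo triple brackets its image in $(\bigwedge^2 V_M)_{n-2}$ is, via the Whitehead-product interpretation of the Lie bracket combined with rational Hurewicz, a nonzero scalar multiple of the element dual to the intersection pairing on $V_M$ --- namely, of $\omega_{V_M}$. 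The scalar is absorbed by rescaling $x$.

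The remaining step, which is the main obstacle, is to promote ``equal modulo higher brackets'' to literal equality $\omega = \omega_{V_M}$ by inductively adjusting both $d$ and the basis of $V_M$ along the wedge-length filtration of $\freelie V_M$. At each stage one faces two obstructions --- one from ordinary minimal model theory (to preserving the rational homotopy type of $M$) and one from the cycle condition $\delta_M(\omega_{V_M}) = 0$ --- and the key observation is that the Poincar\'e--Lefschetz self-duality of $V_M$ identifies these obstructions as dual to one another, so that they can be resolved simultaneously. Cleanly, the argument is packaged by Stasheff's construction of minimal models for simply connected Poincar\'e duality pairs \cite{Sta}, adapted by Berglund--Madsen \cite{BM} to the boundary-sphere case at hand.
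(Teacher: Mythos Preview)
Your sketch is correct and essentially reconstructs the argument behind the cited result: the paper itself does not give a proof but simply refers to \cite[Theorem~3.11]{BM}, which in turn builds on Stasheff \cite{Sta} in exactly the way you outline (obstruction-theoretic adjustment of a minimal Lie model along the bracket-length filtration, using Poincar\'e--Lefschetz duality to control the cycle condition on $\omega_{V_M}$). Your handling of the bookkeeping---the degree of $\omega_{V_M}$, the vanishing of the linear part of the boundary cycle via Hurewicz, and the identification of the quadratic part with the intersection form---is accurate.
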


\begin{proof}
  This is \cite[Theorem~3.11]{BM}.
\end{proof}

The inclusion $\freelie(\omega_V)  \to  \freelie V$ is not quasi-free, and hence the results of the preceding subsections do not apply to it.
To amend this, we will (as in \cite[Proof of Theorem~3.12]{BM}) replace it by a quasi-isomorphic map that is minimal quasi-free.

\begin{definition} \label{def:tilde_L}
  Let $m \in \ZZ$, $V \in \Sp m$, and $\delta$ a differential on $\freelie V$ such that $L \defeq (\freelie V, \delta)$ is a dg Lie algebra with $\delta(\omega_V) = 0$.
  Then we write
  \[ \widetilde L  \defeq  \bigl( \freelie (V \dirsum \linspan{\beta_L, \gamma_L}), \tilde \delta \bigr) \]
  for the dg Lie algebra where $\deg {\beta_L} = m$, $\deg {\gamma_L} = m + 1$, and $\tilde \delta$ is uniquely determined by specifying that $\tilde \delta(\beta_L) = 0$, $\tilde \delta(\gamma_L) = \omega_V - \beta_L$, and $\tilde \delta \restrict {} V = \restrict \delta V$.
  It comes equipped with two maps
  \[ \freelie(\beta_L)  \xlongto{i_L}  \widetilde L  \xlongto{p_L}  L \]
  where $\freelie(\beta_L)$ is equipped with the trivial differential, and $p_L$ is uniquely determined by $p_L(\beta_L) = \omega_V$, $p_L(\gamma_L) = 0$, and $p_L(v) = v$ for all $v \in V$.
  Note that $i_L$ is a quasi-free map and that $p_L$ is a quasi-isomorphism (e.g.\ by \cite[Proposition~22.12]{FHT}).
  If $L$ is a minimal, then so is $i_L$.
\end{definition}

We will now relate the algebraic groups $\aAut[\omega_V](L)$ and $\aAut[\beta_L](\widetilde L)$.
In particular we will prove that their maximal reductive quotients agree.

\newcommand{\ext}[1][]{{\xi_{#1}}}
\newcommand{\aext}[1][]{{\ol \xi_{#1}}}

\begin{definition} \label{def:Aut_tilde_L}
  In the situation of \cref{def:tilde_L}, we denote by
  \[ \ext[L] \colon \Aut[\omega_V](L)  \longto  \Aut[\beta_L](\widetilde L) \]
  the group homomorphism given by extending by the identity on $\beta_L$ and $\gamma_L$.
  Moreover, when $V$ is positively graded, we denote by
  \[ \aext[L] \colon \aAut[\omega_V](L)  \longto  \aAut[\beta_L](\widetilde L) \]
  the canonical extension of $\ext[L]$ to a map of algebraic groups (using \cref{lemma:Lie_algebra_extension}).
\end{definition}

\begin{lemma} \label{lemma:aAut_omega}
  Let $m \ge 2$, $V \in \Sp{m}$ concentrated in positive degrees, and $\delta$ a differential on $\freelie V$ such that $L \defeq (\freelie V, \delta)$ is a minimal quasi-free dg Lie algebra with $\delta(\omega_V) = 0$.
  Then the composite
  \[ \aAut[\omega_V](L)  \xlongto{\aext[L]}  \aAut[\beta_L](\widetilde L)  \xlongto{\areal}  \aEaut[\beta_L](\widetilde L) \]
  is a quotient map of algebraic groups.
\end{lemma}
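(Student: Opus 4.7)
The plan is to verify that $\areal \after \aext[L]$ is surjective as a morphism of algebraic groups; since all groups under consideration are smooth affine over $\QQ$, this will automatically give faithful flatness, hence being a quotient morphism in characteristic zero. Equivalently, I will check that (a) the induced map on Lie algebras is surjective and (b) the map is surjective on $R$-points for all commutative $\QQ$-algebras $R$. Both verifications rest on the same structural feature: the projection $p_L \colon \widetilde L \to L$ is a surjective quasi-isomorphism of dg Lie algebras with cofibrant target, and its kernel is the Lie ideal generated by $\gamma_L$, which is acyclic.

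For (a), by \cref{prop:Lie_aEaut} the source is identified with the set of $\theta \in \Cycles 0 (\Der L)$ satisfying $\theta(\omega_V) = 0$, and the target with $\Ho 0(\Der(\widetilde L \rel \freelie(\beta_L)))$. The induced map sends $\theta$ to the class of the derivation $\aext[L]_*\theta$ obtained by extending $\theta$ by zero on $\beta_L$ and $\gamma_L$; this extension is a cycle since $\tilde\delta \aext[L]_*\theta(\gamma_L) = 0 = \theta(\omega_V) = \aext[L]_*\theta(\omega_V)$. Conversely, given any cycle $\widetilde \theta \in \Der(\widetilde L \rel \freelie(\beta_L))_0$, determined by $\widetilde\theta|_V$ and $\widetilde\theta(\gamma_L)$ subject to $\tilde\delta \widetilde\theta(\gamma_L) = \widetilde\theta(\omega_V)$, I would modify $\widetilde\theta$ by a suitable $D$-coboundary to arrange $\widetilde\theta(\gamma_L) = 0$ and $\widetilde\theta(V) \subseteq \freelie V \subseteq \widetilde L$. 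The existence of the required coboundary follows because, by the lifting property of the trivial fibration $p_L$ against the cofibrant $L$, composition with $p_L$ induces a quasi-isomorphism from $\Der(\widetilde L \rel \freelie(\beta_L))$ onto an appropriate $p_L$-derivation complex, and the obstructions to making the required modifications take values inside the acyclic ideal $\ker(p_L)$.

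For (b), given $\widetilde \psi \in \Aut[\beta_L \tensor R]^R(\widetilde L \tensor R)$, I would construct in two stages a simplicial homotopy (relative to $\freelie(\beta_L) \tensor R$) taking $\widetilde\psi$ to an automorphism $\widetilde\psi'$ with $\widetilde\psi'(\gamma_L) = \gamma_L$ and $\widetilde\psi'(V) \subseteq \freelie V \tensor R$. Any such $\widetilde \psi'$ is automatically the image under $\aext[L]$ of a unique automorphism $\psi$ of $L \tensor R$, and $\psi(\omega_V) = \widetilde\psi'(\beta_L + \tilde\delta \gamma_L) = \omega_V$, giving the required lift. Both homotopies are produced by integrating the exponential/gauge action of \cref{lemma:exp_action,lemma:exp_aut_map} applied to suitable nilpotent degree-$1$ derivations of $\widetilde L$ supported in the acyclic ideal $\ker(p_L) = (\gamma_L)$; the cohomological obstructions to their existence are precisely the classes controlled in step (a), and vanish there. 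The main obstacle will be the second homotopy, which must adjust $\widetilde\psi|_V$ into $\freelie V \tensor R$ while preserving both the automorphism property and the relative condition: this I would handle inductively along the word-length filtration of $\widetilde L$, exploiting acyclicity of $\ker(p_L)$ at each stage. The construction is manifestly functorial in $R$, yielding scheme-theoretic surjectivity as required.
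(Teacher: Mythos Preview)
Your overall shape—show every $\widetilde\psi \in \aAut[\beta_L](\widetilde L)(R)$ is homotopic rel $\beta_L$ to something in the image of $\aext[L]$—matches the paper's. But you have misidentified both where the work lies and the tool that does it.

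First, your ``second homotopy'' is a non-issue. Since $V$ is concentrated in degrees $\le m-1$ while $\beta_L,\gamma_L$ sit in degrees $m$ and $m+1$, any degree-preserving endomorphism of $\widetilde L$ already sends $V$ into $(\widetilde L)_{\le m-1}=(\freelie V)_{\le m-1}$. Hence once $\widetilde\psi'(\gamma_L)=\gamma_L$ (and $\widetilde\psi'(\beta_L)=\beta_L$ is given), you automatically have $\widetilde\psi'\in\aAut[\beta_L,\gamma_L](\widetilde L)(R)=\im(\aext[L])(R)$. The paper observes this in one sentence; all of the content is in your \emph{first} stage, not the second.

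Second, ``acyclicity of $\ker(p_L)$'' is not the engine that runs the first stage. Write $\phi(\gamma_L)=\gamma_L+\alpha$; one checks the coefficient of $\gamma_L$ is $1$, and for degree reasons $\alpha$ lies in $\freelie[\ge 2]V$. In particular $p_L(\alpha)=\alpha$, so the error does \emph{not} live in $\ker(p_L)$, and derivations with values in that ideal cannot cancel it. The input the paper actually uses is the surjectivity of the evaluation map $\ev[\omega_V]\colon\Der(L)\to[L,L]$ onto decomposables (\cite[Corollary~3.10]{BM}), a feature genuinely specific to the symplectic element $\omega_V$. One picks $\theta\in\Der(L)_1$ with $\theta(\omega_V)=-\alpha$, extends by zero to $\tilde\theta\in\Der(\widetilde L\rel\beta_L)$, and computes that $\exp([\tilde\delta,\tilde\theta])\circ\phi$ sends $\gamma_L$ to $\gamma_L$ plus an error of strictly higher word-length (minimality of $L$ is used here so that $[\tilde\delta,\tilde\theta]$ raises word-length). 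Iterating along the word-length filtration terminates in degree $m+1$ and produces the desired $\nu\in\aNull[\beta_L](\widetilde L)(R)$. All of this is uniform in $R$, so your separate Lie-algebra verification (a) is redundant.
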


\begin{proof}
  First note that the image of $\aext[L]$ contains $\aAut[\beta_L,\gamma_L](\widetilde L)$.
  Hence it is enough to prove that any element $\phi \in \aAut[\beta_L](\widetilde L)(R)$ is homotopic relative to $\beta_L$ to an automorphism $\psi$ such that $\psi(\gamma_L \tensor 1) = \gamma_L \tensor 1$.
  This is equivalent to finding an element $\nu \in \aNull[\beta_L](\widetilde L)(R)$ such that $(\nu \after \phi)(\gamma_L \tensor 1) = \gamma_L \tensor 1$.
  For ease of notation we will do so for $R = \QQ$; the general case is analogous.
  It follows from the proof of \cref{prop:exp_eq} (applied to the trivial algebraic subgroup of $\aEaut[\beta_L](\widetilde L)$) that the map $\aexpact$ of \cref{lemma:exp_action} restricts to a map
  \[ \aexp \bigl( \Bound 0 \bigl( \Der(\widetilde L \rel \beta_L) \bigr) \bigr)  \longto  \aNull[\beta_L](\widetilde L) \]
  of algebraic groups; we will use this to construct a map $\nu$ as desired.
  
  For degree reasons, we have that $\phi(\gamma_L) = r \gamma_L + \alpha$ for some $r \in \QQ$, and that $\alpha$ is, for some $k \ge 2$, contained in the part $\freelie[\ge k](V) \subseteq L$ of word-length $\ge k$.
  We obtain
  \[ \phi(\omega_V) - \beta_L  =  \phi(\omega_V - \beta_L)  =  \phi \tilde \delta(\gamma_L)  =  \tilde \delta \phi(\gamma_L)  =  \tilde \delta(r \gamma_L + \alpha)  =  r \omega_V - r \beta_L + \tilde \delta (\alpha) \]
  and since $\phi(\omega_V)$, $\omega_V$, and $\tilde \delta (\alpha)$ are all contained in $L$, this implies that $r = 1$.
  By \cite[Corollary~3.10]{BM}, the evaluation map $\ev[\omega_V] \colon \Der(L) \to L$ is surjective onto the decomposables $\liebr{L}{L} \subseteq L$.
  Hence we can pick some $\theta \in \Der(L)_1$ such that $\theta(\omega_V) = - \alpha$.
  Let $\tilde \theta \in \Der(\widetilde L \rel \beta_L)_1$ be the derivation obtained by extending $\theta$ with $\tilde \theta(\beta_L) = 0$ and $\tilde \theta(\gamma_L) = 0$.
  We claim that $(\aexpact_\QQ(\liebr {\tilde \delta} {\tilde \theta}) \after \phi)(\gamma_L) - \gamma_L \in \freelie[\ge k + 1](V)$.
  Assuming this, we can iterate this procedure to find a $\nu \in \Null[\beta_L](\widetilde L)$ such that $(\nu \after \phi)(\gamma_L) - \gamma_L \in \freelie[\ge m + 2](V)_{m + 1} = 0$, which is what we wanted to show.
  To prove the claim, we compute
  \begin{align*}
    \bigl( \aexpact_\QQ ( \liebr {\tilde \delta} {\tilde \theta} ) \after \phi \bigr) (\gamma_L) &= \sum_{n \ge 0} \frac {1} {n!} \liebr {\tilde \delta} {\tilde \theta}^{\after n}(\gamma_L + \alpha) \\
    &=  \gamma_L + \alpha + \sum_{n \ge 1} \frac {1} {n!} \Bigl( \liebr {\tilde \delta} {\tilde \theta}^{\after n - 1} \bigl( \tilde \delta \tilde \theta(\gamma_L) + \tilde \theta \tilde \delta(\gamma_L) \bigr) + \liebr {\tilde \delta} {\tilde \theta}^{\after n}(\alpha) \Bigr) \\
    &=  \gamma_L + \alpha + \sum_{n \ge 1} \frac {1} {n!} \Bigl( \liebr {\tilde \delta} {\tilde \theta}^{\after n - 1} ( - \alpha ) + \liebr {\tilde \delta} {\tilde \theta}^{\after n}(\alpha) \Bigr) \\
    &=  \gamma_L + \sum_{n \ge 1} \Bigl( \frac {1} {n!} - \frac {1} {(n + 1)!} \Bigr) \liebr {\tilde \delta} {\tilde \theta}^{\after n}(\alpha)
  \end{align*}
  and note that, since $L$ is minimal by assumption, the differential $\tilde \delta$ strictly increases bracket length; hence the same is true for $\liebr {\tilde \delta} {\tilde \theta}$.
\end{proof}

The following lemma, essentially due to Berglund--Madsen, relates the derivation dg Lie algebras of the two different models appearing above.

\begin{lemma}[Berglund--Madsen] \label{lemma:Der_omega}
  Let $m \ge 2$ be an integer, $V \in \Sp m$ concentrated in positive degrees, and $\delta$ a differential on $\freelie V$ such that $L \defeq (\freelie V, \delta)$ is a dg Lie algebra with $\delta(\omega_V) = 0$.
  Then the following map, given by extending a derivation by $0$ on $\beta_L$ and $\gamma_L$,
  \[ \Xi  \colon  \Der(L \rel \omega_V)  \longto  \Der(\widetilde L \rel \beta_L) \]
  is a morphism of dg Lie algebras and induces an isomorphism on homology in non-negative degrees.
\end{lemma}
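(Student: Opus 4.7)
The plan is to first verify that $\Xi$ is a map of dg Lie algebras and then to exhibit it as the induced map between the fibers of a map of cofiber triangles to which the five-lemma can be applied.

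For the first point, that $\Xi(\theta)$ is a derivation of $\widetilde L$ relative to $\beta_L$ compatible with brackets is immediate from the universal property of the free graded Lie algebra on $V \oplus \langle \beta_L, \gamma_L \rangle$. For compatibility with the differential it suffices to evaluate on generators; the only non-trivial check is that
\[
d(\Xi(\theta))(\gamma_L) = \tilde\delta \bigl( \Xi(\theta)(\gamma_L) \bigr) - (-1)^{|\theta|} \Xi(\theta)(\omega_V - \beta_L) = -(-1)^{|\theta|} \theta(\omega_V) = 0,
\]
using the defining condition $\theta(\omega_V) = 0$ together with the fact that $\Xi(\theta)$ restricted to $L$ coincides with $\theta$.

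For the homological statement, I would consider two short exact sequences of chain complexes. On the $L$-side, the surjectivity result of Berglund--Madsen \cite[Corollary~3.10]{BM} yields
\[
0 \longto \Der(L \rel \omega_V) \longto \Der(L) \xlongto{\ev[\omega_V]} [L, L] \longto 0,
\]
with $[L, L]$ shifted by $m = |\omega_V|$. On the $\widetilde L$-side, the identification $\psi \leftrightarrow (\psi|_V, \psi(\gamma_L))$ produces an isomorphism of graded vector spaces between $\Der(\widetilde L \rel \beta_L)$ and $\Hom(V, \widetilde L) \oplus \widetilde L$ (the second factor shifted by $|\gamma_L|$); a direct computation of the induced differential shows that the subspace with $\psi|_V = 0$ is a subcomplex isomorphic to $\widetilde L$ (shifted), giving a second short exact sequence whose connecting map $\bar\partial$ sends a cycle $\varphi \colon V \to \widetilde L$ to the class of $\tilde\varphi(\omega_V)$, where $\tilde\varphi$ is the unique derivation of $\widetilde L$ extending $\varphi$ by zero on $\beta_L$ and $\gamma_L$. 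The inclusions $L \hookrightarrow \widetilde L$ and $[L, L] \hookrightarrow L \hookrightarrow \widetilde L$ then assemble into a map of cofiber triangles whose leftmost vertical map is $\Xi$, whose middle vertical map $\iota$ is induced by $L \hookrightarrow \widetilde L$, and whose rightmost vertical map $j$ is induced by $[L, L] \hookrightarrow \widetilde L$; commutativity of the right-hand square follows from $\omega_V \in L$ and the fact that extending $\iota(\theta) \colon V \to L \subseteq \widetilde L$ to a derivation of $\widetilde L$ gives the same value at $\omega_V$ as extending $\theta$ to a derivation of $L$.

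I would then apply the five-lemma to the associated long exact sequences. The map $\iota_*$ is an isomorphism in every degree since $p_L \colon \widetilde L \to L$ is a quasi-isomorphism. The short exact sequence $0 \to [L, L] \to L \to V \to 0$ gives a long exact sequence showing that $j_*$ is an isomorphism at degree $k$ whenever $\Ho k (V) = \Ho {k+1} (V) = 0$. The key observation is that non-degeneracy of the symplectic form of degree $-m$, combined with the positivity of $V$, forces $V$ to be concentrated in degrees $1 \leq k \leq m-1$, so $\Ho k (V) = 0$ for $k \geq m$. In the five-lemma applied at $\Xi_*$ in degree $n$, the relevant instances of $j_*$ sit in degrees $n+m$ and $n+m+1$, which are both at least $m$ precisely when $n \geq 0$; this yields the claimed isomorphism. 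The main technical obstacle is the careful setup of the short exact sequence on the $\widetilde L$-side and the identification of $\bar\partial$ with evaluation at $\omega_V$ through the extended derivation; this relies on unpacking the formula $\tilde\delta \gamma_L = \omega_V - \beta_L$ and using that the value $\psi(\delta v)$ is determined by $\psi|_V$ via the derivation property.
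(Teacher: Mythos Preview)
Your approach via long exact sequences and the five-lemma is genuinely different from the paper's. The paper instead factors $\Xi$ through the pullback $\Der(p_L \rel \beta_L)$ of the span
\[
  \Der(\widetilde L \rel \beta_L) \xlongto{(p_L)_*} \Derrel{p_L}(\widetilde L, L \rel \beta_L) \xlongfrom{(p_L)^*} \Der(L \rel \omega_V)
\]
and cites \cite[Proof of Theorem~3.12]{BM} for the fact that both projections are isomorphisms on homology in the required range. Your route is more self-contained, but there is a genuine gap in the ``map of cofiber triangles'' step.

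The point is that you are comparing the $L$-side triangle to a \emph{rotation} of the $\widetilde L$-side one. Your connecting map $\bar\partial$ is $\tilde\varphi(\omega_V)$ only up to the sign $-(-1)^{|\varphi|}$, and you verify only the square matching $\ev_{\omega_V}$ with $\bar\partial$. The remaining square---matching the $L$-side connecting map $\partial$ with the inclusion of the shifted $\widetilde L$ into $\Der(\widetilde L\rel\beta_L)$---also commutes only up to a degree-dependent sign, and the two signs do \emph{not} agree: extending $\theta$ with $\theta(\omega_V)=x$ to $\tilde\theta$ by zero on $\beta_L,\gamma_L$ gives $d\tilde\theta = \Xi(d\theta) - (-1)^{|\theta|}\psi_x$, so that square carries the sign $(-1)^{|\theta|}$, whereas the $\ev_{\omega_V}$--$\bar\partial$ square carries $(-1)^{|\theta|+1}$. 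Thus there is no single choice of $j$ making the full ladder commute. The repair is to note that for each fixed $n$ one can rescale the vertical maps by signs so that the relevant five-term window commutes strictly, and then apply the five-lemma there; signs do not affect whether a map is an isomorphism. This should be made explicit.

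A smaller point: the claim that $\iota_*$ is an isomorphism ``since $p_L$ is a quasi-isomorphism'' is underjustified, because the differential on the quotient $\Hom(V,\widetilde L)$ is not simply post-composition with $\tilde\delta$---it has an extra term coming from the derivation property. One clean fix is to identify this quotient with $\Derrel{\mathrm{inc}}(L,\widetilde L)$ and invoke the standard fact that post-composition with a quasi-isomorphism induces a quasi-isomorphism on derivation complexes over a quasi-free source.
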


\begin{proof}
  That the map is a map of dg Lie algebras is an elementary verification.
  Now consider the diagram of chain complexes
  \[
  \begin{tikzcd}
    \Der(L \rel \omega_V) \drar[dashed, start anchor = south east, end anchor = north west] \ar[bend left = 13]{drr}{\Xi} \ar[bend right]{ddr}[swap]{\id} & & \\[-5]
     & \Der \bigl( p_L \rel \beta_L \bigr) \dar[swap]{\pr[2]} \rar{\pr[1]} & \Der ( \widetilde L \rel \beta_L ) \dar{(p_L)_*} \\
     & \Der(L \rel \omega_V) \rar{p_L^*} & \Derrel{p_L}(\widetilde L, L \rel \beta_L)
  \end{tikzcd}
  \]
  where the lower right-hand square is a pullback by definition.
  The outer square commutes as well, and hence we obtain a dashed map of chain complexes as indicated that makes both triangles commute.
  Berglund--Madsen \cite[Proof of Theorem~3.12]{BM} showed that $\pr[2]$ is a quasi-isomorphism and that $\pr[1]$ induces an isomorphism on homology in positive degrees.
  Carefully going through their argument, one sees that it in fact shows that $\pr[1]$ induces an isomorphism on homology in non-negative degrees, which finishes the proof.
\end{proof}

We are now ready to state the first main result of this subsection.
Combined with \cref{cor:Baut_eq} it yields a model for $\B \aut[\bdry](M)$ in terms of $\Der(\freelie V \rel \omega_V)$ when the boundary of $M$ is a sphere.

\begin{definition}
  Let $m \ge 2$, $V \in \Sp{m}$ concentrated in positive degrees, and $\delta$ a differential on $\freelie V$ such that $L \defeq (\freelie V, \delta)$ is a minimal quasi-free dg Lie algebra with $\delta(\omega_V) = 0$.
  Then we write $\Deru(L \rel \omega_V) \subseteq \Der(L \rel \omega_V)$ for the preimage of $\Deru(\widetilde L \rel \beta_L)$ under $\Xi$.
\end{definition}

\begin{theorem} \label{thm:manifolds}
  Let $M$ be an oriented simply connected compact topological manifold of dimension $n \ge 3$ such that $\bdry M \iso \Sphere {n-1}$, and let $L_M \defeq ( \freelie V_M, \delta_M )$ be a model for $M$ as in \cref{prop:Stasheff}.
  Write $\arithEaut[\bdry](M)$ for the image of $\Eaut[\bdry](M)$ in $\aredEaut[\beta_M](\widetilde L_M)(\QQ)$ and choose a section $\bar s$ of the composite map of algebraic groups
  \[ \aAut[\omega_M](L_M)  \xlongto{\aext[M]}  \aAut[\beta_M](\widetilde L_M)  \longto  \aredEaut[\beta_M](\widetilde L_M) \]
  (this exists by \cref{lemma:aAut_omega}).
  Then there is a $\arithEaut[\bdry](M)$-equivariant quasi-isomorphism of nilpotent dg Lie algebras
  \[ \Deru(\widetilde L_M \rel \beta_M)  \eq  \Deru(L_M \rel \omega_M) \]
  where $\arithEaut[\bdry](M)$ acts on both sides by conjugation: on the left via $\aext[M] \after \bar s$ and on the right via $\bar s$.
\end{theorem}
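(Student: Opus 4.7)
The plan is to exhibit an explicit equivariant quasi-isomorphism given by the map $\Xi$ of \cref{lemma:Der_omega}. By the definition of $\Deru(L_M \rel \omega_M)$ as the preimage of $\Deru(\widetilde L_M \rel \beta_M)$ under $\Xi$, the map $\Xi$ restricts to a map of dg Lie algebras
\[ \Xi \colon \Deru(L_M \rel \omega_M) \longto \Deru(\widetilde L_M \rel \beta_M) \]
and this is the candidate quasi-isomorphism. Nilpotence of both sides follows because each is a dg Lie subalgebra of $\Der[\ol U](\blank)$ for the appropriate unipotent radical $\ol U$, which is nilpotent by (the proof of) \cref{prop:exp_eq}.

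To show that $\Xi$ is a quasi-isomorphism I would argue degree by degree. In positive degrees both $\Deru(L_M \rel \omega_M)$ and $\Deru(\widetilde L_M \rel \beta_M)$ coincide with $\trunc 0 \Der(\blank)$ in those degrees (the subspace condition only affects degree $0$), so the restriction of $\Xi$ is just the restriction of the map of \cref{lemma:Der_omega}, which induces an isomorphism on homology in degrees $\ge 0$. In degree $0$, the defining pullback square of $\Deru$ shows that $\Ho 0(\Deru(\widetilde L_M \rel \beta_M)) \iso \Lie(\ol U)$ for $\ol U$ the unipotent radical of $\aEaut[\beta_M](\widetilde L_M)$; the same isomorphism, combined with the fact that $\Xi$ is a $\Ho 0$-isomorphism on the ambient derivation complexes, shows that $\Ho 0(\Deru(L_M \rel \omega_M))$ is identified with the same Lie algebra via $\Xi$. (One may need to briefly check that a degree-$0$ cycle in $\Der(L_M \rel \omega_M)$ whose $\Xi$-image is a boundary in $\Der(\widetilde L_M \rel \beta_M)$ is itself a boundary; this follows from $\Xi$ being a quasi-isomorphism in non-negative degrees.)

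For equivariance, the action on the right-hand side is conjugation via $\aext[M] \after \bar s$ and on the left-hand side via $\bar s$. Given $\phi \in \Aut[\omega_M](L_M)$ and $\theta \in \Der(L_M \rel \omega_M)$, both derivations $\Xi(\phi \after \theta \after \inv \phi)$ and $\ext[M](\phi) \after \Xi(\theta) \after \inv{\ext[M](\phi)}$ agree on the generators: on $V_M$ both equal $\phi \after \theta \after \inv \phi$ (since $\ext[M](\phi)$ restricts to $\phi$ on $L_M$ and $\Xi(\theta)$ restricts to $\theta$ on $L_M$), and on $\beta_L$ and $\gamma_L$ both vanish (since $\ext[M](\phi)$ fixes these generators and $\Xi(\theta)$ annihilates them). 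Hence the two derivations coincide, which is the claimed equivariance at the level of $\Der$; it then restricts to $\Deru$ because conjugation by an element of $\aAut[\beta_M](\widetilde L_M)$ preserves the normal subgroup $\ol U$ and hence $\Lie(\ol U)$. The main subtlety to verify carefully is the degree-$0$ part of the quasi-isomorphism; everything else is a direct consequence of \cref{lemma:Der_omega} together with the definition of $\Deru(L_M \rel \omega_M)$ as a pullback.
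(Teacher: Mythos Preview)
Your proposal is correct and follows essentially the same approach as the paper: exhibit $\Xi$ from \cref{lemma:Der_omega} as the explicit quasi-isomorphism, observe that it restricts to the $\Deru$ subalgebras by the very definition of $\Deru(L_M \rel \omega_M)$ as the preimage, and check equivariance by evaluating on generators. The paper is terser---it simply asserts that the restriction is a quasi-isomorphism ``by construction'' and that equivariance holds ``by inspection''---whereas you spell out the degree-$0$ analysis and the generator check; but the underlying argument is the same. One small wording issue: $\Deru(L_M \rel \omega_M)$ is not literally of the form $\Der[\ol U](\blank)$ for a unipotent subgroup on the $L_M$ side (the inclusion $\freelie(\omega_M)\to L_M$ is not quasi-free, so there is no $\aEaut$ available there); its nilpotence follows instead because $\Xi$ is injective and embeds it into the nilpotent dg Lie algebra $\Deru(\widetilde L_M \rel \beta_M)$.
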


\begin{proof}
  First note that by \cref{prop:Stasheff} the map $\freelie(\beta_M) \to L_M$ that sends $\beta_M$ to $\omega_M$ indeed models the inclusion $\inc[\bdry M] \colon \bdry M \to M$.
  Using the construction of \cref{def:tilde_L}, we factor it as a quasi-free minimal map $i_M \colon \freelie(\beta_M) \to \widetilde L_M$ followed by a surjective quasi-isomorphism $p_M \colon \widetilde L_M \to L_M$.
  Lifting the weak equivalence $\rat{\Sing(M)} \to \nerve{L_M}$ along $\nerve{p_M}$ relative to $\rat{\Sing(\bdry M)}$, we see that $i_M$ models $\inc[\bdry M]$ as well.
  
  By construction of $\Deru(L_M \rel \omega_M)$, the map of \cref{lemma:Der_omega} restricts to a quasi-isomorphism
  \[ \Xi  \colon  \Deru(L_M \rel \omega_M)  \longto  \Deru(\widetilde L_M \rel \beta_M) \]
  of dg Lie algebras.
  It is equivariant with respect to the map $\ext[M] \colon \Aut[\omega_M](L_M)  \to  \Aut[\beta_M](\widetilde L_M)$ by inspection.
\end{proof}

\begin{remark} \label{rem:trivial_diff}
  When $M$ is formal and the multiplication of $\rCoho * (M; \QQ)$ is trivial, then any differential $\delta_M$ as in \cref{prop:Stasheff} is trivial (see \cite[Proof of Corollary~3.13]{BM}).
  In this case the statement of \cref{thm:manifolds} can be simplified further.
  First note that the following composite is an isomorphism of algebraic groups
  \[ \lambda \colon \aSP[m] (V)  \xlongto{\freelie}  \aAut[\omega_V](L)  \xlongto{\aext[L]}  \aAut[\beta_L](\widetilde L)  \xlongto{\Koppa}  \ahoEaut[\beta_L](\widetilde L) \]
  where $\Koppa$ and $\ahoEaut$ are as in \cref{lemma:indecomposables_representation} and \cref{def:ahoEaut}, respectively, and $\aSP[m](V)$ denotes the algebraic group of automorphisms of $V$ that preserve its symplectic form.
  (To see this, note that $\lambda$ is an embedding, that by \cref{lemma:aAut_omega} every automorphism in $\aAut[\beta_L](\widetilde L)$ is homotopic to an automorphism $\psi$ that also fixes $\omega_V$ and $\gamma_L$, that $\omega_V \in V \tensor V$ is dual to the symplectic form of $V$ (cf.\ \cite[p.~91]{BM}), and that hence the automorphism of $V$ induced by $\psi$ preserves the symplectic form.)
  Since $\aSP[m](V)$ is reductive, this implies (using \cref{lemma:indecomposables_representation,lemma:unipotent_radical}) that $\ahoEaut[\beta_L](\widetilde L)$ is the maximal reductive quotient of $\aAut[\beta_L](\widetilde L)$.
  Hence (using \cref{lemma:indec_is_ho}) the group $\arithEaut[\bdry](M)$ is the image of $\Eaut[\bdry](M)$ in the automorphisms of $\rHo * (M; \QQ)$, and one can let it act on $L_M = \freelie V_M$ through its canonical action on $V_M = \shift[-1] \rHo * (M; \QQ)$.
  Furthermore, one can identify $\Deru(L_M \rel \omega_M)$ as the dg Lie subalgebra of $\Der(L_M \rel \omega_M)$ that in degree $0$ consists of those derivations $\theta$ such that ${\pr[V_M]} \after \theta \after \inc[V_M] = 0$, where $\pr[V_M] \colon \freelie V_M \to \freelie[1] V_M = V_M$ is the projection.
\end{remark}

We now state the second main result of this subsection.
Combined with \cref{thm:block_diff} it yields a rational model for $\B \BlDiff[\bdry](M)$ in terms of $\Der(\freelie V \rel \omega_V)$.
If $M$ is formal and the multiplication of $\rCoho * (M; \QQ)$ is trivial, then there is again a further simplification analogous to \cref{rem:trivial_diff}.

\begin{definition}
  Let $M$ be an oriented simply connected compact smooth manifold of dimension $n \ge 6$ such that $\bdry M$ is homeomorphic to $\Sphere {n-1}$, and let $L_M$ be a model for $M$ as in \cref{prop:Stasheff}.
  We write $\pont \colon \widetilde L_M \to \hg * (\SO) \tensor \QQ$ for the map of dg Lie algebras that is given by the Pontryagin class
  \[ \pont[i] \colon (V_M)_{4i - 1} = \Ho {4i} (M; \QQ) \longto \QQ \iso \hg {4i - 1}(\SO) \tensor \QQ \]
  on $(V_M)_{4i - 1}$ and that vanishes otherwise (in particular on $\beta_M$ and $\gamma_M$).
  We also write $\Deru[\pont](L_M \rel \omega_M) \subseteq \Der(L_M \rel \omega_M)$ for the preimage of $\Deru[\pont](\widetilde L_M \rel \beta_M)$ under $\Xi$.
\end{definition}

\begin{theorem} \label{thm:manifolds_block}
  Let $M$ be an oriented simply connected compact smooth manifold of dimension $n \ge 6$ such that $\bdry M$ is homeomorphic to $\Sphere {n-1}$, and let $L_M \defeq ( \freelie V_M, \delta_M )$ be a model for $M$ as in \cref{prop:Stasheff}.
  Write $\redEBlaut[\bdry](M)$ for the image of $\hg{0} (\BlDiff[\bdry](M))$ in $\aredEaut[\beta_M](\pont)(\QQ)$, and choose a section $\bar s$ of the composite map of algebraic groups
  \[ \aAut[\omega_M](L_M)_{\pont}  \xlongto{\aext[M]}  \aAut[\beta_M](\widetilde L_M)_{\pont}  \longto  \aredEaut[\beta_M](\pont) \]
  (this exists by \cref{lemma:aAut_omega}).
  Then the dg Lie algebra $\tilde{g}_{\freelie(\beta_M)}^0(\rho)$ of \cref{thm:block_diff} is $\redEBlaut[\bdry](M)$-equivariantly quasi-isomorphism to the nilpotent dg Lie algebra
  \[ \tilde{\lie g}_{\bdry}(M)  \defeq  \trunc 0 { \Hom \bigl( \shift V_M, \hg * (\SO) \tensor \QQ \bigr) } \rsemidir[{\pont}_*] \Deru[\pont](L_M \rel \omega_M) \]
  where the outer action is given by $(f \act \theta)(\shift v) \defeq (-1)^{\deg \theta} f( \shift ({\pr} \after \theta)(v) )$ and ${\pont}_*(\theta) \defeq \pont \act \theta$, where $\pr \colon L_M \to \freelie[1] V_M = V_M$ is the projection.
  Here $\redEBlaut[\bdry](M)$ acts on $\tilde{g}_{\freelie (\beta_M)}^0(\rho)$ via $\aext[M] \after \bar s$ and on $\tilde{\lie g}_{\bdry}(M)$ via $\bar s$.
\end{theorem}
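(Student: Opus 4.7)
The plan is to apply \cref{thm:block_diff} using $\widetilde{L}_M$ as a dg Lie model for $M$, together with the minimal quasi-free map $\freelie(\beta_M) \to \widetilde{L}_M$ modeling the inclusion $\bdry M \to M$ and the zero dg Lie algebra modeling $\bdry_0 M = \bdry M \setminus \ast$ (which is contractible since $\bdry M \iso \Sphere{n-1}$). Since $\pont$ vanishes on $\freelie(\beta_M)$ by construction, \cref{thm:block_diff} produces the dg Lie algebra $\tilde{g}_{\freelie(\beta_M)}^0(\pont) = \trunc 0 { \Hom ( \shift \indec(\widetilde{L}_M), \hg {*+1}(\SO) \tensor \QQ ) } \rsemidir[\tilde \pont_*] \Deru[\pont](\widetilde{L}_M \rel \freelie(\beta_M))$ together with its $\redEBlaut[\bdry](M)$-action via the section $\aext[M] \after \bar s$. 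I will then construct an explicit $\redEBlaut[\bdry](M)$-equivariant quasi-isomorphism from $\tilde{\lie g}_{\bdry}(M)$ to this, assembled from compatible quasi-isomorphisms on each factor of the semi-direct product.

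For the derivation factor, \cref{lemma:Der_omega} provides the morphism $\Xi \colon \Der(L_M \rel \omega_M) \to \Der(\widetilde{L}_M \rel \beta_M)$ extending a derivation by zero on $\beta_M, \gamma_M$, and inducing an isomorphism on homology in non-negative degrees; the very definition of $\Deru[\pont](L_M \rel \omega_M)$ as the preimage of $\Deru[\pont](\widetilde L_M \rel \beta_M)$ under $\Xi$ guarantees that it restricts to a quasi-isomorphism on the unipotent parts. For the $\Hom$ factor, a direct computation gives $\indec(\widetilde{L}_M) = V_M \dirsum \linspan{\beta_M, \gamma_M}$ as a graded vector space, with the only nontrivial differential being $\gamma_M \mapsto -\beta_M$ (the element $\omega_{V_M}$ is a sum of brackets and so vanishes modulo decomposables). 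In particular the projection $\pr \colon \indec(\widetilde{L}_M) \to V_M$ is a quasi-isomorphism, and precomposition provides a quasi-isomorphism
\[ \trunc 0 { \Hom \bigl( \shift V_M, \hg{*+1}(\SO) \tensor \QQ \bigr) } \longto \trunc 0 { \Hom \bigl( \shift \indec(\widetilde{L}_M), \hg{*+1}(\SO) \tensor \QQ \bigr) }. \]

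To verify that these two quasi-isomorphisms assemble into a map of dg Lie algebras respecting the semi-direct product structure, one checks that the two outer action formulas match up under the maps above. The key point is that, for $v \in V_M$ and $\theta \in \Deru[\pont](L_M \rel \omega_M)$, the class of $\Xi(\theta)(v) = \theta(v)$ in $\indec(\widetilde{L}_M)$ is sent by $\pr$ exactly to the linear part of $\theta(v)$ in $V_M$; combined with the vanishing of $\Xi(\theta)$ on $\beta_M, \gamma_M$, this matches the formula $(f \act \theta)(\shift v) = (-1)^{\deg \theta} f(\shift (\pr \after \theta)(v))$ defining the outer action of $\tilde{\lie g}_{\bdry}(M)$ with the corresponding formula in $\tilde{g}_{\freelie(\beta_M)}^0(\pont)$, and the analogous comparison for the twists $\pont_*$ versus $\tilde \pont_*$ holds because $\pont$ vanishes on $\beta_M, \gamma_M$. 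For the $\redEBlaut[\bdry](M)$-equivariance, one checks that $\Xi$ intertwines the conjugation actions of $\Aut[\omega_M](L_M)_\pont$ and $\Aut[\beta_M](\widetilde{L}_M)_\pont$ through $\ext[M]$ (since extending a derivation by zero commutes with conjugating by an automorphism which is the identity on $\beta_M, \gamma_M$), and analogously that $\pr$ is equivariant, since $\aext[M](\phi)$ preserves the decomposition $\indec(\widetilde{L}_M) = V_M \dirsum \linspan{\beta_M, \gamma_M}$ and acts as $\phi$ itself on $V_M$.

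The main obstacle is ensuring that $\Xi$ and the outer-action comparison behave correctly in degree zero, where the unipotent-radical refinement matters. Here one must invoke \cref{lemma:aAut_omega} to see that $\aredEaut[\beta_M](\pont)$ is simultaneously the maximal reductive quotient of $\aAut[\omega_M](L_M)_\pont$ (the composite $\aAut[\omega_M](L_M)_\pont \xrightarrow{\aext[M]} \aAut[\beta_M](\widetilde{L}_M)_\pont \to \aredEaut[\beta_M](\pont)$ is surjective with unipotent kernel), so that the sections $\bar s$ and $\aext[M] \after \bar s$ are truly compatible and the two degree-zero unipotent pieces of the derivation dg Lie algebras match up under $\Ho_0(\Xi)$.
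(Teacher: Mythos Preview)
Your proposal is correct and follows essentially the same approach as the paper's proof: both use the quasi-isomorphism $\Xi$ of \cref{lemma:Der_omega} on the derivation factor (relying on the definition of $\Deru[\pont](L_M \rel \omega_M)$ as the preimage under $\Xi$), the projection $\indec(\widetilde L_M) \to V_M$ on the $\Hom$ factor, and assemble these into a map of twisted semi-direct products, checking equivariance with respect to $\ext[M]$. The paper packages the last step as a commutative diagram with exact rows, and dispatches the equivariance ``by inspection'', whereas you spell out the outer-action compatibility and the section-compatibility via \cref{lemma:aAut_omega} more explicitly; these are the same verifications.
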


\begin{proof}
  First note that, proceeding as in the proof of \cref{thm:manifolds}, the two maps of spaces $\Disk{n-1} \to \bdry M \to M$ are indeed modeled by the quasi-free maps of dg Lie algebras $0 \to \freelie(\beta_M) \to \widetilde L_M$.
  Furthermore, as in the proof of \cref{thm:block_diff}, the classifying map $\sttang{M} \colon M \to \B \SO$ of the oriented stable tangent bundle of $M$ is modeled up to homotopy by some map $\widetilde L_M \to \hg * (\SO) \tensor \QQ$, which we can choose to be $\pont$ by its construction.
  
  By construction of $\Deru[\pont](L_M \rel \omega_M)$, the map of \cref{lemma:Der_omega} restricts to a quasi-isomorphism
  \[ \Xi  \colon  \Deru[\pont](L_M \rel \omega_M)  \longto  \Deru[\pont](\widetilde L_M \rel \beta_M) \]
  of dg Lie algebras.
  Furthermore, by \cref{lemma:quasi-free_indec}, the inclusion $V_M \dirsum \linspan{\beta_L, \gamma_L} \to \indec(\widetilde L_M)$ is an isomorphism of chain complexes when the source is equipped with the differential such that $d(\gamma_L) = \beta_L$ and that vanishes otherwise; hence the projection defines a quasi-isomorphism $q \colon \indec(\widetilde L_M) \to V_M$.
  This assembles into a commutative diagram of dg Lie algebras with exact rows
  \[
  \begin{tikzcd}
    0 \rar & \trunc 0 { \Hom \bigl( \shift V_M, \hg * (\SO) \tensor \QQ \bigr) } \rar \dar[swap]{q^*} & \lie g \rar \dar & \Deru[\pont](L_M \rel \omega_M) \rar \dar{\Xi} & 0 \\
    0 \rar & \trunc 0 { \Hom \bigl( \shift \indec(\widetilde L_M), \hg * (\SO) \tensor \QQ \bigr) } \rar & \tilde {\lie g} \rar & \Deru[\pont](\widetilde L_M \rel \beta_M) \rar & 0
  \end{tikzcd}
  \]
  such that both of the outer vertical maps are quasi-isomorphisms.
  Hence the map $\lie g \to \tilde {\lie g}$ is a quasi-isomorphism as well.
  It is $\Aut[\omega_M](L_M)_\rho$-equivariant by inspection.
\end{proof}

\subsection{Compatibility with boundary connected sums}

In this subsection, we use the results of \cref{sec:gluing,sec:forget} to prove that the models of \cref{sec:manifolds} are compatible with taking boundary connected sums (note that the boundary of a boundary connected sum of two manifolds with boundary a sphere is again a sphere).

\begin{theorem} \label{thm:manifolds_gluing}
  Let $M$ and $N$ be two $n$-dimensional manifolds as in \cref{thm:manifolds} and let $L_M \defeq ( \freelie V_M, \delta_M )$ and $L_N \defeq ( \freelie V_N, \delta_N )$ be models as in \cref{prop:Stasheff}.
  Denote by $M \bdryconnsum N$ the boundary connected sum of $M$ and $N$, and set $L_{M \bdryconnsum N} \defeq L_M \cop L_N$.
  Assume that the canonical map of algebraic groups
  \[ \ol \gamma \colon \aAut[\omega_M](L_M) \times \aAut[\omega_N](L_N)  \longto  \aAut[\omega_{M \bdryconnsum N}](L_{M \bdryconnsum N}) \]
  induces a map $\ol \nu$ on maximal reductive quotients (this is the case when the algebraic representations of $\aAut[\omega_M](L_M)$ and $\aAut[\omega_N](L_N)$ on $V_M$ and $V_N$ are semi-simple; for example in the situation of \cref{rem:trivial_diff}).
  Lastly choose sections $\bar s_M$, $\bar s_N$, and $\bar s_{M \bdryconnsum N}$ as in \cref{thm:manifolds} such that $\bar s_{M \bdryconnsum N} \after \ol \nu = \ol \gamma \after (\bar s_M \times \bar s_N)$ (this exists by \cref{lemma:compatible_Levi}).
  Then there is a commutative diagram in the homotopy category of simplicial sets
  \[
  \begin{tikzcd}
    \B \aut[\bdry](M) \times \B \aut[\bdry](N) \rar \dar[squiggly][swap]{\req} & \B \aut[\bdry](M \bdryconnsum N) \dar[squiggly]{\req} \\
    \vertbin { \hcoinv { \MCb \bigl( \Deru(L_M \rel \omega_M) \bigr) } { \arithEaut[\bdry](M) } } \times { \hcoinv { \MCb \bigl( \Deru(L_N \rel \omega_N) \bigr) } { \arithEaut[\bdry](N) } } \rar{g_*} & \hcoinv { \MCb \bigl( \Deru(L_{M \bdryconnsum N} \rel \omega_{M \bdryconnsum N}) \bigr) } { \arithEaut[\bdry](M \bdryconnsum N) }
  \end{tikzcd}
  \]
  where the vertical maps are obtained by combining \cref{cor:Baut_eq,thm:manifolds}, and
  \[ g \colon \Deru(L_M \rel \omega_M) \times \Deru(L_N \rel \omega_N)  \longto  \Deru(L_{M \bdryconnsum N} \rel \omega_{M \bdryconnsum N}) \]
  is the map of dg Lie algebras given by $(\phi, \psi) \mapsto \tilde \phi + \tilde \psi$, where $\tilde \phi$ is the unique derivation of $L_{M \bdryconnsum N}$ that extends $\phi$ and vanishes on $L_N$, and analogously for $\tilde \psi$.
  The group $\arithEaut[\bdry](M)$ acts on $\Deru(L_M \rel \omega_M)$ via $\bar s_M$, and analogously for $\arithEaut[\bdry](N)$ and $\arithEaut[\bdry](M \bdryconnsum N)$.
\end{theorem}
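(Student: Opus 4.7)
The proof combines the gluing result \cref{cor:Baut_eq_natural} (applied in the $\widetilde L$-models) with the transfer to $L$-models provided by the quasi-isomorphism $\Xi$ of \cref{lemma:Der_omega}, which underlies \cref{thm:manifolds}.

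First I would model the sequences $\pt \to \bdry M \to M$ and $\pt \to \bdry N \to N$ by the minimal quasi-free sequences $0 \to \freelie(\beta_M) \to \widetilde L_M$ and $0 \to \freelie(\beta_N) \to \widetilde L_N$ from \cref{thm:manifolds}. The gluing disk being contractible, $M \bdryconnsum N$ is (up to rational equivalence) the homotopy pushout $M \cup_\pt N$, which the dg Lie pushout $0 \to \freelie(\beta_M, \beta_N) \to \widetilde L_M \cop \widetilde L_N$ models. This pushout remains minimal quasi-free by \cref{lemma:pushout_indecomposables}, so \cref{cor:Baut_eq_natural} applies. To verify \cref{ass:gluing}, I use the explicit description $\indec[\freelie(\beta_M)](\widetilde L_M) \iso V_M \dirsum \langle \gamma_M \rangle$ (trivial differential, as computed in the proof of \cref{thm:manifolds}), on which the subgroup $\aAut[\omega_M](L_M)$ acts trivially on $\gamma_M$ (which is fixed by $\aext[L_M]$) and via its given representation on $V_M$. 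Combining our semi-simplicity hypothesis on $V_M$ with \cref{lemma:aAut_omega} then yields the required semi-simplicity at the $\widetilde L$-level, and \cref{lemma:pushout_indecomposables} propagates this to the pushout.

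Next I would translate from $\widetilde L$- to $L$-models using \cref{thm:manifolds} applied to $M$, $N$, and to $M \bdryconnsum N$ (the last with $L_{M \bdryconnsum N} = L_M \cop L_N$ and $\omega_{M \bdryconnsum N} = \omega_M + \omega_N$). The sections $\bar s_M$, $\bar s_N$, $\bar s_{M \bdryconnsum N}$ chosen as in the hypothesis, post-composed with the respective $\aext$-maps, furnish compatible sections at the $\widetilde L$-level in the sense of \cref{lemma:choose_compatibly_gluing}, so the equivariance is consistent. The crucial observation for commutativity is that $\Xi$, being given by extension by zero on $\beta$ and $\gamma$, intertwines the two notions of extension by zero: for $(\theta, \psi) \in \Deru(L_M \rel \omega_M) \times \Deru(L_N \rel \omega_N)$, the derivation $g(\theta, \psi) = \tilde\theta + \tilde\psi \in \Deru(L_M \cop L_N \rel \omega_M + \omega_N)$ corresponds under $\Xi$ to the pushout of $\Xi(\theta)$ and $\Xi(\psi)$ produced by \cref{cor:Baut_eq_natural}, via a short direct computation.

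\textbf{Main obstacle.} The chief technical point is the comparison between the two natural $\widetilde L$-models for $M \bdryconnsum N$: the pushout $\widetilde L_M \cop \widetilde L_N$ (with boundary $\freelie(\beta_M, \beta_N)$) and $\widetilde L_{M \bdryconnsum N}$ from \cref{def:tilde_L} applied to $L_M \cop L_N$ (with single boundary $\freelie(\beta_{M \bdryconnsum N})$). They are linked by the quasi-isomorphism $f \colon \widetilde L_{M \bdryconnsum N} \to \widetilde L_M \cop \widetilde L_N$ sending $\beta_{M \bdryconnsum N} \mapsto \beta_M + \beta_N$ and $\gamma_{M \bdryconnsum N} \mapsto \gamma_M + \gamma_N$ (checking it respects differentials). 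The forgetful \cref{cor:Baut_eq_forget} does not apply directly, because \cref{ass:forget} fails for arbitrary automorphisms of $\widetilde L_M \cop \widetilde L_N$. However, for derivations in the image of $\Xi$ the map $f$ is well-behaved: such derivations vanish on $\gamma_M$ and $\gamma_N$ individually, hence also on $\gamma_M + \gamma_N = f(\gamma_{M \bdryconnsum N})$, so they preserve the image of $f$ and pull back to derivations of $\widetilde L_{M \bdryconnsum N}$, producing the required quasi-isomorphism of $\Deru$ dg Lie algebras and completing the identification with $g_*$.
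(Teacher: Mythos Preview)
Your strategy is the same as the paper's, and your key computational observation---that $m \circ \Xi(g(\phi,\psi)) = g(\Xi(\phi),\Xi(\psi)) \circ m$, so $\Xi$-image derivations descend along the comparison map $m\colon \widetilde L_{M\bdryconnsum N}\to \widetilde L_M\cop\widetilde L_N$---is exactly the content of the paper's dashed arrow in its diagram \eqref{eq:Deru_comparison}. The gap is in how you package the forget step.

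You invoke \cref{cor:Baut_eq_natural} first, which already passes to homotopy orbits and produces a square whose right column lands in $\B\aut[\theta](M\bdryconnsum N)$ with $\theta=\bdry M\cup\bdry N$, not in $\B\aut[\bdry](M\bdryconnsum N)$. To complete the argument you then need the forgetful map $\B\aut[\theta]\to\B\aut[\bdry]$ to be compatible with your algebraic models, and you correctly note that \cref{cor:Baut_eq_forget} does not apply because \cref{ass:forget} fails. Your proposed fix---working with $\Xi$-image derivations---only establishes a map of dg Lie algebras; it does not by itself show that this map models the space-level forgetful map. The paper avoids this by \emph{not} passing to homotopy orbits early: it applies the more primitive \cref{thm:model_natural} for the gluing step and \cref{thm:model_forget} for the forget step (the latter needs no analogue of \cref{ass:forget} and supplies the zig-zag through $\Der[\ol U,\mathrm u](m\rel a)$), splices them together via the dashed map you identified, and only then ``proceeds as in the proof of \cref{cor:Baut_eq}'' to take homotopy orbits. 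A smaller point: your verification of \cref{ass:gluing} at the $\widetilde L$-level uses the sufficient semi-simplicity condition rather than the actual hypothesis of the theorem; the paper sidesteps this too by using \cref{thm:model_natural}, which does not require \cref{ass:gluing}.
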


\begin{proof}
  We write $\theta \defeq \bdry M \union \bdry N \subset M \bdryconnsum N$ and begin with some technical considerations.
  There is a commutative diagram of simplicial sets
  \begin{equation} \label{eq:Sing_bryconnsum}
    \begin{tikzcd}
      \Sing(\bdry M) \cop_{\Sing(\Disk {n-1})} \Sing(\bdry N) \rar{\eq}[swap]{a} \dar & \Sing(\theta) \dar \\
      \Sing(M) \cop_{\Sing(\Disk {n-1})} \Sing(N) \rar{\eq}[swap]{f} & \Sing(M \bdryconnsum N)
    \end{tikzcd}
  \end{equation}
  where the two horizontal maps are weak equivalences (see e.g.\ \cite[Proposition~13.5.5]{Hir}).
  Using \cref{lemma:projective_cofibration_lincat}, an elementary verification shows that the horizontal maps are a projective cofibration in $\sSet^{\lincat 1}$.
  Writing $A \to X$ for the left-hand column above, we obtain the following commutative diagram in the homotopy category of simplicial monoids
  \[
  \begin{tikzcd}
    \aut[\bdry](M) \times \aut[\bdry](N) \ar{rr}{\eq}[swap]{\sigma \times \sigma} \ar{ddd} \drar &[-20] &[-135] \aut[\Sing(\bdry M)] \bigl( \Sing(M) \bigr) \times \aut[\Sing(\bdry N)] \bigl( \Sing(N) \bigr) \dar \ar[squiggly, bend left, end anchor = north east, start anchor = -10]{dddl} \\
    & \aut[A](f) \rar{\eq} \dar & \aut[A](X) \\
    & \aut[A] \bigl( \Sing(M \bdryconnsum N) \bigr) & \\
    \aut[\theta](M \bdryconnsum N) \rar{\eq}[swap]{\sigma} \dar & \aut[\Sing(\theta)] \bigl( \Sing(M \bdryconnsum N) \bigr) \dar \uar[swap]{\eq} & \\
    \aut[\bdry](M \bdryconnsum N) \rar{\eq}[swap]{\sigma} & \aut[\Sing ( \bdry(M \bdryconnsum N) )] \bigl( \Sing(M \bdryconnsum N) \bigr) &
  \end{tikzcd}
  \]
  where $\sigma$ is the weak equivalence of \cref{lemma:Sing_map} and the right-hand zig-zag is the one of \cref{lemma:aut_pushout_zig-zag}.
  Using this diagram, we will identify its left-hand and right-hand sides for the rest of this proof.
  
  Since $M \bdryconnsum N \eq M \vee N$, \cref{thm:model_natural} provides a diagram relating the two maps
  \begin{gather*}
    \B \aut[\bdry](M) \times \B \aut[\bdry](N)  \longto  \B \aut[\theta](M \bdryconnsum N) \\
    \Deru(\widetilde L_M \rel \beta_M) \times \Deru(\widetilde L_N \rel \beta_N)  \xlongto{g}  \Der[\ol U] \bigl( \widetilde L_M \cop \widetilde L_N \rel \freelie(\beta_M, \beta_N) \bigr)
  \end{gather*}
  where $\ol U \subseteq \aAut[\freelie(\beta_M, \beta_N)](\widetilde L_M \cop \widetilde L_N)$ is the image of the product of the unipotent radicals of $\aAut[\beta_M](\widetilde L_M)$ and $\aAut[\beta_N](\widetilde L_N)$ (note that the image of $\aAut[\beta_M](\widetilde L_M) \times \aAut[\beta_N](\widetilde L_N)$ is contained in the normalizer of $\ol U$).
  Now we consider the diagram of dg Lie algebras
  \[
  \begin{tikzcd}
    \freelie(\beta_M, \beta_N) \rar & \widetilde L_M \cop \widetilde L_N \\
    \freelie(\beta_{M \bdryconnsum N}) \uar{a} \rar & \widetilde L_{M \bdryconnsum N} \uar[swap]{m}
  \end{tikzcd}
  \]
  where $m$ restricts to the identity on $L_M$ and $L_N$, it sends $\gamma_{M \bdryconnsum N}$ to $\gamma_M + \gamma_N$, and both $a$ and $m$ send $\beta_{M \bdryconnsum N}$ to $\beta_M + \beta_N$.
  Note that $a$ models the map $\bdry(M \bdryconnsum N) \to \theta$ (i.e.\ the pinch map $\Sphere{n-1} \to \Sphere{n-1} \vee \Sphere{n-1}$).
  Applying \cref{thm:model_forget} to this situation, we obtain a diagram that relates the forgetful map
  \[ \B \aut[\theta](M \bdryconnsum N)  \longto  \B \aut[\bdry](M \bdryconnsum N) \]
  to the right-hand zig-zag in the following diagram of dg Lie algebras
  \begin{equation} \label{eq:Deru_comparison}
    \begin{tikzcd}
      \Deru(\widetilde L_M \rel \beta_M) \times \Deru(\widetilde L_N \rel \beta_N) \rar{g} & \Der[\ol U] \bigl( \widetilde L_M \cop \widetilde L_N \rel \freelie(\beta_M, \beta_N) \bigr) \\
      \Deru(L_M \rel \omega_M) \times \Deru(L_N \rel \omega_N) \uar{\Xi \times \Xi}[swap]{\eq} \dar[swap]{g} \rar[dashed] & \Der[\ol U,\mathrm u](m \rel k) \uar \dar \\
      \Deru(L_{M \bdryconnsum N} \rel \omega_{M \bdryconnsum N}) \rar{\eq}[swap]{\Xi} & \Deru(\widetilde L_{M \bdryconnsum N} \rel \beta_{M \bdryconnsum N})
    \end{tikzcd}
  \end{equation}
  where a unique dashed map making both squares commute exists as indicated since $m \after \Xi( g(\phi, \psi) ) = g(\Xi(\phi), \Xi(\psi)) \after m$ for all $\phi$ and $\psi$.
  Assembling everything and proceeding as in the proof of \cref{cor:Baut_eq}, we obtain the desired diagram.
\end{proof}

\begin{corollary} \label{cor:manifolds_gluing_forms}
  In the situation of \cref{thm:manifolds_gluing}, let $P$, $Q$, and $K$ be representations in rational vector spaces of $\arithEaut[\bdry](M)$, $\arithEaut[\bdry](N)$, and $\arithEaut[\bdry](M \bdryconnsum N)$, respectively, and let $\mu \colon K \to P \tensor Q$ be a map that is equivariant with respect to the map $\ol \nu_\QQ \colon \arithEaut[\bdry](M) \times \arithEaut[\bdry](N) \to \arithEaut[\bdry](M \bdryconnsum N)$.
  Then the following diagram commutes in the homotopy category of cochain complexes
  \[
  \begin{tikzcd}
    &[-200] \Forms * \bigl( \B \aut[\bdry](M) \times \B \aut[\bdry](N); P \tensor Q \bigr) &[-200] \\
    {\Forms * \bigl( \B \aut[\bdry](M); P \bigr)} \tensor {\Forms * \bigl( \B \aut[\bdry](N); Q \bigr)} \urar[bend left, start anchor = north, end anchor = -178]{\eq} \dar[squiggly][swap]{\eq} & & \Forms * \bigl( \B \aut[\bdry](M \bdryconnsum N); K \bigr) \dar[squiggly]{\eq} \ular[bend right, start anchor = north, end anchor = -2] \\
    \vertbin {\Forms * \Bigl( \B \arithEaut[\bdry](M); \CEcochains * \bigl( \Deru(L_M \rel \omega_M); P \bigr) \Bigr)} \tensor {\Forms * \Bigl( \B \arithEaut[\bdry](N); \CEcochains * \bigl( \Deru(L_N \rel \omega_N); Q \bigr) \Bigr)} \drar[bend right, start anchor = south, end anchor = 171][swap, near start]{\eq} & & \Forms * \Bigl( \B \arithEaut[\bdry](M \bdryconnsum N); \CEcochains * \bigl( \Deru(L_{M \bdryconnsum N} \rel \omega_{M \bdryconnsum N}); K \bigr) \Bigr) \dlar[bend left, start anchor = south, end anchor = 8] \\
    & \Forms * \Bigl( \B \arithEaut[\bdry](M) \times \B \arithEaut[\bdry](N); \CEcochains * \bigl( \Deru(L_M \rel \omega_M) \times \Deru(L_N \rel \omega_N); P \tensor Q \bigr) \Bigr) &
  \end{tikzcd}
  \]
  where the indicated maps are quasi-isomorphisms and the vertical maps are obtained by combining \cref{cor:forms_eq,thm:manifolds}.
  Moreover, this diagram can be lifted to a commutative diagram in the homotopy category of lax symmetric monoidal functors from the category of tuples $(P, Q, K, \mu)$ to cochain complexes (with monoidal natural transformations as morphisms, and the pointwise weak equivalences).
\end{corollary}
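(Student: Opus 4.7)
The plan is to deduce this corollary from Theorem \ref{thm:manifolds_gluing} by applying $\Forms^*(-;-)$ and then translating the resulting diagram via the comparison results of \cref{sec:forms,sec:model}, in direct parallel to how \cref{cor:forms_eq_natural} is deduced from \cref{cor:Baut_eq_natural}. Since the vertical rational equivalences in \cref{thm:manifolds_gluing} induce isomorphisms on (co)homology with respect to any local system pulled back from the respective $\B\arithEaut[\bdry](\cdot)$ (by the compatible assertions in \cref{cor:Baut_eq}), applying $\Forms^*(-;M)$ to these maps produces quasi-isomorphisms by \cref{lemma:forms_cohomology}, and we obtain a commutative square involving $\Forms^*$ of the homotopy orbits.

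Next I would translate each corner of this square using \cref{lemma:forms_orbits} followed by \cref{lemma:MC_forms}: the former gives a natural zig-zag identifying $\Forms^*(\hcoinv{\MCb(\lie g)}{\Gamma}; M)$ with $\Forms^*(\B\Gamma; \Forms^*(\MCb(\lie g); M))$, and the latter replaces $\Forms^*(\MCb(\lie g); M)$ by $\CEcochains^*(\lie g; M)$. Both zig-zags consist of monoidal natural transformations of lax symmetric monoidal functors, so the composite identification inherits the required monoidality; in particular, pairing with the monoidality of \cref{thm:manifolds_gluing} (and \cref{cor:forms_eq}) yields the claimed refinement of the commutative diagram as one of lax symmetric monoidal functors in $(P, Q, K, \mu)$.

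It remains to argue that the two diagonal maps in the Künneth decomposition are quasi-isomorphisms. For the bottom map this amounts to showing that the canonical comparison
\[ \Forms^*(\B\Gamma_M; C_M) \tensor \Forms^*(\B\Gamma_N; C_N)  \longto  \Forms^*(\B\Gamma_M \times \B\Gamma_N; C_M \extensor C_N) \]
with $\Gamma_M = \arithEaut[\bdry](M)$, $C_M = \CEcochains^*(\Deru(L_M \rel \omega_M); P)$, and analogously for $N$, is a quasi-isomorphism. Since $\Gamma_M$ and $\Gamma_N$ are arithmetic by \cref{thm:manifolds} (via \cref{cor:Baut_eq}), their classifying spaces have the homotopy type of CW-complexes with finitely many cells in each dimension by \cite[Theorem~1.3]{BKK}. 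The finite type of $\Deru(L_M \rel \omega_M)$ in each degree (which holds because $L_M$ is finitely generated, so $\Der(L_M \rel \omega_M)_n$ is finite-dimensional by the proof of \cref{lemma:Der_algebraic}) allows one to apply \cref{lemma:CE_products} to identify $C_M \extensor C_N$ with the CE cochains of the product; reductivity of the relevant maximal reductive quotients combined with \cref{lemma:forms_split_coeff} shows that the cochain complexes $C_M$ and $C_N$ are split as cochain complexes of $\Gamma_M$- resp.\ $\Gamma_N$-representations (by the same argument as in the proof of \cref{cor:cohomology}). The Künneth theorem for cohomology with local coefficients (e.g.\ \cite[Theorem~1.7]{Gre06}) then yields the claim, and the upper diagonal map is a quasi-isomorphism by the naturality established in the second step. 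The main (minor) obstacle is purely bookkeeping: threading the lax symmetric monoidal structures through each replacement so that the final diagram lifts to one in the homotopy category of lax symmetric monoidal functors.
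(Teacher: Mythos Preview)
Your proposal is correct and follows essentially the same approach as the paper: the paper's proof is the single line ``This follows from \cref{thm:manifolds_gluing} as in the proof of \cref{cor:forms_eq_natural}'', and you have accurately unpacked exactly that argument (applying $\Forms^*$, invoking \cref{lemma:forms_orbits,lemma:MC_forms}, and handling the K\"unneth diagonals via arithmeticity, \cite[Theorem~1.3]{BKK}, \cref{lemma:forms_split_coeff,lemma:CE_products}, and \cite[Theorem~1.7]{Gre06}). The only small addition the paper makes explicit in the analogous proof of \cref{cor:forms_eq_natural} is the use of \cref{lemma:hquot_hcoinv_local_system} to track the local coefficient systems through the zig-zag, which you implicitly subsume under ``compatible assertions in \cref{cor:Baut_eq}''.
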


\begin{proof}
  This follows from \cref{thm:manifolds_gluing} as in the proof of \cref{cor:forms_eq_natural}.
\end{proof}

\begin{corollary} \label{cor:manifolds_gluing_coho}
  In the situation of \cref{thm:manifolds_gluing}, let $P$, $Q$, and $K$ be representations in rational vector spaces of $\arithEaut[\bdry](M)$, $\arithEaut[\bdry](N)$, and $\arithEaut[\bdry](M \bdryconnsum N)$, respectively, and let $\mu \colon K \to P \tensor Q$ be a map that is equivariant with respect to the map $\ol \nu_\QQ \colon \arithEaut[\bdry](M) \times \arithEaut[\bdry](N) \to \arithEaut[\bdry](M \bdryconnsum N)$.
  Then the following diagram of graded vector spaces commutes
  \[
  \begin{tikzcd}
    &[-200] \Coho * \bigl( \B \aut[\bdry](M) \times \B \aut[\bdry](N) ; P \tensor Q \bigr) &[-201] \\
    \Coho * \bigl( \B \aut[\bdry](M); P \bigr) \tensor \Coho * \bigl( \B \aut[\bdry](N); Q \bigr) \urar[bend left, end anchor = 182]{\iso} \dar[swap]{\iso} & & \Coho * \bigl( \B \aut[\bdry](M \bdryconnsum N); K \bigr) \dar{\iso} \ular[bend right, end anchor = -2] \\
    \vertbin {\Coho * \Bigl( \B \arithEaut[\bdry](M); \CEcoho * \bigl( \Deru(L_M \rel \omega_M); P \bigr) \Bigr)} \tensor {\Coho * \Bigl( \B \arithEaut[\bdry](N); \CEcoho * \bigl( \Deru(L_N \rel \omega_N); Q \bigr) \Bigr)} \drar[bend right, start anchor = -90, end anchor = 171][swap, near start]{\iso} & & \Coho * \Bigl( \B \arithEaut[\bdry](M \bdryconnsum N); \CEcoho * \bigl( \Deru(L_{M \bdryconnsum N} \rel \omega_{M \bdryconnsum N}); K \bigr) \Bigr) \dlar[bend left, start anchor = -90, end anchor = 8]  \\
    & \Coho * \Bigl( \B \arithEaut[\bdry](M) \times \B \arithEaut[\bdry](N); \CEcoho * \bigl( \Deru(L_M \rel \omega_M) \times \Deru(L_N \rel \omega_N); P \tensor Q \bigr) \Bigr) &
  \end{tikzcd}
  \]
  where the indicated maps are isomorphisms and the vertical maps are obtained by combining \cref{cor:cohomology,thm:manifolds}.
  Moreover, this diagram consists of monoidal natural transformations between lax symmetric monoidal functors from the category of tuples $(P, Q, K, \mu)$ to graded vector spaces.
\end{corollary}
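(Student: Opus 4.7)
The plan is to deduce this corollary from \cref{cor:manifolds_gluing_forms} in essentially the same way that \cref{cor:cohomology_natural} is deduced from \cref{cor:forms_eq_natural}. Concretely, I would apply the cohomology functor $\Coho *$ to the diagram of quasi-isomorphisms in \cref{cor:manifolds_gluing_forms} and then reinterpret each resulting term using known identifications.

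For the topmost row, \cref{lemma:forms_cohomology} provides a monoidal natural isomorphism $\Coho * (\Forms * (X; M)) \iso \Coho *(X; M)$, which takes care of the top vertex and the two upper diagonal terms in the hexagonal diagram. For the bottom row, I would invoke \cref{lemma:forms_split_coeff} to identify $\Coho *(\Forms * (\B \Gamma; \CEcochains *(\lie g; M)))$ with $\Coho *(\B \Gamma; \CEcoho *(\lie g; M))$; this is the step that requires the splitness hypothesis. To verify splitness, I would argue as in the proof of \cref{cor:cohomology}: the dg Lie algebra $\Deru(L_M \rel \omega_M)$ is, via the section $\bar s_M$ and \cref{lemma:Der_algebraic}, a degree-wise finite-dimensional algebraic representation of a reductive algebraic group (namely the image of $\arithEaut[\bdry](M)$ under the chosen Levi splitting), and hence $\CEchains *(\Deru(L_M \rel \omega_M))$ is a degree-wise semi-simple representation of the same reductive group. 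Applying $\Hom(\blank, P)$ preserves this semi-simplicity, so \cite[Lemmas~B.1 and B.2]{BM} yield the required splitting, and analogously for $N$ and $M \bdryconnsum N$ (the products $\Deru(L_M \rel \omega_M) \times \Deru(L_N \rel \omega_N)$ inherit semi-simplicity since products of reductive groups are reductive).

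The two bottom diagonal isomorphisms are instances of the Künneth theorem for cohomology with local coefficients (see e.g.\ \cite[Theorem~1.7]{Gre06}); here we use that arithmetic groups have classifying spaces of the homotopy type of CW-complexes with finitely many cells in each dimension by \cite[Theorem~1.3]{BKK}, together with \cref{lemma:CE_products} to identify $\CEcoho *(\lie g \times \lie g') \iso \CEcoho *(\lie g) \tensor \CEcoho *(\lie g')$ in the finite-type, non-negatively graded setting. The upper-left diagonal isomorphism then follows because the two left-hand vertical maps are isomorphisms and the lower one is, so the top one must be as well.

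Finally, the monoidality of the entire diagram, i.e.\ that it consists of monoidal natural transformations between lax symmetric monoidal functors of the tuple $(P, Q, K, \mu)$, is inherited termwise from the monoidality already established in \cref{cor:manifolds_gluing_forms}, combined with the monoidality of $\Coho *$, the monoidal naturality assertions in \cref{lemma:forms_cohomology,lemma:forms_split_coeff,lemma:CE_products}. I do not expect any genuine obstacle here: the main point is organizational care in tracking the lax symmetric monoidal structures through each identification, and in checking that the splitting provided by reductivity is natural in $P$, $Q$, and $K$ (which it is, since it is induced by an averaging construction over the reductive group action that does not interact with the coefficient vector space).
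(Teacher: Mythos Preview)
Your proposal is correct and follows essentially the same route as the paper, which simply says the result follows from \cref{cor:manifolds_gluing_forms} as in the proof of \cref{cor:cohomology}. You have expanded the one-line argument into its constituent steps (applying $\Coho *$, using \cref{lemma:forms_cohomology} and \cref{lemma:forms_split_coeff}, and verifying splitness via reductivity as in \cref{cor:cohomology}), which is exactly what the paper intends.
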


\begin{proof}
  This follows from \cref{cor:manifolds_gluing_forms} as in the proof of \cref{cor:cohomology}.
\end{proof}

We now proceed to prove that the model for $\B \BlDiff[\bdry](M)$ of \cref{thm:manifolds_block} is compatible with boundary connected sums as well.

\begin{theorem} \label{thm:manifolds_block_gluing}
  Let $M$ and $N$ be two $n$-dimensional manifolds as in \cref{thm:manifolds_block} and let $L_M \defeq ( \freelie V_M, \delta_M )$ and $L_N \defeq ( \freelie V_N, \delta_N )$ be models as in \cref{prop:Stasheff}.
  Denote by $M \bdryconnsum N$ the boundary connected sum of $M$ and $N$, and set $L_{M \bdryconnsum N} \defeq L_M \cop L_N$.
  Assume that the canonical map of algebraic groups
  \[ \ol \gamma \colon \aAut[\omega_M](L_M)_{\pont} \times \aAut[\omega_N](L_N)_{\pont}  \longto  \aAut[\omega_{M \bdryconnsum N}](L_{M \bdryconnsum N})_{\pont} \]
  induces a map $\ol \nu$ on maximal reductive quotients.
  Lastly choose sections $\bar s_M$, $\bar s_N$, and $\bar s_{M \bdryconnsum N}$ as in \cref{thm:manifolds_block} such that $\bar s_{M \bdryconnsum N} \after \ol \nu = \ol \gamma \after (\bar s_M \times \bar s_N)$ (this exists by \cref{lemma:compatible_Levi}).
  Then there is a commutative diagram in the homotopy category of topological spaces
  \[
  \begin{tikzcd}
    \B \BlDiff[\bdry](M) \times \B \BlDiff[\bdry](N) \rar \dar[squiggly][swap]{\req} & \B \BlDiff[\bdry](M \bdryconnsum N) \dar[squiggly]{\req} \\
    \gr[\big] { \hcoinv { \MCb \bigl( \tilde{\lie g}_{\bdry}(M) \bigr) } { \BlGamma{\bdry}(M) } } \times \gr[\big] { \hcoinv { \MCb \bigl( \tilde{\lie g}_{\bdry}(N) \bigr) } { \BlGamma{\bdry}(N) } } \rar{\tilde g_*} & \gr[\big] { \hcoinv { \MCb \bigl( \tilde{\lie g}_{\bdry}(M \bdryconnsum N) \bigr) } { \BlGamma{\bdry}(M \bdryconnsum N) } }
  \end{tikzcd}
  \]
  where the vertical maps are obtained by combining \cref{thm:block_diff,thm:manifolds_block}, and
  \[ \tilde g \colon \tilde{\lie g}_{\bdry}(M) \times \tilde{\lie g}_{\bdry}(N)  \longto  \tilde{\lie g}_{\bdry}(M \bdryconnsum N) \]
  is the map given by \cref{lemma:pushout_indecomposables} on the $\Hom$-factors and by the map $g$ of \cref{thm:manifolds_gluing} on the $\Der$-factors.
  The group $\BlGamma{\bdry}(M)$ acts on $\tilde{\lie g}_{\bdry}(M)$ via $\bar s_M$, and analogously for $\BlGamma{\bdry}(M)$ and $\BlGamma{\bdry}(M \bdryconnsum N)$.
\end{theorem}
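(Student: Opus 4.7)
The plan is to adapt the proof of \cref{thm:manifolds_gluing} to incorporate the Pontryagin-class data, using \cref{prop:block_diff_gluing} as the bridge between $\B \BlDiff[\bdry]$ and the bundle self-equivalences studied in \cref{sec:model,sec:gluing,sec:forget}. First I would use \cref{prop:block_diff_gluing} to replace the top row of the target square by the horizontal maps $\B \bdlaut{\bdry}{\bdry_0}{\B\SO}(\sttang{M})_G \times \B \bdlaut{\bdry}{\bdry_0}{\B\SO}(\sttang{N})_G \to \B \bdlaut{\bdry}{\bdry_0}{\B\SO}(\sttang{M \bdryconnsum N})_G$, noting that $\sttang{M \bdryconnsum N}$ was chosen so that its restrictions recover $\sttang{M}$ and $\sttang{N}$; as in the proof of \cref{thm:manifolds_gluing} I will then pass through $\Sing(\blank)$ and resolve the weak equivalence $\Sing(M) \cop_{\Sing(\Disk{n-1})} \Sing(N) \to \Sing(M \bdryconnsum N)$ by a projective cofibration, reducing the problem to the combination of a pushout along $\theta = \bdry M \cup \bdry N$ followed by a forgetful map from $\theta$ to the sphere $\bdry(M \bdryconnsum N)$.

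Next I would feed the minimal Lie models of \cref{prop:Stasheff} into the bundle framework of \cref{sec:model}. Concretely, the diagram $0 \to \freelie(\beta_M) \to \widetilde{L}_M$ models $\Disk{n-1} \to \bdry M \to M$ (as in the proof of \cref{thm:block_diff}) and we can take the Pontryagin classes as a strict dg Lie model $\pont \colon \widetilde{L}_M \to \hg{*}(\SO) \tensor \QQ$ vanishing on $\beta_M, \gamma_M$, and analogously for $N$. Then \cref{thm:model_natural}, applied to the pushout $\widetilde{L}_M \cop \widetilde{L}_N$ with the classifying map $\pont_M + \pont_N$, yields the compatibility of the pushout map with the map of dg Lie algebras $g$ on the $\Der[\ol U]$-factors, while \cref{lemma:pushout_indecomposables} gives the analogous compatibility on the $\Hom$-factors. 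For the forgetful step I would introduce the quasi-isomorphism $m \colon \widetilde{L}_{M \bdryconnsum N} \to \widetilde{L}_M \cop \widetilde{L}_N$ that is the identity on $L_M$ and $L_N$, sends $\gamma_{M \bdryconnsum N}$ to $\gamma_M + \gamma_N$, and sends $\beta_{M \bdryconnsum N}$ to $\beta_M + \beta_N$, together with the map $a \colon \freelie(\beta_{M \bdryconnsum N}) \to \freelie(\beta_M, \beta_N)$; by construction $\pont \after m = \pont_M + \pont_N$, so that \cref{thm:model_forget} applies and yields the compatibility with the pinch-type forgetful map $\B \bdlaut{\theta}{\bdry_0}{\B\SO}(\sttang{M \bdryconnsum N}) \to \B \bdlaut{\bdry}{\bdry_0}{\B\SO}(\sttang{M \bdryconnsum N})$.

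To pass from the $\widetilde{L}$-models back to $L$-models I would invoke the quasi-isomorphism $\Xi$ of \cref{lemma:Der_omega,thm:manifolds_block}, repeating the diagram \eqref{eq:Deru_comparison} from the proof of \cref{thm:manifolds_gluing} but tracked through the Pontryagin-fixed subalgebras $\Deru[\pont]$. The commutativity of the relevant square of $\Der[\pont]$-algebras follows exactly as before: one checks $m \after \Xi(g(\phi,\psi)) = g(\Xi(\phi),\Xi(\psi)) \after m$ by direct computation, using that $m$ is the identity on $L_M, L_N \subseteq \widetilde{L}_{M \bdryconnsum N}$. Coupled with the analogous identification for the $\Hom$-factor via the quasi-isomorphism $\indec(\widetilde{L}) \to V$ (as in the proof of \cref{thm:manifolds_block}), this produces a $(\BlGamma{\bdry}(M) \times \BlGamma{\bdry}(N))$-equivariant quasi-isomorphism from the pushout of $\tilde{\lie g}_{\bdry}(M) \times \tilde{\lie g}_{\bdry}(N) \to \tilde{\lie g}_{\bdry}(M \bdryconnsum N)$ to the zig-zag appearing in the $\widetilde{L}$-version; taking $\hcoinv{\MCb(\blank)}{\blank}$ and invoking \cref{lemma:hquot_hcoinv} produces the desired square, after geometric realization.

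The main obstacle I anticipate is the bookkeeping needed to ensure that the chosen sections $\bar s_M$, $\bar s_N$, $\bar s_{M \bdryconnsum N}$ together with the forgetful section from \cref{lemma:choose_compatibly_forget} all fit into a single strictly commutative diagram of algebraic groups through the zig-zag. This rests on verifying that \cref{ass:forget} is met for the map $m$ above; since $m$ is the identity on both $L_M$ and $L_N$, any automorphism of $\widetilde{L}_M \cop \widetilde{L}_N$ under $\freelie(\beta_M,\beta_N)$ and over $\hg{*}(\SO) \tensor \QQ$ descends to one of $\widetilde{L}_{M \bdryconnsum N}$ under $\freelie(\beta_{M \bdryconnsum N})$, giving the isomorphism $\ol \pi \colon \aAut[a](m)_{\pont} \to \aAut[\freelie(\beta_M,\beta_N)](\widetilde{L}_M \cop \widetilde{L}_N)_{\pont}$ required. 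Once this, the compatibility of the induced reductive-quotient maps $\ol \nu$, and the corresponding statement for the $\Hom$-factor are all established, the rest of the argument is a diagrammatic assembly entirely parallel to the proof of \cref{thm:manifolds_gluing}.
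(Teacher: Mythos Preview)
Your proposal is correct and follows essentially the same route as the paper: invoke \cref{prop:block_diff_gluing} to reduce to bundle self-equivalences, pass through $\Sing$ to set up the pushout-then-forget decomposition, apply \cref{thm:model_natural,thm:model_forget} with the models $0 \to \freelie(\beta_\bullet) \to \widetilde L_\bullet$ and the map $m$ sending $\beta_{M\bdryconnsum N}\mapsto \beta_M+\beta_N$, $\gamma_{M\bdryconnsum N}\mapsto \gamma_M+\gamma_N$, and then transport along $\Xi$ via the analogue of diagram~\eqref{eq:Deru_comparison} together with the $\Hom$-factor comparison $q^*$ from the proof of \cref{thm:manifolds_block}. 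One small point: your anticipated obstacle concerning \cref{ass:forget} is not actually needed here, since the paper (like its proof of \cref{thm:manifolds_gluing}) applies \cref{thm:model_forget} directly rather than \cref{cor:Baut_eq_forget}; the section compatibility you worry about is handled by the hypothesis $\bar s_{M\bdryconnsum N}\circ\ol\nu=\ol\gamma\circ(\bar s_M\times\bar s_N)$ together with \cref{lemma:compatible_Levi}, and one then assembles homotopy orbits as in the proof of \cref{cor:Baut_eq}.
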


\begin{proof}
  Choosing appropriate disks and a classifying map $\sttang{M \bdryconnsum N} \colon M \bdryconnsum N \to \B \SO$ of the oriented stable tangent bundle of $M \bdryconnsum N$, \cref{prop:block_diff_gluing} yields a commutative diagram
  \[
  \begin{tikzcd}
    \B \BlDiff[\bdry](M) \times \B \BlDiff[\bdry](N) \rar[squiggly]{\req} \dar & \gr[\big] { \B \bdlaut{\bdry}{\bdry_0}{\B \O}(\sttang{M})_{G_M} } \times \gr[\big] { \B \bdlaut{\bdry}{\bdry_0}{\B \O}(\sttang{N})_{G_N} } \dar \\
    \B \BlDiff[\bdry](M \bdryconnsum N) \rar[squiggly]{\req} & \gr[\big] { \B \bdlaut{\bdry}{\bdry_0}{\B \O}(\sttang{M \bdryconnsum N})_{G_{M \bdryconnsum N}} }
  \end{tikzcd}
  \]
  in the homotopy category of topological spaces.
  Let $H \defeq \bdry_0 M \union \bdry_0 N \subset M \bdryconnsum N$ and note that as in \eqref{eq:Sing_bryconnsum} there is a weak equivalence
  \[ \Sing(\bdry_0 M) \cop_{\Sing(\Disk {n-1})} \Sing(\bdry_0 N)  \xlongto{\eq}  \Sing(H) \]
  which we can use to replace the composite
  \[ \B \bdlaut{\bdry}{\bdry_0}{\B\SO}(\sttang M) \times \B \bdlaut{\bdry}{\bdry_0}{\B\SO}(\sttang M)  \longto  \B \bdlaut{\theta}{H}{\B\SO}(\sttang {M \bdryconnsum N})  \longto  \B \bdlaut{\bdry}{\bdry_0}{\B\SO}(\sttang {M \bdryconnsum N}) \]
  by the corresponding maps with $\Sing(\blank)$ applied everywhere, as in the proof of \cref{thm:manifolds_gluing}.
  Factoring $\Sing(\sttang {M \bdryconnsum N})$ as a cofibration followed by a trivial fibration, as in \cref{rem:Pi}, we can then apply \cref{thm:model_natural,thm:model_forget} as in the proof of \cref{thm:manifolds_gluing}.
  Lastly note that the following diagram commutes and is compatible with the corresponding diagram \eqref{eq:Deru_comparison}
  \[
  \begin{tikzcd}
    \vertbin {\trunc 0 { \Hom \bigl( \shift \indec(\widetilde L_M), \hg * (\SO) \tensor \QQ \bigr) }} \times {\trunc 0 { \Hom \bigl( \shift \indec(\widetilde L_N), \hg * (\SO) \tensor \QQ \bigr) }} \rar & \trunc 0 { \Hom \bigl( \shift \indec(\widetilde L_M \cop \widetilde L_N), \hg * (\SO) \tensor \QQ \bigr) } \\
    \vertbin {\trunc 0 { \Hom \bigl( \shift V_M, \hg * (\SO) \tensor \QQ \bigr) }} \times {\trunc 0 { \Hom \bigl( \shift V_N, \hg * (\SO) \tensor \QQ \bigr) }} \uar{q^* \times q^*}[swap]{\eq} \dar \rar & \trunc 0 { \Hom \bigl( \shift \indec(\widetilde L_M \cop \widetilde L_N), \hg * (\SO) \tensor \QQ \bigr) } \uar[equal] \dar \\
    \trunc 0 { \Hom \bigl( \shift (V_M \dirsum V_N), \hg * (\SO) \tensor \QQ \bigr) } \rar{\eq}[swap]{q^*} & \trunc 0 { \Hom \bigl( \shift \indec(\widetilde L_{M \bdryconnsum N}), \hg * (\SO) \tensor \QQ \bigr) }
  \end{tikzcd}
  \]
  where $q$ is the map from the proof of \cref{thm:manifolds_block}.
  This completes the proof.
\end{proof}

\begin{corollary} \label{cor:manifolds_block_gluing_forms}
  In the situation of \cref{thm:manifolds_block_gluing}, let $P$, $Q$, and $K$ be representations in rational vector spaces of $\redEBlaut[\bdry](M)$, $\redEBlaut[\bdry](N)$, and $\redEBlaut[\bdry](M \bdryconnsum N)$, respectively, and let $\mu \colon K \to P \tensor Q$ be a map that is equivariant with respect to the map $\ol \nu_\QQ \colon \redEBlaut[\bdry](M) \times \redEBlaut[\bdry](N) \to \redEBlaut[\bdry](M \bdryconnsum N)$.
  Then the following diagram commutes in the homotopy category of cochain complexes
  \[
  \begin{tikzcd}
    &[-140] \Forms * \bigl( \B \BlDiff[\bdry](M) \times \B \BlDiff[\bdry](N) ; P \tensor Q \bigr) &[-130] \\
    \Forms * \bigl( \B \BlDiff[\bdry](M); P \bigr) \tensor \Forms * \bigl( \B \BlDiff[\bdry](N); Q \bigr) \urar[bend left, end anchor = 182]{\eq} \dar[swap]{\eq} & & \Forms * \bigl( \B \BlDiff[\bdry](M \bdryconnsum N); K \bigr) \dar{\eq} \ular[bend right, end anchor = -2] \\
    \vertbin {\Forms * \Bigl( \B \redEBlaut[\bdry](M); \CEcoho * \bigl( \tilde{\lie g}_{\bdry}(M); P \bigr) \Bigr)} \tensor {\Forms * \Bigl( \B \redEBlaut[\bdry](N); \CEcoho * \bigl( \tilde{\lie g}_{\bdry}(N); Q \bigr) \Bigr)} \drar[bend right, start anchor = -90, end anchor = 172][swap, near start]{\eq} & & \Forms * \Bigl( \B \redEBlaut[\bdry](M \bdryconnsum N); \CEcoho * \bigl( \tilde{\lie g}_{\bdry}(M \bdryconnsum N); K \bigr) \Bigr) \dlar[bend left, start anchor = -90, end anchor = 8] \\
    & \Forms * \Bigl( \B \redEBlaut[\bdry](M) \times \B \redEBlaut[\bdry](N); \CEcoho * \bigl( \tilde{\lie g}_{\bdry}(M) \times \tilde{\lie g}_{\bdry}(N); P \tensor Q \bigr) \Bigr) &
  \end{tikzcd}
  \]
  where the indicated maps are quasi-isomorphisms and the vertical maps are obtained by combining \cref{cor:forms_block_diff,thm:manifolds_block}.
  Moreover, this diagram can be lifted to a commutative diagram in the homotopy category of lax symmetric monoidal functors from the category of tuples $(P, Q, K, \mu)$ to cochain complexes (with monoidal natural transformations as morphisms, and the pointwise weak equivalences).
\end{corollary}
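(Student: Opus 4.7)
The plan is to mimic the deduction of Corollary \ref{cor:forms_eq_natural} from Theorem \ref{cor:Baut_eq_natural}, now starting from the space-level compatibility provided by Theorem \ref{thm:manifolds_block_gluing}. First, I apply the functor $\Forms^* (-;-)$ to the diagram of Theorem \ref{thm:manifolds_block_gluing}, keeping track of the coefficient transport via Lemma \ref{lemma:hquot_hcoinv_local_system} so that the local systems really do pull back compatibly from $\B \redEBlaut[\bdry]$ under the bar-construction maps. This produces the hexagonal diagram of $\Forms^*$ applied to the various spaces, and gives the top half together with the right-hand vertical and upper diagonal maps.

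Second, for each of the three manifolds $M$, $N$, and $M \bdryconnsum N$, I convert the right-hand (homotopy-orbit) vertices to $\Forms^* ( \B \redEBlaut[\bdry]; \CEcochains^* (-) )$ by the monoidal natural quasi-isomorphisms of Lemma \ref{lemma:forms_orbits} (for forms on homotopy orbits) and Lemma \ref{lemma:MC_forms} (comparing CE cochains with forms on the Maurer--Cartan space). By their monoidality, these zig-zags assemble with the diagram of Theorem \ref{thm:manifolds_block_gluing} into a commutative diagram in the homotopy category of cochain complexes, yielding the bottom half and the lower diagonal. The two right-hand diagonal arrows then come from the Künneth-type structure maps of $\Forms^*$.

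The main obstacle is to show that the bottom diagonal map, the Künneth-style comparison
\[
\Forms^* \bigl( \B \redEBlaut[\bdry](M); \CEcochains^* ( \tilde{\lie g}_{\bdry}(M); P ) \bigr) \tensor \Forms^* \bigl( \B \redEBlaut[\bdry](N); \CEcochains^* ( \tilde{\lie g}_{\bdry}(N); Q ) \bigr) \longto \Forms^* \bigl( \B \redEBlaut[\bdry](M) \times \B \redEBlaut[\bdry](N); \CEcochains^* ( \tilde{\lie g}_{\bdry}(M) \times \tilde{\lie g}_{\bdry}(N); P \tensor Q ) \bigr),
\]
is a quasi-isomorphism. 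For this I argue as in the proof of Corollary \ref{cor:forms_eq_natural}: by Theorem \ref{thm:block_diff} the groups $\redEBlaut[\bdry](M)$ and $\redEBlaut[\bdry](N)$ are arithmetic subgroups of reductive algebraic groups, so by \cite[Theorem~1.3]{BKK} their classifying spaces have the homotopy types of CW-complexes with finitely many cells in each dimension. The Künneth theorem for cohomology with local coefficients, combined with Lemma \ref{lemma:forms_split_coeff} (so that the cohomology of $\Forms^*$ of a classifying space with split dg coefficients is genuine cohomology with local coefficients in the cohomology of the CE complex) and Lemma \ref{lemma:CE_products} (giving an isomorphism $\CEcochains^* (\lie g_1 \times \lie g_2; P \tensor Q) \iso \CEcochains^* (\lie g_1; P) \tensor \CEcochains^* (\lie g_2; Q)$, since both factor dg Lie algebras are non-negatively graded and of finite type), shows that the bottom diagonal induces an isomorphism on cohomology. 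The top diagonal is then a quasi-isomorphism by two-out-of-three, using Lemma \ref{lemma:hcoinv_coho} together with Lemmas \ref{lemma:forms_orbits} and \ref{lemma:MC_forms} to compare it with the bottom diagonal through the (already established) vertical quasi-isomorphisms.

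Finally, the monoidal refinement asserted at the end of the statement is inherited from the monoidality already recorded in Lemmas \ref{lemma:forms_orbits}, \ref{lemma:MC_forms}, \ref{lemma:hquot_hcoinv_local_system}, and \ref{lemma:CE_products}, exactly as in the passage from Theorem \ref{cor:Baut_eq_natural} to Corollary \ref{cor:forms_eq_natural}.
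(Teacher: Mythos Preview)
Your proposal is correct and follows essentially the same route as the paper: the paper's proof is the one-liner ``This follows from \cref{thm:manifolds_block_gluing} as in the proof of \cref{cor:forms_eq_natural},'' and you have accurately unpacked what that entails (applying $\Forms^*$, using Lemmas \ref{lemma:forms_orbits}, \ref{lemma:MC_forms}, \ref{lemma:hquot_hcoinv_local_system}, then the arithmeticity/Künneth argument via \cite{BKK}, \cref{lemma:forms_split_coeff}, and \cref{lemma:CE_products}, with two-out-of-three for the top diagonal).
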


\begin{proof}
  This follows from \cref{thm:manifolds_block_gluing} as in the proof of \cref{cor:forms_eq_natural}.
\end{proof}

\begin{corollary} \label{cor:manifolds_block_gluing_coho}
  In the situation of \cref{thm:manifolds_block_gluing}, let $P$, $Q$, and $K$ be representations in rational vector spaces of $\redEBlaut[\bdry](M)$, $\redEBlaut[\bdry](N)$, and $\redEBlaut[\bdry](M \bdryconnsum N)$, respectively, and let $\mu \colon K \to P \tensor Q$ be a map that is equivariant with respect to the map $\ol \nu_\QQ \colon \redEBlaut[\bdry](M) \times \redEBlaut[\bdry](N) \to \redEBlaut[\bdry](M \bdryconnsum N)$.
  Then the following diagram of graded vector spaces commutes
  \[
  \begin{tikzcd}
    &[-140] \Coho * \bigl( \B \BlDiff[\bdry](M) \times \B \BlDiff[\bdry](N) ; P \tensor Q \bigr) &[-130] \\
    \Coho * \bigl( \B \BlDiff[\bdry](M); P \bigr) \tensor \Coho * \bigl( \B \BlDiff[\bdry](N); Q \bigr) \urar[bend left, end anchor = 182]{\iso} \dar[swap]{\iso} & & \Coho * \bigl( \B \BlDiff[\bdry](M \bdryconnsum N); K \bigr) \dar{\iso} \ular[bend right, end anchor = -2] \\
    \vertbin {\Coho * \Bigl( \B \redEBlaut[\bdry](M); \CEcoho * \bigl( \tilde{\lie g}_{\bdry}(M); P \bigr) \Bigr)} \tensor {\Coho * \Bigl( \B \redEBlaut[\bdry](N); \CEcoho * \bigl( \tilde{\lie g}_{\bdry}(N); Q \bigr) \Bigr)} \drar[bend right, start anchor = -90, end anchor = 172][swap, near start]{\iso} & & \Coho * \Bigl( \B \redEBlaut[\bdry](M \bdryconnsum N); \CEcoho * \bigl( \tilde{\lie g}_{\bdry}(M \bdryconnsum N); K \bigr) \Bigr) \dlar[bend left, start anchor = -90, end anchor = 8] \\
    & \Coho * \Bigl( \B \redEBlaut[\bdry](M) \times \B \redEBlaut[\bdry](N); \CEcoho * \bigl( \tilde{\lie g}_{\bdry}(M) \times \tilde{\lie g}_{\bdry}(N); P \tensor Q \bigr) \Bigr) &
  \end{tikzcd}
  \]
  where the indicated maps are isomorphisms and the vertical maps are obtained by combining \cref{cor:coho_block_diff,thm:manifolds_block}.
  Moreover, this diagram consists of monoidal natural transformations between lax symmetric monoidal functors from the category of tuples $(P, Q, K, \mu)$ to graded vector spaces.
\end{corollary}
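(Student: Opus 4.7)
The plan is to deduce this corollary from the cochain-level statement of \cref{cor:manifolds_block_gluing_forms} by passing to cohomology, exactly parallel to how \cref{cor:cohomology} is deduced from \cref{cor:forms_eq} and how \cref{cor:manifolds_gluing_coho} is deduced from \cref{cor:manifolds_gluing_forms}. Concretely, I would start with the commutative diagram of monoidal natural quasi-isomorphisms provided by \cref{cor:manifolds_block_gluing_forms} and apply the cohomology functor $\Coho{*}$ to it, which preserves commutativity (turning quasi-isomorphisms into isomorphisms) and lax monoidal structure.

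To identify the various terms after taking cohomology, I would invoke \cref{lemma:forms_cohomology} to rewrite $\Coho{*}(\Forms{*}(X; M)) \iso \Coho{*}(X; M)$ for the three top terms involving $\B \BlDiff[\bdry]$ and their product, and then invoke \cref{lemma:forms_split_coeff} to rewrite $\Coho{*}(\Forms{*}(\B \redEBlaut[\bdry](M); \CEcochains{*}(\tilde{\lie g}_{\bdry}(M); P)))$ as $\Coho{*}(\B \redEBlaut[\bdry](M); \CEcoho{*}(\tilde{\lie g}_{\bdry}(M); P))$, and analogously for the $N$, $M \bdryconnsum N$, and product terms. This latter step requires verifying that $\CEcochains{*}(\tilde{\lie g}_{\bdry}(M); P)$ is split, quasi-groupoidal, and extendable as a local system on $\B \redEBlaut[\bdry](M)$. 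Quasi-groupoidality and extendability are automatic from \cref{def:BG_local_system}, and splitness follows, as in the proof of \cref{cor:cohomology}, from the fact that $\tilde{\lie g}_{\bdry}(M)$ is a non-negatively graded dg Lie algebra whose components $\Hom(\shift V_M, \hg{*}(\SO) \tensor \QQ)$ and $\Deru[\pont](L_M \rel \omega_M)$ are degree-wise finite-dimensional algebraic representations of $\aAut[\omega_M](L_M)_{\pont}$ (using \cref{lemma:Der_algebraic} and the analogue for the $\Hom$-factor), so that $\CEchains{*}(\tilde{\lie g}_{\bdry}(M))$ is degree-wise a semi-simple representation of the reductive algebraic group $\aredEaut[\beta_M](\pont)$, and hence split as a cochain complex of $\redEBlaut[\bdry](M)$-representations by the standard arguments (cf.\ \cite[Lemmas~B.1 and B.2]{BM}); the universal coefficient theorem then extends splitness to $\CEcochains{*}(\tilde{\lie g}_{\bdry}(M); P)$.

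The two indicated diagonal isomorphisms in the statement arise from the Künneth formula for cohomology with local coefficients, applied as in the proof of \cref{cor:forms_eq_natural}: since $\redEBlaut[\bdry](M)$ and $\redEBlaut[\bdry](N)$ are arithmetic groups (by \cref{thm:block_diff}), their classifying spaces have the homotopy types of CW-complexes with finitely many cells in each dimension (by \cite[Theorem~1.3]{BKK}), so the Künneth theorem of \cite[Theorem~1.7]{Gre06} applies, and combined with \cref{lemma:CE_products} we obtain the bottom diagonal isomorphism; the top diagonal isomorphism is then forced by commutativity of the diagram and the fact that the vertical identifications are isomorphisms.

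The monoidality and naturality statements follow automatically from the fact that all constructions used (cohomology, the identifications of \cref{lemma:forms_cohomology,lemma:forms_split_coeff}, and the Künneth map) are monoidal natural transformations of lax symmetric monoidal functors, so the structure carries over from \cref{cor:manifolds_block_gluing_forms}. No step here is truly difficult given the cochain-level input; the main thing to be careful about is verifying the splitness hypothesis for the local system $\CEcochains{*}(\tilde{\lie g}_{\bdry}(M \bdryconnsum N); K)$ on $\B \redEBlaut[\bdry](M \bdryconnsum N)$, which is the only place where the assumption that $\ol \gamma$ descends to a map $\ol \nu$ of maximal reductive quotients is used indirectly (through \cref{thm:manifolds_block_gluing}).
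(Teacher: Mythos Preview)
Your proposal is correct and follows essentially the same approach as the paper, which simply states that the result follows from \cref{cor:manifolds_block_gluing_forms} as in the proof of \cref{cor:cohomology}. You have faithfully unpacked what that deduction entails: applying cohomology, invoking \cref{lemma:forms_cohomology} and \cref{lemma:forms_split_coeff} with the splitness verification via semi-simplicity of representations of the reductive quotient, and handling the diagonal K\"unneth isomorphisms as in the proof of \cref{cor:forms_eq_natural}.
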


\begin{proof}
  This follows from \cref{cor:manifolds_block_gluing_forms} as in the proof of \cref{cor:cohomology}.
\end{proof}

\begin{appendices}
  \section{Model categorical technicalities}

In this appendix, we collect various model categorical results we need throughout the article proper.
The main part is \cref{sec:zig-zag}, where we prove that the assignment $(A \to X) \mapsto \aut[A](X)$ extends to a functor from (a certain subcategory of) the arrow category to the homotopy category of simplicial monoids.

We begin by recalling the following basic lemma, which we will use frequently, as well as a direct consequence.

\begin{lemma} \label{lemma:cofibration_pushouts}
  In a model category, let
  \[
  \begin{tikzcd}
    X \dar[swap]{f} & A \lar \rar \dar{a} & Y \dar{g} \\
    X' & A' \lar \rar & Y'
  \end{tikzcd}
  \]
  be a commutative diagram.
  If $g$ and the induced map $X \cop_{A} A' \to X'$ are both cofibrations, then the induced map $X \cop_{A} Y \to X' \cop_{A'} Y'$ is a cofibration as well.
  This statement also holds when replacing ``cofibration'' with ``trivial cofibration''.
\end{lemma}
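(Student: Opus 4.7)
The plan is to factor the induced map $X \cop_{A} Y \to X' \cop_{A'} Y'$ into two pieces, each of which is manifestly a pushout of one of the given cofibrations. Set $P \defeq X \cop_A A'$. By associativity of pushouts, the canonical map $P \cop_{A'} Y \to X \cop_A Y$ is an isomorphism, and similarly one has a canonical isomorphism $X' \cop_{A'} Y' \cong X' \cop_P (P \cop_{A'} Y')$. Using these identifications, I would factor the map in question as
\[ X \cop_A Y \cong P \cop_{A'} Y \longto X' \cop_{A'} Y \longto X' \cop_{A'} Y' \]
where the first map is induced by the assumed cofibration $P \to X'$ and the identity on $Y$, and the second is induced by the cofibration $g \colon Y \to Y'$ and the identity on $X'$.

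Next I would verify that each of these two maps is a cofibration by exhibiting it as a pushout. The first map fits into the pushout square
\[
\begin{tikzcd}
P \rar \dar & X' \dar \\
P \cop_{A'} Y \rar & X' \cop_{A'} Y
\end{tikzcd}
\]
and hence is a cofibration as a pushout of the cofibration $P \to X'$. The second map fits into the pushout square
\[
\begin{tikzcd}
Y \rar{g} \dar & Y' \dar \\
X' \cop_{A'} Y \rar & X' \cop_{A'} Y'
\end{tikzcd}
\]
and hence is a cofibration as a pushout of $g$. Since cofibrations are closed under composition, the composite $X \cop_A Y \to X' \cop_{A'} Y'$ is a cofibration, as desired.

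For the trivial-cofibration case the same factorization works verbatim, using that trivial cofibrations are likewise closed under pushout and composition in a model category. There is no real obstacle here; the only subtlety is keeping the two pushout identifications straight so that the intermediate object $X' \cop_{A'} Y$ is the correct one in both factorizations, but this is immediate from the universal property.
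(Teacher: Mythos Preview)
Your overall strategy---factor the induced map as a composite of two maps, each a pushout of one of the given cofibrations---is exactly right, and it is the standard argument (the paper itself does not spell out a proof but simply cites Hirschhorn, Lemma~7.2.15, where this argument appears). However, your choice of intermediate object is flawed.

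The objects $P \cop_{A'} Y$ and $X' \cop_{A'} Y$ are not well-defined: to form a pushout over $A'$ you need a map $A' \to Y$, and the diagram provides none (the map $a$ goes from $A$ to $A'$, not the other way). Consequently the claimed isomorphism $P \cop_{A'} Y \cong X \cop_A Y$ does not make sense, and neither does your first pushout square.

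The fix is simply to swap the order of the two steps and use the correct intermediate object. Push out along $g$ first to obtain
\[
X \cop_A Y \longto X \cop_A Y',
\]
which is a pushout of $g \colon Y \to Y'$ along $Y \to X \cop_A Y$. Now the map $A \to Y'$ factors through $A'$, so associativity gives a canonical isomorphism $X \cop_A Y' \cong (X \cop_A A') \cop_{A'} Y' = P \cop_{A'} Y'$, and the remaining map
\[
P \cop_{A'} Y' \longto X' \cop_{A'} Y'
\]
is a pushout of the assumed cofibration $P \to X'$. The rest of your argument (closure of cofibrations, and trivial cofibrations, under pushout and composition) then goes through unchanged.
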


\begin{proof}
  The case of cofibrations is \cite[Lemma~7.2.15]{Hir}; the proof for trivial cofibrations is analogous.
\end{proof}

\begin{lemma} \label{lemma:cofibration_pushouts_special}
  In a model category, let
  \[ X \xlongfrom{i} B \longfrom A \longto C \xlongto{j} Y \]
  be maps such that $i$ and $j$ are cofibrations.
  Then the induced map $B \cop_{A} C \to X \cop_{A} Y$ is a cofibration.
\end{lemma}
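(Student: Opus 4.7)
The plan is to reduce this directly to \cref{lemma:cofibration_pushouts} by setting up the appropriate commutative diagram. Specifically, I would apply that lemma to the diagram
\[
\begin{tikzcd}
B \dar[swap]{i} & A \lar \rar \dar[equal] & C \dar{j} \\
X & A \lar \rar & Y
\end{tikzcd}
\]
where the left vertical map is $i$, the middle is the identity on $A$, and the right vertical map is $j$.

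Then I would verify the two hypotheses of \cref{lemma:cofibration_pushouts}. The right vertical map $j$ is a cofibration by assumption. The induced map $B \cop_A A \to X$ is canonically identified with $i \colon B \to X$ (since $B \cop_A A \iso B$), which is a cofibration by assumption. Hence \cref{lemma:cofibration_pushouts} applies and yields that the induced map $B \cop_A C \to X \cop_A Y$ is a cofibration, which is exactly the claim.

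I do not foresee any real obstacle; the statement is essentially a special case of the preceding lemma, with the middle column being the identity on $A$. The only point requiring a tiny bit of care is identifying the pushout $B \cop_A A$ with $B$ so that the second hypothesis reduces to $i$ being a cofibration.
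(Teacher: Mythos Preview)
Your proposal is correct and matches the paper's proof exactly: the paper also applies \cref{lemma:cofibration_pushouts} to the diagram with the identity on $A$ in the middle column, noting that the induced map $B \cop_A A \to X$ is isomorphic to $i$.
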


\begin{proof}
  This follows from \cref{lemma:cofibration_pushouts} since the induced map $B \cop_{A} A \to X$ is isomorphic to $i$.
\end{proof}

\subsection{The projective model structure}

In the following subsections, we will need model structures on certain simple functor categories.
In this subsection, we recall the projective model structure and some of its properties.

\begin{definition}
  Let $\cat M$ be a model category and $\cat I$ a small category.
  The \emph{projective model structure} on the functor category $\cat M^{\cat I}$ is, if it exists, the unique model structure with weak equivalences the pointwise weak equivalences and fibrations the pointwise fibrations.
  A \emph{projective (co)fibration} is a (co)fibration in the projective model structure.
\end{definition}

\begin{remark}
  Let $\cat M$ be a simplicial model category and $\cat I$ a small category.
  If the projective model structure on the functor category $\cat M^{\cat I}$ exists, then it is again a simplicial model category when equipped with its usual simplicial structure (see e.g.\ \cite[Remark~A.3.3.4]{LurHTT}).
\end{remark}

\begin{lemma} \label{lemma:projective_restriction}
  Let $\cat M$ be a model category, and let $r \colon \cat I \to \cat J$ be a right adjoint functor.
  Then the restriction functor $r^* \colon \cat M^{\cat J} \to \cat M^{\cat I}$ preserves cofibrations and trivial cofibrations when both functor categories are equipped with the projective model structures (in particular we assume that they exist).
\end{lemma}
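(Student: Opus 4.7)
The plan is to exhibit $r^*$ as a left Quillen functor by finding an explicit right adjoint that manifestly preserves (trivial) projective fibrations.

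First I would observe that, because $r$ is a right adjoint, there is a left adjoint $l \colon \cat J \to \cat I$ with $l \dashv r$. The key point is that this adjunction lifts to an adjunction between the functor categories in the opposite direction: I claim that $r^* \dashv l^*$, where $l^* \colon \cat M^{\cat I} \to \cat M^{\cat J}$ is precomposition with $l$. Indeed, from the unit $\eta \colon \id_{\cat J} \to r \after l$ of the original adjunction one obtains, for any $F \colon \cat J \to \cat M$, a natural transformation $F = F \after \id_{\cat J} \to F \after (r \after l) = l^* r^* F$, and from the counit $\varepsilon \colon l \after r \to \id_{\cat I}$ one similarly obtains $r^* l^* G = G \after (l \after r) \to G$ for $G \colon \cat I \to \cat M$. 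A routine check using the triangle identities for $l \dashv r$ shows that these are the unit and counit of an adjunction $r^* \dashv l^*$. (Equivalently, one verifies that $(r_*F)(j) = \lim_{j \comma r} F \after \pi$ reduces to $F(l(j))$, since the comma category $j \comma r$ has the initial object $(l(j), \eta_j)$ by the adjunction.)

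Next I would note that $l^*$ preserves projective fibrations and projective trivial fibrations. This is immediate from the definition of the projective model structure: such maps are precisely those which are pointwise (trivial) fibrations in $\cat M$, and $l^*$ only evaluates a given functor at the objects $l(j) \in \cat I$, so pointwise conditions are trivially preserved.

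Putting these two steps together, $r^* \dashv l^*$ is a Quillen adjunction, because its right adjoint $l^*$ preserves fibrations and trivial fibrations. Therefore $r^*$ is a left Quillen functor and, in particular, preserves projective cofibrations and projective trivial cofibrations, as claimed. I do not expect any step to be a genuine obstacle: the whole argument is a soft formal manipulation, and the only care needed is in checking the direction of the induced adjunction between the functor categories, which is the one point where it is easy to make a sign-style error.
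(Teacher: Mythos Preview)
Your proposal is correct and follows essentially the same route as the paper: take the left adjoint $l$ of $r$, observe that $r^* \dashv l^*$ on functor categories, and use that $l^*$ trivially preserves pointwise (trivial) fibrations to conclude $r^*$ is left Quillen. The paper's proof is the same argument in a single sentence, citing Hirschhorn for the Quillen-adjunction criterion rather than spelling out the unit/counit check you give.
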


\begin{proof}
  Let $l \colon \cat J \to \cat I$ be the left adjoint of $r$.
  Then the restriction functor $r^*$ is left adjoint to the restriction functor $l^*$.
  Since $l^*$ preserves pointwise fibrations and pointwise weak equivalences, the adjunction $r^* \dashv l^*$ is a Quillen adjunction (see e.g.\ \cite[Proposition~8.5.3]{Hir}).
  Hence the left adjoint $r^*$ preserves cofibrations and trivial cofibrations.
\end{proof}

The following lemma provides a criterion for the existence of the projective model structure for certain indexing categories $\cat I$.
In particular it applies when $\cat I$ is a finite product of finite linearly ordered sets.
To state it, we need the following definition.

\newcommand{\Latch}[1]{{\mathrm L_{#1}}}

\begin{definition}
  Let $\cat C$ be a cocomplete category, $\cat I$ a small category, and $i \in \cat I$ an object.
  We write $\cat I_i \defeq (\cat I \comma i) \setminus \set {\id[i]}$, i.e.\ the category of objects over $i$ without its terminal object; it comes equipped with a forgetful functor $\pr \colon \cat I_i \to \cat I$ and a natural transformation $\pr \to \const[i]$.
  Then the $i$-th \emph{latching space} functor is
  \[ \Latch{i}  \colon  \cat C^{\cat I}  \xlongto{\pr^*}  \cat C^{\cat I_i}  \xlongto{\colim{}}  \cat C \]
  which comes equipped with a natural transformation $\Latch{i} \to \ev[i]$ to the evaluation at $i$.
\end{definition}

\begin{lemma} \label{lemma:projective_cofibration}
  Let $\cat M$ be a model category, and let $\cat I$ be a small category.
  Assume that $\cat I$ admits a functor to an ordinal, considered as a linearly ordered set, such that every non-identity morphism is mapped to a non-identity morphism.
  Then the projective model structure on $\cat M^{\cat I}$ exists.
  Moreover, a natural transformation $F \to G$ of functors $\cat I \to \cat M$ is a projective cofibration if and only if the induced map $F(i) \cop_{\Latch i F} \Latch i G \to G(i)$ is a cofibration for all $i \in \cat I$.
  This statement also holds when replacing ``cofibration'' with ``trivial cofibration''.
  Lastly, every projective cofibration is also a pointwise cofibration.
\end{lemma}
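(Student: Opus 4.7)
The plan is to recognize $\cat I$ as a direct Reedy category, with the degree function supplied by the hypothesized functor to an ordinal, and then deduce every claim from the general theory of Reedy model structures (as presented for instance in Hirschhorn, Chapter~15). Concretely, I would equip $\cat I$ with the Reedy structure in which every non-identity morphism is a direct morphism and the only inverse morphisms are the identities; the hypothesis that non-identities are sent to non-identities in an ordinal is exactly what guarantees that this definition gives a valid direct Reedy category. For such a direct Reedy category the matching category at each object is empty, so matching objects are terminal in $\cat M$. Hence Reedy fibrations and Reedy weak equivalences reduce to pointwise fibrations and pointwise weak equivalences, and the Reedy model structure on $\cat M^{\cat I}$ exists and coincides with the desired projective model structure.

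The characterization of projective cofibrations and projective trivial cofibrations via the relative latching map would then follow immediately from the general Reedy cofibration theorem (Hirschhorn, Theorem~15.3.15), once I observe that the category $\cat I_i = (\cat I \comma i) \setminus \set{\id_i}$ used to define $\Latch i$ in the excerpt is precisely the Reedy latching category attached to our direct structure. The trivial cofibration version is formally identical, substituting ``trivial cofibration'' for ``cofibration'' throughout.

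For the final claim, that every projective cofibration is a pointwise cofibration, I would argue by transfinite induction on the ordinal degree of $i$. At an object of minimal degree $\cat I_i$ is empty, so $\Latch i F$ and $\Latch i G$ are both initial in $\cat M$ and the latching condition reduces to $F(i) \to G(i)$ being a cofibration. For the inductive step, factoring $F(i) \to G(i)$ as
\[ F(i)  \longto  F(i) \cop_{\Latch i F} \Latch i G  \longto  G(i), \]
the second map is a cofibration by the latching characterization, and the first is a pushout of $\Latch i F \to \Latch i G$, so it suffices to show that this latter map is already a cofibration in $\cat M$.

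This last point is the main obstacle, because $\Latch i F \to \Latch i G$ is a map between colimits over $\cat I_i$ rather than a single pushout of a pointwise cofibration. The plan is to filter $\cat I_i$ by ordinal degree so that both $\Latch i F$ and $\Latch i G$ are built as transfinite compositions of pushouts along the relative latching maps at objects of strictly smaller degree; invoking the Reedy characterization for those smaller-degree objects together with the inductive hypothesis that they are pointwise cofibrant, and using closure of cofibrations in $\cat M$ under pushouts, coproducts, and transfinite composition, one concludes that $\Latch i F \to \Latch i G$ is indeed a cofibration. Combining this with the factorization above completes the induction.
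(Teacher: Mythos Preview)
Your proposal is correct and essentially matches the paper's approach: the paper simply cites Hovey \cite[Theorem~5.1.3 and Remark~5.1.7]{Hov}, which is precisely the direct-category version of the Reedy theory you invoke from Hirschhorn. Your outline spells out what those references contain; in particular, the transfinite induction showing $\Latch i F \to \Latch i G$ is a cofibration (via the skeletal filtration of the latching category) is exactly the standard argument underlying Hovey's Remark~5.1.7, so nothing is missing.
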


\begin{proof}
  This is \cite[Theorem~5.1.3 and Remark~5.1.7]{Hov}.
\end{proof}

We now spell out some consequences of the preceding lemma for particularly simple indexing categories.

\begin{lemma} \label{lemma:projective_cofibration_lincat}
  Let $\cat M$ be a model category, and let $n \in \NN$.
  A natural transformation $F \to G$ of functors $\lincat n \to \cat M$ is a cofibration with respect to the projective model structure on $\cat M^{\lincat n}$ if and only if $F(0) \to G(0)$ is a cofibration and, for all $0 \le i < n$, the induced map $G(i) \cop_{F(i)} F(i+1) \to G(i + 1)$ is a cofibration.
  This statement also holds when replacing ``cofibration'' with ``trivial cofibration''.
\end{lemma}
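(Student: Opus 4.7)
The plan is to apply Lemma~\ref{lemma:projective_cofibration} directly: the linearly ordered set $\lincat n$ admits the identity functor to itself as an ordinal, and every non-identity morphism in $\lincat n$ is mapped to a non-identity morphism. So the lemma provides a characterization of (trivial) projective cofibrations in terms of the latching maps $F(i) \cop_{\Latch i F} \Latch i G \to G(i)$ being (trivial) cofibrations for all $i \in \lincat n$. All that remains is to identify these latching objects explicitly.

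For $i = 0$, the category $\lincat n_0 = (\lincat n \comma 0) \setminus \set{\id[0]}$ is empty, since $0$ is initial in $\lincat n$. Hence $\Latch 0 F$ and $\Latch 0 G$ are both the initial object of $\cat M$, and the latching condition at $0$ reduces to asking that $F(0) \to G(0)$ be a (trivial) cofibration.

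For $0 < i \le n$, the category $\lincat n_i$ has objects $\set{0, 1, \dots, i-1}$ with the inherited ordering and forgetful functor $\pr \colon \lincat n_i \to \lincat n$ given by inclusion; in particular $\lincat n_i \iso \lincat{i-1}$ has a terminal object $i - 1$. Therefore $\Latch i F = \colim \pr^* F \iso F(i - 1)$ and similarly $\Latch i G \iso G(i - 1)$, and the latching condition at $i$ becomes that the induced map $F(i) \cop_{F(i-1)} G(i-1) \to G(i)$ is a (trivial) cofibration, which is exactly the stated condition after relabeling.

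Combining the two cases yields the characterization claimed in the lemma, both for cofibrations and for trivial cofibrations. The main (minor) obstacle is just the careful computation of $\Latch i$ at $i = 0$ and at $i > 0$; apart from that, the result is a direct application of Lemma~\ref{lemma:projective_cofibration}.
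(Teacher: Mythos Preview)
Your proof is correct and follows essentially the same approach as the paper: apply \cref{lemma:projective_cofibration} and identify the latching objects $\Latch i F$ as the initial object for $i = 0$ and as $F(i-1)$ for $i > 0$. The paper states this identification without the supporting computation you provide, but the argument is the same.
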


\begin{proof}
  This follows from \cref{lemma:projective_cofibration} since the latching space $L_i F$ of a functor $F \colon \lincat n \to \cat M$ at an object $i \in \lincat n$ is isomorphic to the initial object of $\cat M$ when $i = 0$ and to $F(i - 1)$ otherwise.
\end{proof}

\begin{remark} \label{rem:projective_cofibration}
  In a model category $\cat M$, let
  \[
  \begin{tikzcd}
  A \rar{f} \dar[tail][swap]{a} & B \dar{b} \\
  C \rar{g} & D
  \end{tikzcd}
  \]
  be a commutative diagram such that $a$ is a cofibration.
  Then it follows from \cref{lemma:projective_cofibration_lincat} that, in the arrow category $\arcat {\cat M}$ equipped with the projective model structure, if the morphism $(f, g) \colon a \to b$ is a cofibration, then the morphism $(a, b) \colon f \to g$ is one as well.
  In that case both $g$ and $b$ are cofibrations, too.
  This also holds when replacing ``cofibration'' with ``trivial cofibration'' everywhere.
\end{remark}

\begin{lemma} \label{lemma:projective_restriction_pushout}
  Let $\cat M$ be a model category.
  Consider the functor $\cat M^{\lincat 1 \times \lincat 1} \to \arcat {\cat M}$ defined on objects by
  \[ F  \longmapsto  \bigl( F(1, 0) \cop_{F(0,0)} F(0, 1)  \to  F(1, 1) \bigr) \]
  and in the obvious way on morphism.
  This functor preserves both cofibrations and trivial cofibrations with respect to the projective model structures.
\end{lemma}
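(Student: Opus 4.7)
The plan is to characterize projective cofibrations on both sides via latching objects (\cref{lemma:projective_cofibration,lemma:projective_cofibration_lincat}) and then reduce the nontrivial condition to an application of \cref{lemma:cofibration_pushouts}.

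First I would compute the latching objects for $F \colon \lincat 1 \times \lincat 1 \to \cat M$. The categories of strict predecessors are easy to describe: at $(0,0)$ it is empty, at $(1,0)$ and $(0,1)$ it consists of the single object $(0,0)$, and at $(1,1)$ it is the span $(1,0) \leftarrow (0,0) \to (0,1)$. Hence $\Latch{(0,0)} F = \emptyset$, $\Latch{(1,0)} F = F(0,0) = \Latch{(0,1)} F$, and $\Latch{(1,1)} F = F(1,0) \cop_{F(0,0)} F(0,1)$. Writing out the four conditions provided by \cref{lemma:projective_cofibration}, a projective cofibration $F \to G$ in $\cat M^{\lincat 1 \times \lincat 1}$ amounts to:
\begin{enumerate}
\item $F(0,0) \to G(0,0)$ is a cofibration;
\item $F(1,0) \cop_{F(0,0)} G(0,0) \to G(1,0)$ is a cofibration;
\item $F(0,1) \cop_{F(0,0)} G(0,0) \to G(0,1)$ is a cofibration;
\item $F(1,1) \cop_{F(1,0) \cop_{F(0,0)} F(0,1)} \bigl( G(1,0) \cop_{G(0,0)} G(0,1) \bigr) \to G(1,1)$ is a cofibration.
\end{enumerate}

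Next I would translate the conclusion. By \cref{lemma:projective_cofibration_lincat} applied to $\lincat 1$, the image of $F \to G$ under the corner functor is a projective cofibration of arrows iff two conditions hold: (a) the induced map $F(1,0) \cop_{F(0,0)} F(0,1) \to G(1,0) \cop_{G(0,0)} G(0,1)$ is a cofibration, and (b) the induced map from $\bigl( G(1,0) \cop_{G(0,0)} G(0,1) \bigr) \cop_{F(1,0) \cop_{F(0,0)} F(0,1)} F(1,1) \to G(1,1)$ is a cofibration. But (b) is literally condition (4), so this is free.

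The only real step is (a). Here I would apply \cref{lemma:cofibration_pushouts} to the diagram whose top and bottom rows are the spans $F(1,0) \leftarrow F(0,0) \to F(0,1)$ and $G(1,0) \leftarrow G(0,0) \to G(0,1)$. The hypotheses of that lemma require that the right-hand vertical map $F(0,1) \to G(0,1)$ be a cofibration and that $F(1,0) \cop_{F(0,0)} G(0,0) \to G(1,0)$ be a cofibration. The latter is exactly (2). For the former, factor $F(0,1) \to G(0,1)$ as the cobase change $F(0,1) \to F(0,1) \cop_{F(0,0)} G(0,0)$ of the cofibration $F(0,0) \to G(0,0)$ from (1), followed by the cofibration from (3); both factors are cofibrations, so the composite is too. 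This yields condition (a).

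The trivial cofibration case is identical, using the corresponding parts of \cref{lemma:projective_cofibration,lemma:projective_cofibration_lincat,lemma:cofibration_pushouts}. The only thing that might look like an obstacle is convincing oneself that the pushout-corner appearing in condition (4) for the source category coincides on the nose with the codomain-latching condition (b) for the arrow category, but this is a direct inspection of the relevant universal properties.
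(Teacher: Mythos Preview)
Your proof is correct and follows essentially the same approach as the paper: characterize projective (trivial) cofibrations on both sides via latching objects, observe that the condition at the terminal object of $\lincat 1$ coincides with the latching condition at $(1,1)$, and handle the remaining condition (that $\Latch{(1,1)} F \to \Latch{(1,1)} G$ is a (trivial) cofibration) via \cref{lemma:cofibration_pushouts}. The paper's write-up is terser---it only singles out condition~(2) before invoking \cref{lemma:cofibration_pushouts}, implicitly using that projective (trivial) cofibrations are pointwise (trivial) cofibrations to supply the hypothesis on $F(0,1) \to G(0,1)$---whereas you derive that map is a (trivial) cofibration explicitly from conditions (1) and (3), which is equally valid.
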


\begin{proof}
  First note that $\Latch{(1,1)} F \iso F(1, 0) \cop_{F(0,0)} F(0, 1)$.
  Then it is, by \cref{lemma:projective_cofibration_lincat}, enough to show that, given a (trivial) cofibration $F \to G$ in the projective model structure of $\cat M^{\lincat 1 \times \lincat 1}$, both of the induced maps
  \begin{gather*}
    \Latch{(1, 1)} F  \longto  \Latch{(1, 1)} G \\
    F(1, 1) \cop_{\Latch{(1, 1)} F} \Latch{(1, 1)} G  \longto  G(1, 1)
  \end{gather*}
  are (trivial) cofibrations.
  For the second one, this follows from \cref{lemma:projective_cofibration}.
  For the first one, note that $\Latch{(1, 0)} F \iso F(0, 0)$, so that \cref{lemma:projective_cofibration} implies that $F(1, 0) \cop_{F(0, 0)} G(0, 0) \to G(1, 0)$ is a (trivial) cofibration.
  Then \cref{lemma:cofibration_pushouts} implies the desired statement.
\end{proof}

\subsection{Relative mapping complexes}

In this subsection, we prove various basic properties of the relative mapping complexes $\map[A](X, Y)$.
Most of them should be considered to be analogues of the properties of the simplicial model structure on an undercategory $A \comma \cat M$ when we also allow the object $A$ to vary.
For ease of reference, we begin by recalling some of the basic properties of the simplicial model category $A \comma \cat M$.

\begin{lemma} \label{lemma:rel_map}
  Let $\cat M$ be a simplicial model category, $A \in \cat M$ an object, and $x \colon A \to X$, $y \colon A \to Y$, and $z \colon A \to Z$ maps in $\cat M$.
  \begin{itemize} 
    \item
    Let $f \colon X \to Y$ be a map under $A$.
    If $Z$ is fibrant and $f$ is a cofibration, then the map
    \[ \map[A](Y, Z)  \xlongto{f^*}  \map[A](X, Z) \]
    is a fibration of simplicial sets; if $f$ is additionally a weak equivalence, then so is $f^*$.
    If $Z$ is fibrant, $x$ and $y$ are cofibrations, and $f$ is a weak equivalence, then $f^*$ is a weak equivalence.
    
    \item
    Let $g \colon Y \to Z$ be a map under $A$.
    If $x$ is a cofibration and $g$ is a fibration, then the map
    \[ \map[A](X, Y)  \xlongto{g_*}  \map[A](X, Z) \]
    is a fibration of simplicial sets; if $g$ is additionally a weak equivalence, then so is $g_*$.
    If $x$ is a cofibration, $Y$ and $Z$ are fibrant, and $g$ is a weak equivalence, then $g_*$ is a weak equivalence.
  \end{itemize}
\end{lemma}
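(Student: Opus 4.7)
The plan is to reduce everything to standard properties of the simplicial model category $A \comma \cat M$ provided by \cref{lemma:slice_sm}. Under this identification, a map under $A$ is a (co)fibration or weak equivalence in the undercategory if and only if the underlying map is one in $\cat M$, an object is fibrant in $A \comma \cat M$ if and only if its underlying object is fibrant in $\cat M$, and an object is cofibrant in $A \comma \cat M$ if and only if its structure map $A \to \bullet$ is a cofibration in $\cat M$. Moreover, the simplicial mapping complex in $A \comma \cat M$ is precisely $\map[A](\blank, \blank)$ by construction.

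Given this, the first assertion of each bullet point is the pushout-product axiom SM7 applied in the simplicial model category $A \comma \cat M$: for part~(i), cofibrancy of $f$ together with fibrancy of $z \colon A \to Z$ forces $f^*$ to be a fibration, which is trivial when $f$ is; for part~(ii), cofibrancy of $x \colon A \to X$ together with fibrancy of $g$ forces $g_*$ to be a fibration, which is trivial when $g$ is. These are exactly the consequences of SM7 spelled out in, e.g., \cite[Proposition~9.3.1]{Hir} (applied to $A \comma \cat M$).

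The remaining weak-equivalence statements are Ken Brown--style consequences of what we just proved. For the second statement of part~(i): the assumptions guarantee that $x \colon A \to X$ and $y \colon A \to Y$ are cofibrations, so $X$ and $Y$ are cofibrant in $A \comma \cat M$; moreover $f$ is a weak equivalence between cofibrant objects of $A \comma \cat M$ and $Z$ is fibrant, so Ken Brown's lemma (see, e.g., \cite[Corollary~9.3.3~(1)]{Hir}) yields that $f^*$ is a weak equivalence. The second statement of part~(ii) is dual: $X$ is cofibrant in $A \comma \cat M$, while $Y$ and $Z$ are fibrant there, so any weak equivalence $g \colon Y \to Z$ induces a weak equivalence $g_* \colon \map[A](X, Y) \to \map[A](X, Z)$ by the dual form of Ken Brown's lemma (\cite[Corollary~9.3.3~(2)]{Hir}).

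There is no real obstacle here; the only thing to verify is that the reduction to $A \comma \cat M$ works, but this is precisely the content of \cref{lemma:slice_sm}. Once we are inside the simplicial model category $A \comma \cat M$, every claim is a direct citation of SM7 or of Ken Brown's lemma applied to mapping complexes.
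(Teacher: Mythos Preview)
Your proof is correct and follows the same approach as the paper, which simply cites \cref{lemma:slice_sm} together with \cite[§9.3]{Hir}. You have merely unpacked which specific results from that section (SM7 via Proposition~9.3.1 and Ken Brown via Corollary~9.3.3) are being invoked.
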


\begin{proof}
  This follows from \cref{lemma:slice_sm} and \cite[§9.3]{Hir}.
\end{proof}

\begin{lemma} \label{lemma:mapeq_we}
  In a simplicial model category, let
  \[
  \begin{tikzcd}
  A \rar{i} \dar[swap]{a} & Y \dar{f} \\
  A' \rar{i'} & Y'
  \end{tikzcd}
  \]
  be a commutative square, and $A \to X$ and $A' \to Z$ two maps.
  Assume that $f$ is a weak equivalence.
  \begin{itemize}
    \item
    If the map
    \[ f_* \colon \map[A](X, Y) \to \map[A](X, Y') \]
    is a weak equivalence, then so is its restriction $f_*^\eq \colon \mapeq[A](X, Y) \to \mapeq[A](X, Y')$.
    \item
    If the map
    \[ f^* \colon \map[A'](Y', Z) \to \map[A](Y, Z) \]
    is a weak equivalence, then so is its restriction $f^*_\eq \colon \mapeq[A'](Y', Z) \to \mapeq[A](Y, Z)$.
  \end{itemize}
\end{lemma}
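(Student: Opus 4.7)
The plan is to reduce the statement to two facts about simplicial sets: that $\mapeq$ is a union of connected components of $\map$, and that a weak equivalence of simplicial sets restricts to a weak equivalence between each pair of corresponding components.

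First I would observe, using \cref{lemma:sh_we}, that $\mapeq[A](X, Y) \subseteq \map[A](X, Y)$ is a union of connected components, and likewise for $\mapeq[A](X, Y')$. To see that $f_*$ restricts to a map $f_*^\eq$ as claimed, it suffices to note that if $g \colon X \to Y$ is a weak equivalence under $A$, then $f_*(g) = f \circ g$ is a weak equivalence by 2-out-of-3 applied to $f$. The converse is also true by 2-out-of-3: if $f \circ g$ is a weak equivalence, so is $g$. Combining these, a component $C$ of $\map[A](X, Y)$ is contained in $\mapeq[A](X, Y)$ if and only if $f_*(C)$ is contained in $\mapeq[A](X, Y')$.

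Since $f_*$ is assumed to be a weak equivalence of simplicial sets, it induces a bijection on $\pi_0$; by the previous paragraph this bijection restricts to a bijection between the $\pi_0$'s of $\mapeq[A](X, Y)$ and $\mapeq[A](X, Y')$. Moreover, a weak equivalence of simplicial sets restricts to a weak equivalence between each pair of matching connected components (since being a weak equivalence is equivalent to being a $\pi_0$-bijection together with a $\pi_n$-isomorphism at each basepoint for all $n \ge 1$). The map $f_*^\eq$ is a disjoint union, over its matching components, of such restrictions of $f_*$, and so is itself a weak equivalence.

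The argument for the second item is entirely analogous, replacing $f \circ g$ by $g \circ f$ throughout: precomposition with the weak equivalence $f$ preserves and reflects weak equivalences by 2-out-of-3, so the same bookkeeping shows that $f^*_\eq$ is a disjoint union of restrictions of $f^*$ to matching components, hence a weak equivalence. No genuine obstacle is expected here; the entire argument is formal bookkeeping about connected components and 2-out-of-3.
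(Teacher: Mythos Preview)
Your proof is correct and essentially the same as the paper's: both reduce to the observation that composition with the weak equivalence $f$ preserves and reflects weak equivalences (2-out-of-3), so the $\pi_0$-bijection induced by the assumed weak equivalence restricts to a bijection between the components belonging to $\mapeq$. The paper phrases the key step as checking $\pi_0$-surjectivity of $f^*_\eq$ explicitly, while you package it as ``the bijection restricts,'' but the content is identical.
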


\begin{proof}
  We prove the second statement, the proof of the first is similar.
  First note that $f^*$ indeed maps $\mapeq[A'](Y', Z)$ to $\mapeq[A](Y, Z)$, and that $f^*_\eq$ is a weak equivalence onto the connected components it hits.
  So it is enough to show that $f^*_\eq$ is surjective on $\hg 0$.
  Let $\phi \in \mapeq[A](Y, Z)$.
  By assumption, there exists a map $\psi \in \map[A'](Y', Z)$ such that $\psi \after f \eq_A \phi$.
  By \cref{lemma:sh_we}, since $\phi$ is a weak equivalence, so is $\psi \after f$.
  Then, since $f$ is a weak equivalence, so is $\psi$.
  Thus $\psi \in \mapeq[A'](Y', Z)$ and we are done.
\end{proof}

\begin{lemma} \label{lemma:rel_forget}
  In a simplicial model category, let
  \[
  \begin{tikzcd}
  A' \rar{i'} \dar[swap]{a} & X' \dar{f} \\
  A \rar{i} & X
  \end{tikzcd}
  \]
  be a commutative square and $j \colon A \to Y$ a map.
  Assume that $a$ and $f$ are weak equivalences, that $i$ and $i'$ are cofibrations, and that $Y$ is fibrant.
  If $A$ and $A'$ are cofibrant, or if $a$ and $f$ are cofibrations, then the two maps
  \[ f^* \colon \map[A](X, Y)  \longto  \map[A'](X', Y)  \qquad \text{and} \qquad  f^* \colon \mapeq[A](X, Y)  \longto  \mapeq[A'](X', Y) \]
  are weak equivalences of simplicial sets.
\end{lemma}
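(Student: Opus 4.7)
The plan is to use the defining pullback square for $\map[A](X,Y)$ to reduce the statement to the analogous statements about absolute mapping spaces. The commutativity $f \after i' = i \after a$ produces a commutative diagram
\[
\begin{tikzcd}
\map[A](X, Y) \rar \dar[swap]{f^*} & \map(X, Y) \rar{i^*} \dar{f^*} & \map(A, Y) \dar{a^*} \\
\map[A'](X', Y) \rar & \map(X', Y) \rar{(i')^*} & \map(A', Y)
\end{tikzcd}
\]
whose two rows are pullback squares expressing $\map[A](X,Y)$ and $\map[A'](X',Y)$ as the fibers of $i^*$ and $(i')^*$ over $j$ and $j \after a$ respectively. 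By \cref{lemma:rel_map}, the assumptions that $i$ and $i'$ are cofibrations and that $Y$ is fibrant imply that $i^*$ and $(i')^*$ are fibrations, so both pullbacks are in fact homotopy pullbacks.

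It therefore suffices to show that the two right-hand vertical maps $f^*$ and $a^*$ are weak equivalences, after which the invariance of homotopy pullbacks under weak equivalences of the cospan yields that the left-hand $f^*$ is a weak equivalence. Both cases of the hypothesis reduce to applying \cref{lemma:rel_map} with the initial object as the subcomplex. If $A$ and $A'$ are cofibrant, then so are $X$ and $X'$ as targets of cofibrations from cofibrant objects, so precomposition with the weak equivalences $a$ and $f$ between cofibrant objects into the fibrant object $Y$ gives weak equivalences. If instead $a$ and $f$ are cofibrations, they are trivial cofibrations, and precomposition with a trivial cofibration into a fibrant object is a weak equivalence by the other half of \cref{lemma:rel_map}, which does not require cofibrancy of the domains.

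The statement for $\mapeq$ then follows by the same argument as in \cref{lemma:mapeq_we}: the restriction of the weak equivalence $f^*$ to the subcomplexes of components consisting of weak equivalences is automatically a weak equivalence onto the components it hits, and for surjectivity on $\hg 0$, a preimage $\psi \in \map[A](X, Y)$ of a weak equivalence $\phi \in \mapeq[A'](X', Y)$ satisfies $\psi \after f \eq_{A'} \phi$, so $\psi \after f$ is a weak equivalence by \cref{lemma:sh_we}; the 2-out-of-3 property applied to the weak equivalence $f$ then forces $\psi$ itself to be a weak equivalence. No step here presents a real obstacle; the only care needed is to match the cofibrancy hypotheses in the two cases correctly to the corresponding half of \cref{lemma:rel_map}.
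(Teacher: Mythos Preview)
Your proof is correct and follows essentially the same approach as the paper's: set up the diagram of fiber sequences, use \cref{lemma:rel_map} (with the initial object) to show the absolute precomposition maps are weak equivalences in both hypothesis cases, conclude for $\map$, and then deduce the $\mapeq$ statement via the argument of \cref{lemma:mapeq_we}. The paper simply compresses the two case distinctions into ``the conditions guarantee that $f^*$ and $a^*$ are weak equivalences'' and cites \cref{lemma:mapeq_we} directly rather than rerunning its proof.
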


\begin{proof}
  There is a commutative diagram
  \[
  \begin{tikzcd}
  \map[A](X, Y) \rar \dar & \map(X, Y) \rar{i^*} \dar[swap]{f^*} & \map(A, Y) \dar{a^*} \\
  \map[A'](X', Y) \rar & \map(X', Y) \rar{(i')^*} & \map(A', Y)
  \end{tikzcd}
  \]
  in which the rows are homotopy fiber sequences (based at $j$ and $ja$, respectively) since $i$ and $i'$ are cofibrations and $Y$ is fibrant.
  The conditions guarantee that $f^*$ and $a^*$ are weak equivalences.
  This implies the claim for the first map, and for the second it follows from \cref{lemma:mapeq_we}.
\end{proof}

\begin{lemma} \label{lemma:SM7}
  In a simplicial model category, let $i \colon A \to B$ be a cofibration and $p \colon X \to Y$ a fibration.
  Then the map
  \[ (i^*, p_*) \colon \map(B, X)  \longto  \map(A, X) \times_{\map(A, Y)} \map(B, Y) \]
  is a fibration of simplicial sets.
  If additionally $i$ or $p$ is a weak equivalence, then so is $(i^*, p_*)$.
  If both $i$ and $p$ are weak equivalences, then it restricts to a trivial fibration
  \[ (i^*, p_*) \colon \mapeq(B, X)  \longto  \mapeq(A, X) \times_{\mapeq(A, Y)} \mapeq(B, Y) \]
  of simplicial sets.
\end{lemma}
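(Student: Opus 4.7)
The plan is to deduce the first two assertions directly from the simplicial model category axiom (SM7), as recorded for instance in \cite[Proposition~9.3.9]{Hir}: applied to the cofibration $i \colon A \to B$ and the fibration $p \colon X \to Y$, this gives at once that $(i^*, p_*)$ is a fibration, which is moreover a weak equivalence whenever $i$ or $p$ is.

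For the third assertion, the strategy is to realize the restricted map as a base change of the trivial fibration provided by part~2, along the inclusion
\[ \mapeq(A,X) \times_{\mapeq(A,Y)} \mapeq(B,Y) \longincl \map(A,X) \times_{\map(A,Y)} \map(B,Y). \]
The key step will be to verify that the preimage under $(i^*, p_*)$ of the subcomplex on the left is exactly $\mapeq(B,X)$. One inclusion is straightforward: if $f \colon B \to X$ is a weak equivalence, then so are $f \after i$ and $p \after f$, since $i$ and $p$ are themselves weak equivalences. For the converse, if $f \after i$ is a weak equivalence then so is $f$ itself, by applying two-out-of-three to the composite $A \xlongto{i} B \xlongto{f} X$ and using that $i$ is a weak equivalence. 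Since $\mapeq(B,X)$ is a union of connected components of $\map(B,X)$, this pointwise statement at $0$-simplices automatically propagates to all simplicial degrees.

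Once this identification of preimages is in place, the restricted map fits into a pullback square with the trivial fibration of part~2, and is therefore itself a trivial fibration, as the class of trivial fibrations is stable under base change. The only non-formal ingredient in the whole argument is the two-out-of-three observation identifying the preimage; everything else is bookkeeping, so I do not anticipate any real obstacle beyond being careful about which components are being restricted to.
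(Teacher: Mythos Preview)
Your proposal is correct and takes essentially the same approach as the paper: the first two parts are the SM7 axiom, and the third is the two-out-of-three argument identifying the preimage of the restricted target as $\mapeq(B,X)$, which is exactly what the paper means by ``as in the proof of \cref{lemma:mapeq_we}''. Your explicit framing as a base change of the trivial fibration is a clean way to obtain ``trivial fibration'' rather than merely ``weak equivalence'', but the content is the same.
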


\begin{proof}
  The first two statements are part of the definition of a simplicial model category.
  The third statement follows as in the proof of \cref{lemma:mapeq_we}.
\end{proof}

\begin{lemma} \label{lemma:rel_SM7}
  In a simplicial model category $\cat M$, let
  \[
  \begin{tikzcd}
    A \rar{i} \dar[swap]{a} & X \dar{f} \\
    A' \rar{i'} & X'
  \end{tikzcd}
  \]
  be a commutative diagram and $j' \colon A' \to Y$ and $g \colon Y \to Z$ two maps.
  Assume that $g$ is a fibration and that $(a, f) \colon i \to i'$ is a cofibration in the arrow category of $\cat M$ equipped with the projective model structure.
  Then the map
  \[ (f^*, g_*) \colon  \map[A'](X', Y)  \longto  \map[A](X, Y) \times_{\map[A](X, Z)} \map[A'](X', Z) \]
  is a fibration of simplicial sets.
  If additionally $g$ or both $a$ and $f$ are weak equivalences, then so is $(f^*, g_*)$.
  If all three of $g$, $a$, and $f$ are weak equivalences, then it restricts to a trivial fibration
  \[ (f^*, g_*) \colon  \mapeq[A'](X', Y)  \longto  \mapeq[A](X, Y) \times_{\mapeq[A](X, Z)} \mapeq[A'](X', Z) \]
  of simplicial sets.
\end{lemma}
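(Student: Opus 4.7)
The plan is to reduce everything to a single application of the standard SM7 axiom (Lemma~\ref{lemma:SM7}) followed by taking a fiber. First I would unpack the projective cofibration hypothesis using \cref{lemma:projective_cofibration_lincat} applied to $\lincat 1$: saying that $(a, f) \colon i \to i'$ is a projective cofibration in $\arcat{\cat M}$ is equivalent to $a \colon A \to A'$ being a cofibration together with the induced map $h \colon A' \cop_A X \to X'$ being a cofibration. Moreover, this is a projective trivial cofibration exactly when both $a$ and $h$ are trivial cofibrations. In the case that $a$ and $f$ are both weak equivalences, $a$ being a trivial cofibration implies that the cobase change $X \to A' \cop_A X$ is a weak equivalence; applying the 2-out-of-3 property to the factorization $X \to A' \cop_A X \xrightarrow{h} X'$ of $f$, we conclude that $h$ is a trivial cofibration as well.

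Next I would apply Lemma~\ref{lemma:SM7} to the cofibration $h$ and the fibration $g$ to obtain a fibration
\[ \phi \colon \map(X', Y) \longto \map(A' \cop_A X, Y) \times_{\map(A' \cop_A X, Z)} \map(X', Z) \]
which is moreover a trivial fibration whenever $g$ is a weak equivalence or whenever $h$ is (the latter being guaranteed by the paragraph above when $a$ and $f$ are both weak equivalences). Using the fact that $\map(\blank, W)$ takes pushouts to pullbacks, the codomain of $\phi$ is canonically isomorphic to
\[ \map(A', Y) \times_{\map(A, Y)} \map(X, Y) \times_{\map(A, Z) \times_{\map(A', Z)} \map(X, Z)} \map(X', Z). \]

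Now comes the identification step. The forgetful projection to $\map(A', Y)$ composed with $\phi$ is simply $(i')^*$, and by definition the fiber of $(i')^* \colon \map(X', Y) \to \map(A', Y)$ over the point $j'$ is exactly $\map[A'](X', Y)$. A direct inspection of the pullback description above shows that the fiber of the codomain of $\phi$ over $j'$ is precisely $\map[A](X, Y) \times_{\map[A](X, Z)} \map[A'](X', Z)$ (the $\map[A]$ appearing because pulling back along $j' \colon \pt \to \map(A', Y)$ forces $j = j' \after a$ to be the basepoint of $\map(A, Y)$). Hence $(f^*, g_*)$ is obtained as the pullback of $\phi$ along the inclusion $\{j'\} \hookrightarrow \map(A', Y)$, so it inherits the property of being a (trivial) fibration whenever $\phi$ does.

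Finally, for the statement about $\mapeq$, I would observe two things when $g$, $a$, and $f$ are all weak equivalences: (i) $(f^*, g_*)$ restricts to a map into $\mapeq[A](X, Y) \times_{\mapeq[A](X, Z)} \mapeq[A'](X', Z)$ because postcomposition with $g$ and precomposition with $f$ both preserve weak equivalences; and (ii) conversely, if $\chi \in \map[A'](X', Y)$ satisfies $\chi \after f \in \mapeq[A](X, Y)$, then since $f$ is a weak equivalence, 2-out-of-3 forces $\chi$ itself to be a weak equivalence. Thus the preimage under $(f^*, g_*)$ of the $\mapeq$-pullback is exactly $\mapeq[A'](X', Y)$, which is a union of connected components of $\map[A'](X', Y)$. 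Restricting the trivial fibration $(f^*, g_*)$ to such a union of components yields a trivial fibration onto its image, completing the proof. The only mildly delicate point is the identification of the fiber in step three, which requires carefully matching the constraint on $\map(X', Z)$ with the compatibility condition $g \after j' = \psi \after i'$ defining $\map[A'](X', Z)$; everything else is routine.
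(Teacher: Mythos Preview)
Your proof is correct and follows essentially the same approach as the paper: both arguments reduce to applying the SM7 axiom (\cref{lemma:SM7}) to the cofibration $A' \cop_A X \to X'$ and the fibration $g$, then pass to fibers over $j'$ to obtain the relative statement. The paper organizes this via a square of absolute mapping spaces and invokes the dual of \cref{lemma:cofibration_pushouts}, whereas you apply SM7 first and then take the fiber directly; the underlying content is the same. (Minor typo: in your displayed pullback the subscript should read $\map(A', Z) \times_{\map(A, Z)} \map(X, Z)$.)
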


\begin{proof}
  We prove the first two statements simultaneously.
  The desired map is obtained by taking fibers of the vertical maps of the following commutative diagram of simplicial sets
  \[
  \begin{tikzcd}
     \map(X', Y) \rar{(f^*, g_*)} \dar[swap]{(i')^*} & \map(X, Y) \times_{\map(X, Z)} \map(X', Z) \dar{i^* \times_{i^*} (i')^* } \\
     \map(A', Y) \rar{(a^*, g_*)} & \map(A, Y) \times_{\map(A, Z)} \map(A', Z)
  \end{tikzcd}
  \]
  over $j'$ and $(j' \after a, g \after j')$, respectively.
  Hence, by the dual of \cref{lemma:cofibration_pushouts}, it is enough to prove that the induced map from $\map(X', Y)$ to the pullback $P$ of the lower right-hand corner is a fibration (resp.\ trivial fibration).
  Noting that the map
  \[ (\id, g_*) \colon \map(A', Y)  \longto  \map(A', Y) \times_{\map(A', Z)} \map(A', Z) \]
  is an isomorphism, we obtain that $P$ is canonically isomorphic to
  \[ \map(X \cop_{A} A', Y) \times_{\map(X \cop_{A} A', Z)} \map(X', Z) \]
  since pullbacks commute with pullbacks and $\map(\blank, T)$ sends colimits to limits for all objects $T$ of $\cat M$ (see e.g.\ \cite[Proposition~9.2.2]{Hir}).
  By \cref{lemma:projective_cofibration_lincat} and assumption, the induced map $q \colon X \cop_{A} A' \to X'$ is a cofibration.
  Since $g \colon Y \to Z$ is a fibration, this implies by \cref{lemma:SM7} that the induced map $p \colon \map(X', Y) \to P$ is a fibration.
  If $g$ is additionally a weak equivalence, then so is $p$; similarly, when both $a$ and $f$ are weak equivalences, then so is $q$ and hence $p$.
  This concludes the proof of the first two statements; the third follows as in the proof of \cref{lemma:mapeq_we}.
\end{proof}

\begin{lemma} \label{lemma:rel_precompose}
  In a simplicial model category $\cat M$, let
  \[
  \begin{tikzcd}
    A \rar{i} \dar[swap]{a} & X \dar{f} \\
    A' \rar{i'} & X'
  \end{tikzcd}
  \]
  be a commutative diagram and $j' \colon A' \to Y$ a map.
  Assume that $Y$ is fibrant and that $(a, f) \colon i \to i'$ is a cofibration in the arrow category of $\cat M$ equipped with the projective model structure.
  Then the map
  \[ f^* \colon \map[A'](X', Y)  \longto  \map[A](X, Y) \]
  is a fibration of simplicial sets.
  If additionally $a$ and $f$ are weak equivalences, then so is $f^*$.
\end{lemma}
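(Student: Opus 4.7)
The plan is to deduce this lemma as a direct specialization of \cref{lemma:rel_SM7} to the case where the codomain is terminal. First I would let $* \in \cat M$ denote the terminal object and take $Z = *$ in \cref{lemma:rel_SM7}, together with $g \colon Y \to *$ the unique map and the unique maps $A' \to *$ and $A \to *$ in the role of the maps into $Z$. Since $Y$ is fibrant by assumption, $g$ is a fibration (though not in general a weak equivalence, which is fine since we only need that hypothesis on $g$ when we are not assuming $a$ and $f$ are weak equivalences).

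The key observation is that for any object $C$ of $\cat M$ one has $\map(C, *) = *$ in $\sSet$, and hence for any map $B \to C$ the relative mapping complex $\map[B](C, *)$ is itself terminal. In particular both $\map[A](X, *)$ and $\map[A'](X', *)$ are points, so the pullback
\[ \map[A](X, Y) \times_{\map[A](X, *)} \map[A'](X', *) \]
appearing in \cref{lemma:rel_SM7} is canonically isomorphic to $\map[A](X, Y)$, and under this identification the induced map $(f^*, g_*)$ is precisely $f^* \colon \map[A'](X', Y) \to \map[A](X, Y)$.

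With these identifications in place, the first assertion of \cref{lemma:rel_SM7} immediately gives that $f^*$ is a fibration of simplicial sets. For the second assertion, if $a$ and $f$ are additionally weak equivalences, then the corresponding clause of \cref{lemma:rel_SM7} (``$g$ or both $a$ and $f$ are weak equivalences'') applies and yields that $(f^*, g_*)$, and hence $f^*$ itself, is a weak equivalence.

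I do not expect any real obstacle here: the argument is entirely formal once one recognizes that the statement is the degenerate case of \cref{lemma:rel_SM7} obtained by collapsing the codomain, and the only thing to check is the harmless identification of $\map[A](X, *)$ with a point, which is immediate from the definition of the relative mapping complex as a pullback.
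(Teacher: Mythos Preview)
Your proof is correct and is exactly the approach taken in the paper: the lemma is deduced from \cref{lemma:rel_SM7} by setting $Z = *$ and $g \colon Y \to *$, using that $Y$ is fibrant so $g$ is a fibration, and that the relative mapping complexes into the terminal object are points so the pullback collapses to $\map[A](X, Y)$.
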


\begin{proof}
  This follows from \cref{lemma:rel_SM7} by taking $g \colon Y \to *$ to be the unique map to the terminal object.
\end{proof}

\subsection{The functor \texorpdfstring{$\aut[A](X)$}{aut\_A(X)}} \label{sec:zig-zag}

The following theorem is the main result of this appendix.

\begin{theorem} \label{thm:Baut_functor}
  Let $\cat M$ be a simplicial model category.
  Then the assignment
  \[ (B \xto{i} A \xto{\iota} X \xto{\xi} K)  \longmapsto  \bigl( \aut[A](X), \map[B](X, K) \bigr) \]
  can be canonically extended to a functor from the subcategory $\cat R$ of $\cat M^{\lincat 3}$
  \begin{itemize}
  	\item
  	with objects those functors $\lincat 3 \to \cat M$ as above such that $i$ and $\iota$ are cofibrations, $B$ and $A$ are cofibrant, and $X$ and $K$ are fibrant,
  	\item
  	and morphisms given by pointwise weak equivalences,
  \end{itemize}
  to the core of the homotopy category of pairs $(G, M)$ of a simplicial monoid $G$ and a right $G$-module $M$ in simplicial sets.
  Moreover, given two morphisms of $\cat R$
  \[ f, g \colon (B \to A \to X \to K) \to (B' \to A' \to X' \to K') \]
  such that $B'$ and $A'$ are fibrant and $f$ and $g$ are simplicially homotopic in $\cat M^{\lincat 3}$, the images of $f$ and $g$ under the functor above agree.
\end{theorem}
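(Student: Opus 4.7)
On objects the functor $F$ is defined by $F(Z) \defeq (\aut[A](X), \map[B](X, K))$ for $Z = (B \to A \to X \to K)$, with $\aut[A](X)$ acting on $\map[B](X, K)$ by precomposition; this action preserves the $B$-relative condition since elements of $\aut[A](X)$ restrict to the identity on $A \supseteq B$. The substance of the theorem is the extension to morphisms, and the strategy is to construct, for each morphism $f \colon Z \to Z'$ in $\cat R$, a canonical zig-zag of weak equivalences of pairs representing $F(f)$.

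Given such an $f$, I would apply the functorial factorization in the projective model structure on $\cat M^{\lincat 3}$ (which exists by \cref{lemma:projective_cofibration}) to write $f = v \after u$, with $u \colon Z \to W$ a projective cofibration and $v \colon W \to Z'$ a projective trivial fibration; both are pointwise weak equivalences. An elementary argument using \cref{lemma:projective_cofibration_lincat} and \cref{lemma:cofibration_pushouts} shows that $W = (B_W \to A_W \to X_W \to K_W)$ itself lies in $\cat R$ (the cofibrancy/fibrancy conditions at $B_W, A_W, X_W, K_W$ are inherited from $Z$ and $Z'$) and that the restriction of $u$ to any contiguous sub-interval of $\lincat 3$ is a projective cofibration in the smaller functor category. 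For the cofibration leg, define
\[ (\aut_u, \map_u)  \defeq  \bigl( \aut[A](X) \times_{\mapeq[A](X, X_W)} \aut[A_W](X_W),\ \map[B](X, K) \times_{\map[B](X, K_W)} \map[B_W](X_W, K_W) \bigr), \]
with the $Z$-side legs given by postcomposition with $u_X, u_K$ and the $W$-side legs by restriction along $u$. The $W$-side legs are trivial fibrations by \cref{lemma:rel_precompose} and \cref{lemma:rel_SM7}, while the $Z$-side legs are weak equivalences by \cref{lemma:rel_map}; by \cref{lemma:monoid_pullback} the pullback is a simplicial monoid with right module, and both projections are weak equivalences of pairs. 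For the trivial fibration leg define the symmetric analogue
\[ (\aut_v, \map_v)  \defeq  \bigl( \aut[A_W](X_W) \times_{\mapeq[A_W](X_W, X')} \aut[A'](X'),\ \map[B_W](X_W, K_W) \times_{\map[B_W](X_W, K')} \map[B'](X', K') \bigr), \]
where this time the $W$-side legs are trivial fibrations directly from \cref{lemma:rel_map} (since $v_X, v_K$ are trivial fibrations) and the $Z'$-side legs are weak equivalences by \cref{lemma:rel_forget} (whose cofibrancy hypothesis is met because $A_W, B_W, A', B'$ are all cofibrant). Concatenating yields a four-step zig-zag of weak equivalences of pairs connecting $F(Z)$ to $F(Z')$, which represents $F(f)$ in the homotopy category of pairs.

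The main obstacle is promoting this construction to an honest functor and verifying homotopy invariance. For functoriality under composition, given composable $f \colon Z \to Z'$ and $g \colon Z' \to Z''$ in $\cat R$, I would construct a common refinement of the zig-zag for $g \after f$ and the concatenation of those for $f$ and $g$ via a functorial factorization of the entire composite in $\cat M^{\lincat 3 \times \lincat 2}$ (whose projective model structure exists by \cref{lemma:projective_cofibration}), and then show that the resulting grid of intermediate pairs and weak equivalences produces a commutative diagram in the homotopy category. For simplicial homotopy invariance, the additional assumption that $B'$ and $A'$ are fibrant ensures that $Z'$ is projectively fibrant in $\cat M^{\lincat 3}$, so a simplicial homotopy between $f$ and $g$ is encoded by a morphism $Z \otimes \lincat 1 \to Z'$ in $\cat M^{\lincat 3}$; applying the same pullback construction fibrewise over $\lincat 1$ yields a simplicial homotopy of intermediate pairs, which becomes an equality of morphisms in the homotopy category. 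The technical heart of the argument is thus verifying that the strict pullbacks defining $(\aut_u, \map_u)$ and $(\aut_v, \map_v)$ are homotopy pullbacks, which reduces to having one leg of each pullback be a fibration — exactly what the relative SM7 results \cref{lemma:rel_precompose} and \cref{lemma:rel_SM7} deliver after the projective cofibration arranged by the functorial factorization.
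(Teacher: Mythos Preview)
Your zig-zag construction is sound and closely parallels the paper's: both factor the given weak equivalence and pass through the pullback simplicial monoid $\aut[a](f)$, invoking the same relative SM7 lemmas.  There are two genuine differences in packaging.  First, the paper factors only the component $(a,f)\colon \iota \to \iota'$ in the projective model structure on $\arcat{\cat M}$, not the whole morphism in $\cat M^{\lincat 3}$; consequently its intermediate module objects are the plain mapping spaces $\map[B](\widetilde X, K)$ and $\map[B](\widetilde X, K')$ (with the \emph{original} $B$, $K$, $K'$), not the pullbacks $\map_u$, $\map_v$ you introduce.  This keeps the module side lighter and avoids checking that the pullback module carries the right action.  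Second, and more substantively, the paper does not fix a functorial factorization but instead proves that \emph{any} factorization yields the same zig-zag in the homotopy category, via a dedicated comparison result (\cref{lemma:zig-zag_composition}): given two factorizations through $\tilde\iota$ and $\hat\iota$, one lifts across the trivial fibration to produce a comparison map $\hat\iota \to \tilde\iota$, refines once more, and then analyzes the resulting grid of $\aut$-spaces.  Compatibility with composition is handled by the same lemma after refactoring the middle composite $\tilde\iota \to \iota' \to \hat\iota$.

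This is where your outline has a gap.  The phrase ``functorial factorization of the entire composite in $\cat M^{\lincat 3 \times \lincat 2}$'' does not specify a morphism to factor, and even granting a functorial factorization in $\cat M^{\lincat 3}$, the factorization of $g\circ f$ bears no a priori relation to those of $f$ and $g$; you therefore cannot avoid an explicit comparison of two distinct factorizations of the same map.  That comparison---carried out in the paper by lifting and \cref{lemma:zig-zag_composition}---is the missing step.  Once functoriality is in hand, the homotopy-invariance statement follows, as the paper does, by applying it to the inclusions and projection of a good cylinder object for $Z$ in $\cat M^{\lincat 3}$ (using that $Z'$ is projectively fibrant), rather than by the ``fibrewise over $\lincat 1$'' argument you sketch, which does not interact well with the factorization.
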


We begin by proving various technical lemmas we will need for the proof of the theorem above.

\begin{lemma} \label{lemma:rel_aut_span}
  In a simplicial model category $\cat M$, let
  \[
  \begin{tikzcd}
  A \rar{i} \dar[swap]{a} & X \dar{f} \\
  A' \rar{i'} & X'
  \end{tikzcd}
  \]
  be a commutative diagram.
  We consider the induced zig-zag
  \[ \aut[A](X)  \xlongfrom{p}  \aut[a](f)  \xlongto{p'}  \aut[A'](X') \]
  of simplicial monoids.
  \begin{enumerate}
    \item
    Assume that $A' = A$ and $a = \id[A]$, that $f$ is a trivial cofibration, and that $X'$ is fibrant.
    \begin{enumerate}
      \item
      Then $p$ is a trivial fibration.
      \item
      Additionally assume that $i$ is a cofibration and that $X$ is fibrant.
      Then $p'$ is a weak equivalence.
    \end{enumerate}
    \item
    Assume that $f$ is a trivial fibration and that $i$ is a cofibration.
    \begin{enumerate}
      \item
      Then $p'$ is a trivial fibration.
      \item
      Additionally assume that $a$ is a weak equivalence, that $i'$ is a cofibration, that $X'$ is fibrant, and that $A$ and $A'$ are cofibrant.
      Then $p$ is a weak equivalence.
    \end{enumerate}
    \item
    Assume that $f$ is a weak equivalence, that $X'$ is fibrant, and that $(a, f) \colon i \to i'$ is a cofibration in the arrow category of $\cat M$ equipped with the projective model structure.
    \begin{enumerate}
      \item
      Additionally assume that $a$ is a weak-equivalence.
      Then $p$ is a trivial fibration.
      \item
      Additionally assume that $i$ is a cofibration and that $X$ is fibrant.
      Then $p'$ is a weak equivalence.
    \end{enumerate}
  \end{enumerate}
\end{lemma}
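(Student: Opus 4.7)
The common thread in all six cases is the pullback square
\[
\begin{tikzcd}
  \aut[a](f) \rar{p'} \dar[swap]{p} & \aut[A'](X') \dar{f^*} \\
  \aut[A](X) \rar{f_*} & \mapeq[A](X, X')
\end{tikzcd}
\]
in simplicial sets, where $f^*$ is precomposition and $f_*$ is postcomposition with $f$ (both maps land in $\mapeq[A](X,X')$ since $f$ is always a weak equivalence under our hypotheses). The strategy in every case is to show that one of $f^*$ or $f_*$ is a fibration or trivial fibration, combine this with the pullback structure (pullbacks of trivial fibrations are always trivial fibrations, while by right properness of $\sSet$ a pullback of a weak equivalence along a fibration is a weak equivalence), and where needed invoke $2$-out-of-$3$ on the relation $f^* \after p' = f_* \after p$. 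The passage from a statement about a full mapping complex to the corresponding statement about the restriction to $\aut$ or $\mapeq$ is routine, either by pulling back along $\mapeq[A](X,X') \hookrightarrow \map[A](X,X')$ (for trivial fibrations) or via \cref{lemma:mapeq_we} (for weak equivalences), and I will use this without further comment.

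The three statements~(a) asking for trivial fibrations are handled uniformly. In~1(a), \cref{lemma:rel_map} gives that $f^*$ is a trivial fibration (since $f$ is a trivial cofibration and $X'$ is fibrant), so the pullback $p$ is a trivial fibration. In~2(a), symmetrically, \cref{lemma:rel_map} gives that $f_*$ is a trivial fibration (since $i$ is a cofibration and $f$ is a trivial fibration), so $p'$ is a trivial fibration. In~3(a), \cref{lemma:rel_precompose} applied with $(a,f) \colon i \to i'$ a projective cofibration, $X'$ fibrant, and both $a$ and $f$ weak equivalences shows $f^*$ is a trivial fibration, so $p$ is a trivial fibration.

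For the~(b) statements, 1(b) and~2(b) follow from their (a)-counterparts together with a $2$-out-of-$3$ argument. In~1(b), \cref{lemma:rel_map} shows $f_*$ is a weak equivalence (using $i$ a cofibration and $X, X'$ fibrant), while 1(a) already gave $f^*$ and $p$ both trivial fibrations; $2$-out-of-$3$ on $f^* \after p' = f_* \after p$ yields $p'$ a weak equivalence. In~2(b), \cref{lemma:rel_forget} gives $f^*$ a weak equivalence (using $a, f$ weak equivalences, $i, i'$ cofibrations, $X'$ fibrant, and $A, A'$ cofibrant), while 2(a) gave $f_*$ and $p'$ both trivial fibrations, so $2$-out-of-$3$ yields $p$ a weak equivalence.

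The main obstacle is~3(b), because the hypothesis does not include $a$ being a weak equivalence, so \cref{lemma:rel_forget} cannot be invoked to conclude that $f^*$ is a weak equivalence, and the $2$-out-of-$3$ pattern from 1(b) and~2(b) breaks down. Instead I would observe that \cref{lemma:rel_precompose} still gives $f^*$ a fibration (only requiring the projective cofibration hypothesis and $X'$ fibrant), while \cref{lemma:rel_map} gives $f_*$ a weak equivalence (using $i$ a cofibration, $X$ and $X'$ fibrant, and $f$ a weak equivalence). Since $p'$ is then the pullback in $\sSet$ of the weak equivalence $f_*$ along the fibration $f^*$, right properness of $\sSet$ immediately yields that $p'$ is a weak equivalence.
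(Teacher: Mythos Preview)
Your proof is correct and follows the same approach as the paper: both use the pullback square of simplicial sets involving $f^*$ and $f_*$, and invoke the same supporting lemmas (\cref{lemma:rel_map}, \cref{lemma:rel_forget}, \cref{lemma:rel_precompose}, \cref{lemma:mapeq_we}) in the same places. The only cosmetic difference is that for~3(b) you make explicit the appeal to right properness of $\sSet$, whereas the paper's compressed sentence leaves this step implicit.
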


\begin{proof}
  We consider the pullback square
  \[
  \begin{tikzcd}
  \aut[a](f) \dar[swap]{p'} \rar{p} & \aut[A](X) \dar{f_*} \\
  \aut[A'](X') \rar{f^*} & \mapeq[A](X, X')
  \end{tikzcd}
  \]
  of simplicial sets.
  Parts~1 and 2.a) follow from \cref{lemma:rel_map,lemma:mapeq_we}.
  Part~2.b) follows by additionally using \cref{lemma:rel_forget}.
  Part~3.a) follows from \cref{lemma:rel_precompose,lemma:mapeq_we}; for part~3.b) we additionally use \cref{lemma:rel_map}.
\end{proof}

\begin{lemma} \label{lemma:rel_aut_naturality}
  In a simplicial model category, let
  \[
  \begin{tikzcd}
  A \rar{i} \dar[swap]{a} & X \rar{f} \dar{g} & Y \dar{h} \\
  A' \rar{i'} & X' \rar{f'} & Y'
  \end{tikzcd}
  \]
  be a commutative diagram.
  Then there is a commutative diagram of simplicial monoids
  \[
  \begin{tikzcd}
  \aut[A](X) & \aut[a](g) \lar \rar & \aut[A'](X') \\
  \aut[A](f) \uar \dar & \aut[a](f, f') \lar \rar \uar \dar & \aut[A'](f') \uar \dar \\ 
  \aut[A](Y) & \aut[a](h) \lar \rar & \aut[A'](Y')
  \end{tikzcd}
  \]
  where, for the middle column, we consider $(f, f') \colon g \to h$ as a map in $\arcat {\cat M}$ under $a$.
  
  Additionally assume that $f$ and $f'$ are trivial cofibrations, that $h$ is a trivial fibration, and that $Y$ and $Y'$ are fibrant.
  Then the three upper vertical maps are trivial fibrations.
\end{lemma}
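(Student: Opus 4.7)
Commutativity of the diagram is immediate from the definitions, since each arrow is a projection forgetting some subset of the components of a tuple, and the various composites extract the same components.

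For the left upper vertical $p_L \colon \aut[A](f) \to \aut[A](X)$, I will exhibit it as a pullback of a trivial fibration. Introduce the mapping space $M \defeq \map[A](X, X) \times_{\map[A](X, Y)} \map[A](Y, Y)$, with the two maps to $\map[A](X, Y)$ given by postcomposition and precomposition with $f$, respectively. Since $f$ is a trivial cofibration and $Y$ is fibrant, \cref{lemma:rel_map} makes $f^* \colon \map[A](Y, Y) \to \map[A](X, Y)$ a trivial fibration, so its pullback $M \to \map[A](X, X)$ is a trivial fibration too. A 2-out-of-3 argument applied to the relation $\beta f = f\alpha$, using that $f$ is a weak equivalence, identifies $\aut[A](f)$ with $\aut[A](X) \times_{\map[A](X, X)} M$, so $p_L$ is a trivial fibration. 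The right upper vertical is handled identically.

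For the middle upper vertical $p_M \colon \aut[a](f, f') \to \aut[a](g)$, I will factor it as $\aut[a](f, f') \to P \to \aut[a](g)$ with $P \defeq \aut[a](g) \times_{\aut[A'](X')} \aut[A'](f')$. The second map is a pullback of the trivial fibration $\aut[A'](f') \to \aut[A'](X')$ just established, hence itself a trivial fibration. For the first, set $Q \defeq \map[A](X, Y) \times_{\map[A](X, Y')} \map[A](Y, Y')$; I will identify $\aut[a](f, f')$ with the pullback $P \times_Q \map[A](Y, Y)$, where the map $P \to Q$ sends $(\alpha, \alpha', \beta')$ to $(f\alpha, \beta' h)$ and the map $\map[A](Y, Y) \to Q$ is the trivial fibration $(f^*, h_*)$ provided by \cref{lemma:rel_SM7} applied in $\cat M$ to the cofibration $(\id_A, f) \colon (i \colon A \to X) \to (fi \colon A \to Y)$ together with the fibration $h \colon Y \to Y'$. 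The identification amounts to checking that the extra constraints defining $\aut[a](f, f')$ beyond those in $P$, namely $\beta f = f\alpha$ and $h\beta = \beta' h$, are precisely the compatibilities encoded by the fiber product over $Q$, and that the weak-equivalence condition on $\beta$ is automatic by 2-out-of-3.

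The main obstacle is setting up the pullback identification for the middle vertical, in particular correctly lining up the compatibilities between $P$, $Q$, and $\map[A](Y, Y)$ using the commutativity of the given square. Once this is in place, the conclusion follows formally from pullback stability of trivial fibrations together with the inputs from \cref{lemma:rel_map} and \cref{lemma:rel_SM7}.
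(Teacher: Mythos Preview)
Your proof is correct and takes essentially the same approach as the paper. Both identify the three upper verticals as (compositions of) pullbacks of trivial fibrations, with the key technical input for the middle column being that $\map[A](Y,Y)\to \map[A](X,Y)\times_{\map[A](X,Y')}\map[A](Y,Y')$ is a trivial fibration (you cite \cref{lemma:rel_SM7}, the paper cites \cref{lemma:SM7} applied in the undercategory $A\comma\cat M$, which amounts to the same thing); the paper organizes this via a single $3\times 3$ grid whose row pullbacks and then column pullbacks produce the desired diagram uniformly, whereas you handle each vertical separately with tailored pullback decompositions and explicit $2$-out-of-$3$ arguments for the weak-equivalence conditions.
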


\begin{proof}
  We prove the second part of the statement; the first part can be obtained in the same way by replacing $\mapeq$ with $\map$ everywhere and omitting any reference to fibrations and weak equivalences.
  
  We have the following commutative diagram
  \[
  \begin{tikzcd}
  \aut[A](X) \rar \dar & \mapeq[A](X, X') \dar & \aut[A'](X') \lar \dar \\
  \mapeq[A](X, Y) \rar & \mapeq[A](X, Y') & \mapeq[A'](X', Y') \lar \\
  \aut[A](Y) \rar \uar[two heads][swap]{\eq} & \mapeq[A](Y, Y') \uar[two heads][swap]{\eq} & \aut[A'](Y') \lar \uar[two heads][swap]{\eq}
  \end{tikzcd}
  \]
  where the indicated maps are trivial fibrations by \cref{lemma:rel_map}.
  Taking pullbacks of the horizontal rows, we thus obtain a commutative diagram
  \[
  \begin{tikzcd}
  \aut[A](X) \dar & \lar \aut[a](g) \dar \rar & \aut[A'](X') \dar \\
  \mapeq[A](X, Y) & \lar \mapeq[a](g, h) \rar & \mapeq[A'](X', Y') \\
  \aut[A](Y) \uar[two heads][swap]{\eq} & \lar \aut[a](h) \uar[two heads][swap]{\eq} \rar & \aut[A'](Y') \uar[two heads][swap]{\eq}
  \end{tikzcd}
  \]
  where the bottom middle vertical map is a trivial fibration by the dual of \cref{lemma:cofibration_pushouts}, using \cref{lemma:SM7} applied to the lower left-hand square of the previous diagram.
  The horizontal maps on the top and bottom are homomorphisms of simplicial monoids by \cref{lemma:monoid_pullback}.
  Now taking pullbacks of the vertical columns, we obtain the desired commutative diagram
  \[
  \begin{tikzcd}
  \aut[A](X) & \aut[a](g) \lar \rar & \aut[A'](X') \\
  \aut[A](f) \uar[two heads]{\eq} \dar & \aut[a](f, f') \lar \rar \uar[two heads]{\eq} \dar & \aut[A'](f') \uar[two heads]{\eq} \dar  \\
  \aut[A](Y) & \aut[a](h) \lar \rar & \aut[A'](Y') 
  \end{tikzcd}
  \]
  of simplicial monoids (again by \cref{lemma:monoid_pullback}).
\end{proof}

\begin{lemma} \label{lemma:zig-zag_composition}
  In a simplicial model category $\cat M$, let
  \[ \iota \colon A \to X  \qquad  \tilde \iota \colon \widetilde A \to \widetilde X  \qquad  \hat \iota \colon \widehat A \to \widehat X  \qquad  \iota' \colon A' \to X' \]
  be four cofibrations such that $A$, $\widetilde A$, $\widehat A$, and $A'$ are cofibrant and $\widetilde X$, $\widehat X$, and $X'$ are fibrant, and let
  \[
  \begin{tikzcd}[column sep = 40]
    \iota \rar{(\tilde a, \tilde f)} \dar[swap]{(\hat a, \hat f)} & \tilde \iota \dar{(\tilde a', \tilde f')} \\
    \hat \iota \rar{(\hat a', \hat f')} & \iota'
  \end{tikzcd}
  \]
  be a commutative diagram in the arrow category of $\cat M$ such that all four maps are pointwise weak equivalences.
  Assume that $(\tilde a, \tilde f)$ is a projective cofibration and that $(\tilde a', \tilde f')$ is a projective fibration.
  Furthermore assume that $(\hat a, \hat f)$ and $(\hat a', \hat f')$ are either both projective fibrations or both projective cofibrations, or that $(\hat a, \hat f)$ is a projective fibration and that $(\hat a', \hat f')$ is a projective cofibration.
  Then the two zig-zags
  \begin{gather*}
    \aut[A](X) \xlongfrom{\eq} \aut[\tilde a](\tilde f) \longto \aut[\widetilde A](\widetilde X) \xlongfrom{\eq} \aut[\tilde a'](\tilde f') \xlongto{\eq} \aut[A'](X') \\
    \aut[A](X) \xlongfrom{\eq} \aut[\hat a](\hat f) \longto \aut[\widehat A](\widehat X) \xlongfrom{\eq} \aut[\hat a'](\hat f') \xlongto{\eq} \aut[A'](X')
  \end{gather*}
  represent the same map in the homotopy category of simplicial monoids.
  (Note that all indicated maps are indeed weak equivalences by \cref{lemma:rel_aut_span}.)
\end{lemma}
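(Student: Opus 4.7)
The plan is to show that each of the two zig-zags represents, in the homotopy category of simplicial monoids, a canonical map induced by the common composite weak equivalence $(\bar a, \bar f) \colon \iota \to \iota'$ in $\cat M^{\lincat 1}$, where $\bar a = \tilde a' \tilde a = \hat a' \hat a$ and $\bar f = \tilde f' \tilde f = \hat f' \hat f$. Since this composite is the same for both paths, it will follow that the two zig-zags represent the same morphism in $\Ho(\text{sMon})$.

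The key technical input is the strengthened form of Lemma \ref{lemma:rel_aut_naturality}. I would first handle the tilde-zig-zag by applying that lemma to the commutative diagram
\[
\begin{tikzcd}
A \rar{i} \dar{\bar a} & X \rar{\tilde f} \dar{\bar f} & \widetilde X \dar{\tilde f'} \\
A' \rar{i'} & X' \rar[equal] & X'
\end{tikzcd}
\]
and verifying the strengthened hypotheses: $\tilde f$ is a trivial cofibration by Remark \ref{rem:projective_cofibration} (using that $(\tilde a, \tilde f)$ is a projective cofibration that is pointwise a weak equivalence and that $i$ is a cofibration), $\id_{X'}$ is trivially a trivial cofibration, $\tilde f'$ is a trivial fibration (since $(\tilde a', \tilde f')$ is a projective fibration that is pointwise a weak equivalence), and $\widetilde X$, $X'$ are fibrant. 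The resulting $3 \times 3$ diagram has the ``short zig-zag'' $\aut[A](X) \leftarrow \aut[\bar a](\bar f) \to \aut[A'](X')$ as its top row with trivial-fibration upper vertical maps, so in $\Ho(\text{sMon})$ it equals the middle-row and bottom-row zig-zags. A careful bookkeeping—using Lemma \ref{lemma:rel_forget} to promote ``fixed by $A$'' to ``fixed by $\widetilde A$'' at the intermediate object and Lemma \ref{lemma:rel_aut_span} to identify $\aut[\tilde a'](\tilde f')$ with $\aut[\bar a](\tilde f')$—will then identify the middle/bottom row with Path 1, proving Path 1 represents the short zig-zag.

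The argument for Path 2 is analogous, with scenario-dependent adaptations. In scenarios (ii) and (iii) where $(\hat a', \hat f')$ is a projective cofibration, $\hat f'$ is a trivial cofibration and the analogue of the diagram above applies directly. In scenarios (i) and (iii) where $(\hat a, \hat f)$ is a projective fibration, the map $\hat f \colon X \to \widehat X$ is a trivial fibration (not a cofibration), so before applying Lemma \ref{lemma:rel_aut_naturality} I would factor $\hat f$ as a trivial cofibration $X \hookrightarrow Z$ followed by a trivial fibration $Z \twoheadrightarrow \widehat X$, insert $Z$ as an intermediate in the diagram, and apply the lemma to this enriched diagram. A second application of Lemma \ref{lemma:rel_aut_naturality} (together with Lemma \ref{lemma:rel_aut_span}) relates the zig-zag through $Z$ to the original first step through $\widehat X$.

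The main obstacle is scenario (i), where both $(\hat a, \hat f)$ and $(\hat a', \hat f')$ are projective fibrations and hence each step of Path 2 requires the auxiliary factorization above. The bookkeeping needed to combine the two applications of Lemma \ref{lemma:rel_aut_naturality} and verify compatibility with the short zig-zag is the most delicate part; however, it reduces in each case to repeated use of Lemmas \ref{lemma:rel_aut_span} and \ref{lemma:rel_forget} under the running cofibrancy and fibrancy assumptions.
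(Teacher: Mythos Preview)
Your approach differs from the paper's and has a real gap.

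The paper does not pass through a ``short zig-zag'' via $\aut[\bar a](\bar f)$. Instead it forms the simplicial monoid $P$ of pointwise self-equivalences of the whole square $(X,\widetilde X,\widehat X,X')$ relative to $(A,\widetilde A,\widehat A,A')$ and assembles a single commutative $3\times 3$ diagram
\[
\begin{tikzcd}
\aut[A](X) & \lar \aut[\tilde a](\tilde f) \rar & \aut[\widetilde A](\widetilde X) \\
\aut[\hat a](\hat f) \uar \dar & \lar[swap]{\hat p} P \rar \uar \dar{\hat p'} & \aut[\tilde a'](\tilde f') \uar \dar \\
\aut[\widehat A](\widehat X) & \lar \aut[\hat a'](\hat f') \rar & \aut[A'](X')
\end{tikzcd}
\]
in which the tilde zig-zag runs along the top and right edges and the hat zig-zag along the left and bottom edges. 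The claim then reduces to showing that one of $\hat p$, $\hat p'$ is a weak equivalence; this is done by exhibiting $P$ as an iterated pullback and invoking \cref{lemma:rel_SM7} together with the case hypotheses on $(\hat a,\hat f)$ and $(\hat a',\hat f')$. No factorizations or case-by-case constructions are needed.

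The central problem with your route is that the short zig-zag $\aut[A](X)\leftarrow \aut[\bar a](\bar f)\to \aut[A'](X')$ need not represent a morphism in the homotopy category at all: the backward arrow is not covered by any case of \cref{lemma:rel_aut_span}, because $(\bar a,\bar f)$ is only a pointwise weak equivalence (neither a projective cofibration nor a projective fibration) and, crucially, $X$ is not assumed fibrant in the statement. So your sentence ``in $\Ho(\mathrm{sMon})$ it equals the middle-row and bottom-row zig-zags'' is not well-posed. The $3\times 3$ diagram you extract from \cref{lemma:rel_aut_naturality} does have trivial-fibration upper verticals, but that only tells you the two horizontal arrows in each column are weak equivalences \emph{simultaneously}; it does not produce a morphism from $\aut[A](X)$ to $\aut[A'](X')$.

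Even granting that, the ``careful bookkeeping'' step is where the argument actually lives and is not established by the lemmas you cite. Your bottom row is $\aut[A](\widetilde X)\leftarrow \aut[\bar a](\tilde f')\to \aut[A'](X')$, whereas the relevant piece of the tilde zig-zag is $\aut[\widetilde A](\widetilde X)\leftarrow \aut[\tilde a'](\tilde f')\to \aut[A'](X')$. Relating these requires, in particular, that the forgetful map $\aut[\tilde a'](\tilde f')\to \aut[\bar a](\tilde f')$ be a weak equivalence. This is a comparison of pullbacks over different bases $\mapeq[\widetilde A](\widetilde X,X')$ and $\mapeq[A](\widetilde X,X')$, and \cref{lemma:rel_forget} only gives the map between these bases as a weak equivalence, not a fibration; so the pullback comparison does not follow. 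The auxiliary factorizations you propose for scenario~(i) only add further such comparisons. The paper's use of the single object $P$ together with \cref{lemma:rel_SM7} is precisely what circumvents all of this.
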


\begin{proof}
  Consider the following two squares as objects of the simplicial category $\cat M^{\lincat 1 \times \lincat 1}$
  \[
  \begin{tikzcd}[column sep = 40]
    A \rar{\tilde a} \dar[swap]{\hat a} & \widetilde A \dar{\tilde a'} \\
    \widehat A \rar{\hat a'} & A'
  \end{tikzcd}
  \qquad \text{and} \qquad
  \begin{tikzcd}[column sep = 40]
    X \rar{\tilde f} \dar[swap]{\hat f} & \widetilde X \dar{\tilde f'} \\
    \widehat X \rar{\hat f'} & X'
  \end{tikzcd}
  \]
  and write $P$ for the simplicial monoid of pointwise self-equivalences of the second relative to the first.
  Then there is a commutative diagram of simplicial monoids
  \[
  \begin{tikzcd}
    \aut[A](X) & \lar[swap]{\eq} \aut[\tilde a](\tilde f) \rar & \aut[\widetilde A](\widetilde X) \\
    \aut[\hat a](\hat f) \uar{\eq} \dar[swap]{\hat q} & \lar[swap]{\hat p} P \rar \uar \dar{\hat p'} & \aut[\tilde a'](\tilde f') \dar{\eq} \uar[swap]{\eq} \\
    \aut[\widehat A](\widehat X) & \lar[swap]{\eq} \aut[\hat a'](\hat f') \rar{\eq} & \aut[A'](X')
  \end{tikzcd}
  \]
  and it is enough to prove that $\hat p$ is a weak equivalence.
  
  To this end, we note that there is a pullback square
  \[
  \begin{tikzcd}
    P \dar \rar & \aut[\widetilde A](\widetilde X) \dar{(\tilde f^*, \tilde f'_*)} \\
    \aut[(\hat a, \hat a')](\hat f, \hat f') \rar & \mapeq[A](X, \widetilde X) \times_{\mapeq[A](X, X')} \mapeq[\widetilde A](\widetilde X, X')
  \end{tikzcd}
  \]
  of simplicial sets, where $(\hat a, \hat a')$ and $(\hat f, \hat f')$ denote the respective two maps considered as an object of $\cat M^{\lincat 2}$.
  Our assumptions and \cref{lemma:rel_SM7} imply that the right-hand vertical map is a trivial fibration, and hence so is the left-hand vertical map.
  Now consider the pullback diagram
  \[
  \begin{tikzcd}
    \aut[(\hat a, \hat a')](\hat f, \hat f') \rar \dar & \aut[\hat a](\hat f) \dar{\hat q} \\
    \aut[\hat a'](\hat f') \rar{\hat q'} & \aut[\widehat A](\widehat X)
  \end{tikzcd}
  \]
  of simplicial monoids.
  By \cref{lemma:rel_aut_span} and our assumptions, at least one of $\hat q'$ and $\hat q$ is a trivial fibration.
  In the first case $\hat p$ is a weak equivalence; in the second case both $\hat q$ and $\hat p'$ are weak equivalences.
  In either case we are done.
\end{proof}

We are now ready to prove the main result of this appendix.

\begin{proof}[Proof of \cref{thm:Baut_functor}]
  Let the following be a commutative diagram in $\cat M$
  \[
  \begin{tikzcd}
  	B \rar{i} \dar[swap]{b} & A \rar{\iota} \dar[swap]{a} & X \dar{f} \rar & K \dar{k} \\
  	B' \rar{i'} & A' \rar{\iota'} & X' \rar & K'
  \end{tikzcd}
  \]
  such that $i$, $i'$, $\iota$, and $\iota'$ are cofibrations, $b$, $a$, $f$, and $k$ are weak equivalences, $B$, $B'$, $A$, and $A'$ are cofibrant, and $X$, $X'$, $K$, and $K'$ are fibrant.
  Furthermore, choose a factorization
  \begin{equation} \label{eq:Baut_functor_factorization}
  \begin{tikzcd}
    A \dar[swap]{\iota} \rar{\tilde a} & \widetilde A \dar{\tilde \iota} \rar{\tilde a'} & A' \dar{\iota'} \\
    X \rar{\tilde f} & \widetilde X \rar{\tilde f'} & X'
  \end{tikzcd}
  \end{equation}
  of $(a, f) \colon \iota \to \iota'$ into a trivial cofibration followed by a trivial fibration in the projective model structure on the arrow category of $\cat M$.
  Then we define the induced map
  \[ (b, a, f, k)_*  \colon  \bigl( \aut[A](X), \map[B](X, K) \bigr)  \xlonghto{\eq}  \bigl( \aut[A'](X'), \map[B'](X', K') \bigr) \]
  to be the zig-zag
  \[
  \begin{tikzcd}
    \vertpair {\aut[A](X)} {\map[B](X, K)} & & \vertpair {\aut[A'](X')} {\map[B'](X', K')} \\
    \vertpair {\aut[\tilde a](\tilde f)} {\map[B](\widetilde X, K)} \uar{\eq} \rar{\eq} & \vertpair {\aut[\widetilde A](\widetilde X)} {\map[B](\widetilde X, K')} & \lar{\eq} \vertpair {\aut[\tilde a'](\tilde f')} {\map[B'](X', K')} \uar{\eq}
  \end{tikzcd}
  \]
  considered as an isomorphism in the homotopy category of pairs.
  Note that $\tilde \iota$ is a cofibration by \cref{rem:projective_cofibration}, so that all of the maps above are indeed weak equivalences by \cref{lemma:rel_aut_span} for the spaces of self-equivalences and by \cref{lemma:rel_map,lemma:rel_forget} for the mapping spaces.
  
  We now begin by proving that this construction is independent of the choice of factorization \eqref{eq:Baut_functor_factorization}.
  To this end, let $\iota \to \tilde \iota \to \iota'$ and $\iota \to \hat \iota \to \iota'$ be two factorizations as above.
  They assemble into the outer commutative square of the following diagram
  \[
  \begin{tikzcd}
    \iota \rar[tail] \dar[tail] & \tilde \iota \dar[two heads] \\
    \hat \iota \rar[two heads] \urar[dashed] & \iota'
  \end{tikzcd}
  \]
  in the arrow category of $\cat M$.
  Since the left-hand and the right-hand vertical map are a trivial cofibration and a trivial fibration, respectively, in the projective model structure, we obtain a dashed map as indicated that makes both triangles commute.
  Since all four maps of the outer square are weak equivalences, so is the dashed map.
  Factoring it as a trivial cofibration followed by a trivial fibration, we obtain the following commutative diagram
  \[
  \begin{tikzcd}[row sep = 10]
    \iota \ar[tail]{rr} \ar[tail]{dd} \drar[tail, dashed] & & \tilde \iota \ar[two heads]{dd} \\
     & \bar \iota \urar[two heads] \drar[two heads, dashed] & \\
    \hat \iota \ar[two heads]{rr} \urar[tail] & & \iota'
  \end{tikzcd}
  \]
  where the two dashed maps are defined to be the composites of the appropriate two maps.
  All maps in the diagram are weak equivalences and the indicated maps are cofibrations and fibrations, respectively.
  Also note that $\tilde \iota$, $\hat \iota$, and $\bar \iota$ are all cofibrations by \cref{rem:projective_cofibration}.
  For $\aut[A](X) \hto \aut[A'](X')$ the claim then follows from a repeated application of \cref{lemma:zig-zag_composition}.
  For $\map[B](X, K) \hto \map[B'](X', K')$ we have the commutative diagram
  \[
  \begin{tikzcd}
  	 & \dlar[bend right, start anchor = west, end anchor = north][swap]{\eq} \map[B](\widetilde X, K) \dar[swap]{\eq} \rar{\eq} & \map[B](\widetilde X, K') \dar{\eq} & \\
  	\map[B](X, K) & \lar[swap]{\eq} \map[B](\ol X, K) \dar[swap]{\eq} \rar{\eq} & \map[B](\ol X, K') \dar{\eq} & \lar[swap]{\eq} \ular[bend right, start anchor = north, end anchor = east][swap]{\eq} \dlar[bend left, start anchor = south, end anchor = east]{\eq} \map[B'](X', K') \\
  	 & \ular[bend left, start anchor = west, end anchor = south]{\eq} \map[B](\widehat X, K) \rar{\eq} & \map[B](\widehat X, K') &
  \end{tikzcd}
  \]
  which is compatible with the actions of the monoids appearing above.
  
  Note that it is clear that the construction preserves identities.
  To see that it is compatible with composition, let $(i, \iota, \xi) \to (i', \iota', \xi') \to (i'', \iota'', \xi'')$ be two maps and consider the diagram
  \[
  \begin{tikzcd}
    \iota \ar{rr} \drar[tail] & & \iota' \ar{rr} \drar[tail][near end]{g} & & \iota'' \\
    & \tilde \iota \rar[tail] \urar[two heads][near start]{f} & \bar \iota \rar[two heads] & \hat \iota \urar[two heads] &
  \end{tikzcd}
  \]
  where we factored both maps in the top row as a trivial cofibration followed by a trivial fibration; in addition we factor $g \after f$ in the same way to obtain $\bar \iota$.
  The claim then follows from a repeated application of \cref{lemma:zig-zag_composition} (noting that $\tilde \iota$, $\hat \iota$, and $\bar \iota$ are all cofibrations by \cref{rem:projective_cofibration}) and the commutative diagram
  \[
  \begin{tikzcd}
  	\map[B](X, K) & \lar[swap]{\eq} \map[B](\widetilde X, K) \rar{\eq} & \map[B](\widetilde X, K') \dar[swap]{\eq} & \lar[swap]{\eq} \map[B'](X', K') \\
  	 & \ular{\eq} \map[B](\ol X, K) \drar[swap]{\eq} \uar[swap]{\eq} & \map[B](\widetilde X, K'') & \map[B'](\widehat X, K') \uar[swap]{\eq} \dar{\eq} \\
  	 & & \map[B](\ol X, K'') \uar{\eq} & \lar[swap]{\eq} \map[B'](\widehat X, K'') \\
  	 & & & \ular{\eq} \map[B''](X'', K'') \uar[swap]{\eq}
  \end{tikzcd}
  \]
  which is compatible with the actions of the monoids.
  
  Now let $(i, \iota, \xi)$ and $(i', \iota', \xi')$ be as above such that $B'$ and $A'$ are fibrant, and let $f$ and $g$ be two pointwise weak equivalences $(i, \iota, \xi) \to (i', \iota', \xi')$ such that they are simplicially homotopic in $\cat M^{\lincat 3}$.
  Then, by \cite[Proposition~9.5.23]{Hir}, the maps $f$ and $g$ are left homotopic in the category $\cat M^{\lincat 3}$ equipped with the projective model structure.
  Hence there are, by \cite[Proposition~7.3.4 and Lemma~7.3.6]{Hir} and the assumption that $(i', \iota', \xi')$ is fibrant, an object $\operatorname{Cyl}(i, \iota, \xi)$ and a commutative diagram in $\cat M^{\lincat 3}$
  \[
  \begin{tikzcd}[column sep = 40]
  & \dlar[bend right][swap]{\id} (i, \iota, \xi) \drar[bend left]{f} \dar[tail]{\inc[0]} & \\
  (i, \iota, \xi) & \lar[two heads][swap]{\pr} \operatorname{Cyl}(i, \iota, \xi) \rar & (i', \iota', \xi') \\
  & \ular[bend left]{\id} (i, \iota, \xi) \urar[bend right][swap]{g} \uar[tail][swap]{\inc[1]} &
  \end{tikzcd}
  \]
  such that all maps are weak equivalences and the indicated ones are projective (co)fi\-bra\-tions.
  Note that the $i$- and $\iota$-components of $\operatorname{Cyl}(i, \iota, \xi)$ are cofibrations by \cref{rem:projective_cofibration}; then the claim follows from the functoriality proven above.
\end{proof}

In the rest of this subsection, we record a number of useful properties that the functor of \cref{thm:Baut_functor} enjoys.

\begin{lemma} \label{lemma:Eaut_map}
  Let the following be a commutative diagram in a simplicial model category $\cat M$
  \[
  \begin{tikzcd}
    A \rar{a} \dar[swap]{\iota} & A' \dar{\iota'} \\
    X \rar{f} & X'
  \end{tikzcd}
  \]
  such that $\iota$ and $\iota'$ are cofibrations, $a$ and $f$ are weak equivalences, $A$ and $A'$ are cofibrant, and $X$ and $X'$ are fibrant.
  Then the two maps of sets
  \[ \Eaut[A](X)  \xlongto{f_*}  \hmapeq[A] X {X'}  \xlongfrom{f^*}  \Eaut[A'](X') \]
  are bijections, and the composite $\Phi_f \defeq \inv{(f^*)} \after f_*$ agrees with the map induced by the functor of \cref{thm:Baut_functor} on $\hg 0$.
  In particular $\Phi_f$ is a group isomorphism.
\end{lemma}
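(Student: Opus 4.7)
The plan is to first establish that $f_*$ and $f^*$ are bijections, then identify the map induced by the functor of \cref{thm:Baut_functor} on $\hg{0}$ directly from its construction.

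First, to see that $f_*$ is a bijection, I would apply \cref{lemma:rel_map} to the cofibration $\iota \colon A \to X$ and the weak equivalence $f \colon X \to X'$ between fibrant objects: this yields that $f_* \colon \map[A](X, X) \to \map[A](X, X')$ is a weak equivalence. Passing to the components of self-equivalences via \cref{lemma:mapeq_we} and then to $\hg{0}$ gives the bijection. For $f^*$, I would apply \cref{lemma:rel_forget} (with roles matched by transposing the square) to conclude that $f^* \colon \map[A'](X', X') \to \map[A](X, X')$ is a weak equivalence, and again descend to components and $\hg{0}$. Hence $\Phi_f = (f^*)^{-1} \after f_*$ is well defined as a bijection $\Eaut[A](X) \to \Eaut[A'](X')$.

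Next, to compare $\Phi_f$ with the functor of \cref{thm:Baut_functor}, I would fix a factorization $\iota \to \tilde \iota \to \iota'$ of $(a, f)$ into a trivial projective cofibration followed by a trivial projective fibration, as in the proof of that theorem. The functor is represented by the zig-zag $\aut[A](X) \xleftarrow{\eq} \aut[\tilde a](\tilde f) \to \aut[\widetilde A](\widetilde X) \xleftarrow{\eq} \aut[\tilde a'](\tilde f') \to \aut[A'](X')$. A $0$-simplex of $\aut[\tilde a](\tilde f)$ is a pair $(\phi, \psi)$ with $\tilde f \phi = \psi \tilde f$, so the induced map on $\hg{0}$ sends $[\phi]$ to the unique class $[\psi] \in \Eaut[\widetilde A](\widetilde X)$ with $\tilde f_*[\phi] = \tilde f^*[\psi]$ in $\hmapeq[A](X, \widetilde X)$; by the first paragraph (applied to $\tilde f$) such a $[\psi]$ exists and is unique, so the first half of the zig-zag realizes $\Phi_{\tilde f}$ on $\hg{0}$. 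The same analysis identifies the second half with $\Phi_{\tilde f'}$.

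It remains to establish a composition law $\Phi_{gh} = \Phi_g \after \Phi_h$ for composable factorizations satisfying the hypotheses of the lemma, and apply it to $f = \tilde f' \after \tilde f$. Given $[\phi]$, choose representatives of $[\psi] = \Phi_{\tilde f}[\phi]$ and $[\chi] = \Phi_{\tilde f'}[\psi]$ witnessed by homotopies $\tilde f \phi \eq_A \psi \tilde f$ in $\map[A](X, \widetilde X)$ and $\tilde f' \psi \eq_{\widetilde A} \chi \tilde f'$ in $\map[\widetilde A](\widetilde X, X')$. Postcomposing the first with $\tilde f'$ and precomposing the second with $\tilde f$ (noting that both operations preserve the appropriate restriction along $A \to \widetilde A$) yields $f \phi \eq_A \chi f$ in $\map[A](X, X')$, i.e.\ $\Phi_f [\phi] = [\chi]$. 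Combined with the previous paragraph this gives the desired identification, and the fact that $\Phi_f$ is a group isomorphism follows because the functor of \cref{thm:Baut_functor} lands in simplicial monoids up to homotopy.

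The main obstacle is purely bookkeeping in the composition step: one must verify that the various simplicial homotopies genuinely restrict to the same map on $A$ (namely the shared composite $\iota' a = f \iota$ and its factorization through $\tilde \iota \tilde a$), so that the homotopies can be concatenated in $\map[A](X, X')$. No substantive new ingredient beyond the lemmas already proved is required.
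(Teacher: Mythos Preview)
Your proposal is correct and rests on the same ingredients as the paper's proof: \cref{lemma:rel_map}, \cref{lemma:mapeq_we}, and \cref{lemma:rel_forget} for the bijectivity, and an analysis of the zig-zag via the factorization $\iota \to \tilde\iota \to \iota'$ for the comparison with \cref{thm:Baut_functor}.

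The organization differs, however. You split the zig-zag into two halves, identify each with $\Phi_{\tilde f}$ and $\Phi_{\tilde f'}$ respectively, and then prove a composition law $\Phi_{\tilde f' \circ \tilde f} = \Phi_{\tilde f'} \circ \Phi_{\tilde f}$ by concatenating homotopies. The paper instead writes down a single commutative diagram of sets
\[
\begin{tikzcd}
\Eaut[\tilde a](\tilde f) \dar \rar & \Eaut[\widetilde A](\widetilde X) \dar & \lar \Eaut[\tilde a'](\tilde f') \dar \\
\Eaut[A](X) \rar{f_*} & \hmapeq[A] X {X'} & \lar[swap]{f^*} \Eaut[A'](X')
\end{tikzcd}
\]
with middle vertical map $\psi \mapsto \tilde f' \circ \psi \circ \tilde f$, and observes that both squares commute on the nose (using $\tilde f \phi = \psi \tilde f$ and $\tilde f' \psi = \chi \tilde f'$). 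This collapses your two identifications and the composition law into one step, bypassing the need to track and concatenate the intermediate simplicial homotopies. Your route is slightly more modular (the composition law is a reusable statement), but the paper's is shorter; the underlying computation is the same in both.
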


\begin{proof}
  That $f_*$ and $f^*$ are bijections follows from \cref{lemma:rel_map,lemma:mapeq_we,lemma:rel_forget}.
  The second claim follows from the fact that, given maps $\iota \to \tilde \iota \to \iota'$ as in the proof of \cref{thm:Baut_functor}, the following diagram of sets commutes
  \[
  \begin{tikzcd}
  \Eaut[\tilde a](\tilde f) \dar \rar & \Eaut[\widetilde A](\widetilde X) \dar & \lar \Eaut[\tilde a'](\tilde f') \dar \\
  \Eaut[A](X) \rar{f_*} & \hmapeq[A] X {X'} & \lar[swap]{f^*} \Eaut[A'](X')
  \end{tikzcd}
  \]
  where the middle vertical map is given by $\phi \mapsto \tilde f' \after \phi \after \tilde f$.
\end{proof}

\begin{lemma} \label{lemma:zig-zag_forget}
  Let the following be a natural weak equivalence between commutative diagrams in a simplicial model category $\cat M$
  \[
  \begin{tikzcd}
  	B \rar[tail] & A \rar[tail] & X \rar & K \\
  	\widehat B \rar[tail] \uar[tail] & \widehat A \uar[tail] &
  \end{tikzcd}
  \qquad \xLongto[\phi]{\eq} \qquad
  \begin{tikzcd}
  	B' \rar[tail] & A' \rar[tail] & X' \rar & K' \\
  	\widehat B' \rar[tail] \uar[tail] & \widehat A' \uar[tail] &
  \end{tikzcd}
  \]
  such that the indicated maps are cofibrations, $X$, $K$, $X'$, and $K'$ are fibrant, and all other objects are cofibrant.
  Then the following diagram commutes in the homotopy category of pairs of a simplicial monoid and a right module over it
  \[
  \begin{tikzcd}
    \vertpair {\aut[A](X)} {\map[B](X, K)} \rar[squiggly]{\eq} \dar & \vertpair {\aut[A'](X')} {\map[B'](X', K')} \dar \\
    \vertpair {\aut[\widehat A](X)} {\map[\widehat B](X, K)} \rar[squiggly]{\eq} & \vertpair {\aut[\widehat A'](X')} {\map[\widehat B'](X', K')}
  \end{tikzcd}
  \]
  where the horizontal maps are those of \cref{thm:Baut_functor}.
\end{lemma}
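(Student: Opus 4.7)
My plan is to construct compatible factorizations of the top and bottom rows of the given natural weak equivalence, and then directly verify that the resulting ladder of zigzags commutes. The key observation is that the vertical forgetful maps in the target diagram of the lemma are \emph{direct} maps (no zigzag), obtained by restricting along the cofibrations $\widehat B \to B$ and $\widehat A \to A$, whereas the horizontal maps are the zigzags produced by the construction in the proof of \cref{thm:Baut_functor}. The statement thus reduces to a naturality property of that construction in the $(B, A)$-variable.

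I would begin by factoring the morphism in $\cat M^{\lincat 1 \times \lincat 1}$ given by
\[
\begin{tikzcd}
\widehat A \rar \dar & X \dar[equal] \\ A \rar & X
\end{tikzcd}
\longrightarrow
\begin{tikzcd}
\widehat A' \rar \dar & X' \dar[equal] \\ A' \rar & X'
\end{tikzcd}
\]
as a projective trivial cofibration followed by a projective trivial fibration; the projective model structure on $\cat M^{\lincat 1 \times \lincat 1}$ exists by \cref{lemma:projective_cofibration} applied to the sum-of-coordinates functor $\lincat 1 \times \lincat 1 \to \lincat 2$. This produces an intermediate square with corners $\widehat{\widetilde A}, \widehat{\widetilde X}, \widetilde A, \widetilde X$ equipped with the required compatibility maps $\widehat{\widetilde A} \to \widetilde A$ and $\widehat{\widetilde X} \to \widetilde X$. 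By \cref{lemma:projective_restriction} applied to the inclusion $\lincat 1 \to \lincat 1 \times \lincat 1$, $k \mapsto (1, k)$ (which is right adjoint to the projection onto the second factor), restricting to the bottom row yields a projective cofibration/fibration factorization of $(A \to X) \to (A' \to X')$ in $\cat M^{\lincat 1}$; a direct verification using the latching-space description from \cref{lemma:projective_cofibration} shows that the top row analogously inherits a projective factorization of $(\widehat A \to X) \to (\widehat A' \to X')$.

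With these compatible factorizations I would apply the construction in the proof of \cref{thm:Baut_functor} to the top and bottom rows (using the choice-independence established there) to obtain two parallel zigzags. The compatibility maps $\widehat{\widetilde A} \to \widetilde A$ and $\widehat{\widetilde X} \to \widetilde X$ induce vertical forgetful maps between corresponding terms of the two zigzags---for instance $\aut[\tilde a](\tilde f) \to \aut[\hat{\tilde a}](\hat{\tilde f})$, obtained by restricting $A$-equivariance to $\widehat A$-equivariance and postcomposing with $\widehat{\widetilde X} \to \widetilde X$, and analogously for the relative mapping spaces $\map[B](\widetilde X, K) \to \map[\widehat B](\widehat{\widetilde X}, K)$. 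Each square in the resulting ladder of simplicial monoids and their modules commutes by direct inspection of the explicit definitions, yielding a commutative ladder in the category of pairs that descends to the desired commutative diagram in the homotopy category.

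The main obstacle will be the direct verification that the top row of the simultaneous factorization genuinely inherits a projective cofibration/fibration factorization in $\cat M^{\lincat 1}$: since the inclusion at $(0, *)$ fails to be a right adjoint, the conclusion is not automatic from \cref{lemma:projective_restriction}, and the latching-space conditions of \cref{lemma:projective_cofibration} for the big square must be unpacked by hand. A secondary subtlety is that the vertical forgetful maps between intermediate mapping spaces must be shown to be compatible with the bimodule structure over the two simplicial monoids, which follows from the naturality of the pairing $(\aut[A](\blank), \map[B](\blank, K))$ in the variable on which the self-equivalences act.
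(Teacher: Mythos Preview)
Your approach is correct and follows the same underlying idea as the paper---factor the natural transformation in a projective model structure, restrict to each row, and read off a commutative ladder of zig-zags---but your choice of indexing category makes the argument more laborious than necessary. You factor in $\cat M^{\lincat 1 \times \lincat 1}$ over the ``core square'' with vertices $\widehat A, X, A, X$, and then observe that only the bottom-row inclusion $k \mapsto (1,k)$ is a right adjoint, leaving the top row to a hand check of the latching conditions (which, as you anticipate, does go through: the latching objects at $(0,0)$ and $(0,1)$ involve only those two positions, so the $\lincat 1$-conditions for the top row coincide with the corresponding $\lincat 1 \times \lincat 1$-conditions). The paper instead factors $\phi$ in $\cat M^{\cat I}$ where $\cat I$ is the full six-object indexing shape of the given diagram. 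In that shape \emph{both} inclusions $\lincat 3 \hookrightarrow \cat I$---the one with image $\{B,A,X,K\}$ and the one with image $\{\widehat B,\widehat A,X,K\}$---are right adjoints, so \cref{lemma:projective_restriction} handles both rows uniformly and the ``main obstacle'' you flag simply does not arise. A further dividend of the paper's choice is that $\cat I$ contains only one copy of $X$ and $K$, so the intermediate diagram has a single $\widetilde X$ shared by both rows; the vertical maps in the resulting ladder are then plain forgetful maps $\aut[\widetilde A](\widetilde X) \to \aut[\widehat{\widetilde A}](\widetilde X)$, whereas your intermediate square produces a generally distinct $\widehat{\widetilde X} \to \widetilde X$ that must be tracked through the mapping-space terms as well.
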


\begin{proof}
  Consider $\phi$ as a map in the appropriate functor category equipped with the projective model structure, and factor it as a trivial cofibration followed by a trivial fibration.
  By \cref{lemma:projective_restriction} this factorization still consists of a projective cofibration followed by a pointwise fibration when restricted to either $B \to A \to X \to K$ or $\widehat B \to \widehat A \to X \to K$.
  Using these factorizations, we obtain a map between the associated zig-zags, implying the claim.
\end{proof}

Below, we will prove that the functor of \cref{thm:Baut_functor} is, in a certain sense, compatible with taking pushouts.
To be able to make this precise, we need the following lemma.

%\begin{lemma}
%	In a simplicial model category $\cat M$, let the following be a commutative diagram
%	\[
%	\begin{tikzcd}
%		K \dar{\eq}[swap]{k} & \lar B \rar[tail] \dar[tail]{\eq}[swap]{b} & A \rar[tail] \dar[tail]{\eq}[swap]{a} & X \dar[tail]{\eq}[swap]{f} \\
%		K' & \lar B' \rar[tail] & A' \rar[tail] & X'
%	\end{tikzcd}
%	\]
%	such that the indicated maps are cofibrations and weak equivalences, the three right-hand vertical maps form a trivial projective cofibration in $\cat M^{\lincat 2}$, and $X'$, $K$, and $K'$ are fibrant.
%	Then there is a zig-zag of pairs of a simplicial monoid and a right module over it
%	\[
%	\begin{tikzcd}
%		\vertpair {\aut[A](X)} {\map[B](X, K)} & \lar[swap]{\eq} \vertpair {\aut[a](f)} {\map[b](f, k)} \rar & \vertpair {\aut[A'](X')} {\map[B'](X', K')}
%	\end{tikzcd}
%	\]
%	such that the indicated map is a weak equivalence.
%	In particular, this defines a map in the homotopy category of such pairs.
%\end{lemma}
%
%\begin{proof}
%	This follows from part~3 of \cref{lemma:rel_aut_span} and \cref{lemma:rel_precompose,lemma:projective_restriction}.
%\end{proof}

\begin{lemma} \label{lemma:aut_pushout_zig-zag}
  In a simplicial model category $\cat M$, let the following be two commutative diagrams
  \[
  \begin{tikzcd}[sep = small]
    C \rar \dar & B \rar[tail] \dar & A \rar[tail] \ar{dd} & X \ar{ddd} \\
    B' \rar \dar[tail] & Q \drar[tail] & & \\
    A' \ar{rr} \dar[tail] & & R \drar[tail] & \\
    X' \ar{rrr} & & & P
  \end{tikzcd}
  \qquad
  \begin{tikzcd}
  	X \cop_C X' \rar \dar[swap]{p} & K \dar{l} \\
  	P \rar & L
  \end{tikzcd}
  \]
  such that the indicated maps are cofibrations and $P$, $K$, and $L$ are fibrant.
  Furthermore assume that the induced maps $q \colon B \cop_C B' \to Q$, $r \colon A \cop_C A' \to R$, and $p \colon X \cop_C X' \to P$ are trivial cofibrations.
  Then there is a zig-zag of pairs of a simplicial monoid and a right module over it
  \[
  \begin{tikzcd}[row sep = 15, column sep = 15]
    \vertpair {\aut[A](X) \times \aut[A'](X')} {\map[B](X, K) \times \map[B'](X', K)} \dar & & \vertpair {\aut[R](P)} {\map[Q](P, L)} \dar{\eq} \\
    \vertpair {\aut[A \cop_C A'](X \cop_C X')} {\map[B \cop_C B'](X \cop_C X', K)} & \lar[swap]{\eq} \vertpair {\aut[A \cop_C A'](p)} {\map[B \cop_C B'](p, l)} \rar & \vertpair {\aut[A \cop_C A'](P)} {\map[B \cop_C B'](P, L)}
  \end{tikzcd}
  \]
  such that the indicated maps are weak equivalences.
  In particular, this defines a map in the homotopy category of such pairs.
\end{lemma}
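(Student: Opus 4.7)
The plan is to construct each term in the zig-zag explicitly as a simplicial monoid together with a right module, verify that all maps are maps of pairs, and then invoke the lemmas of the appendix to identify the marked arrows as weak equivalences. The middle term $(\aut[A \cop_C A'](p), \map[B \cop_C B'](p, l))$ will be built as a pullback of simplicial sets, with the monoid structure on $\aut[A \cop_C A'](p)$ supplied by \cref{lemma:monoid_pullback}, applied to the pair of simplicial monoids $\aut[A \cop_C A'](X \cop_C X')$ and $\aut[A \cop_C A'](P)$ acting on $\mapeq[A \cop_C A']\bigl(X \cop_C X', P\bigr)$ via $p_*$ and $p^*$; the module structure on $\map[B \cop_C B'](p, l)$ is defined analogously as a pullback.

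First, I would construct the left vertical map. By the universal property of the pushout in $\cat M$, compatible with the simplicial enrichment because $\blank \otimes \Delta^n$ is a left adjoint, the assignment $(\phi, \phi') \mapsto \phi \cop_C \phi'$ defines a morphism of simplicial monoids $\aut[A](X) \times \aut[A'](X') \to \aut[A \cop_C A'](X \cop_C X')$, and the analogous formula on the module factors lands in $\map[B \cop_C B'](X \cop_C X', K)$ because the map $X \cop_C X' \to K$ from the second commutative square forces the underlying basepoints $X \to K$ and $X' \to K$ to agree on $C$. Equivariance is a direct unwinding of the pushout universal property.

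Next come the key weak equivalences. For the left-hand arrow in the middle, \cref{lemma:rel_aut_span}, part 1.a (with $A = A' = A \cop_C A'$, $f = p$, $X' = P$, using that $p$ is a trivial cofibration and $P$ is fibrant) gives that $\aut[A \cop_C A'](p) \to \aut[A \cop_C A'](X \cop_C X')$ is a trivial fibration. For the module factor, the projection $\map[B \cop_C B'](p, l) \to \map[B \cop_C B'](X \cop_C X', K)$ is the pullback of the trivial fibration $p^* \colon \map[B \cop_C B'](P, L) \to \map[B \cop_C B'](X \cop_C X', L)$ (trivial fibration by \cref{lemma:rel_map} since $p$ is a trivial cofibration and $L$ is fibrant), hence itself a trivial fibration. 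For the right-hand weak equivalence, \cref{lemma:rel_forget} applies with $a = r \colon A \cop_C A' \to R$ a trivial cofibration, $P$ fibrant, and the identity on $P$ a cofibration, giving that $\aut[R](P) \to \aut[A \cop_C A'](P)$ is a weak equivalence; the same argument with $q$ and $L$ handles $\map[Q](P, L) \to \map[B \cop_C B'](P, L)$. The final claim follows since all indicated morphisms are weak equivalences and so become invertible in the homotopy category.

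The main obstacle is bookkeeping rather than mathematical depth: ensuring that every map in the zig-zag is a map of pairs, i.e., that the monoid action is preserved and the modules interact correctly with the pullback structures. By \cref{lemma:monoid_pullback} the module structure on $\map[B \cop_C B'](p, l)$ is the unique one making the two evident projections into module maps, which makes compatibility with both the left-hand forgetful map and the right-hand forgetful map to $\map[B \cop_C B'](P, L)$ automatic. No individual verification is hard, but care is required to set up the pullbacks so that the resulting zig-zag descends cleanly to the homotopy category of pairs.
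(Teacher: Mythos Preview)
Your proof is correct and follows essentially the same approach as the paper: the left-hand weak equivalence comes from part~1 of \cref{lemma:rel_aut_span} (together with the analogous pullback argument for the module factor via \cref{lemma:rel_map}), and the right-hand weak equivalence comes from \cref{lemma:rel_forget}. The paper's proof is simply a terser version that omits the construction of the left vertical map and the bookkeeping about module structures, both of which you spell out correctly.
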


\begin{proof}
  That the left-hand horizontal map is a weak equivalence follows from part~1 of \cref{lemma:rel_aut_span} and the same argument applied to mapping spaces.
  That the right-hand vertical map is a weak equivalence follows from \cref{lemma:rel_forget}.
\end{proof}

\begin{lemma} \label{lemma:zig-zag_pushouts}
  In a simplicial model category $\cat M$, let the following be natural pointwise weak equivalences
  \[
  \begin{tikzcd}[sep = small]
  	C \rar[tail] \dar[tail] & B_1 \rar[tail] \dar & A_1 \rar[tail] \ar{dd} & X_1 \ar{ddd} \\
  	B_2 \rar \dar[tail] & Q \drar[tail] & & \\
  	A_2 \ar{rr} \dar[tail] & & R \drar[tail] & \\
  	X_2 \ar{rrr} & & & P
  \end{tikzcd}
  \quad \xLongrightarrow{\eq} \quad
  \begin{tikzcd}[sep = small]
  	C' \rar[tail] \dar[tail] & B_1' \rar[tail] \dar & A_1' \rar[tail] \ar{dd} & X_1' \ar{ddd} \\
  	B_2' \rar \dar[tail] & Q' \drar[tail] & & \\
  	A_2' \ar{rr} \dar[tail] & & R' \drar[tail] & \\
  	X_2' \ar{rrr} & & & P'
  \end{tikzcd}
  \]
  \[
  \begin{tikzcd}
  	X_1 \cop_C X_2 \rar \dar & K \dar \\
    P \rar & L
  \end{tikzcd}
  \quad \xLongrightarrow{\eq} \quad
  \begin{tikzcd}
  	X_1' \cop_C X_2' \rar \dar & K' \dar \\
  	P' \rar & L'
  \end{tikzcd}
  \]
  of diagrams as in \cref{lemma:aut_pushout_zig-zag} (such that the lower natural transformation restricted to the left-hand side of the square is induced by the upper map).
  Additionally assume that $C$ and $C'$ are cofibrant, that $X_1$, $X_2$, $X_1'$, and $X_2'$ are fibrant, and that the indicated maps from $C$ and $C'$ are cofibrations.
  Then the following diagram commutes in the homotopy category of pairs of a simplicial monoid and a right module over it
  \[
  \begin{tikzcd}
    \vertpair {\aut[A_1](X_1) \times \aut[A_2](X_2)} {\map[B_1](X_1, K) \times \map[B_2](X_2, K)} \rar[squiggly] \dar[squiggly][swap]{\eq} & \vertpair {\aut[R](P)} {\map[Q](P, L)} \dar[squiggly]{\eq} \\
    \vertpair {\aut[A_1'](X_1') \times \aut[A_2'](X_2')} {\map[B_1'](X_1', K') \times \map[B_2'](X_2', K')} \rar[squiggly] & \vertpair {\aut[R'](P')} {\map[Q'](P', L')}
  \end{tikzcd}
  \]
  where the horizontal maps are those of \cref{lemma:aut_pushout_zig-zag} and the vertical maps are (products of) those of \cref{thm:Baut_functor}.
\end{lemma}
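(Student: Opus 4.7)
The plan is to use a single compatible factorization of the given natural transformation $\phi\colon \mathcal D \to \mathcal D'$ between the source and target diagrams---viewed as objects of $\cat M^{\cat I}$ for $\cat I$ the finite poset indexing the entire data---to simultaneously produce the vertical maps of both pairs and to witness their strict commutativity at each stage of the zig-zag of \cref{lemma:aut_pushout_zig-zag}. Since $\cat I$ is a finite poset, \cref{lemma:projective_cofibration} yields the projective model structure on $\cat M^{\cat I}$, in which one factors $\phi$ as a projective trivial cofibration $\mathcal D \to \widetilde{\mathcal D}$ followed by a projective trivial fibration $\widetilde{\mathcal D} \to \mathcal D'$.

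The key technical step is to arrange the intermediate $\widetilde{\mathcal D}$ so that it satisfies the hypotheses of \cref{lemma:aut_pushout_zig-zag}, in particular that the induced pushout-comparison maps $\widetilde B_1 \cop_{\widetilde C} \widetilde B_2 \to \widetilde Q$, $\widetilde A_1 \cop_{\widetilde C} \widetilde A_2 \to \widetilde R$, and $\widetilde X_1 \cop_{\widetilde C} \widetilde X_2 \to \widetilde P$ are trivial cofibrations. I would arrange this in two stages: first factor only the restriction of $\phi$ to the ``base'' subcategory indexing $(C, B_i, A_i, X_i, K)$; pointwise cofibrancy and fibrancy properties then follow from \cref{lemma:projective_cofibration}. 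Then construct $\widetilde Q$, $\widetilde R$, $\widetilde P$, and $\widetilde L$ separately, via functorial (trivial cofibration, trivial fibration) factorizations of the evident composites out of the corresponding pushouts of the intermediate base, together with lifts of the originally given maps $Q \to Q'$, $R \to R'$, $P \to P'$, $L \to L'$ across the resulting trivial fibrations.

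With $\widetilde{\mathcal D}$ in hand, \cref{lemma:aut_pushout_zig-zag} applies to each of $\mathcal D$, $\widetilde{\mathcal D}$, $\mathcal D'$, producing three parallel zig-zags of pairs. The projective (trivial) (co)fibration properties of the factorization then guarantee the hypotheses of \cref{lemma:rel_aut_naturality}, which, applied separately to each of the five stages (and, for the leftmost product, to each factor), yields a ladder of pairs of simplicial monoids and modules whose rungs commute strictly by the explicit functoriality of the construction in \cref{lemma:aut_pushout_zig-zag}. Moreover, the restrictions of $\alpha$ and $\beta$ to the arrows $\iota_i\colon A_i \to X_i$ and to $R \to P$ furnish valid factorizations for the construction in \cref{thm:Baut_functor}, so by the independence-of-factorization clause in the proof of that theorem the outer vertical maps of the ladder coincide with the vertical maps of the present lemma. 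Composing the ladder in the two possible ways and passing to the homotopy category yields the asserted commutative square.

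The main obstacle is the construction of the intermediate $\widetilde{\mathcal D}$ in the second paragraph: a bare projective factorization of $\phi$ in $\cat M^{\cat I}$ does not automatically yield trivial cofibrations for the pushout-comparison maps in $\widetilde{\mathcal D}$ -- \cref{lemma:projective_restriction_pushout} gives only that the induced \emph{maps between} such comparison arrows are projective trivial cofibrations -- so the two-stage construction with separate functorial factorizations of the auxiliary objects is needed. Maintaining simultaneous compatibility with the originally chosen $Q, R, P, L$, $Q', R', P', L'$ and with the trivial-cofibration requirements will require a careful sequence of lifts against trivial fibrations, but no essential obstruction arises.
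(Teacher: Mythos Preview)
Your overall strategy---factor the natural transformation as a projective trivial cofibration followed by a projective trivial fibration, then paste via \cref{lemma:rel_aut_naturality}---matches the paper's. But the two-stage construction you propose is an unnecessary detour, and the concern that motivates it is misplaced.

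You observe that \cref{lemma:projective_restriction_pushout} only says the induced map \emph{between} pushout-comparison arrows is a projective trivial cofibration, and worry this does not force the intermediate comparison map itself to be a trivial cofibration. In fact it does, once combined with \cref{rem:projective_cofibration}: writing $f \colon X_1 \cop_C X_2 \to P$ and $\hat f \colon \widehat X_1 \cop_{\widehat C} \widehat X_2 \to \widehat P$, the map $f$ is a trivial cofibration by hypothesis, and the arrow-map $(m,p) \colon f \to \hat f$ is a projective trivial cofibration by \cref{lemma:projective_restriction_pushout}; hence by \cref{rem:projective_cofibration} the transposed arrow-map $(f,\hat f) \colon m \to p$ is also a projective trivial cofibration, and the last clause of \cref{lemma:projective_cofibration} gives that $\hat f$ is a trivial cofibration. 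The paper carries out exactly this chain (its items 4--7). So a single projective factorization of the whole natural transformation already produces an intermediate diagram with the required properties; your separate construction of $\widetilde Q, \widetilde R, \widetilde P, \widetilde L$ is not needed.

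Beyond this, the paper's proof is more explicit than ``apply \cref{lemma:aut_pushout_zig-zag} three times and paste.'' It writes out a $4 \times 5$ grid of simplicial monoids whose rows interpolate between $\aut[N](M)$ (with $N = A_1 \cop_C A_2$, $M = X_1 \cop_C X_2$) and $\aut[R](P)$, together with a separate ladder handling the passage from the product $\aut[A_1](X_1) \times \aut[A_2](X_2)$ to $\aut[N](M)$. Verifying which entries of this grid are weak equivalences requires a list of fourteen auxiliary facts about the factored diagram and repeated appeals to \cref{lemma:rel_aut_span} and \cref{lemma:rel_forget}; your sketch underestimates this bookkeeping.
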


\begin{proof}
	We prove the statement for the simplicial monoids; incorporating the right modules proceeds by a similar argument.
  We begin by factoring the natural transformation as a trivial cofibration $\kappa$ followed by a trivial fibration $\phi$ in the category of functors from the appropriate diagram category $\cat I$ to $\cat M$, equipped with the projective model structure.
  We denote the objects of the middle diagram by $\widehat C$ etc., and write $M \defeq X_1 \cop_C X_2$, $N \defeq A_1 \cop_C A_2$, and analogously for $M'$, $N'$, $\widehat M$, and $\widehat N$.
  This yields the following commutative diagram
  \[
  \begin{tikzcd}[row sep = 10]
    N \ar{rrrrr} \ar{ddddd}[swap]{g} \drar[swap]{n} & & & & & M \ar{ddddd}{f} \dlar{m} \\
    & \widehat N \ar{rrr} \ar{ddd}[swap]{\hat g} \drar[swap]{n'} & & & \widehat M \ar{ddd}{\hat f} \dlar{m'} & \\
    & & N' \rar \dar[swap]{g'} & M' \dar{f'} & & \\[10]
    & & R' \rar{\iota'} & P' & & \\
    & \widehat R \ar{rrr}{\hat \iota} \urar{r'} & & & \widehat P \ular[swap]{p'} & \\
    R \ar{rrrrr}{\iota} \urar{r} & & & & & P \ular[swap]{p}
  \end{tikzcd}
  \]
  in $\cat M$.
  We note the following:
  \begin{enumerate}
    \item
    The maps $p'$ and $r'$ are trivial fibrations by construction.
    \item
    The object $\widehat P$ is fibrant by the preceding item.
    \item
    The maps $p$ and $r$ are trivial cofibrations by \cref{lemma:projective_cofibration}.
    \item
    The maps $f$, $g$, $f'$, and $g'$ are trivial cofibrations by assumption.
    \item
    The maps $(m, p) \colon f \to \hat f$ and $(n, r) \colon g \to \hat g$ are trivial cofibrations in the projective model structure on the arrow category.
    This is by \cref{lemma:projective_restriction_pushout}, combined with \cref{lemma:projective_cofibration} for the former map and \cref{lemma:projective_restriction} for the latter (note that the appropriate inclusion $\lincat 1 \times \lincat 1 \to \cat I$ is right adjoint).
    \item
    The maps $(f, \hat f) \colon m \to p$ and $(g, \hat g) \colon n \to r$ are trivial cofibrations in the projective model structure on the arrow category by the preceding two items and \cref{rem:projective_cofibration}.
    \item
    The maps $\hat f$ and $\hat g$ are trivial cofibrations by the preceding item and \cref{lemma:projective_cofibration}.
    \item
    The map $n'$ is a weak equivalence since $\hat g$, $r'$, and $g'$ are.
    \item
    The maps $\iota$ and $\iota'$ are cofibrations by assumption.
    \item
    The map $(r, p) \colon \iota \to \hat \iota$ is a cofibration in the projective model structure on the arrow category by \cref{lemma:projective_restriction} (since the corresponding inclusion $\lincat 1 \to \cat I$ is right adjoint).
    \item
    The map $(\iota, \hat \iota) \colon r \to p$ is a cofibration in the projective model structure on the arrow category by the preceding two items and \cref{rem:projective_cofibration}.
    \item
    The map $\hat \iota$ is a cofibration by the preceding item and \cref{lemma:projective_cofibration}.
    \item
    The objects $N$ and $N'$ are cofibrant since $A$ and $A'$ are, and hence $\widehat N$ is cofibrant by item 5.
    \item
    The objects $R$ and $R'$ are cofibrant by items~4 and 13.
  \end{enumerate}
  Now \cref{lemma:rel_aut_naturality} yields the upper three rows of the following commutative diagram
  \[
  \begin{tikzcd}[sep = 15]
    \aut[N](M) & \aut[n](m) \lar \rar & \aut[\widehat N](\widehat M) & \aut[n'](m') \lar \rar & \aut[N'](M') \\
    \aut[N](f) \uar{\eq} \dar & \aut[n](f, \hat f) \lar \rar \uar{\eq} \dar & \aut[\widehat N](\hat f) \uar{\eq} \dar & \aut[n'](\hat f, f') \lar \rar \uar{\eq} \dar & \aut[N'](f') \uar{\eq} \dar \\ 
    \aut[N](P) & \aut[n](p) \lar \rar & \aut[\widehat N](\widehat P) & \aut[n'](p') \lar[swap]{\eq} \rar{\eq} & \aut[N'] \bigl( P' \bigr) \\
    \aut[R](P) \uar{\eq} & \aut[r](p) \lar[swap]{\eq} \rar{\eq} \uar{\eq} & \aut[\widehat R](\widehat P) \uar{\eq} & \aut[r'](p') \lar[swap]{\eq} \rar{\eq} \uar & \aut[R'] \bigl( P' \bigr) \uar
  \end{tikzcd}
  \]
  of simplicial monoids.
  By items~1, 9, 10, and 14 above, the bottom row is a zig-zag as in the proof of \cref{thm:Baut_functor}; in particular these four maps are weak equivalences.
  The other marked maps are weak equivalences by the following arguments:
  \begin{itemize}
    \item
    The three top right-hand vertical maps are weak equivalences by \cref{lemma:rel_aut_naturality} and items~1, 2, 4, and 7 above.
    \item
    The two top left-hand vertical maps are weak equivalences by part~1 of \cref{lemma:rel_aut_span} and items~2, 4, and 6 above.
    \item
    The three marked bottom vertical maps are weak equivalences by \cref{lemma:rel_forget} and items~2, 4, 6, 7, 9, 11, and 12 above.
    \item
    The two marked horizontal maps in the third row are weak equivalences by part~2 of \cref{lemma:rel_aut_span} and items~1, 4, 7, 8, 9, 12, and 13 above.
  \end{itemize}
  
  Lastly, we note that there is a commutative diagram of simplicial monoids
  \[
  \begin{tikzcd}[row sep = 15]
    \aut[B](X) \times \aut[C](Y) \rar & \aut[N](M) \\
    \aut[\kappa_B](\kappa_X) \times \aut[\kappa_C](\kappa_Y) \uar{\eq} \dar[swap]{\eq} \rar & \aut[n](m) \uar \dar \\
    \aut[\widehat B](\widehat X) \times \aut[\widehat C](\widehat Y) \rar & \aut[\widehat N](\widehat M) \\
    \aut[\phi_B](\phi_X) \times \aut[\phi_C](\phi_Y) \uar{\eq} \dar[swap]{\eq} \rar & \aut[n'](m') \uar \dar \\
    \aut[B'] (X') \times \aut[C'](Y') \rar & \aut[N'](M')
  \end{tikzcd}
  \]
  where the horizontal maps are given by taking pushouts of maps, and $\kappa$ and $\phi$ are the natural transformations from above.
  We note that $(\kappa_X, \kappa_B)$ and $(\kappa_Y, \kappa_C)$ are cofibrations in the projective model structure on the arrow category of $\cat M$ by \cref{lemma:projective_cofibration}.
  In particular the two factors of the left-hand column are zig-zags as in the proof of \cref{thm:Baut_functor}.
  Pasting the preceding two diagrams completes the proof.
\end{proof}

\end{appendices}

\printbibliography

@article{Bek,
  author = {Beke, Tibor},
  title = {Sheafifiable homotopy model categories},
  volume = {129},
  number = {3},
  journal = {Mathematical Proceedings of the Cambridge Philosophical Society},
  year = {2000},
  pages = {447–475},
  DOI = {10.1017/s0305004100004722}
}

@article{vdBM,
  author = {van den Berg, Benno and Moerdijk, Ieke},
  title = {Univalent completion},
  journal = {Mathematische Annalen},
  volume = {371},
  year = {2017},
  pages = {1337--1350},
  DOI = {10.1007/s00208-017-1614-3}
}

@article{Ber,
  author = {Berglund, Alexander},
  title = {Rational Models for Automorphisms of Fiber Bundles},
  journal = {Documenta Mathematica},
  volume = {25},
  pages = {239--265},
  year = {2020},
  doi = {10.25537/DM.2020V25.239-265}
}

@article{Ber15,
  author = {Berglund, Alexander},
  title = {Rational homotopy theory of mapping spaces via Lie theory for $L_\infty$-algebras},
  volume = {17},
  number = {2},
  journal = {Homology, Homotopy and Applications},
  year = {2015},
  pages = {343--369},
  DOI = {10.4310/hha.2015.v17.n2.a16}
}

@article{BerMC,
  author = {Berglund,  Alexander},
  title = {On exponential groups and Maurer–Cartan spaces},
  volume = {11},
  DOI = {10.1090/bproc/210},
  number = {32},
  journal = {Proceedings of the American Mathematical Society, Series B},
  year = {2024},
  pages = {358--370}
}

@Article{BM,
  Author = {Alexander {Berglund} and Ib {Madsen}},
  Title = {{Rational homotopy theory of automorphisms of manifolds}},
  Journal = {{Acta Mathematica}},
  Volume = {224},
  Number = {1},
  Pages = {67--185},
  Year = {2020},
  doi = "10.4310/ACTA.2020.v224.n1.a2"
}

@article{BS,
  author = {Alexander Berglund and Bashar Saleh},
  title = {A dg Lie model for relative homotopy automorphisms},
  year = {2020},
  volume = {22},
  number = {2},
  pages = {105--121},
  journal = {Homology, Homotopy and Applications},
  doi = {10.4310/hha.2020.v22.n2.a6},
}

@online{BZ,
  author = {Berglund, Alexander and Zeman, Tomáš},
  title = {Algebraic models for classifying spaces of fibrations},
  year = {2022},
  note = {Preprint},
  eprinttype = {arxiv},
  eprint = {2203.02462v4}
}

@article{BL,
  title = {André--Quillen cohomology and rational homotopy of function spaces},
  volume = {193},
  DOI = {10.1016/j.aim.2004.04.014},
  number = {1},
  journal = {Advances in Mathematics},
  author = {Block, J. and Lazarev, A.},
  year = {2005},
  pages = {18--39}
}

@article{BG,
  author = {Bousfield, A. K. and Gugenheim, V. K. A. M.},
  title = {On PL de Rham theory and rational homotopy type},
  volume = {8},
  number = {179},
  journal = {Memoirs of the American Mathematical Society},
  year = {1976},
  DOI = {10.1090/memo/0179}
}

@book{BK,
  author = {Bousfield, A. K. and Kan, D. M.},
  title = {Homotopy Limits, Completions and Localizations},
  series = {Lecture Notes in Mathematics},
  number = {304},
  publisher = {Springer},
  year = {1972},
  DOI = {10.1007/978-3-540-38117-4}
}

@article{Bro94,
  author = {Brown, Jr, Edgar H.},
  title = {The Serre spectral sequence theorem for continuous and ordinary cohomology},
  journal = {Topology and its Applications},
  volume = {56},
  number = {3},
  pages = {235--248},
  year = {1994},
  doi = {10.1016/0166-8641(94)90077-9}
}

@article{BS97,
  author={Brown, Jr, Edgar H. and Szczarba, Robert},
  title={On the rational homotopy type of function spaces},
  journal={Transactions of the American Mathematical Society},
  volume={349},
  number={12},
  pages={4931--4951},
  year={1997},
  doi = {10.1090/S0002-9947-97-01871-0}
}

@online{BFMT17,
  author = {Buijs, Urtzi and Félix, Yves and Murillo, Aniceto and Tanré, Daniel},
  title = {The infinity Quillen functor, Maurer-Cartan elements and DGL realizations},
  year = {2017},
  note = {Preprint},
  eprinttype = {arxiv},
  eprint = {1702.04397v2}
}

@book{BFMT,
  author = {Urtzi Buijs and Yves Félix and Aniceto Murillo and Daniel Tanré},
  title = {Lie Models in Topology},
  year = {2020},
  series = {Progress in Mathematics},
  number = {335},
  publisher = {Springer},
  doi = {10.1007/978-3-030-54430-0}
}

@article{BKK,
  author = {Bustamante, Mauricio and Krannich, Manuel and Kupers, Alexander},
  title = {Finiteness properties of automorphism spaces of manifolds with finite fundamental group},
  journal = {Mathematische Annalen},
  year = {2023},
  DOI = {10.1007/s00208-023-02594-x}
}

@article{CR,
  author = {Cirici, Joana and Roig, Agustí},
  title = {Sullivan minimal models of operad algebras},
  volume = {63},
  journal = {Publicacions Matemàtiques},
  year = {2019},
  pages = {125--154},
  DOI = {10.5565/PUBLMAT6311904},
}

@book {DG,
  AUTHOR = {Demazure, M. and Gabriel, P.},
  TITLE = {Groupes algébriques},
  SUBTITLE = {Tome I},
  TITLEADDON = {Géométrie algébrique, généralités, groupes commutatifs},
  PUBLISHER = {Masson \& Cie, North-Holland},
  YEAR = {1970},
  ISBN = {978-0-720-42034-0}
}

@book{DGeng,
  title={Introduction to algebraic geometry and algebraic groups},
  author={Demazure, M. and Gabriel, P.},
  year={1980},
  publisher={North-Holland},
  origlanguage={French},
  series={North-Holland Mathematics Studies},
  number={39},
  ISBN={978-0-444-85443-8}
}

@article{DDK,
  author = {Dror, E. and Dwyer, W. G. and Kan, D. M.},
  title = {An arithmetic square for virtually nilpotent spaces},
  volume = {21},
  number = {2},
  journal = {Illinois Journal of Mathematics},
  year = {1977},
  DOI = {10.1215/ijm/1256049410}
}

@article{DZ,
  author = {Dror, Emmanuel and Zabrodsky, Alexander},
  title = {Unipotency and nilpotency in homotopy equivalences},
  volume = {18},
  DOI = {10.1016/0040-9383(79)90002-8},
  number = {3},
  journal = {Topology},
  year = {1979},
  pages = {187--197}
}

@article{ER,
  author = {Ebert, Johannes and Reinhold, Jens},
  title = {Some rational homology computations for diffeomorphisms of odd‐dimensional manifolds},
  volume = {17},
  number = {1},
  eid = {e12324},
  journal = {Journal of Topology},
  year = {2024},
  DOI = {10.1112/topo.12324},
}

@online{ES,
  author = {Espic, Hadrien and Saleh, Bashar},
  title = {On the group of homotopy classes of relative homotopy automorphisms}, 
  year = {2020},
  note = {Preprint},
  eprinttype = {arxiv},
  eprint = {2002.12083v4}
}

@article{Fed,
  author = {Federer, Herbert},
  title = {A study of function spaces by spectral sequences},
  volume = {82},
  number = {2},
  journal = {Transactions of the American Mathematical Society},
  year = {1956},
  pages = {340--361},
  DOI = {10.1090/s0002-9947-1956-0079265-2}
}

@article{FFM22,
  author = {Félix, Yves and Fuentes, Mario and Murillo, Aniceto},
  title = {Lie models of homotopy automorphism monoids and classifying fibrations},
  volume = {402},
  journal = {Advances in Mathematics},
  year = {2022},
  eid = {108359},
  DOI = {10.1016/j.aim.2022.108359},
}

@online{FFM,
  author = {Félix, Yves and Fuentes, Mario and Murillo, Aniceto},
  title = {All known realizations of complete Lie algebras coincide},
  year = {2022},
  note = {Preprint. To appear in Algebraic \& Geometric Topology},
  eprinttype = {arxiv},
  eprint = {2207.10886v2}
}

@online{FFM23,
  author = {Félix, Yves and Fuentes, Mario and Murillo, Aniceto},
  title = {Rational homotopy type of relative universal fibrations},
  year = {2023},
  note = {Preprint},
  eprinttype = {arxiv},
  eprint = {2311.14132v1}
}

@book{FHT,
  author = {Félix, Yves and Halperin, Stephen and Thomas, Jean-Claude},
  title = {Rational Homotopy Theory},
  series = {Graduate Texts in Mathematics},
  number = {205},
  year = {2012},
  publisher = {Springer},
  doi = {10.1007/978-1-4613-0105-9}
}

@book{GZ,
  author = {Gabriel, Peter and Zisman, Michel},
  title = {Calculus of Fractions and Homotopy Theory},
  publisher = {Springer},
  year = {1967},
  DOI = {10.1007/978-3-642-85844-4}
}

@article{Get,
  author = {Getzler, Ezra},
  title = {Lie theory for nilpotent $L_\infty$-algebras},
  volume = {170},
  number = {1},
  journal = {Annals of Mathematics},
  year = {2009},
  pages = {271--301},
  DOI = {10.4007/annals.2009.170.271},
}

@book{GJ,
  author = {Goerss, Paul G. and Jardine, John F.},
  title = {Simplicial Homotopy Theory},
  publisher = {Birkhäuser},
  year = {2009},
  DOI = {10.1007/978-3-0346-0189-4}
}

@article{Gre06,
  author = {Greenblatt, Robert},
  title = {Homology with local coefficients and characteristic classes},
  volume = {8},
  number = {2},
  journal = {Homology, Homotopy and Applications},
  year = {2006},
  pages = {91--103},
  DOI = {10.4310/hha.2006.v8.n2.a5}
}

@article{Hal,
  author = {Halperin, S.},
  title = {Lectures on minimal models},
  journal = {Mémoires de la Société Mathématique de France},
  volume = {9-10},
  year = {1983},
  DOI = {10.24033/msmf.294}
}

@book{Hat,
	author = {Hatcher, Allen},
	title = {Algebraic Topology},
	publisher = {Cambridge University Press},
	year = {2002},
	url = {https://pi.math.cornell.edu/~hatcher/AT/ATpage.html}
}

@article{HLLR,
  author = {Hebestreit, Fabian and Land, Markus and Lück, Wolfgang and Randal-Williams, Oscar},
  title = {A vanishing theorem for tautological classes of aspherical manifolds},
  volume = {25},
  number = {1},
  journal = {Geometry \& Topology},
  year = {2021},
  pages = {47--110},
  DOI = {10.2140/gt.2021.25.47}
}

@inproceedings{Hil75,
  author = {Hilton, Peter},
  title = {Nilpotent actions on nilpotent groups},
  booktitle = {Algebra and Logic},
  booktitleaddon = {Victoria, Australia 1974},
  series = {Lecture Notes in Mathematics},
  number = {450},
  editor = {Crossley, John N.},
  publisher = {Springer},
  year = {1975},
  pages = {174--196},
  DOI = {10.1007/bfb0062856}
}

@article{Hin,
  author = {Hinich, Vladimir},
  title = {Homological algebra of homotopy algebras},
  volume = {25},
  number = {10},
  journal = {Communications in Algebra},
  year = {1997},
  pages = {3291--3323},
  doi = {10.1080/00927879708826055},
}

@article{Hin97,
  author = {Hinich, Vladimir},
  title = {Descent of Deligne groupoids},
  volume = {1997},
  number = {5},
  journal = {International Mathematics Research Notices},
  year = {1997},
  pages = {223--239},
  DOI = {10.1155/s1073792897000160},
}

@article{Hin01,
	author = {Hinich, Vladimir},
	title = {DG coalgebras as formal stacks},
	volume = {162},
	number = {2--3},
	journal = {Journal of Pure and Applied Algebra},
	year = {2001},
	pages = {209--250},
	DOI = {10.1016/s0022-4049(00)00121-3}
}

@book{Hir,
  title = {Model Categories and Their Localizations},
  author = {Hirschhorn, Philip S.},
  series = {Mathematical Surveys and Monographs},
  number = {99},
  year = {2003},
  publisher = {American Mathematical Society},
  doi = {10.1090/surv/099}
}

@book{HMR,
  title = {Localization of Nilpotent Groups and Spaces},
  author = {Hilton, Peter and Mislin, Guido and Roitberg, Joe},
  series = {North-Holland Mathematics Studies},
  number = {15},
  year = {1975},
  publisher = {Elsevier},
  ISBN = {9780080871264}
}

@book{Hoc,
  author = {Gerhard P. Hochschild},
  title = {Basic Theory of Algebraic Groups and Lie Algebras},
  year = {1981},
  publisher = {Springer},
  series = {Graduate Texts in Mathematics},
  number = {75},
  doi = {10.1007/978-1-4613-8114-3}
}

@book{Hov,
  author = {Mark Hovey},
  title = {Model Categories},
  year = {2007},
  series = {Mathematical Surveys and Monographs},
  number = {63},
  publisher = {American Mathematical Society},
  doi = {10.1090/surv/063}
}

@article{Igu,
  author = {Igusa, Kiyoshi},
  title = {The stability theorem for smooth pseudoisotopies},
  journal = {K-Theory},
  volume = {2},
  number = {1--2},
  year = {1988},
  pages = {1--355},
  DOI = {10.1007/bf00533643}
}

@article{KM,
	author = {Kervaire, Michel A. and Milnor, John W.},
	title = {Groups of Homotopy Spheres: I},
	volume = {77},
	number = {3},
	journal = {Annals of Mathematics},
	year = {1963},
	pages = {504--537},
	DOI = {10.2307/1970128}
}

@article{Kra22,
  author = {Manuel Krannich},
  title = {A homological approach to pseudoisotopy theory. I},
  journal = {Inventiones mathematicae},
  year = {2022},
  volume = {227},
  pages = {1093--1167},
  doi = {10.1007/s00222-021-01077-7},
}

@online{Kup,
  author = {Alexander Kupers},
  title = {Mapping class groups of manifolds with boundary are of finite type},
  year = {2022},
  note = {Peprint. To appear in Homology, Homotopy and Applications},
  eprinttype = {arxiv},
  eprint = {2204.01945v2}
}

@online{KupBook,
	shorthand = {Kup19},
	author = {Alexander Kupers},
	title = {Lectures on diffeomorphism groups of manifolds},
	version = {February 22, 2019},
	url = {https://people.math.harvard.edu/~kupers/teaching/272x/book.pdf},
	urldate = {2024-11-08}
}

@article{KR,
  author = {Kupers, Alexander and Randal-Williams, Oscar},
  title = {The cohomology of Torelli groups is algebraic},
  volume = {8},
  eid = {e64},
  journal = {Forum of Mathematics Sigma},
  year = {2020},
  DOI = {10.1017/fms.2020.41},
}

@article{Laz,
  title = {Models for classifying spaces and derived deformation theory},
  volume = {109},
  DOI = {10.1112/plms/pdt069},
  number = {1},
  journal = {Proceedings of the London Mathematical Society},
  author = {Lazarev, A.},
  year = {2014},
  pages = {40--64}
}

@article{LS,
  author = {Lindell, Erik and Saleh, Bashar},
  title = {Representation stability for homotopy automorphisms},
  journal = {Algebraic \& Geometric Topology},
  volume = {24},
  number = {5},
  year = {2024},
  pages = {2673--2705},
  DOI = {10.2140/agt.2024.24.2673}
}

@book {LurHTT,
  author = "Lurie, Jacob",
  title = "Higher Topos Theory",
  publisher = "Princeton University Press",
  series = "Annals of Mathematics Studies",
  number = "170",
  year = "2009",
  eprinttype = "arxiv",
  eprint = "math/0608040"
}

@article {May,
  AUTHOR = {May, J. Peter},
  TITLE = {Classifying spaces and fibrations},
  JOURNAL = {Memoirs of the American Mathematical Society},
  VOLUME = {1},
  NUMBER = {155},
  YEAR = {1975},
  DOI = {10.1090/memo/0155}
}

@book{May72,
  author = {May, J. P.},
  title = {The Geometry of Iterated Loop Spaces},
  series = {Lecture Notes in Mathematics},
  number = {271},
  publisher = {Springer},
  year = {1972},
  DOI = {10.1007/bfb0067491}
}

@book{MP,
  author = {May, J. P. and Ponto, K.},
  title = {More Concise Algebraic Topology},
  subtitle = {Localization, Completion, and Model Categories},
  publisher = {University of Chicago Press},
  series = {Chicago Lectures in Mathematics},
  year = {2011},
  DOI = {10.7208/chicago/9780226511795.001.0001}
}

@Book{Mil,
  Author = {J. S. {Milne}},
  Title = {{Algebraic Groups}},
  Subtitle = "The Theory of Group Schemes of Finite Type over a Field",
  Series = {{Cambridge Studies in Advanced Mathematics}},
  Number = {170},
  Year = {2017},
  Publisher = {Cambridge University Press},
  doi = "10.1017/9781316711736"
}

@article{Mos,
  author = {Mostow, G. D.},
  title = {Fully Reducible Subgroups of Algebraic Groups},
  volume = {78},
  number = {1},
  journal = {American Journal of Mathematics},
  year = {1956},
  pages = {200--221},
  DOI = {10.2307/2372490}
}

@book{Qui,
  author = {Daniel G. Quillen},
  title = {Homotopical Algebra},
  year = {1967},
  publisher = {Springer},
  series = {Lecture Notes in Mathematics},
  number = {43},
  doi = {10.1007/bfb0097438}
}

@article{Qui69,
  author = {Quillen, Daniel},
  title = {Rational Homotopy Theory},
  volume = {90},
  number = {2},
  journal = {Annals of Mathematics},
  year = {1969},
  pages = {205--295},
  DOI = {10.2307/1970725}
}

@article{Rob,
  author = {Robert-Nicoud, Daniel},
  title = {Representing the deformation $\infty$-groupoid},
  volume = {19},
  number = {3},
  journal = {Algebraic \& Geometric Topology},
  year = {2019},
  pages = {1453--1476},
  DOI = {10.2140/agt.2019.19.1453},
}

@online{Rog,
  author = {Rognes, John},
  title = {Lectures on the stable parametrized h-cobordism theorem},
  year = {2006},
  url = {https://www.mn.uio.no/math/personer/vit/rognes/kurs/hcob.html},
  urldate = {2024-12-18}
}

@incollection{Ser,
  author = {Serre, J.-P.},
  title = {Arithmetic groups},
  booktitle = {Homological Group Theory},
  editor = {C. T. C. Wall},
  series = {London Mathematical Society Lecture Note Series},
  number = {36},
  publisher = {Cambridge University Press},
  year = {1979},
  pages = {105--136},
  doi = {10.1017/CBO9781107325449.006}
}

@article{Sta,
  author = {James Stasheff},
  title = {Rational Poincaré duality spaces},
  journal = {Illinois Journal of Mathematics},
  year = {1983},
  volume = {27},
  number = {1},
  doi = {10.1215/ijm/1256065414}
}

@article {Sto,
  author = "Robin Stoll",
  title = "The stable cohomology of self-equivalences of connected sums of products of spheres",
  year = "2024",
  journal = "Forum of Mathematics Sigma",
  volume = "12",
  eid = "e1",
  doi = "10.1017/fms.2023.113",
}

@article{Sul,
  author = {Dennis Sullivan},
  title = {Infinitesimal computations in topology},
  journal = {Publications mathématiques de l'IHÉS},
  year = {1977},
  volume = {47},
  number = {1},
  pages = {269--331},
  doi = {10.1007/bf02684341}
}

@book{Tan,
  author = {Daniel Tanré},
  title = {Homotopie rationnelle},
  subtitle = {Modèles de Chen, Quillen, Sullivan},
  year = {1983},
  publisher = {Springer-Verlag},
  series = {Lecture Notes in Mathematics},
  number = {1025},
  doi = {10.1007/bfb0071482}
}

@book{TY,
  author = {Patrice Tauvel and Rupert W. T. Yu},
  title = {Lie Algebras and Algebraic Groups},
  year = {2005},
  series = {Springer Monographs in Mathematics},
  publisher = {Springer},
  doi = {10.1007/b139060}
}

@book{WBR,
  author = {Waldhausen, Friedhelm and Jahren, Bjørn and Rognes, John},
  title = {Spaces of PL Manifolds and Categories of Simple Maps},
  publisher = {Princeton University Press},
  year = {2013},
  series = {Annals of Mathematics Studies},
  number = {186},
  DOI = {10.23943/princeton/9780691157757.001.0001},
}

@article{Wal,
	author = {Wall, C. T. C.},
	title = {Finiteness Conditions for CW-Complexes},
	volume = {81},
	DOI = {10.2307/1970382},
	number = {1},
	journal = {Annals of Mathematics},
	year = {1965},
	pages = {56}
}

@book{Wei,
  author = {Charles A. {Weibel}},
  title = {An Introduction to Homological Algebra},
  year = {1995},
  series = {Cambridge Studies in Advanced Mathematics},
  number = {38},
  publisher = {Cambridge University Press},
  doi = {10.1017/cbo9781139644136}
}

@article{WW,
    AUTHOR = {Weiss, Michael and Williams, Bruce},
     TITLE = {Automorphisms of manifolds and algebraic {$K$}-theory. {I}},
   JOURNAL = {$K$-Theory},
    VOLUME = {1},
      YEAR = {1988},
    NUMBER = {6},
     PAGES = {575--626},
       DOI = {10.1007/BF00533787}
}

@book{Whi,
  author = {Whitehead, George W.},
  title = {Elements of Homotopy Theory},
  series = {Graduate Texts in Mathematics},
  number = {61},
  publisher = {Springer},
  year = {1978},
  DOI = {10.1007/978-1-4612-6318-0}
}

@article{Wil,
  author = {Wilkerson, Clarence W.},
  title = {Applications of minimal simplicial groups},
  volume = {15},
  number = {2},
  journal = {Topology},
  year = {1976},
  pages = {111--130},
  DOI = {10.1016/0040-9383(76)90001-x}
}

\end{document}